\DeclareRobustCommand{\greektext}{%
  \fontencoding{LGR}\selectfont\def\encodingdefault{LGR}}
\DeclareRobustCommand{\textgreek}[1]{\leavevmode{\greektext #1}}
\numberwithin{equation}{section}
\numberwithin{figure}{section}
\newcommand{\lyxaddress}[1]{
\par {\raggedright #1
\vspace{1.4em}
\noindent\par}
}
  \theoremstyle{remark}
  \newtheorem*{rem*}{\protect\remarkname}
  \theoremstyle{definition}
  \newtheorem{defn}{\protect\definitionname}[section]
 \theoremstyle{definition}
 \newtheorem*{defn*}{\protect\definitionname}
  \theoremstyle{plain}
  \newtheorem{lem}{\protect\lemmaname}[section]
  \theoremstyle{plain}
  \newtheorem{prop}{\protect\propositionname}[section]
  \theoremstyle{plain}
  \newtheorem{cor}{\protect\corollaryname}[section]
\newenvironment{customthm}[1]
  {\innercustomthm}
  {\endinnercustomthm}
  \providecommand{\definitionname}{Definition}
  \providecommand{\lemmaname}{Lemma}
  \providecommand{\propositionname}{Proposition}
  \providecommand{\remarkname}{Remark}
\providecommand{\corollaryname}{Corollary}
\begin{document}
\title{A proof of the instability of AdS\\ for the Einstein--massless Vlasov system}

\author{Georgios Moschidis}
\maketitle

\lyxaddress{University of California Berkeley, Department of Mathematics, Evans
Hall, Berkeley, CA 94720-3840, United States, \tt gmoschidis@berkeley.edu}
\begin{abstract}
In recent years, the conjecture on the insability of Anti-de~Sitter
spacetime, put forward by Dafermos\textendash Holzegel \cite{DafHol,DafermosTalk}
in 2006, has attracted a substantial amount of numerical and heuristic
studies. Following the pioneering work \cite{bizon2011weakly} of
Bizon\textendash Rostworowski, research efforts have been mainly focused
on the study of the spherically symmetric Einstein\textendash scalar
field system. The first rigorous proof of the instability of AdS in
the simplest spherically symmetric setting, namely for the \emph{Einstein\textendash null
dust system}, was obtained in \cite{MoschidisNullDust}. In order
to circumvent problems associated with the trivial break down of the
Einstein\textendash null dust system occuring at the center $r=0$,
\cite{MoschidisNullDust} studied the evolution of the system in the
exterior of an \emph{inner mirror} placed at $r=r_{0}$, $r_{0}>0$.
However, in view of additional considerations on the nature of the
instability, it was necessary for \cite{MoschidisNullDust} to allow
the mirror radius $r_{0}$ to shrink to $0$ with the size of the
initial perturbation; well-posedness in the resulting complicated
setup (involving low-regularity estimates of uniform modulus with
respect to $r_{0}$) was obtained in \cite{MoschidisMaximalDevelopment}.

In this paper, we establish the instability of AdS for the \emph{Einstein\textendash massless
Vlasov} system in spherical symmetry; this will be the first proof
of the AdS instability conjecture for an Einstein\textendash matter
system which is well-posed for regular initial data in the standard
sense, \emph{without the addition of an inner mirror}. The necessary
well-posedness results for this system are obtained in our companion
paper \cite{MoschidisVlasovWellPosedness}.

Our proof utilises an instability mechanism based on beam interactions
which is superficially similar to the one appearing in \cite{MoschidisNullDust}.
However, new difficulties associated with the Einstein\textendash massless
Vlasov system (such as the need for control on the paths of non-radial
geodesics in a large curvature regime) will force us to develop a
different strategy of proof involving a novel configuration of beam
interactions. One of the main novelties of our construction is the
introduction of a multi-scale hierarchy of domains in phase space,
on which the initial support of the Vlasov field $f$ is localised.
The propagation of this hierarchical structure of the support of $f$
along the evolution will be crucial both for controlling the geodesic
flow under minimal regularity assumptions and for guaranteeing the
existence of the solution until the time of trapped surface formation.
\end{abstract}
\tableofcontents{}

\section{Introduction}

In the presence of a \emph{negative} cosmological constant $\Lambda$,
the maximally symmetric solution of the \emph{vacuum Einstein equations
}
\begin{equation}
Ric_{\text{\textgreek{m}\textgreek{n}}}-\frac{1}{2}Rg_{\text{\textgreek{m}\textgreek{n}}}+\Lambda g_{\text{\textgreek{m}\textgreek{n}}}=0\label{eq:VacuumEinsteinEquations}
\end{equation}
in $n+1$ dimensions, $n\ge3$, is Anti-de~Sitter spacetime $(\mathcal{M}_{AdS},g_{AdS})$.
Expressed in the standard polar coordinate chart on $\mathcal{M}_{AdS}\simeq\mathbb{R}^{n+1}$,
the AdS metric $g_{AdS}$ takes the form 
\begin{equation}
g_{AdS}=-\big(1-\frac{2}{n(n-1)}\Lambda r^{2}\big)dt^{2}+\big(1-\frac{2}{n(n-1)}\Lambda r^{2}\big)^{-1}dr^{2}+r^{2}g_{\mathbb{S}^{n-1}},\label{eq:AdSMetricPolarCoordinates}
\end{equation}
where $g_{\mathbb{S}^{n-1}}$ is the standard metric on the round
sphere of radius $1$. A \emph{conformal boundary} $\mathcal{I}$
can be naturally attached to $(\mathcal{M}_{AdS},g_{AdS})$ at $r=\infty$,
with $\mathcal{I}$ having the conformal structure of a \emph{timelike}
hypersurface diffeomorphic to $\mathbb{R}\times\mathbb{S}^{n-1}$
(see Figure \ref{fig:AdS_Conformal}). 

\begin{figure}[h] 
\centering 
\scriptsize
%% Creator: Inkscape inkscape 0.92.3, www.inkscape.org
%% PDF/EPS/PS + LaTeX output extension by Johan Engelen, 2010
%% Accompanies image file '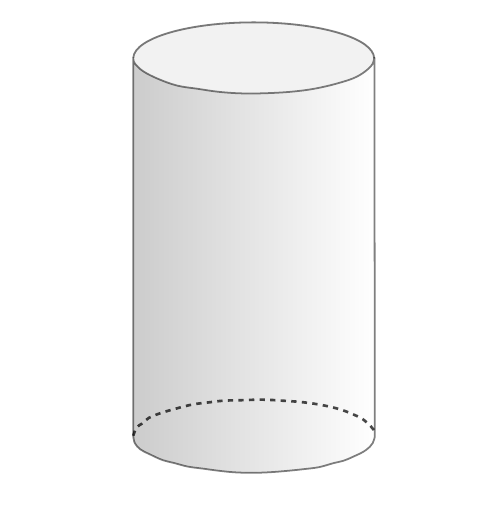' (pdf, eps, ps)
%%
%% To include the image in your LaTeX document, write
%%   \input{<filename>.pdf_tex}
%%  instead of
%%   \includegraphics{<filename>.pdf}
%% To scale the image, write
%%   \def\svgwidth{<desired width>}
%%   \input{<filename>.pdf_tex}
%%  instead of
%%   \includegraphics[width=<desired width>]{<filename>.pdf}
%%
%% Images with a different path to the parent latex file can
%% be accessed with the `import' package (which may need to be
%% installed) using
%%   \usepackage{import}
%% in the preamble, and then including the image with
%%   \import{<path to file>}{<filename>.pdf_tex}
%% Alternatively, one can specify
%%   \graphicspath{{<path to file>/}}
%% 
%% For more information, please see info/svg-inkscape on CTAN:
%%   http://tug.ctan.org/tex-archive/info/svg-inkscape
%%
\begingroup%
  \makeatletter%
  \providecommand\color[2][]{%
    \errmessage{(Inkscape) Color is used for the text in Inkscape, but the package 'color.sty' is not loaded}%
    \renewcommand\color[2][]{}%
  }%
  \providecommand\transparent[1]{%
    \errmessage{(Inkscape) Transparency is used (non-zero) for the text in Inkscape, but the package 'transparent.sty' is not loaded}%
    \renewcommand\transparent[1]{}%
  }%
  \providecommand\rotatebox[2]{#2}%
  \newcommand*\fsize{\dimexpr\f@size pt\relax}%
  \newcommand*\lineheight[1]{\fontsize{\fsize}{#1\fsize}\selectfont}%
  \ifx\svgwidth\undefined%
    \setlength{\unitlength}{144bp}%
    \ifx\svgscale\undefined%
      \relax%
    \else%
      \setlength{\unitlength}{\unitlength * \real{\svgscale}}%
    \fi%
  \else%
    \setlength{\unitlength}{\svgwidth}%
  \fi%
  \global\let\svgwidth\undefined%
  \global\let\svgscale\undefined%
  \makeatother%
  \begin{picture}(1,1.04166667)%
    \lineheight{1}%
    \setlength\tabcolsep{0pt}%
    \put(0,0){\includegraphics[width=\unitlength,page=1]{AdS.pdf}}%
    \put(0.47656002,0.1566871){\color[rgb]{0,0,0}\makebox(0,0)[lt]{\lineheight{0}\smash{\begin{tabular}[t]{l}$\mathbb{S}^n_+$\end{tabular}}}}%
    \put(0,0){\includegraphics[width=\unitlength,page=2]{AdS.pdf}}%
    \put(0.13184435,0.50467){\color[rgb]{0,0,0}\makebox(0,0)[lt]{\lineheight{0}\smash{\begin{tabular}[t]{l}$t$\end{tabular}}}}%
    \put(0.80038778,0.52348365){\color[rgb]{0,0,0}\makebox(0,0)[lt]{\lineheight{0}\smash{\begin{tabular}[t]{l}$\mathcal{I}\simeq \mathbb{R}\times \mathbb{S}^{n-1}$\end{tabular}}}}%
    \put(0.44462943,0.51193996){\color[rgb]{0,0,0}\makebox(0,0)[lt]{\lineheight{0}\smash{\begin{tabular}[t]{l}$\mathcal{M}_{AdS}$\end{tabular}}}}%
  \end{picture}%
\endgroup%
 
\caption{The AdS spacetime $(\mathcal{M}_{AdS}^{n+1},g_{AdS})$ can be conformally identified with the interior of $(\mathbb{R}\times\mathbb{S}_{+}^{n},g_{E}^{(\Lambda)})$, where $\mathbb{S}_{+}^{n}$ is the northern hemisphere of $\mathbb{S}^{n}$ and $g_{E}^{(\Lambda)}\doteq-dt^{2}+\Big(\frac{n(n-1)}{-2\Lambda}\Big)g_{\mathbb{S}_{+}^{n}}$. The timelike boundary $\mathcal{I}$ of $(\mathbb{R}\times\mathbb{S}_{+}^{n},g_{E}^{(\Lambda)})$ corresponds to the conformal boundary of $(\mathcal{M}_{AdS}^{n+1},g_{AdS})$ at infinity. \label{fig:AdS_Conformal}}
\end{figure}

More generally, a conformal boundary $\mathcal{I}$ with similar properties
can be attached to any spacetime $(\mathcal{M},g)$ which is merely
\emph{asymptotically AdS}, i.\,e.~possesses an asymptotic region
with geometry resembling that of (\ref{eq:AdSMetricPolarCoordinates})
in the region $\{r\ge R_{0}\}$, $R_{0}\gg\frac{1}{\sqrt{-\Lambda}}$.
For a more detailed exposition of the geometric properties associated
to AdS asymptotics, see \cite{HawkingEllis1973}.

The hyperbolic nature of the system (\ref{eq:VacuumEinsteinEquations})
and the timelike character of $\mathcal{I}$ imply that the right
framework to study asymptotically AdS solutions of (\ref{eq:VacuumEinsteinEquations})
is that of an \emph{initial-boundary} value problem, with boundary
conditions imposed asymptotically on $\mathcal{I}$. The well-posedness
of the asymptotically AdS initial-boundary value problem for (\ref{eq:VacuumEinsteinEquations})
was first addressed by Friedrich \cite{Friedrich1995}, who established
the existence of solutions for a broad class of boundary conditions
on $\mathcal{I}$, including examples both of \emph{reflecting} and
of \emph{dissipative} conditions (see also the discussion in \cite{Friedrich2014,HolzegelLukSmuleviciWarnick}).
\textgreek{T}he formulation of appropriate boundary conditions for
(\ref{eq:VacuumEinsteinEquations}) on $\mathcal{I}$ and their effects
on the spacetime geometry have also been investigated in the high
energy physics literature; the recent surge of interest on these topics
was sparked by the the putative \emph{AdS/CFT correspondence},\emph{
}put forward by Maldacena \cite{Maldacena}, Gubser\textendash Klebanov\textendash Polyakov
\cite{gubser1998gauge} and Witten \cite{witten1998anti} in 1998
(see \cite{AGMOO2000,Hartnoll2009,AmmonErdmenger}).

The well-posedness of the initial-boundary value problem for (\ref{eq:VacuumEinsteinEquations})
allows discussing the \emph{dynamics} associated to families of asymptotically
AdS initial data sets for (\ref{eq:VacuumEinsteinEquations}). Thus,
the question of stability of the trivial solution $(\mathcal{M}_{AdS},g_{AdS})$
under perturbation of its initial data arises naturally in this context.
When reflecting boundary conditions are imposed on $\mathcal{I}$,
the possibility of non-linear instability for $(\mathcal{M}_{AdS},g_{AdS})$
is already insinuated by the lack of asymptotic stability for solutions
to linear toy-models for (\ref{eq:VacuumEinsteinEquations}); this
is already illustrated by the simple example of the conformally coupled
Klein\textendash Gordon equation 
\begin{equation}
\square_{g_{AdS}}\varphi-\frac{2}{3}\Lambda\varphi=0
\end{equation}
 on $(\mathcal{M}_{AdS}^{3+1},g_{AdS})$, where imposing Dirichlet
conditions for $r\varphi$ on $\mathcal{I}$ results in the energy
flux of $\varphi$ through the foliation $\{t=\tau\}$ to be constant
in $\tau$, thus preventing any non-trivial solution $\varphi$ from
decaying to $0$ as $\tau\rightarrow+\infty$.\footnote{The failure of asymptotic stability for $(\mathcal{M}_{AdS},g_{AdS})$
as a solution of the \emph{non-linear} system (\ref{eq:VacuumEinsteinEquations})
with reflecting conditions on $\mathcal{I}$ follows from the results
of M. Anderson \cite{AndersonAdS}.} Motivated by additional considerations in the setting of the biaxial
Bianchi IX symmetry class for (\ref{eq:VacuumEinsteinEquations})
in 4+1 dimensions, Dafermos and Holzegel \cite{DafHol,DafermosTalk}
in fact conjectured a stronger instability statement in 2006:

\medskip{}

\noindent \textbf{AdS instability conjecture.} \emph{There exist
arbitrarily small perturbations to the initial data of $(\mathcal{M}_{AdS},g_{AdS})$
which, under evolution by the vacuum Einstein equations (\ref{eq:VacuumEinsteinEquations})
with a reflecting boundary condition on $\mathcal{I}$, lead to the
development of black hole regions. In particular, $(\mathcal{M}_{AdS},g_{AdS})$
is non-linearly unstable.}

\medskip{}

\noindent The scenario proposed by the conjecture can be also viewed
as a manifestation of \emph{gravitational turbulence}: The formation
of black hole regions signifies the emergence of non-trivial geometric
structures at small scales, arising from the non-linear evolution
of initial data which were almost trivial at the same spatial scales.
%Isws reference
\begin{rem*}
We should point out that the above formulation of the AdS instability
conjecture is ambiguous with respect to the initial data norm $||\cdot||_{data}$
measuring the ``smallness'' of the perturbations. A minimal requirement
for $||\cdot||_{data}$ is that perturbations of $(\mathcal{M}_{AdS},g_{AdS})$
which are small with respect to $||\cdot||_{data}$ should give rise
to solutions $g$ of (\ref{eq:VacuumEinsteinEquations}) which exist
(and remain close to $g_{AdS}$) for long time intervals $\{0\le t\le T_{*}\}$,
i.\,e.~that$(\mathcal{M}_{AdS},g_{AdS})$ is Cauchy stable as a
solution of the initial-boundary value problem for (\ref{eq:VacuumEinsteinEquations});\footnote{Here, Cauchy stability should be understood as stability over compact
subsets of $\mathcal{M}_{AdS}\cup\mathcal{I}$ in the conformal picture.} this condition implies, in particular, that the timescale of black
hole formation tends to $+\infty$ as the size of the initial perturbation
shrinks to $0$. The requirement for long-time existence in fact imposes
a condition on $||\cdot||_{data}$ as a measure of regularity of initial
data sets: The trapped surface formation results of \cite{Christodoulou2009,KlainermanLukRodnianski2014,AnLuk2017}
imply that there is no uniform time of existence for solutions to
(\ref{eq:VacuumEinsteinEquations}) in terms of initial data norms
which are not sufficiently strong so as to measure the concentration
of energy at small scales (such as norms of regularity below $||\cdot||_{H^{\frac{3}{2}}}$
when $n=3$, as a corollary of \cite{AnLuk2017}). 

The choice of reflecting boundary conditions on $\mathcal{I}$ is
also crucial for the validity of the conjecture: Assuming, instead,
``optimally dissipative'' conditions on $\mathcal{I}$, Holzegel\textendash Luk\textendash Smulevici\textendash Warnick
\cite{HolzegelLukSmuleviciWarnick} showed that solutions to the linearized\emph{
}vacuum Einstein equations on $(\mathcal{M}_{AdS},g_{AdS})$ decay
at a superpolynomial rate in $t$, providing a strong indication of
non-linear asymptotic stability for perturbations of $(\mathcal{M}_{AdS},g_{AdS})$
in this setting.
\end{rem*}
The study of the AdS instability conjecture in $3+1$ dimensions has
been mainly focused, so far, on Einstein\textendash matter systems
which admit non-trivial spherically symmetric dynamics (thus reducing
the problem to the more tractable setting of $1+1$ dimensional hyperbolic
systems), while still retaining many of the qualitative properties
of the vacuum equations (\ref{eq:VacuumEinsteinEquations});\footnote{As a consequence of the extension of Birkhoff's theorem to the case
$\Lambda<0$ (see \cite{Eiesland1925}), the vacuum equations (\ref{eq:VacuumEinsteinEquations})
become trivial in spherical symmetry, which is the only surface symmetry
class compatible with AdS asymptotics in $3+1$ dimensions. However,
this problem can be circumvented in $4+1$ dimensions in the biaxial
Bianchi IX symmetry class (see \cite{BizonChmajSchmidt2005}).} a prominent example of such a model is provided by the \emph{Einstein\textendash Klein-Gordon
system}
\begin{equation}
\begin{cases}
Ric_{\mu\nu}-\frac{1}{2}Rg_{\mu\nu}+\Lambda g_{\mu\nu}=8\pi T_{\mu\nu}[\varphi],\\
\square_{g}\varphi-\mu\varphi=0,\\
T_{\mu\nu}[\varphi]\doteq\partial_{\mu}\varphi\partial_{\nu}\varphi-\frac{1}{2}g_{\mu\nu}\partial^{\alpha}\varphi\partial_{\alpha}\varphi.
\end{cases}\label{eq:EinsteinScalarField}
\end{equation}
In the case when the Klein\textendash Gordon mass $\mu$ satisfies
the so-called Breitenlohner\textendash Freedman bound, well-posedness
for the initial-boundary vlaue problem for (\ref{eq:EinsteinScalarField})
in the spherically symmetric case was established, for a wide class
of boundary conditions on $\mathcal{I}$, by Holzegel\textendash Smulevici
\cite{HolzSmul2012} and Holzegel\textendash Warnick \cite{HolzWarn2015}.\footnote{See also \cite{VasyAdS} and \cite{Warnick2013} for well-posedness
results for the \emph{linear} Klein\textendash Gordon equation on
general asymptotically AdS backgrounds, without symmetry assumptions.} 

The first numerical and heuristic study in the direction of establishing
the AdS instability conjecture for (\ref{eq:EinsteinScalarField})
was carried out by Bizon\textendash Rostworowski \cite{bizon2011weakly}
in 2011. The numerical simulations of \cite{bizon2011weakly} verified
the existence of spherically symmetric initial data sets for (\ref{eq:EinsteinScalarField}),
with small initial size, the evolution of which (under Dirichlet conditions
at $\mathcal{I}$) reaches the threshold of trapped surface formation
after sufficiently long time. In addition, \cite{bizon2011weakly}
was the first to propose a mechanism driving the initial stage of
the instability: Analyzing perturbatively the interactions of different
frequency modes of the scalar function $\varphi$, \cite{bizon2011weakly}
suggested that the transfer of energy from low to high frequencies
was propelled by a hierarchy of resonant interactions. 

Following \cite{bizon2011weakly}, a vast amount of numerical and
heuristic works were dedicated to the study of the AdS intability
conjecture for the spherically symmetric system (\ref{eq:EinsteinScalarField}),
addressing, in addition, questions related to the long time dynamics
of \emph{generic} perturbations to $(\mathcal{M}_{AdS},g_{AdS})$
and the possibility of existence of ``islands of stability'' in
the moduli space of initial data for (\ref{eq:EinsteinScalarField})
close to $(\mathcal{M}_{AdS},g_{AdS})$; see, e.\,g.~\cite{DiasHorSantos,BuchelEtAl2012,DiasEtAl,MaliborskiEtAl,BalasubramanianEtAl,CrapsEtAl2014,CrapsEtAl2015,BizonMalib,DimitrakopoulosEtAl,GreenMailardLehnerLieb,HorowitzSantos,DimitrakopoulosEtAl2015,DimitrakopoulosEtAl2016,Bizon2018}.
For works moving outside the realm of $1+1$ systems, see also \cite{DiasSantos2016,Bantilan2017,Rostworowski2017}.
Most of the aforementioned works utilised a frequency space analysis
similar to the one introduced in \cite{bizon2011weakly}, with the
notable exception of \cite{DimitrakopoulosEtAl}. A more detailed
discussion on the numerics literature surrounding the AdS instability
conjecture can be found in \cite{MoschidisNullDust}.

The first rigorous proof of the instability of AdS in a spherically
symmetric setting was obtained in \cite{MoschidisNullDust}, for the
case of the \emph{Einstein\textendash null dust} system with both
ingoing and outgoing dust; this system can be formally viewed as a
high frequency limit of (\ref{eq:EinsteinScalarField}) in spherical
symmetry (see also the discussion in \cite{PoissonIsrael1990}). The
proof of \cite{MoschidisNullDust} uncovered and utilised an alternative
instability mechanism at the level of position space: Arranging the
null dust into a specific configuration of localised spherically symmetric
beams, \cite{MoschidisNullDust} showed that successive reflections
off $\mathcal{I}$ lead to the concentration of energy in the beam
lying initially to the exterior of the rest. However, in order to
circumvent a trivial break down of the Einstein\textendash null dust
system occuring once the dust reaches the center of symmetry, \cite{MoschidisNullDust}
placed an \emph{inner mirror} at a finite radius $r=r_{0}>0$ and
studied the evolution restricted to the region $r>r_{0}$. Moreover,
further considerations on the nature of the dynamics around $(\mathcal{M}_{AdS},g_{AdS})$
necessitated that the mirror radius $r_{0}$ in \cite{MoschidisNullDust}
was allowed to shrink to $0$ at a rate proportional to the size of
the initial perturbation. Well-posedness for the initial-boundary
value problem in this rather complicated setup, in an initial data
topology allowing for estimates with uniform modulus with respect
to $r_{0}$, was obtained in \cite{MoschidisMaximalDevelopment}. 

In this paper, we will prove the instability of $(\mathcal{M}_{AdS}^{3+1},g_{AdS})$
for the \emph{Einstein\textendash massless Vlasov} system in spherical
symmetry; this system is well-posed for regular initial data in the
standard sense, \emph{without the addition of an inner mirror} around
the center of symmetry. Novel difficulties associated with the Einstein\textendash massless
Vlasov system (both at a technical and at a more conceptual level)
will force us to depart from the main strategy of proof followed in
\cite{MoschidisNullDust}, and develop a new physical space configuration
of beam interactions, where the sizes of the phase-space domains corresponding
to each beam are part of a complicated hierarchy of scales. We will
now proceed to review the main result of this paper in more detail;
a discussion on the complications arising in the proof as well as
a brief comparison with the methods of \cite{MoschidisNullDust} will
then follow in Section \ref{subsec:Discussion-on-the}.

\subsection{\label{subsec:The-main-result:}The main result: AdS instability
for the spherically symmetric Einstein\textendash massless Vlasov
system}

Let $(\mathcal{M},g)$ be a $3+1$ dimensional, smooth Lorentzian
manifold and let $f$ be a non-negative measure on $T\mathcal{M}$
supported on the set of future directed null vectors. The Einstein\textendash massless
Vlasov system for $(\mathcal{M},g;f)$ takes the form 
\begin{equation}
\begin{cases}
Ric_{\mu\nu}-\frac{1}{2}Rg_{\mu\nu}+\Lambda g_{\mu\nu}=8\pi T_{\mu\nu}[f],\\
\mathcal{L}^{(g)}f=0,
\end{cases}\label{eq:EinsteinMasslessVlasovIntro}
\end{equation}
where $\mathcal{L}^{(g)}$ is the geodesic spray on $T\mathcal{M}$
and $T_{\mu\nu}[f]$ is expressed in terms of $f$ and $g$ by (\ref{eq:StressEnergy}).
In the spherically symmetric setting, there is a unique reflecting
boundary condition for (\ref{eq:EinsteinMasslessVlasovIntro}) at
conformal infinity $\mathcal{I}$; it is formulated simply as the
requirement that the Vlasov field $f$ is conserved along the \emph{reflection}
of null geodesics $\gamma$ off $\mathcal{I}$ (see Section \ref{subsec:The-Einstein-equations}).

The main result of this paper is the proof of the AdS instability
conjecture for the system (\ref{eq:EinsteinMasslessVlasovIntro})
in spherical symmetry:

\begin{customthm}{1}[rough version]\label{thm:TheTheoremIntro}

There exists a one-parameter family of \emph{smooth, spherically symmetric,
asymptotically AdS} initial data $\mathcal{D}^{(\varepsilon)}$ for
(\ref{eq:EinsteinMasslessVlasovIntro}), $\varepsilon\in(0,1]$, satisfying
the following properties:

\begin{itemize}

\item As $\varepsilon\rightarrow0$, $\mathcal{D}^{(\varepsilon)}$
converge to the trivial data $\mathcal{D}^{(0)}$ of $(\mathcal{M}_{AdS},g_{AdS};0)$
with respect to a suitable initial data norm $||\cdot||_{data}$.

\item For any $\varepsilon\in(0,1)$, the (unique) maximally extended
solution $(\mathcal{M},g;f)^{(\varepsilon)}$ of (\ref{eq:EinsteinMasslessVlasovIntro})
arising from $\mathcal{D}^{(\varepsilon)}$ with reflecting boundary
conditions on $\mathcal{I}$ contains a trapped sphere, and, hence,
a black hole region.

\end{itemize}

In particular, $(\mathcal{M}_{AdS},g_{AdS})$ is unstable as a solution
of (\ref{eq:EinsteinMasslessVlasovIntro}) under spherically symmetric
perturbations which are small with respect to $||\cdot||_{data}$.

\end{customthm}

For a more detailed statement of Theorem \ref{thm:TheTheoremIntro},
see Section \ref{sec:The-main-result}. For the definition the maximal
future development $(\mathcal{M},g;f)^{(\varepsilon)}$ of an initial
data set $\mathcal{D}^{(\varepsilon)}$ and the notion of a trapped
sphere, see Section \ref{sec:Well-posedness-and-extension}.
\begin{rem*}
The initial data norm $||\cdot||_{data}$ appearing in the statement
of Theorem \ref{thm:TheTheoremIntro} is a scale invariant norm that
has just enough regularity to provide control of the integrals of
the right hand sides of the constraint equations (\ref{eq:ConstraintUFinal})\textendash (\ref{eq:ConstraintVFinal})
in the evolution; a precise definition of the norm is given in Section
\ref{subsec:Well-posedness} (see Definition \ref{def:InitialDataNorm}).
The necessary well-posedness results for the initial-boundary value
problem for (\ref{eq:EinsteinMasslessVlasovIntro}), as well as the
crucial Cauchy stability statement for $(\mathcal{M}_{AdS},g_{AdS})$
in the topology defined by $||\cdot||_{data}$ (see the remark below
the statement of the AdS instability conjecture), are obtained in
our companion paper \cite{MoschidisVlasovWellPosedness} and are also
reviewed in Section \ref{sec:Well-posedness-and-extension}.

We should point out that, switching to the case $\Lambda=0$, Minkowski
spacetime $(\mathbb{R}^{3+1},\eta)$ is \emph{non-linearly stable}
as a solution of (\ref{eq:EinsteinMasslessVlasovIntro}) under spherically
symmetric perturbations which are initially small with respect to
the norm $||\cdot||_{data}$ (suitably modified in the region $r\gg1$
to accommodate for the change in the value of $\Lambda$). This result,
which can be viewed as a straightforward corollary of our method of
proof of Cauchy stability for $(\mathcal{M}_{AdS},g_{AdS})$ and is
discussed in more detail in Section 6 of \cite{MoschidisVlasovWellPosedness},
provides further justification for the use of the initial data norm
$||\cdot||_{data}$ in the study of the AdS instability conjecture.\footnote{The non-linear stability of $(\mathbb{R}^{3+1},\eta)$ as a solution
of the Einstein\textendash massless Vlasov system (\ref{eq:EinsteinMasslessVlasovIntro})
\emph{without any symmetry assumptions} was shown by Taylor \cite{Taylor2017},
with respect to initial perturbations which are small in a higher
order, weighted Sobolev space. Our argument for obtaining a global
stability statement for $(\mathbb{R}^{3+1},\eta)$ as a corollary
of a Cauchy stability statement is in fact analogous (albeit much
simpler) to the strategy implemented in \cite{Taylor2017}, where
global stability is also inferred as a corollary of a quantitative,
semi-global Cauchy stability statement (see \cite{Taylor2017} for
more details).}
\end{rem*}
%Na deiksw to kanoniko well-posedness statement sto allo paper.

\subsection{\label{subsec:Discussion-on-the}Sketch of the proof and further
discussion}

In this section, we will briefly sketch the proof of Theorem \ref{thm:TheTheoremIntro},
highlighting the main technical complications and obstacles shaping
our strategy. We will then comment on the relation between the proof
of Theorem \ref{thm:TheTheoremIntro} and the ideas appearing in \cite{MoschidisNullDust}.

The proof of Theorem \ref{thm:TheTheoremIntro} is carried out in
double null coordinates $(u,v,\theta,\varphi)$, in which a general
spherically symmetric metric $g$ takes the form 
\begin{equation}
g=-\Omega^{2}(u,v)dudv+r^{2}(u,v)g_{\mathbb{S}^{2}}\label{eq:SphericallySymmetricMetric-1}
\end{equation}
(see Section \ref{subsec:Spherically-symmetric-spacetimes}). The
initial data family $\mathcal{D}^{(\varepsilon)}$ in the statement
of Theorem \ref{thm:TheTheoremIntro} is then constructed as a family
of \emph{characteristic} smooth initial data prescribed at $u=0$;
the necessary well-posedness results for the characteristic initial-boundary
value problem in this setting are established in our companion paper
\cite{MoschidisVlasovWellPosedness} and are also reviewed in Section
\ref{sec:Well-posedness-and-extension}.

\begin{figure}[h] 
\centering 
\scriptsize
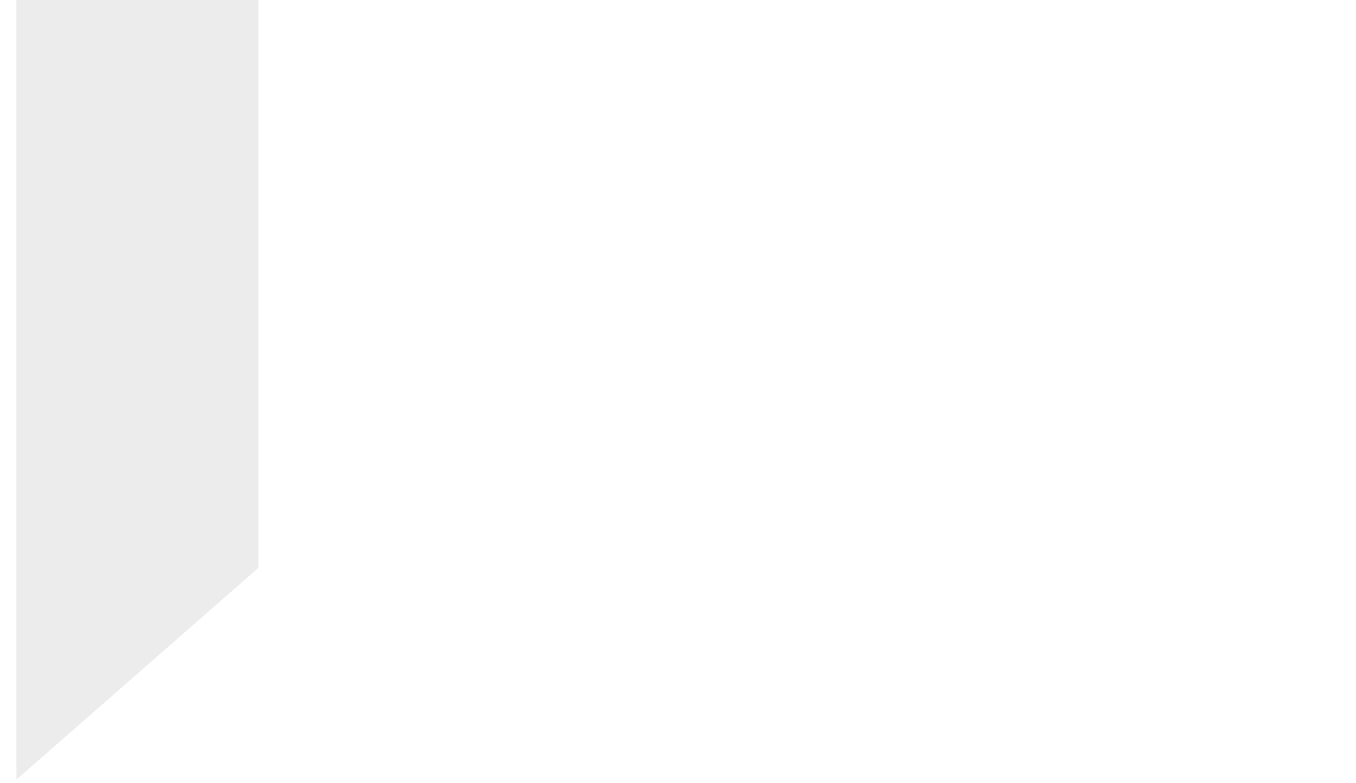 
\caption{The initial data family $\mathcal{D}^{(\epsilon)}$ gives rise to a large number $\mathcal{N}_{\epsilon}$ of spherically symmetric Vlasov beams, which are initially ingoing. The left part of the figure provides a schematic depiction of just three of these beams, projected onto the $(u,v)$-plane. Each successive beam is increasingly narrower compared to the previous ones in the configuration (as shown schematically on the right), and contains geodesics of increasingly smaller angular momenta. \label{fig:BeamsIntro}}
\end{figure}

The family $\mathcal{D}^{(\varepsilon)}$ is constructed so that the
physical space support of the corresponding Vlasov field $f_{\varepsilon}$
is initially separated into a large number $N_{\varepsilon}\gg1$
of narrow ingoing beams (see Figure \ref{fig:BeamsIntro}), organised
in terms of a particular \emph{multi-scale hierarchy}, which we will
now describe: Denoting with $\zeta_{i}$ the $i$-th Vlasov beam (with
$i$ increasing with the initial distance of the beam from $r=0$),
the configuration of beams is set up so that, at $u=0$, $\zeta_{i}$
has physical space width $\text{\textgreek{D}}L_{i}$ satisfying
\begin{equation}
\text{\textgreek{D}}L_{i}\sim\varepsilon^{(i)}(-\Lambda)^{-\frac{1}{2}},\label{eq:Length}
\end{equation}
 where the hierarchy of small parameters $\{\varepsilon^{(i)}\}_{i=0}^{N_{\varepsilon}}$
(each given by an explicit formula in terms of $\varepsilon$ and
$i$) satisfies
\begin{equation}
\varepsilon^{(i+1)}\ll\varepsilon^{(i)}\text{ and }\varepsilon^{(i)}\xrightarrow{\varepsilon\rightarrow0}0\text{ for all }i=0,\ldots,N_{\varepsilon}-1.
\end{equation}
The initial separation $d_{i}$ (with respect to the $v$ coordinate)
between the beams $\zeta_{i}$ and $\zeta_{i+1}$ is chosen to satisfy
\begin{equation}
\text{\textgreek{D}}L_{i}\ll d_{i}\ll\text{\textgreek{D}}L_{i-1}.\label{eq:InitialSeparation}
\end{equation}

The \emph{energy content} $\mathcal{E}_{i}$ of the beam $\zeta_{i}$
at $u=0$ is defined as the difference of the renormalised Hawking
mass 
\[
\tilde{m}\doteq\frac{r}{2}\Big(1-4\Omega^{-2}\partial_{u}r\partial_{v}r\Big)-\frac{1}{6}\Lambda r^{3}
\]
between the inner and outer boundary of $\zeta_{i}$ at $u=0$, i.\,e.:
\[
\mathcal{E}_{i}\doteq\tilde{m}(0,v_{i}^{+})-\tilde{m}(0,v_{i}^{-})
\]
(where $\zeta_{i}\cap\{u=0\}=\{0\}\times[v_{i}^{-},v_{i}^{+}]$ in
the $(u,v)$-plane). The Vlasov field $f_{\varepsilon}$ is chosen
so that $\mathcal{E}_{i}$ satisfies
\begin{equation}
\mathcal{E}_{i}\sim a_{i}\text{\textgreek{D}}L_{i},\label{eq:Energy}
\end{equation}
where the parameters $0\le a_{i}\ll1$ are only fixed at a later stage
in the proof (we will come back to this point later in the discussion). 
\begin{rem*}
\noindent In order to ensure the condition 
\begin{equation}
||\mathcal{D}^{(\varepsilon)}||_{data}\xrightarrow{\varepsilon\rightarrow0}0\label{eq:InitialDataGoingToZeroIntro}
\end{equation}
in the statement of Theorem \ref{thm:TheTheoremIntro}, an additional
smallness condition needs to be imposed on $\sum_{i=0}^{N_{\varepsilon}}a_{i}$;
we refer the reader to the detailed construction of the initial data
family in Section \ref{subsec:The-initial-data-family}.
\end{rem*}
Regarding the momentum space conditions imposed on the beam configuration,
the beam $\zeta_{i}$ is chosen to consist only of null geodesics
$\gamma$ with normalised angular momentum $\frac{l}{E_{0}}$ satisfying:\footnote{Here, we define the normalised angular momentum of a geodesic $\gamma$
in a spherically symmetric spacetime simply as the ratio $\frac{l}{E_{0}}$
between the usual angular momentum $l$ of $\gamma$ and its initial
energy $E_{0}=g(\partial_{u}+\partial_{v},\dot{\gamma})|_{u=0}$ (with
respect to the timelike coordinate vector field $\partial_{u}+\partial_{v}$)
at $u=0$; this ratio is independent of the choice of affine parametrisation
of $\gamma$.}
\begin{equation}
\frac{l}{E_{0}}\sim\varepsilon^{(i)}(-\Lambda)^{-\frac{1}{2}}.\label{eq:AngularMomentaAssumption}
\end{equation}
As a result, the geodesics in the support of the Vlasov beam $f_{\varepsilon}$
are nearly radial when $\varepsilon\ll1$.
\begin{rem*}
\noindent For a null geodesic $\gamma$ in AdS spacetime $(\mathcal{M}_{AdS},g_{AdS})$,
the normalised angular momentum $\frac{l}{E_{0}}$ determines the
minimum value of $r$ along $\gamma$, with geodesics having smaller
normalised angular momentum approaching closer to the center $r=0$;
in the case when $\frac{l}{E_{0}}\ll(-\Lambda)^{-\frac{1}{2}}$, the
following approximate relation holds on $(\mathcal{M}_{AdS},g_{AdS})$:
\begin{equation}
\min_{\gamma}r\sim\frac{l}{E_{0}}.\label{eq:PoximityGeodesicCenter}
\end{equation}
\end{rem*}
In the maximal future development $(\mathcal{M}_{\varepsilon};f_{\varepsilon})$
of $\mathcal{D}^{(\varepsilon)}$, the Vlasov beams $\zeta_{i}$ are
reflected off $\mathcal{I}$ multiple times (see Figure \ref{fig:BeamsIntro});
between any two successive reflections, the Vlasov beams $\zeta_{i}$
exchange energy through their non-linear interactions, possibly at
a loss of coherence. The proof of Theorem \ref{thm:TheTheoremIntro}
will consist of showing that, after a large number of reflections
off $\mathcal{I}$, the non-linear interactions lead to the concentration
of sufficient energy at the top beam $\zeta_{N_{\varepsilon}}$ so
that a trapped surface can form as $\zeta_{N_{\varepsilon}}$ approaches
the center $r=0$ for the last time. 

Controlling the coherence of the Vlasov beams for sufficiently long
time (ensuring, in particular, that the qualitative picture of the
configuration is similar to the one depicted in Figure \ref{fig:BeamsIntro})
will consititue a major technical challenge of the proof, but the
relevant details will be only briefly sketched in this discussion.
The fact that, for $\Lambda=0$, the system (\ref{eq:EinsteinMasslessVlasovIntro})
admits static solutions with $(\mathcal{M}_{st},g_{st};f_{st})$
\[
C_{0}\le\sup_{\mathcal{M}_{st}}\frac{2\tilde{m}}{r}\le C_{1}<1
\]
 (see \cite{Andreasson2017}) shows that, in general, when $\frac{2\tilde{m}}{r}$
exceeds a certain threshold in the evolution, the configuration of
Vlasov beams can not be expected to behave in a qualitatively similar
fashion as on $(\mathcal{M}_{AdS},g_{AdS})$ (i.\,e.~approach $r=0$
only for a brief period of time separating an ingoing and an outgoing
phase, like the beams depicted in Figure \ref{fig:BeamsIntro}). The
additional flexibility provided by the freedom in the choice of the
parameters $\varepsilon^{(i)}$ in the multi-scale hierarchy of parameters
(\ref{eq:Length})\textendash (\ref{eq:Energy}) will be crucial for
circumventing this obstacle. 

The evolution of $\mathcal{D}^{(\varepsilon)}$ will be studied in
two steps:

\begin{enumerate}

\item In the first step, we will show that a scale invariant norm
measuring the concentration of energy of $(\mathcal{M},g;f)^{(\varepsilon)}$
grows in time at a specific rate, driven by an instability mechanism
based on the interactions of the beams similar to the one implemented
in \cite{MoschidisNullDust}. Provided the initial parameters $a_{i}$
in (\ref{eq:Energy}) are chosen appropriately, we will show that
the beam interactions lead to the formation of a specific, predetermined
profile $\mathcal{S}_{*}=(\Omega^{2},r;f)^{(\varepsilon)}|_{u=u_{*}}$
at a late enough retarded time $u=u_{*}(\varepsilon)\gg1$. The freedom
in the choice of the parameters $\varepsilon_{i}$ and a collection
of robust estimates on the exchange of energy between the beams will
enable us to control a priori for $0\le u\le u_{*}$
\begin{equation}
\frac{2\tilde{m}}{r}\le\delta_{*}\label{eq:SmallnessTrappingIntro}
\end{equation}
 where $\delta_{*}\ll1$ is fixed; the bound (\ref{eq:SmallnessTrappingIntro})
will be crucial for controlling the paths of geodesics in the support
of $f^{(\varepsilon)}$ for $u\in[0,u_{*}]$. 

\item In a second step, we will show that the specific features of
$\mathcal{S}_{*}$ (inherited by the properties of the multi-scale
hierarchy (\ref{eq:Length})\textendash (\ref{eq:Energy})) imply
that a trapped surface necessarily forms along $\zeta_{N_{\varepsilon}}\cap\{u=u_{\dagger}\}$
for some $u_{*}<u_{\dagger}\le u_{*}+O(1)$, i.\,e.~that 
\begin{equation}
\sup_{\zeta_{N_{\varepsilon}}\cap\{u=u_{\dagger}\}}\frac{2\tilde{m}}{r}>1.
\end{equation}
It will thus follow that $(\mathcal{M},g;f^{(\varepsilon)})$ contains
a black hole region. 

\end{enumerate}

We will now proceed to discuss the above steps in more detail.

\subsubsection{\label{subsec:First-stage-of}First stage of the instability: Growth
of the scale invariant norm and formation of the intermediate profile}

The first step in the proof of Theorem \ref{thm:TheTheoremIntro}
will consist of showing that the interactions of the beams $\zeta_{i}$
lead to a gradual increase in the energy content of all beams $\zeta_{j}$
with $j\ge1$ (implying the concentration of energy at finer scales,
in view of the length-scale hierarchy (\ref{eq:Length})). In particular,
our aim at this stage would be to show that, for some $u_{*}\gg1$
determined in terms of $\varepsilon$, the energy content $\mathcal{E}_{j}^{*}$
of $\zeta_{j}\cap\{u=u_{*}\}$ satisfies the relation
\begin{equation}
\frac{2\mathcal{E}_{j}^{*}}{d_{j}^{*}}\sim\frac{C}{N_{\varepsilon}}\exp\big(-2\frac{C}{N_{\varepsilon}}j\big)\text{ for all }1\le j\le N_{\varepsilon}-1\label{eq:AllButFinalBeamProfileIntro}
\end{equation}
and:
\begin{equation}
\mathcal{E}_{N_{\varepsilon}}^{*}\sim\frac{\varepsilon^{(N_{\varepsilon})}}{\sqrt{-\Lambda}},\label{eq:FinalBeamProfileIntro}
\end{equation}
where $d_{j}^{*}$ is the distance between $\zeta_{j}\cap\{u=u_{*}\}$
and $\zeta_{j+1}\cap\{u=u_{*}\}$. For simplicity, for the rest of
this discussion, we will refer to the initial data induced on $\{u=u_{*}\}$
by the solution $(\mathcal{M},g;f)^{(\varepsilon)}$ simply as the
\emph{intermediate profile} $\mathcal{S}_{*}$. This step will in
fact occupy the bulk of the proof of Theorem \ref{thm:TheTheoremIntro}.

\medskip{}

\noindent \emph{Estimates for the geodesic flow.} Obtaining estimates
for the null geodesic flow on $(\mathcal{M},g;f)^{(\varepsilon)}$
for sufficiently long times is a prerequisite for studying the interactions
of the beams $\zeta_{i}$. More precisely, we would like to show that,
at least until the formation of the intermediate profile $\mathcal{S}_{*}$,
null geodesics in $(\mathcal{M},g;f)^{(\varepsilon)}$ follow trajectories
which are similar (in a certain sense) to the trajectories of null
geodesics on $(\mathcal{M}_{AdS},g_{AdS})$. 

The only quantitative bound assumed on the initial data family is
a smallness condition in terms of the low-regularity norm $||\cdot||_{data}$.
The well-posedness estimates established in our companion paper \cite{MoschidisVlasovWellPosedness}
imply that, for any fixed $\bar{U}>0$, the following scale-invariant
bound holds for $(\mathcal{M},g;f)^{(\varepsilon)}$ as $\varepsilon\rightarrow0$:
\begin{align}
||(\mathcal{M},g;f)^{(\varepsilon)}||_{u\le\bar{U}}\doteq\sup_{\bar{u}\le\bar{U}}\int_{\{u=\bar{u}\}}r\Big(\frac{T_{vv}[f]}{\partial_{v}r}+\frac{T_{uv}[f]}{-\partial_{u}r}\Big) & (\bar{u},v)\,dv+\label{eq:SmallnessRightHandSideConstraintsIntro}\\
+\sup_{\bar{v}}\int_{\{v=\bar{v}\}\cap\{u\le\bar{U}\}}r & \Big(\frac{T_{uv}[f]}{\partial_{v}r}+\frac{T_{uu}[f]}{-\partial_{u}r}\Big)(u,\bar{v})\,du\lesssim||\mathcal{D}^{(\varepsilon)}||_{data},\nonumber 
\end{align}
where $T_{\mu\nu}[f]$ are the components of the energy momentum tensor
of $f^{(\varepsilon)}$. However, for values of $\bar{U}$ closer
to $u_{*}$, we will only be able to estimate
\begin{equation}
||(\mathcal{M},g;f)^{(\varepsilon)}||_{u\le\bar{U}}\le C\label{eq:SmallnessRightHandSideConstraintsIntro-1}
\end{equation}
for some $C\gg1$. Therefore, it will be necessary for us to obtain
sufficient control on the phase space trajectories of null geodesics
$\gamma:[0,a)\rightarrow(\mathcal{M},g)^{(\varepsilon)}$ merely under
the (rather weak) assumption that (\ref{eq:SmallnessRightHandSideConstraintsIntro-1})
and the a priori estimate (\ref{eq:SmallnessTrappingIntro}) hold.
To this end, we will rely crucially on a reformulation of the equations
of motion for null geodesics, making use of the fact that $(\mathcal{M},g;f)^{(\varepsilon)}$
satisfies (\ref{eq:EinsteinMasslessVlasovIntro}), yielding identities
such as the following:
\begin{align}
\log\big(\Omega^{2}\dot{\gamma}^{u}\big)(s)-\log\big(\Omega^{2}\dot{\gamma}^{u}\big)(0)= & \int_{v(\gamma(0))}^{v(\gamma(s))}\int_{u_{1}(v)}^{u(\gamma(s_{v}))}\Big(\frac{1}{2}\frac{\frac{6\tilde{m}}{r}-1}{r^{2}}\Omega^{2}-24\pi T_{uv}[f]\Big)\,dudv+\label{eq:UsefulRelationForGeodesicWithMu-U-1}\\
 & +\int_{v(\gamma(0))}^{v(\gamma(s))}\big(\partial_{v}\log(\Omega^{2})-2\frac{\partial_{v}r}{r}\big)(0,v)\,dv\nonumber 
\end{align}
(assuming that the initial point $\gamma(0)$ of $\gamma$ belongs
to $\{u=0\}$; see Figure \ref{fig:Geodesic_Integration_Intro}).\footnote{Note that it is already apparent in (\ref{eq:UsefulRelationForGeodesicWithMu-U-1})
that the a priori estimate (\ref{eq:SmallnessTrappingIntro}) for
$\frac{2\tilde{m}}{r}$ is important for controlling $\gamma$; when
$\frac{2\tilde{m}}{r}\ge\frac{1}{3}$, the bulk term in (\ref{eq:UsefulRelationForGeodesicWithMu-U-1})
no longer has a definite sign.} We refer the reader to Section \ref{sec:GeodesicPathsAndDifferenceEstimates}
for more details; for the rest of this discussion, we will suppress
any technical issues related to the precise estimates on the geodesic
flow on $(\mathcal{M},g)^{(\varepsilon)}$.

\begin{figure}[t] 
\centering 
\scriptsize
%% Creator: Inkscape inkscape 0.92.3, www.inkscape.org
%% PDF/EPS/PS + LaTeX output extension by Johan Engelen, 2010
%% Accompanies image file '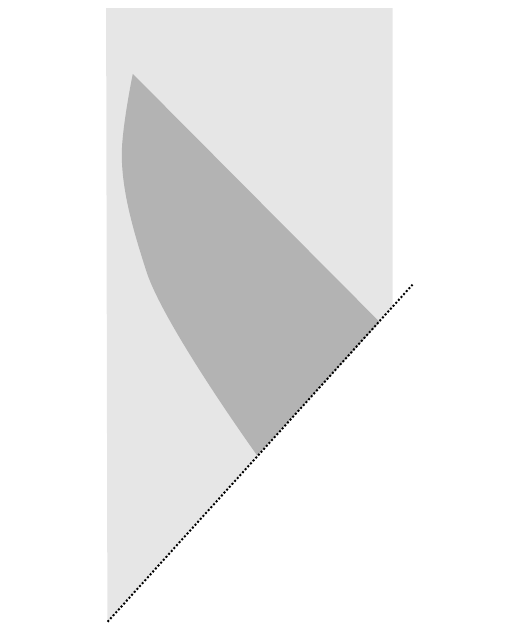' (pdf, eps, ps)
%%
%% To include the image in your LaTeX document, write
%%   \input{<filename>.pdf_tex}
%%  instead of
%%   \includegraphics{<filename>.pdf}
%% To scale the image, write
%%   \def\svgwidth{<desired width>}
%%   \input{<filename>.pdf_tex}
%%  instead of
%%   \includegraphics[width=<desired width>]{<filename>.pdf}
%%
%% Images with a different path to the parent latex file can
%% be accessed with the `import' package (which may need to be
%% installed) using
%%   \usepackage{import}
%% in the preamble, and then including the image with
%%   \import{<path to file>}{<filename>.pdf_tex}
%% Alternatively, one can specify
%%   \graphicspath{{<path to file>/}}
%% 
%% For more information, please see info/svg-inkscape on CTAN:
%%   http://tug.ctan.org/tex-archive/info/svg-inkscape
%%
\begingroup%
  \makeatletter%
  \providecommand\color[2][]{%
    \errmessage{(Inkscape) Color is used for the text in Inkscape, but the package 'color.sty' is not loaded}%
    \renewcommand\color[2][]{}%
  }%
  \providecommand\transparent[1]{%
    \errmessage{(Inkscape) Transparency is used (non-zero) for the text in Inkscape, but the package 'transparent.sty' is not loaded}%
    \renewcommand\transparent[1]{}%
  }%
  \providecommand\rotatebox[2]{#2}%
  \newcommand*\fsize{\dimexpr\f@size pt\relax}%
  \newcommand*\lineheight[1]{\fontsize{\fsize}{#1\fsize}\selectfont}%
  \ifx\svgwidth\undefined%
    \setlength{\unitlength}{150bp}%
    \ifx\svgscale\undefined%
      \relax%
    \else%
      \setlength{\unitlength}{\unitlength * \real{\svgscale}}%
    \fi%
  \else%
    \setlength{\unitlength}{\svgwidth}%
  \fi%
  \global\let\svgwidth\undefined%
  \global\let\svgscale\undefined%
  \makeatother%
  \begin{picture}(1,1.2)%
    \lineheight{1}%
    \setlength\tabcolsep{0pt}%
    \put(0,0){\includegraphics[width=\unitlength,page=1]{Geodesic_Integration_Intro.pdf}}%
    \put(0.30357129,0.02530902){\color[rgb]{0,0,0}\rotatebox{45}{\makebox(0,0)[lt]{\lineheight{1.25}\smash{\begin{tabular}[t]{l}$u=0$\end{tabular}}}}}%
    \put(0,0){\includegraphics[width=\unitlength,page=2]{Geodesic_Integration_Intro.pdf}}%
    \put(0.24133665,0.57796987){\color[rgb]{0,0,0}\makebox(0,0)[lt]{\lineheight{1.25}\smash{\begin{tabular}[t]{l}$\gamma$\end{tabular}}}}%
    \put(0.29207921,1.05940576){\color[rgb]{0,0,0}\makebox(0,0)[lt]{\lineheight{1.25}\smash{\begin{tabular}[t]{l}$\gamma (s)$\end{tabular}}}}%
    \put(0.33787126,0.30816814){\color[rgb]{0,0,0}\makebox(0,0)[lt]{\lineheight{1.25}\smash{\begin{tabular}[t]{l}$\gamma (0)$\end{tabular}}}}%
    \put(0.80791781,0.54451535){\color[rgb]{0,0,0}\rotatebox{-45}{\makebox(0,0)[lt]{\lineheight{1.25}\smash{\begin{tabular}[t]{l}$v=v(\gamma (s))$\end{tabular}}}}}%
    \put(0.55544255,0.29822839){\color[rgb]{0,0,0}\rotatebox{-45}{\makebox(0,0)[lt]{\lineheight{1.25}\smash{\begin{tabular}[t]{l}$v=v(\gamma (0))$\end{tabular}}}}}%
  \end{picture}%
\endgroup%
 
\caption{Schematic depiction of the domain of integration appearing in the right hand side of (\ref{eq:UsefulRelationForGeodesicWithMu-U-1}) for a null geodesic $\gamma$. \label{fig:Geodesic_Integration_Intro}}
\end{figure}

\medskip{}

\noindent \emph{Beam interactions and energy concentration.} Let us
now proceed to consider a pair of beams $\zeta_{i}$ $\zeta_{j}$,
with 
\[
0\le i<j\le N_{\varepsilon}
\]
(the fact that $i$ is smaller than $j$, i.\,e.~that $\zeta_{i}$
initially lies in the interior of $\zeta_{j}$, will be crucial for
this part of this discussion). The beams $\zeta_{i}$ and $\zeta_{j}$
will be successively reflected off $\mathcal{I}$ multiple times in
the time interval $u\in[0,u_{*}]$, intersecting each other twice
between each successive pair of reflections, in a pattern as depicted
in Figure \ref{fig:BeamsIntroPair}. In particular, assuming that
the geodesic flow on $(\mathcal{M},g)^{(\varepsilon)}$ behaves in
a similar fashion as on AdS spacetime $(\mathcal{M}_{AdS},g_{AdS})$,
the condition (\ref{eq:InitialSeparation}) on the initial separation
of the beams implies that (with notations as in Figure \ref{fig:BeamsIntroPair}):
\begin{equation}
\varepsilon^{(i)}(-\Lambda)^{-\frac{1}{2}}\ll\sup_{\mathcal{R}_{0}}r\sim d_{i}\label{eq:RelationForRInIntersectionRegionsNear}
\end{equation}
and 
\begin{equation}
\inf_{\mathcal{R}_{\infty}}r\sim\frac{1}{\varepsilon^{(i)}}(-\Lambda)^{-\frac{1}{2}}\gg\sup_{\mathcal{R}_{0}}r.\label{eq:ComparisonIntersectionRegion}
\end{equation}

\begin{figure}[h] 
\centering 
\scriptsize
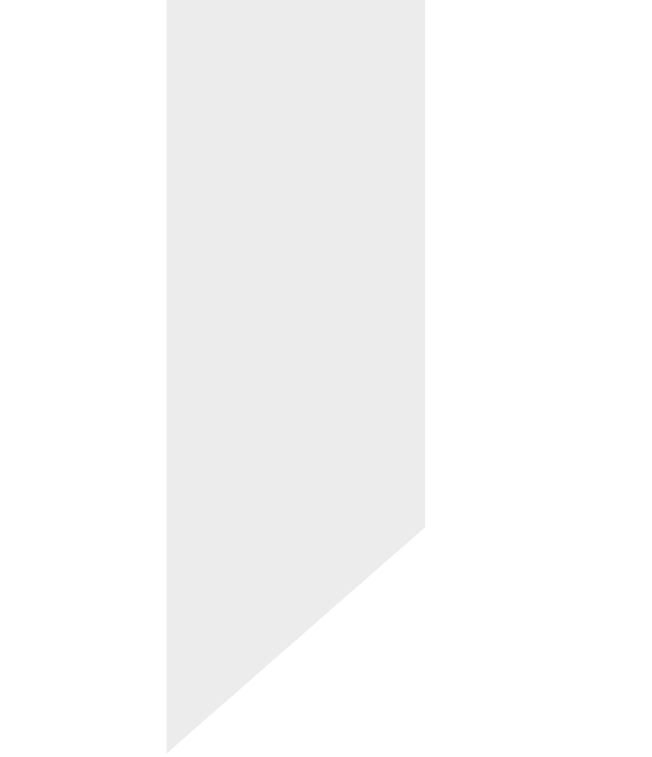 
\caption{Any two beams $\zeta_i$ and $\zeta_j$, $0\le i< j\le N_{\epsilon}$, will intersect twice between each successive pair of reflections off conformal infinity; here, $\mathcal{R}_{0}$ denotes the intersection region closer to the axis, while $\mathcal{R}_{\infty}$ denotes the intersection region closer to conformal infinity. Note that, since $i<j$, the beam $\zeta_i$ lies initially in the interior of $\zeta_j$. As a result, $\zeta_i$ is outgoing at the first intersection $\mathcal{R}_0$, while $\zeta_j$ is ingoing. \label{fig:BeamsIntroPair}}
\end{figure}

In view of the condition (\ref{eq:AngularMomentaAssumption}) on the
angular momenta of the geodesics in the beams $\zeta_{i}$ and $\zeta_{j}$,
the relations (\ref{eq:RelationForRInIntersectionRegionsNear}) and
(\ref{eq:ComparisonIntersectionRegion}) imply that, on the intersection
regions $\mathcal{R}_{0}$ and $\mathcal{R}_{\infty}$, the geodesics
of $\zeta_{i}$ and $\zeta_{j}$ can be essentially viewed as purely
radial (since their angular momentum is negligible compared to sphere
radius $r$ in these regions). Therefore, it is reasonable to expect
that the exchange of energy occuring between $\zeta_{i}$ and $\zeta_{j}$
is governed by the same mechanism as for beams of \emph{null dust,}
evolving according to the Einstein\textendash null dust system; this
is the mechanism employed in \cite{MoschidisNullDust}. 

According to \cite{MoschidisNullDust}, when a localised, spherically
symmetric and \emph{ingoing} null-dust beam $\bar{\zeta}$ intersects
a similar \emph{outgoing} beam $\zeta$ over a region $\mathcal{R}$
(see Figure \ref{fig:Intersection_Intro}), the energy contents $\mathcal{E}[\bar{\zeta}]$,
$\mathcal{E}[\zeta]$ of $\bar{\zeta},\zeta$, respectively, right
before and right after the interaction are related by the following
approximate formulas (assuming that $\mathcal{E}[\zeta],\mathcal{E}[\bar{\zeta}]\ll r|_{\mathcal{\mathcal{R}}}$
and that (\ref{eq:SmallnessTrappingIntro}) holds): 
\begin{equation}
\mathcal{E}_{+}[\bar{\zeta}]=\mathcal{E}_{-}[\bar{\zeta}]\cdot\exp\Big(\frac{2\mathcal{E}_{-}[\zeta]}{r|_{\mathcal{R}}}+\mathfrak{Err}\Big)\label{eq:IncreaseEnergyIngoingIntro}
\end{equation}
and 
\begin{equation}
\mathcal{E}_{+}[\zeta]=\mathcal{E}_{-}[\zeta]\cdot\exp\Big(-\frac{2\mathcal{E}_{-}[\bar{\zeta}]}{r|_{\mathcal{R}}}+\mathfrak{Err}\Big),\label{eq:DecreaseEnergyOutgoingIntro}
\end{equation}
where $\mathcal{E}_{-}$ and $\mathcal{E}_{+}$ denote the energy
contents of the beams before and after the interaction, respectively,
defined by the difference in the values of the renormalised Hawking
mass $\tilde{m}$ at the two vacuum regions bounding each beam before
and right after the intersection (see Figure \ref{fig:Intersection_Intro});
for the purpose of this discussion, we will assume that the error
terms $\mathfrak{Err}$ in (\ref{eq:IncreaseEnergyIngoingIntro})\textendash (\ref{eq:DecreaseEnergyOutgoingIntro})
are negligible and can be ignored. The formula (\ref{eq:IncreaseEnergyIngoingIntro})
can be deduced by tracking the change in the mass difference around
$\bar{\zeta}$ through the relation 
\begin{equation}
\partial_{u}\partial_{v}\tilde{m}=2\pi\partial_{u}\Big(\frac{1-\frac{2m}{r}}{\partial_{v}r}r^{2}T_{vv}[f|_{\bar{\zeta}}]\Big)\label{eq:MassIntro}
\end{equation}
(see the relation (6.57) in \cite{MoschidisNullDust}) using the following
facts:
\begin{enumerate}
\item The change in $\frac{1-\frac{2m}{r}}{\partial_{v}r}$ is determined
in terms of $\zeta$ by the constraint equation (see (\ref{eq:EksiswshGiaK}))
\begin{equation}
\partial_{u}\log\Big(\frac{\partial_{v}r}{1-\frac{2m}{r}}\Big)=-\frac{4\pi}{r}\frac{r^{2}T_{uu}[f]}{-\partial_{u}r}.\label{eq:ConstraintINtro}
\end{equation}
\item In the null dust limit, the quantity $r^{2}T_{vv}[f|_{\bar{\zeta}}]$
is constant in $u$ as a consequence of the conservation of energy
relation, i.\,e.:
\begin{equation}
\partial_{u}\big(r^{2}T_{vv}[f|_{\bar{\zeta}}]\big)=0\label{eq:ConservationIntro}
\end{equation}
(see the relation (2.38) in \cite{MoschidisNullDust}).
\end{enumerate}
The formula (\ref{eq:DecreaseEnergyOutgoingIntro}) is obtained by
following the same procedure for $\zeta$ with the roles of $u$ and
$v$ inverted (resulting in a change of sign in (\ref{eq:ConstraintINtro})).
Note that the (\ref{eq:IncreaseEnergyIngoingIntro})\textendash (\ref{eq:DecreaseEnergyOutgoingIntro})
imply that \emph{the energy of the ingoing beam increases, while that
of the outgoing beam decreases}.

\begin{figure}[h] 
\centering 
\scriptsize
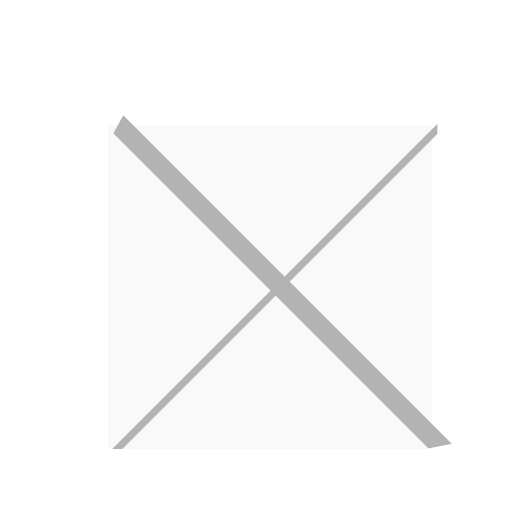 
\caption{Schematic depiction of a pair $\zeta$, $\bar{\zeta}$ of intersecting Vlasov beams supported on nearly radial null geodesics. Due to the non-linear interaction of the beams, the energy $\mathcal{E}[\bar{\zeta}]$ of the ingoing beam $\bar{\zeta}$ increases, while the energy $\mathcal{E}[\zeta]$ of $\zeta$ (the total energy being conserved during the interaction).  \label{fig:Intersection_Intro}}
\end{figure}

In this paper, we show that the formulas (\ref{eq:IncreaseEnergyIngoingIntro})\textendash (\ref{eq:DecreaseEnergyOutgoingIntro})
also hold in the case when we are dealing with solutions of the system
(\ref{eq:EinsteinMasslessVlasovIntro}) instead of the Einstein\textendash null
dust system, under the condition the beams $\zeta$, $\bar{\zeta}$
consist of null geodesics which are nearly radial at their intersection
region $\mathcal{R}$. In this case, the additional error terms appearing
in the analogues of the relations (\ref{eq:MassIntro}) and (\ref{eq:ConservationIntro})
(see (\ref{eq:ODEforEnergyFlux}) and (\ref{eq:ConservationOfEnergyDu}),
respectively) can be eventually controlled; we should point out, however,
that the error terms appearing in this case in (\ref{eq:ConservationIntro})
are of \emph{higher order} in terms of derivatives of the metric,
and are not controlled by the norm $||\cdot||$; estimating their
size requires a novel set of higher order bounds and precise control
on the size of the interaction region in terms of the hierarchy $\varepsilon^{(i)}$.
We will suppress this technical issue here (for more details, see
Section \ref{sec:The-technical-core}). 

Let us now return to the interaction of the beams $\zeta_{i}$ and
$\zeta_{j}$ in Figure \ref{fig:BeamsIntroPair}. Note that, for the
interaction taking place in the region $\mathcal{R}_{0}$, $\zeta_{i}$
has the role of the outgoing beam $\zeta$ in (\ref{eq:IncreaseEnergyIngoingIntro})\textendash (\ref{eq:DecreaseEnergyOutgoingIntro}),
while $\zeta_{j}$ is the ingoing beam $\bar{\zeta}$; in the case
of $\mathcal{R}_{\infty}$, these roles are inverted. 
\begin{rem*}
Note here the asymmetry between $\zeta_{i}$ and $\zeta_{j}$: For
this part of the discussion, it is important that $i<j$, which is
the order convention fixing $\zeta_{i}$ to be the outgoing beam in
the region $\mathcal{R}_{0}$ closest to the axis.
\end{rem*}
In view of the relation (\ref{eq:ComparisonIntersectionRegion}) between
$r|_{\mathcal{R}_{0}}$ and $r|_{\mathcal{R}_{\infty}}$, the formulas
(\ref{eq:IncreaseEnergyIngoingIntro})\textendash (\ref{eq:DecreaseEnergyOutgoingIntro})
imply that the loss of energy occuring for the beam $\zeta_{j}$ at
$\mathcal{R}_{\infty}$ is negligible compared to the gain of energy
for the same beam occuring earlier in the region $\mathcal{R}_{0}$;
the opposite is true for $\zeta_{i}$. As a result, the net contribution
for the energy $\mathcal{E}[\zeta_{j}]$ of $\zeta_{j}$ after the
pair of interactions with the beam $\zeta_{i}$ between two successive
reflections off $\mathcal{I}$ is strictly positive, i.\,e.~$\mathcal{E}[\zeta_{j}]$
strictly increases, with the total energy gain estimated as follows:
\begin{equation}
\mathcal{E}_{after}[\zeta_{j}]=\mathcal{E}_{before}[\zeta_{j}]\cdot\exp\Big(-\frac{2\mathcal{E}_{before}[\zeta_{i}]}{\sup_{\mathcal{R}_{0}}r}+\mathfrak{Err}\Big).\label{eq:INcreaseIntro}
\end{equation}

On the other hand, the energy of $\zeta_{i}$ strictly decreases as
a result of this interaction:
\begin{equation}
\mathcal{E}_{after}[\zeta_{i}]=\mathcal{E}_{before}[\zeta_{i}]\cdot\exp\Big(-\frac{2\mathcal{E}_{before}[\zeta_{j}]}{r|_{\mathcal{R}_{0}}}+\frac{2\mathcal{E}_{before}[\zeta_{j}]}{r|_{\mathcal{R}_{\infty}}}+\mathfrak{Err}\Big)<\mathcal{E}_{before}[\zeta_{i}].\label{eq:DecreaseIntro}
\end{equation}
However, using the fact that $\varepsilon^{(i)}\gg\varepsilon^{(j)}$
and assuming that the hierarchy (\ref{eq:Energy}) on the initial
energy content of the beams is propagated in the evolution, from (\ref{eq:RelationForRInIntersectionRegionsNear})\textendash (\ref{eq:ComparisonIntersectionRegion})
we deduce that 
\begin{equation}
\frac{2\mathcal{E}_{before}[\zeta_{j}]}{r|_{\mathcal{R}_{0}}},\frac{2\mathcal{E}_{before}[\zeta_{j}]}{r|_{\mathcal{R}_{\infty}}}\lesssim\frac{\varepsilon^{(j)}}{\varepsilon^{(i)}}\ll1.\label{eq:BoundSmallnessIntro}
\end{equation}
Substituting (\ref{eq:BoundSmallnessIntro}) in the formula (\ref{eq:DecreaseIntro}),
we infer that the energy of $\zeta_{i}$ remains essentially unaffected
by its interaction with $\zeta_{j}$, i.\,e.
\begin{equation}
\mathcal{E}_{after}[\zeta_{i}]-\mathcal{E}_{before}[\zeta_{i}]=\mathfrak{Err}\ll\mathcal{E}_{before}[\zeta_{i}].
\end{equation}

To sum up, for any beam $\zeta_{i_{0}}$, $0\le i_{0}\le N_{\varepsilon}$,
the energy content of $\zeta_{i_{0}}$ between two successive reflections
off $\mathcal{I}$ changes according to the following rules:
\begin{enumerate}
\item The interaction of $\zeta_{i_{0}}$ with any beam $\zeta_{i_{1}}$
with $i_{1}<i_{0}$ results in an increase in the energy of $\zeta_{0}$,
quantified by (\ref{eq:INcreaseIntro}).
\item The interaction of $\zeta_{i_{0}}$ with any beam $\zeta_{i_{1}}$
with $i_{1}>i_{0}$ has virtually no effect on the energy content
of $\zeta_{0}$.
\item The energy content of $\zeta_{i_{0}}$ before and after each reflection
off $\mathcal{I}$ remains the same (as a consequence of the reflecting
boundary conditions imposed on $\mathcal{I}$).
\end{enumerate}
In particular, the energy content of each beam, \emph{except for}
$\zeta_{0}$, strictly increases with the number of reflections off
$\mathcal{I}$.\footnote{The sum of the energies of all beams is conserved and is proportional
to the value of $\tilde{m}$ at $\mathcal{I}$. In particular, the
energy gain for the beams $\zeta_{i}$, $1\le i\le N_{\varepsilon}$
eventually comes at the expense of a decrease in the energy of $\zeta_{0}$.
However, this decrease is negligible compared to the initial value
of $\mathcal{E}[\zeta_{0}]$ (which also satisfies $\mathcal{E}[\zeta_{0}]\gg\mathcal{E}[\zeta_{i}]$
for all $i\ge1$), hence we can safely assume that $\mathcal{E}[\zeta_{0}]$
remains virtually unchanged in the evolution.}

\medskip{}

\noindent \emph{Formation of the intermediate profile $\mathcal{S}_{*}$}.
For any beam index $0\le i\le N_{\varepsilon}-1$, and any $n\in\mathbb{N}$
corresponding to a specific number of reflections of the beam $\zeta_{i}$
off $\mathcal{I}$, let us introduce the following dimensionless quantity:
\begin{equation}
\mu_{i}[n]\doteq\frac{2\mathcal{E}^{(n)}[\zeta_{i}]}{d_{i}^{(n)}},\label{eq:MuIntro}
\end{equation}
where $\mathcal{E}^{(n)}[\zeta_{i}]$ is the energy of $\zeta_{\iota}$
after the $n$-th reflection off $\mathcal{I}$, while $d_{i}^{(n)}$
denotes the distance (defined in an appropriate sense) between the
beams $\zeta_{i}$ and $\zeta_{i+1}$ after the same reflection. Using
the relations (\ref{eq:INcreaseIntro}) and (\ref{eq:DecreaseIntro}),
combined with an analogous set of estimates for the change in the
separation of the beams over time, we infer that the quantities $\mu_{i}[n]$
satisfy the following recursive system of relations: 
\begin{equation}
\mu_{i}[n]=\mu_{i}[n-1]\exp\Big(2\sum_{\bar{i}=0}^{i-1}\mu_{\bar{i}}[n]+\mathfrak{Err}\Big).\label{eq:RecursiveIntro}
\end{equation}
Ignoring the error terms $\mathfrak{Err}$, the relation (\ref{eq:RecursiveIntro})
can be readily solved inductively in $i$: For $i=0$, (\ref{eq:RecursiveIntro})
implies that $\mu_{0}[n]\simeq\mu_{0}[0]$, for $i=1$ we infer that
$\mu_{1}[n]\simeq\mu_{1}[0]e^{2n\mu_{0}[0]}$, and so on. In particular,
all the quantities $\mu_{i}[n]$ \emph{except for} $\mu_{0}[n]$ are
strictly increasing in $n$.
\begin{rem*}
When $n=0$, the definition of the initial data norm $||\cdot||_{data}$
implies that 
\[
\sum_{i\ge0}\mu_{i}[0]\lesssim||\mathcal{D}^{(\varepsilon)}||_{data}.
\]
Therefore, while (\ref{eq:RecursiveIntro}) implies a fast rate of
growth for $\mu_{i}[n]$, $i\ge1$, the smallness condition (\ref{eq:InitialDataGoingToZeroIntro})
on the initial data necessitates that, for any \emph{fixed} value
of $n$, $\max_{i}\mu_{i}[n]\rightarrow0$ as $\varepsilon\rightarrow0$;
this is, of course, also implied by the Cauchy stability of the trivial
solution $(\mathcal{M}_{AdS},g_{AdS})$ with respect to $||\cdot||_{data}$.
\end{rem*}
Given any natural number $n_{*}$, it is not difficult to choose the
initial data parameters $a_{i}$ in (\ref{eq:Energy}) so that the
quantities $\mu_{j}[n_{*}]$, $0\le j\le N_{\varepsilon}-1$, obtained
by solving (\ref{eq:RecursiveIntro}) (with initial values $\mu_{j}[0]$
computed explicitly in terms of $a_{j}$), are equal to the right
hand side of (\ref{eq:AllButFinalBeamProfileIntro}), i.\,e. 
\begin{equation}
\mu_{j}[n_{*}]=\frac{C}{N_{\varepsilon}}\exp\big(-2\frac{C}{N_{\varepsilon}}j\big).\label{eq:MuJProfile}
\end{equation}
 Similarly, using (\ref{eq:INcreaseIntro}), $a_{N_{\varepsilon}}$
can be chosen so that the energy $\mathcal{E}^{(n_{*})}[\zeta_{N_{\varepsilon}}]$
of the last beam is equal to the right hand side of (\ref{eq:FinalBeamProfileIntro}),
i.\,e.: 
\begin{equation}
\mathcal{E}^{(n_{*})}[\zeta_{N_{\varepsilon}}]=\frac{\varepsilon^{(N_{\varepsilon})}}{\sqrt{-\Lambda}}.\label{eq:ENProfile}
\end{equation}
 Provided $n_{*}$ is large enough in terms of $\varepsilon$, we
can estimate a priori (using the explicit solution of (\ref{eq:RecursiveIntro})
and the fast growth of $\mu_{i}[n]$ in $n$) that the aforementioned
values of the parameters $a_{i}$ (which were defined in terms of
$n_{*}$) are consistent with the initial smallness assumption (\ref{eq:InitialDataGoingToZeroIntro}).
It can be then readily shown that, between the $n_{*}$-th and the
$(n_{*}+1)$-th reflection of the beams off $\mathcal{I}$, there
exists a time $u=u_{*}\sim n_{*}(-\Lambda)^{-\frac{1}{2}}$ such that
the beam slices $\zeta_{i}\cap\{u=u_{*}\}$ satisfy (\ref{eq:AllButFinalBeamProfileIntro})
and (\ref{eq:FinalBeamProfileIntro}); see Section (\ref{sec:ThefirstStage}).
\begin{rem*}
At the time $u=u_{*}$ when the intermediate profile $\mathcal{S}_{*}$
is formed, the $||\cdot||$-norm of the solution (defined by the left
hand side of (\ref{eq:SmallnessRightHandSideConstraintsIntro})) satisfies
\begin{equation}
||(\mathcal{M},g;f)^{(\varepsilon)}||_{u\le u_{*}}\gtrsim\sum_{i=0}^{N_{\varepsilon}-1}\mu_{i}[n_{*}]\gtrsim\sum_{i=0}^{N_{\varepsilon}-1}\frac{C}{N_{\varepsilon}}e^{-2\frac{C}{N_{\varepsilon}}i}\sim C\gg1.\label{eq:LargeNorm}
\end{equation}
As a result, the formation of $\mathcal{S}_{*}$ already provides
an instability statement for $(\mathcal{M}_{AdS},g_{AdS})$ with respect
to the initial data norm $||\cdot||_{data}$; for the proof of the
AdS instability conjecture, however, it is necessary to move beyond
$u=u_{*}$ and establish that, moreover, a trapped surface forms in
$(\mathcal{M},g;f)^{(\varepsilon)}$.

We should also point out that, as a consequence of the explicit formulas
(\ref{eq:AllButFinalBeamProfileIntro})\textendash (\ref{eq:FinalBeamProfileIntro})
for the energy content of $\zeta_{i}$ at $\{u=u_{*}\}$ and the fact
that $\mu_{i}[n]$ is non-decreasing in $n$, we can estimate a priori
that, for all $0\le n\le n_{*}$: 
\begin{equation}
\max_{0\le i\le N_{\varepsilon}-1}\mu_{i}[n]\le\frac{C}{N_{\varepsilon}}.\label{eq:MaxMu}
\end{equation}
Provided that the number $N_{\varepsilon}$ of beams is sufficiently
large in terms of $C$ and satisfies 
\[
\frac{N_{\varepsilon}}{C}\gg\max_{i}\frac{d_{i}}{\varepsilon^{(i)}}
\]
(where $d_{i}$ is the initial separation between $\zeta_{i}$ and
$\zeta_{i+1}$), the estimate (\ref{eq:MaxMu}) (combined with a number
of technical lemmas related to the geodesic flow on $(\mathcal{M},g)^{(\varepsilon)}$)
allows us to obtain the crucial a priori bound (\ref{eq:SmallnessTrappingIntro})
for $\frac{2\tilde{m}}{r}$. As mentioned earlier, this bound is fundamental
for rigorously implementing the heuristic ideas discussed in this
section.\footnote{Observe that, while the scale invariant norm $||\cdot||$ of the solution
$(\mathcal{M},g;f)^{(\varepsilon)}$ becomes large at $u=u_{*}$ (see
(\ref{eq:LargeNorm})), the slightly weaker scale invariant quantity
$\frac{2\tilde{m}}{r}$ remains bounded by a small constant; introducing
the multiscale hierarchy (\ref{eq:Length})\textendash (\ref{eq:Energy})
was fundamental for achieving the construction of such a configuration
of beams.}
\end{rem*}

\subsubsection{\label{subsec:Second-stage-of}Second stage of the instability: Trapped
surface formation}

The second step of the proof of Theorem \ref{thm:TheTheoremIntro}
will consist of showing that a trapped surface (and, hence, a black
hole region) is formed at a time $u=u_{\dagger}>u_{*}$ with $u_{\dagger}-u_{*}\lesssim1$.
More precisely, we will show that, in the development of the intermediate
profile $\mathcal{S}_{*}$, the configuration of the beams $\zeta_{i}$,
$0\le i\le N_{\varepsilon}$ behaves as follows (see Figure \ref{fig:Final_Step_Intro}):
\begin{enumerate}
\item For $0\le i\le N_{\varepsilon}-1$, the geodesics in the beams $\zeta_{i}$
obey dynamics which are qualitatively similar to those on AdS spacetime
(albeit satisfying weaker bounds than in the region $u<u_{*}$); in
particular, the beams $\zeta_{i}$ briefly approach the center $r=0$
before being deflected away, intersecting with each other, in the
meantime, as depicted in Figure \ref{fig:Final_Step_Intro}. Up until
the time $u=u'$ when the last intersection between these beams and
the outermost beam $\zeta_{N_{\varepsilon}}$ occurs (see Figure \ref{fig:Final_Step_Intro}),
the mass ratio $\frac{2\tilde{m}}{r}$ satisfies the smallness condition
(\ref{eq:SmallnessTrappingIntro}).
\item The final beam $\zeta_{N_{\varepsilon}}$, moving in the ingoing direction,
interacts with all the beams $\zeta_{i}$, $i=0,\ldots,N_{\varepsilon}-1$,
increasing its energy content $\mathcal{E}[\zeta_{N_{\varepsilon}}]$.
The increase in $\mathcal{E}[\zeta_{N_{\varepsilon}}]$ is sufficient
for a trapped surface to form before $\zeta_{N_{\varepsilon}}$ has
the chance to be deflected off to infinity again: There exists a point
$p_{\dagger}\in\zeta_{N_{\varepsilon}}\cap\{u\ge u'\}$ such that
\begin{equation}
\frac{2\tilde{m}}{r}(p_{\dagger})>1.\label{eq:TrappedSphereIntro}
\end{equation}
(see Figure \ref{fig:Final_Step_Intro}).
\end{enumerate}
\begin{figure}[h] 
\centering 
\scriptsize
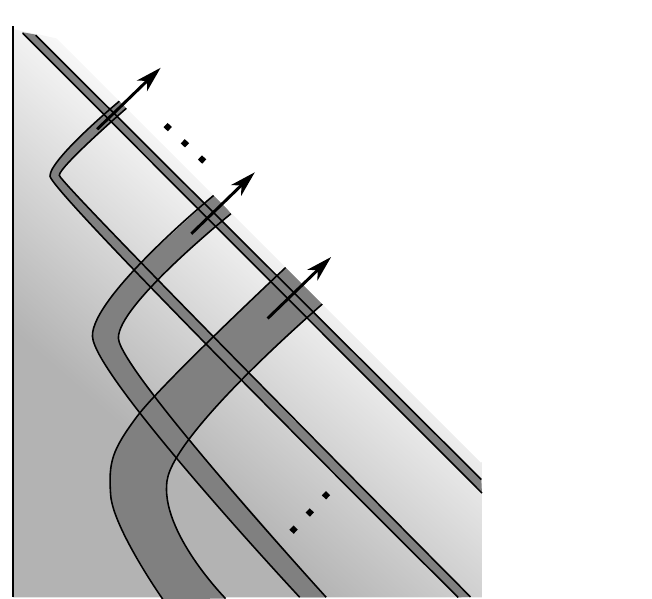 
\caption{Schematic depiction of the evolution of the intermediate profile $\mathcal{S}_*$. After interacting with each other for one last time, the beams $\zeta_i$, $1\le i \le N_{\epsilon}-1$ gain a sufficient amount of energy so that, at the region of intersection of any of those beams with the last beam $\zeta_{N_{\epsilon}}$, the mass ratio $\frac{2\tilde{m}}{r}$ is proportional to the (small) constant $\frac{C}{N_{\epsilon}}$. In turn, $\zeta_{N_{\epsilon}}$ gains enough energy from those interactions, so that a trapped sphere $p_{\dagger}$ is created before $\zeta_{N_{\epsilon}}$ is deflected again to infinity. \label{fig:Final_Step_Intro}} 
\end{figure}

In order to establish that, in the region $\{u_{*}\le u\le u'\}$,
the beams $\zeta_{i}$, $0\le i\le N_{\varepsilon}-1$ approximately
obey the AdS dynamics, we appeal to arguments similar to the ones
implemented in the previous step. In this case, however, additional
technical issues force us to choose the various constants implicit
in the hierarchy (\ref{eq:Length})\textendash (\ref{eq:Energy})
more carefully. To this end, we actually introduce an additional $\varepsilon$-dependent
hierarchy with respect to some of these constants; we refer the reader
to Sections \ref{subsec:The-hierarchy-of-parameters} and \ref{sec:The-final-stage}. 

Arguing similarly as for the proof of (\ref{eq:RecursiveIntro}),
we can show thatan an alogous approximate formula holds for the energy
content $\mathcal{E}^{*}[\zeta_{i}]$ of the beams $\zeta_{i}$, $0\le i\le N_{\varepsilon}-1$,
right before their intersection with $\zeta_{N_{\varepsilon}}$ (see
Figure \ref{fig:Final_Step_Intro}): Setting, for $0\le i\le N_{\varepsilon}-1$,
\[
\mu_{i}^{*}\doteq\frac{2\mathcal{E}^{*}[\zeta_{i}]}{\sup_{\zeta_{i}\cap\zeta_{N_{\varepsilon}}}r},
\]
the analogue of formula (\ref{eq:RecursiveIntro}) reads 
\begin{equation}
\mu_{i}^{*}=\mu_{i}[n_{*}]\exp\Big(2\sum_{\bar{i}=0}^{i-1}\mu_{\bar{i}}^{*}+\mathfrak{Err}\Big),\label{eq:FinalMuI}
\end{equation}
where the quantities $\mu_{i}[n_{*}]$ are given by (\ref{eq:MuJProfile}).
We therefore readily deduce from (\ref{eq:MuJProfile}) and (\ref{eq:FinalMuI})
(ignoring the error terms $\mathfrak{Err}$ in (\ref{eq:FinalMuI}))
that, for all $0\le i\le N_{\varepsilon}-1$: 
\begin{equation}
\mu_{i}^{*}\sim\frac{C}{N_{\varepsilon}}.\label{eq:MuStar}
\end{equation}

Let us now consider the beam $\zeta_{N_{\varepsilon}}$. Using the
formula (\ref{eq:IncreaseEnergyIngoingIntro}) for $\zeta_{N_{\varepsilon}}$
at every intersection between $\zeta_{N_{\varepsilon}}$ and the beams
$\zeta_{i}$, $0\le i\le N_{\varepsilon}-1$, we infer that the energy
content $\mathcal{E}_{final}[\zeta_{N_{\varepsilon}}]$ of $\zeta_{N_{\varepsilon}}$
at $u=u'$ satisfies the following lower bound in terms of the associated
energy $\mathcal{E}^{(n_{*})}[\zeta_{N_{\varepsilon}}]$ at $u=u_{*}$:
\[
\mathcal{E}_{final}[\zeta_{N_{\varepsilon}}]\gtrsim\mathcal{E}^{(n_{*})}[\zeta_{N_{\varepsilon}}]\cdot\exp\Big(\sum_{i=0}^{N_{\varepsilon}-1}\mu_{i}^{*}\Big).
\]
In view of the relations (\ref{eq:ENProfile}) and (\ref{eq:MuStar})
for $\mathcal{E}^{(n_{*})}[\zeta_{N_{\varepsilon}}]$ and $\mu_{i}^{*}$,
respectively, we thus deduce that 
\begin{equation}
\mathcal{E}_{final}[\zeta_{N_{\varepsilon}}]\sim e^{C}\frac{\varepsilon^{(N_{\varepsilon})}}{\sqrt{-\Lambda}}\gg\frac{\varepsilon^{(N_{\varepsilon})}}{\sqrt{-\Lambda}}.\label{eq:HighEnoughEnergy}
\end{equation}

We will now argue that the lower bound (\ref{eq:HighEnoughEnergy})
and the fact that the beam $\zeta_{N_{\varepsilon}}$ consists of
geodesics satisfying initially the angular momentum condition
\begin{equation}
\frac{l}{E_{0}}\sim\frac{\varepsilon^{(N_{\varepsilon})}}{\sqrt{-\Lambda}}\label{eq:SmallAngularMomentumInitially}
\end{equation}
(see (\ref{eq:AngularMomentaAssumption})) imply that there exists
a point $p_{\dagger}\in\zeta_{N_{\varepsilon}}\cap\{u\ge u'\}$ such
that (\ref{eq:TrappedSphereIntro}) holds. Notice that, for any $u_{0}>u'$,
we can trivially estimate from below 
\[
\sup_{\zeta_{N_{\varepsilon}}\cap\{u=u_{0}\}}\frac{2\tilde{m}}{r}\gtrsim\frac{2\mathcal{E}_{final}[\zeta_{N_{\varepsilon}}]}{\sup_{\zeta_{N_{\varepsilon}}\cap\{u=u_{0}\}}r}.
\]
Thus, in order to establish (\ref{eq:TrappedSphereIntro}) and complete
the proof of Theorem \ref{thm:TheTheoremIntro}, it suffices to show
that, for some $u_{\dagger}>u'$, the beam slice $\zeta_{N_{\varepsilon}}\cap\{u=u_{\dagger}\}$
is contained in the region $\{r\le r_{0}\}$, where 
\[
r_{0}\sim\frac{\varepsilon^{(N_{\varepsilon})}}{\sqrt{-\Lambda}}.
\]

Heuristically, on a spacetime where the geodesic flow behaves similarly
as on $(\mathcal{M}_{AdS},g_{AdS})$, the inclusion 
\begin{equation}
\zeta_{N_{\varepsilon}}\cap\{u=u_{\dagger}\}\subset\{r\le r_{0}\}\label{eq:Inclusion}
\end{equation}
 would follow from the fact that, for every null geodesic $\gamma$
of $(\mathcal{M}_{AdS},g_{AdS})$, the minimum value of $r$ along
$\gamma$ satisfies 
\[
\min_{\gamma}r\sim\frac{l}{E_{0}}.
\]
However, in our case, for $u\ge u'$, the spacetime metric $g$ is
no longer close to $g_{AdS}$; the fact that there exists a $u_{\dagger}$
such that the inclusion (\ref{eq:Inclusion}) holds follows from a
careful manipulation of the equations of the geodesic flow (in the
regime where the condition (\ref{eq:SmallnessTrappingIntro}) is violated),
using in addition some of the monotonicity properties of the system
(\ref{eq:EinsteinMasslessVlasovIntro}), as well as the estimates
on the geodesic flow for $u\le u_{*}$ obtained in the previous step.
For the relevant details, see Section \ref{subsec:Energy-growth-for-final-beam}. 
\begin{rem*}
A technical issue which was not highlighted so far in this discussion
is the fact that the existence and smoothness of the development $(\mathcal{M},g;f)^{(\varepsilon)}$
up to the time $u=u_{\dagger}$ of trapped surface formation (including,
in particular, the statement that a naked singularity does not appear
earlier in the evolution) is non-trivial. The Cauchy stability statement
for $(\mathcal{M}_{AdS},g_{AdS})$ guarantees the existence of $(\mathcal{M},g;f)^{(\varepsilon)}$
only up to times $U$ when 
\[
||(\mathcal{M},g;f)^{(\varepsilon)}||_{u\le U}\ll1.
\]
Beyond that point, and up to time $u=u'$, we infer the existence
and smoothness of $(\mathcal{M},g;f)^{(\varepsilon)}$ using an extension
principle established in our companion paper \cite{MoschidisVlasovWellPosedness},
guaranteeing the smooth extendibility of a development under the smallness
condition (\ref{eq:SmallnessTrappingIntro}) for $\frac{2\tilde{m}}{r}$.
From $u=u'$ up to $u=u_{\dagger}$, the existence and smoothness
of the solution follows from our explicit a priori estimates for the
geodesics in the beam $\zeta_{N_{\varepsilon}}$ and the fact that,
in the part of $\{u'\le u\le u_{\dagger}\}$ consisting of the past
of the point $p_{\dagger}$, the spacetime is vacuum (and hence trivially
extendible) outside $\zeta_{N_{\varepsilon}}$; for a review of the
relevant extension principles, see Section \ref{sec:Well-posedness-and-extension},
as well as Section \ref{subsec:Notational-conventions-and}.
\end{rem*}

\subsubsection{Discussion: Comparison with the case of the Einstein\textendash null
dust system with an inner mirror}

\begin{figure}[h] 
\centering 
\scriptsize
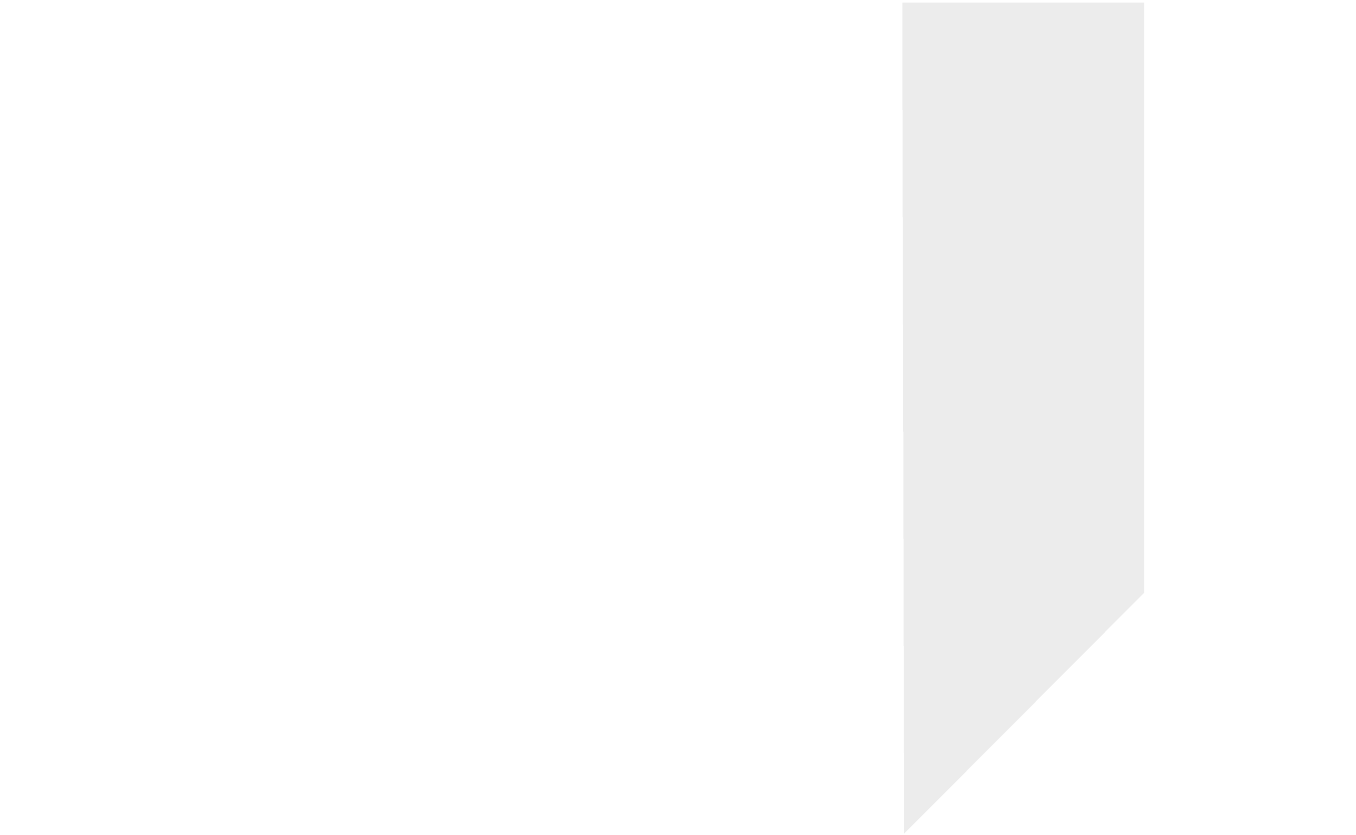 
\caption{Schematic depiction of the configuration of beam interactions in the case of the Einstein--null dust system with an inner mirror (left), which was treated in \cite{MoschidisNullDust}, and the multi-scale configuration employed in this paper for the case of the Einstein--massless Vlasov system (right). For simplicity, only three beams are depicted in each case. \label{fig:Comparison_null_dust_intro}} 
\end{figure}

In this section, we will highlight the differences between the strategy
of proof of Theorem \ref{thm:TheTheoremIntro}, sketched in the previous
sections, and the one implemented in \cite{MoschidisNullDust} for
the case of the spherically symmetric Einstein\textendash null dust
system.

In \cite{MoschidisNullDust}, the instability of $(\mathcal{M}_{AdS},g_{AdS})$
as a solution of the Einstein\textendash null dust system with an
inner mirror was established by setting up a family of initial data
$(r,\Omega^{2};\bar{\tau})^{(\varepsilon)}|_{u=0}$ which gave rise
to a configuration of null dust beams $\zeta_{i}^{\prime}$, $0\le i\le N_{\varepsilon}$,
of comparable size (see Figure \ref{fig:Comparison_null_dust_intro}).
These beams were successively reflected off an inner mirror at $r=r_{0}^{(\varepsilon)}$
(with $r_{0}^{(\varepsilon)}$ proportional to the total energy $\tilde{m}^{(\varepsilon)}|_{\mathcal{I}}$
of $(r,\Omega^{2};\bar{\tau})^{(\varepsilon)}|_{u=0}$) and conformal
infinity $\mathcal{I}$, exchanging energy through their non-linear
interactions. Using the relations (\ref{eq:MassIntro})\textendash (\ref{eq:ConservationIntro})
and the fact that the beams $\zeta_{i}^{\prime}$ were initially comparable
in size, it was shown in \cite{MoschidisNullDust} that, for this
family of configurations, the quantities 
\[
\mu_{n}\doteq\max_{\{U_{n}\le u\le U_{n+1}\}}\frac{2\tilde{m}}{r}
\]
and 
\[
\rho_{n}=\text{geometric separation between }\zeta_{N_{\varepsilon}}^{\prime}\text{ and }\zeta_{0}^{\prime}\text{ in the region }\{U_{n}\le u\le U_{n+1}\}
\]
 (where $U_{n}$ is the value of $u$ at the point where the beam
$\zeta_{0}^{\prime}$ is reflected off $\mathcal{I}$ for the $n$-th
time\footnote{Assuming that a black hole has not formed for $\{u\le U_{n}\}$.}),
satisfy the system of relations
\begin{align}
\text{\textgreek{r}}_{n+1} & \le\text{\textgreek{r}}_{n}+C_{1}r_{0}^{(\varepsilon)}\log\big((1-\text{\textgreek{m}}_{n})^{-1}+1\big),\label{eq:InductiveRelationSequence-1}\\
\text{\textgreek{m}}_{n+1} & \ge\text{\textgreek{m}}_{n}\exp\big(\frac{c_{1}r_{0}^{(\varepsilon)}}{\text{\textgreek{r}}_{n+1}}\big),\nonumber 
\end{align}
for some $0<c_{1}<1<C_{1}$ (see the relation (6.165) in \cite{MoschidisNullDust}).
It was then shown that the system (\ref{eq:InductiveRelationSequence-1})
guarantees the existence of some $n_{0}=n_{0}(\varepsilon)\in\mathbb{N}$
such that 
\begin{equation}
\mu_{n_{0}}\ge1-\delta_{\varepsilon}\label{eq:AlmostTrapping}
\end{equation}
 for some $\delta_{\varepsilon}\ll1$. From (\ref{eq:AlmostTrapping}),
it was concluded using a suitable Cauchy stability statement that,
by possibly perturbing the initial data set $(r,\Omega^{2};\bar{\tau})^{(\varepsilon)}|_{u=0}$
ever so slightly (with the size of the pertrbation determined by $\delta_{\varepsilon}$),
one could in fact achieve 
\begin{equation}
\mu_{n_{0}}>1.\label{eq:TrappingNullDust}
\end{equation}
The lower bound (\ref{eq:TrappingNullDust}) then implied the existence
of a trapped sphere in the development of $(r,\Omega^{2};\bar{\tau})^{(\varepsilon)}|_{u=0}$
at time $u\sim U_{n_{0}}+O(1)$; see \cite{MoschidisNullDust}.

The analysis of \cite{MoschidisNullDust} leading to the recursive
system of inequalities (\ref{eq:InductiveRelationSequence-1}) relied
crucially on the fact that the null-dust beams $\zeta_{i}^{\prime}$
consisted entirely of radial null geodesics, which, in a double null
coordinate chart $(u,v,\theta,\varphi)$, necessarily move along lines
of the form $\{u=const\}$ or $\{v=const\}$ (see Figure \ref{fig:Comparison_null_dust_intro}).
This trivial a priori control on the paths of radial null geodesics
in the $(u,v)$-plane implies, in particular, that the qualititative
picture of beam interactions depicted in Figure \ref{fig:Comparison_null_dust_intro}
remains valid even in the regime where $\frac{2\tilde{m}}{r}\sim1$,
i.\,e.~in the last few reflections of the beams off $r=r_{0}^{(\varepsilon)}$
and $\mathcal{I}$ before a trapped surface is formed. Furthermore,
the presence of an inner mirror at $r=r_{0}^{(\varepsilon)}>0$ in
the setup of \cite{MoschidisNullDust} guaranteed the absence of naked
singularities in the evolution of the initial data family $(r,\Omega^{2};\bar{\tau})^{(\varepsilon)}|_{u=0}$
(as a consequence of the results of \cite{MoschidisMaximalDevelopment}).

In contrast, in the case of the Einstein\textendash massless Vlasov
system (\ref{eq:EinsteinMasslessVlasovIntro}), there is no useful
general a priori estimate for the shape of beams consisting of non-radial
geodesics in the regime where $\frac{2\tilde{m}}{r}\sim1$ (as suggested
already by the relation (\ref{eq:UsefulRelationForGeodesicWithMu-U-1})).
Therefore, in order to establish the formation of a trapped sphere
in this setting, we were forced to design a configuration of interacting
Vlasov beams with the property that all the beam interactions preceding
the first point $p_{\dagger}$ where $\frac{2\tilde{m}}{r}=1$ lie
in the regime $\frac{2\tilde{m}}{r}\ll1$; in this regime, the qualititative
picture of the (right half of) Figure \ref{fig:Comparison_null_dust_intro}
can be shown to remain relevant (see Section \ref{sec:GeodesicPathsAndDifferenceEstimates}).
In particular, this was achived by first identifying the profile $\mathcal{S}_{*}$
(described at the end of Section \ref{subsec:First-stage-of}) as
a useful intermediate step for trapped surface formation. In turn,
the structure of $\mathcal{S}_{*}$ necessitated imposing the multi-scale
hierarchy (\ref{eq:Length})\textendash (\ref{eq:Energy}) on the
construction of the initial data family $\mathcal{D}^{(\varepsilon)}$.
It is a remarkable feature of the system (\ref{eq:EinsteinMasslessVlasovIntro})
that the same hierarchy of scales greatly simplifies the formulas
of energy exchange occuring between the Vlasov beams, resulting in
the approximate \emph{monotonicity }relations (\ref{eq:RecursiveIntro});
the monotonicity properties of (\ref{eq:RecursiveIntro}) are crucial
for obtaining a priori control of $\frac{2\tilde{m}}{r}$ in the evolution
until the formation of $\mathcal{S}_{*}$, thus ensuring ensure the
absence of naked singularities in the solution in view of results
obtained in our companion paper \cite{MoschidisVlasovWellPosedness}.

\subsection{Outline of the paper}

The structure of the paper is as follows:

In Section \ref{sec:The-Einstein--Vlasov-system}, we will introduce
the Einstein\textendash massless Vlasov system (\ref{eq:EinsteinMasslessVlasovIntro})
in spherical symmetry. In addition, we will state a number of notational
conventions related to asymptotically AdS spacetimes and we will introduce
the notion of a reflecing boundary condition for (\ref{eq:EinsteinMasslessVlasovIntro})
on $\mathcal{I}$ .

In Section \ref{sec:Well-posedness-and-extension}, we will introduce
the asymptotically AdS characteristic initial-boundary value problem
for (\ref{eq:EinsteinMasslessVlasovIntro}) and present a number of
well-posedness results in this context. These results will include
a fundamental Cauchy stability statement for $(\mathcal{M}_{AdS},g_{AdS})$
in a low regularity topology. The proofs of the results of Section
\ref{sec:Well-posedness-and-extension} are obtained in our companion
paper \cite{MoschidisVlasovWellPosedness}.

The main result of this paper, namely Theorem \ref{thm:TheTheoremIntro},
will be presented in detail in Section \ref{sec:The-main-result}.

The proof of Theorem \ref{thm:TheTheoremIntro} will occupy Sections
\ref{sec:GeodesicPathsAndDifferenceEstimates}\textendash \ref{sec:The-final-stage}.
In particular, the arguments sketched in Section \ref{subsec:First-stage-of}
regarding the first stage of the instability will be presented in
detail in Sections \ref{sec:GeodesicPathsAndDifferenceEstimates}\textendash \ref{sec:ThefirstStage}
(with Sections \ref{sec:GeodesicPathsAndDifferenceEstimates} and
\ref{sec:The-technical-core} devoted to the development of the necessary
technical machinery); the proof of trapped surface formation (roughly
discussed in Section \ref{subsec:Second-stage-of}) will then be presented
in Section \ref{sec:The-final-stage}.

\subsection{Acknowledgements}

I would like to thank Mihalis Dafermos for originally suggesting this
problem to me, as well as for numerous insightful discussions and
for valuable comments on earlier versions of the manuscript. I am
also grateful to Igor Rodnianski for many helpful comments on earlier
ideas on the problem. I would like to acknowledge support from the
Miller Institute for Basic Research in Science, University of California
Berkeley.

\section{\label{sec:The-Einstein--Vlasov-system}The Einstein\textendash massless
Vlasov system in spherical symmetry}

In this section, we will introduce the spherically symmetric Einstein\textendash massless
Vlasov system in $3+1$ dimensions, expressed in a double null coordinate
chart. We will also formulate the reflecting boundary condition for
a massless Vlasov field at conformal infinity $\mathcal{I}$ in the
asymptotically AdS setting. A more detailed statement of the notions
and the results appearing in this section can be found in our companion
paper \cite{MoschidisVlasovWellPosedness}.

\subsection{\label{subsec:Spherically-symmetric-spacetimes}Spherically symmetric
spacetimes and double null coordinate pairs}

In this paper, we will follow the same conventions regarding spherically
symmetric double null coordinate charts as in our companion paper
\cite{MoschidisVlasovWellPosedness} (similar also to those of \cite{MoschidisMaximalDevelopment,MoschidisNullDust}).
Our assumptions on the topology and regularity of the underlying spacetimes
will be satisfied by the solutions of the Einstein\textendash massless
Vlasov system (\ref{eq:EinsteinMasslessVlasovIntro}) constructed
in the proof of Theorem \ref{thm:TheTheoremIntro}. 

In particular, we will only consider \emph{smooth, connected} and
\emph{time oriented} spacetimes $(\mathcal{M}^{3+1},g)$ which are
\emph{spherically symmetric }with a non-empty \emph{axis} $\mathcal{Z}$
(see \cite{MoschidisVlasovWellPosedness}). We will further assume
that $\mathcal{Z}$ is connected and that $\mathcal{M}\backslash\mathcal{Z}$
splits diffeomorphically under the action of $SO(3)$ as
\begin{equation}
\mathcal{M}\backslash\mathcal{Z}\simeq\mathcal{U}\times\mathbb{S}^{2}.\label{eq:NewSphericallySymmetricmanifold}
\end{equation}
We will also restrict ourselves to spacetimes $(\mathcal{M},g)$ such
that the region $\mathcal{M}\backslash\mathcal{Z}$ is regularly foliated
by the two families of spherically symmetric null hypersurfaces $\mathcal{H}=\big\{\mathcal{C}^{+}(p):\,p\in\mathcal{Z}\big\}$
and $\overline{\mathcal{H}}=\big\{\mathcal{C}^{-}(p):\,p\in\mathcal{Z}\big\}$,
where $\mathcal{C}^{+}(p),\mathcal{C}^{-}(p)$ are the future and
past light cones emanating from $p$, respectively. See \cite{MoschidisVlasovWellPosedness}
for a more detailed discussion on the properties of spacetimes $(\mathcal{M},g)$
satisfying the aforementioned conditions.

A \emph{double null coordinate pair} $(u,v)$ on $(\mathcal{M},g)$
will consist of a pair of continuous functions $u,v:\mathcal{M}\rightarrow\mathbb{R}$
which are a smooth parametrization of the foliations $\mathcal{H},\overline{\mathcal{H}}$,
respectively, on $\mathcal{M}\backslash\mathcal{Z}$. Note that any
choice of double null coordinate pair $(u,v)$ on $\mathcal{M}$ fixes
a smooth embedding $(u,v):\mathcal{U}\rightarrow\mathbb{R}^{2}$;
from now on, we will identify $\mathcal{U}$ with its image in $\mathbb{R}^{2}$
associated to a given null coordinate pair.
\begin{rem*}
We will only consider double null coordinate pairs $(u,v)$ for which
$\partial_{u}+\partial_{v}$ is a timelike and future directed vector
field on $\mathcal{M}\backslash\mathcal{Z}$.
\end{rem*}
Given a double null coordinate pair $(u,v)$, the metric $g$, restricted
on $\mathcal{M}\backslash\mathcal{Z}$, is expressed as follows:
\begin{equation}
g=-\Omega^{2}(u,v)dudv+r^{2}(u,v)g_{\mathbb{S}^{2}},\label{eq:NewSphericallySymmetricMetric}
\end{equation}
where $g_{\mathbb{S}^{2}}$ is the standard round metric on $\mathbb{S}^{2}$
and $\text{\textgreek{W}},r:\mathcal{U}\rightarrow(0,+\infty)$ are
smooth functions, with $r$ extending continuously to $0$ on the
axis $\mathcal{Z}$. 

For any pair of smooth functions $h_{1},h_{2}:\mathbb{R}\rightarrow\mathbb{R}$
with $h_{1}^{\prime},h_{2}^{\prime}\neq0$, we can define a new double
null coordinate pair on $\mathcal{M}$ by the relation
\begin{equation}
(\bar{u},\bar{v})=(h_{1}(u),h_{2}(v)).\label{eq:NewGeneralCoordinateTransformation}
\end{equation}
In the new coordinates, the metric $g$ takes the form 
\begin{equation}
g=-\bar{\text{\textgreek{W}}}^{2}(\bar{u},\bar{v})d\bar{u}d\bar{v}+r^{2}(\bar{u},\bar{v})g_{\mathbb{S}^{2}},\label{eq:NewSphericallySymmetricMetricNewGauge}
\end{equation}
where 
\begin{gather}
\bar{\text{\textgreek{W}}}^{2}(\bar{u},\bar{v})=\frac{1}{h_{1}^{\prime}h_{2}^{\prime}}\text{\textgreek{W}}^{2}(h_{1}^{-1}(\bar{u}),h_{2}^{-1}(\bar{v})),\label{eq:NewOmega}\\
r(\bar{u},\bar{v})=r(h_{1}^{-1}(\bar{u}),h_{2}^{-1}(\bar{v})).\label{eq:NewR}
\end{gather}
\begin{rem*}
We will frequently make use of such coordinate transformations, without
renaming the coordinates each time. 
\end{rem*}
Let $(y^{1},y^{2})$ be a local coordinate chart on $\mathbb{S}^{2}$.
Then, the non-zero Christoffel symbols $\Gamma_{\beta\gamma}^{\alpha}$
of (\ref{eq:NewSphericallySymmetricMetric}) in the $(u,v,y^{1},y^{2})$
local coordinate chart on $\mathcal{M}\backslash\mathcal{Z}$ take
the following form:
\begin{gather}
\Gamma_{uu}^{u}=\partial_{u}\log(\text{\textgreek{W}}^{2}),\hphantom{A}\Gamma_{vv}^{v}=\partial_{v}\log(\text{\textgreek{W}}^{2}),\label{eq:NewChristoffelSymbols}\\
\Gamma_{uB}^{A}=r^{-1}\partial_{u}r\delta_{B}^{A},\hphantom{\,}\Gamma_{vB}^{A}=r^{-1}\partial_{v}r\delta_{B}^{A},\nonumber \\
\Gamma_{AB}^{u}=\text{\textgreek{W}}^{-2}\partial_{v}(r^{2})(g_{\mathbb{S}^{2}})_{AB},\hphantom{\,}\Gamma_{AB}^{v}=\text{\textgreek{W}}^{-2}\partial_{u}(r^{2})(g_{\mathbb{S}^{2}})_{AB},\nonumber \\
\Gamma_{BC}^{A}=(\Gamma_{\mathbb{S}^{2}})_{BC}^{A}.\nonumber 
\end{gather}
In the above, the latin indices $A,B,C$ are associated to the spherical
coordinates $y^{1},y^{2}$, $\delta_{B}^{A}$ is Kronecker delta and
$\Gamma_{\mathbb{S}^{2}}$ are the Christoffel symbols of the round
sphere in the $(y^{1},y^{2})$ coordinate chart.

We will define the \emph{Hawking mass} $m:\mathcal{M}\rightarrow\mathbb{R}$
by 
\begin{equation}
m=\frac{r}{2}\big(1-g(\nabla r,\nabla r)\big).
\end{equation}
 Notice that, when viewed as a function on $\mathcal{U}$, the Hawking
mass $m$ is related to the metric coefficients $\Omega$ and $r$
by the formula: 
\begin{equation}
m=\frac{r}{2}\big(1+4\Omega^{-2}\partial_{u}r\partial_{v}r\big)\Leftrightarrow\Omega^{2}=\frac{4\partial_{v}r(-\partial_{u}r)}{1-\frac{2m}{r}}.\label{eq:DefinitionHereHawkingMass}
\end{equation}

Finally, on pure AdS spacetime $(\mathcal{M}_{AdS}^{3+1},g_{AdS})$,
where $g_{AdS}$ is defined by (\ref{eq:AdSMetricPolarCoordinates}),
we will fix a distinguished double null coordinate pair $(u,v)$ by
the relations
\begin{gather}
u=t-\sqrt{-\frac{3}{\Lambda}}\text{Arctan}\Big(\sqrt{-\frac{\Lambda}{3}}r\Big),\label{eq:DoubleNullPairAdS}\\
v=t+\sqrt{-\frac{3}{\Lambda}}\text{Arctan}\Big(\sqrt{-\frac{\Lambda}{3}}r\Big).\nonumber 
\end{gather}
In the resulting double null coordinate chart, $g_{AdS}$ is expressed
as 
\begin{equation}
g_{AdS}=-\Omega_{AdS}^{2}dudv+r^{2}g_{\mathbb{S}^{2}},\label{eq:DoubleNullAdSMetric}
\end{equation}
where 
\begin{align}
r(u,v) & =\sqrt{-\frac{3}{\Lambda}}\tan\Big(\frac{1}{2}\sqrt{-\frac{\Lambda}{3}}(v-u)\Big),\label{eq:AdSMetricValues-1}\\
\Omega_{AdS}^{2}(u,v) & =1-\frac{1}{3}\Lambda r^{2}(u,v).\nonumber 
\end{align}

\subsection{\label{subsec:Asymptotically-AdS-spacetimes}Asymptotically Anti-de~Sitter
spacetimes }

In this section, we will introduce the class of asymptotically AdS
spacetimes in spherical symmetry; the geometry of these spacetimes
will resemble that of (\ref{eq:DoubleNullAdSMetric}) in a neighborhood
of $r=\infty$. In particular, in accordance with \cite{MoschidisVlasovWellPosedness},
we will adopt the following definition: 

\begin{figure}[h] 
\centering 
%% Creator: Inkscape inkscape 0.92.3, www.inkscape.org
%% PDF/EPS/PS + LaTeX output extension by Johan Engelen, 2010
%% Accompanies image file '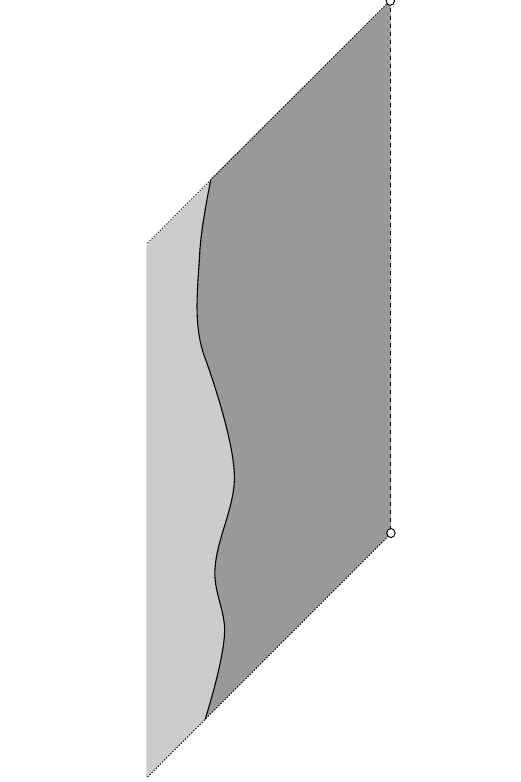' (pdf, eps, ps)
%%
%% To include the image in your LaTeX document, write
%%   \input{<filename>.pdf_tex}
%%  instead of
%%   \includegraphics{<filename>.pdf}
%% To scale the image, write
%%   \def\svgwidth{<desired width>}
%%   \input{<filename>.pdf_tex}
%%  instead of
%%   \includegraphics[width=<desired width>]{<filename>.pdf}
%%
%% Images with a different path to the parent latex file can
%% be accessed with the `import' package (which may need to be
%% installed) using
%%   \usepackage{import}
%% in the preamble, and then including the image with
%%   \import{<path to file>}{<filename>.pdf_tex}
%% Alternatively, one can specify
%%   \graphicspath{{<path to file>/}}
%% 
%% For more information, please see info/svg-inkscape on CTAN:
%%   http://tug.ctan.org/tex-archive/info/svg-inkscape
%%
\begingroup%
  \makeatletter%
  \providecommand\color[2][]{%
    \errmessage{(Inkscape) Color is used for the text in Inkscape, but the package 'color.sty' is not loaded}%
    \renewcommand\color[2][]{}%
  }%
  \providecommand\transparent[1]{%
    \errmessage{(Inkscape) Transparency is used (non-zero) for the text in Inkscape, but the package 'transparent.sty' is not loaded}%
    \renewcommand\transparent[1]{}%
  }%
  \providecommand\rotatebox[2]{#2}%
  \newcommand*\fsize{\dimexpr\f@size pt\relax}%
  \newcommand*\lineheight[1]{\fontsize{\fsize}{#1\fsize}\selectfont}%
  \ifx\svgwidth\undefined%
    \setlength{\unitlength}{150bp}%
    \ifx\svgscale\undefined%
      \relax%
    \else%
      \setlength{\unitlength}{\unitlength * \real{\svgscale}}%
    \fi%
  \else%
    \setlength{\unitlength}{\svgwidth}%
  \fi%
  \global\let\svgwidth\undefined%
  \global\let\svgscale\undefined%
  \makeatother%
  \begin{picture}(1,1.5)%
    \lineheight{1}%
    \setlength\tabcolsep{0pt}%
    \put(0,0){\includegraphics[width=\unitlength,page=1]{Asymptotic_Domain.pdf}}%
    \put(0.46447705,0.10161798){\color[rgb]{0,0,0}\rotatebox{45}{\makebox(0,0)[lt]{\lineheight{1.25}\smash{\begin{tabular}[t]{l}$u=u_1$\end{tabular}}}}}%
    \put(0.40012062,1.1878184){\color[rgb]{0,0,0}\rotatebox{45}{\makebox(0,0)[lt]{\lineheight{1.25}\smash{\begin{tabular}[t]{l}$u=u_2$\end{tabular}}}}}%
    \put(0.52599007,0.83539608){\color[rgb]{0,0,0}\makebox(0,0)[lt]{\lineheight{1.25}\smash{\begin{tabular}[t]{l}$\mathcal{V}_{as}$\end{tabular}}}}%
    \put(0.79085451,1.08158867){\color[rgb]{0,0,0}\rotatebox{-90}{\makebox(0,0)[lt]{\lineheight{1.25}\smash{\begin{tabular}[t]{l}$v=u+v_{\mathcal{I}}$\end{tabular}}}}}%
    \put(0.38483024,0.29399715){\color[rgb]{0,0,0}\rotatebox{90}{\makebox(0,0)[lt]{\lineheight{1.25}\smash{\begin{tabular}[t]{l}$v=u+v_{R_0}(u)$\end{tabular}}}}}%
  \end{picture}%
\endgroup%
 
\caption{Schematic depiction of the asymptotic region $\mathcal{V}_{as}=\{r \ge R_0 \gg 1 \}$ of an asymptotically AdS spacetime.}
\end{figure}
\begin{defn}
\label{def:AsADS}Let $(\mathcal{M},g)$ be a spherically symmetric
spacetime as in Section \ref{subsec:Spherically-symmetric-spacetimes},
with $\sup_{\mathcal{M}}r=+\infty$. We will say that $(\mathcal{M},g)$
is \emph{asymptotically AdS} if, for some $R_{0}\gg1$, there exists
a spherically symmetric double null coordinate pair $(u,v)$ on $\mathcal{M}$
as in Section \ref{subsec:Spherically-symmetric-spacetimes}, such
that the following conditions hold:

\begin{enumerate}

\item The region $\mathcal{V}_{as}$ has the form
\[
\mathcal{V}_{as}=\big\{ u_{1}<u<u_{2}\big\}\cap\big\{ u+v_{R_{0}}(u)\le v<u+v_{\mathcal{I}}\big\}
\]
for some $u_{1}<u_{2}\in\mathbb{R}\cup\{\pm\infty\}$, $v_{\mathcal{I}}\in\mathbb{R}$
and $v_{R_{0}}:(u_{1},u_{2})\rightarrow\mathbb{R}$ with $v(u)<v_{\mathcal{I}}$.

\item The function $\frac{1}{r}$ on $\mathcal{U}$ extends smoothly
on 
\begin{equation}
\mathcal{I}\doteq\big\{ u_{1}<u<u_{2}\big\}\cap\big\{ v=u+v_{\mathcal{I}}\big\}\subset clos(\mathcal{U})\label{eq:TimelikeInfinity}
\end{equation}
and satisfies
\begin{equation}
\frac{1}{r}\Big|_{\mathcal{I}}=0.\label{eq:1/r0}
\end{equation}

\item The function $\frac{\Omega^{2}}{r^{2}}$ extends smoothly on
$\mathcal{I}$, with 
\begin{equation}
\frac{\Omega^{2}}{r^{2}}\Big|_{\mathcal{I}}\neq0.\label{eq:ConformalExtensionI}
\end{equation}

\end{enumerate}
\end{defn}
See \cite{MoschidisVlasovWellPosedness} for further discussion on
the above definition and its relation with the standard definition
of asymptotically AdS spacetimes (appearing, e.\,g., in \cite{Friedrich1995}).
For a spherically symmetric, asymptotically AdS spacetime $(\mathcal{M},g)$
as above, we will use the term \emph{conformal infinity} both for
the planar boundary curve $\mathcal{I}$ and for the spacetime conformal
boundary $\mathcal{I}^{(2+1)}$ of $(\mathcal{M},g)$.

\subsection{\label{subsec:VlasovEquations}Properties of the null geodesic flow
and the massless Vlasov equation}

Let $(\mathcal{M},g)$ be a time oriented, spherically symmetric spacetime
as in Section \ref{subsec:Spherically-symmetric-spacetimes}. In this
section, we will briefly review the properties of the geodesic flow
on $(\mathcal{M},g)$ and we will introduce the Vlasov field equations
on $T\mathcal{M}$. We will use the same notations as those adopted
in \cite{MoschidisVlasovWellPosedness}.

\subsubsection*{The geodesic flow on $(\mathcal{M},g)$}

The equations of motion for a geodesic of $(\mathcal{M},g)$, expressed
in a local coordinate chart $(x^{0},x^{1},x^{2},x^{3})$ on $\mathcal{M}$
with dual momentum coordinates $(p^{0},p^{1},p^{2},p^{3})$ on the
fibers of $T\mathcal{M}$, takes the following form
\begin{equation}
\begin{cases}
\dot{x}^{\alpha}=p^{\alpha},\\
\dot{p}^{a}+\Gamma_{\beta\gamma}^{\alpha}p^{\beta}p^{\gamma}=0,
\end{cases}\label{eq:NewGeodesicFlow}
\end{equation}
 where $\Gamma_{\text{\textgreek{b}\textgreek{g}}}^{\text{\textgreek{a}}}$
are the Christoffel symbols of $g$ with respect to the chart $(x^{0},x^{1},x^{2},x^{3})$.
Fixing a non-vanishing future directed vector field $Q$ on $\mathcal{M}$
(e.\,g.~the vector field $\partial_{u}+\partial_{v}$ in the notation
of Section \ref{subsec:Spherically-symmetric-spacetimes}), the set
\begin{equation}
\mathcal{P}^{+}\doteq\Big\{(x;p)\in T\mathcal{M}:\text{ }g_{\text{\textgreek{a}\textgreek{b}}}(x)p^{\text{\textgreek{a}}}p^{\text{\textgreek{b}}}=0,\text{ }g_{\text{\textgreek{a}\textgreek{b}}}(x)p^{\text{\textgreek{a}}}Q^{\text{\textgreek{b}}}(x)\le0\Big\},\label{eq:NullNewShell}
\end{equation}
i.\,e.~the set of future directed null vectors in $T\mathcal{M}$,
is invariant under (\ref{eq:NewGeodesicFlow}).

The \emph{angular momentum} function $l:T\mathcal{M}\rightarrow[0,+\infty)$
is defined in a local coordinate chart $(u,v,y^{1},y^{2})$ as in
Section \ref{subsec:Spherically-symmetric-spacetimes} by
\begin{equation}
l^{2}\doteq r^{2}g_{AB}p^{A}p^{B}=r^{4}(g_{\mathbb{S}^{2}})_{AB}p^{A}p^{B}\label{eq:AngularMomentum}
\end{equation}
(note that $l$ is in fact coordinate independent). The spherical
symmetry of $(\mathcal{M},g)$ implies that $l$ is a constant of
motion for the geodesic flow (\ref{eq:NewGeodesicFlow}). As a result,
(\ref{eq:NewGeodesicFlow}) can be reduced to a system in terms only
of $u$, $v$, $p^{u}$, $p^{v}$ and $l$. Reexpressed in terms of
these variables, the null-shell relation defining $\mathcal{P}^{+}$
in (\ref{eq:NullNewShell}) takes the form 
\begin{equation}
\Omega^{2}p^{u}p^{v}=\frac{l^{2}}{r^{2}},\text{ }p^{u}\ge0\label{eq:NullShellNewAngularMomentum}
\end{equation}
 while (\ref{eq:NewGeodesicFlow}) restricted on $\mathcal{P}^{+}$
is reduced to
\begin{equation}
\begin{cases}
\frac{du}{ds}=p^{u},\\
\frac{dv}{ds}=p^{v},\\
\frac{d}{ds}\big(\Omega^{2}p^{u}\big)=\Big(\partial_{v}\log(\Omega^{2})-2\frac{\partial_{v}r}{r}\Big)\frac{l^{2}}{r^{2}},\\
\frac{d}{ds}\big(\Omega^{2}p^{v}\big)=\Big(\partial_{u}\log(\Omega^{2})-2\frac{\partial_{u}r}{r}\Big)\frac{l^{2}}{r^{2}},\\
\frac{dl}{ds}=0.
\end{cases}\label{eq:NewNullGeodesicsSphericalSymmetry}
\end{equation}
\begin{rem*}
Identifying a geodesic in $(\mathcal{M},g)$ with its image in the
planar domain $\mathcal{U}$, we will frequently refer to (\ref{eq:NewNullGeodesicsSphericalSymmetry})
simply as the equations of motion for a ``geodesic in $\mathcal{U}$''.
Let us also note that, on a smooth spacetime $(\mathcal{M},g)$ as
above, the relations (\ref{eq:NullShellNewAngularMomentum}) and (\ref{eq:NewNullGeodesicsSphericalSymmetry})
imply that a geodesic $\gamma$ with $l>0$ cannot cross the axis
$\mathcal{Z}\equiv\{r=0\}$.
\end{rem*}

\subsubsection*{The Vlasov equation}

We will adopt the following definition for a Vlasov field $f$ on
$T\mathcal{M}$:
\begin{defn*}
A \emph{Vlasov field} $f$ is a non-negative measure on $T\mathcal{M}$
which is constant along the flow lines of (\ref{eq:NewGeodesicFlow}).
A Vlasov field $f$ supported on (\ref{eq:NullNewShell}) will be
called a \emph{massless Vlasov field}.
\end{defn*}
As a consequence of the above definition, in any local coordinate
chart $(x^{\alpha};p^{\alpha})$ on $T\mathcal{M}$ (with $p^{\alpha}$
dual to $x^{a}$), $f$ satisfies the following equation (refered
to, from now on, as the \emph{Vlasov field equation})
\begin{equation}
p^{\alpha}\partial_{x^{\alpha}}f-\text{\textgreek{G}}_{\beta\gamma}^{\alpha}p^{\beta}p^{\gamma}\partial_{p^{\alpha}}f=0.\label{eq:Vlasov}
\end{equation}

The \emph{energy momentum }tensor of a Vlasov field $f$ is a symmetric
$(0,2)$-form $T_{\alpha\beta}$ on $\mathcal{M}$ (possibly defined
only in the sense of distributions), given by the expression 
\begin{equation}
T_{\alpha\beta}(x)\doteq\int_{T_{x}\mathcal{M}}p_{\alpha}p_{\beta}f\,\sqrt{-det(g(x))}dp^{0}\cdots dp^{3},\label{eq:StressEnergy}
\end{equation}
where $T_{x}\mathcal{M}$ denotes the fiber of $T\mathcal{M}$ over
$x\in\mathcal{M}$ and 
\begin{equation}
p_{\gamma}=g_{\gamma\delta}(x)p^{\delta}.
\end{equation}
Equation (\ref{eq:Vlasov}) implies that
\begin{equation}
\nabla^{\alpha}T_{\alpha\beta}=0,\label{eq:ConservedEnergy}
\end{equation}
i.\,e.~that $T_{\alpha\beta}$ is conserved. 

Another conserved quantity associated to a Vlasov field $f$ is a
1-form called the \emph{particle current,} defined by the formula

\begin{equation}
N_{a}(x)\doteq\int_{T_{x}\mathcal{M}}p_{a}f\,\sqrt{-det(g(x))}dp^{0}\cdots dp^{3}.\label{eq:NewParticleCurrent}
\end{equation}
The Vlasov equation (\ref{eq:Vlasov}) readily implies that 
\begin{equation}
\nabla^{\alpha}N_{a}=0.\label{eq:ConservedParticle}
\end{equation}
\begin{rem*}
In this paper, we will only consider smooth Vlasov fields $f$ which
are compactly supported in the momentum coordinates $p^{a}$ for any
fixed $x$. Under this condition, it can be readily shown that $N_{\alpha}(x)$,
$T_{\alpha\beta}(x)$ are smooth tensor fields on $\mathcal{M}$. 
\end{rem*}
A \emph{spherically symmetric} Vlasov field $f$, i.\,e.~a Vlasov
field which is invariant under the induced action of $SO(3)$ on $T\mathcal{M}$,
only depends on the $u$, $v$, $p^{u}$, $p^{v}$ and $l$ varables.
Assuming, in addition, that $f$ is massless, it follows that $f$
is conserved along the flow lines of the reduced system (\ref{eq:NewNullGeodesicsSphericalSymmetry}).
The Vlasov field equation formally reduces, in this case, to (\ref{eq:Vlasov}):
\begin{equation}
p^{u}\partial_{u}f+p^{v}\partial_{v}f=\Big(\partial_{u}\log(\Omega^{2})(p^{u})^{2}+\frac{2}{r}\Omega^{-2}\partial_{v}r\frac{l^{2}}{r^{2}}\Big)\partial_{p^{u}}f+\Big(\partial_{v}\log(\Omega^{2})(p^{v})^{2}+\frac{2}{r}\Omega^{-2}\partial_{u}r\frac{l^{2}}{r^{2}}\Big)\partial_{p^{v}}f\label{eq:VlasovEquationNewSphericalSymmetry}
\end{equation}
(note that (\ref{eq:VlasovEquationNewSphericalSymmetry}) does not
contain derivatives in $l$).
\begin{rem*}
In this paper, we will only consider \emph{smooth }spherically symmetric
massless Vlasov fields $f$, i.\,e.~$f$ will be of the form 
\begin{equation}
f(u,v;p^{u},p^{v},l)=\bar{f}(u,v;p^{u},p^{v},l)\cdot\delta\big(\Omega^{2}p^{u}p^{v}-\frac{l^{2}}{r^{2}}\big),\label{eq:ExpressionRegularF}
\end{equation}
where $\bar{f}$ is smooth in its variables and $\delta$ is Dirac's
delta function. For a smooth and spherically symmetric massless Vlasov
field $f$, we will frequently denote with $\bar{f}$ any smooth function
for which (\ref{eq:ExpressionRegularF}) holds; note that $\bar{f}$
is uniquely determined only along the null set (\ref{eq:NullNewShell}).
\end{rem*}
The energy-momentum tensor (\ref{eq:StressEnergy}) associated to
a smooth, spherically symmetric Vlasov field $f$ takes the form 
\begin{equation}
T=T_{uu}(u,v)du^{2}+2T_{uv}(u,v)dudv+T_{vv}(u,v)dv^{2}+T_{AB}(u,v)dy^{A}dy^{B}.\label{eq:SphSymEnergy}
\end{equation}
In the case when $f$ is in addition massless, the components of (\ref{eq:SphSymEnergy})
can be expressed as
\begin{align}
T_{uu} & =\frac{\pi}{2}r^{-2}\int_{0}^{+\infty}\int_{0}^{+\infty}\big(\Omega^{2}p^{v}\big)^{2}\bar{f}(u,v;p^{u},p^{v},l)\Big|_{\Omega^{2}p^{u}p^{v}=\frac{l^{2}}{r^{2}}}\,\frac{dp^{u}}{p^{u}}ldl,\label{eq:ComponentsStressEnergy}\\
T_{vv} & =\frac{\pi}{2}r^{-2}\int_{0}^{+\infty}\int_{0}^{+\infty}\big(\Omega^{2}p^{u}\big)^{2}\bar{f}(u,v;p^{u},p^{v},l)\Big|_{\Omega^{2}p^{u}p^{v}=\frac{l^{2}}{r^{2}}}\,\frac{dp^{u}}{p^{u}}ldl,\nonumber \\
T_{uv} & =\frac{\pi}{2}r^{-2}\int_{0}^{+\infty}\int_{0}^{+\infty}\big(\Omega^{2}p^{u}\big)\cdot\big(\Omega^{2}p^{v}\big)\bar{f}(u,v;p^{u},p^{v},l)\Big|_{\Omega^{2}p^{u}p^{v}=\frac{l^{2}}{r^{2}}}\,\frac{dp^{u}}{p^{u}}ldl,\nonumber \\
g^{AB}T_{AB} & =4\Omega^{-2}T_{uv}.\nonumber 
\end{align}
Similarly, the particle current (\ref{eq:NewParticleCurrent}) associated
to $f$ is of the form 
\begin{equation}
N=N_{u}du+N_{v}dv,
\end{equation}
where, in the case when $f$ is in addition massless:
\begin{align}
N_{u} & =\pi r^{-2}\int_{T_{x}\mathcal{M}\cap\{\Omega^{2}p^{u}p^{v}=\frac{l^{2}}{r^{2}}\}}\Omega^{2}p^{v}\bar{f}(u,v;p^{u},p^{v},l)\Big|_{\Omega^{2}p^{u}p^{v}=\frac{l^{2}}{r^{2}}}\,\frac{dp^{u}}{p^{u}}ldl,\label{eq:ComponentsParticle}\\
N_{v} & =\pi r^{-2}\int_{T_{x}\mathcal{M}\cap\{\Omega^{2}p^{u}p^{v}=\frac{l^{2}}{r^{2}}\}}\Omega^{2}p^{u}\bar{f}(u,v;p^{u},p^{v},l)\Big|_{\Omega^{2}p^{u}p^{v}=\frac{l^{2}}{r^{2}}}\,\frac{dp^{u}}{p^{u}}ldl.\nonumber 
\end{align}

The following estimate of $T_{\mu\nu}$ in terms of $N_{\mu}$ will
be useful later in the paper (see also \cite{MoschidisVlasovWellPosedness}):
In view of the expressions (\ref{eq:DefinitionHereHawkingMass}),
(\ref{eq:ComponentsStressEnergy}) and (\ref{eq:ComponentsParticle}),
we can bound
\begin{equation}
\frac{1-\frac{2m}{r}}{\partial_{v}r}T_{vv}(u,v)+\frac{1-\frac{2m}{r}}{-\partial_{u}r}T_{uv}(u,v)\le2\sup_{supp\big(f(u,v;\cdot,\cdot,l)\big)}\Big(\partial_{v}r(u,v)p^{v}-\partial_{u}r(u,v)p^{u}\Big)\cdot N_{v}(u,v)\label{eq:VDerivativeMassfromTotalParticleCurrent}
\end{equation}
and 
\begin{equation}
\frac{1-\frac{2m}{r}}{\partial_{v}r}T_{uv}(u,v)+\frac{1-\frac{2m}{r}}{-\partial_{u}r}T_{uu}(u,v)\le2\sup_{supp\big(f(u,v;\cdot,\cdot,l)\big)}\Big(\partial_{v}r(u,v)p^{v}-\partial_{u}r(u,v)p^{u}\Big)\cdot N_{u}(u,v).\label{eq:UDerivativeMassFromReducedTotalCurrent}
\end{equation}

\subsection{\label{subsec:The-Einstein-equations}The Einstein\textendash massless
Vlasov system}

The \emph{Einstein\textendash massless Vlasov} system with cosmological
constant $\Lambda$ takes the form 
\begin{equation}
\begin{cases}
Ric_{\mu\nu}(g)-\frac{1}{2}R(g)g_{\mu\nu}+\Lambda g_{\mu\nu}=8\pi T_{\mu\nu}[f],\\
p^{\alpha}\partial_{x^{\alpha}}f-\text{\textgreek{G}}_{\beta\gamma}^{\alpha}p^{\beta}p^{\gamma}\partial_{p^{\alpha}}f=0,\\
supp(f)\subset\mathcal{P}^{+}
\end{cases}\label{eq:The mainSystemEinsteinVlasov}
\end{equation}
where $(\mathcal{M},g)$ is a Lorentzian manifold, $f$ is a non-negative
measure on $T\mathcal{M}$, $T_{\mu\nu}[f]$ is expressed in terms
of $f$ by (\ref{eq:StressEnergy}) and $\mathcal{P}^{+}\subset T\mathcal{M}$
is defined by (\ref{eq:NullNewShell}) (see also \cite{DafermosRendall,MoschidisMaximalDevelopment,MoschidisNullDust}).
In this paper, we will only consider the case when the cosmological
constant $\Lambda$ is \emph{negative}.

Reduced to the case where $(\mathcal{M},g)$ is a spherically symmetric
spacetime (see Section \ref{subsec:Spherically-symmetric-spacetimes})
and $f$ is a spherically symmetric massless Vlasov field (see Section
\ref{subsec:VlasovEquations}), the system (\ref{eq:The mainSystemEinsteinVlasov})
is equivalent to the following set of relations for $(r,\Omega^{2},f)$:
\begin{align}
\partial_{u}\partial_{v}(r^{2})= & -\frac{1}{2}(1-\Lambda r^{2})\Omega^{2}+8\pi r^{2}T_{uv},\label{eq:RequationFinal}\\
\partial_{u}\partial_{v}\log(\Omega^{2})= & \frac{\Omega^{2}}{2r^{2}}\big(1+4\Omega^{-2}\partial_{u}r\partial_{v}r\big)-8\pi T_{uv}-2\pi\Omega^{2}g^{AB}T_{AB},\label{eq:OmegaEquationFinal}\\
\partial_{u}(\Omega^{-2}\partial_{u}r)= & -4\pi rT_{uu}\Omega^{-2},\label{eq:ConstraintUFinal}\\
\partial_{v}(\Omega^{-2}\partial_{v}r)= & -4\pi rT_{vv}\Omega^{-2},\label{eq:ConstraintVFinal}\\
p^{\alpha}\partial_{x^{\alpha}}f= & \text{\textgreek{G}}_{\beta\gamma}^{\alpha}p^{\beta}p^{\gamma}\partial_{p^{\alpha}}f,\label{eq:VlasovFinal}\\
supp(f)\subseteq & \big\{\Omega^{2}(u,v)p^{u}p^{v}-\frac{l^{2}}{r^{2}(u,v)}=0,\text{ }p^{u}\ge0\big\}.\label{NullShellFinal}
\end{align}
\begin{rem*}
In view of the relation $4\Omega^{-2}T_{uv}=g^{AB}T_{AB}$ (following
from the fact that $f$ is supported on the null set $\mathcal{P}$)
and the definition (\ref{eq:DefinitionHereHawkingMass}) of $m$,
equation (\ref{eq:OmegaEquationFinal}) is equivalent to 
\begin{equation}
\partial_{u}\partial_{v}\log(\Omega^{2})=4\frac{m}{r^{3}}\frac{(-\partial_{u}r)\partial_{v}r}{1-\frac{2m}{r}}-16\pi T_{uv}.\label{eq:OneMoreUsefulEquationOmega}
\end{equation}
\end{rem*}
It is useful, in general, to consider transformations of the double
null coordinate pair $(u,v)$ of the form $(u,v)\rightarrow(u',v')=(U(u),V(v))$
(see Section \ref{subsec:Spherically-symmetric-spacetimes}). Under
such a gauge transformation, a solution $(r,\Omega^{2},f)$ is transformed
into a solution $(r',(\Omega^{\prime})^{2},f')$ in the new coordinate
system through the relations:
\begin{align}
r^{\prime}(u',v') & \doteq r(u,v),\label{eq:GeneralGaugeTransformationSpacetime}\\
(\Omega^{\prime})^{2}(u',v') & \doteq\frac{1}{\frac{dU}{dv}(u)\cdot\frac{dV}{dv}(v)}\Omega^{2}(u,v),\nonumber \\
f^{\prime}(u',v';\frac{dU}{du}(u)p^{u^{\prime}},\frac{dV}{dv}(v)p^{v^{\prime}}l) & \doteq f(u,v;p^{u},p^{v},l).\nonumber 
\end{align}

Let us introduce the \emph{renormalised Hawking mass} $\tilde{m}$
by the relation
\begin{equation}
\tilde{m}\doteq m-\frac{1}{6}\Lambda r^{3},\label{eq:RenormalisedNewHawkingMass}
\end{equation}
where $m$ is defined by (\ref{eq:DefinitionHereHawkingMass}). Equations
(\ref{eq:RequationFinal})\textendash (\ref{eq:ConstraintVFinal})
yield (formally, at least) the following system for $(r,\tilde{m},f)$
on the subset of $\mathcal{M}$ where $1-\frac{2m}{r}>0$ and $\partial_{u}r<0<\partial_{v}r$:

\begin{align}
\partial_{u}\partial_{v}r= & -\frac{2\tilde{m}-\frac{2}{3}\Lambda r^{3}}{r^{2}}\frac{(-\partial_{u}r)\partial_{v}r}{1-\frac{2m}{r}}+4\pi rT_{uv},\label{eq:EquationROutside}\\
\partial_{v}\log\big(\frac{-\partial_{u}r}{1-\frac{2m}{r}}\big)= & 4\pi r^{-1}\frac{r^{2}T_{vv}}{\partial_{v}r},\label{eq:EksiswshGiaKappa-}\\
\partial_{u}\log\big(\frac{\partial_{v}r}{1-\frac{2m}{r}}\big)= & -4\pi r^{-1}\frac{r^{2}T_{uu}}{-\partial_{u}r},\label{eq:EksiswshGiaK}\\
\partial_{v}\tilde{m}= & 2\pi\big(1-\frac{2m}{r}\big)\Big(\frac{r^{2}T_{vv}}{\partial_{v}r}+\frac{r^{2}T_{uv}}{-\partial_{u}r}\Big),\label{eq:TildeVMaza}\\
\partial_{u}\tilde{m}= & -2\pi\big(1-\frac{2m}{r}\big)\Big(\frac{r^{2}T_{uu}}{-\partial_{u}r}+\frac{r^{2}T_{uv}}{\partial_{v}r}\Big).\label{eq:TildeUMaza}
\end{align}

\paragraph*{Useful relations for null-geodesics on solutions of the system (\ref{eq:RequationFinal})\textendash (\ref{NullShellFinal})}

We will now present a number of relations for null geodesics on solutions
$(\mathcal{M},g;f)$ of the system (\ref{eq:RequationFinal})\textendash (\ref{NullShellFinal}).
These relations, appearing also in our companion paper \cite{MoschidisVlasovWellPosedness},
will be useful for the construction of localised Vlasov beams appearing
in the proof of Theorem \ref{thm:TheTheoremIntro}.

Let $\gamma$ be the projection on $\mathcal{U}$ of a null geodesic
in $(\mathcal{M},g)$ with $l>0$. The relation (\ref{eq:NullShellNewAngularMomentum})
implies that $\gamma$ is a timelike curve in $\mathcal{U}$ with
respect to the reference metric 
\begin{equation}
g_{ref}=-dudv.\label{eq:ReferenceMetric}
\end{equation}
Let $u_{1}(v)$ be a function of $v$ and let $\gamma:[0,a)\rightarrow\{u\ge u_{1}(v)\}\subset\mathcal{M}$
with $\gamma(0)\in\{u=u_{1}(v)\}$ be a null geodesic with angular
momentum $l>0$. Then, using the equations of motion (\ref{eq:NewNullGeodesicsSphericalSymmetry})
combined with the null shell relation (\ref{eq:NullShellNewAngularMomentum}),
we infer that, for all $s\in[0,a)$:

\begin{align}
\log\big(\Omega^{2}\dot{\gamma}^{u}\big)(s)-\log\big(\Omega^{2}\dot{\gamma}^{u}\big)(0)= & \int_{\text{\textgreek{g}}([0,s]))}\Big(\partial_{v}\log(\Omega^{2})-2\frac{\partial_{v}r}{r}\Big)\,dv=\label{eq:AllhMiaEksiswshGiaGevdaisiakes}\\
= & \int_{v(\gamma(0))}^{v(\gamma(s))}\int_{u_{1}(v)}^{u(\gamma(s_{v}))}\Big(\partial_{u}\partial_{v}\log(\Omega^{2})-2\partial_{u}\frac{\partial_{v}r}{r}\Big)\,dudv+\nonumber \\
 & +\int_{v(\gamma(0))}^{v(\gamma(s))}\big(\partial_{v}\log(\Omega^{2})-2\frac{\partial_{v}r}{r}\big)(u_{1}(v),v)\,dv,\nonumber 
\end{align}
where $s_{\bar{v}}$ is defined as the value of the parameter $s$
determined by the condition
\begin{equation}
v(\gamma(s_{\bar{v}}))=\bar{v}.\label{eq:s_v}
\end{equation}
Substituting the relations (\ref{eq:EquationROutside}) and (\ref{eq:OneMoreUsefulEquationOmega})
for $\partial_{u}\partial_{v}r$ and $\partial_{u}\partial_{v}\log\Omega^{2}$
in the right hand side of (\ref{eq:AllhMiaEksiswshGiaGevdaisiakes})
and recalling the definition (\ref{eq:RenormalisedNewHawkingMass})
of $\tilde{m}$, we readily infer from (\ref{eq:AllhMiaEksiswshGiaGevdaisiakes})
the equivalent relation: 
\begin{align}
\log\big(\Omega^{2}\dot{\gamma}^{u}\big)(s)-\log\big(\Omega^{2}\dot{\gamma}^{u}\big)(0)= & \int_{v(\gamma(0))}^{v(\gamma(s))}\int_{u_{1}(v)}^{u(\gamma(s_{v}))}\Big(\frac{1}{2}\frac{\frac{6\tilde{m}}{r}-1}{r^{2}}\Omega^{2}-24\pi T_{uv}\Big)\,dudv+\label{eq:XrhsimhGewdaisiakh-U}\\
 & +\int_{v(\gamma(0))}^{v(\gamma(s))}\big(\partial_{v}\log(\Omega^{2})-2\frac{\partial_{v}r}{r}\big)(u_{1}(v),v)\,dv.\nonumber 
\end{align}

\begin{figure}[h] 
\centering 
\scriptsize
%% Creator: Inkscape inkscape 0.92.3, www.inkscape.org
%% PDF/EPS/PS + LaTeX output extension by Johan Engelen, 2010
%% Accompanies image file '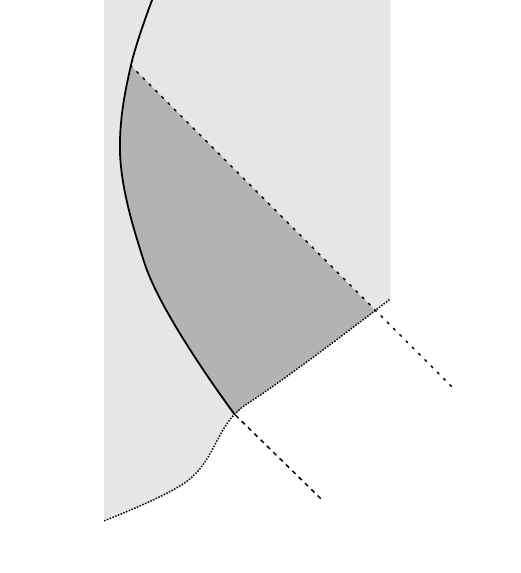' (pdf, eps, ps)
%%
%% To include the image in your LaTeX document, write
%%   \input{<filename>.pdf_tex}
%%  instead of
%%   \includegraphics{<filename>.pdf}
%% To scale the image, write
%%   \def\svgwidth{<desired width>}
%%   \input{<filename>.pdf_tex}
%%  instead of
%%   \includegraphics[width=<desired width>]{<filename>.pdf}
%%
%% Images with a different path to the parent latex file can
%% be accessed with the `import' package (which may need to be
%% installed) using
%%   \usepackage{import}
%% in the preamble, and then including the image with
%%   \import{<path to file>}{<filename>.pdf_tex}
%% Alternatively, one can specify
%%   \graphicspath{{<path to file>/}}
%% 
%% For more information, please see info/svg-inkscape on CTAN:
%%   http://tug.ctan.org/tex-archive/info/svg-inkscape
%%
\begingroup%
  \makeatletter%
  \providecommand\color[2][]{%
    \errmessage{(Inkscape) Color is used for the text in Inkscape, but the package 'color.sty' is not loaded}%
    \renewcommand\color[2][]{}%
  }%
  \providecommand\transparent[1]{%
    \errmessage{(Inkscape) Transparency is used (non-zero) for the text in Inkscape, but the package 'transparent.sty' is not loaded}%
    \renewcommand\transparent[1]{}%
  }%
  \providecommand\rotatebox[2]{#2}%
  \newcommand*\fsize{\dimexpr\f@size pt\relax}%
  \newcommand*\lineheight[1]{\fontsize{\fsize}{#1\fsize}\selectfont}%
  \ifx\svgwidth\undefined%
    \setlength{\unitlength}{150bp}%
    \ifx\svgscale\undefined%
      \relax%
    \else%
      \setlength{\unitlength}{\unitlength * \real{\svgscale}}%
    \fi%
  \else%
    \setlength{\unitlength}{\svgwidth}%
  \fi%
  \global\let\svgwidth\undefined%
  \global\let\svgscale\undefined%
  \makeatother%
  \begin{picture}(1,1.1)%
    \lineheight{1}%
    \setlength\tabcolsep{0pt}%
    \put(0,0){\includegraphics[width=\unitlength,page=1]{Geodesic_Integration.pdf}}%
    \put(0.23762377,0.49381178){\color[rgb]{0,0,0}\makebox(0,0)[lt]{\lineheight{1.25}\smash{\begin{tabular}[t]{l}$\gamma$\end{tabular}}}}%
    \put(0.28836633,0.97524738){\color[rgb]{0,0,0}\makebox(0,0)[lt]{\lineheight{1.25}\smash{\begin{tabular}[t]{l}$\gamma (s)$\end{tabular}}}}%
    \put(0.29950493,0.28589105){\color[rgb]{0,0,0}\makebox(0,0)[lt]{\lineheight{1.25}\smash{\begin{tabular}[t]{l}$\gamma (0)$\end{tabular}}}}%
    \put(0.24679985,0.06206183){\color[rgb]{0,0,0}\rotatebox{25}{\makebox(0,0)[lt]{\lineheight{1.25}\smash{\begin{tabular}[t]{l}$u=u_1(v)$\end{tabular}}}}}%
    \put(0.80420483,0.46035699){\color[rgb]{0,0,0}\rotatebox{-45}{\makebox(0,0)[lt]{\lineheight{1.25}\smash{\begin{tabular}[t]{l}$v=v(\gamma (s))$\end{tabular}}}}}%
    \put(0.55915533,0.23263432){\color[rgb]{0,0,0}\rotatebox{-45}{\makebox(0,0)[lt]{\lineheight{1.25}\smash{\begin{tabular}[t]{l}$v=v(\gamma (0))$\end{tabular}}}}}%
  \end{picture}%
\endgroup%
 
\caption{Formula (\ref{eq:XrhsimhGewdaisiakh-U}) expresses the change in the magnitude of $\Omega^{2}\dot{\gamma}^{u}$ for a future directed null geodesic $\gamma$ in terms of a spacetime integral over a region as depiced above. \label{fig:Geodesic_Integration}}
\end{figure}

For null geodesics $\gamma:[0,a)\rightarrow\{v\ge v_{1}(u)\}$ with
$\gamma(0)\in\{v=v_{1}(u)\}$, the analogue of formula (\ref{eq:XrhsimhGewdaisiakh-U})
is
\begin{align}
\log\big(\Omega^{2}\dot{\gamma}^{v}\big)(s)-\log\big(\Omega^{2}\dot{\gamma}^{v}\big)(0)= & \int_{u(\gamma(0))}^{u(\gamma(s))}\int_{v_{1}(u)}^{v(\gamma(s_{u}))}\Big(\frac{1}{2}\frac{\frac{6\tilde{m}}{r}-1}{r^{2}}\Omega^{2}-24\pi T_{uv}\Big)\,dvdu+\label{eq:XrhsimhGewdaisiakh-V}\\
 & +\int_{u(\gamma(0))}^{u(\gamma(s))}\big(\partial_{u}\log(\Omega^{2})-2\frac{\partial_{u}r}{r}\big)(u,v_{1}(u))\,du,\nonumber 
\end{align}
where $s_{u}$ is defined by the relation
\begin{equation}
u(\gamma(s_{\bar{u}}))=\bar{u}.\label{eq:s_u}
\end{equation}
See also \cite{MoschidisVlasovWellPosedness}.

\paragraph*{Asymptotically AdS solutions and the reflecting boundary condition
at infinity}

Let $(\mathcal{M},g;f)$ be a spherically symmetric solution of (\ref{eq:The mainSystemEinsteinVlasov}),
such that, in addition, $(\mathcal{M},g)$ is asymptotically AdS,
in accordance with the Definition \ref{def:AsADS}. In this case,
the following quantities will be useful as renormalised substitutes
of $r$, $\Omega^{2}$ and $T_{\mu\nu}$ near conformal infinity (see
Section \ref{subsec:Asymptotically-AdS-spacetimes}):
\begin{align}
\rho & \doteq\tan^{-1}\big(\sqrt{-\frac{\Lambda}{3}}r\big),\label{eq:RenormalisedQuantities}\\
\widetilde{\Omega}^{2} & \doteq\frac{\Omega^{2}}{1-\frac{1}{3}\Lambda r^{2}},\nonumber \\
\tau_{\mu\nu} & \doteq r^{2}T_{\mu\nu}.\nonumber 
\end{align}
From (\ref{eq:EquationROutside}) and (\ref{eq:OneMoreUsefulEquationOmega}),
it readily follows that $(\rho,\widetilde{\Omega}^{2},\tau_{\mu\nu})$
satisfy the relations
\begin{align}
\partial_{u}\partial_{v}\rho & =-\frac{1}{2}\sqrt{-\frac{\Lambda}{3}}\frac{\tilde{m}}{r^{2}}\frac{1-\frac{2}{3}\Lambda r^{2}}{1-\frac{1}{3}\Lambda r^{2}}\widetilde{\Omega}^{2}+4\pi\sqrt{-\frac{\Lambda}{3}}\frac{1}{r-\frac{1}{3}\Lambda r^{3}}\tau_{uv},\label{eq:RenormalisedEquations}\\
\partial_{u}\partial_{v}\log(\widetilde{\Omega}^{2}) & =\frac{\tilde{m}}{r}\Big(\frac{1}{r^{2}}+\frac{1}{3}\Lambda\frac{\Lambda r^{2}-1}{1-\frac{1}{3}\Lambda r^{2}}\Big)\widetilde{\Omega}^{2}-16\pi\frac{1-\frac{1}{2}\Lambda r^{2}}{1-\frac{1}{3}\Lambda r^{2}}r^{-2}\tau_{uv}.\nonumber 
\end{align}

In the asymptotically AdS setting, it is natural to study the system
(\ref{eq:RequationFinal})\textendash (\ref{NullShellFinal}) with
boundary conditions imposed for $f$ on $\mathcal{I}$. In this paper,
we will consider the \emph{reflecting} boundary condition. Defined
in terms of the reflection of null geodesics off $\mathcal{I}^{(2+1)}$,
the reflecting boundary condition can be formulated as follows (see
\cite{MoschidisVlasovWellPosedness} for more details): 
\begin{defn}
\label{def:ReflectingBoundaryCondition} Let $(\mathcal{M},g)$ be
as in Definition \ref{def:AsADS}, and let $f$ be a smooth massless
Vlasov field on $T\mathcal{M}$, as defined in Section \ref{subsec:VlasovEquations}.
We will say that $f$ satisfies the \emph{reflecting} boundary condition
on conformal infinity if, for any pair of future directd null geodesics
$\gamma:(a,+\infty)\rightarrow\mathcal{M}$ and $\gamma_{\models}:(-\infty,b)\rightarrow\mathcal{M}$
such that $\gamma_{\models}$ is the reflection of $\gamma$ off conformal
infinity $\mathcal{I}^{(2+1)}$, according to Definition 2.2 in \cite{MoschidisVlasovWellPosedness},
$f$ satisfies 
\begin{equation}
f|_{(\gamma,\dot{\gamma})}=f|_{(\gamma_{\models},\dot{\gamma}_{\models})},\label{eq:ReflectingCondition}
\end{equation}
where $f|_{(\gamma,\dot{\gamma})}$ is the (constant) value of $f$
along the curve $(\gamma,\dot{\gamma})$ in $T\mathcal{M}$. 
\end{defn}
\begin{rem*}
Equivalently, $f$ satisfies the reflecting condition on $\mathcal{I}^{(3+1)}$
if $f$ is constant along the trajectory of $(\gamma,\dot{\gamma})$
for any future directed, affinely paramterised null geodesic $\gamma$
which is \emph{maximally extended through reflections, }in accordance
with Definition 2.3 in \cite{MoschidisVlasovWellPosedness} (see also
Figure \ref{fig:Maximal_Geodesic}).
\end{rem*}
\begin{figure}[h] 
\centering 
%% Creator: Inkscape inkscape 0.92.3, www.inkscape.org
%% PDF/EPS/PS + LaTeX output extension by Johan Engelen, 2010
%% Accompanies image file '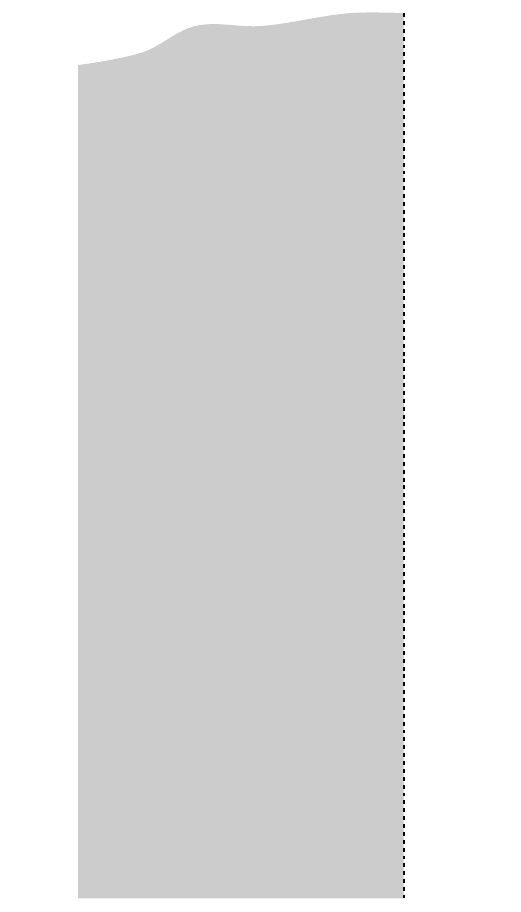' (pdf, eps, ps)
%%
%% To include the image in your LaTeX document, write
%%   \input{<filename>.pdf_tex}
%%  instead of
%%   \includegraphics{<filename>.pdf}
%% To scale the image, write
%%   \def\svgwidth{<desired width>}
%%   \input{<filename>.pdf_tex}
%%  instead of
%%   \includegraphics[width=<desired width>]{<filename>.pdf}
%%
%% Images with a different path to the parent latex file can
%% be accessed with the `import' package (which may need to be
%% installed) using
%%   \usepackage{import}
%% in the preamble, and then including the image with
%%   \import{<path to file>}{<filename>.pdf_tex}
%% Alternatively, one can specify
%%   \graphicspath{{<path to file>/}}
%% 
%% For more information, please see info/svg-inkscape on CTAN:
%%   http://tug.ctan.org/tex-archive/info/svg-inkscape
%%
\begingroup%
  \makeatletter%
  \providecommand\color[2][]{%
    \errmessage{(Inkscape) Color is used for the text in Inkscape, but the package 'color.sty' is not loaded}%
    \renewcommand\color[2][]{}%
  }%
  \providecommand\transparent[1]{%
    \errmessage{(Inkscape) Transparency is used (non-zero) for the text in Inkscape, but the package 'transparent.sty' is not loaded}%
    \renewcommand\transparent[1]{}%
  }%
  \providecommand\rotatebox[2]{#2}%
  \newcommand*\fsize{\dimexpr\f@size pt\relax}%
  \newcommand*\lineheight[1]{\fontsize{\fsize}{#1\fsize}\selectfont}%
  \ifx\svgwidth\undefined%
    \setlength{\unitlength}{150bp}%
    \ifx\svgscale\undefined%
      \relax%
    \else%
      \setlength{\unitlength}{\unitlength * \real{\svgscale}}%
    \fi%
  \else%
    \setlength{\unitlength}{\svgwidth}%
  \fi%
  \global\let\svgwidth\undefined%
  \global\let\svgscale\undefined%
  \makeatother%
  \begin{picture}(1,1.75)%
    \lineheight{1}%
    \setlength\tabcolsep{0pt}%
    \put(0,0){\includegraphics[width=\unitlength,page=1]{Maximal_geodesic.pdf}}%
    \put(0.83582917,0.75705444){\color[rgb]{0,0,0}\makebox(0,0)[lt]{\lineheight{1.25}\smash{\begin{tabular}[t]{l}$\mathcal{I}$\end{tabular}}}}%
    \put(0,0){\includegraphics[width=\unitlength,page=2]{Maximal_geodesic.pdf}}%
    \put(0.41367573,1.41862625){\color[rgb]{0,0,0}\makebox(0,0)[lt]{\lineheight{1.25}\smash{\begin{tabular}[t]{l}$\gamma_N$\end{tabular}}}}%
    \put(0.37252476,0.75804458){\color[rgb]{0,0,0}\makebox(0,0)[lt]{\lineheight{1.25}\smash{\begin{tabular}[t]{l}$\gamma_{N-1}$\end{tabular}}}}%
    \put(0.38551977,0.05414621){\color[rgb]{0,0,0}\makebox(0,0)[lt]{\lineheight{1.25}\smash{\begin{tabular}[t]{l}$\gamma_{N-2}$\end{tabular}}}}%
  \end{picture}%
\endgroup%
 
\caption{Schematic depiction of the components $\gamma_n$ of a maximally  extended geodesic $\gamma=\bigcup_{n=0}^{N}\gamma_n$ through reflections off conformal infinity, as defined in \cite{MoschidisVlasovWellPosedness}. Each component $\gamma_n$ is the reflection off $\mathcal{I}$ of $\gamma_{n-1}$. A massless Vlasov field $f$ satisfying the reflecting boundary condition on $\mathcal{I}$ is constant along any such maximally extended null geodesic. \label{fig:Maximal_Geodesic}}
\end{figure}

The following Lemma is a trivial corollary of the relations (\ref{eq:TildeUMaza})\textendash (\ref{eq:TildeVMaza})
for $\tilde{m}$, the condition (\ref{eq:1/r0}) on conformal infinity
$\mathcal{I}$ and the reflecting boundary condition (\ref{eq:ReflectingCondition})
for $f$:
\begin{lem}
Let $(r,\Omega^{2},f)$ be an asymptotically AdS solution of (\ref{eq:RequationFinal})\textendash (\ref{NullShellFinal})
as above, satisfying on $\mathcal{I}$ the reflecting boundary condition,
in accordance with Definition \ref{def:ReflectingBoundaryCondition}.
Then, the renormalised Hawking mass $\tilde{m}$ is constant along
$\mathcal{I}$:
\begin{equation}
(\partial_{u}+\partial_{v})\tilde{m}|_{\mathcal{I}}=0.\label{eq:ConservationHawkingMassAtInfinity}
\end{equation}
\end{lem}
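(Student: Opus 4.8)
The plan is to reduce the identity, via the equations \eqref{eq:TildeVMaza}--\eqref{eq:TildeUMaza}, to the statement that the renormalised stress-energy components $\tau_{vv}\doteq r^{2}T_{vv}$ and $\tau_{uu}\doteq r^{2}T_{uu}$ agree on $\mathcal{I}$, and then to deduce the latter directly from the reflecting boundary condition. First I would add \eqref{eq:TildeVMaza} and \eqref{eq:TildeUMaza} and substitute $1-\frac{2m}{r}=\frac{4\partial_{v}r(-\partial_{u}r)}{\Omega^{2}}$ (from \eqref{eq:DefinitionHereHawkingMass}) to obtain the clean identity
\[
(\partial_{u}+\partial_{v})\tilde{m}=\frac{8\pi}{\Omega^{2}}\Big((-\partial_{u}r)\,\tau_{vv}-(\partial_{v}r)\,\tau_{uu}+(\partial_{u}r+\partial_{v}r)\,\tau_{uv}\Big),
\]
valid on $\{\partial_{u}r<0<\partial_{v}r\}\cap\{1-\tfrac{2m}{r}>0\}$, which by Definition \ref{def:AsADS} contains a one-sided neighbourhood of $\mathcal{I}$. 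Every factor on the right extends continuously up to $\mathcal{I}$: $\tfrac{r^{2}}{\Omega^{2}}$ is smooth and non-vanishing there by \eqref{eq:ConformalExtensionI}; $\tfrac{\partial_{v}r}{r^{2}}=-\partial_{v}(1/r)$ and $\tfrac{-\partial_{u}r}{r^{2}}=\partial_{u}(1/r)$ extend smoothly since $1/r$ does; and $\tau_{uv}=\tfrac{\pi}{2}\tfrac{\Omega^{2}}{r^{2}}\int_{0}^{\infty}\!\!\int_{0}^{\infty}l^{2}\bar{f}\,\tfrac{dp^{u}}{p^{u}}\,l\,dl$ (using the mass-shell relation $\Omega^{2}p^{u}p^{v}=l^{2}/r^{2}$) is bounded near $\mathcal{I}$.

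Next I would pass to the limit on $\mathcal{I}=\{v=u+v_{\mathcal{I}}\}$. Since $1/r$ vanishes identically along $\mathcal{I}$ and $\partial_{u}+\partial_{v}$ is tangent to $\mathcal{I}$, the tangential derivative $(\partial_{u}+\partial_{v})(1/r)$ vanishes on $\mathcal{I}$; equivalently $\tfrac{\partial_{v}r}{r^{2}}$ and $\tfrac{-\partial_{u}r}{r^{2}}$ have a common limit on $\mathcal{I}$, while $\tfrac{\partial_{u}r+\partial_{v}r}{\Omega^{2}}=-(\partial_{u}+\partial_{v})(1/r)\cdot\tfrac{r^{2}}{\Omega^{2}}\to 0$. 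Hence the $\tau_{uv}$-term drops out in the limit and the two remaining coefficients $\tfrac{-\partial_{u}r}{\Omega^{2}}$ and $\tfrac{\partial_{v}r}{\Omega^{2}}$ share a common limit $C_{\mathcal{I}}$, so that
\[
(\partial_{u}+\partial_{v})\tilde{m}\big|_{\mathcal{I}}=8\pi\,C_{\mathcal{I}}\,\big(\tau_{vv}-\tau_{uu}\big)\big|_{\mathcal{I}}.
\]

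It remains to show $\tau_{vv}=\tau_{uu}$ on $\mathcal{I}$, and this is where the reflecting condition enters. At a point $q\in\mathcal{I}$ a future directed null geodesic arriving at $q$ carries reduced momentum $(p^{u},p^{v})$ on the shell $\Omega^{2}p^{u}p^{v}=l^{2}/r^{2}$, and by Definition \ref{def:ReflectingBoundaryCondition} its reflection off conformal infinity leaves $q$ with reduced momentum obtained by the interchange $p^{u}\leftrightarrow p^{v}$ (and unchanged $l$); the shell is symmetric under this swap. Condition \eqref{eq:ReflectingCondition} therefore forces $\bar{f}(q;p^{u},p^{v},l)=\bar{f}(q;p^{v},p^{u},l)$ for all admissible momenta. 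Since the interchange $\partial_{u}\leftrightarrow\partial_{v}$ is a linear isometry of $(T_{q}\mathcal{M},g)$ — it manifestly preserves $-\Omega^{2}dudv+r^{2}g_{\mathbb{S}^{2}}$ — and exchanges $p_{u}\leftrightarrow p_{v}$, the change of variables $p^{u}\leftrightarrow p^{v}$ in the integral formulas \eqref{eq:ComponentsStressEnergy} (which leaves both the measure $\tfrac{dp^{u}}{p^{u}}$ and the shell invariant) converts the expression for $T_{vv}(q)$ into that for $T_{uu}(q)$; thus $\tau_{vv}(q)=\tau_{uu}(q)$ for every $q\in\mathcal{I}$, and the displayed formula gives $(\partial_{u}+\partial_{v})\tilde{m}|_{\mathcal{I}}=0$, i.e.\ \eqref{eq:ConservationHawkingMassAtInfinity}.

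The only genuine content is the last step: that the reflection off $\mathcal{I}^{(2+1)}$ acts on the reduced null momenta precisely as $p^{u}\leftrightarrow p^{v}$. This is built into the definition of reflection of null geodesics (Definition 2.2 of \cite{MoschidisVlasovWellPosedness}) through the null/conformal structure of the boundary, which is exactly what makes the Lemma a triviality; the remainder is routine asymptotic bookkeeping near $\mathcal{I}$ using Definition \ref{def:AsADS}. One minor point to be careful about is that all the quantities entering the displayed identity indeed extend continuously to $\mathcal{I}$ (so that the limit may be taken term by term), which, as indicated above, follows from the smoothness of $1/r$ and $\Omega^{2}/r^{2}$ up to $\mathcal{I}$ together with the smoothness and compact momentum support of $f$.
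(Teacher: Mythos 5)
Your proposal is correct and follows essentially the route the paper indicates: the paper treats the lemma as a direct consequence of the mass equations (\ref{eq:TildeVMaza})--(\ref{eq:TildeUMaza}), the boundary condition (\ref{eq:1/r0}) at $\mathcal{I}$, and the reflecting condition (\ref{eq:ReflectingCondition}), deferring the details to Lemma 2.1 of \cite{MoschidisVlasovWellPosedness}, and your argument is precisely that computation carried out (adding the two mass equations, taking the limit at $\mathcal{I}$ using the smooth extensions of $1/r$ and $\Omega^{2}/r^{2}$, and using the $p^{u}\leftrightarrow p^{v}$ symmetry of $\bar{f}$ forced by reflection to conclude $\tau_{uu}=\tau_{vv}$ on $\mathcal{I}$).
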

See also Lemma 2.1 in \cite{MoschidisVlasovWellPosedness}.

\section{\label{sec:Well-posedness-and-extension}The asymptotically AdS characteristic
initial-boudary value problem}

In this section, we will review the well-posedness results regarding
the characteristic-boundary initial value problem for (\ref{eq:RequationFinal})\textendash (\ref{NullShellFinal})
established in \cite{MoschidisVlasovWellPosedness}. In particular,
we will introduce the notion of a smoothly compatible, characteristic
asymptotically AdS initial data set for the system (\ref{eq:RequationFinal})\textendash (\ref{NullShellFinal})
and we will present a result on the existense and uniqueness of a
maximal future development for (\ref{eq:RequationFinal})\textendash (\ref{NullShellFinal})
when refelcting boundary conditions are imposed on $\mathcal{I}$.
We will also state a few continuation criteria for smooth solutions
of (\ref{eq:RequationFinal})\textendash (\ref{NullShellFinal}),
which will be crucial for the constructions involved in the proof
of Theorem \ref{thm:TheTheoremIntro}. We will end this section by
presenting a Cauchy stability statement for the trivial solution of
(\ref{eq:RequationFinal})\textendash (\ref{NullShellFinal}) in a
scale invariant initial data topology, which will later allow us to
address the AdS instability conjecture in a low regularity setting
in Section \ref{sec:The-main-result}. The proofs of the results appearing
in this section are presented in detail in our companion paper \cite{MoschidisVlasovWellPosedness}.

\subsection{\label{subsec:Asymptotically-AdS-Initial-Data}Smoothly compatible
characteristic initial data sets for (\ref{eq:RequationFinal})\textendash (\ref{NullShellFinal})}

In this paper, the study of the dynamics of the system (\ref{eq:RequationFinal})\textendash (\ref{NullShellFinal})
in the asymptotically AdS setting will take place in the the framework
of the characteristic initial-boundary value problem, with initial
data prescribed at $u=0$, satisfying the constraint equation (\ref{eq:ConstraintVFinal}).
We will consider the following class of initial data which is compatible
with smoothness of the associated development for (\ref{eq:RequationFinal})\textendash (\ref{NullShellFinal})
at the axis $\mathcal{Z}$ and at conformal infinity $\mathcal{I}$
(see also \cite{MoschidisVlasovWellPosedness}):
\begin{defn}
\label{def:SmoothlyCompatibleInitialData} For a given $v_{\mathcal{I}}>0$,
let $r_{/},\Omega_{/}:[0,v_{\mathcal{I}})\rightarrow[0,+\infty)$
and $\bar{f}_{/}:(0,v_{\mathcal{I}})\times[0,+\infty)^{2}\rightarrow[0,+\infty)$
be smooth functions. The quadruplet $(r_{/},\Omega_{/}^{2},\bar{f}_{/};v_{\mathcal{I}})$
(simplified to $(r_{/},\Omega_{/}^{2},\bar{f}_{/})$ when the value
of $v_{\mathcal{I}}$ is clear from the context) will be called a
\emph{smoothly compatible asymptotically AdS initial data set }for
the system (\ref{eq:RequationFinal})\textendash (\ref{NullShellFinal})
if it satisfies the following conditions: 

\begin{enumerate}

\item The functions $(r_{/},\Omega_{/}^{2},\bar{f}_{/})$ satisfy
on $(0,v_{\mathcal{I}})$ the constraint equation (\ref{eq:ConstraintVFinal}),
with $T_{vv}$ is defined in terms of $(r_{/},\Omega_{/}^{2},\bar{f}_{/})$
by the second relation in (\ref{eq:ComponentsStressEnergy}) with
$\bar{f}(u,v;p^{u},p^{v},l)\Big|_{\Omega^{2}p^{u}p^{v}=\frac{l^{2}}{r^{2}}}$
replaced by $\bar{f}_{/}(v;p^{u},l)$.

\item At $v=0$, the functions $r_{/}$, $\Omega_{/}^{2}$ extend
smoothly and satisfy
\begin{equation}
\Omega_{/}^{2}(0)>0
\end{equation}
and
\begin{equation}
r_{/}(0)=0.
\end{equation}

\item The functions $1/r_{/}$ and $r_{/}^{-2}\Omega_{/}^{2}$ extend
smoothly on $v=v_{\mathcal{I}}$ and satisfy
\begin{equation}
1/r_{/}(v_{\mathcal{I}})=0,
\end{equation}
\begin{equation}
r_{/}^{-2}\Omega_{/}^{2}(v_{\mathcal{I}})>0,
\end{equation}
\begin{equation}
\partial_{v}(1/r_{/})(v_{\mathcal{I}})<0.\label{eq:PositiveDvRInitially}
\end{equation}
Furthermore, for any $\bar{p}\ge0$ and $l\ge0$, the function $\bar{f}_{/}(v,\Omega_{/}^{-2}(v)\bar{p},l)$
extends smoothly on $v=v_{\mathcal{I}}$.

\item The functions $r_{/},\text{ }\Omega_{/}^{2},\text{ }\bar{f}_{/}$
satisfy Conditions 1\textendash 3 of Definition 3.5 of \cite{MoschidisVlasovWellPosedness}
on smooth compatibility at $v=0$ and $v=v_{\mathcal{I}}$.

\end{enumerate}

We will also denote by $\mathfrak{B}_{0}$ the set of all smoothly
compatible, asymptotically AdS initial data sets $(r_{/},\Omega_{/}^{2},\bar{f}_{/};v_{\mathcal{I}})$
for (\ref{eq:RequationFinal})\textendash (\ref{NullShellFinal})
which have \emph{bounded support in phase space}, i.\,e.~satisfy
for every $v\in(0,v_{\mathcal{I}})$ and every $l\ge0$:
\begin{equation}
\sup_{p^{u}\in supp(f_{/}(v;\cdot,l))}\Big(\Omega_{/}^{2}\big(p^{u}+\frac{l^{2}}{\Omega_{/}^{2}r_{/}^{2}p^{u}}\big)\Big)\le C\label{eq:BoundedSupportDefinition}
\end{equation}
for some constant $C<+\infty$ \emph{independent} of $v$, $l$.
\end{defn}
For a more detailed discussion on Definition \ref{def:SmoothlyCompatibleInitialData}
and the properties of initial data sets in $\mathfrak{B}_{0}$, see
Definitions 3.4 and 3.5 in \cite{MoschidisVlasovWellPosedness}.

The following remarks regarding Definition \ref{def:SmoothlyCompatibleInitialData}
were also discussed in \cite{MoschidisVlasovWellPosedness} (see Section
3.2 of \cite{MoschidisVlasovWellPosedness}):
\begin{itemize}
\item Under a gauge transformation of the $(u,v)$-plane of the form $(u,v)\rightarrow(u',v')=(U(u),V(v))$,
$\frac{dU}{du},\frac{dV}{dv}\neq0$, solutions $(r,\Omega^{2},f)$
to (\ref{eq:RequationFinal})\textendash (\ref{NullShellFinal}) transform
according to (\ref{eq:GeneralGaugeTransformationSpacetime}). Considering
the restriction of such a transformation with $U(0)=V(0)=0$ at the
initial data $(r_{/},\Omega_{/}^{2},\bar{f}_{/})$ induced on $\{u=0\}$,
we infer that $(r_{/},\Omega_{/}^{2},\bar{f}_{/})$ transform as follows:
\begin{align}
r_{/}^{\prime}(v') & \doteq r_{/}(v),\label{eq:InitialDataGaugeTransformation}\\
(\Omega_{/}^{\prime})^{2}(v') & \doteq\frac{1}{\frac{dU}{du}(0)\cdot\frac{dV}{dv}(v)}\Omega_{/}^{2}(v),\nonumber \\
\bar{f}_{/}^{\prime}(v';\frac{dU}{du}(0)\cdot p,l) & \doteq\bar{f}_{/}(v;p,l).\nonumber 
\end{align}
\item In this paper, following the conventions of \cite{MoschidisVlasovWellPosedness},
we will study asymptotically AdS solutions $(r,\Omega^{2},f)$ of
(\ref{eq:RequationFinal})\textendash (\ref{NullShellFinal}) under
the gauge condition that $r=0$ on $\{u=v\}$ and $r=\infty$ on $\{u=v-v_{\mathcal{I}}\}$
(see Definition \ref{def:DevelopmentSets} in the next section). For
a gauge transformation $(u,v)\rightarrow(U(u),V(v))$ to preserve
this condition, it is necessary that 
\begin{equation}
U(v)=V(v)\text{ and }U(v-v_{\mathcal{I}})=V(v)-v_{\mathcal{I}}.\label{eq:GaugeConditionAxisInfinity}
\end{equation}
At the level of the initial data transformation at $u=0$ associated
to the coordinate transformation $v\rightarrow V(v)$ and the parameter
$\frac{dU}{du}(0)$, (\ref{eq:GaugeConditionAxisInfinity}) implies
that 
\begin{equation}
\frac{dU}{du}(0)=\frac{dV}{dv}(0)\text{ and }V(0)=0\text{, }V(v_{\mathcal{I}})=v_{\mathcal{I}}.\label{eq:ConditionForAxisANdInfinityInitialData}
\end{equation}
Note that, in general, the property of an initial data set $(r_{/},\Omega_{/}^{2},\bar{f}_{/};v_{\mathcal{I}})$
being smoothly compatible is gauge dependent. In particular, when
the transformed initial data set $(r_{/}^{\prime},(\Omega_{/}^{\prime}),\bar{f}_{/}^{\prime};V(v_{\mathcal{I}}))$
is also smoothly compatible, Condition 4 of Definition \ref{def:SmoothlyCompatibleInitialData}
implies that a certain relation holds between $\frac{d^{k}V}{(dv)^{k}}(0)$
and $\frac{d^{k}V}{(dv)^{k}}(v_{\mathcal{I}})$ for all $k\in\mathbb{N}$;
this relation does not hold, in general, for gauge transformations
as above, even when $V$ satisfies the (necessary) condition $V\in C^{\infty}([0,v_{\mathcal{I}}])$.
See also the discussion in Section 3.2 of \cite{MoschidisVlasovWellPosedness}. 
\item Let $(r_{/},\Omega_{/}^{2},\bar{f}_{/};v_{\mathcal{I}})\in\mathfrak{B}_{0}$.
We will define the function $(\partial_{u}r)_{/}$ on $(0,v_{\mathcal{I}})$
(coinciding formally with $\partial_{u}r|_{u=0}$ in a development
of $(r_{/},\Omega_{/}^{2},\bar{f}_{/};v_{\mathcal{I}})$ solving (\ref{eq:RequationFinal})\textendash (\ref{NullShellFinal})
and satisfying $r=0$ on $\{u=v\}$) by integrating equation (\ref{eq:RequationFinal})
in $v$. In particular: 
\begin{equation}
(\partial_{u}r)_{/}(v)\doteq\frac{1}{2r_{/}(v)}\int_{0}^{v}\Big(-\frac{1}{2}(1-\Lambda r_{/}^{2})\Omega_{/}^{2}+8\pi r_{/}^{2}(T_{/})_{uv}\Big)\,d\bar{v},\label{eq:DuRInitially}
\end{equation}
where $(T_{uv})_{/}$ is defined in terms of $(r_{/},\Omega_{/}^{2},\bar{f}_{/})$
by (\ref{eq:ComponentsStressEnergy}) with $\frac{l^{2}}{\Omega_{/}^{2}r_{/}^{2}p^{u}}$
in place of $p^{v}$ and $\bar{f}_{/}\big(v;p^{u},l\big)$ in place
of $\bar{f}(0,v;p^{u},p^{v},l)\Big|_{\Omega^{2}p^{u}p^{v}=\frac{l^{2}}{r^{2}}}$.
We will also define the functions $m_{/},\tilde{m}_{/}$ on $(0,v_{\mathcal{I}})$
through the relations (\ref{eq:DefinitionHereHawkingMass}), (\ref{eq:RenormalisedNewHawkingMass}),
as well as the energy-momentum components $(T_{/})_{\mu\nu}$ through
the relation (\ref{eq:ComponentsStressEnergy}) (again, with $\frac{l^{2}}{\Omega_{/}^{2}r_{/}^{2}p^{u}}$
in place of $p^{v}$ and $\bar{f}_{/}\big(v;p^{u},l\big)$ in place
of $\bar{f}(0,v;p^{u},p^{v},l)\Big|_{\Omega^{2}p^{u}p^{v}=\frac{l^{2}}{r^{2}}}$).
\item For $(r_{/},\Omega_{/}^{2},\bar{f}_{/};v_{\mathcal{I}})\in\mathfrak{B}_{0}$
as above, the functions $(T_{/})_{\mu\nu}$, $m_{/}$ and $\tilde{m}_{/}$
extend smoothly on $v=0$, with 
\[
m_{/},\tilde{m}_{/}=O(r_{/}^{3}).
\]
Furthermore, the condition (\ref{eq:BoundedSupportDefinition}) implies
that 
\begin{equation}
\lim_{v\rightarrow v_{\mathcal{I}}^{-}}\tilde{m}(v)<+\infty,\label{eq:FiniteTotalMassInitialData}
\end{equation}
while (\ref{eq:ConstraintVFinal}) and (\ref{eq:PositiveDvRInitially})
imply that 
\begin{equation}
\inf_{v\in(0,v_{\mathcal{I}})}\partial_{v}r_{/}(v)>0\label{eq:NonTrappingForAsymptoticallyAdSInitialData}
\end{equation}
(see also Remark 2 in Section 3.2 of \cite{MoschidisVlasovWellPosedness}). 
\end{itemize}
The following normalised gauge condition for initial data sets $(r_{/},\Omega_{/}^{2},\bar{f}_{/};v_{\mathcal{I}})$
was introduced in \cite{MoschidisVlasovWellPosedness} for the purpose
of fixing a simple representation of the trivial initial data set
$(r_{AdS/},\Omega_{AdS/}^{2},0;v_{\mathcal{I}})$ (see Definition
3.6 in \cite{MoschidisVlasovWellPosedness}):
\begin{defn}
\label{def:GaugeNormalisation} Let $(r_{/},\Omega_{/}^{2},\bar{f}_{/};v_{\mathcal{I}})$
be a smoothly compatible, asymptotically AdS initial data set for
(\ref{eq:RequationFinal})\textendash (\ref{NullShellFinal}), as
in Definition \ref{def:SmoothlyCompatibleInitialData}. Let also $v\rightarrow v'=V(v)$
(with $V\in C^{\infty}([0,v_{\mathcal{I}}])$), $(r_{/},\Omega_{/}^{2},\bar{f}_{/};v_{\mathcal{I}})\rightarrow(r_{/}^{\prime},(\Omega_{/}^{\prime}),\bar{f}_{/}^{\prime};v_{\mathcal{I}})$,
be a gauge transformation, defined by (\ref{eq:InitialDataGaugeTransformation}),
satisfying the condition (\ref{eq:ConditionForAxisANdInfinityInitialData}).
We will say that $(r_{/}^{\prime},(\Omega_{/}^{\prime}),\bar{f}_{/}^{\prime};v_{\mathcal{I}})$
satisfies the \emph{normalised gauge condition} if 
\begin{equation}
\frac{\partial_{v}r_{/}^{\prime}}{1-\frac{1}{3}\Lambda(r_{/}^{\prime})^{2}}(v)=\frac{(\Omega_{/}^{\prime})^{2}}{4\partial_{v}r_{/}^{\prime}}(v)\text{ for }v\in(0,v_{\mathcal{I}}).\label{eq:NormalisedGaugeCondition}
\end{equation}
In this case, we will say that $(r_{/},\Omega_{/}^{2},\bar{f}_{/};v_{\mathcal{I}})\rightarrow(r_{/}^{\prime},(\Omega_{/}^{\prime}),\bar{f}_{/}^{\prime};v_{\mathcal{I}})$
is a gauge normalising transformation.
\end{defn}
We should make the following remarks regarding Definition \ref{def:GaugeNormalisation}:
\begin{itemize}
\item It can be readily shown (see Lemma 3.2 in \cite{MoschidisVlasovWellPosedness})
that, for any $(r_{/},\Omega_{/}^{2},\bar{f}_{/};v_{\mathcal{I}})\in\mathfrak{B}_{0}$,
there exists a unique gauge normalising transformation as in Definition
\ref{def:GaugeNormalisation}. The trivial (normalised) initial data
set $(r_{AdS/},\Omega_{AdS/}^{2},0;\sqrt{-\frac{3}{\Lambda}}\pi)$
is expressed as: 
\begin{align}
r_{AdS/}(v) & =\sqrt{-\frac{3}{\Lambda}}\tan\Big(\frac{1}{2}\sqrt{-\frac{\Lambda}{3}}v\Big),\label{eq:AdSMetricValues}\\
\Omega_{AdS/}^{2}(v) & =1-\frac{1}{3}\Lambda r_{AdS}^{2}(v),\nonumber 
\end{align}
which are the data induced at $u=0$ by the AdS metric expressed in
the standard double null coordinate chart (\ref{eq:DoubleNullPairAdS}).
For different values of the endpoint parameter $v_{\mathcal{I}}>0$,
we obtain by rescaling: 
\begin{align}
r_{AdS/}^{(v_{\mathcal{I}})}(v) & =r_{AdS/}\Big(\sqrt{-\frac{3}{\Lambda}}\pi\frac{v}{v_{\mathcal{I}}}\Big),\label{eq:AdSMetricValuesRescaled1}\\
\big(\Omega_{AdS/}^{(v_{\mathcal{I}})}\big)^{2}(v) & =-\frac{3}{\Lambda}\frac{\pi^{2}}{v_{\mathcal{I}}^{2}}\Omega_{AdS/}^{2}\Big(\sqrt{-\frac{3}{\Lambda}}\pi\frac{v}{v_{\mathcal{I}}}\Big).\nonumber 
\end{align}
\item By integrating the constraint equation (\ref{eq:ConstraintVFinal}),
we infer that the gauge condition (\ref{eq:NormalisedGaugeCondition})
is equivalent to 
\begin{equation}
\frac{\partial_{v}r_{/}}{1-\frac{1}{3}\Lambda r_{/}^{2}}(v)=\frac{1}{2a}\exp\Big(4\pi\int_{0}^{v}\frac{r_{/}(T_{/})_{vv}}{(\partial_{v}r_{/})^{2}}(\bar{v})\,(\partial_{v}r_{/})d\bar{v}\Big),\label{eq:FormulaForDvR/2}
\end{equation}
where 
\begin{equation}
a\doteq\sqrt{-\frac{\Lambda}{3}}\frac{1}{\pi}\int_{0}^{v_{\mathcal{I}}}\exp\Big(4\pi\int_{0}^{v}\frac{r_{/}(T_{/})_{vv}}{(\partial_{v}r_{/})^{2}}(\bar{v})\,(\partial_{v}r_{/})d\bar{v}\Big)\,dv
\end{equation}
and $(T_{/})_{vv}$ is defined in terms of $(r_{/},\Omega_{/}^{2},\bar{f}_{/})$
by in (\ref{eq:ComponentsStressEnergy}) with $\bar{f}_{/}(v;p^{u},l)$
in place of $\bar{f}(u,v;p^{u},p^{v},l)\Big|_{\Omega^{2}p^{u}p^{v}=\frac{l^{2}}{r^{2}}}$.
Alternatively, the gauge condition (\ref{eq:NormalisedGaugeCondition})
can be expressed as
\begin{equation}
\frac{\partial_{v}r_{/}}{1-\frac{1}{3}\Lambda r_{/}^{2}}=-\frac{(\partial_{u}r)_{/}}{1-\frac{2m_{/}}{r_{/}}}.\label{eq:EqualDuRDvRInitially}
\end{equation}
\end{itemize}
A comparative advantage of considering the gauge condition (\ref{eq:NormalisedGaugeCondition})
when constructing asymptotically AdS initial data sets $(r_{/},\Omega_{/}^{2},\bar{f}_{/};v_{\mathcal{I}})$
for (\ref{eq:RequationFinal})\textendash (\ref{NullShellFinal})
is that (\ref{eq:NormalisedGaugeCondition}) allows one to completely
determine $(r_{/},\Omega_{/}^{2},\bar{f}_{/};v_{\mathcal{I}})$ in
terms of $v_{\mathcal{I}}$ and $\bar{f}_{/}$, which can be freely
prescribed. In particular, the following result was established in
\cite{MoschidisVlasovWellPosedness}:
\begin{lem}
\label{lem:FreeFVlasov} \emph{(Lemma 3.1 in \cite{MoschidisVlasovWellPosedness}).}
Let $v_{\mathcal{I}}>0$ and let $F:[0,v_{\mathcal{I}})\times[0,+\infty)^{2}\rightarrow[0,+\infty)$
be a smooth function such that $\text{supp}(F)$ is a compact subset
of $(0,v_{\mathcal{I}})\times(0,+\infty)^{2}$. There exists a unique
asymptotically AdS initial data set $(r_{/},\Omega_{/}^{2},\bar{f}_{/};v_{\mathcal{I}})$
for (\ref{eq:RequationFinal})\textendash (\ref{NullShellFinal})
satisfying Conditions 1\textendash 3 of Definition \ref{def:SmoothlyCompatibleInitialData}
and the gauge condition (\ref{eq:NormalisedGaugeCondition}), such
that 
\begin{equation}
\bar{f}_{/}(v;p^{u},l)=F\big(v;\text{ }\partial_{v}r_{/}(v)p^{u},\text{ }l\big).\label{eq:AlmostVlasovFieldInvariant Form}
\end{equation}

Assume, in addition, that $F$ satisfies the smallness condition 
\begin{equation}
\mathcal{M}[F]\doteq\int_{0}^{v_{\mathcal{I}}}\frac{r_{AdS/}^{(v_{\mathcal{I}})}(T_{AdS}^{(v_{\mathcal{I}})}[F])_{vv}}{\partial_{v}r_{AdS/}^{(v_{\mathcal{I}})}}(\bar{v})\,d\bar{v}<c_{0}\ll1,\label{eq:SmallnessConditionForConstruction}
\end{equation}
where $c_{0}>0$ is an absolute constant, $(T_{AdS}^{(v_{\mathcal{I}})}[F])_{vv}$
is defined by
\begin{equation}
(T_{AdS}^{(v_{\mathcal{I}})}[F])_{vv}(v)\doteq\frac{\pi}{2}\frac{1}{(r_{AdS/}^{(v_{\mathcal{I}})})^{2}(v)}\int_{0}^{+\infty}\int_{0}^{+\infty}p^{2}F(v;p,l)\,\frac{dp}{p}ldl\label{eq:DefinitionAdST}
\end{equation}
 and $r_{AdS/}^{(v_{\mathcal{I}})},\text{ }(\Omega_{AdS/}^{(v_{\mathcal{I}})})^{2}$
are the rescaled AdS metric coefficients given by (\ref{eq:AdSMetricValuesRescaled1}).
Then, the following bounds hold:
\begin{equation}
\Big|\frac{\partial_{v}r_{/}}{1-\frac{1}{3}\Lambda r_{/}^{2}}(v)-\frac{\partial_{v}r_{AdS/}^{(v_{\mathcal{I}})}}{1-\frac{1}{3}\Lambda(r_{AdS/}^{(v_{\mathcal{I}})})^{2}}(v)\Big|\le C\mathcal{M}[F]\text{ for all }v\in(0,v_{\mathcal{I}})\label{eq:InitialEstimateDifferenceFromAdS}
\end{equation}
and 
\begin{equation}
\int_{0}^{v_{\mathcal{I}}}\frac{r_{/}(T_{/})_{vv}}{\partial_{v}r_{/}}(\bar{v})\,d\bar{v}\le C\mathcal{M}[F],\label{eq:InitialSmallNorm}
\end{equation}
where $C>0$ is an absolute constant and is $(T_{/})_{vv}$ is defined
in terms of $(r_{/},\Omega_{/}^{2},\bar{f}_{/};v_{\mathcal{I}})$
by the second relation in (\ref{eq:ComponentsStressEnergy}) with
$\bar{f}(u,v;p^{u},p^{v},l)\Big|_{\Omega^{2}p^{u}p^{v}=\frac{l^{2}}{r^{2}}}$
replaced by $\bar{f}_{/}(v;p^{u},l)$.
\end{lem}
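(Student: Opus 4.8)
The plan is to reduce the entire data set to the single scalar function $r_{/}$, to solve the resulting first-order ODE with a shooting (endpoint) condition, and then to obtain the quantitative bounds by a bootstrap about the rescaled AdS data set.

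First I would note that, once the normalised gauge (\ref{eq:NormalisedGaugeCondition}) and the ansatz (\ref{eq:AlmostVlasovFieldInvariant Form}) are imposed, the remaining data is slaved to $r_{/}$: (\ref{eq:NormalisedGaugeCondition}) gives $\Omega_{/}^{2}=4(\partial_{v}r_{/})^{2}/(1-\tfrac{1}{3}\Lambda r_{/}^{2})$, and then $\bar f_{/}$ is determined by $F$ and $\partial_{v}r_{/}$. The key computation is that, under these two conditions, the Vlasov contribution to the constraint (\ref{eq:ConstraintVFinal}) collapses to a function of $v$ alone: plugging the ansatz into the formula for $(T_{/})_{vv}$ (the second relation of (\ref{eq:ComponentsStressEnergy})), changing the momentum variable to $q=\partial_{v}r_{/}(v)\,p^{u}$ and using the gauge relation produces
\[
\frac{r_{/}(T_{/})_{vv}}{\partial_{v}r_{/}}(v)=\frac{\cos^{3}\rho_{/}}{\sin\rho_{/}}(v)\;\partial_{v}\rho_{/}(v)\;\mathcal{F}(v),\qquad \mathcal{F}(v)\doteq 8\pi\int_{0}^{\infty}\!\!\int_{0}^{\infty}q^{2}F(v;q,l)\,\tfrac{dq}{q}\,l\,dl,
\]
where $\rho_{/}=\tan^{-1}(\sqrt{-\tfrac{\Lambda}{3}}\,r_{/})\in[0,\tfrac{\pi}{2}]$ is the initial-data analogue of the renormalised coordinate in (\ref{eq:RenormalisedQuantities}), and $\mathcal{F}\ge0$ is smooth, depends only on $F$, and is compactly supported in $(0,v_{\mathcal{I}})$ (since $\mathrm{supp}(F)$ is). Integrating (\ref{eq:ConstraintVFinal}) and using the gauge condition -- equivalently, using (\ref{eq:FormulaForDvR/2}) -- and writing $\Phi\doteq\partial_{v}r_{/}/(1-\tfrac{1}{3}\Lambda r_{/}^{2})=(\sqrt{-\tfrac{\Lambda}{3}})^{-1}\partial_{v}\rho_{/}$, the construction becomes: find $(\rho_{/},\Phi)$ on $[0,v_{\mathcal{I}}]$ with
\[
\partial_{v}\rho_{/}=\sqrt{-\tfrac{\Lambda}{3}}\,\Phi,\qquad \partial_{v}\log\Phi=4\pi\,\mathcal{F}(v)\,\partial_{v}\rho_{/}\,\frac{\cos^{3}\rho_{/}}{\sin\rho_{/}},
\]
together with $\rho_{/}(0)=0$ and $\rho_{/}(v_{\mathcal{I}})=\tfrac{\pi}{2}$ (encoding $r_{/}(0)=0$ and $1/r_{/}(v_{\mathcal{I}})=0$) and one free constant $\Phi(0)=\tfrac{1}{2a}>0$.

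Next I would solve this by shooting on $c=\Phi(0)$. For each $c>0$ the initial value problem has a unique smooth solution: the only apparent singularity of the right-hand side, the factor $1/\sin\rho_{/}$ as $\rho_{/}\to0$, is harmless because $\mathrm{supp}(F)\subset(0,v_{\mathcal{I}})\times(0,\infty)^{2}$ forces $\mathcal{F}$ to vanish near $v=0$, so on a neighbourhood of $0$ the solution is the exact rescaled AdS solution $\rho_{/}(v)=\sqrt{-\tfrac{\Lambda}{3}}\,cv$, $\Phi\equiv c$, after which the equation is regular as long as $\rho_{/}\in(0,\pi)$. Since $\mathcal{F}\ge0$, $\Phi$ is non-decreasing, hence $\rho_{/}$ is strictly increasing and reaches $\tfrac{\pi}{2}$ at some finite $v^{*}(c)$; because $\mathcal{F}$ is also supported away from $v_{\mathcal{I}}$, past its support $\Phi$ is again constant, so $c\mapsto v^{*}(c)$ is continuous with $v^{*}(c)\to0$ as $c\to\infty$, and a continuity argument yields a $c$ with $v^{*}(c)=v_{\mathcal{I}}$ -- for $F$ in the regime (\ref{eq:SmallnessConditionForConstruction}) relevant to this paper this is immediate from the bootstrap below, which also pins $c$ down uniquely, whence uniqueness of the data set in the prescribed class (any such data set solves the same reduced problem). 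Setting $a=\tfrac{1}{2c}$ and reconstructing $(r_{/},\Omega_{/}^{2},\bar f_{/})$ gives the asserted data set. Finally, since $\mathcal{F}$, hence $F$, vanishes on neighbourhoods of both $v=0$ and $v=v_{\mathcal{I}}$, the data coincides there with a rescaled AdS data set; thus Condition~1 of Definition~\ref{def:SmoothlyCompatibleInitialData} is the constraint (\ref{eq:ConstraintVFinal}), imposed by construction, while Conditions~2--3 -- including smoothness of $\bar f_{/}(v,\Omega_{/}^{-2}\bar p,l)$, which by the gauge relation equals $F\big(v;\tfrac{1-\frac{1}{3}\Lambda r_{/}^{2}}{4\Phi}\,\bar p,l\big)$ and so vanishes near $v_{\mathcal{I}}$ -- are inherited from the AdS data, and (\ref{eq:BoundedSupportDefinition}) follows from compactness of $\mathrm{supp}(F)$.

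For the estimates I would assume $\mathcal{M}[F]<c_{0}$ and bootstrap about the rescaled AdS solution, for which $\Phi_{AdS}\doteq\partial_{v}r_{AdS/}^{(v_{\mathcal{I}})}/(1-\tfrac{1}{3}\Lambda(r_{AdS/}^{(v_{\mathcal{I}})})^{2})=\tfrac{\pi}{2v_{\mathcal{I}}}\sqrt{-\tfrac{3}{\Lambda}}$ is constant and $\rho_{AdS/}^{(v_{\mathcal{I}})}(v)=\tfrac{\pi v}{2v_{\mathcal{I}}}$ (see (\ref{eq:AdSMetricValuesRescaled1})). Assuming a priori $\tfrac{1}{2}\Phi_{AdS}\le\Phi\le2\Phi_{AdS}$ on $[0,v_{\mathcal{I}}]$ makes $\rho_{/}$ comparable to $\rho_{AdS/}^{(v_{\mathcal{I}})}$, in particular confined to a fixed compact subset of $(0,\tfrac{\pi}{2})$ on $\mathrm{supp}(\mathcal{F})$. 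The change of variables $w=\rho_{/}(v)$ gives the closed form $\int_{0}^{v_{\mathcal{I}}}\tfrac{r_{/}(T_{/})_{vv}}{\partial_{v}r_{/}}\,dv=\int_{0}^{\pi/2}\mathcal{F}(\rho_{/}^{-1}(w))\tfrac{\cos^{3}w}{\sin w}\,dw$, and $\mathcal{M}[F]$ is given by the same expression with $\rho_{AdS/}^{(v_{\mathcal{I}})}$ in place of $\rho_{/}$, up to an explicit $\Lambda,v_{\mathcal{I}}$-dependent constant (this is the reason for the weight in the definition of $\mathcal{M}[F]$); since $\rho_{/}^{-1}$ is uniformly close to $(\rho_{AdS/}^{(v_{\mathcal{I}})})^{-1}$ on the support, comparing the two yields $\int_{0}^{v_{\mathcal{I}}}\tfrac{r_{/}(T_{/})_{vv}}{\partial_{v}r_{/}}\,dv\lesssim\mathcal{M}[F]$, which is (\ref{eq:InitialSmallNorm}). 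Hence $\psi(v)\doteq4\pi\int_{0}^{v}\tfrac{r_{/}(T_{/})_{vv}}{\partial_{v}r_{/}}$ satisfies $0\le\psi\lesssim\mathcal{M}[F]$, so $a=\sqrt{-\tfrac{\Lambda}{3}}\tfrac{1}{\pi}\int_{0}^{v_{\mathcal{I}}}e^{\psi}\,dv=\sqrt{-\tfrac{\Lambda}{3}}\tfrac{v_{\mathcal{I}}}{\pi}(1+O(\mathcal{M}[F]))$ and $\Phi=\tfrac{1}{2a}e^{\psi}=\Phi_{AdS}(1+O(\mathcal{M}[F]))$; for $c_{0}$ small this closes the bootstrap with a strictly improved constant and gives (\ref{eq:InitialEstimateDifferenceFromAdS}). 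The main obstacle, to my mind, is the structural reduction above -- recognising and verifying that in the normalised gauge the ansatz (\ref{eq:AlmostVlasovFieldInvariant Form}) forces the Vlasov source $\tfrac{r_{/}(T_{/})_{vv}}{\partial_{v}r_{/}}$ to depend on $F$ only through the single $v$-function $\mathcal{F}$ -- together with handling the removable singularity of the reduced equation at $r_{/}=0$ simultaneously with the shooting argument.
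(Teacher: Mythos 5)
This paper does not actually contain a proof of this lemma (it is imported verbatim from \cite{MoschidisVlasovWellPosedness}), so I can only check your argument internally and against the identities the paper does record. Your structural reduction is correct and consistent with those: in the gauge (\ref{eq:NormalisedGaugeCondition}) the ansatz forces $(T_{/})_{vv}$ to take exactly the form (\ref{eq:ExpressionEnergyMomentumPrime}), the constraint integrates to (\ref{eq:FormulaForDvR/2}), and your first--order system for $(\rho_{/},\Phi)$ with $\rho_{/}(0)=0$, $\rho_{/}(v_{\mathcal{I}})=\pi/2$ and one free constant $\Phi(0)=c$ is a faithful reformulation of the problem; likewise the bootstrap about the rescaled AdS profile is the right mechanism for (\ref{eq:InitialEstimateDifferenceFromAdS})--(\ref{eq:InitialSmallNorm}) under (\ref{eq:SmallnessConditionForConstruction}), and is essentially the computation the paper itself repeats in the proof of Lemma \ref{lem:FirstSmallnessBoundInitialData}.

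The gap is in the existence--uniqueness claim, which the lemma asserts for \emph{every} smooth compactly supported $F$, with no smallness. Your shooting argument establishes only that $v^{*}(c)\rightarrow 0$ as $c\rightarrow\infty$, and you defer the intermediate--value step to the bootstrap, i.e.\ to the regime (\ref{eq:SmallnessConditionForConstruction}); for general $F$ you never show that the range of $c\mapsto v^{*}(c)$ reaches $v_{\mathcal{I}}$, and the natural completion --- proving $v^{*}(c)\rightarrow\infty$ as $c\rightarrow0$ --- genuinely fails for large data. In the $\rho$--variable your system reads $d\log\Phi/d\rho=4\pi\mathcal{F}(v(\rho))\,\cos^{3}\rho/\sin\rho$, whose kernel behaves like $4\pi\mathcal{F}/\rho$ near the centre; if $4\pi\mathcal{F}>1$ on its $v$--support $[\alpha,\beta]$, then as $c\rightarrow0$ the solution enters the support at the tiny angle $\rho_{\alpha}\sim\sqrt{-\Lambda/3}\,c\,\alpha$, the elapsed $v$--time for $\rho$ to sweep from $\rho_{\alpha}$ up to any fixed angle converges to a \emph{finite} limit of order $\alpha/(4\pi\mathcal{F}-1)$ (in units $\sqrt{-\Lambda/3}=1$), and $\Phi$ is amplified like $c^{1-4\pi\mathcal{F}}\rightarrow\infty$; consequently, if $\beta-\alpha$ exceeds that limit, $v^{*}(c)$ remains bounded as $c\rightarrow0$, and together with continuity and $v^{*}\rightarrow0$ at $c\rightarrow\infty$ the shooting map has bounded range and simply misses large values of $v_{\mathcal{I}}$. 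So at this step either a smallness hypothesis or an argument of a genuinely different nature is needed --- your sketch does not decide it. The uniqueness claim has the same defect: every admissible data set corresponds to a root of $v^{*}(c)=v_{\mathcal{I}}$, and your bootstrap pins down $c$ only among near--AdS solutions; since the source is increasing in $\Phi$ but decreasing in $\rho$, $v^{*}$ is not monotone in $c$, so the absence of further roots outside the perturbative regime is not established. As written, the proposal proves the lemma only under (\ref{eq:SmallnessConditionForConstruction}), not in the stated generality.
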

For the proof of Lemma \ref{lem:FreeFVlasov}, see \emph{\cite{MoschidisVlasovWellPosedness}.}

In general, a gauge normalising transformation (as in Definition \ref{def:GaugeNormalisation})
is not smoothly compatible; that is to say, an initial data set $(r_{/},\Omega_{/}^{2},\bar{f}_{/};v_{\mathcal{I}})$
expressed in a gauge where (\ref{eq:NormalisedGaugeCondition}) holds
will not, in general, satisfy Condition 4 of Definition \ref{def:SmoothlyCompatibleInitialData}
(see also the more detailed discussion in Sections 3.2 and 3.3 in
\cite{MoschidisVlasovWellPosedness}). 
\begin{rem*}
For the trivial initial data set $(r_{AdS/},\Omega_{AdS/}^{2},0;\sqrt{-\frac{3}{\Lambda}}\pi)$,
the gauge normalised form (\ref{eq:AdSMetricValues}) is also smoothly
compatible (see \cite{MoschidisVlasovWellPosedness}).
\end{rem*}
The following lemma, which is established in \cite{MoschidisVlasovWellPosedness},
shows that any initial data set $(r_{/},\Omega_{/}^{2},\bar{f}_{/};v_{\mathcal{I}})$
which satisfies Conditions 1\textendash 3 of Definition \ref{def:SmoothlyCompatibleInitialData}
when expressed in a gauge where (\ref{eq:NormalisedGaugeCondition})
holds can be gauge transformed into a smoothly compatible initial
data set, provided $\bar{f}_{/}$ is supported away from $v=0,v_{\mathcal{I}}$
and $l=0$. 
\begin{lem}
\label{lem:NormalisationToSmoothCompatibility} \emph{(Lemma 3.3 in
\cite{MoschidisVlasovWellPosedness}).} Let $(r_{/},\Omega_{/}^{2},\bar{f}_{/};v_{\mathcal{I}})$
satisfy Conditions 1\textendash 3 of Definition \ref{def:SmoothlyCompatibleInitialData},
as well as the normalised gauge condition (\ref{eq:NormalisedGaugeCondition}).
Assume that $\bar{f}_{/}$ is supported away from $v=0,v_{\mathcal{I}}$
and $l=0$, i.\,e.~there exists some $\bar{\delta}>0$, such that
$\bar{f}_{/}$ satisfies 
\begin{equation}
\bar{f}_{/}(v;p,l)=0\text{ for }v\in(0,\bar{\delta}]\cup[v_{\mathcal{I}}-\bar{\delta},v_{\mathcal{I}})\label{eq:ZeroFNearBoundary}
\end{equation}
and 
\begin{equation}
\bar{f}_{/}(v;p,l)=0\text{ for }l\in[0,\bar{\delta}].\label{eq:ZeroFSmallL}
\end{equation}
Then, there exists a gauge transformation $v\rightarrow v'(V)$, $(r_{/},\Omega_{/}^{2},\bar{f}_{/};v_{\mathcal{I}})\rightarrow(r_{/}^{\prime},(\Omega_{/}^{\prime})^{2},\bar{f}_{/}^{\prime};v_{\mathcal{I}})$
(of the form (\ref{eq:InitialDataGaugeTransformation})), satisfying
$V\in C^{\infty}[0,+\infty)$, (\ref{eq:ConditionForAxisANdInfinityInitialData})
and
\begin{equation}
V(v)=v\text{ for }v\le v_{\mathcal{I}}-\frac{1}{2}\bar{\delta},\label{eq:ConditionGaugeOnlAway}
\end{equation}
such that the transformed initial data set $(r_{/}^{\prime},(\Omega_{/}^{\prime})^{2},\bar{f}_{/}^{\prime};v_{\mathcal{I}})$
satisfies all of the Conditions 1\textendash 4 of Definition \ref{def:SmoothlyCompatibleInitialData}. 

Furthermore, for any $\varepsilon_{0}\in(0,1)$, the gauge transformation
can be chosen so that 
\begin{equation}
1-\varepsilon_{0}\le\frac{dV}{dv}(v)\le1+\varepsilon_{0}\text{ for }v\in[0,v_{\mathcal{I}}]\label{eq:FirstDerivativeTransformation}
\end{equation}
and 
\begin{equation}
\max_{v\in[0,v_{\mathcal{I}}]}\Big|\frac{d^{2}V}{(dv)^{2}}(v)\Big|\le\int_{0}^{v_{\mathcal{I}}}\Big(\frac{1-\Lambda r_{/}^{2}}{1-\frac{1}{3}\Lambda r_{/}^{2}}(T_{/})_{vv}+3(T_{/})_{uv}\Big)(v)\,dv+\frac{\varepsilon_{0}}{v_{\mathcal{I}}},\label{eq:SecondDerivativeTransformation}
\end{equation}
where $(T_{/})_{vv}$, $(T_{/})_{uv}$ are defined in terms of $(r_{/},\Omega_{/}^{2},\bar{f}_{/})$
by (\ref{eq:ComponentsStressEnergy}) with $\bar{f}_{/}(v;p^{u},l)$
in place of $\bar{f}(u,v;p^{u},p^{v},l)\Big|_{\Omega^{2}p^{u}p^{v}=\frac{l^{2}}{r^{2}}}$
and $\frac{l^{2}}{\Omega_{/}^{2}r_{/}^{2}p^{u}}$ in place of $p^{v}$. 
\end{lem}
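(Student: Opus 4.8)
The plan is to use decisively the hypotheses \eqref{eq:ZeroFNearBoundary}--\eqref{eq:ZeroFSmallL}. On the two collars $(0,\bar\delta]$ and $[v_{\mathcal{I}}-\bar\delta,v_{\mathcal{I}})$ the field $\bar{f}_{/}$ vanishes, so the constraint \eqref{eq:ConstraintVFinal} degenerates to $\partial_{v}(\Omega_{/}^{-2}\partial_{v}r_{/})=0$; combined with the normalised gauge condition \eqref{eq:NormalisedGaugeCondition} (equivalently the formula \eqref{eq:FormulaForDvR/2}) this pins $(r_{/},\Omega_{/}^{2})$ to an \emph{explicit} profile there. Writing $\rho\doteq\tan^{-1}(\sqrt{-\Lambda/3}\,r_{/})$ as in \eqref{eq:RenormalisedQuantities} and $\mathcal{J}(v)\doteq 4\pi\int_{0}^{v}\tfrac{r_{/}(T_{/})_{vv}}{\partial_{v}r_{/}}\,d\bar{v}$, one checks from \eqref{eq:FormulaForDvR/2} that $\partial_{v}\rho=\sqrt{-\Lambda/3}\,\tfrac{1}{2a}\,e^{\mathcal{J}(v)}$, which is constant on each collar (where $\mathcal{J}$ is constant), equal to $\sqrt{-\Lambda/3}\,\tfrac{1}{2a}$ on $(0,\bar\delta]$ and to $\sqrt{-\Lambda/3}\,\tfrac{1}{2a}\,e^{\mathcal{J}}$ on $[v_{\mathcal{I}}-\bar\delta,v_{\mathcal{I}})$, with $\mathcal{J}\doteq\mathcal{J}(v_{\mathcal{I}})\ge0$; likewise $\widetilde{\Omega}_{/}^{2}=-\tfrac{12}{\Lambda}(\partial_{v}\rho)^{2}$ is constant on each collar. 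Thus near each endpoint the metric data is a \emph{linear} reparametrisation of the standard AdS profile \eqref{eq:AdSMetricValuesRescaled1}, with the two linear factors differing by $e^{\mathcal{J}}>1$ whenever $\bar{f}_{/}\not\equiv0$.

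First I would observe that Conditions 1--3 of Definition~\ref{def:SmoothlyCompatibleInitialData} are preserved under any transformation \eqref{eq:InitialDataGaugeTransformation} with $V\in C^{\infty}$, $V(0)=0$, $V(v_{\mathcal{I}})=v_{\mathcal{I}}$, $V'>0$ (solutions map to solutions, and the relevant boundary values of $r_{/},\Omega_{/}^{2},1/r_{/},r_{/}^{-2}\Omega_{/}^{2}$ transform trivially), so the entire content is Condition~4, i.e.\ smooth compatibility at the axis and at $\mathcal{I}$. At the axis this already holds for the data \emph{as given}: near $v=0$ the data is the rescaled-AdS vacuum profile above, whose axis-germ is smoothly compatible for any value of the linear factor (reducing to the AdS data, whose gauge-normalised form is smoothly compatible, as recorded above), and $\bar{f}_{/}\equiv0$ there; this is why we may keep $V(v)=v$ for $v\le v_{\mathcal{I}}-\tfrac12\bar\delta$, which moreover leaves $\bar{f}_{/}$ entirely unchanged (its support lies in $(\bar\delta,v_{\mathcal{I}}-\bar\delta)$), so the Vlasov part of Condition~4 needs no revisiting. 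The sole failure is at $\mathcal{I}$: since the double-null gauge is \emph{pinned} by $\{u=v\}=\mathcal{Z}$ and $\{u=v-v_{\mathcal{I}}\}=\mathcal{I}$ (cf.\ \eqref{eq:GaugeConditionAxisInfinity}--\eqref{eq:ConditionForAxisANdInfinityInitialData}), smooth compatibility of the development at conformal infinity forces the germ of $(r_{/},\Omega_{/}^{2})$ at $v_{\mathcal{I}}$ to be the AdS germ with the \emph{same} $\rho$-linear factor as at $v=0$, and the discrepancy $e^{\mathcal{J}}\neq1$ is precisely this (unique) obstruction.

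The correction is then explicit: set $V(v)=v$ for $v\le v_{\mathcal{I}}-\tfrac12\bar\delta$, $V(v)=v_{\mathcal{I}}+e^{\mathcal{J}}(v-v_{\mathcal{I}})$ for $v\in[v_{\mathcal{I}}-\tfrac14\bar\delta,v_{\mathcal{I}}]$, and interpolate by a fixed smooth monotone profile on the remaining interval, with all derivatives matching at the two junctions. Then $V\in C^{\infty}[0,+\infty)$, $V(0)=0$, $V(v_{\mathcal{I}})=v_{\mathcal{I}}$, $V'(0)=1$ (take $\tfrac{dU}{du}(0)=1$ in \eqref{eq:ConditionForAxisANdInfinityInitialData}), and $V'(v_{\mathcal{I}})=e^{\mathcal{J}}$ cancels the discrepancy, so the transformed metric data near $v_{\mathcal{I}}$ is the AdS germ with the $v=0$ linear factor; hence Condition~4 at $\mathcal{I}$ holds (with $\bar{f}_{/}'\equiv0$ there), and, together with the preservation of Conditions 1--3 and of Condition~4 at the axis, the first assertion follows. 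For the quantitative part I would use $e^{\mathcal{J}}-1\lesssim\mathcal{J}$ and, via the normalised-gauge identity $\partial_{v}r_{/}=\tfrac{1-\frac13\Lambda r_{/}^{2}}{2a}\,e^{\mathcal{J}(v)}\gtrsim\tfrac{1-\frac13\Lambda r_{/}^{2}}{2a}$ together with $\tfrac{r_{/}}{1-\frac13\Lambda r_{/}^{2}}\lesssim\tfrac{1-\Lambda r_{/}^{2}}{1-\frac13\Lambda r_{/}^{2}}$ up to an absolute constant, the bound $\mathcal{J}\lesssim\int_{0}^{v_{\mathcal{I}}}\big(\tfrac{1-\Lambda r_{/}^{2}}{1-\frac13\Lambda r_{/}^{2}}(T_{/})_{vv}+(T_{/})_{uv}\big)\,dv$; choosing the interpolating profile so that $V'$ departs from $1$ by only $O(\mathcal{J})$ and $|V''|\lesssim\mathcal{J}+\varepsilon_{0}/v_{\mathcal{I}}$ then yields \eqref{eq:FirstDerivativeTransformation}--\eqref{eq:SecondDerivativeTransformation} in the near-AdS regime $\mathcal{M}[F]\ll1$ of the intended applications (cf.\ \eqref{eq:InitialEstimateDifferenceFromAdS}--\eqref{eq:InitialSmallNorm}).

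The step I expect to be the main obstacle is pinning down, and verifying, the precise content of Condition~4 at $\mathcal{I}$ for normalised-gauge data with vacuum collars: that smooth compatibility of the development at conformal infinity is governed by the single invariant $e^{\mathcal{J}}$ — the ratio of the $\partial_{v}\rho$-normalisations at the two endpoints, which the transcendental matching of the germs of $\rho$ and $\widetilde{\Omega}^{2}$ at $v=0$ and $v_{\mathcal{I}}$ imposed by the pinned $u$-coordinate forces to equal $1$ — and that the affine $V$ above removes it. A secondary, purely quantitative, difficulty is that the correction is confined to the vacuum collar $[v_{\mathcal{I}}-\bar\delta,v_{\mathcal{I}})$ of fixed (possibly small) width, so the bound on $V''$ requires some care to pin down in terms of the energy integrals alone; here smallness of $\mathcal{M}[F]$ (hence of $\mathcal{J}$), which holds in all the applications, is what makes the claimed bounds attainable for arbitrary $\varepsilon_{0}$. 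Throughout, the hypotheses \eqref{eq:ZeroFNearBoundary}--\eqref{eq:ZeroFSmallL} do double duty: they make the endpoint collars vacuum (yielding the explicit profiles and the freedom to reparametrise near $v_{\mathcal{I}}$) and, through the exclusion of $l=0$, keep the axis- and $\mathcal{I}$-regularity of the Vlasov field itself out of the argument.
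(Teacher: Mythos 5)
This paper does not actually contain a proof of this lemma (it is quoted from the companion paper \cite{MoschidisVlasovWellPosedness}), so I can only assess your argument on its own terms — and I believe its central mechanism is not correct. You identify the obstruction to Condition~4 at the $\mathcal{I}$--corner as the mismatch $e^{\mathcal{J}}$ between the constant values of $\partial_{v}\rho$ on the two vacuum collars, and you remove it with an affine reparametrisation having $\frac{dV}{dv}(v_{\mathcal{I}})=e^{\mathcal{J}}$. This is incompatible with the statement you are trying to prove: (\ref{eq:FirstDerivativeTransformation}) is asserted for \emph{every} $\varepsilon_{0}\in(0,1)$ with no smallness hypothesis on the data, whereas your transformation violates it as soon as $e^{\mathcal{J}}-1>\varepsilon_{0}$; you concede this by retreating to the regime $\mathcal{M}[F]\ll1$, i.e.\ you prove at best a weaker statement. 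Moreover, the claimed first-order condition (equality of the renormalised slopes at the two endpoints) is not a feature of smoothly compatible data in general: for data induced on a later slice $\{u=u_{*}\}$ of a development with matter — which the paper asserts is smoothly compatible — the renormalised $\partial_{v}r$ at the axis end and at the infinity end differ by matter/mass integrals along the slice, so this cannot be the content of Condition~4, and your heuristic that the pinned gauge ``forces the germ at $v_{\mathcal{I}}$ to be the AdS germ with the same $\rho$-linear factor as at $v=0$'' is unsubstantiated.

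The structural point you miss is that, although the data on the collar $[v_{\mathcal{I}}-\bar{\delta},v_{\mathcal{I}})$ looks like linearly reparametrised AdS data in the variables $(\rho,\widetilde{\Omega}^{2})$, the development near the corner $(0,v_{\mathcal{I}})$ is \emph{not} locally AdS: it is vacuum with total renormalised mass $\tilde{m}_{/}(v_{\mathcal{I}})\neq0$, which enters the corner jets through $(\partial_{u}r)_{/}$ via (\ref{eq:DuRInitially}). At first order the normalised gauge already yields the corner tangency relation, since (\ref{eq:EqualDuRDvRInitially}) together with $(1-\tfrac{2m}{r})/(1-\tfrac13\Lambda r^{2})\to1$ as $v\to v_{\mathcal{I}}$ gives $\frac{\partial_{v}r_{/}}{1-\frac13\Lambda r_{/}^{2}}=\frac{-(\partial_{u}r)_{/}}{1-\frac13\Lambda r_{/}^{2}}$ in the limit — so no slope correction of size $e^{\mathcal{J}}$ is needed; the genuine failures of Condition~4 occur at second and higher order and are proportional to the mass/energy integrals, and they cannot be cancelled by an affine map (which leaves all higher derivatives of $V$ at $v_{\mathcal{I}}$ equal to zero). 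This is exactly what the quantitative conclusion encodes: one must prescribe a nontrivial higher-order jet of $V$ at $v_{\mathcal{I}}$, with $\frac{d^{2}V}{dv^{2}}$ of the size of the integral in (\ref{eq:SecondDerivativeTransformation}) while $\frac{dV}{dv}$ stays within an arbitrary $\varepsilon_{0}$ of $1$ (a Borel-type localisation of the jet inside the vacuum collar). Your proposal neither identifies these higher-order conditions nor explains how an affine $V$ could satisfy them, so the key step — verifying Condition~4 at $v=v_{\mathcal{I}}$ — remains open; the parts of your argument concerning preservation of Conditions~1--3, axis compatibility on the vacuum collar at $v=0$, and the harmlessness of the transformation for $\bar{f}_{/}$ are fine.
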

For the proof of Lemma \ref{lem:NormalisationToSmoothCompatibility},
see \emph{\cite{MoschidisVlasovWellPosedness}.}
\begin{rem*}
The above lemma applies, in particular, to the normalised initial
data sets $(r_{/},\Omega_{/}^{2},\bar{f}_{/};v_{\mathcal{I}})$ provided
by Lemma \ref{lem:FreeFVlasov} for any function $F$ which is compactly
supported in $(0,v_{\mathcal{I}})\times(0,+\infty)^{2}$. 
\end{rem*}

\subsection{\label{subsec:Well-posedness-of-the}Well-posedness of the characteristic
initial-boundary value problem and the maximal future development}

In this section, we will formulate the notion of a \emph{development}
of a smoothly compatible, asymptotically AdS initial data set (see
Definition \ref{def:SmoothlyCompatibleInitialData}) with respect
to the system (\ref{eq:RequationFinal})\textendash (\ref{NullShellFinal}),
assuming the reflecting boundary condition on $\mathcal{I}$ (see
Definition \ref{def:ReflectingBoundaryCondition}). We will then present
a fundamental well-posedness result for the associated characteristic
initial\textendash boundary value problem for (\ref{eq:RequationFinal})\textendash (\ref{NullShellFinal}),
culminating in the statement of the existence and uniqueness of a
maximal future development for any given smoothly compatible, asymptotically
AdS initial data set with bounded support in phase space. The proofs
of the results presented in this section (together with a wider collection
of well-posedness results) can be found in our companion paper \cite{MoschidisVlasovWellPosedness}.

The following class of domains in the $(u,v)$-plane will appear naturally
as the class of domains of definition for solutions $(r,\Omega^{2},f)$
to the characteristic initial-boundary value problem for (\ref{eq:RequationFinal})\textendash (\ref{NullShellFinal});
see also \cite{MoschidisVlasovWellPosedness}.
\begin{defn}
\label{def:DevelopmentSets}For any given $v_{\mathcal{I}}>0$, we
will define $\mathscr{U}_{v_{\mathcal{I}}}$ to be the set of all
connected open domains $\mathcal{U}$ of the $(u,v)$-plane with piecewise
Lipschitz boundary $\partial\mathcal{U}$, with the property that
\begin{equation}
\partial\mathcal{U}=\mathcal{S}_{v_{\mathcal{I}}}\cup\gamma_{\mathcal{Z}}\cup\mathcal{I}\cup clos(\zeta),\label{eq:BoundaryOfU}
\end{equation}
where, for some $u_{\gamma_{\mathcal{Z}}},u_{\mathcal{I}}\in(0,+\infty]$,
\begin{equation}
\mathcal{S}_{v_{\mathcal{I}}}=\{0\}\times[0,v_{\mathcal{I}}],
\end{equation}
\begin{equation}
\gamma_{\mathcal{Z}}=\{u=v\}\cap\{0\le u<u_{\gamma_{\mathcal{Z}}}\},\label{eq:AxisForm}
\end{equation}
\begin{equation}
\mathcal{I}=\{u=v-v_{\mathcal{I}}\}\cap\{0\le u<u_{\mathcal{I}}\}\label{eq:InfinityForm}
\end{equation}
 and the Lipschitz curve $\zeta$ is achronal with respect to the
reference Lorentzian metric 
\begin{equation}
g_{ref}\doteq-dudv\label{eq:ComparisonUVMetric}
\end{equation}
on $\mathbb{R}^{2}$. In particular, the case $\zeta=\emptyset$ is
allowed. 
\end{defn}
\begin{figure}[h] 
\centering 
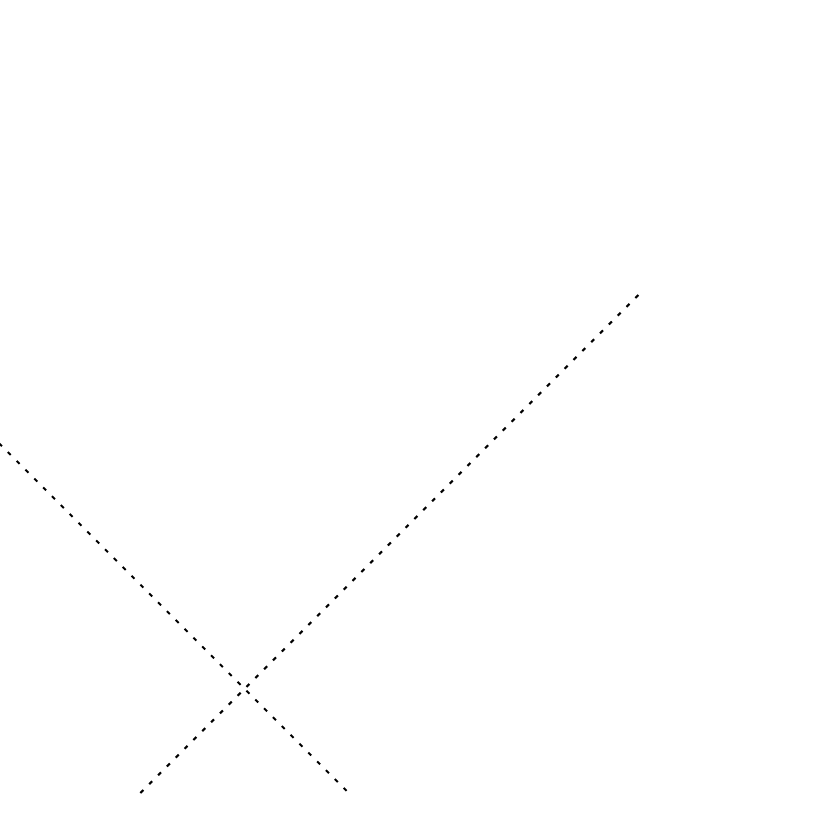 
\caption{Depicted above is a typical domain $\mathcal{U}\in\mathscr{U}_{v_{\mathcal{I}}}$. In the case when the boundary set $\zeta$ is empty, it is necessary that both $\gamma_{\mathcal{Z}}$ and $\mathcal{I}$ are unbounded (i.\,e.~extend all the way to $u+v=\infty$).}
\end{figure}
\begin{rem*}
In the case when $\zeta=\emptyset$ in (\ref{eq:BoundaryOfU}), it
is necessary that both $\text{\textgreek{g}}_{\mathcal{Z}}$ and $\mathcal{I}$
extend all the way to $u+v=+\infty$. 
\end{rem*}
We will define a \emph{future development} of an asymptotically AdS
characteristic initial data set for (\ref{eq:RequationFinal})\textendash (\ref{NullShellFinal})
with reflecting boundary conditions on $\mathcal{I}$ as follows:
\begin{defn}
\label{def:Development} For a given $v_{\mathcal{I}}>0$, let $(r_{/},\Omega_{/}^{2},\bar{f}_{/};v_{\mathcal{I}})$
be a smoothly compatible, asymptotically AdS initial data set for
the system (\ref{eq:RequationFinal})\textendash (\ref{NullShellFinal}),
as introduced by Definition \ref{def:SmoothlyCompatibleInitialData}.
A future development of $(r_{/},\Omega_{/}^{2},\bar{f}_{/};v_{\mathcal{I}})$
for (\ref{eq:RequationFinal})\textendash (\ref{NullShellFinal})
with reflecting boundary conditions on $\mathcal{I}$ will consist
of a domain $\mathcal{U}\subset\mathbb{R}^{2}$ belonging to the class
$\mathscr{U}_{v_{\mathcal{I}}}$ introduced in Definition \ref{def:DevelopmentSets},
together with a solution $(r,\Omega^{2},f)$ of the system (\ref{eq:RequationFinal})\textendash (\ref{NullShellFinal})
on $\mathcal{U}$, such that the following conditions hold:

\begin{enumerate}

\item $(\mathcal{U};r,\Omega^{2},f)$ is a smooth solution of (\ref{eq:RequationFinal})\textendash (\ref{NullShellFinal})
with smooth axis $\gamma_{\mathcal{Z}}$ and smooth conformal infinity
$\mathcal{I}$, in accordance with Definitions 3.1 and 3.2 of \cite{MoschidisVlasovWellPosedness}.

\item The solution $(r,\Omega^{2},f)$ induces the initial data $(r_{/},\Omega_{/}^{2},\bar{f}_{/};v_{\mathcal{I}})$
at $u=0$:
\begin{equation}
(r,\Omega^{2})(0,v)=(r_{/},\Omega_{/}^{2})(v)\label{eq:InitialROmegaForExistence}
\end{equation}
 and 
\begin{equation}
f(0,v;p^{u},p^{v},l)=\bar{f}_{/}(v;p^{u},l)\cdot\delta\Big(\Omega_{/}^{2}(v)p^{u}p^{v}-\frac{l^{2}}{r_{/}(v)}\Big).\label{eq:InitialFForExistence}
\end{equation}

\item The reflecting boundary condition (\ref{eq:ReflectingCondition})
is satisfied by $f$ along conformal infinity $\mathcal{I}$.

\end{enumerate}
\end{defn}
\begin{rem*}
For any smooth development $(\mathcal{U};r,\Omega^{2},f)$ as in Definition
\ref{def:Development}, the fact that the vector field $\partial_{v}+\partial_{u}$
is tangential to $\gamma_{\mathcal{Z}}$, $\mathcal{I}$ implies that
\begin{align}
\partial_{v}r|_{\gamma_{\mathcal{Z}}} & =-\partial_{u}r|_{\gamma_{\mathcal{Z}}},\label{eq:BoundaryConditionDrAxis}\\
\partial_{v}(\frac{1}{r})|_{\mathcal{I}} & =-\partial_{u}(\frac{1}{r})|_{\mathcal{I}}.\label{eq:BoundaryConditionDrInfinity}
\end{align}
Moreover, using the formula (\ref{eq:DefinitionHereHawkingMass})
for $\Omega^{2}$, equations (\ref{eq:RequationFinal})\textendash (\ref{eq:OmegaEquationFinal})
and (\ref{eq:RenormalisedEquations}) for $(r,\Omega^{2})$, the smoothness
of $(r,\Omega^{2},f)$ at $\gamma_{\mathcal{Z}}$, $\mathcal{I}$
(implying, among other things, that $m=O(r^{3})$ at $\gamma_{\mathcal{Z}}$)
and the boundary conditions (\ref{eq:BoundaryConditionDrAxis})\textendash (\ref{eq:BoundaryConditionDrInfinity})
for $r$, we infer that $\Omega^{2}$ satisfies the following Neuman-type
boundary conditions at $\gamma_{\mathcal{Z}}$, $\mathcal{I}$: 
\begin{align}
(\partial_{v}-\partial_{u})\Omega^{2}\Big|_{\gamma_{\mathcal{Z}_{\varepsilon}}} & =0,\label{eq:BoundaryConditionOmegaAxis}\\
(\partial_{v}-\partial_{u})\Big(\frac{\Omega^{2}}{1-\frac{1}{3}\Lambda r^{2}}\Big)\Bigg|_{\mathcal{I}} & =0.\label{eq:BoundaryConditionOmegaInfinity}
\end{align}
Let us also point out that, for any smooth development $(\mathcal{U};r,\Omega^{2},f)$
as in Definition \ref{def:Development} and any $u_{*}\in[0,u_{\gamma_{\mathcal{Z}}})$,
the characteristic initial data set $(r_{/u_{*}},\Omega_{/u_{*}}^{2},\bar{f}_{/u_{*}};v_{\mathcal{I}})$
induced on the slice $\{u=u_{*}\}\cap\mathcal{U}$ by $(r,\Omega^{2},f)$
is smoothly compatible, in accordance with Definition \ref{def:SmoothlyCompatibleInitialData};
see also the Discussion in Section 3.2 of \cite{MoschidisVlasovWellPosedness}.
\end{rem*}
The following proposition establishes the well-posedness of the innitial-boundary
value problem for (\ref{eq:RequationFinal})\textendash (\ref{NullShellFinal})
with reflecting boundary conditions on $\mathcal{I}$ in the class
$\mathfrak{B}_{0}$ of smoothly compatible, asymptotically AdS initial
data with bounded support in phase space, as introduced in Definition
\ref{def:SmoothlyCompatibleInitialData}: 
\begin{prop}
\emph{(Theorem 4.1 of \cite{MoschidisVlasovWellPosedness}).} \label{thm:LocalExistenceUniqueness}
Let $(r_{/},\Omega_{/}^{2},\bar{f}_{/};v_{\mathcal{I}})\in\mathfrak{B}_{0}$
(see Definition \ref{def:SmoothlyCompatibleInitialData}). Then, there
exists a $u_{*}>0$ (depending on $(r_{/},\Omega_{/}^{2},\bar{f}_{/};v_{\mathcal{I}})$)
and a unique solution $(r,\Omega^{2}f)$ of (\ref{eq:RequationFinal})\textendash (\ref{NullShellFinal})
on the domain 
\begin{equation}
\mathcal{U}_{u_{*};v_{\mathcal{I}}}\doteq\big\{0\le u<u_{*}\big\}\cap\big\{ u<v<u+v_{\mathcal{I}}\big\},\label{eq:GeneralDomain}
\end{equation}
 such that $(\mathcal{U}_{u_{*},v_{\mathcal{I}}};r,\Omega^{2}f)$
is a future development of $(r_{/},\Omega_{/}^{2},\bar{f}_{/};v_{\mathcal{I}})$
with reflecting boundary conditions on $\mathcal{I}$, in accordance
with Definition \ref{def:Development}. 
\end{prop}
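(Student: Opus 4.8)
The plan is to realise the development as the unique fixed point of a standard Dafermos-type double null iteration on a short retarded-time slab $\{0\le u<u_{*}\}$, alternating between solving the massless Vlasov transport equation along the (reflected) null geodesic flow of a candidate metric and solving the reduced Einstein equations for the metric given the resulting energy-momentum tensor. The whole scheme is run in the gauge fixing $r=0$ on $\gamma_{\mathcal{Z}}=\{u=v\}$ and $1/r=0$ on $\mathcal{I}=\{u=v-v_{\mathcal{I}}\}$. Near the axis one works directly with $(r,\Omega^{2})$ and the equations (\ref{eq:RequationFinal}), (\ref{eq:OmegaEquationFinal}), (\ref{eq:ConstraintUFinal}); near conformal infinity one substitutes the renormalised variables $(\rho,\widetilde{\Omega}^{2},\tau_{\mu\nu})$ of (\ref{eq:RenormalisedQuantities}), which obey the regular system (\ref{eq:RenormalisedEquations}) and the Neumann-type boundary conditions (\ref{eq:BoundaryConditionOmegaAxis})--(\ref{eq:BoundaryConditionOmegaInfinity}), so that ``smooth up to $\mathcal{I}$'' becomes ordinary smoothness of $(\rho,\widetilde{\Omega}^{2})$ on the closed triangle $\mathcal{U}_{u_{*};v_{\mathcal{I}}}$. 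Conditions 1--4 of Definition \ref{def:SmoothlyCompatibleInitialData} are precisely what ensure that the data, extended/reflected across $\gamma_{\mathcal{Z}}$ and $\mathcal{I}$ with the correct parity, are smooth there, so that the iterates can be kept in a function space that is smooth (with the appropriate weights at $\mathcal{I}$) up to all four corners.

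\emph{The matter map.} Given a candidate geometry $(r,\Omega^{2})$ on $\mathcal{U}_{u_{*};v_{\mathcal{I}}}$ lying in a small ball, in a norm $\|\cdot\|$ controlling $r$, $\Omega^{2}$, their first derivatives and finitely many higher derivatives, one solves the reduced geodesic system (\ref{eq:NewNullGeodesicsSphericalSymmetry}) on $\mathcal{P}^{+}$, continued through reflections off $\mathcal{I}$ as in Definition \ref{def:ReflectingBoundaryCondition}, and transports $\bar f_{/}$ along the resulting flow lines to define $f$. Since future-directed null geodesics are timelike for $g_{ref}=-du\,dv$ they advance monotonically in $u$, and for $u_{*}$ small each flow line meets $\mathcal{I}$ at most once; quantitative control of $p^{u},p^{v}$ (and of the reflection data) along each flow line comes from the integrated identities (\ref{eq:XrhsimhGewdaisiakh-U})--(\ref{eq:XrhsimhGewdaisiakh-V}), in which the transport of $\log(\Omega^{2}\dot\gamma^{u})$ is rewritten as a bulk integral of $\tfrac{1}{2}\tfrac{6\tilde m/r-1}{r^{2}}\Omega^{2}-24\pi T_{uv}$, a quantity that is $O(\|\cdot\|)$-small over a slab of width $u_{*}$. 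Propagating the bounded-support-in-phase-space condition (\ref{eq:BoundedSupportDefinition}) in this way, the quantities $T_{\mu\nu}[f]$, $N_{\mu}[f]$ are smooth tensor fields via (\ref{eq:ComponentsStressEnergy}), and (\ref{eq:VDerivativeMassfromTotalParticleCurrent})--(\ref{eq:UDerivativeMassFromReducedTotalCurrent}) bound the relevant mass-type combinations; this produces a map $(r,\Omega^{2})\mapsto T_{\mu\nu}[f]$ that is Lipschitz in $\|\cdot\|$.

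\emph{The geometry map and the contraction.} Given $T_{\mu\nu}$, $N_{\mu}$, one integrates the constraint (\ref{eq:ConstraintVFinal}) (valid on $\{u=0\}$ by hypothesis and propagated in $u$), the constraint (\ref{eq:ConstraintUFinal}), and the evolution equations (\ref{eq:RequationFinal}) for $r$ and (\ref{eq:OneMoreUsefulEquationOmega}) for $\Omega^{2}$ --- or, near $\mathcal{I}$, the renormalised system (\ref{eq:RenormalisedEquations}) --- subject to $r=0$ on $\gamma_{\mathcal{Z}}$, $1/r=0$ on $\mathcal{I}$, and the boundary relations (\ref{eq:BoundaryConditionDrAxis})--(\ref{eq:BoundaryConditionOmegaInfinity}). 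Standard Duhamel/energy estimates for this $\partial_{u}\partial_{v}$ system on the triangle show that the output $(r,\Omega^{2})$ stays in the ball and depends Lipschitz-continuously on $T_{\mu\nu}$, once more with a gain of a factor $Cu_{*}$ in any difference estimate. Composing the two maps gives $\Phi$, a contraction on the small ball provided $u_{*}$ is chosen small in terms of the data; its fixed point is the desired development $(r,\Omega^{2},f)$, and smoothness up to $\gamma_{\mathcal{Z}}$ and $\mathcal{I}$ follows by running the same scheme at the level of higher derivatives, the compatibility conditions providing the corner data needed to start the bootstrap. Uniqueness follows from the same difference estimates applied to two developments over a common domain, closed by a Grönwall argument.

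\emph{Main obstacle.} The delicate point is closing the matter-map estimates at the low regularity imposed by Definition \ref{def:SmoothlyCompatibleInitialData}: the characteristic ODEs (\ref{eq:NewNullGeodesicsSphericalSymmetry}) contain $\partial_{u}\log\Omega^{2}$, $\partial_{v}\log\Omega^{2}$ and $\partial r/r$, so differentiating $f$ once in spacetime naively costs two derivatives of the metric, whereas $r$ and $\Omega^{2}$ only solve a $\partial_{u}\partial_{v}$ system whose right-hand side is itself a momentum average of $f$. The way around this is exactly the reformulation (\ref{eq:XrhsimhGewdaisiakh-U})--(\ref{eq:XrhsimhGewdaisiakh-V}), in which the Einstein equations are used to trade the ``bad'' second derivatives of the metric for $\tilde m/r^{2}$ and $T_{uv}$, i.e.\ for first-order and matter quantities controlled at the available regularity; this keeps the geodesic flow --- and the reflection map off $\mathcal{I}$ --- controlled uniformly up to the corners $\gamma_{\mathcal{Z}}\cap\{u=0\}$ and $\mathcal{I}\cap\{u=0\}$. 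A second, less severe point (for $u_{*}$ small) is the bookkeeping of reflections off $\mathcal{I}$: identifying which flow lines reach $\mathcal{I}$ inside the slab, and checking that the reflected momentum depends smoothly and boundedly on the incoming one, so that $f$ and its derivatives remain controlled across the reflection.
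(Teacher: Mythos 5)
There is nothing in this paper to compare your argument against: Proposition \ref{thm:LocalExistenceUniqueness} is quoted verbatim from the companion paper (it is Theorem 4.1 of \cite{MoschidisVlasovWellPosedness}), and the present paper explicitly defers its proof to Section 4.3 of that reference. So your proposal can only be judged on its own terms, not against a proof contained here.

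On its own terms, your sketch follows the route one would expect for this statement -- a double-null iteration alternating the Vlasov transport along the (reflected) geodesic flow with the integration of the constraint and evolution equations, run in the gauge $r|_{\gamma_{\mathcal{Z}}}=0$, $1/r|_{\mathcal{I}}=0$, with the renormalised variables (\ref{eq:RenormalisedQuantities})--(\ref{eq:RenormalisedEquations}) near $\mathcal{I}$ and the smooth-compatibility conditions of Definition \ref{def:SmoothlyCompatibleInitialData} supplying the corner data -- and your identification of the derivative bookkeeping for $f$ as the main difficulty, together with the use of the identities (\ref{eq:XrhsimhGewdaisiakh-U})--(\ref{eq:XrhsimhGewdaisiakh-V}) to avoid second derivatives of the metric in the geodesic equations, is in the spirit of how this paper itself controls the geodesic flow. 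Two points in your write-up deserve correction or elaboration. First, the claim that the bulk term in (\ref{eq:XrhsimhGewdaisiakh-U}) is ``$O(\|\cdot\|)$-small over a slab of width $u_{*}$'' is not right near the axis: for a geodesic of small angular momentum the integrand is of size $\Omega^{2}/r^{2}$ on a region where $r\sim l/E_{0}$, and the resulting contribution is merely bounded (compare the factors $e^{e^{200C_{0}}}$ in Lemma \ref{lem:GeodesicPaths}), not small in $u_{*}$; this does not break your scheme, since for the matter map you only need uniform bounds and Lipschitz dependence while the contraction factor is harvested from the $u$-integrations in the geometry map, but the smallness as stated is false and should not be leaned on. Second, besides the reflection off $\mathcal{I}$ you must also treat the passage of radial and nearly-radial geodesics through (or near) the centre $r=0$, where the coefficients $\partial\log\Omega^{2}$ and $\partial r/r$ in (\ref{eq:NewNullGeodesicsSphericalSymmetry}) are singular in the naive sense and where smoothness of $f$, $T_{\mu\nu}[f]$ and of the iterates for $(r,\Omega^{2})$ at $\gamma_{\mathcal{Z}}$ has to be propagated (the condition $m=O(r^{3})$ and the parity/compatibility conditions at the axis are what make this work); your sketch only alludes to this with the phrase ``correct parity'', and a complete argument would have to carry this through the iteration and the uniqueness step. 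With those caveats, I see no structural obstruction in your outline, but I cannot certify that it coincides with the argument actually given in \cite{MoschidisVlasovWellPosedness}.
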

For the proof of Proposition \ref{thm:LocalExistenceUniqueness},
see Section 4.3 of \cite{MoschidisVlasovWellPosedness}.

The existence of a unique \emph{maximal }future development for smoothly
compatible, asymptotically AdS characteristic initial data with bounded
support in phase space was also established in \cite{MoschidisVlasovWellPosedness}: 
\begin{prop}
(Corollary 4.2 of \emph{\cite{MoschidisVlasovWellPosedness}}). \label{cor:MaximalDevelopment}
Let $(r_{/},\Omega_{/}^{2},\bar{f}_{/};v_{\mathcal{I}})$ be initial
data set in $\mathfrak{B}_{0}$. Then there exists a unique future
development $(\mathcal{U}_{max};r,\Omega^{2},f)$ of $(r_{/},\Omega_{/}^{2},\bar{f}_{/};v_{\mathcal{I}})$
with reflecting boundary conditions on $\mathcal{I}$ having the following
property: If $(\mathcal{U}_{*};r_{*},\Omega_{*}^{2},f_{*})$ is any
other future development of $(r_{/},\Omega_{/}^{2},\bar{f}_{/};v_{\mathcal{I}})$
with reflecting boundary conditions on $\mathcal{I}$, then 
\begin{equation}
\mathcal{U}_{*}\subseteq\mathcal{U}_{max}
\end{equation}
 and 
\begin{equation}
(r,\Omega^{2},f)|_{\mathcal{U}_{*}}=(r_{*},\Omega_{*}^{2},f_{*}).
\end{equation}
The solution $(\mathcal{U}_{max};r,\Omega^{2},f)$ will be called
the maximal future development of $(r_{/},\Omega_{/}^{2},\bar{f}_{/};v_{\mathcal{I}})$
under the reflecting boundary condition on $\mathcal{I}$.
\end{prop}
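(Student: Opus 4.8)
\emph{Proof strategy.} The plan is the classical construction of the maximal development as the union of all future developments, the two non-formal ingredients being (i) a \emph{patching lemma}: any two future developments of $(r_{/},\Omega_{/}^{2},\bar{f}_{/};v_{\mathcal{I}})$ with reflecting boundary conditions on $\mathcal{I}$ coincide on the intersection of their domains; and (ii) the verification that the union of all such domains is again an admissible domain in the class $\mathscr{U}_{v_{\mathcal{I}}}$ (Definition \ref{def:DevelopmentSets}) carrying a future development in the sense of Definition \ref{def:Development}. Granting (i) and (ii), one sets $\mathcal{U}_{max}\doteq\bigcup_{\mathcal{U}}\mathcal{U}$, the union ranging over the (nonempty, by Proposition \ref{thm:LocalExistenceUniqueness}) family of all future developments of $(r_{/},\Omega_{/}^{2},\bar{f}_{/};v_{\mathcal{I}})$; defines $(r,\Omega^{2},f)$ on $\mathcal{U}_{max}$ by letting it coincide near each point with any development containing that point (well-defined by (i)); and then the asserted maximality and the uniqueness of $\mathcal{U}_{max}$ are immediate: any future development $(\mathcal{U}_{*};r_{*},\Omega_{*}^{2},f_{*})$ satisfies $\mathcal{U}_{*}\subseteq\mathcal{U}_{max}$ by construction and $(r,\Omega^{2},f)|_{\mathcal{U}_{*}}=(r_{*},\Omega_{*}^{2},f_{*})$ by (i), and a second domain with the stated maximality property would be contained in, and contain, $\mathcal{U}_{max}$.

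\emph{Step 1: the patching lemma.} Given two future developments $(\mathcal{U}_{1};r_{1},\Omega_{1}^{2},f_{1})$ and $(\mathcal{U}_{2};r_{2},\Omega_{2}^{2},f_{2})$, I would show that the set $\mathcal{A}\subseteq\mathcal{U}_{1}\cap\mathcal{U}_{2}$ on which $(r_{1},\Omega_{1}^{2})=(r_{2},\Omega_{2}^{2})$ together with all derivatives and $f_{1}=f_{2}$ (as measures on the fibres) is open, closed in $\mathcal{U}_{1}\cap\mathcal{U}_{2}$, and nonempty. Nonemptiness and openness near the common initial slice $\{u=0\}$ (and near $\gamma_{\mathcal{Z}}$, $\mathcal{I}$) follow from the uniqueness part of Proposition \ref{thm:LocalExistenceUniqueness} applied on small characteristic subdomains; openness at interior points is automatic. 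Closedness is the crux: at a point $p\in\overline{\mathcal{A}}\cap(\mathcal{U}_{1}\cap\mathcal{U}_{2})$ one invokes the local uniqueness theory underlying Proposition \ref{thm:LocalExistenceUniqueness} on a small characteristic rectangle (respectively a small neighbourhood of $\gamma_{\mathcal{Z}}$ or of $\mathcal{I}$) whose relevant domain of determinacy is contained in $\mathcal{A}$. Here one must keep in mind that the reflecting boundary condition of Definition \ref{def:ReflectingBoundaryCondition} ties the value of $f$ at $p$ to data propagated backward along broken null geodesics that may reflect off $\mathcal{I}$ finitely many times before meeting $\{u=0\}$, so the set determining the solution at $p$ is the \emph{past reflection domain} of $p$ with respect to $g_{ref}=-du\,dv$, bounded by $\gamma_{\mathcal{Z}}$, $\mathcal{I}$, $\mathcal{S}_{v_{\mathcal{I}}}$ and past null rays. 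The structural point, read off from (\ref{eq:BoundaryOfU})--(\ref{eq:InfinityForm}), is that every $\mathcal{U}\in\mathscr{U}_{v_{\mathcal{I}}}$ is \emph{past-complete} relative to $\{u=0\}$: it contains the past reflection domain of each of its points. Hence for $p\in\mathcal{U}_{1}\cap\mathcal{U}_{2}$ this domain lies in both $\mathcal{U}_{1}$ and $\mathcal{U}_{2}$, and local uniqueness there forces $p\in\mathcal{A}$.

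\emph{Step 2: the union is an admissible development.} Let $\mathcal{U}_{max}$ be the union, let $\gamma_{\mathcal{Z}}^{max}$ and $\mathcal{I}^{max}$ be the corresponding maximal segments of $\{u=v\}$ and $\{u=v-v_{\mathcal{I}}\}$ in $\overline{\mathcal{U}_{max}}$, and let $\zeta_{max}$ be the remaining part of $\partial\mathcal{U}_{max}$. To see $\mathcal{U}_{max}\in\mathscr{U}_{v_{\mathcal{I}}}$, one checks that $\zeta_{max}$ is achronal and piecewise Lipschitz with respect to $g_{ref}$: if $p\in\zeta_{max}$ and $q$ lies strictly in the past of $p$ (both $u$ and $v$ coordinates strictly smaller, within the wedge $\{u<v<u+v_{\mathcal{I}}\}$), then approximating $p$ by points $p_{n}\in\mathcal{U}_{n}\subseteq\mathcal{U}_{max}$ and using past-completeness gives $q\in\mathcal{U}_{n}\subseteq\mathcal{U}_{max}$ for $n$ large; thus the chronological past of every boundary point misses $\zeta_{max}$, which is exactly achronality, and achronal curves for $-du\,dv$ are locally Lipschitz graphs. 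One also notes that if $\zeta_{max}=\emptyset$ then $\gamma_{\mathcal{Z}}^{max}$ and $\mathcal{I}^{max}$ necessarily extend to $u+v=+\infty$, as Definition \ref{def:DevelopmentSets} demands. Finally the glued $(r,\Omega^{2},f)$ is smooth, solves (\ref{eq:RequationFinal})--(\ref{NullShellFinal}), has smooth axis and conformal infinity, satisfies the reflecting boundary condition, and induces $(r_{/},\Omega_{/}^{2},\bar{f}_{/};v_{\mathcal{I}})$ at $u=0$ — all local statements inherited from the developments covering $\mathcal{U}_{max}$ — so $(\mathcal{U}_{max};r,\Omega^{2},f)$ is a future development.

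\emph{Main obstacle.} The only genuinely technical point is the closedness half of Step 1, i.e.\ the local uniqueness / domain-of-determinacy statement for the coupled system (\ref{eq:RequationFinal})--(\ref{NullShellFinal}) in the presence of the reflecting boundary condition: the Vlasov transport ties $f$ at a point to data carried along \emph{reflected} null geodesics rather than along a single characteristic triangle, so one must simultaneously control $r,\Omega^{2}$ and their derivatives, the geodesic flow feeding into $f$, and the (finitely many) reflections off $\mathcal{I}$. This is precisely the well-posedness machinery of \cite{MoschidisVlasovWellPosedness} underlying Proposition \ref{thm:LocalExistenceUniqueness}; once one has local uniqueness on arbitrary small characteristic subdomains (including those abutting $\gamma_{\mathcal{Z}}$ or $\mathcal{I}$), the remaining arguments are the soft point-set ones sketched above.
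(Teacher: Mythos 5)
This proposition is not proved in the present paper at all: it is quoted as Corollary 4.2 of the companion paper \cite{MoschidisVlasovWellPosedness}, to whose Section 4.2 the reader is referred. Your construction — take the union of all future developments, glue via a patching/uniqueness lemma resting on the local well-posedness of Proposition \ref{thm:LocalExistenceUniqueness}, and check the union lies in $\mathscr{U}_{v_{\mathcal{I}}}$ — is exactly the standard argument that the cited corollary follows, so in substance you are reproducing the intended proof. One caveat: the ``past-completeness'' you claim to read off from \eqref{eq:BoundaryOfU}--\eqref{eq:InfinityForm} (that every $\mathcal{U}\in\mathscr{U}_{v_{\mathcal{I}}}$ contains the past reflection domain of each of its points) is not literally forced by the boundary decomposition together with achronality of $\zeta$ as reproduced here — e.g.\ slit-type domains are not visibly excluded by Definition \ref{def:DevelopmentSets} alone — so this domain-of-dependence property should be attributed to the companion paper's definitions of developments and its uniqueness theory rather than asserted as immediate; with that attribution, the remaining point-set arguments are fine.
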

For a a more detailed presentation and a discussion on the proof of
Proposition \ref{cor:MaximalDevelopment}, see Section 4.2 of \cite{MoschidisVlasovWellPosedness}.

The following notions regarding conformal infinity for future developments
of smoothly compatible, asymptotically AdS initial data set will be
frequently used in this paper:
\begin{defn}
\label{def:Black_hole_and_completeness} Let $(\mathcal{U};r,\Omega^{2},f)$
be a future development of a smoothly compatible, asymptotically AdS
initial data set $(r_{/},\Omega_{/}^{2},\bar{f}_{/};v_{\mathcal{I}})$
for the system (\ref{eq:RequationFinal})\textendash (\ref{NullShellFinal}),
and let $u_{\mathcal{I}}$ and $\mathcal{I}$ be defined according
to Definition \ref{def:DevelopmentSets}. 

\begin{itemize}

\item The \emph{black hole} region of $(\mathcal{U};r,\Omega^{2},f)$
will be defined as the set 
\begin{equation}
\mathcal{B}\doteq\{u\ge u_{\mathcal{I}}\}\cap\mathcal{U}.
\end{equation}
We will say that $(\mathcal{U};r,\Omega^{2},f)$ contains a black
hole if $\mathcal{B}\neq\emptyset$. 

\item We will say that a point $p\in\mathcal{U}$ corresponds to
a \emph{trapped sphere} of $(\mathcal{U};r,\Omega^{2},f)$ if 
\begin{equation}
\frac{2m}{r}(p)>1.
\end{equation}

\item We will say that $(\mathcal{U};r,\Omega^{2},f)$ has \emph{future
complete} conformal infinity $\mathcal{I}$ if 
\begin{equation}
\int_{0}^{u_{\mathcal{I}}}\frac{\Omega}{\big(1-\frac{1}{3}\Lambda r^{2}\big)^{\frac{1}{2}}}(u,u+v_{\mathcal{I}})\,du=+\infty.\label{eq:ConditionFutureCompleteInfinity}
\end{equation}

\end{itemize}
\end{defn}
\begin{rem*}
As a consequence of the relation (\ref{eq:DefinitionHereHawkingMass})
and the fact that $\partial_{u}r<0$ everywhere on $\mathcal{U}$
(following from (\ref{eq:ConstraintUFinal}) and the fact that $\partial_{u}r<0$
on $\{u=0\}\cup\mathcal{I}$), if $(\bar{u},\bar{v})\in\mathcal{U}$
satisfies 
\[
\frac{2m}{r}(\bar{u},\bar{v})\ge1,
\]
 then 
\[
\partial_{v}r(\bar{u},\bar{v})<0.
\]
Hence, as a consequence of (\ref{eq:ConstraintVFinal}):
\begin{equation}
\sup_{v\ge\bar{v}}\partial_{v}r(\bar{u},v)<0.
\end{equation}
Therefore, in this case the function $r$ is bounded from above along
$\{u=\bar{u}\}$ and hence the line $\{u=\bar{u}\}$ does not intersect
$\{r=\infty\}=\mathcal{I}$, i.\,e.~$(\bar{u},\bar{v})$ is contained
in the black hole region $\mathcal{B}$. Equivalently, 
\begin{equation}
\frac{2m}{r}(u,v)<1\text{ for all }(u,v)\in\{u<u_{\mathcal{I}}\}\cap\mathcal{U}.\label{eq:NoTrappingPastInfinity}
\end{equation}
\end{rem*}
In general, we will not be able to show that the maximal future development
of a smooth initial data set $(r_{/},\Omega_{/}^{2},\bar{f}_{/};v_{\mathcal{I}})$
has future complete conformal infinity $\mathcal{I}$.\footnote{The statement that for generic initial data, $\mathcal{I}$ is future
complete, is of course equivalent to the statement of the weak cosmic
censorship conjecture in the asymptotically AdS settings for (\ref{eq:The mainSystemEinsteinVlasov})
in spherical symmetry. } However, in the presence of a trapped sphere, the following statement
holds:
\begin{lem}
\emph{(Lemma B.1 in \cite{MoschidisVlasovWellPosedness})}\label{lem:CompletenessOfI}
Let $(r_{/},\Omega_{/}^{2},\bar{f}_{/};v_{\mathcal{I}})\in\mathfrak{B}_{0}$
and let $(\mathcal{U}_{max};r,\Omega^{2},f)$ be the maximal future
development of $(r_{/},\Omega_{/}^{2},\bar{f}_{/};v_{\mathcal{I}})$
with reflecting boundary conditions on $\mathcal{I}$. Assume that
there exists a point $(\bar{u},\bar{v})\in\mathcal{U}_{max}$ satisfying
\begin{equation}
\frac{2m}{r}(\bar{u},\bar{v})>1.\label{eq:TrappedPoint}
\end{equation}
Then, $(\mathcal{U}_{max};r,\Omega^{2},f)$ has future complete conformal
infinity $\mathcal{I}$, i.\,e.~(\ref{eq:ConditionFutureCompleteInfinity})
holds.
\end{lem}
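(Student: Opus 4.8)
\emph{Proof proposal.} The plan is to argue by contradiction, exploiting the fact that a trapped sphere caps the black hole region and pins the renormalised Hawking mass in its complement, and then to invoke a continuation criterion across conformal infinity. Concretely, I would suppose that (\ref{eq:ConditionFutureCompleteInfinity}) fails, i.e.\ that
\[
L\doteq\int_{0}^{u_{\mathcal{I}}}\frac{\Omega}{\big(1-\tfrac{1}{3}\Lambda r^{2}\big)^{1/2}}(u,u+v_{\mathcal{I}})\,du<+\infty ,
\]
and derive a contradiction with the maximality of $\mathcal{U}_{max}$ (Proposition \ref{cor:MaximalDevelopment}).

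First I would record the consequences of the trapped-sphere hypothesis (\ref{eq:TrappedPoint}). By the Remark preceding the statement, the point $(\bar u,\bar v)$ lies in $\mathcal{B}=\{u\ge u_{\mathcal{I}}\}\cap\mathcal{U}_{max}$, so $\mathcal{B}\neq\emptyset$ and therefore $u_{\mathcal{I}}<+\infty$; in particular $p_{\dagger}\doteq(u_{\mathcal{I}},u_{\mathcal{I}}+v_{\mathcal{I}})$ is a genuine endpoint of $\mathcal{I}\subset\partial\mathcal{U}_{max}$. On the region $\mathcal{R}\doteq\{u<u_{\mathcal{I}}\}\cap\mathcal{U}_{max}$ we have $\partial_{v}r>0$ and $\tfrac{2m}{r}<1$ by (\ref{eq:NoTrappingPastInfinity}), so the monotonicity relations (\ref{eq:TildeVMaza})--(\ref{eq:TildeUMaza}) give $\partial_{v}\tilde m\ge0\ge\partial_{u}\tilde m$ there. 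Combining this with the conservation law $(\partial_{u}+\partial_{v})\tilde m|_{\mathcal{I}}=0$ of (\ref{eq:ConservationHawkingMassAtInfinity}) (so $\tilde m|_{\mathcal{I}}$ equals a constant $0\le M<+\infty$, finite by (\ref{eq:FiniteTotalMassInitialData})) and the smoothness of $\tilde m$ up to $\mathcal{I}$, monotonicity in $v$ from the axis yields $0\le\tilde m\le M$ throughout $\mathcal{R}$. Since $r\to+\infty$ on approach to $\mathcal{I}$ (uniformly up to $p_{\dagger}$, as a consequence of the asymptotically-AdS structure together with the bound $\tilde m\le M$), writing $\mathcal{V}_{\delta}\doteq\{u_{\mathcal{I}}-\delta<u<u_{\mathcal{I}}\}\cap\{u+v_{\mathcal{I}}-\delta<v<u+v_{\mathcal{I}}\}\cap\mathcal{U}_{max}$, for every $\delta_{*}>0$ there is a $\delta>0$ with $\tfrac{2\tilde m}{r}\le\tfrac{2M}{r}<\delta_{*}$ and $\tfrac{2m}{r}=\tfrac{2\tilde m}{r}+\tfrac{1}{3}\Lambda r^{2}<\delta_{*}$ on $\mathcal{V}_{\delta}$.

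Next I would upgrade this to smooth control of $(r,\Omega^{2},f)$ up to $p_{\dagger}$ and extend. The smallness of $\tfrac{2\tilde m}{r}$ on $\mathcal{V}_{\delta}$ puts the solution in the regime governed by the continuation criteria reviewed in Section \ref{sec:Well-posedness-and-extension} (established in \cite{MoschidisVlasovWellPosedness}): integrating the renormalised equations (\ref{eq:RenormalisedEquations}) for $(\rho,\widetilde{\Omega}^{2})$ inward from $\mathcal{I}$ --- their right-hand sides carrying the favourable weights $r^{-2}$, $(r-\tfrac{1}{3}\Lambda r^{3})^{-1}$ --- controls $\rho=\tan^{-1}(\sqrt{-\Lambda/3}\,r)$, $\partial_{u}\rho$, $\partial_{v}\rho$ and $\widetilde{\Omega}^{2}$ in terms of their traces on $\mathcal{I}\cap\{u>u_{\mathcal{I}}-\delta\}$, and these traces are controlled because the assumption $L<+\infty$ makes $\widetilde{\Omega}|_{\mathcal{I}}$ integrable up to $p_{\dagger}$ while $\tilde m|_{\mathcal{I}}\equiv M$; meanwhile the particle-current bounds (\ref{eq:VDerivativeMassfromTotalParticleCurrent})--(\ref{eq:UDerivativeMassFromReducedTotalCurrent}), the conservation $\nabla^{\alpha}N_{\alpha}=0$, the reflecting boundary condition and $\tilde m\le M$ propagate the bounded-phase-space condition (\ref{eq:BoundedSupportDefinition}) for $f$ up to $p_{\dagger}$, keeping $\tau_{\mu\nu}=r^{2}T_{\mu\nu}$ and the momentum support of $f$ uniformly bounded on $\mathcal{V}_{\delta}$. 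With all of this in hand, the extension theory across conformal infinity of \cite{MoschidisVlasovWellPosedness} produces a smooth future development, with reflecting boundary conditions, on a domain strictly containing $\mathcal{U}_{max}$ near $p_{\dagger}$, contradicting Proposition \ref{cor:MaximalDevelopment}. Hence $L=+\infty$, which is (\ref{eq:ConditionFutureCompleteInfinity}).

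The step I expect to be the main obstacle is the phase-space control in the last part: ruling out concentration of the Vlasov field --- equivalently, blow-up of its momentum support, which is what governs the singular behaviour of $T_{\mu\nu}$ and is \emph{not} controlled by $||\cdot||_{data}$ --- as one runs up to the endpoint $p_{\dagger}$ of $\mathcal{I}$, so that a continuation criterion genuinely applies. This is precisely where the trapped-sphere hypothesis is used in an essential way: it is the existence of a trapped sphere that forces $u_{\mathcal{I}}<+\infty$ and squeezes $\tilde m$ between $0$ and the finite mass $M$ at infinity on the whole of $\{u<u_{\mathcal{I}}\}$, hence makes $\tfrac{2\tilde m}{r}$ small near $p_{\dagger}$; absent a trapped sphere one cannot exclude a naked singularity forming at $p_{\dagger}$ and rendering $\mathcal{I}$ incomplete.
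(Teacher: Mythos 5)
Your reductions at the start are fine: a trapped point forces $u_{\mathcal{I}}\le\bar u<+\infty$, on $\{u<u_{\mathcal{I}}\}$ one has $\frac{2m}{r}<1$ and $\partial_{v}r>0>\partial_{u}r$ by (\ref{eq:NoTrappingPastInfinity}), hence $\partial_{v}\tilde m\ge0\ge\partial_{u}\tilde m$ by (\ref{eq:TildeVMaza})--(\ref{eq:TildeUMaza}), and $\tilde m\le M\doteq\tilde m|_{\mathcal{I}}<+\infty$ by (\ref{eq:ConservationHawkingMassAtInfinity}). The proof breaks, however, at the claim that $r\to+\infty$ \emph{uniformly} on the coordinate strips $\mathcal{V}_{\delta}$, so that $\frac{2\tilde m}{r}$ and $\frac{2m}{r}$ become smaller than any $\delta_{*}$ near the endpoint $p_{\dagger}=(u_{\mathcal{I}},u_{\mathcal{I}}+v_{\mathcal{I}})$. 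This does not follow from the asymptotically AdS structure plus $\tilde m\le M$, and in the expected geometry it is false: $p_{\dagger}$ is simultaneously the future endpoint of the event horizon $\{u=u_{\mathcal{I}}\}$, along which $r$ stays bounded (at a marginally trapped sphere $2m=r$ gives $2\tilde m=r(1-\tfrac{1}{3}\Lambda r^{2})\ge r$, so $r\le 2M$ there) and $\frac{2m}{r}$ approaches $1$. Concretely, in the Schwarzschild--AdS exterior --- which, by the remark following Theorem \ref{thm:TheTheorem}, is exactly what the developments of interest look like near $(u_{\mathcal{I}},u_{\mathcal{I}}+v_{\mathcal{I}})$ --- every hypersurface $\{r=R\}$ with $R>r_{+}$ terminates at that corner, so every $\mathcal{V}_{\delta}$ contains exterior points with $r=O(M)$ and $\frac{2m}{r}$ arbitrarily close to $1$. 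The rate at which $r\to\infty$ transversally to $\mathcal{I}$ degenerates as $u\to u_{\mathcal{I}}$; smallness of $\frac{2\tilde m}{r}$ holds on sets of the form $\{r\ge R\}$, not on coordinate neighborhoods of $p_{\dagger}$. Since this smallness is precisely what you feed into the continuation step, the contradiction never gets started.

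The second, related gap is the extension step itself. Neither continuation criterion available here applies near $p_{\dagger}$: Proposition \ref{cor:GeneralContinuationCriterion} requires $\sup\frac{2m}{r}<1$ on a full slab together with smallness of $\frac{2\tilde m}{r}$ at the axis, both destroyed by the trapped region, while Proposition \ref{prop:ExtensionPrincipleMihalis} requires $r$ bounded above and below, violated since $r\to\infty$ at $\mathcal{I}$; there is no ``extension theory across conformal infinity'' near an endpoint of $\mathcal{I}$ to invoke. More fundamentally, finiteness of the integral in (\ref{eq:ConditionFutureCompleteInfinity}) is not by itself an extendibility statement: the maximal development of Proposition \ref{cor:MaximalDevelopment} is permitted to have a future boundary component $\zeta$ emanating from the endpoint of an incomplete $\mathcal{I}$, and ruling out exactly this scenario is the content of the lemma, so ``contradicting maximality'' begs the question unless you actually construct a strictly larger development --- which the failure of smallness above blocks. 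Note also that this paper does not prove the lemma; it defers to Appendix B of \cite{MoschidisVlasovWellPosedness}. Whatever the details there, the argument must bound the completeness integral from below directly, exploiting the trapped sphere/horizon together with the monotonicity of quantities such as $\frac{\partial_{v}r}{1-\frac{2m}{r}}$ and $\frac{-\partial_{u}r}{1-\frac{2m}{r}}$ from (\ref{eq:EksiswshGiaKappa-})--(\ref{eq:EksiswshGiaK}), rather than proceed by a continuation-criterion contradiction of the kind you propose.
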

For a proof of Lemma \ref{lem:CompletenessOfI}, see Section B of
the Appendix of \cite{MoschidisVlasovWellPosedness}.

\subsection{Continuation criteria for smooth solutions of (\ref{eq:RequationFinal})\textendash (\ref{NullShellFinal}) }

In this section, we will state two criteria that will allow us to
extend smooth solutions $(r,\Omega^{2},f)$ of (\ref{eq:RequationFinal})\textendash (\ref{NullShellFinal})
beyond their original domain of definition. These criteria will be
applied in our proof of Theorem \ref{thm:TheTheoremIntro} in Sections
\ref{sec:The-technical-core}\textendash \ref{sec:The-final-stage}.
For a wider class of continuation criteria, as well as for a proof
of the results of this section, see Section 5 of our companion paper
\cite{MoschidisVlasovWellPosedness}. 

The main extension principle of this section is the following:
\begin{prop}
\emph{(Corollary 5.1 in \cite{MoschidisVlasovWellPosedness}).} \label{cor:GeneralContinuationCriterion}For
any $v_{\mathcal{I}}>0$ and $u_{1}>0$, let $(r,\Omega^{2},f)$ be
a smooth solution of the system (\ref{eq:RequationFinal})\textendash (\ref{NullShellFinal})
on the domain $\mathcal{U}_{u_{1};v_{\mathcal{I}}}$ (defined as in
(\ref{eq:GeneralDomain})), with smooth axis $\{u=v\}$ and smooth
conformal infinity $\{u=v-v_{\mathcal{I}}\}$ (see Definitions 3.1\textendash 3.3
of \cite{MoschidisVlasovWellPosedness}). Assume that $(r,\Omega^{2},f)$
satisfies 
\begin{equation}
\sup_{\mathcal{U}_{u_{1};v_{\mathcal{I}}}}\frac{2m}{r}<1,\label{eq:NoTrappingContinuity}
\end{equation}
\begin{equation}
\limsup_{(u,v)\rightarrow(u_{1},u_{1})}\frac{2\tilde{m}}{r}\le\delta_{0},\label{eq:NoConcentrationContinuity}
\end{equation}
where $\delta_{0}$ is the constant appearing in the statement of
Proposition 4.1 in \cite{MoschidisVlasovWellPosedness}, and, moreover,
at $u=0$, we have
\begin{equation}
supp\Big(f(0,\cdot;\cdot)\Big)\subset\Big\{\Omega^{2}(p^{u}+p^{v})\le C_{0}\Big\}\text{ for some }C_{0}<+\infty.\label{eq:CompactSupportFContinuity}
\end{equation}
Then, there exists some $\bar{u}_{1}>u_{1}$, such that $(r,\Omega^{2},f)$
extends on the whole of the domain $\mathcal{U}_{\bar{u}_{1};v_{\mathcal{I}}}\supset\mathcal{U}_{u_{1};v_{\mathcal{I}}}$
as a smooth solution of (\ref{eq:RequationFinal})\textendash (\ref{NullShellFinal})
with smooth axis $\gamma_{\bar{u}_{1}}$ and smooth conformal infinity
$\mathcal{I}_{\bar{u}_{1}}$.
\end{prop}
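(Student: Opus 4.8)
The plan is to show that, under hypotheses \eqref{eq:NoTrappingContinuity}--\eqref{eq:CompactSupportFContinuity}, the solution $(r,\Omega^2,f)$ extends to a strictly larger domain $\mathcal{U}_{\bar u_1;v_{\mathcal{I}}}$. The strategy is to produce uniform a priori control on all quantities that enter the local existence theory (Proposition \ref{thm:LocalExistenceUniqueness}) along $\{u=u_1\}$, so that the induced data there lies in $\mathfrak{B}_0$ and, moreover, the local existence time is bounded below by a positive constant; one then glues the new local development onto the old one. First I would fix a conformal gauge consistent with Definition \ref{def:DevelopmentSets} (namely $r=0$ on $\{u=v\}$, $r=\infty$ on $\{u=v-v_{\mathcal{I}}\}$, together with the normalisation used in \cite{MoschidisVlasovWellPosedness}), which is possible since the bound \eqref{eq:NoTrappingContinuity} guarantees $\{u<u_1\}\cap\mathcal{U}_{u_1;v_{\mathcal{I}}}$ does not meet a black hole region and hence that $\mathcal{I}$ is a genuine boundary component reaching $u=u_1$.

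The core of the argument is a hierarchy of propagation estimates, all ultimately fed by the two structural facts \eqref{eq:NoTrappingContinuity} and \eqref{eq:NoConcentrationContinuity}. The key steps, in order, are: (i) use \eqref{eq:NoConcentrationContinuity} together with the monotonicity relations \eqref{eq:TildeVMaza}--\eqref{eq:TildeUMaza} for $\tilde m$ and the conservation law \eqref{eq:ConservationHawkingMassAtInfinity} at $\mathcal{I}$ to obtain a uniform bound $\sup_{\mathcal{U}_{u_1;v_{\mathcal{I}}}}\tilde m\le C$, and then combine with \eqref{eq:NoTrappingContinuity} to bound $\Omega^2$ from above and below and $\partial_v r$, $-\partial_u r$ away from $0$ and $\infty$ on compact $r$-ranges, via \eqref{eq:DefinitionHereHawkingMass} and the constraint equations \eqref{eq:ConstraintUFinal}--\eqref{eq:ConstraintVFinal}; (ii) use the reformulated geodesic relations \eqref{eq:XrhsimhGewdaisiakh-U}--\eqref{eq:XrhsimhGewdaisiakh-V}, where the sign of the bulk term is controlled precisely because $\frac{6\tilde m}{r}-1$ and $-24\pi T_{uv}$ can be estimated using steps (i) and the compact-momentum-support hypothesis \eqref{eq:CompactSupportFContinuity}, to show that null geodesics in the support of $f$ cannot lose or gain momentum uncontrollably as they approach $\{u=u_1,v=u_1\}$; propagating \eqref{eq:CompactSupportFContinuity} along the flow of \eqref{eq:NewNullGeodesicsSphericalSymmetry} via the conservation of $f$ then gives that the induced data $(r_{/u_1},\Omega^2_{/u_1},\bar f_{/u_1};v_{\mathcal{I}})$ has bounded support in phase space in the sense of \eqref{eq:BoundedSupportDefinition}; (iii) verify that this induced data is smoothly compatible (this uses the remark after Definition \ref{def:Development}, which records that the slices of any smooth development are smoothly compatible, so it suffices to check that the solution extends smoothly up to and including $\{u=u_1\}$ away from the corner — which follows from the classical uniform bounds on $r$, $\Omega^2$, $f$ and their derivatives obtained by differentiating \eqref{eq:RequationFinal}--\eqref{eq:VlasovFinal} and running Grönwall, again using steps (i)--(ii)); (iv) conclude that $(r_{/u_1},\Omega^2_{/u_1},\bar f_{/u_1};v_{\mathcal{I}})\in\mathfrak{B}_0$ and apply Proposition \ref{thm:LocalExistenceUniqueness} with this data, obtaining a development on $\{u_1\le u<u_1+\delta\}\cap\{u<v<u+v_{\mathcal{I}}\}$ for some $\delta>0$; (v) glue, using the uniqueness part of Proposition \ref{cor:MaximalDevelopment}, to produce a smooth solution on $\mathcal{U}_{\bar u_1;v_{\mathcal{I}}}$ with $\bar u_1=u_1+\delta$, with smooth axis and smooth conformal infinity. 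The compatibility conditions at $\mathcal{I}$ and at the axis are preserved because the Neumann-type boundary relations \eqref{eq:BoundaryConditionDrAxis}--\eqref{eq:BoundaryConditionOmegaInfinity} hold automatically on any smooth development.

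The main obstacle is step (ii)--(iii): controlling the geodesic flow, and hence the support and smoothness of $f$, uniformly up to the potentially singular corner $(u_1,u_1)$ where the axis meets $\{u=u_1\}$. The delicate point is that near the axis the angular-momentum terms $l^2/r^2$ in \eqref{eq:NewNullGeodesicsSphericalSymmetry} and in \eqref{eq:XrhsimhGewdaisiakh-U} blow up as $r\to 0$, so one must exploit that geodesics with $l>0$ cannot reach the axis (the remark after \eqref{eq:NewNullGeodesicsSphericalSymmetry}) together with the a priori smallness \eqref{eq:NoConcentrationContinuity} of $\tilde m/r$ to keep the bulk integrals in \eqref{eq:XrhsimhGewdaisiakh-U} absolutely convergent and uniformly bounded; this is exactly where the constant $\delta_0$ from Proposition 4.1 of \cite{MoschidisVlasovWellPosedness} enters, and where one genuinely needs the companion-paper machinery rather than a soft ODE argument. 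Everything else is a routine bootstrap once these bounds are in hand.
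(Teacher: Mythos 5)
You should first be aware that this paper contains no proof of this statement at all: it is quoted verbatim from the companion paper (Corollary 5.1 of \cite{MoschidisVlasovWellPosedness}) and the text simply points to Section 5.3 there. So the comparison can only be with the proof one expects that reference to contain, and your skeleton — uniform bounds on $\tilde{m}$, $\Omega^{2}$, $\partial_{u}r$, $\partial_{v}r$ from \eqref{eq:NoTrappingContinuity}--\eqref{eq:NoConcentrationContinuity}; propagation of the momentum support \eqref{eq:CompactSupportFContinuity} to obtain \eqref{eq:BoundedSupportDefinition} on the slice $\{u=u_{1}\}$; membership of the induced data in $\mathfrak{B}_{0}$; restarting via Proposition \ref{thm:LocalExistenceUniqueness} and gluing by uniqueness (Proposition \ref{cor:MaximalDevelopment}) — is indeed the natural route, and your interior step is exactly what Proposition \ref{prop:ExtensionPrincipleMihalis} (Proposition 5.1 of \cite{MoschidisVlasovWellPosedness}) supplies, so you need not re-derive it by hand with a Gr\"onwall argument.

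The gap is at the two corners of $\{u=u_{1}\}$, and as written your plan does not close either of them. The whole analytic content of the criterion is that no ``first singularity'' occurs on the segment $\{u=u_{1}\}$ before you can restart; away from the endpoints this is Proposition \ref{prop:ExtensionPrincipleMihalis}, but at the corner $(u_{1},u_{1})$ on the axis you explicitly defer to ``companion-paper machinery'' (the extension principle along $r=0$ under $2\tilde{m}/r\ll1$, i.e.\ Theorem 5.1 of \cite{MoschidisVlasovWellPosedness}, which is what the constant $\delta_{0}$ in \eqref{eq:NoConcentrationContinuity} is calibrated to), so that step is invoked rather than proved — acceptable as a reduction, but it is the heart of the matter, and your suggestion that it could be fed by the geodesic identities \eqref{eq:XrhsimhGewdaisiakh-U}--\eqref{eq:XrhsimhGewdaisiakh-V} alone understates it: those identities are used in this paper only under a global smallness assumption on $2\tilde{m}/r$, not merely a $\limsup$ condition at a single corner. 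More importantly, you never address the other corner $(u_{1},u_{1}+v_{\mathcal{I}})$ where $\{u=u_{1}\}$ meets conformal infinity: smooth extendibility of the conformal boundary there is not automatic from bounded $\tilde{m}$ and requires an extension principle at $\mathcal{I}$ for the reflecting boundary condition (this is precisely where the hypothesis \eqref{eq:CompactSupportFContinuity} and its propagation are consumed, through the weighted estimates on the renormalised system \eqref{eq:RenormalisedEquations} near $r=\infty$), and without it your step (iii)--(iv) — smooth compatibility and asymptotically AdS character of the induced data at $u=u_{1}$ — is not justified. So: right architecture, but the two boundary extension principles that make the criterion true are respectively assumed and omitted.
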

For a proof of Proposition \ref{cor:GeneralContinuationCriterion},
see Section 5.3 in \cite{MoschidisVlasovWellPosedness}.

The next extension principle, which is also presented in \cite{MoschidisVlasovWellPosedness},
applies to the case of smooth solutions of (\ref{eq:RequationFinal})\textendash (\ref{NullShellFinal}),
restricted to domains on which $r$ is bounded away from $0$ and
$+\infty$:
\begin{prop}
\emph{(Proposition 5.1 in \cite{MoschidisVlasovWellPosedness}).}
\label{prop:ExtensionPrincipleMihalis}For any $u_{1}<u_{2}$, any
$v_{1}<v_{2}$ and any $\Lambda\in\mathbb{R}$, let $(r,\Omega^{2},f)$
be a smooth solution of the system (\ref{eq:RequationFinal})\textendash (\ref{NullShellFinal})
on an open neighborhood $\mathcal{V}$ of the rectangular region
\[
\mathcal{R}\doteq[u_{1},u_{2}]\times[v_{1},v_{2}]\backslash\{(u_{2},v_{2})\},
\]
satisfying 
\begin{equation}
\inf_{\mathcal{V}}r>0,\label{eq:LowerBoundRGiven}
\end{equation}
\begin{equation}
\sup_{\mathcal{V}}r<+\infty,\label{eq:UpperBoundRGiven}
\end{equation}
\begin{equation}
\sup_{\mathcal{V}}\tilde{m}<+\infty,\label{eq:UpperBoundMass}
\end{equation}
\begin{equation}
\sup_{(\{u_{1}\}\times[v_{1},v_{2}])\cup([u_{1},u_{2}]\times\{v_{1}\})}\partial_{u}r<0,\label{eq:InitialNegativeDerivativeInU}
\end{equation}
and, for some $C<+\infty$:
\begin{equation}
supp\Big(f(u_{1},\cdot;\cdot)\Big),supp\Big(f(\cdot,v_{1};\cdot)\Big)\subseteq\big\{\Omega^{2}(p^{v}+p^{u})\le C\big\}.\label{eq:CompactSupportF}
\end{equation}
Then, $(r,\Omega^{2},f)$ extends smoothly in a neighborhood of $\{(u_{2},v_{2})\}$.
\end{prop}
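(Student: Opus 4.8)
The plan is to reduce the statement to a uniform a priori estimate on the punctured rectangle $\mathcal{R}$ and then re-solve a local Goursat problem. Concretely, it suffices to prove that, on $\mathcal{R}$: $\Omega^{2}$ is bounded above and below; $m$ (equivalently $\tilde m$, since $r$ is bounded) is bounded above \textendash{} which is assumed \textendash{} and below; the momentum support of $f$ stays inside a fixed compact subset of the null shell; and that all of these bounds propagate to the higher-order derivatives of $(r,\Omega^{2},\bar f)$. Granting this, $(r,\Omega^{2},\bar f)$ together with all their derivatives extend continuously, hence smoothly, to the closed rectangle $[u_{1},u_{2}]\times[v_{1},v_{2}]$, in particular to the corner $(u_{2},v_{2})$; one then feeds the resulting smooth characteristic data on $\{u=u_{2}\}\cup\{v=v_{2}\}$ near $(u_{2},v_{2})$ into the interior Goursat version of the local well-posedness result underlying Proposition \ref{thm:LocalExistenceUniqueness} (no boundary condition is needed here), obtaining the desired smooth extension.

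The heart of the argument is the $C^{0}$ stage, which I would run as a continuity/bootstrap argument on the foliation of $\mathcal{R}$ by the segments $\{u=\mathrm{const}\}$. First, the sign $\partial_{u}r<0$ propagates from $\{u=u_{1}\}$: by (\ref{eq:ConstraintUFinal}) and $T_{uu}\ge0$, the quantity $\Omega^{-2}\partial_{u}r$ is non-increasing in $u$, so it stays negative; equivalently, the manifestly positive quantity $\kappa\doteq\frac{\Omega^{2}}{4(-\partial_{u}r)}=\frac{\partial_{v}r}{1-\frac{2m}{r}}$ is non-increasing in $u$ by (\ref{eq:EksiswshGiaK}), hence bounded above on $\mathcal{R}$ by its finite values on the smooth edge $\{u=u_{1}\}\times[v_{1},v_{2}]$; likewise $\Omega^{-2}\partial_{v}r$ is non-increasing in $v$ by (\ref{eq:ConstraintVFinal}). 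The two genuinely new pieces of information to extract are then (i) a uniform \emph{lower} bound for $\kappa$ \textendash{} which prevents $\Omega^{2}$ from focusing to $0$ at the corner, and reduces to bounding the spacetime integrals $\iint_{\mathcal{R}}\frac{rT_{uu}}{-\partial_{u}r}$ and $\iint_{\mathcal{R}}\frac{rT_{vv}}{\partial_{v}r}$ \textendash{} and (ii) bounds on $\Omega^{2}$ (above and below), obtained by integrating twice over $\mathcal{R}$ the wave equation $\partial_{u}\partial_{v}\log\Omega^{2}=\frac{m}{r^{3}}\Omega^{2}-16\pi T_{uv}$, the form of (\ref{eq:OmegaEquationFinal}) recorded in (\ref{eq:OneMoreUsefulEquationOmega}), once $r,m,\tilde m$ are controlled and $\iint_{\mathcal{R}}T_{uv}<\infty$. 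The lower bound on $m$ then follows from the assumed upper bound together with $\partial_{u}r<0$, the bound on $\kappa$, and the monotonicity of $\Omega^{-2}\partial_{v}r$.

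All the matter integrals above are controlled via conservation of the particle current $N$: since $\nabla^{\alpha}N_{\alpha}=0$ (see (\ref{eq:ConservedParticle})) and $r$ is bounded above and below, the total particle number through any causal slice of $\mathcal{R}$ is bounded by its value on the initial segments $\{u=u_{1}\}\cup\{v=v_{1}\}$; then the pointwise bounds (\ref{eq:VDerivativeMassfromTotalParticleCurrent})\textendash(\ref{eq:UDerivativeMassFromReducedTotalCurrent}) control the relevant $T_{\mu\nu}$-fluxes by $N$ times $\sup_{\mathrm{supp}(f)}(\partial_{v}r\,p^{v}-\partial_{u}r\,p^{u})$, and, on the non-trapped part of $\mathcal{R}$ (where $1-\frac{2m}{r}>0$ and $\partial_{v}r>0$), the mass identities (\ref{eq:TildeVMaza})\textendash(\ref{eq:TildeUMaza}) directly bound the weighted stress-energy integrals by $\sup\tilde m$. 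The momentum weight $\partial_{v}r\,p^{v}-\partial_{u}r\,p^{u}$ is, in turn, propagated from the initial segments using the geodesic identities (\ref{eq:XrhsimhGewdaisiakh-U})\textendash(\ref{eq:XrhsimhGewdaisiakh-V}): once $r,\Omega^{2},\tilde m$ have crude a priori bounds on the bounded region $\mathcal{R}$, the bulk integrand $\tfrac12\tfrac{6\tilde m/r-1}{r^{2}}\Omega^{2}-24\pi T_{uv}$ there is integrable, so $\Omega^{2}\dot\gamma^{u}$, $\Omega^{2}\dot\gamma^{v}$ \textendash{} hence $p^{u}$, $p^{v}$ along every null geodesic, hence $\mathrm{supp}(f)$ \textendash{} stay in a fixed compact set (away from the degenerate part of the null shell because $\Omega^{2}p^{u}p^{v}=l^{2}/r^{2}$ with $\Omega^{2},r$ bounded below). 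Since these estimates feed into one another, I would close them simultaneously by a Gr\"onwall argument; the higher-order bounds then follow routinely by commuting (\ref{eq:RequationFinal}), (\ref{eq:OneMoreUsefulEquationOmega}) and the reduced Vlasov equation with $\partial_{u},\partial_{v},\partial_{p^{u}}$.

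The main obstacle is running the $C^{0}$ stage in the presence of (possibly) trapped regions: where $2m/r\ge1$ the convenient signs of $1-\frac{2m}{r}$ and of the mass monotonicity fail, so one cannot bound the matter fluxes directly by $\sup\tilde m$ there. I would handle this by splitting $\mathcal{R}$ along the apparent horizon $\{2m/r=1\}$: on its causal past the mass-flux estimate above applies, while on its causal future one uses that $\partial_{v}r<0$ (so $r$ is decreasing, and bounded below by hypothesis) to obtain the required control of $\kappa$, $\Omega^{2}$ and the matter by a separate monotone argument, and then patches the two regions together. Making this splitting compatible with the Gr\"onwall bootstrap \textendash{} in particular, deriving the lower bound on $\Omega^{2}$ \emph{uniformly up to the corner} $(u_{2},v_{2})$ \textendash{} is the delicate point of the proof.
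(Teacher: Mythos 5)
First, note that this paper does not contain a proof of this statement: it is quoted as Proposition 5.1 of \cite{MoschidisVlasovWellPosedness}, whose argument is a Dafermos--Rendall--type extension principle (cf.\ \cite{DafermosRendall}), and your overall architecture --- uniform a priori bounds up to the corner, momentum-support control via the area-integral form (\ref{eq:XrhsimhGewdaisiakh-U})--(\ref{eq:XrhsimhGewdaisiakh-V}) of the geodesic equations, then higher-order bounds and a local characteristic re-solve near $(u_{2},v_{2})$ --- is indeed that strategy. The gap is in how you propose to close the $C^{0}$ stage. Your control of the matter fluxes goes through the particle current and the bounds (\ref{eq:VDerivativeMassfromTotalParticleCurrent})--(\ref{eq:UDerivativeMassFromReducedTotalCurrent}), whose right-hand sides carry the factor $\sup_{supp(f)}\big(\partial_{v}r\,p^{v}-\partial_{u}r\,p^{u}\big)$, i.e.\ precisely the momentum-support bound you are trying to establish, while your momentum estimate in turn presupposes ``crude a priori bounds'' on $\Omega^{2}$ and on $\iint T_{uv}$ which are nowhere obtained independently; invoking a ``simultaneous Gr\"onwall'' does not visibly close this loop, since the momentum supremum enters the flux bounds multiplicatively and the fluxes enter the momentum estimate through an exponential. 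What actually closes the argument is structural and needs no current at this stage: by (\ref{eq:ConstraintUFinal}) and $T_{uu}\ge0$, $\partial_{u}r<0$ propagates from the initial edges and $\kappa\doteq\frac{\partial_{v}r}{1-\frac{2m}{r}}=\frac{\Omega^{2}}{4(-\partial_{u}r)}$ is non-increasing in $u$ by (\ref{eq:EksiswshGiaK}), so $\Omega^{2}\le4\kappa_{0}(-\partial_{u}r)$ and $\int\Omega^{2}\,du\le4\kappa_{0}\sup r$, giving $\iint\Omega^{2}\,dudv<\infty$; then (\ref{eq:RequationFinal}) bounds $\iint r^{2}T_{uv}\,dudv$ by corner values of $r^{2}$ plus $\iint\Omega^{2}$; moreover the $-24\pi T_{uv}$ term in (\ref{eq:XrhsimhGewdaisiakh-U}) has the favorable sign for the upper bound on $\Omega^{2}\dot{\gamma}^{u}$. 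Only after the momentum support (hence pointwise $T_{\mu\nu}$) is bounded do the mass and current identities enter, to control $-\partial_{u}r$, $m$ from below, and $\Omega^{2}$ two-sidedly via (\ref{eq:OneMoreUsefulEquationOmega}).

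The second, and more serious, gap is exactly where you locate the delicacy: the proposed splitting of $\mathcal{R}$ along $\{2m/r=1\}$ with an unspecified ``separate monotone argument'' to its future. In the trapped region the identities (\ref{eq:TildeVMaza})--(\ref{eq:TildeUMaza}) are no longer sign-definite, and ``$\partial_{v}r<0$ with $r$ bounded below'' does not by itself yield control of $\kappa$, $\Omega^{2}$ or the matter, so as written this step is a placeholder rather than a proof. In fact no splitting is needed: all of the estimates above (the $u$-Raychaudhuri monotonicity, the monotonicity of $\kappa$, the bound $\Omega^{2}\le4\kappa_{0}(-\partial_{u}r)$ integrated in $u$, the $r^{2}$-equation bound on $\iint T_{uv}$, and the geodesic area identity) are insensitive to the sign of $1-\frac{2m}{r}$. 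The only places where the sign of $\partial_{v}r$ matters --- the Gr\"onwall bound for $-\partial_{u}r$ coming from the $\partial_{u}\partial_{v}r$ equation, and the lower bound on $m$ via $1-\frac{2m}{r}=4(-\partial_{u}r)\cdot\big(\Omega^{-2}\partial_{v}r\big)$ --- are handled by the $v$-Raychaudhuri monotonicity (\ref{eq:ConstraintVFinal}) (so $\Omega^{-2}\partial_{v}r$ is bounded above by its values on $\{v=v_{1}\}$) together with $\sup\tilde{m}<\infty$ and the $r$-bounds, which bound $|1-\frac{2m}{r}|$ wherever $\partial_{v}r<0$. Reorganized along these lines your outline becomes the correct proof; as it stands, the circular flux/momentum ordering and the unproved trapped-region step are genuine gaps.
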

For a proof of Proposition \ref{prop:ExtensionPrincipleMihalis},
as well as a discussion on the connection between Proposition \ref{prop:ExtensionPrincipleMihalis}
and an analogous result established in \cite{DafermosRendall}, see
\cite{MoschidisVlasovWellPosedness}.

\subsection{\label{subsec:Well-posedness}Cauchy stability of $(\mathcal{M}_{AdS},g_{AdS})$
for (\ref{eq:RequationFinal})\textendash (\ref{NullShellFinal})
in a low regularity topology}

In this section, we will introduce a low regularity, scale invariant
topology on the space $\mathfrak{B}_{0}$ of smoothly compatible,
asymptotically AdS initial data of bounded support in phase space
(see Definition \ref{def:SmoothlyCompatibleInitialData}). We will
then formulate a Cauchy stability statement for the trivial solution
$(\mathcal{M}_{AdS},g_{AdS})$ in this topology. This statement will
be crucial for addressing the AdS instability conjecture in the associated
low regularity topology. A more detailed discussion on the results
of this section can be found in our companion paper \cite{MoschidisVlasovWellPosedness}.

In accordance with \cite{MoschidisVlasovWellPosedness}, we will introduce
the following map from $\mathfrak{B}_{0}$ to the space of smooth
solution of the (free) massless Vlasov equation (\ref{eq:Vlasov})
on AdS spacetime:
\begin{defn}
\label{def:ComparisonVlasovField} For any given $v_{\mathcal{I}}>0$,
let $(r_{/},\Omega_{/}^{2},\bar{f}_{/};v_{\mathcal{I}})$ be an asymptotically
AdS initial data set in the class $\mathfrak{B}_{0}$ (see Definition
\ref{def:SmoothlyCompatibleInitialData}). Let also $(r_{/},\Omega_{/}^{2},\bar{f}_{/};v_{\mathcal{I}})\rightarrow(r_{/}^{\prime},(\Omega_{/}^{\prime})^{2},\bar{f}_{/}^{\prime};v_{\mathcal{I}})$
be the (unique) gauge transformation such that $(r_{/}^{\prime},(\Omega_{/}^{\prime})^{2},\bar{f}_{/}^{\prime};v_{\mathcal{I}})$
satisfies the normalised gauge condition (\ref{eq:NormalisedGaugeCondition})
(for more details on this transformation, see Lemma 3.2 in \cite{MoschidisVlasovWellPosedness}).
Let us also $\bar{f}_{/}^{(AdS)}:[0,\sqrt{-\frac{3}{\Lambda}}\pi)\times[0,+\infty)^{2}\rightarrow[0,+\infty)$
in terms of $\bar{f}_{/}^{\prime}$ by the expression 
\[
\bar{f}_{/}^{(AdS)}(v;\,p^{u},l)\doteq\bar{f}_{/}^{\prime}(\frac{\sqrt{-\frac{3}{\Lambda}}\pi}{v_{\mathcal{I}}}\cdot v;\,p^{u},l).
\]

We will define $f^{(AdS)}=f^{(AdS)}[\bar{f}_{/};v_{\mathcal{I}}]:T\mathcal{M}_{AdS}\rightarrow[0,+\infty)$
to be the unique solution of the massless Vlasov equation (\ref{eq:Vlasov})
on $(\mathcal{M}_{AdS},g_{AdS})$ with initial conditions corresponding
to $\bar{f}_{/}^{(AdS)}$, i.\,e. satisfying at $u=0$:
\begin{equation}
f^{(AdS)}(0,v;p^{u},\frac{l^{2}}{p^{u}\cdot\Omega_{AdS}^{2}r_{AdS}^{2}(0,v)},l)=\bar{f}_{/}^{(AdS)}(v;\,p^{u},l)\cdot\delta\Big(\Omega_{AdS}^{2}(0,v)p^{u}p^{v}-\frac{l^{2}}{r_{AdS}^{2}(0,v)}\Big),
\end{equation}
where $\Omega_{AdS}^{2}$, $r_{AdS}$ are the coefficients of $g_{AdS}$
given by (\ref{eq:AdSMetricValues-1}). For any $\bar{u}\ge0$ and
$\bar{v}\in(\bar{u},\bar{u}+\sqrt{-\frac{3}{\Lambda}}\pi)$, we will
also set
\begin{equation}
\Big[\frac{rT_{vv}}{\partial_{v}r}\Big]^{(AdS)}(\bar{u},\bar{v})\doteq\frac{r_{AdS}T_{vv}[f^{(AdS)}]}{\partial_{v}r_{AdS}}(\bar{u},\bar{v})\label{eq:VlasovEnergyConcentration}
\end{equation}
(and similarly for $\Big[\frac{rT_{uv}}{-\partial_{u}r}\Big]^{(AdS)}$,
$\Big[\frac{rT_{uv}}{\partial_{v}r}\Big]^{(AdS)}$ and $\Big[\frac{rT_{uu}}{-\partial_{u}r}\Big]^{(AdS)}$),
where the energy momentum components $T_{\alpha\beta}[f^{(AdS)}]$
are defined using the relations (\ref{eq:ComponentsStressEnergy})
with $\Omega_{AdS}^{2}$, $r_{AdS}$ in place of $\Omega^{2}$, $r$.
\end{defn}
Using the mapping $(r_{/},\Omega_{/}^{2},\bar{f}_{/};v_{\mathcal{I}})\rightarrow f^{(AdS)}$
fixed in Definition \ref{def:ComparisonVlasovField}, we will define
the following positive definite functional on $\mathfrak{B}_{0}$
(see also Section 6.1 in \cite{MoschidisVlasovWellPosedness}):
\begin{defn}
\label{def:InitialDataNorm} For any $(r_{/},\Omega_{/}^{2},\bar{f}_{/};v_{\mathcal{I}})\in\mathfrak{B}_{0}$,
we will define the ``norm'' $||(r_{/},\Omega_{/}^{2},\bar{f}_{/};v_{\mathcal{I}})||$
of $(r_{/},\Omega_{/}^{2},\bar{f}_{/};v_{\mathcal{I}})$ in terms
of the free Vlasov field $f^{(AdS)}$ on $(\mathcal{M}_{AdS},g_{AdS})$
as follows:
\begin{align}
||(r_{/},\Omega_{/}^{2},\bar{f}_{/};v_{\mathcal{I}})||\doteq\sup_{U_{*}\ge0} & \int_{U_{*}}^{U_{*}+\sqrt{-\frac{3}{\Lambda}}\pi}\Big(\Big[\frac{rT_{vv}}{\partial_{v}r}\Big]^{(AdS)}(U_{*},v)+\Big[\frac{rT_{uv}}{-\partial_{u}r}\Big]^{(AdS)}(U_{*},v)\Big)\,dv+\label{eq:InitialDataNorm}\\
+ & \sup_{V_{*}\ge0}\int_{\max\{0,V_{*}-\sqrt{-\frac{3}{\Lambda}}\pi\}}^{V_{*}}\Big(\Big[\frac{rT_{uu}}{-\partial_{u}r}\Big]^{(AdS)}(u,V_{*})+\Big[\frac{rT_{uv}}{\partial_{v}r}\Big]^{(AdS)}(u,V_{*})\Big)\,du+\nonumber \\
 & +\sqrt{-\Lambda}\tilde{m}_{/}|_{v=v_{\mathcal{I}}}.\nonumber 
\end{align}
Given a smooth solution asymptotically AdS solution $(r,\Omega^{2};f)$
of (\ref{eq:RequationFinal})\textendash (\ref{NullShellFinal}) on
a domain $\mathcal{U}_{u_{1};v_{\mathcal{I}}}$ of the form (\ref{eq:GeneralDomain})
with axis $\{u=v\}$ and conformal infinity $\{u=v-v_{\mathcal{I}}\}$,
we will similarly define the ``norm'' of the initial data induced
by $(r,\Omega^{2};f)$ on slices of the form $\{u=u_{*}\}\cap\mathcal{U}_{u_{1};v_{\mathcal{I}}}$
for any $u_{*}\in(0,u_{1})$ as follows:
\begin{equation}
||(r,\Omega^{2};f)|_{u=u_{*}}||\doteq||(r_{/u_{*}},\Omega_{/u_{*}}^{2},\bar{f}_{/u_{*}};v_{\mathcal{I}})||\label{eq:InitialDataNormSlice}
\end{equation}
where 
\[
(r_{/u_{*}},\Omega_{/u_{*}}^{2})(\bar{v})\doteq(r,\Omega^{2})(u_{*},u_{*}+\bar{v})
\]
and 
\[
\bar{f}_{/u_{*}}(\bar{v};p,l)=\bar{f}(u_{*},u_{*}+\bar{v};p,\frac{l^{2}}{\Omega^{2}r^{2}|_{(u_{*},u_{*}+\bar{v})}p},l)
\]
(where the function $\bar{f}$ is related to the distribution $f$
by (\ref{eq:ExpressionRegularF})).
\end{defn}
\begin{rem*}
The fact that $||\cdot||$ takes finite values on $\mathfrak{B}_{0}$
follows readily from the condition (\ref{eq:BoundedSupportDefinition})
on the support of $\bar{f}_{/}$. Moreover, $||(r_{/},\Omega_{/}^{2},\bar{f}_{/})||=0$
if and only if $\bar{f}_{/}\equiv0$; in this case, $(r_{/},\Omega_{/}^{2},0)$
can be identified through a gauge transformation with the rescaled
trivial data 
\begin{align}
r_{AdS}^{(v_{\mathcal{I}})}(u,v) & =r_{AdS}\Big(\sqrt{-\frac{3}{\Lambda}}\pi\frac{u}{v_{\mathcal{I}}},\sqrt{-\frac{3}{\Lambda}}\pi\frac{v}{v_{\mathcal{I}}}\Big),\label{eq:AdSMetricValuesRescaled}\\
\big(\Omega_{AdS}^{(v_{\mathcal{I}})}\big)^{2}(u,v) & =-\frac{3}{\Lambda}\frac{\pi^{2}}{v_{\mathcal{I}}^{2}}\Omega_{AdS}^{2}\Big(\sqrt{-\frac{3}{\Lambda}}\pi\frac{u}{v_{\mathcal{I}}},\sqrt{-\frac{3}{\Lambda}}\pi\frac{v}{v_{\mathcal{I}}}\Big).\nonumber 
\end{align}
 For a detailed discussion on the special properties and the scale-invariant
character of $||\cdot||$, see Section 6.1 of \cite{MoschidisVlasovWellPosedness}. 
\end{rem*}
The following result provides a Cauchy stability statement for the
trivial solution $(\mathcal{M}_{AdS},g_{AdS})$ of (\ref{eq:RequationFinal})\textendash (\ref{NullShellFinal})
in the context of the initial data topology defined by (\ref{eq:InitialDataNorm})
on $\mathfrak{B}_{0}$:
\begin{prop}
\emph{(Theorem 6.1 in \cite{MoschidisVlasovWellPosedness}).} \label{prop:CauchyStabilityAdS}
For any $v_{\mathcal{I}}>0$, any $U>0$ and any $C_{0}>0$, there
exist $\varepsilon_{0}>0$ and $C_{1}>0$ such that the following
statement holds: For any $0\le\varepsilon<\varepsilon_{0}$ and any
smooth initial data set $(r_{/},\Omega_{/}^{2},\bar{f}_{/};v_{\mathcal{I}})\in\mathfrak{B}_{0}$
satisfying 
\begin{equation}
||(r_{/},\Omega_{/}^{2},\bar{f}_{/};v_{\mathcal{I}})||<\varepsilon\label{eq:SmallnessInitialNorm}
\end{equation}
(where $||\cdot||$ is defined by (\ref{eq:InitialDataNorm})) and
the bound (\ref{eq:BoundedSupportDefinition}) with $C_{0}$ in place
of $C$, the maximal future development $(\mathcal{U}_{max};f,\Omega^{2},f)$
of $(r_{/},\Omega_{/}^{2},\bar{f}_{/};v_{\mathcal{I}})$ under the
reflecting boundary condition on $\mathcal{I}$ (see Proposition \ref{cor:MaximalDevelopment})
satisfies 
\[
\mathcal{U}_{U;v_{\mathcal{I}}}\subset\mathcal{U}_{max}
\]
(where the domain $\mathcal{U}_{U;v_{\mathcal{I}}}\subset\mathbb{R}^{2}$
is defined in terms of $U$, $v_{\mathcal{I}}$ by (\ref{eq:GeneralDomain})).
Furthermore, $(\mathcal{U}_{U;v_{\mathcal{I}}};r,\Omega^{2},f)$ satisfies
the following bounds:
\begin{equation}
\sup_{u_{*}\in(0,U)}||(r,\Omega^{2};f)|_{u=u_{*}}||\le C_{1}\varepsilon,\label{eq:SmallnessInNormCauchy}
\end{equation}
\begin{equation}
\sup_{(u,v)\in\mathcal{U}_{U;v_{\mathcal{I}}}}\Big(\sup_{p^{u},p^{v}\in supp(f(u,v;\cdot,\cdot,\cdot))}\Big(\Omega^{2}(u,v)\big(p^{u}+p^{v}\big)\Big)\Big)\le(1+C_{1}\varepsilon)C_{0},\label{eq:SmallChangeBoundedSupportCauchy}
\end{equation}
\begin{equation}
\sup_{u\in(0,U)}\int_{u}^{u+v_{\mathcal{I}}}r\Big(\frac{T_{vv}[f]}{\partial_{v}r}+\frac{T_{uv}[f]}{-\partial_{u}r}\Big)(u,v)\,dv+\sup_{v\in(0,U+v_{\mathcal{I}})}\int_{\max\{0,v-v_{\mathcal{I}}\}}^{\min\{v,U\}}r\Big(\frac{T_{uv}[f]}{\partial_{v}r}+\frac{T_{uu}[f]}{-\partial_{u}r}\Big)(u,v)\,du\le C_{1}\varepsilon,\label{eq:SmallnessRightHandSideConstraintsCauchy}
\end{equation}
and
\begin{equation}
\sup_{\mathcal{U}_{U;v_{\mathcal{I}}}}\frac{2\tilde{m}}{r}<C_{1}\varepsilon.\label{eq:SmallnessTrappingCauchy}
\end{equation}
\end{prop}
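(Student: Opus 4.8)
The plan is to prove Proposition~\ref{prop:CauchyStabilityAdS} by a continuity (bootstrap) argument in the retarded time coordinate $u$, closed using \emph{only} the scale-invariant flux quantities controlled by $||\cdot||$, and to convert the resulting a priori estimates into the existence statement via the continuation criterion of Proposition~\ref{cor:GeneralContinuationCriterion}. Fix $C_{1}\gg 1$ (depending on $U,v_{\mathcal{I}},C_{0}$ and absolute constants) and, given data with $||(r_{/},\Omega_{/}^{2},\bar{f}_{/};v_{\mathcal{I}})||<\varepsilon$ and bounded phase-space support with constant $C_{0}$, let $u_{bs}$ be the supremum of $u_{1}\in(0,U]$ such that the maximal future development (Proposition~\ref{cor:MaximalDevelopment}) contains $\mathcal{U}_{u_{1};v_{\mathcal{I}}}$ and there satisfies (\ref{eq:SmallnessInNormCauchy})--(\ref{eq:SmallnessTrappingCauchy}) with $2C_{1}$ in place of $C_{1}$ together with $\Omega^{2}(p^{u}+p^{v})\le 2C_{0}$ on $supp(f)$. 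Proposition~\ref{thm:LocalExistenceUniqueness} gives $u_{bs}>0$. The goal is to show that on $\mathcal{U}_{u_{bs};v_{\mathcal{I}}}$ the weaker bounds self-improve to the $C_{1}$, $C_{0}$ versions: once this is done, the hypotheses of Proposition~\ref{cor:GeneralContinuationCriterion} hold --- (\ref{eq:NoTrappingContinuity}) since $\tfrac{2m}{r}<\tfrac{2\tilde{m}}{r}<2C_{1}\varepsilon<1$ ($\Lambda<0$, $\varepsilon_{0}$ small), (\ref{eq:NoConcentrationContinuity}) from the improved (\ref{eq:SmallnessTrappingCauchy}) with $2C_{1}\varepsilon_{0}<\delta_{0}$, and (\ref{eq:CompactSupportFContinuity}) from the data hypothesis --- so the solution extends past $u_{bs}$ with the strict $C_{1}$, $C_{0}$ bounds persisting on a slightly larger domain by continuity; hence $u_{bs}=U$, which is the asserted $\mathcal{U}_{U;v_{\mathcal{I}}}\subset\mathcal{U}_{max}$, and the improved bounds are precisely (\ref{eq:SmallnessInNormCauchy})--(\ref{eq:SmallnessTrappingCauchy}).

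For the improvement of the mass and flux bounds, I would first pass to the normalised gauge at $u=0$ (the gauge-normalising transformation of Definition~\ref{def:GaugeNormalisation}), where Lemma~\ref{lem:FreeFVlasov} bounds the data flux integrals by their AdS counterparts, which by Definition~\ref{def:InitialDataNorm} are $\lesssim\varepsilon$. I then integrate (\ref{eq:EksiswshGiaKappa-})--(\ref{eq:EksiswshGiaK}) for $\log\bigl(\tfrac{-\partial_{u}r}{1-2m/r}\bigr)$ and $\log\bigl(\tfrac{\partial_{v}r}{1-2m/r}\bigr)$ and the mass equations (\ref{eq:TildeVMaza})--(\ref{eq:TildeUMaza}) along the two null directions, anchoring $\tilde{m}$ on $\mathcal{I}$, where it is constant by (\ref{eq:ConservationHawkingMassAtInfinity}) so that $\tilde{m}|_{\mathcal{I}}\equiv\tilde{m}_{/}|_{v=v_{\mathcal{I}}}\lesssim\varepsilon/\sqrt{-\Lambda}$, and on the axis $\mathcal{Z}$, where smoothness gives $\tilde{m}=O(r^{3})$ so that $\tfrac{2\tilde{m}}{r}=O(r^{2})\to 0$. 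Near $\mathcal{I}$ the flux integrands $\tfrac{r^{2}T_{vv}}{\partial_{v}r}$, $\tfrac{r^{2}T_{uv}}{-\partial_{u}r}$, etc.\ must be re-expressed through the renormalised weights $\rho,\widetilde{\Omega}^{2},\tau_{\mu\nu}$ of (\ref{eq:RenormalisedQuantities}) and then bounded by the norm-type quantities in (\ref{eq:SmallnessRightHandSideConstraintsCauchy}); the latter are themselves closed off by a Gr\"onwall-type comparison, in which the difference between the physical-space flux integrals and the AdS flux integrals (which define $||\cdot||$) is bounded by the physical-space flux integrals using the bootstrap bound (\ref{eq:SmallnessTrappingCauchy}) with $2C_{1}\varepsilon_{0}$ small. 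This simultaneously yields the improved (\ref{eq:SmallnessInNormCauchy}), (\ref{eq:SmallnessRightHandSideConstraintsCauchy}) and (\ref{eq:SmallnessTrappingCauchy}).

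The main obstacle is the propagation of the phase-space support bound (\ref{eq:SmallChangeBoundedSupportCauchy}) in the critical topology, since $||\cdot||$ controls no higher derivative of $(r,\Omega^{2})$. Here I would use the reformulated geodesic identities (\ref{eq:XrhsimhGewdaisiakh-U})--(\ref{eq:XrhsimhGewdaisiakh-V}), together with their analogues across reflections off $\mathcal{I}$ governed by Definition~\ref{def:ReflectingBoundaryCondition} and the renormalised equations (\ref{eq:RenormalisedEquations}). The point is that the bulk integrand $\tfrac{1}{2}\tfrac{6\tilde{m}/r-1}{r^{2}}\Omega^{2}-24\pi T_{uv}$ differs from its exact-AdS value $-\tfrac{1}{2}r^{-2}\Omega_{AdS}^{2}$ --- which generates the \emph{closed} AdS geodesic flow, oscillating between $\mathcal{Z}$ and $\mathcal{I}$ with period $\sim v_{\mathcal{I}}$ and with $\partial_{t}$-energy $\tfrac{1}{2}\Omega^{2}(p^{u}+p^{v})$ conserved (recall $\partial_{u}+\partial_{v}=\partial_{t}$ on AdS, which is Killing) --- by a quantity whose spacetime integral over any bounded region is controlled by the flux quantities of (\ref{eq:SmallnessRightHandSideConstraintsCauchy}), hence by $C_{1}\varepsilon$; the $r^{-2}$-singularity of this integrand near $\mathcal{Z}$ is tamed, and the geodesic is prevented from reaching $\mathcal{Z}$, by the null-shell relation (\ref{eq:NullShellNewAngularMomentum}), which forces $r\gtrsim l/E_{0}$ along any $\gamma\subset supp(f)$. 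Over $u\in[0,U]$ a geodesic undergoes only $\lesssim U\sqrt{-\Lambda}$ reflections, so these errors accumulate to $O(\varepsilon)$, yielding $\bigl|\Omega^{2}(p^{u}+p^{v})-\bigl(\Omega^{2}(p^{u}+p^{v})\bigr)_{AdS}\bigr|\lesssim C_{0}C_{1}\varepsilon$ along $supp(f)$; since the AdS value is $\le C_{0}$ by the data assumption and energy conservation on AdS, this is the improved (\ref{eq:SmallChangeBoundedSupportCauchy}). With all four bootstrap bounds improved, the continuation step above closes the argument. The detailed execution of each of these steps is carried out in the companion paper~\cite{MoschidisVlasovWellPosedness}.
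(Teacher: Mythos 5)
You should first be aware that this paper contains no proof of Proposition \ref{prop:CauchyStabilityAdS} to compare against: the statement is imported verbatim as Theorem 6.1 of \cite{MoschidisVlasovWellPosedness}, and the text simply points to Section 6.2 of that companion paper. This makes the closing move of your proposal — deferring ``the detailed execution of each of these steps'' to \cite{MoschidisVlasovWellPosedness} — circular: the statement you are asked to prove \emph{is} the companion paper's theorem, so nothing may be outsourced to it. Your sketch therefore has to stand on its own, and as written it does not, even though the overall architecture (bootstrap in $u$ on the bounds (\ref{eq:SmallnessInNormCauchy})--(\ref{eq:SmallnessTrappingCauchy}), improvement of the constants, continuation via Proposition \ref{cor:GeneralContinuationCriterion}) is the natural and presumably correct skeleton.

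The genuine gaps are in the two steps that carry all the difficulty of the critical topology. First, the improvement of (\ref{eq:SmallnessInNormCauchy}) is not a ``Gr\"onwall-type comparison'' of flux integrals: by Definition \ref{def:InitialDataNorm}, the norm of the slice $\{u=u_{*}\}$ is computed from the \emph{free} Vlasov field $f^{(AdS)}$ launched on exact AdS from the gauge-normalised data induced at $u=u_{*}$ (Definition \ref{def:ComparisonVlasovField}); bounding it by $C_{1}\varepsilon$ requires renormalising the gauge slice by slice and comparing the perturbed characteristics with the AdS ones, and none of this is reduced to the quantities in (\ref{eq:SmallnessRightHandSideConstraintsCauchy}) by your argument. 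Second, the propagation of the support bound (\ref{eq:SmallChangeBoundedSupportCauchy}) with constant $(1+C_{1}\varepsilon)C_{0}$ is asserted rather than proved. Your claim that the integrand in (\ref{eq:XrhsimhGewdaisiakh-U}) differs from its AdS value by a quantity whose spacetime integral is controlled by the fluxes is not correct as stated: the difference contains $\frac{1}{2}\big(\frac{\Omega^{2}}{r^{2}}-\frac{\Omega_{AdS}^{2}}{r_{AdS}^{2}}\big)$, which is a metric-difference term not controlled by $||\cdot||$, and the cancellation against ``energy conservation on AdS'' is not automatic because the domain of integration in (\ref{eq:XrhsimhGewdaisiakh-U}) is attached to the \emph{perturbed} geodesic, whose turning point and $\inf_{\gamma}r$ differ from the AdS ones. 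Near the axis the vacuum part $\frac{1}{2}r^{-2}\Omega^{2}$ of the integrand contributes $O(1)$, not $O(\varepsilon)$, per bounce, so one must exploit its sign and split into ingoing/outgoing regimes exactly as in the proof of Lemma \ref{lem:GeodesicPaths}, and then prove genuine difference estimates against the AdS flow to upgrade the rough two-sided bounds of that lemma (which lose factors like $e^{e^{200C_{0}}}$) to the sharp $(1+C\varepsilon)$ bound you need; likewise the boundary term $\int\big(\partial_{v}\log\Omega^{2}-2\frac{\partial_{v}r}{r}\big)(u_{1}(v),v)\,dv$ is not pointwise $O(\varepsilon)$ in this topology and must be handled through its endpoint/ratio structure. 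Until these two steps are actually carried out, the bootstrap does not close.
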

For the proof of Proposition \ref{prop:CauchyStabilityAdS}, see Section
6.2 of \cite{MoschidisVlasovWellPosedness}.

\section{\label{sec:The-main-result} Statement of the main result}

In this section, we will present a detailed formulation of Theorem
\ref{thm:TheTheoremIntro} on the instability of $(\mathcal{M}_{AdS},g_{AdS})$
as a solution of the system (\ref{eq:RequationFinal})\textendash (\ref{NullShellFinal}).
In particular, the main result of this paper can be stated as follows:

\begin{customthm}{1}[final version]\label{thm:TheTheorem} 

There exists a 1-parameter family $\mathcal{S}^{(\varepsilon)}=(r_{/}^{(\varepsilon)},(\Omega_{/}^{(\varepsilon)})^{2},\bar{f}_{/}^{(\varepsilon)};\sqrt{-\frac{3}{\Lambda}}\pi)$,
$\varepsilon\in(0,1]$, of smoothly compatible, asymptotically AdS
initial data sets\emph{ }for (\ref{eq:RequationFinal})\textendash (\ref{NullShellFinal})
with bounded support in phase space (see Definition \ref{def:SmoothlyCompatibleInitialData}),
satisfying the following conditions:

\begin{enumerate}

\item $\mathcal{S}^{(\varepsilon)}$ converge to the trivial initial
data as $\varepsilon\rightarrow0$ with respect to the topology defined
by (\ref{eq:InitialDataNorm}), i.\,e.
\begin{equation}
||(r_{/}^{(\varepsilon)},(\Omega_{/}^{(\varepsilon)})^{2},\bar{f}_{/}^{(\varepsilon)};\sqrt{-\frac{3}{\Lambda}}\pi)||\xrightarrow{\varepsilon\rightarrow0}0.\label{eq:ConvergenceToTrivialTheorem}
\end{equation}

\item For any $\varepsilon\in(0,1]$, the corresponding maximal future
development $(\mathcal{U}_{max}^{(\varepsilon)};r_{\varepsilon},\Omega_{\varepsilon}^{2},f_{\varepsilon})$
of $\mathcal{S}^{(\varepsilon)}$ with reflecting boundary conditions
on $\mathcal{I}$ contains a point $(u_{\dagger}^{(\varepsilon)},v_{\dagger}^{(\varepsilon)})$
such that
\begin{equation}
\frac{2m_{\varepsilon}}{r_{\varepsilon}}(u_{\dagger}^{(\varepsilon)},v_{\dagger}^{(\varepsilon)})>1.\label{eq:TrappedSphere}
\end{equation}

\end{enumerate}

\end{customthm}
\begin{rem*}
As a consequence of (\ref{eq:NoTrappingPastInfinity}), the relation
(\ref{eq:TrappedSphere}) implies that $(\mathcal{U}_{max}^{(\varepsilon)};r_{\varepsilon},\Omega_{\varepsilon}^{2},f_{\varepsilon})$
contains a non-trivial black hole region for any $\varepsilon\in(0,1]$.
Furthermore, Lemma \ref{lem:CompletenessOfI} implies that, in this
case, $(\mathcal{U}_{max}^{(\varepsilon)};r_{\varepsilon},\Omega_{\varepsilon}^{2},f_{\varepsilon})$
possesses a complete null infinity $\mathcal{I}_{\varepsilon}$ for
any $\varepsilon\in(0,1]$.

Using the estimates established in the proof of Theorem \ref{thm:TheTheorem},
it can be actually shown that there exists an advanced time $v_{*}^{(\varepsilon)}>0$
satisfying 
\[
v_{\dagger}^{(\varepsilon)}<v_{*}^{(\varepsilon)}<u_{\mathcal{I}_{\varepsilon}}+\sqrt{-\frac{3}{\Lambda}}\pi,
\]
 such that, in the region $\mathcal{V}_{\infty}\doteq\{v\ge v_{*}^{(\varepsilon)}\}\cap\mathcal{U}_{max}^{(\varepsilon)}$,
the Vlasov field $f^{(\varepsilon)}$ vanishes identically and the
solution is locally isometric to a member of the Schwarzschild\textendash AdS
family. This fact implies, in particular, that $u_{\mathcal{I}}<+\infty$
and that the future boundary $\zeta$ of $\mathcal{U}_{max}^{(\varepsilon)}$
is strictly spacelike in a neighborhood of ``future timelike infinity''
$(u_{\mathcal{I}_{\varepsilon}},u_{\mathcal{I}_{\varepsilon}}+\sqrt{-\frac{3}{\Lambda}}\pi)$.
However, we are not able to rule out the possibility of $\zeta$ containing
a null segment emanating from $r=0$ and corresponding to a Cauchy
horizon for the maximal future development $(\mathcal{U}_{max}^{(\varepsilon)};r_{\varepsilon},\Omega_{\varepsilon}^{2},f_{\varepsilon})$
(note that the extension principle along $r=0$ provided by Theorem
5.1 in \cite{MoschidisVlasovWellPosedness} only applies under the
condition that $\frac{2\tilde{m}}{r}\ll1$). We will not pursue this
issue any further in this paper.
\end{rem*}
The proof of Theorem \ref{thm:TheTheorem} will occupy Sections \ref{sec:Auxiliary-constructions}\textendash \ref{sec:The-final-stage}.
In particular, the construction of the initial data family $\mathcal{S}^{(\varepsilon)}$
will be presented in Section \ref{sec:Auxiliary-constructions}, with
(\ref{eq:ConvergenceToTrivialTheorem}) established in Section (\ref{subsec:The-initial-data-family})
(see Lemma \ref{lem:SmallnessFamilyInitialData}). The fact that the
corresponding maximal developments $(\mathcal{U}_{max}^{(\varepsilon)};r_{\varepsilon},\Omega_{\varepsilon}^{2},f_{\varepsilon})$
contain points where (\ref{eq:TrappedSphere}) holds will finally
be established in Section \ref{sec:The-final-stage}, using the technical
machinery developed in Section \ref{sec:The-technical-core} and the
fact that the specific choice of the initial data family leads to
the formation of an intermediate profile with certain properties (see
Section \ref{sec:ThefirstStage}). For a sketch of the proof, see
also Section \ref{subsec:Discussion-on-the} of the introduction.

Before proceeding to the proof of Theorem \ref{thm:TheTheorem} and
its related constructions, we will need to establish a number of fundamental
estimates that will allow us to control the geodesic flow on solutions
of (\ref{eq:RequationFinal})\textendash (\ref{NullShellFinal}) under
minimal assumptions on their geometry; this will be achieved in Section
\ref{sec:GeodesicPathsAndDifferenceEstimates}.

\section{\label{sec:GeodesicPathsAndDifferenceEstimates} Auxiliary estimates
for the null geodesic flow in the case $2\tilde{m}/r\ll1$ }

In this section, we will establish a number of estimates related to
the paths of null geodesics on asymptotically AdS solutions $(r,\Omega^{2},f)$
of (\ref{eq:RequationFinal})\textendash (\ref{NullShellFinal}),
assuming, in addition, that the spacetimes under consideration satisfy
the smallness condition
\[
\frac{2\tilde{m}}{r}\le\delta_{0}\ll1.
\]
The results of this section will be crucial for the proof of Theorem
\ref{thm:TheTheorem}, since they will allow us to estimate the paths
traced out by narrow Vlasov beams with minimal control on the spacetime
geometry.

\subsection{\label{subsec:GeodesicPaths} Geodesic paths under rough assumptions
on the spacetime geometry}

For any $U>0$ and $v_{\mathcal{I}}>0$, let $\mathcal{U}_{U;v_{\mathcal{I}}}$
be the domain in the $(u,v)$-plane defined by (\ref{eq:GeneralDomain}).
Let also $(r,\Omega^{2},f)$ be a smooth solution of (\ref{eq:RequationFinal})\textendash (\ref{NullShellFinal})
on $\mathcal{U}_{U;v_{\mathcal{I}}}$, with smooth axis $\{u=v\}$
and smooth conformal infinity $\{u=v-v_{\mathcal{I}}\}$, in accordance
with to Definitions 3.1\textendash 3.3 of \cite{MoschidisVlasovWellPosedness}. 

The following result provides quantitative bounds for the paths of
null geodesics in $\mathcal{U}_{U;v_{\mathcal{I}}}$:
\begin{lem}
\label{lem:GeodesicPaths} Let $0<\delta_{0}\ll1$ be a sufficiently
small absolute constant, and let $\mathcal{U}_{U;v_{\mathcal{I}}}$
and $(r,\Omega^{2},f)$ be as above. Assume that the following bounds
are satisfied for some $C_{0}>100$:
\begin{equation}
\sup_{\mathcal{U}_{U;v_{\mathcal{I}}}}\Bigg(\Big|\log\Big(\frac{\partial_{v}r}{1-\frac{1}{3}\Lambda r^{2}}\Big)\Big|+\Big|\log\Big(\frac{-\partial_{u}r}{1-\frac{1}{3}\Lambda r^{2}}\Big)\Big|\Bigg)\le C_{0}.\label{eq:BoundGeometryC0}
\end{equation}
and 
\begin{equation}
\sup_{\mathcal{U}_{U;v_{\mathcal{I}}}}\Big(\frac{2\tilde{m}}{r}+\sqrt{-\Lambda}\tilde{m}\Big)\le\delta_{0}.\label{eq:SmallnessTrapping}
\end{equation}

Let $\gamma:[0,a)\rightarrow\mathcal{U}_{U;v_{\mathcal{I}}}$ (with
$a\in(0,+\infty]$) be a future inextendible, future directed, affinely
parametrised null geodesic of $(r,\Omega^{2})$ satisfying the following
conditions:

\begin{itemize}

\item $\gamma$ is initially ingoing, i.\,e.
\begin{equation}
\dot{\gamma}^{u}(0)>\dot{\gamma}^{v}(0),\label{eq:initiallyIngoing}
\end{equation}

\item $\gamma$ has angular momentum $l$ satisfying the bound
\begin{equation}
0<\frac{l}{E_{0}}\sqrt{-\Lambda}\le e^{-50C_{0}},\label{eq:SmallnessAngularMomentum}
\end{equation}
where
\begin{equation}
E_{0}\doteq\frac{1}{2}\Big(\Omega^{2}\dot{\gamma}^{u}+\Omega^{2}\dot{\gamma}^{v}\Big)(0),\label{eq:InitialEnergy}
\end{equation}

\item $\gamma(0)$ satisfies 
\begin{equation}
r(\gamma(0))\ge e^{50C_{0}}\frac{l}{E_{0}}.\label{eq:InitialRGamma2}
\end{equation}

\end{itemize}

Then, the following statements hold for $\gamma$:

\begin{enumerate}

\item Setting 
\[
(u_{0},v_{0})\doteq\gamma(0)
\]
and 
\begin{align}
\mathcal{V}_{\nwarrow} & =[u_{0},v_{0}+e^{150C_{0}}\frac{l}{E_{0}}]\times[v_{0}-e^{150C_{0}}\frac{l}{E_{0}},v_{0}+e^{150C_{0}}\frac{l}{E_{0}}],\label{eq:IngoingUGeneral}\\
\mathcal{V}_{\nearrow} & =[v_{0}-e^{150C_{0}}\frac{l}{E_{0}},v_{0}+e^{150C_{0}}\frac{l}{E_{0}}]\times[v_{0}-e^{150C_{0}}\frac{l}{E_{0}},v_{0}+v_{\mathcal{I}}+e^{150C_{0}}\frac{l}{E_{0}}],\label{eq:OutgoingUGeneral}
\end{align}
 the curve $\gamma$ is contained in the following region: 
\begin{equation}
\gamma\subset\Big\{ r\ge e^{-6C_{0}}\frac{l}{E_{0}}\Big\}\cap\Big(\mathcal{V}_{\nwarrow}\cup\mathcal{V}_{\nearrow}\Big)\cap\mathcal{U}_{U;v_{\mathcal{I}}}\label{eq:DomainForGamma}
\end{equation}
(see Figure \ref{fig:GeodesicPath}).

\item For any $s\in[0,a)$, we can estimate
\begin{equation}
e^{-100C_{0}}E_{0}\le\frac{1}{2}\Big(\Omega^{2}\dot{\gamma}^{u}(s)+\Omega^{2}\dot{\gamma}^{v}(s)\Big)\le e^{e^{200C_{0}}}E_{0}.\label{eq:EnergyEstimateGeneralDomain}
\end{equation}

\item Let $s_{c}\in(0,a]$ be defined as 
\begin{equation}
s_{c}=\sup\big\{ s\in(0,a):\text{ }u(\gamma(s))+v(\gamma(s))\le u_{0}+v_{0}+v_{\mathcal{I}}\big\}.\label{eq:CriticalS}
\end{equation}
Then, for any $s\in[0,s_{c})$, we can bound 
\begin{equation}
\frac{\dot{\gamma}^{v}}{\dot{\gamma}^{u}}\le e^{e^{200C_{0}}}\frac{l^{2}}{E_{0}^{2}}\frac{1-\frac{1}{3}\Lambda r^{2}}{r^{2}}\Bigg|_{\gamma(s)}\label{eq:IngoingRegion}
\end{equation}
while, for any $s\in(s_{c},a)$, we have:\footnote{This is a non-trivial case only when $s_{c}<a$.}
\begin{equation}
\frac{\dot{\gamma}^{u}}{\dot{\gamma}^{v}}(s)\le e^{e^{200C_{0}}}\frac{l^{2}}{E_{0}^{2}}\frac{1-\frac{1}{3}\Lambda r^{2}}{r^{2}}\Bigg|_{\gamma(s)}.\label{eq:OutgoingRegion}
\end{equation}

\end{enumerate}
\end{lem}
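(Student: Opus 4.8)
The plan is to run a continuity/bootstrap argument on the geodesic $\gamma$, using the integrated equations of motion \eqref{eq:XrhsimhGewdaisiakh-U} and \eqref{eq:XrhsimhGewdaisiakh-V} together with the constraint equations \eqref{eq:EksiswshGiaKappa-}--\eqref{eq:EksiswshGiaK} and the smallness hypotheses \eqref{eq:BoundGeometryC0}--\eqref{eq:SmallnessTrapping}. The key observation is that along $\gamma$ the conserved quantities are $l$ and the combination $E \doteq \frac12(\Omega^2\dot\gamma^u + \Omega^2\dot\gamma^v)$ is \emph{almost} conserved: differentiating $E$ along $\gamma$ and using \eqref{eq:NewNullGeodesicsSphericalSymmetry} produces a right-hand side proportional to $\frac{l^2}{r^2}$ times geometric quantities controlled by $C_0$, so as long as $r$ stays bounded below by a fixed multiple of $l/E_0$, the total variation of $\log E$ over the relevant $(u,v)$-range is bounded in terms of $C_0$ — this will yield \eqref{eq:EnergyEstimateGeneralDomain}. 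First I would set up the bootstrap assumptions: (i) $r(\gamma(s)) \ge e^{-6C_0}\frac{l}{E_0}$, (ii) $\gamma$ stays in $\mathcal{V}_{\nwarrow}\cup\mathcal{V}_{\nearrow}$, and (iii) the energy bound \eqref{eq:EnergyEstimateGeneralDomain} with slightly worse constants. Under these assumptions one shows each can be improved.

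The heart of the matter is the analysis of $r$ along $\gamma$ and the ``turning point''. In the ingoing phase $s<s_c$, one has $\dot\gamma^u > \dot\gamma^v$; combining the null-shell relation \eqref{eq:NullShellNewAngularMomentum}, which gives $\dot\gamma^v/\dot\gamma^u = \frac{l^2}{r^2 (\Omega^2\dot\gamma^u)^2}$, with the near-conservation of $\Omega^2\dot\gamma^u$ (obtained by integrating \eqref{eq:XrhsimhGewdaisiakh-U} and controlling the bulk term via \eqref{eq:SmallnessTrapping} and the pointwise bound on $\frac{6\tilde m}{r}-1$, together with the boundary term via \eqref{eq:BoundGeometryC0}), yields exactly the bound \eqref{eq:IngoingRegion}; the symmetric argument using \eqref{eq:XrhsimhGewdaisiakh-V} after the reflection off the axis handles the outgoing phase $s>s_c$ and gives \eqref{eq:OutgoingRegion}. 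To show $r$ cannot drop below $e^{-6C_0}\frac{l}{E_0}$, I would use $\dot r = \dot\gamma^u \partial_u r + \dot\gamma^v \partial_v r$ together with \eqref{eq:IngoingRegion}: near a would-be minimum of $r$, the ratio $\dot\gamma^v/\dot\gamma^u$ forces $\partial_u r\, \dot\gamma^u$ and $\partial_v r\, \dot\gamma^v$ to become comparable, and the explicit lower bound on $\frac{\partial_v r}{1-\frac13\Lambda r^2}$ from \eqref{eq:BoundGeometryC0} prevents $r$ from reaching $0$; quantitatively, $\min_\gamma r \gtrsim e^{-CC_0} \frac{l}{E_0}$. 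The spatial containment \eqref{eq:DomainForGamma} then follows by integrating $|\dot\gamma^u|$ and $|\dot\gamma^v|$ over the affine parameter range corresponding to the ingoing-then-outgoing excursion: in the ingoing phase the $v$-displacement is small because $\dot\gamma^v/\dot\gamma^u$ is tiny away from the turning region, and near the turning region $r \sim l/E_0$ so the whole turning region has $(u,v)$-diameter $\lesssim e^{CC_0}\frac{l}{E_0}$; the outgoing phase mirrors this until $\gamma$ exits through $\{u+v = u_0+v_0+v_{\mathcal{I}}\}$ (i.e.\ hits conformal infinity or leaves $\mathcal{V}_{\nearrow}$).

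The main obstacle I anticipate is the \textbf{turning-region analysis} — i.e.\ rigorously controlling $\gamma$ in the regime where $r \sim l/E_0$ and neither $\dot\gamma^u$ nor $\dot\gamma^v$ dominates. In that region the reduction to ``essentially radial'' behavior fails, and one must genuinely track the full system \eqref{eq:NewNullGeodesicsSphericalSymmetry}; the iterated integrals in \eqref{eq:XrhsimhGewdaisiakh-U}--\eqref{eq:XrhsimhGewdaisiakh-V} are over $(u,v)$-regions whose size is itself part of what must be estimated, so there is a potential circularity that the bootstrap must break by choosing the constant in the exponent $e^{150C_0}$ in \eqref{eq:IngoingUGeneral}--\eqref{eq:OutgoingUGeneral} large enough relative to the losses incurred at each step. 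The doubly-exponential constants $e^{e^{200C_0}}$ in \eqref{eq:EnergyEstimateGeneralDomain}, \eqref{eq:IngoingRegion}, \eqref{eq:OutgoingRegion} are a hint that the argument tolerates a crude iteration: each application of the integrated geodesic equation costs a factor exponential in $C_0$, and a bounded number of such applications (ingoing phase, turning, outgoing phase, possible axis reflection) produces the stated bounds. I would be careful to verify that the geodesic indeed reflects off the axis at most once before exiting the region — this uses $l>0$, so by the remark after \eqref{eq:NewNullGeodesicsSphericalSymmetry} it never actually reaches $\{r=0\}$ — and that \eqref{eq:SmallnessAngularMomentum} and \eqref{eq:InitialRGamma2}, with their generous factors $e^{\pm 50 C_0}$, provide exactly the room needed to initiate and close the bootstrap.
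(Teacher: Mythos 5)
Your proposal correctly identifies the main tools (the integrated identities \eqref{eq:XrhsimhGewdaisiakh-U}--\eqref{eq:XrhsimhGewdaisiakh-V}, the null-shell relation \eqref{eq:NullShellNewAngularMomentum}, the smallness \eqref{eq:SmallnessTrapping}, and an ingoing/outgoing splitting), but the argument as structured has a genuine gap at its core: the mechanism you give for the lower bound on $r$ is not the right one, and the bootstrap you set up does not close with the stated constants. At a minimum of $r$ along $\gamma$ one has $-\partial_{u}r\,\dot{\gamma}^{u}=\partial_{v}r\,\dot{\gamma}^{v}$, and combining this with \eqref{eq:NullShellNewAngularMomentum} gives $r\sim\frac{l}{\Omega^{2}\dot{\gamma}^{u}}\big(\Omega^{2}\tfrac{\partial_{v}r}{-\partial_{u}r}\big)^{1/2}$; so what prevents $r$ from becoming small is an \emph{upper} bound on the ingoing momentum $\Omega^{2}\dot{\gamma}^{u}$, not the lower bound on $\frac{\partial_{v}r}{1-\frac{1}{3}\Lambda r^{2}}$ from \eqref{eq:BoundGeometryC0}. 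In the paper this upper bound, $\Omega^{2}\dot{\gamma}^{u}\le 2e^{3C_{0}}E_{0}$, is obtained \emph{unconditionally} (no bootstrap) from the sign of the bulk term in \eqref{eq:XrhsimhGewdaisiakh-U}: since $\frac{6\tilde{m}}{r}-1<0$ by \eqref{eq:SmallnessTrapping} and $T_{uv}\ge 0$, the double integral is non-positive, and the boundary term is an exact derivative handled by the reflecting boundary condition for $\frac{\Omega^{2}}{1-\frac{1}{3}\Lambda r^{2}}$ at $\mathcal{I}$. This monotonicity observation is what yields the singly-exponential lower bound $r\ge e^{-6C_{0}}\frac{l}{E_{0}}$ and breaks the circularity you worry about. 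If instead the lower bound on $r$ is extracted from your bootstrap assumption (iii) (energy bounded by $e^{e^{200C_{0}}}E_{0}$), you only get $r\gtrsim e^{-e^{200C_{0}}}\frac{l}{E_{0}}$, which cannot be improved back to \eqref{eq:DomainForGamma}. Relatedly, your "key observation" that $\frac{d}{ds}E$ is controlled by $C_{0}$ is false under the hypotheses: the right-hand side of the geodesic system involves $\partial\log(\Omega^{2})$, which is a first derivative of the metric and is \emph{not} controlled by \eqref{eq:BoundGeometryC0}--\eqref{eq:SmallnessTrapping}; the whole point of the renormalised identities is to avoid such pointwise derivative bounds.

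The second problem is quantitative: bootstrapping directly on the final statements "with slightly worse constants" does not close. In the ingoing phase, the lower bound on $\Omega^{2}\dot{\gamma}^{u}$ (needed to keep $\dot{\gamma}^{v}/\dot{\gamma}^{u}$ small) comes from an estimate of the form $|\Delta\log(\Omega^{2}\dot{\gamma}^{u})|\lesssim e^{4C_{0}}\big(\sup_{\gamma}\tfrac{1}{r}\big)\,\Delta v+2C_{0}$, so the product $\big(\sup\tfrac{1}{r}\big)\Delta v$ must be small; with your bootstrap region of size $e^{150C_{0}}\frac{l}{E_{0}}$ and floor $e^{-6C_{0}}\frac{l}{E_{0}}$ this product is $e^{160C_{0}}$, and the improvement step fails. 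The paper instead runs a hierarchy of intermediate thresholds: it defines the turning time by $r$ dropping to $e^{30C_{0}}\frac{l}{E_{0}}$ and runs the continuity argument with a $v$-displacement threshold $e^{20C_{0}}\frac{l}{E_{0}}$, so that $e^{4C_{0}}\cdot e^{-30C_{0}}\cdot e^{20C_{0}}=e^{-6C_{0}}\ll 1$ closes the argument; only afterwards is the outgoing regime treated, by re-applying the $u$-identity anchored at the turning point (not the symmetric $v$-identity, which would give an upper bound on $\Omega^{2}\dot{\gamma}^{v}$ rather than the needed decaying upper bound on $\Omega^{2}\dot{\gamma}^{u}$ and lower bound on $\Omega^{2}\dot{\gamma}^{v}$), with the $\log\big(\tfrac{\Omega^{2}}{-\Lambda r^{2}}\big)$ boundary terms supplying the crucial $r^{-2}$ decay in \eqref{eq:OutgoingRegion}. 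Your proposal would need to be reorganised along these lines — unconditional one-sided bound first, then the $r$-floor, then a two-phase continuity argument with intermediate constants — before it could be considered a proof.
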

\begin{figure}[h] 
\centering 
\scriptsize
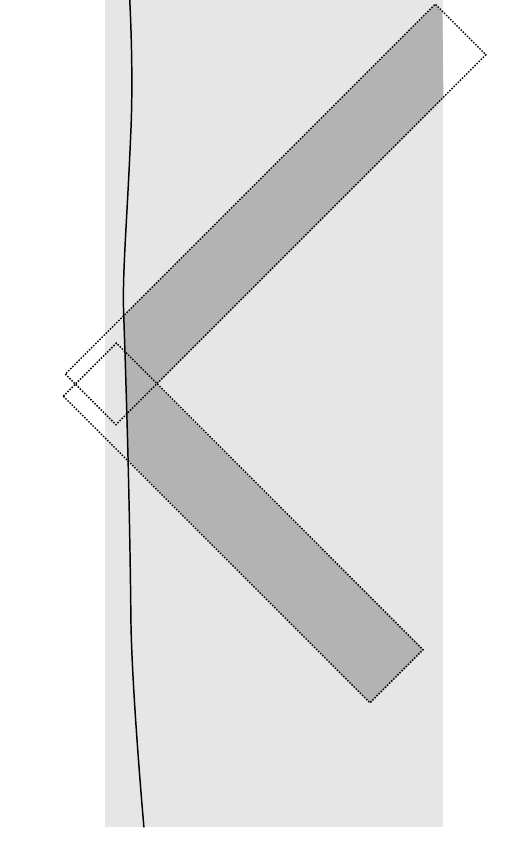 
\caption{Schematic depiction of the rectangular domains $\mathcal{V}_{\nwarrow}$ and $\mathcal{V}_{\nearrow}$ in the statement of Lemma \ref{lem:GeodesicPaths}. \label{fig:GeodesicPath}}
\end{figure}
\begin{rem*}
In the case when one considers future inextendible geodesics $\gamma$
in $\mathcal{U}_{U;v_{\mathcal{I}}}$ with future endpoints on conformal
infinity, the statement of Lemma \ref{lem:GeodesicPaths} can be readily
generalised to the extension of such geodesics through their reflection
off $\{u=v-v_{\mathcal{I}}\}$; see Corollary \ref{cor:GeodesicPathsLongTimes}.
Notice also that the condition (\ref{eq:BoundGeometryC0}) implies
that $\frac{1}{10}e^{-C_{0}}\le\sqrt{-\Lambda}v_{\mathcal{I}}\le10e^{C_{0}}$,
i.\,e.~that, in the class of spacetimes satisfying (\ref{eq:BoundGeometryC0}),
$v_{\mathcal{I}}$ and $(-\Lambda)^{-\frac{1}{2}}$ can be used almost
interchangeably as units of length with merely $O(e^{C_{0}})$ errors
occuring in the transition. Let us finally remark that Lemma \ref{lem:GeodesicPaths}
is also valid in the case when the initial point of $\gamma$ lies
on $\mathcal{I}$, i.\,e.~$\gamma(0)$ in the statement of the Lemma
is replaced by $\gamma(-\infty)$, with $\gamma(-\infty)\in\mathcal{I}$,
and (\ref{eq:initiallyIngoing}) is replaced by $\frac{\Omega^{2}\dot{\gamma}^{u}}{\Omega^{2}\dot{\gamma}^{v}}(-\infty)>1$.
\end{rem*}
\begin{proof}
We will adopt the following convention regarding the parametrization
of $\gamma$: We will denote with $s$ the affine parametrization
of $\gamma$ (and the corresponding derivative by $\dot{}$), while
$\tau$ will denote the parameter corresponding to $u+v$. Let also
$[\tau_{0},\tau_{1})$ be the parameter interval for $\gamma$ associated
to the parameter $\tau$, i.\,e.~$\tau_{0}=u_{0}+v_{0}$ and $\tau_{1}=\lim_{s\rightarrow a}\big(u(\gamma(s))+v(\gamma(s))\big)$.

In view of the formula (\ref{eq:DefinitionHereHawkingMass}), the
bounds (\ref{eq:BoundGeometryC0}) and (\ref{eq:SmallnessTrapping})
imply that 
\begin{equation}
\sup_{\mathcal{U}_{U;v_{\mathcal{I}}}}\Big|\log\Big(\frac{\Omega^{2}}{1-\frac{1}{3}\Lambda r^{2}}\Big)\Big|\le2C_{0}.\label{eq:BoundOmegaC0}
\end{equation}
Using (\ref{eq:XrhsimhGewdaisiakh-U}) with 
\begin{equation}
u_{1}(v)=\begin{cases}
u_{0}, & v\le u_{0}+v_{\mathcal{I}},\\
v-v_{\mathcal{I}}, & v\ge u_{0}+v_{\mathcal{I}},
\end{cases}\label{eq:U1(v)FromInitial}
\end{equation}
 and then using the relation (\ref{eq:BoundaryConditionOmegaInfinity})
for $\frac{\Omega^{2}}{1-\frac{1}{3}\Lambda r^{2}}$ along $\{u=v-v_{\mathcal{I}}\}$,
we readily infer that, for any $\tau\in(\tau_{0},\tau_{1})$:
\begin{align}
\log\big(\Omega^{2}\dot{\gamma}^{u}\big)(\tau)- & \log\big(\Omega^{2}\dot{\gamma}^{u}\big)(\tau_{0})=\int_{v_{0}}^{v(\gamma(\tau))}\int_{u_{1}(v)}^{u(\gamma(\tau_{v}))}\Big(\frac{1}{2}\frac{\frac{6\tilde{m}}{r}-1}{r^{2}}\Omega^{2}-24\pi T_{uv}\Big)\,dudv+\label{eq:UsefulRelationForGeodesicWithMu-2}\\
 & +\Big(\log\Big(\frac{\Omega^{2}}{1-\frac{1}{3}\Lambda r^{2}}\frac{1-\frac{1}{3}\Lambda r^{2}}{-\Lambda r^{2}}\Big)\big(u_{1}(v(\gamma(\tau))),v(\gamma(\tau))\big)-\log\Big(\frac{\Omega^{2}}{1-\frac{1}{3}\Lambda r^{2}}\frac{1-\frac{1}{3}\Lambda r^{2}}{-\Lambda r^{2}}\Big)(u_{0},v_{0})\Big),\nonumber 
\end{align}
where $\tau_{v}$ is defined by 
\begin{equation}
v(\gamma(\tau_{\bar{v}}))=\bar{v}.
\end{equation}
In view of the fact that the first term in the right hand side of
(\ref{eq:UsefulRelationForGeodesicWithMu-2}) is non-positive (as
a consequence of (\ref{eq:SmallnessTrapping})), using (\ref{eq:BoundOmegaC0}),
(\ref{eq:initiallyIngoing}), (\ref{eq:InitialEnergy}) and the fact
that 
\[
r(u_{1}(v(\gamma(\tau))),v(\gamma(\tau)))\ge r(u_{0},v_{0})
\]
(since $\gamma$ has a timelike projection on the $(u,v)$-plane and
thus $v(\gamma(\tau))\ge v_{0}$, while $r(u_{1}(v),v)=\infty$ if
$u_{1}(v)\neq u_{0}$), we infer from (\ref{eq:UsefulRelationForGeodesicWithMu-2})
that, for any $\tau\in(\tau_{0},\tau_{1})$:
\begin{equation}
\Omega^{2}\dot{\gamma}^{u}(\tau)\le e^{3C_{0}}\Omega^{2}\dot{\gamma}^{u}(\tau_{0})\le2e^{3C_{0}}E_{0}.\label{eq:BoundForMaximumOfPu}
\end{equation}

For any $\tau\in(\tau_{0},\tau_{1})$ such that $\frac{d}{d\tau}r(\gamma(\tau))\le0$,
i.\,e.
\begin{equation}
\partial_{v}r|_{\gamma(\tau)}\dot{\gamma}^{v}(\tau)\le-\partial_{u}r|_{\gamma(\tau)}\dot{\gamma}^{u}(\tau),\label{eq:BoundOneSidedDecreasingR}
\end{equation}
we can estimate from (\ref{eq:NullShellNewAngularMomentum}) using
(\ref{eq:BoundGeometryC0}), (\ref{eq:BoundForMaximumOfPu}) and (\ref{eq:BoundOneSidedDecreasingR}):
\begin{align}
\frac{\Omega^{2}l^{2}}{r^{2}}\Big|_{\gamma(\tau)}= & (\Omega^{2}|_{\gamma(\tau)}\dot{\gamma}^{u}(\tau))\cdot(\Omega^{2}|_{\gamma(\tau)}\dot{\gamma}^{v}(\tau))\le\label{eq:AlmostMinimumR}\\
 & \le\sup_{\mathcal{U}_{U;v_{\mathcal{I}}}}\big(\frac{\partial_{v}r}{-\partial_{u}r}\big)(\Omega^{2}|_{\gamma(\tau)}\dot{\gamma}^{u}(\tau))^{2}\le\nonumber \\
 & \le4e^{8C_{0}}E_{0}^{2}.\nonumber 
\end{align}
Since (\ref{eq:AlmostMinimumR}) holds whenever $\frac{d}{d\tau}r(\gamma(\tau))\le0$,
using (\ref{eq:BoundOmegaC0}) we deduce that 
\begin{equation}
\inf_{\gamma}r\ge\frac{1}{2}e^{-5C_{0}}\frac{l}{E_{0}}.\label{eq:LowerBoundRGeneralCase}
\end{equation}

The identity (\ref{eq:UsefulRelationForGeodesicWithMu-2}) implies,
using the bound (\ref{eq:SmallnessTrapping}) for $\tilde{m}/r$,
combined with the bound 
\[
T_{uv}\le e^{C_{0}}\frac{-\partial_{u}\tilde{m}}{r^{2}}
\]
(following readily from (\ref{eq:TildeUMaza}) and (\ref{eq:BoundGeometryC0})),
as well as the bounds (\ref{eq:BoundGeometryC0}) and (\ref{eq:BoundOmegaC0}),
that the following estimate holds for any $\tau\in(\tau_{0},\tau_{1})$
\begin{align}
\Big|\log\big(\Omega^{2}\dot{\gamma}^{u}\big)(\tau)- & \log\big(\Omega^{2}\dot{\gamma}^{u}\big)(\tau_{0})\Big|\le\label{eq:EstimateForSmallnessOfGu}\\
\le & e^{4C_{0}}\int_{u_{0}+v_{\mathcal{I}}}^{v(\gamma(\tau))}\int_{u_{1}(v)}^{u(\gamma(\tau_{v}))}\frac{(-\partial_{u}r)}{r^{2}}\,dudv+e^{2C_{0}}\int_{u_{1}(v)}^{v(\gamma(\tau))}\int_{u_{1}(v)}^{u(\gamma(\tau_{v}))}(-\partial_{u}\tilde{m})\frac{1-\frac{1}{3}\Lambda r^{2}}{r^{2}}\,dudv+2C_{0}\le\nonumber \\
\le & e^{4C_{0}}\Big(\sup_{v\in[v_{0},v(\gamma(\tau))]}\int_{u_{1}(v)}^{u(\gamma(\tau_{v}))}\frac{(-\partial_{u}r)}{r^{2}}\,du\Big)\big(v(\gamma(\tau))-v(\gamma(\tau_{0}))\big)+\nonumber \\
 & \hphantom{e^{4C_{0}}\Big(}+e^{2C_{0}}\int_{u_{1}(v)}^{v(\gamma(\tau))}\int_{u_{1}(v)}^{u(\gamma(\tau_{v}))}4\frac{\tilde{m}}{r}\frac{1-\frac{1}{3}\Lambda r^{2}}{r^{2}}(-\partial_{u}r)\,dudv+\\
 & \hphantom{e^{4C_{0}}\Big(}+e^{2C_{0}}\int_{u_{1}(v)}^{v(\gamma(\tau))}\tilde{m}\frac{1-\frac{1}{3}\Lambda r^{2}}{r^{2}}\Big|_{u=u_{1}(v)}\,dv-e^{2C_{0}}\int_{u_{1}(v)}^{v(\gamma(\tau))}\tilde{m}\frac{1-\frac{1}{3}\Lambda r^{2}}{r^{2}}\Big|_{u=u(\gamma(\tau_{v}))}\,dv\le\nonumber \\
\le & e^{4C_{0}}\sup_{\bar{\tau}\in(\tau_{0},\tau)}\frac{1}{r(\gamma(\bar{\tau}))}\cdot\big(v(\gamma(\tau))-v(\gamma(\tau_{0}))\big)+2C_{0},\nonumber 
\end{align}
where, in passing from the first to the second line in (\ref{eq:EstimateForSmallnessOfGu}),
we integrated in $u$ for the $\partial_{u}\tilde{m}$ term. On the
other hand, the identity (\ref{eq:UsefulRelationForGeodesicWithMu-2})
can also be used to similarly obtain an one sided bound for $\log\big(\Omega^{2}\dot{\gamma}^{u}\big)(\tau)-\log\big(\Omega^{2}\dot{\gamma}^{u}\big)(\tau_{0})$,
using the fact that $T_{uv}\ge0$: 
\begin{align}
\log\big(\Omega^{2}\dot{\gamma}^{u}\big)(\tau)-\log\big(\Omega^{2}\dot{\gamma}^{u}\big)(\tau_{0})\le & \int_{u_{0}+v_{\mathcal{I}}}^{v(\gamma(\tau))}\int_{u_{1}(v)}^{u(\gamma(\tau_{v}))}\Big(\frac{1}{2}\frac{\frac{6\tilde{m}}{r}-1}{r^{2}}\Omega^{2}\Big)\,dudv+\label{eq:EstimateForSmallnessOfGu-1}\\
 & +\Big(\log\Big(\frac{\Omega^{2}}{1-\frac{1}{3}\Lambda r^{2}}\Big)\big(u_{1}(v(\gamma(\tau))),v(\gamma(\tau))\big)-\log\Big(\frac{\Omega^{2}}{1-\frac{1}{3}\Lambda r^{2}}\Big)(u_{0},v_{0})\Big)\le\nonumber \\
\le & -e^{-4C_{0}}\mathcal{B}[\tau]+2C_{0},\nonumber 
\end{align}
where 
\begin{equation}
\mathcal{B}[\tau]\doteq\int_{u_{1}(v)}^{v(\gamma(\tau))}\int_{v-v_{\mathcal{I}}}^{u(\gamma(\tau_{v}))}\frac{(-\partial_{u}r)}{r^{2}}\,dudv.\label{eq:B=00005Bt=00005D}
\end{equation}
Therefore, in view of (\ref{eq:InitialEnergy}), (\ref{eq:initiallyIngoing}):
\begin{equation}
-e^{4C_{0}}\sup_{\bar{\tau}\in(\tau_{0},\tau)}\frac{1}{r(\gamma(\bar{\tau}))}\cdot\big(v(\gamma(\tau))-v(\gamma(\tau_{0}))\big)-2C_{0}\le\log\Big(\frac{\Omega^{2}\dot{\gamma}^{u}(\tau)}{E_{0}}\Big)\le-e^{-4C_{0}}\mathcal{B}[\tau]+2C_{0}.\label{eq:EstimateDecreaseGammaU}
\end{equation}

Let us define $\tau_{in}\in[\tau_{0},\tau_{1}]$ as follows: 
\begin{equation}
\tau_{in}\doteq\sup\Big\{\tau\in(\tau_{0},\tau_{1}):\text{ }r(\gamma(\bar{\tau}))\ge e^{30C_{0}}\frac{l}{E_{0}}\text{ for all }\bar{\tau}\le\tau\Big\}.\label{eq:DefinitionFirstPointNearRegion}
\end{equation}
Our analysis of the path traced by $\gamma$ will be separated into
two regimes: The \emph{ingoing} interval $\tau\in[\tau_{0},\tau_{in})$
and the \emph{outgoing} interval $\tau\in[\tau_{in},\tau_{1})$. Note
that the latter interval will be trivial if $\tau_{in}=\tau_{1}$.
However, in view of (\ref{eq:InitialRGamma2}), it is necessary that
the ingoing inteval is non-trivial, i.\,e.:
\begin{equation}
\tau_{in}>\tau_{0}.
\end{equation}
 \medskip{}

\noindent \emph{The ingoing regime $\tau\in[\tau_{0},\tau_{in})$.}
Let us define $\tau_{*}\in(\tau_{0},\tau_{in}]$ by the relation:
\begin{equation}
\tau_{*}=\sup\Big\{\tau\in(\tau_{0},\tau_{c}):\text{ }v(\gamma(\bar{\tau}))-v(\gamma(\tau_{0}))\le e^{20C_{0}}\frac{l}{E_{0}}\text{ for all }\bar{\tau}\le\tau\Big\}.\label{eq:DefinitionFirstPointNearRegion-1}
\end{equation}

The estimate (\ref{eq:EstimateDecreaseGammaU}) implies, in view of
the fact that $\text{ }r(\gamma(\bar{\tau}))\ge e^{30C_{0}}\frac{l}{E_{0}}$
for $\tau<\tau_{in}$ (folowing from the definition (\ref{eq:DefinitionFirstPointNearRegion})
of $\tau_{in}$) and the definition (\ref{eq:DefinitionFirstPointNearRegion-1})
of $\tau_{*}$, that, for all $\tau\in[\tau_{0},\tau_{*})$:
\begin{equation}
\big(\Omega^{2}\dot{\gamma}^{u}\big)(\tau)\ge e^{-4C_{0}}E_{0}.\label{eq:LowerBoundGammaUIngoing}
\end{equation}
Thus, in view of (\ref{eq:NullShellNewAngularMomentum}) and (\ref{eq:BoundOmegaC0}),
we can bound for all $\tau\in[\tau_{0},\tau_{*})$:
\begin{equation}
(\Omega^{2}\dot{\gamma}^{v})(\tau)\le e^{6C_{0}}\frac{l^{2}(1-\frac{1}{3}\Lambda r^{2})}{E_{0}r^{2}}\Big|_{\gamma(\tau)}.\label{eq:UpperBoundGammaVIngoing}
\end{equation}
In view of (\ref{eq:BoundForMaximumOfPu}), (\ref{eq:LowerBoundRGeneralCase}),
(\ref{eq:LowerBoundGammaUIngoing}), and (\ref{eq:UpperBoundGammaVIngoing}),
we therefore infer that, for all $\tau\in[\tau_{0},\tau_{*})$: 
\begin{equation}
e^{-4C_{0}}E_{0}\le\big(\Omega^{2}\dot{\gamma}^{u}\big)(\tau)+\big(\Omega^{2}\dot{\gamma}^{v}\big)(\tau)\le e^{10C_{0}}E_{0}\label{eq:UpperboundEnergyIngoing}
\end{equation}
and
\begin{equation}
\frac{(\Omega^{2}\dot{\gamma}^{v})(\tau)}{\big(\Omega^{2}\dot{\gamma}^{u}\big)(\tau)+(\Omega^{2}\dot{\gamma}^{v})(\tau)}\le e^{10C_{0}}\frac{l^{2}(1-\frac{1}{3}\Lambda r^{2})}{E_{0}^{2}r^{2}}\Big|_{\gamma(\tau)}.\label{eq:UpperBoundGammaVParametrization}
\end{equation}
Moreover, as a result of (\ref{eq:BoundGeometryC0}), (\ref{eq:UpperboundEnergyIngoing}),
(\ref{eq:UpperBoundGammaVParametrization}) and t he definition (\ref{eq:DefinitionFirstPointNearRegion})
of $\tau_{in}$, as well as using assumption (\ref{eq:SmallnessAngularMomentum})
for $\gamma$, we deduce that, for all $\tau\in[\tau_{0},\tau_{*})$:
\begin{equation}
\frac{d}{d\tau}\Big(\tan^{-1}\big(\sqrt{-\Lambda}r(\gamma(\tau))\big)\Big)\le-e^{-4C_{0}}\Big(1-e^{10C_{0}}\frac{l^{2}(1-\frac{1}{3}\Lambda r^{2})}{E_{0}^{2}r^{2}}\Big|_{\gamma(\tau)}\Big)\sqrt{-\Lambda}\le-\frac{1}{2}e^{-4C_{0}}\sqrt{-\Lambda}.\label{eq:LowerBoundRChangeWhenL<<1}
\end{equation}

Integrating (\ref{eq:UpperBoundGammaVParametrization}) over $\tau\in[\tau_{0},\tau_{*})$
and using (\ref{eq:LowerBoundRChangeWhenL<<1}) and (\ref{eq:LowerBoundRGeneralCase}),
we infer that, for all $\tau\in[\tau_{0},\tau_{*})$:
\begin{equation}
v(\gamma(\tau))-v_{0}\le\frac{1}{2}e^{20C_{0}}\frac{l}{E_{0}}.\label{eq:SmallVDeviation}
\end{equation}
In view of (\ref{eq:SmallVDeviation}), the definition (\ref{eq:DefinitionFirstPointNearRegion-1})
of $\tau_{*}$ implies (through a standard continuity argument) that
\begin{equation}
\tau_{*}=\tau_{in}.\label{eq:T*-Tin}
\end{equation}
Therefore, the estimates (\ref{eq:LowerBoundGammaUIngoing})\textendash (\ref{eq:SmallVDeviation})hold
for all $\tau\in[\tau_{0},\tau_{in})$. Moreover, the bound (\ref{eq:SmallVDeviation})
and the fact that $u\le v$ on $\mathcal{U}_{U;v_{\mathcal{I}}}$
implies that 
\begin{equation}
\tau_{in}=\lim_{\tau\rightarrow\tau_{in}^{-}}\big(u(\gamma(\tau))+v(\gamma(\tau))\big)\le2v_{0}+e^{20C_{0}}\frac{l}{E_{0}}.\label{eq:UpperBoundTin}
\end{equation}

\medskip{}

\noindent \emph{The outgoing regime $\tau\in[\tau_{in},\tau_{1})$.}
In the case when $\tau_{in}<\tau_{1}$ (which is necessarily the case,
for instance, when $\tau_{1}>2v_{0}+e^{20C_{0}}\frac{l}{E_{0}}$,
as a consequence of (\ref{eq:UpperBoundTin})), the definition (\ref{eq:DefinitionFirstPointNearRegion})
of $\tau_{in}$ implies that 
\begin{equation}
r(\gamma(\tau_{in}))=e^{30C_{0}}\frac{l}{E_{0}}\label{eq:CloseToAxisTin}
\end{equation}
from which we obtain, in view of the bound (\ref{eq:BoundGeometryC0})
on $\partial_{v}r$ and the boundary condition $r|_{\{u=v\}}=0$,
that 
\begin{equation}
v(\gamma(\tau_{in}))-u(\gamma(\tau_{in}))\le e^{32C_{0}}\frac{l}{E_{0}}.\label{eq:CloseToAxisUVTin}
\end{equation}
The bounds (\ref{eq:SmallVDeviation}) and (\ref{eq:CloseToAxisUVTin})
therefore yield: 
\begin{equation}
u(\gamma(\tau_{in}))-v_{0}\le2e^{32C_{0}}\frac{l}{E_{0}}.\label{eq:SmallUDeviation}
\end{equation}

For any $\tau\in[\tau_{in},\tau_{1})$, the analogue of the identity
(\ref{eq:UsefulRelationForGeodesicWithMu-2}) after replacing $u_{1}(v)$
with
\[
u_{2}(v)=\begin{cases}
u(\gamma(\tau_{in})), & v\le u(\gamma(\tau_{in}))+v_{\mathcal{I}},\\
v-v_{\mathcal{I}}, & v\ge u(\gamma(\tau_{in}))+v_{\mathcal{I}},
\end{cases}
\]
is
\begin{align}
\log\big(\Omega^{2}\dot{\gamma}^{u}\big)(\tau)-\log\big(\Omega^{2}\dot{\gamma}^{u}\big)(\tau_{in})= & \int_{v(\gamma(\tau_{in}))}^{v(\gamma(\tau))}\int_{u_{2}(v)}^{u(\gamma(\tau_{v}))}\Big(\frac{1}{2}\frac{\frac{6\tilde{m}}{r}-1}{r^{2}}\Omega^{2}-24\pi T_{uv}\Big)\,dudv+\label{eq:UsefulRelationForGeodesicWithMuForOutgoing}\\
 & +\Big(\log\Big(\frac{\Omega^{2}}{-\Lambda r^{2}}\Big)\big(u_{2}(v(\gamma(\tau))),v(\gamma(\tau))\big)-\log\Big(\frac{\Omega^{2}}{-\Lambda r^{2}}\Big)(u(\gamma(\tau_{in})),v(\gamma(\tau_{in})))\Big).\nonumber 
\end{align}
Using the fact that the fisrt term in the right hand side of (\ref{eq:UsefulRelationForGeodesicWithMuForOutgoing})
is non-positive (in view of (\ref{eq:SmallnessTrapping})), the bounds
(\ref{eq:BoundOmegaC0}) for $\Omega^{2}/(1-\frac{1}{3}\Lambda r^{2})$
and (\ref{eq:BoundForMaximumOfPu}) for $\dot{\gamma}^{u}(\tau_{in})$
imply that, for any $\tau\in[\tau_{in},\tau_{1})$: 
\begin{align}
\Omega^{2}\dot{\gamma}^{u}(\tau) & \le\Omega^{2}\dot{\gamma}^{u}(\tau_{in})\times\label{eq:BoundGammaUBeforeOutgoing}\\
 & \hphantom{\le}\times\exp\Bigg\{\log\big(\frac{1-\frac{1}{3}\Lambda r^{2}}{-\Lambda r^{2}}\big)\Big|_{(u_{2}(v(\gamma(\tau))),v(\gamma(\tau)))}-\log\big(\frac{1-\frac{1}{3}\Lambda r^{2}}{-\Lambda r^{2}}\big)\Big|_{(u(\gamma(\tau_{in})),v(\gamma(\tau_{in})))}+4C_{0}\Bigg\}\le\nonumber \\
 & \le2e^{7C_{0}}\frac{r^{2}(u(\gamma(\tau_{in})),v(\gamma(\tau_{in})))}{r^{2}(u_{2}(v(\gamma(\tau))),v(\gamma(\tau)))}\cdot\frac{\big(1-\frac{1}{3}\Lambda r^{2}\big)(u_{2}(v(\gamma(\tau))),v(\gamma(\tau)))}{\big(1-\frac{1}{3}\Lambda r^{2}\big)(u(\gamma(\tau_{in})),v(\gamma(\tau_{in})))}E_{0}.\nonumber 
\end{align}
Using (\ref{eq:SmallnessAngularMomentum}) and (\ref{eq:CloseToAxisTin}),
the bound (\ref{eq:BoundGammaUBeforeOutgoing}) yields for any $\tau\in[\tau_{in},\tau_{1})$:
\begin{equation}
\Omega^{2}\dot{\gamma}^{u}(\tau)\le e^{70C_{0}}\frac{1-\frac{1}{3}\Lambda r^{2}}{r^{2}}\Big|_{\big(u_{2}(v(\gamma(\tau))),v(\gamma(\tau))\big)}\frac{l^{2}}{E_{0}}.\label{eq:UpperBoundGammaUDecayInR}
\end{equation}
The relation (\ref{eq:NullShellNewAngularMomentum}) implies, in view
of (\ref{eq:BoundOmegaC0}), (\ref{eq:UpperBoundGammaUDecayInR})
and the fact that $\partial_{u}r<0$, that, for any $\tau\in[\tau_{in},\tau_{1})$:
\begin{equation}
\Omega^{2}\dot{\gamma}^{v}(\tau)\ge e^{-72C_{0}}\frac{\frac{1-\frac{1}{3}\Lambda r^{2}}{r^{2}}\Big|_{\big(u(\gamma(\tau))),v(\gamma(\tau))\big)}}{\frac{1-\frac{1}{3}\Lambda r^{2}}{r^{2}}\Big|_{\big(u_{2}(v(\gamma(\tau))),v(\gamma(\tau))\big)}}E_{0}\ge e^{-72C_{0}}E_{0}.\label{eq:LowerBoundGammaVOutgoing}
\end{equation}

The estimates (\ref{eq:UpperBoundGammaUDecayInR}) and (\ref{eq:LowerBoundGammaVOutgoing}),
combined with (\ref{eq:BoundGeometryC0}), (\ref{eq:SmallnessAngularMomentum})
and (\ref{eq:CloseToAxisTin}), imply that, for any $\tau\in[\tau_{in},\tau_{1})$:
\begin{align}
|u(\gamma(\tau))-u(\gamma(\tau_{in}))| & =\int_{\tau_{in}}^{\tau}\dot{\gamma}^{u}(\bar{\tau})\,d\bar{\tau}=\int_{v(\gamma(\tau_{in}))}^{v(\gamma(\tau))}\frac{\Omega^{2}\dot{\gamma}^{u}}{\Omega^{2}\dot{\gamma}^{v}}(\bar{\tau})\,dv(\gamma(\bar{\tau}))\le\label{eq:SmallUDeviationOutgoing}\\
 & \le e^{150C_{0}}\frac{l^{2}}{E_{0}^{2}}\int_{v(\gamma(\tau_{in}))}^{v(\gamma(\tau))}\frac{1-\frac{1}{3}\Lambda r^{2}}{r^{2}}(u_{2}(v),v)\,dv\le\nonumber \\
 & \le e^{160C_{0}}\frac{l^{2}}{E_{0}^{2}}\frac{1}{r(\gamma(\tau_{in}))}\le\nonumber \\
 & \le e^{130C_{0}}\frac{l}{E_{0}}.\nonumber 
\end{align}
Using (\ref{eq:CloseToAxisTin}) and (\ref{eq:SmallUDeviationOutgoing})
(as well as (\ref{eq:BoundGeometryC0}), (\ref{eq:SmallnessAngularMomentum})
and (\ref{eq:BoundOmegaC0})), we can estimate 
\begin{align}
\int_{v(\gamma(\tau_{in}))}^{v(\gamma(\tau))}\int_{u_{2}(v)}^{u(\gamma(\tau_{v}))}\frac{1}{r^{2}}\Omega^{2}\,dudv & \le\int_{\{r\ge r(\gamma(\tau_{in}))\}\cap\{|u-u(\gamma(\tau_{in}))|\le e^{130C_{0}}\frac{l}{E_{0}}\}}\frac{1}{r^{2}}\Omega^{2}\,dudv\le\label{eq:BoundForDecreaseOutgoing}\\
 & \le e^{150C_{0}}\frac{l}{E_{0}}\int_{r(\gamma(\tau_{in}))}^{\infty}\frac{1}{r^{2}}\,dr\le\nonumber \\
 & \le e^{120C_{0}}.\nonumber 
\end{align}
Similarly, in view of (\ref{eq:TildeVMaza}) and (\ref{eq:SmallnessTrapping}):
\begin{align}
\int_{v(\gamma(\tau_{in}))}^{v(\gamma(\tau))}\int_{u_{2}(v)}^{u(\gamma(\tau_{v}))} & T_{uv}\,dudv=\label{eq:BoundForDecreaseTuvutgoing}\\
 & =\int_{v(\gamma(\tau_{in}))}^{v(\gamma(\tau))}\int_{u_{2}(v)}^{u(\gamma(\tau_{v}))}\frac{\partial_{v}\tilde{m}}{8\pi r^{2}}\frac{\Omega^{2}}{-\partial_{u}r}\,dudv\le\nonumber \\
 & \le e^{4C_{0}}\int_{v(\gamma(\tau_{in}))}^{v(\gamma(\tau))}\int_{u_{2}(v)}^{u(\gamma(\tau_{v}))}\frac{\partial_{v}\tilde{m}}{r^{2}}\,dudv\le\nonumber \\
 & \le e^{4C_{0}}\Big(\int_{v(\gamma(\tau_{in}))}^{v(\gamma(\tau))}\int_{u_{2}(v)}^{u(\gamma(\tau_{v}))}2\frac{\tilde{m}}{r}\frac{\partial_{v}r}{r^{2}}\,dudv+\int_{u_{2}(v(\gamma(\tau)))}^{u(v(\gamma(\tau)))}\frac{\tilde{m}}{r^{2}}(u,v(\gamma(\tau)))\,du\Big)\le\nonumber \\
 & \le e^{10C_{0}}\delta_{0}\Big(\int_{v(\gamma(\tau_{in}))}^{v(\gamma(\tau))}\int_{u_{2}(v)}^{u(\gamma(\tau_{v}))}\frac{\Omega^{2}}{r^{2}}\,dudv+\int_{u_{2}(v(\gamma(\tau)))}^{u(v(\gamma(\tau)))}\frac{1}{r(\gamma(\tau_{1}))}\,du\Big)\le\nonumber \\
 & \le e^{150C_{0}}\delta_{0}.\nonumber 
\end{align}

Using (\ref{eq:SmallnessTrapping}), (\ref{eq:BoundForDecreaseOutgoing})
and (\ref{eq:BoundForDecreaseTuvutgoing}), we infer from (\ref{eq:UsefulRelationForGeodesicWithMuForOutgoing})
that, for any $\tau\in[\tau_{in},\tau_{1})$: 
\begin{align}
\log\big(\Omega^{2}\dot{\gamma}^{u}\big)(\tau) & -\log\big(\Omega^{2}\dot{\gamma}^{u}\big)(\tau_{in})\ge\label{eq:UsefulRelationForGeodesicWithMuForOutgoing-1}\\
 & \ge-2e^{120C_{0}}+\Big(\log\Big(\frac{\Omega^{2}}{-\Lambda r^{2}}\Big)\big(u_{2}(v(\gamma(\tau))),v(\gamma(\tau))\big)-\log\Big(\frac{\Omega^{2}}{-\Lambda r^{2}}\Big)(u(\gamma(\tau_{in})),v(\gamma(\tau_{in})))\Big)
\end{align}
and, therefore, in view of the bounds (\ref{eq:BoundOmegaC0}) for
$\Omega^{2}/(1-\frac{1}{3}\Lambda r^{2})$ and (\ref{eq:LowerBoundGammaUIngoing})
for $\dot{\gamma}^{u}(\tau_{in})$: 
\begin{align}
\Omega^{2}\dot{\gamma}^{u}(\tau) & \ge\Omega^{2}\dot{\gamma}^{u}(\tau_{in})\times\label{eq:BoundGammaUBeforeOutgoing-1}\\
 & \hphantom{\ge}\times\exp\Bigg\{-e^{130C_{0}}+\log\big(\frac{1-\frac{1}{3}\Lambda r^{2}}{-\Lambda r^{2}}\big)\Big|_{(u_{2}(v(\gamma(\tau))),v(\gamma(\tau)))}-\log\big(\frac{1-\frac{1}{3}\Lambda r^{2}}{-\Lambda r^{2}}\big)\Big|_{(u(\gamma(\tau_{in})),v(\gamma(\tau_{in})))}+4C_{0}\Bigg\}\le\nonumber \\
 & \ge e^{-e^{140C_{0}}}\frac{r^{2}(u(\gamma(\tau_{in})),v(\gamma(\tau_{in})))}{r^{2}(u_{2}(v(\gamma(\tau))),v(\gamma(\tau)))}\cdot\frac{\big(1-\frac{1}{3}\Lambda r^{2}\big)(u_{2}(v(\gamma(\tau))),v(\gamma(\tau)))}{\big(1-\frac{1}{3}\Lambda r^{2}\big)(u(\gamma(\tau_{in})),v(\gamma(\tau_{in})))}E_{0}.\nonumber 
\end{align}
The estimate (\ref{eq:BoundGammaUBeforeOutgoing-1}) implies, in view
of (\ref{eq:NullShellNewAngularMomentum}), (\ref{eq:SmallnessAngularMomentum}),
(\ref{eq:BoundOmegaC0}), (\ref{eq:LowerBoundRGeneralCase}) and the
fact that $\partial_{v}r>0$, that, for any $\tau\in[\tau_{in},\tau_{1})$:
\begin{align}
\Omega^{2}\dot{\gamma}^{v}(\tau) & \le e^{2C_{0}}l^{2}\frac{1-\frac{1}{3}\Lambda r^{2}}{r^{2}}\Bigg|_{\gamma(\tau)}\frac{1}{\Omega^{2}\dot{\gamma}^{u}(\tau)}\le\label{eq:UpperBoundGammaVOutgoing}\\
 & \le e^{e^{145C_{0}}}\frac{l^{2}}{E_{0}}\frac{1-\frac{1}{3}\Lambda r^{2}}{r^{2}}\Bigg|_{\gamma(\tau)}\cdot\frac{r^{2}}{\big(1-\frac{1}{3}\Lambda r^{2}\big)}\Bigg|_{(u_{2}(v(\gamma(\tau))),v(\gamma(\tau)))}\cdot\frac{\big(1-\frac{1}{3}\Lambda r^{2}\big)}{r^{2}}\Bigg|_{\gamma(\tau_{in})}\le\nonumber \\
 & \le e^{e^{145C_{0}}}\frac{l^{2}}{E_{0}}\frac{1-\frac{1}{3}\Lambda(\inf_{\gamma}r)^{2}}{(\inf_{\gamma}r)^{2}}\le\nonumber \\
 & \le e^{e^{150C_{0}}}E_{0}.\nonumber 
\end{align}
The bounds (\ref{eq:BoundForMaximumOfPu}), (\ref{eq:LowerBoundGammaVOutgoing})
and (\ref{eq:UpperBoundGammaVOutgoing}) therefore imply that, for
any $\tau\in[\tau_{in},\tau_{1})$: 
\begin{equation}
e^{-72C_{0}}E_{0}\le\Omega^{2}\dot{\gamma}^{u}(\tau)+\Omega^{2}\dot{\gamma}^{v}(\tau)\le e^{e^{200C_{0}}}E_{0}.\label{eq:EnergyEstimateOutgoing}
\end{equation}

\medskip{}

\noindent The estimates we have established so far are sufficient
to complete the proof of Lemma \ref{lem:GeodesicPaths}. In particular:

\begin{itemize}

\item The bound (\ref{eq:DomainForGamma}) follows readily from the
bound (\ref{eq:LowerBoundRGeneralCase}) on $\inf_{\gamma}r$, the
bounds (\ref{eq:SmallVDeviation}) and (\ref{eq:SmallUDeviationOutgoing})
on the total change of $u,v$ along the intervals $[\tau_{0},\tau_{in})$,
$[\tau_{in},\tau_{1})$, respectively (in view also of (\ref{eq:T*-Tin})),
and the bounds (\ref{eq:CloseToAxisUVTin}) and (\ref{eq:SmallUDeviation})
on $\gamma(\tau_{in})$.

\item The energy bound (\ref{eq:EnergyEstimateGeneralDomain}) follows
immediately from (\ref{eq:UpperboundEnergyIngoing}) and (\ref{eq:EnergyEstimateOutgoing}).

\item The estimates (\ref{eq:IngoingRegion}) and (\ref{eq:OutgoingRegion})
follow readily from the bounds (\ref{eq:LowerBoundGammaUIngoing}),
(\ref{eq:UpperBoundGammaVIngoing}) for $\tau\in[\tau_{0},\tau_{in})$
and the bounds (\ref{eq:UpperBoundGammaUDecayInR}), (\ref{eq:LowerBoundGammaVOutgoing})
for $\tau\in[\tau_{in},\tau_{1})$, as well as the fact that, for
any $\tau\in[\tau_{0},\tau_{1})$ such that $r(\gamma(\tau))\le e^{50C_{0}}\frac{l}{E_{0}}$,
we can estimate as a consequence of (\ref{eq:NullShellNewAngularMomentum}),
(\ref{eq:BoundOmegaC0}), (\ref{eq:BoundForMaximumOfPu}), (\ref{eq:UpperboundEnergyIngoing})
and (\ref{eq:EnergyEstimateOutgoing}): 
\begin{equation}
\frac{\dot{\gamma}^{v}}{\dot{\gamma}^{u}}(\tau)=l^{2}\frac{\Omega^{2}}{r^{2}}\Big|_{\gamma(\tau)}\frac{1}{(\Omega^{2}\dot{\gamma}^{u})^{2}(\tau)}\ge e^{-60C_{0}}
\end{equation}
and 
\begin{equation}
\frac{\dot{\gamma}^{v}}{\dot{\gamma}^{u}}(\tau)=l^{-2}\frac{r^{2}}{\Omega^{2}}\Big|_{\gamma(\tau)}(\Omega^{2}\dot{\gamma}^{v})^{2}(\tau)\le e^{e^{200C_{0}}}.
\end{equation}

\end{itemize}
\end{proof}
By applying Lemma \ref{lem:GeodesicPaths} successively between the
points of reflection off $\mathcal{I}$ of a maximally extended null
geodesic $\gamma$,%Edw link ston orismo mallon sto allo paper? we
obtain the following useful generalisation of Lemma \ref{lem:GeodesicPaths}:%Sxhma ki edw opwsdhpote me tis reflections.... 
\begin{cor}
\label{cor:GeodesicPathsLongTimes} Let $0<\delta_{0}\ll1$ be a sufficiently
small absolute constant, and let $\mathcal{U}_{U;v_{\mathcal{I}}}$
and $(r,\Omega^{2},f)$ be as in Lemma \ref{lem:GeodesicPaths}, satisfying
(\ref{eq:BoundGeometryC0}) and (\ref{eq:SmallnessTrapping}) for
some $C_{0}>100$. Let also $\gamma_{n}:(a_{n},b_{n})\rightarrow\mathcal{U}_{U;v_{\mathcal{I}}}$,
$0\le n<N+1$ (for some $N\in\mathbb{N}\cup\{\infty\}$ and $-\infty\le a_{n}<b_{n}\le+\infty$
), be a collection of future directed, affinely parametrized null
geodesics in $(\mathcal{U}_{U;v_{\mathcal{I}}};r,\Omega^{2})$ with
$a_{0}=0$ and $\gamma_{0}(0)\in\{u=0\}$, such that $\gamma_{n}$
is the reflection of $\gamma_{n-1}$ off $\mathcal{I}$; thus, $\gamma=\cup_{n=0}^{N}\gamma_{n}$
constitutes an affinely parametrised, maximally extended geodesic
through reflections off $\mathcal{I}$, in accordance with Definition
2.3 in \cite{MoschidisVlasovWellPosedness}. 

Assume that $\gamma_{0}$ satisfies initially the conditions (\ref{eq:initiallyIngoing}),
(\ref{eq:InitialRGamma2}) and 
\begin{equation}
0<\frac{l}{E_{0}}\sqrt{-\Lambda}\le e^{-400(1+\lceil v_{\mathcal{I}}^{-1}U\rceil)C_{0}},\label{eq:SmallAngularMomentumMaximallyExtended}
\end{equation}
with $E_{0}$ defined by (\ref{eq:InitialEnergy}). Then, the following
statements hold for the maximally extended geodesic $\gamma$:

\begin{itemize}

\item The curve $\gamma=\cup_{n=0}^{N}\gamma_{n}$ is contained in
the region
\begin{equation}
\gamma\subset\Big\{ r\ge e^{-e^{400C_{0}}(1+v_{\mathcal{I}}^{-1}U)}\frac{l}{E_{0}}\Big\}\cap\bigcup_{k=0}^{\lceil v_{\mathcal{I}}^{-1}U\rceil}\Big(\mathcal{V}_{\nwarrow}^{(k)}\cup\mathcal{V}_{\nearrow}^{(k)}\Big)\cap\mathcal{U}_{U;v_{\mathcal{I}}},\label{eq:DomainForGammaMaximallyExtended}
\end{equation}
where, setting 
\[
v_{0}\doteq v\big(\gamma_{0}(0)\big),
\]
 the domains $\mathcal{V}_{\nwarrow}^{(k)}$, $\mathcal{V}_{\nearrow}^{(k)}$
are defined for any $k\in\mathbb{N}$ by
\begin{align}
\mathcal{V}_{\nwarrow}^{(k)} & =[kv_{\mathcal{I}},v_{0}+kv_{\mathcal{I}}+e^{300(k+1)C_{0}}\frac{l}{E_{0}}]\times[v_{0}+kv_{\mathcal{I}}-e^{300(k+1)C_{0}}\frac{l}{E_{0}},v_{0}+kv_{\mathcal{I}}+e^{300(k+1)C_{0}}\frac{l}{E_{0}}],\label{eq:IngoingUGeneralMaximallyExtended}\\
\mathcal{V}_{\nearrow}^{(k)} & =[v_{0}+kv_{\mathcal{I}}-e^{300(k+1)C_{0}}\frac{l}{E_{0}},v_{0}+kv_{\mathcal{I}}+e^{300(k+1)C_{0}}\frac{l}{E_{0}}]\times\label{eq:OutgoingUGeneralMaximallyExtended}\\
 & \hphantom{[v_{0}+kv_{\mathcal{I}}-e^{300(k+1)C_{0}}\frac{l}{E_{0}}=}\times[v_{0}+kv_{\mathcal{I}}-e^{300(k+1)C_{0}}\frac{l}{E_{0}},v_{0}+(k+1)v_{\mathcal{I}}+e^{300(k+1)C_{0}}\frac{l}{E_{0}}].\nonumber 
\end{align}

\item Denoting by $\dot{\gamma}$ the derivative of $\gamma$ with
respect to the affine parametrisation of the $\gamma_{n}$'s, we can
estimate
\begin{equation}
e^{-300C_{0}(1+v_{\mathcal{I}}^{-1}U)}E_{0}\le\Omega^{2}(\dot{\gamma}^{u}+\dot{\gamma}^{v})\le e^{e^{300C_{0}}(1+v_{\mathcal{I}}^{-1}U)}E_{0}.\label{eq:EnergyGrowthGammaMaximallyExtended}
\end{equation}

\item For any $0\le\bar{u}<U$, defining $n[\bar{u}]$ by the condition
that $\gamma\cap\{u=\bar{u}\}\in\gamma_{n[\bar{u}]}$, we can bound
at the point $\gamma\cap\{u=\bar{u}\}$:
\begin{align}
\frac{\dot{\gamma}^{v}}{\dot{\gamma}^{u}}\Bigg|_{\gamma\cap\{u=\bar{u}\}} & \le e^{e^{300C_{0}}(1+v_{\mathcal{I}}^{-1}U)}\frac{l^{2}}{E_{0}^{2}}\frac{1-\frac{1}{3}\Lambda r^{2}}{r^{2}}\Bigg|_{\gamma\cap\{u=\bar{u}\}}\text{ if }\gamma\cap\{u=\bar{u}\}\in\cup_{k\in\mathbb{N}}\mathcal{V}_{\nwarrow}^{(k)},\label{eq:IngoingRegionMaximallyExtended}\\
\frac{\dot{\gamma}^{u}}{\dot{\gamma}^{v}}\Bigg|_{\gamma\cap\{u=\bar{u}\}} & \le e^{e^{300C_{0}}(1+v_{\mathcal{I}}^{-1}U)}\frac{l^{2}}{E_{0}^{2}}\frac{1-\frac{1}{3}\Lambda r^{2}}{r^{2}}\Bigg|_{\gamma\cap\{u=\bar{u}\}}\text{ if }\gamma\cap\{u=\bar{u}\}\in\cup_{k\in\mathbb{N}}\mathcal{V}_{\nearrow}^{(k)}.\label{eq:OutgoingRegionMaximallyExtended}
\end{align}

\end{itemize}
\end{cor}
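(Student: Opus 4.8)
\textbf{Proof plan for Corollary \ref{cor:GeodesicPathsLongTimes}.}
The plan is to prove the corollary by induction on $n$, applying Lemma \ref{lem:GeodesicPaths} on each segment $\gamma_n$ between consecutive reflections off $\mathcal{I}$. First I would note that, once the conclusions of Lemma \ref{lem:GeodesicPaths} have been applied to $\gamma_0$, the geodesic $\gamma_0$ reaches conformal infinity exactly once (in the outgoing regime of the lemma), and at the reflection point the angular momentum $l$ is unchanged while the energy $E_0^{(1)}\doteq\frac12(\Omega^2\dot\gamma^u+\Omega^2\dot\gamma^v)$ evaluated right after reflection satisfies, by \eqref{eq:EnergyEstimateGeneralDomain} applied to $\gamma_0$, the two-sided bound $e^{-100C_0}E_0\le E_0^{(1)}\le e^{e^{200C_0}}E_0$. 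Since reflection off $\mathcal{I}$ turns an outgoing geodesic into an ingoing one (the roles of $\dot\gamma^u,\dot\gamma^v$ swap), the reflected segment $\gamma_1$ again satisfies the hypothesis \eqref{eq:initiallyIngoing}; the hypothesis \eqref{eq:InitialRGamma2} is automatic since $\gamma_1$ starts at $r=\infty$ (this is exactly the case noted in the remark after Lemma \ref{lem:GeodesicPaths}, with $\gamma_1(-\infty)\in\mathcal{I}$). The only nontrivial point is to check that the smallness condition \eqref{eq:SmallnessAngularMomentum} for $\gamma_1$, namely $\frac{l}{E_0^{(1)}}\sqrt{-\Lambda}\le e^{-50C_0}$, still holds; this is where the stronger initial assumption \eqref{eq:SmallAngularMomentumMaximallyExtended} is used, absorbing the factor $e^{100C_0}$ lost in passing from $E_0$ to $E_0^{(1)}$.

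The inductive step is then a clean iteration of this observation. Suppose that after $k$ reflections one has controlled $\gamma_0,\dots,\gamma_k$ and knows that the energy parameter $E_0^{(k)}$ at the start of $\gamma_k$ obeys $e^{-100kC_0}E_0\le E_0^{(k)}$, and that $\frac{l}{E_0^{(k)}}\sqrt{-\Lambda}\le e^{-50C_0}$ (which follows from \eqref{eq:SmallAngularMomentumMaximallyExtended} precisely because the total number of reflections occurring in $\mathcal{U}_{U;v_{\mathcal{I}}}$ is at most $\lceil v_{\mathcal{I}}^{-1}U\rceil$, and the $v$-extent of the domain is $v_{\mathcal I}$ so each reflection advances $u+v$ by roughly $v_{\mathcal I}$). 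Applying Lemma \ref{lem:GeodesicPaths} to $\gamma_k$ gives: the localisation of $\gamma_k$ inside $\mathcal{V}_\nwarrow\cup\mathcal{V}_\nearrow$ built around the point $\gamma_k(0)$, which by the inductive bookkeeping lies within $O(e^{300kC_0}\frac{l}{E_0})$ of $(v_0+kv_{\mathcal I},v_0+kv_{\mathcal I})$ — this is what produces the boxes $\mathcal{V}_\nwarrow^{(k)},\mathcal{V}_\nearrow^{(k)}$ of \eqref{eq:IngoingUGeneralMaximallyExtended}--\eqref{eq:OutgoingUGeneralMaximallyExtended}; the lower bound $r\ge e^{-6C_0}\frac{l}{E_0^{(k)}}$ along $\gamma_k$, which combined with $E_0^{(k)}\ge e^{-100kC_0}E_0$ gives the $r$-lower bound in \eqref{eq:DomainForGammaMaximallyExtended} after replacing $100k$ by $e^{400C_0}(1+v_{\mathcal I}^{-1}U)$; the energy bound \eqref{eq:EnergyEstimateGeneralDomain} for $\gamma_k$, which telescopes to \eqref{eq:EnergyGrowthGammaMaximallyExtended} (each reflection costs at most a multiplicative $e^{e^{200C_0}}$ upstairs and $e^{-100C_0}$ downstairs, and summing $k\le\lceil v_{\mathcal I}^{-1}U\rceil$ of these gives the stated exponents after crude majorisation); and the ingoing/outgoing slope bounds \eqref{eq:IngoingRegion}--\eqref{eq:OutgoingRegion}, which transcribe directly into \eqref{eq:IngoingRegionMaximallyExtended}--\eqref{eq:OutgoingRegionMaximallyExtended} once one observes that a point $\gamma\cap\{u=\bar u\}$ in $\mathcal{V}_\nwarrow^{(k)}$ lies in the ingoing regime $\tau<s_c$ of the lemma applied to $\gamma_k$ (so $\frac{\dot\gamma^v}{\dot\gamma^u}$ is small) while a point in $\mathcal{V}_\nearrow^{(k)}$ lies in the outgoing regime (so $\frac{\dot\gamma^u}{\dot\gamma^v}$ is small).

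The main obstacle — really the only place that requires care rather than bookkeeping — is the bookkeeping of exponential constants itself: Lemma \ref{lem:GeodesicPaths} loses a fixed power of $e^{C_0}$ (and, in the outgoing regime, even a double exponential $e^{e^{200C_0}}$) at each reflection, so one must check that these losses, compounded $\lceil v_{\mathcal I}^{-1}U\rceil$ times, are all absorbed by the margins built into the statement: the boxes grow like $e^{300(k+1)C_0}$ rather than $e^{150C_0}$, the $r$-lower bound degrades to $e^{-e^{400C_0}(1+v_{\mathcal I}^{-1}U)}$, the energy bounds to $e^{\pm e^{300C_0}(1+v_{\mathcal I}^{-1}U)}$, and the smallness hypothesis on $\frac{l}{E_0}$ is strengthened to $e^{-400(1+\lceil v_{\mathcal I}^{-1}U\rceil)C_0}$ precisely to guarantee that at \emph{every} stage the quantity $\frac{l}{E_0^{(k)}}\sqrt{-\Lambda}$ still satisfies the hypothesis $\le e^{-50C_0}$ of the lemma. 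One also needs the elementary geometric fact that, in a domain $\mathcal{U}_{U;v_{\mathcal I}}$ with $u\le v\le u+v_{\mathcal I}$, a maximally extended geodesic undergoes at most $\lceil v_{\mathcal I}^{-1}U\rceil$ reflections off $\mathcal{I}$ before exiting $\{u<U\}$ (each reflection-to-reflection arc increases $u$ by at least a definite fraction of $v_{\mathcal I}$, up to the small-$\frac{l}{E_0}$ corrections already controlled by \eqref{eq:SmallVDeviation} and \eqref{eq:SmallUDeviationOutgoing}), which is what makes the finite index range $k=0,\dots,\lceil v_{\mathcal I}^{-1}U\rceil$ in \eqref{eq:DomainForGammaMaximallyExtended} legitimate. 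With these constants verified the induction closes and the corollary follows.
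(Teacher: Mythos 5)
Your plan is correct and is essentially the paper's own argument: apply Lemma \ref{lem:GeodesicPaths} successively to each segment $\gamma_{n}$ (for $n\ge1$ invoking the lemma with the initial point sent to $\mathcal{I}$, as in the remark following it), propagate the inductive energy bound $e^{-100C_{0}}E_{n-1}\le E_{n}\le e^{e^{200C_{0}}}E_{n-1}$ obtained from (\ref{eq:EnergyEstimateGeneralDomain}), note that at most $\lceil v_{\mathcal{I}}^{-1}U\rceil$ reflections occur in $\mathcal{U}_{U;v_{\mathcal{I}}}$, and absorb the compounded losses into the enlarged constants of the statement. One small bookkeeping correction: the lower bound on $r$ in (\ref{eq:DomainForGammaMaximallyExtended}) follows by combining $r\ge e^{-6C_{0}}\,l/E_{0}^{(k)}$ with the \emph{upper} bound $E_{0}^{(k)}\le e^{k e^{200C_{0}}}E_{0}$ (so that $l/E_{0}^{(k)}$ is bounded below in terms of $l/E_{0}$), whereas the lower bound $E_{0}^{(k)}\ge e^{-100kC_{0}}E_{0}$ that you cite there is instead what verifies (\ref{eq:SmallnessAngularMomentum}) at each stage and converts (\ref{eq:IngoingRegion})--(\ref{eq:OutgoingRegion}) into (\ref{eq:IngoingRegionMaximallyExtended})--(\ref{eq:OutgoingRegionMaximallyExtended}).
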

\begin{proof}
The proof of Corollary (\ref{cor:GeodesicPathsLongTimes}) follows
by applying Lemma (\ref{lem:GeodesicPaths}) successively on the curves
$\gamma_{n}$, treating the cases $n\ge1$ by considering the limit
where the initial point of $\gamma$ in the statement of Lemma (\ref{lem:GeodesicPaths})
is sent to $\mathcal{I}$ and establishing (as a consequence of (\ref{eq:EnergyEstimateGeneralDomain}))
the inductive bound
\begin{equation}
e^{-100C_{0}}E_{n-1}\le E_{n}\le e^{e^{200C_{0}}}E_{n-1}\label{eq:InductionEnergy}
\end{equation}
for the energy $E_{n}=\frac{1}{2}\Big(\Omega^{2}\dot{\gamma}_{n}^{u}+\Omega^{2}\dot{\gamma}_{n}^{v}\Big)\big|_{s=-\infty}$
of $\gamma_{n}$ at its initial point on $\mathcal{I}$. Following
this procedure, (\ref{eq:DomainForGammaMaximallyExtended}) is inferred
from (\ref{eq:DomainForGamma}), (\ref{eq:EnergyGrowthGammaMaximallyExtended})
is inferred from (\ref{eq:InductionEnergy}) and (\ref{eq:IngoingRegionMaximallyExtended})\textendash (\ref{eq:OutgoingRegionMaximallyExtended})
are inferred from (\ref{eq:IngoingRegion})\textendash (\ref{eq:OutgoingRegion}),
using also that (\ref{eq:EnergyGrowthGammaMaximallyExtended}) and
(\ref{eq:NullShellNewAngularMomentum}) imply that 
\[
e^{-e^{300C_{0}}(1+v_{\mathcal{I}}^{-1}U)}\frac{l^{2}}{E_{0}^{2}}\frac{1-\frac{1}{3}\Lambda r^{2}}{r^{2}}\le\frac{\dot{\gamma}^{v}}{\dot{\gamma}^{u}}\le e^{e^{300C_{0}}(1+v_{\mathcal{I}}^{-1}U)}\frac{l^{2}}{E_{0}^{2}}\frac{1-\frac{1}{3}\Lambda r^{2}}{r^{2}}
\]
in the region $\cup_{k=0}^{\lceil v_{\mathcal{I}}^{-1}U\rceil}(\mathcal{V}_{\nwarrow}^{(k)}\cap\mathcal{V}_{\nearrow}^{(k)})$.
We will omit the trivial details. 
\end{proof}

\section{\label{sec:Auxiliary-constructions}Construction of the initial data
and notation}

In this section, we will construct the family of initial data $(r_{/}^{(\varepsilon)},(\Omega_{/}^{(\varepsilon)})^{2},\bar{f}_{/}^{(\varepsilon)};\sqrt{-\frac{3}{\Lambda}}\pi)\in\mathfrak{B}_{0}$
appearing in the statement of Theorem \ref{thm:TheTheorem}. To this
end, we will first introduce a hierarchy of parameters depending on
$\varepsilon$, the precise choice of which will be crucial for the
proof of Theorem \ref{thm:TheTheorem}. We will also introduce some
shorthand notation associated to a few fundamental constructions on
the maximal future development $(\mathcal{U}_{max}^{(\varepsilon)};r,\Omega^{2},f_{\varepsilon})$
of $(r_{/}^{(\varepsilon)},(\Omega_{/}^{(\varepsilon)})^{2},\bar{f}_{/}^{(\varepsilon)};\sqrt{-\frac{3}{\Lambda}}\pi)$.

\subsection{\label{subsec:The-hierarchy-of-parameters}The hierarchy of parameters}

In this section, we will introduce a set of parameters that will be
used in the construction of the family of initial data $(r_{/}^{(\varepsilon)},(\Omega_{/}^{(\varepsilon)})^{2},\bar{f}_{/}^{(\varepsilon)};\sqrt{-\frac{3}{\Lambda}}\pi)$.

We will first introduce the following hierarchy of parameters:
\begin{defn}
\label{def:HierarchyParameters} Let $0<\varepsilon_{1}\ll1$ be a
sufficiently small absolute constant. For any $\varepsilon\in(0,\varepsilon_{1})$,
we will define the parameters $\delta_{\varepsilon},\rho_{\varepsilon},\sigma_{\varepsilon}$
through the following hierarchy of relations: 
\begin{align}
\varepsilon & =\exp\Big(-\exp(\rho_{\varepsilon}^{-10})\Big),\label{eq:HierarchyOfParameters}\\
\rho_{\varepsilon} & =\exp\Big(-\exp\big(\exp(\exp(\exp(\delta_{\varepsilon}^{-10})))\big)\Big),\nonumber \\
\delta_{\varepsilon} & =\exp\Big(-\exp\big(\exp(\sigma_{\varepsilon}^{-10})\big)\Big).\nonumber 
\end{align}
For $\varepsilon\in[\varepsilon_{1},1]$, we will define the parameters
$\delta_{\varepsilon},\rho_{\varepsilon},\sigma_{\varepsilon}$ to
be equal to $\delta_{\varepsilon_{1}},\rho_{\varepsilon_{1}},\sigma_{\varepsilon_{1}}$,
respectively. 
\end{defn}
We will also set 
\begin{equation}
N_{\varepsilon}\doteq\lceil\rho_{\varepsilon}^{-1}\exp\big(\exp(\delta_{\varepsilon}^{-15})\big)\rceil.\label{eq:NumberOfBeams}
\end{equation}
Notice that 
\begin{equation}
\lim_{\varepsilon\rightarrow0}\delta_{\varepsilon}=\lim_{\varepsilon\rightarrow0}\rho_{\varepsilon}=\lim_{\varepsilon\rightarrow0}\sigma_{\varepsilon}=\lim_{\varepsilon\rightarrow0}\frac{1}{N_{\varepsilon}}=0\label{eq:ParametersGoingToZero}
\end{equation}
and, as $\varepsilon\rightarrow0$:

\begin{equation}
\varepsilon\ll\rho_{\varepsilon}\ll\delta_{\varepsilon}\ll\sigma_{\varepsilon}\ll1.\label{eq:HeuristicRelationParameters}
\end{equation}

Finally, for any $\varepsilon\in(0,1]$ and any $0\le i\le N_{\varepsilon}$,
we will define the parameter $\varepsilon^{(i)}$ by the recursive
relation 
\begin{equation}
\begin{cases}
\varepsilon^{(i+1)}=\exp\big(-\exp\big((\varepsilon^{(i)})^{-2}\big)\big),\\
\varepsilon^{(0)}=\varepsilon.
\end{cases}\label{eq:RecursiveEi}
\end{equation}
Note that, as $\varepsilon\rightarrow0$: 
\begin{equation}
1\gg\varepsilon^{(0)}\gg\varepsilon^{(1)}\gg\ldots\gg\varepsilon^{(N_{\varepsilon})}.
\end{equation}
\begin{rem*}
In the rest of the paper, we will frequently use the relation (\ref{eq:HierarchyOfParameters})
in order to bound an expression involving $\sigma_{\varepsilon},\delta_{\varepsilon},\rho_{\varepsilon}$
(appearing usually as an error term in some estimate) by a simpler
one; for instance, (\ref{eq:HierarchyOfParameters}) allows us to
bound 
\[
\exp\big(\exp(e^{\delta_{\varepsilon}^{-6}})\big)\le\rho_{\varepsilon}^{\frac{1}{20}}.
\]
We will \emph{not} always explicitly refer to (\ref{eq:HierarchyOfParameters})
when using such bounds while passing from one line to the next in
a complicated estimate. 
\end{rem*}

\subsection{\label{subsec:The-initial-data-family}The initial data family }

In this section, we will define the initial data family $(r_{/}^{(\varepsilon)},(\Omega_{/}^{(\varepsilon)})^{2},\bar{f}_{/}^{(\varepsilon)};\sqrt{-\frac{3}{\Lambda}}\pi)$
appearing in the statement of Theorem \ref{thm:TheTheorem} in terms
of the parameters introduced in the previous section. The construction
of the initial data family will proceed in two steps: We will first
obtain a gauge normalised expression for $(r_{/}^{(\varepsilon)},(\Omega_{/}^{(\varepsilon)})^{2},\bar{f}_{/}^{(\varepsilon)};\sqrt{-\frac{3}{\Lambda}}\pi)$
(in accordance with Definition \ref{def:GaugeNormalisation}) by suitably
prescribing the value of $\bar{f}_{/}^{(\varepsilon)}$ and using
Lemma \ref{lem:FreeFVlasov}, and we will then obtain a smoothly compatible
initial data set through the gauge transformation provided by Lemma
\ref{lem:NormalisationToSmoothCompatibility}.

Let us fix a smooth cut-off function $\chi:\mathbb{R}\rightarrow[0,1]$
such that $\chi|_{[-1.1]}=1$ and $\chi_{\mathbb{R}\backslash(-2,2)}=0$.
The following functions will later be used to define the initial Vlasov
field $\bar{f}_{/}^{(\varepsilon)}$:
\begin{defn}
\label{def:SeedFunctions} For any $\varepsilon\in(0,\varepsilon_{1}]$,
where $0<\varepsilon_{1}\ll1$ is the constant appearing in Definition
\ref{def:HierarchyParameters}, we will define the following sequence
of smooth functions $F_{i}^{(\varepsilon)}:[0,\sqrt{-\frac{3}{\Lambda}}\pi]\times[0,+\infty)^{2}\rightarrow[0,+\infty)$
for any $0\le i\le N_{\varepsilon}$: 
\begin{equation}
F_{i}^{(\varepsilon)}(v;p,l)\doteq\frac{1}{(\varepsilon^{(i)})^{2}}\chi\Big(\frac{\sqrt{-\Lambda}(v-v_{\varepsilon,i})}{\varepsilon^{(i)}}\Big)\cdot\chi\big(p-3\big)\cdot\chi\Big(\frac{\sqrt{-\Lambda}l}{\varepsilon^{(i)}}-4\Big),\label{eq:InitialVlasovSeeds}
\end{equation}
where
\begin{equation}
v_{\varepsilon,i}\doteq\sqrt{-\frac{3}{\Lambda}}\frac{\pi}{2}+\rho_{\varepsilon}^{-1}\sum_{j=0}^{i-1}\varepsilon^{(j)}(-\Lambda)^{-1/2}\label{eq:InitialCenters}
\end{equation}
and $\varepsilon^{(i)}$, $\rho_{\varepsilon}$ are defined in terms
of $\varepsilon$ by (\ref{eq:HierarchyOfParameters}) and (\ref{eq:RecursiveEi}).
\end{defn}
\begin{rem*}
Note that, for any $\varepsilon\ll1$ and any $0\le i\le N_{\varepsilon}$,
the functions $F_{i}^{(\varepsilon)}$ satisfy 
\begin{equation}
\int_{0}^{\sqrt{-\frac{3}{\Lambda}}\pi}(1-\frac{1}{3}\Lambda r_{AdS/}^{2})\Big(\frac{r_{AdS/}T_{vv}^{(AdS)}[F_{i}^{(\varepsilon)}]}{\partial_{v}r_{AdS/}}+\frac{r_{AdS/}T_{uv}^{(AdS)}[F_{i}^{(\varepsilon)}]}{-(\partial_{u}r_{AdS})_{/}}\Big)(v)\,dv\le C\varepsilon^{(i)},\label{eq:SmallnessInitialBeams}
\end{equation}
for some absolute constant $C>0$, where $T_{\mu\nu}^{(AdS)}[F_{i}^{(\varepsilon)}]$
is defined by
\begin{align*}
T_{vv}^{(AdS)}[F_{i}^{(\varepsilon)}] & \doteq\frac{\pi}{2}r_{AdS/}^{-2}(v)\int_{0}^{+\infty}\int_{0}^{+\infty}\Big(\frac{\Omega_{AdS/}^{2}}{\partial_{v}r_{AdS/}}(v)p\Big)^{2}\cdot F_{i}^{(\varepsilon)}(v;p,l)\,\frac{dp}{p}ldl,\\
T_{uv}^{(AdS)}[F_{i}^{(\varepsilon)}] & \doteq\frac{\pi}{2}r_{AdS/}^{-2}(v)\int_{0}^{+\infty}\int_{0}^{+\infty}\frac{\Omega_{AdS/}^{2}(v)l^{2}}{r_{AdS/}^{2}(v)}\cdot F_{i}^{(\varepsilon)}(v;p,l)\,\frac{dp}{p}ldl
\end{align*}
and $r_{AdS/},\text{ }\Omega_{AdS/}^{2}$ are given by (\ref{eq:AdSMetricValues}).
\end{rem*}
We will define the initial data family $(r_{/}^{(\varepsilon)},(\Omega_{/}^{(\varepsilon)})^{2},\bar{f}_{/}^{(\varepsilon)};\sqrt{-\frac{3}{\Lambda}}\pi)$
in terms of $F_{i}^{(\varepsilon)}$ as follows:
\begin{defn}
\label{def:InitialDataFamily} Let $0<\varepsilon_{1}\ll1$ be the
constant appearing in Definition \ref{def:HierarchyParameters}. For
any $\varepsilon\in(0,\varepsilon_{1}]$ and any finite sequence $\{a_{\varepsilon i}\}_{i=0}^{N_{\varepsilon}}\in(0,\sigma_{\varepsilon})$
satisfying the smallness condition
\begin{equation}
\sum_{i=0}^{N_{\varepsilon}}a_{\varepsilon i}\le\rho_{\varepsilon}^{-1}\sigma_{\varepsilon},\label{eq:UpperBoundSumWeightsBeam}
\end{equation}
we will define 
\begin{equation}
F^{(\varepsilon)}(v;p,l)=\sum_{i=0}^{N_{\varepsilon}}a_{\varepsilon i}F_{i}^{(\varepsilon)}(v;p,l),\label{eq:InitialVlasovSeedsTotal}
\end{equation}
where $\bar{f}_{i}^{(\varepsilon)}$ are given by (\ref{eq:InitialVlasovSeeds}).
Let also $(r_{/}^{\prime(\varepsilon)},\Omega_{/}^{\prime(\varepsilon)},\bar{f}_{/}^{\prime(\varepsilon)};\sqrt{-\frac{3}{\Lambda}}\pi)$
be the gauge normalised, asymptotically AdS initial data set provided
by Lemma \ref{lem:FreeFVlasov} for $F=F^{(\varepsilon)}$ and $v_{\mathcal{I}}=\sqrt{-\frac{3}{\Lambda}}\pi$;
recall that, according to Lemma \ref{lem:FreeFVlasov}, $\bar{f}_{/}^{\prime(\varepsilon)}$
is related to $F^{(\varepsilon)}$ by 
\begin{equation}
\bar{f}_{/}^{\prime(\varepsilon)}(v;p^{u},l)=F^{(\varepsilon)}\big(v;\text{ }\partial_{v}r_{/}^{\prime(\varepsilon)}(v)p^{u},\text{ }l\big).\label{eq:AlmostVlasovFieldInvariant Form-1}
\end{equation}

For any $\varepsilon\in(0,\varepsilon_{1}]$, we will define $(r_{/}^{(\varepsilon)},\Omega_{/}^{(\varepsilon)},\bar{f}_{/}^{(\varepsilon)};\sqrt{-\frac{3}{\Lambda}}\pi)$
as the (unique) smoothly compatible, asymptotically AdS initial data
set which is obtained from the gauge normalised initial data set $(r_{/}^{\prime(\varepsilon)},\Omega_{/}^{\prime(\varepsilon)},\bar{f}_{/}^{\prime(\varepsilon)};\sqrt{-\frac{3}{\Lambda}}\pi)$
through the gauge transformation of Lemma \ref{lem:NormalisationToSmoothCompatibility}
(note that the notation for $(r_{/}^{(\varepsilon)},\Omega_{/}^{(\varepsilon)},\bar{f}_{/}^{(\varepsilon)};\sqrt{-\frac{3}{\Lambda}}\pi)$
and $(r_{/}^{\prime(\varepsilon)},\Omega_{/}^{\prime(\varepsilon)},\bar{f}_{/}^{\prime(\varepsilon)};\sqrt{-\frac{3}{\Lambda}}\pi)$
is inverted in Lemma \ref{lem:NormalisationToSmoothCompatibility}),
with $\varepsilon$ in place of $\varepsilon_{0}$ in (\ref{eq:FirstDerivativeTransformation})\textendash (\ref{eq:SecondDerivativeTransformation}).

For $\varepsilon\in(\varepsilon_{1},1]$, we will set 
\begin{equation}
(r_{/}^{(\varepsilon)},(\Omega_{/}^{(\varepsilon)})^{2},\bar{f}_{/}^{(\varepsilon)})\doteq(r_{/}^{(\varepsilon_{1})},(\Omega_{/}^{(\varepsilon_{1})})^{2},\bar{f}_{/}^{(\varepsilon_{1})}).\label{eq:DefinitionFamilyLargeEpsilon}
\end{equation}
\end{defn}
\begin{rem*}
The fact that $F^{(\varepsilon)}$ is compactly supported in $(0,\sqrt{-\frac{3}{\Lambda}}\pi)\times(0,+\infty)^{2}$,
satisfying in particular 
\begin{equation}
F^{(\varepsilon)}(v;p,l)=0\text{ for }v\in[0,v_{\varepsilon,0}-2\frac{\varepsilon^{(0)}}{\sqrt{-\Lambda}}]\cup[v_{\varepsilon,N_{\varepsilon}}+2\frac{\varepsilon^{(N_{\varepsilon})}}{\sqrt{-\Lambda}},+\infty)\text{ or }l\in[0,2\frac{\varepsilon^{(N_{\varepsilon})}}{\sqrt{-\Lambda}}]\label{eq:SupportFepsilon}
\end{equation}
(see (\ref{eq:InitialVlasovSeeds}) and (\ref{eq:InitialVlasovSeedsTotal})),
allows us to apply Lemma \ref{lem:NormalisationToSmoothCompatibility}
in the statement of Definition \ref{def:InitialDataFamily}. In view
of the fact that, as a consequence of (\ref{eq:ConditionGaugeOnlAway}),
the gauge transformation provided by Lemma \ref{lem:NormalisationToSmoothCompatibility}
is the identity when restricted to $v\in[0,v_{\varepsilon,N_{\varepsilon}}+2\frac{\varepsilon^{(N_{\varepsilon})}}{\sqrt{-\Lambda}}]$
(which includes the support of $F^{(\varepsilon)}$ in the $v$-variable),
the following relations hold between $(r_{/}^{(\varepsilon)},\Omega_{/}^{(\varepsilon)},\bar{f}_{/}^{(\varepsilon)};\sqrt{-\frac{3}{\Lambda}}\pi)$
and $(r_{/}^{\prime(\varepsilon)},\Omega_{/}^{\prime(\varepsilon)},\bar{f}_{/}^{\prime(\varepsilon)};\sqrt{-\frac{3}{\Lambda}}\pi)$:
\begin{equation}
\bar{f}_{/}^{(\varepsilon)}=\bar{f}_{/}^{\prime(\varepsilon)}\text{ on }(0,\sqrt{-\frac{3}{\Lambda}}\pi)\times[0,+\infty)^{2}\label{eq:EqualVlasovFieldsTransformation}
\end{equation}
and 
\begin{equation}
(r_{/}^{(\varepsilon)},(\Omega_{/}^{(\varepsilon)})^{2})(v)=(r_{/}^{\prime(\varepsilon)},(\Omega_{/}^{\prime(\varepsilon)})^{2})(v)\text{ for }v\in[0,v_{\varepsilon,N_{\varepsilon}}+2\frac{\varepsilon^{(N_{\varepsilon})}}{\sqrt{-\Lambda}}].\label{eq:EqualComponentsTransformation}
\end{equation}
In particular, (\ref{eq:InitialVlasovSeedsTotal}) and (\ref{eq:AlmostVlasovFieldInvariant Form-1})
imply that:

\begin{equation}
\bar{f}_{/}^{(\varepsilon)}(v;p^{u},l)=\sum_{i=0}^{N_{\varepsilon}}a_{\varepsilon i}F_{i}^{(\varepsilon)}\big(v;\text{ }\partial_{v}r_{/}^{\prime(\varepsilon)}(v)p^{u},\text{ }l\big).\label{eq:InitialVlasovTotal}
\end{equation}
\end{rem*}
\begin{figure}[h] 
\centering 
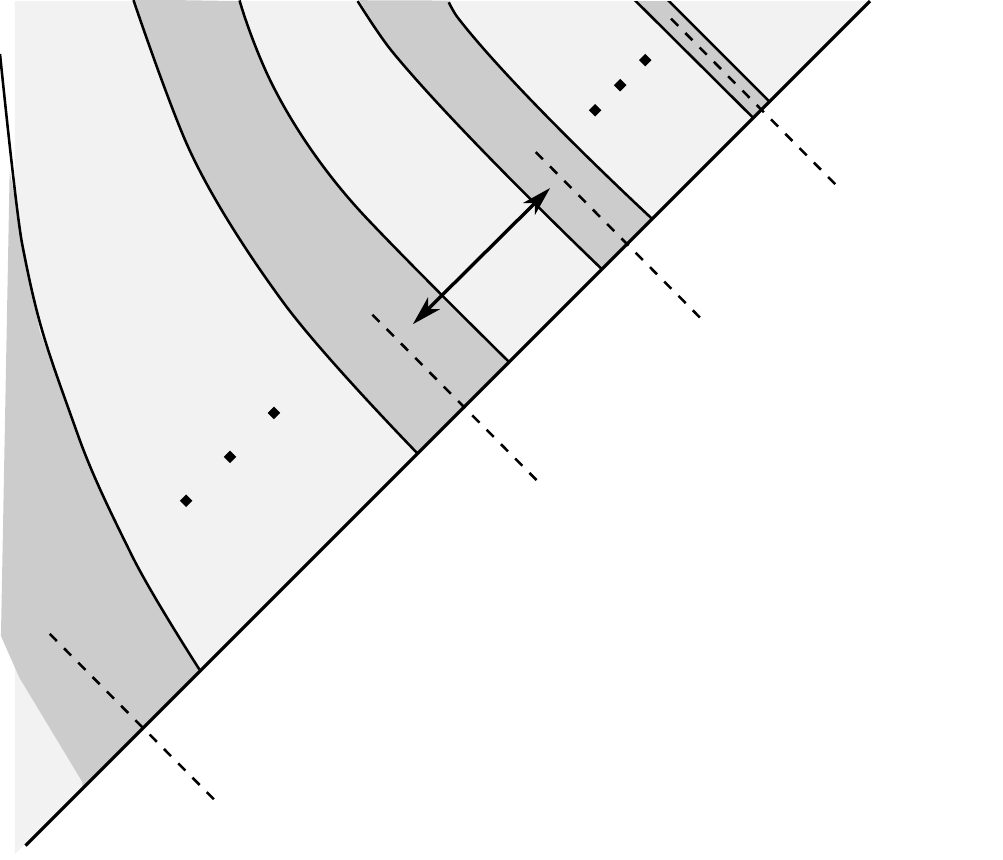 
\caption{Schematic depiction of the support in the $(u,v)$-plane of the Vlasov beams emanating from the initial data at $u=0$. The $i$-th beam in the picture corresponds to $\bar{f}_{i}^{(\varepsilon)}$, $0\le i\le N_{\varepsilon}$, and contains null geodesics with angular momenta $\sim \varepsilon^{(i)}(-\Lambda)^{-1/2}$, i.\,e.~proportional to the width of the beam.}
\end{figure}

The following estimates for the initial data family $(r_{/}^{(\varepsilon)},(\Omega_{/}^{(\varepsilon)})^{2},\bar{f}_{/}^{(\varepsilon)})$
are an immediate consequence of the expression (\ref{eq:InitialVlasovSeeds})
for $F_{i}^{(\varepsilon)}$ and the quantitative bounds provided
by Lemmas \ref{lem:FreeFVlasov} and \ref{lem:NormalisationToSmoothCompatibility}:
\begin{lem}
\label{lem:FirstSmallnessBoundInitialData} There exists some $C>0$,
such that, for all $\varepsilon\in(0,1)$, the initial data set $(r_{/}^{(\varepsilon)},(\Omega_{/}^{(\varepsilon)})^{2},\bar{f}_{/}^{(\varepsilon)};\sqrt{-\frac{3}{\Lambda}}\pi)$
satisfies: 
\begin{equation}
\sup_{v\in(0,\sqrt{-\frac{3}{\Lambda}}\pi)}\Bigg(\Big|\frac{\partial_{v}r_{/}^{(\varepsilon)}}{1-\frac{1}{3}\Lambda(r_{/}^{(\varepsilon)})^{2}}-\frac{\partial_{v}r_{AdS/}}{1-\frac{1}{3}\Lambda r_{AdS/}^{2}}\Big|(v)+\Big|\frac{(\Omega_{/}^{(\varepsilon)})^{2}}{1-\frac{1}{3}\Lambda(r_{/}^{(\varepsilon)})^{2}}-\frac{\Omega_{AdS/}^{2}}{1-\frac{1}{3}\Lambda r_{AdS/}^{2}}\Big|(v)\Bigg)\le C\varepsilon.\label{eq:ComparableInitialDataWithAdSEpsilonn}
\end{equation}
Furthermore, for any $i=0,\ldots,N_{\varepsilon}$, we can estimate
on the support of $F_{i}^{(\varepsilon)}$:
\begin{equation}
\sup_{v\in(0,\sqrt{-\frac{3}{\Lambda}}\pi)}\Bigg[\Big|\partial_{v}\Big(\frac{\partial_{v}r_{/}^{(\varepsilon)}}{1-\frac{1}{3}\Lambda(r_{/}^{(\varepsilon)})^{2}}\Big)\Big|(v)+\Big|\partial_{v}\Big(\frac{(\Omega_{/}^{(\varepsilon)})^{2}}{1-\frac{1}{3}\Lambda(r_{/}^{(\varepsilon)})^{2}}\Big)\Big|(v)\Bigg]\le C\sqrt{-\Lambda}.\label{eq:BoundDvdvRAndDvOmegaInitiallyInBeams}
\end{equation}
Finally, the following estimate holds: 
\begin{equation}
\sup_{v\in(0,\sqrt{-\frac{3}{\Lambda}}\pi)}\frac{2\tilde{m}_{/}^{(\varepsilon)}}{r_{/}^{(\varepsilon)}}<C\varepsilon,\label{eq:SmallRationInitially}
\end{equation}
where $\tilde{m}_{/}^{(\varepsilon)}$ is defined in terms of $r_{/}^{(\varepsilon)},\text{ }(\Omega_{/}^{(\varepsilon)})^{2},\text{ }\bar{f}_{/}^{(\varepsilon)}$
by (\ref{eq:TildeVMaza}), i.\,e.~by the implicit relation 
\begin{equation}
\tilde{m}_{/}^{(\varepsilon)}(v)=2\pi\int_{0}^{v}\Bigg(\big(1-\frac{2\tilde{m}_{/}^{(\varepsilon)}}{r_{/}^{(\varepsilon)}}-\frac{1}{3}\Lambda r_{/}^{(\varepsilon)}\big)\frac{(r_{/}^{(\varepsilon)})^{2}(T_{/}^{(\varepsilon)})_{vv}}{\partial_{v}r_{/}^{(\varepsilon)}}+4\frac{\partial_{v}r_{/}^{(\varepsilon)}}{(\Omega_{/}^{(\varepsilon)})^{2}}(r_{/}^{(\varepsilon)})^{2}(T_{/}^{(\varepsilon)})_{uv}\Bigg)(\bar{v})\,d\bar{v}.\label{eq:RenormalisedMassInitially}
\end{equation}
\end{lem}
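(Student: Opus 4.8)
The plan is to read off all three estimates from Lemmas \ref{lem:FreeFVlasov} and \ref{lem:NormalisationToSmoothCompatibility}, the single nontrivial input being the smallness bound $\mathcal{M}[F^{(\varepsilon)}]\le C\varepsilon$. To prove that bound I would argue as follows: by linearity of $F\mapsto\mathcal{M}[F]$, the identities $\Omega_{AdS/}^{2}=1-\frac{1}{3}\Lambda r_{AdS/}^{2}$ and $\partial_{v}r_{AdS/}=\frac{1}{2}(1-\frac{1}{3}\Lambda r_{AdS/}^{2})$ from (\ref{eq:AdSMetricValues}) (so that $\Omega_{AdS/}^{2}/\partial_{v}r_{AdS/}\equiv2$, hence $(T^{(v_{\mathcal{I}})}_{AdS}[\,\cdot\,])_{vv}=\frac{1}{4}T^{(AdS)}_{vv}[\,\cdot\,]$), and the estimate (\ref{eq:SmallnessInitialBeams}), one gets $\mathcal{M}[F_{i}^{(\varepsilon)}]\le C\varepsilon^{(i)}$ and therefore $\mathcal{M}[F^{(\varepsilon)}]\le C\sum_{i=0}^{N_{\varepsilon}}a_{\varepsilon i}\varepsilon^{(i)}$. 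Splitting off $i=0$ (where $a_{\varepsilon0}\varepsilon^{(0)}\le\sigma_{\varepsilon}\varepsilon\le\varepsilon$) and bounding the tail by $\varepsilon^{(1)}\sum_{i\ge1}a_{\varepsilon i}\le\varepsilon^{(1)}\rho_{\varepsilon}^{-1}\sigma_{\varepsilon}$ via (\ref{eq:UpperBoundSumWeightsBeam}), the super-exponential decay (\ref{eq:RecursiveEi}) together with the hierarchy (\ref{eq:HierarchyOfParameters}) makes $\varepsilon^{(1)}\rho_{\varepsilon}^{-1}\sigma_{\varepsilon}\le\varepsilon^{2}$, so $\mathcal{M}[F^{(\varepsilon)}]\le C\varepsilon$; after shrinking $\varepsilon_{1}$ this also gives $\mathcal{M}[F^{(\varepsilon)}]<c_{0}$, so Lemma \ref{lem:FreeFVlasov} applies (recall $F^{(\varepsilon)}$ is compactly supported in $(0,\sqrt{-3/\Lambda}\pi)\times(0,+\infty)^{2}$ by (\ref{eq:SupportFepsilon})). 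For $\varepsilon\in(\varepsilon_{1},1]$ everything reduces to $\varepsilon=\varepsilon_{1}$ by (\ref{eq:DefinitionFamilyLargeEpsilon}), so I may assume $\varepsilon\le\varepsilon_{1}$.

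For (\ref{eq:ComparableInitialDataWithAdSEpsilonn}): since $v_{\mathcal{I}}=\sqrt{-3/\Lambda}\pi$ we have $r_{AdS/}^{(v_{\mathcal{I}})}=r_{AdS/}$, $\frac{\partial_{v}r_{AdS/}}{1-\frac{1}{3}\Lambda r_{AdS/}^{2}}\equiv\frac{1}{2}$ and $\frac{\Omega_{AdS/}^{2}}{1-\frac{1}{3}\Lambda r_{AdS/}^{2}}\equiv1$. Applying (\ref{eq:InitialEstimateDifferenceFromAdS}) to the gauge normalised data $(r_{/}^{\prime(\varepsilon)},(\Omega_{/}^{\prime(\varepsilon)})^{2},\bar{f}_{/}^{\prime(\varepsilon)})$ gives $\big|\frac{\partial_{v}r_{/}^{\prime(\varepsilon)}}{1-\frac{1}{3}\Lambda(r_{/}^{\prime(\varepsilon)})^{2}}-\frac{1}{2}\big|\le C\varepsilon$, and the normalised gauge condition (\ref{eq:NormalisedGaugeCondition}), rewritten as $\frac{(\Omega_{/}^{\prime(\varepsilon)})^{2}}{1-\frac{1}{3}\Lambda(r_{/}^{\prime(\varepsilon)})^{2}}=4\big(\frac{\partial_{v}r_{/}^{\prime(\varepsilon)}}{1-\frac{1}{3}\Lambda(r_{/}^{\prime(\varepsilon)})^{2}}\big)^{2}$, upgrades this to $\big|\frac{(\Omega_{/}^{\prime(\varepsilon)})^{2}}{1-\frac{1}{3}\Lambda(r_{/}^{\prime(\varepsilon)})^{2}}-1\big|\le C\varepsilon$. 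To pass to the smoothly compatible data I would use Lemma \ref{lem:NormalisationToSmoothCompatibility} with $\varepsilon_{0}=\varepsilon$: on $\{v\le v_{\varepsilon,N_{\varepsilon}}+2\varepsilon^{(N_{\varepsilon})}/\sqrt{-\Lambda}\}$ the transformation is the identity by (\ref{eq:EqualComponentsTransformation}), while elsewhere the Vlasov field vanishes and, using the $r$-invariance, the transformation law (\ref{eq:InitialDataGaugeTransformation}) and $|dV/dv-1|\le\varepsilon$ from (\ref{eq:FirstDerivativeTransformation}), both scale-invariant quantities change by at most $O(\varepsilon)$; since $V$ is a bijection of $[0,v_{\mathcal{I}}]$, (\ref{eq:ComparableInitialDataWithAdSEpsilonn}) follows.

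For (\ref{eq:BoundDvdvRAndDvOmegaInitiallyInBeams}): since $\mathrm{supp}(F_{i}^{(\varepsilon)})$ lies, in $v$, inside $[0,v_{\varepsilon,N_{\varepsilon}}+2\varepsilon^{(N_{\varepsilon})}/\sqrt{-\Lambda}]$, by (\ref{eq:EqualComponentsTransformation}) it suffices to bound the relevant $v$-derivatives for $(r_{/}^{\prime(\varepsilon)},(\Omega_{/}^{\prime(\varepsilon)})^{2})$ there. Differentiating (\ref{eq:FormulaForDvR/2}) (equivalently, using the constraint (\ref{eq:ConstraintVFinal})) gives $\partial_{v}\big(\frac{\partial_{v}r_{/}^{\prime(\varepsilon)}}{1-\frac{1}{3}\Lambda(r_{/}^{\prime(\varepsilon)})^{2}}\big)=4\pi\,\frac{r_{/}^{\prime(\varepsilon)}(T_{/}^{\prime(\varepsilon)})_{vv}}{\partial_{v}r_{/}^{\prime(\varepsilon)}}\cdot\frac{\partial_{v}r_{/}^{\prime(\varepsilon)}}{1-\frac{1}{3}\Lambda(r_{/}^{\prime(\varepsilon)})^{2}}$; on $\mathrm{supp}(F_{i}^{(\varepsilon)})$ the explicit form (\ref{eq:InitialVlasovSeeds}) with its $(\varepsilon^{(i)})^{-2}$ normalisation and the momentum confinement $p\sim3$, $\sqrt{-\Lambda}\,l\sim\varepsilon^{(i)}$ (of widths $O(1)$ and $O(\varepsilon^{(i)})$ respectively), together with $\frac{\partial_{v}r_{/}^{\prime(\varepsilon)}}{1-\frac{1}{3}\Lambda(r_{/}^{\prime(\varepsilon)})^{2}}=\frac{1}{2}+O(\varepsilon)$ from the previous step, yield $\frac{r_{/}^{\prime(\varepsilon)}(T_{/}^{\prime(\varepsilon)})_{vv}}{\partial_{v}r_{/}^{\prime(\varepsilon)}}\le Ca_{\varepsilon i}\sqrt{-\Lambda}\le C\sqrt{-\Lambda}$, hence the claimed bound for the $r$-quantity; the bound for $\partial_{v}\big(\frac{(\Omega_{/}^{\prime(\varepsilon)})^{2}}{1-\frac{1}{3}\Lambda(r_{/}^{\prime(\varepsilon)})^{2}}\big)$ then follows by differentiating the gauge identity above.

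For (\ref{eq:SmallRationInitially}) I would run a short continuity argument on $[0,\sqrt{-3/\Lambda}\pi)$: assuming $\frac{2\tilde{m}_{/}^{(\varepsilon)}}{r_{/}^{(\varepsilon)}}\le\frac{1}{2}$ on $[0,v]$, we have $1-\frac{2m_{/}^{(\varepsilon)}}{r_{/}^{(\varepsilon)}}=O(1)$ there, so integrating (\ref{eq:TildeVMaza}) (equivalently, (\ref{eq:RenormalisedMassInitially})) and using that $\bar{f}_{/}^{(\varepsilon)}$ is supported, in $v$, in a fixed compact subinterval of $(0,\sqrt{-3/\Lambda}\pi)$ on which $r_{/}^{(\varepsilon)}\sim(-\Lambda)^{-1/2}$ by the first estimate, one gets $\tilde{m}_{/}^{(\varepsilon)}(v)\le C(-\Lambda)^{-1/2}\int_{0}^{\sqrt{-3/\Lambda}\pi}\big(\frac{r_{/}^{(\varepsilon)}(T_{/}^{(\varepsilon)})_{vv}}{\partial_{v}r_{/}^{(\varepsilon)}}+\frac{r_{/}^{(\varepsilon)}(T_{/}^{(\varepsilon)})_{uv}}{-(\partial_{u}r)_{/}^{(\varepsilon)}}\big)\,d\bar{v}\le C(-\Lambda)^{-1/2}\varepsilon$, by (\ref{eq:InitialSmallNorm}) and the analogous bound for the $T_{uv}$ integral (cf.\ (\ref{eq:SmallnessInitialBeams})). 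Since $\tilde{m}_{/}^{(\varepsilon)}$ is non-decreasing and becomes constant beyond the support while $r_{/}^{(\varepsilon)}\gtrsim(-\Lambda)^{-1/2}$ on the support and tends to $+\infty$ at conformal infinity, this yields $\sup_{v}\frac{2\tilde{m}_{/}^{(\varepsilon)}}{r_{/}^{(\varepsilon)}}\le C\sqrt{-\Lambda}\cdot(-\Lambda)^{-1/2}\varepsilon=C\varepsilon\ll\frac{1}{2}$, which closes the bootstrap. The only genuinely delicate point in the whole argument is the first one --- bounding $\sum a_{\varepsilon i}\varepsilon^{(i)}$ by $C\varepsilon$ even though $\sum a_{\varepsilon i}$ may be as large as $\rho_{\varepsilon}^{-1}\sigma_{\varepsilon}\gg1$ --- and this is resolved entirely by the rapid decay built into (\ref{eq:RecursiveEi}) and the hierarchy (\ref{eq:HierarchyOfParameters}); the remainder is routine bookkeeping with Lemmas \ref{lem:FreeFVlasov} and \ref{lem:NormalisationToSmoothCompatibility}.
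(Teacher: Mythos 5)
Your proposal is correct and follows essentially the same route as the paper's proof: reduction to the gauge-normalised data via Lemmas \ref{lem:FreeFVlasov} and \ref{lem:NormalisationToSmoothCompatibility}, the estimate (\ref{eq:InitialEstimateDifferenceFromAdS}) together with the gauge condition (\ref{eq:NormalisedGaugeCondition}) for (\ref{eq:ComparableInitialDataWithAdSEpsilonn}), differentiation of (\ref{eq:FormulaForDvR/2}) for (\ref{eq:BoundDvdvRAndDvOmegaInitiallyInBeams}), and an integral/Gronwall-type argument on (\ref{eq:RenormalisedMassInitially}) for (\ref{eq:SmallRationInitially}). The only differences are cosmetic: you spell out the weighted-sum bound $\sum_i a_{\varepsilon i}\varepsilon^{(i)}\le C\varepsilon$ (which the paper leaves implicit), use a continuity argument in place of Gronwall, and transfer the derivative bound only on the support of $F_i^{(\varepsilon)}$ via (\ref{eq:EqualComponentsTransformation}), which is all the statement requires.
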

\begin{proof}
In view of (\ref{eq:DefinitionFamilyLargeEpsilon}), it suffices to
establish (\ref{eq:ComparableInitialDataWithAdSEpsilonn})\textendash (\ref{eq:SmallRationInitially})
for $\varepsilon\in(0,\varepsilon_{1}]$. Furthermore, it suffices
to establish the bounds (\ref{eq:ComparableInitialDataWithAdSEpsilonn})\textendash (\ref{eq:SmallRationInitially})
for the intermediate, gauge normalised initial data set $(r_{/}^{\prime(\varepsilon)},\Omega_{/}^{\prime(\varepsilon)},\bar{f}_{/}^{\prime(\varepsilon)};\sqrt{-\frac{3}{\Lambda}}\pi)$
constructed in Definition \ref{def:InitialDataFamily}. Since $(r_{/}^{\prime(\varepsilon)},\Omega_{/}^{\prime(\varepsilon)},\bar{f}_{/}^{\prime(\varepsilon)};\sqrt{-\frac{3}{\Lambda}}\pi)$
is related to $(r_{/}^{(\varepsilon)},\Omega_{/}^{(\varepsilon)},\bar{f}_{/}^{(\varepsilon)};\sqrt{-\frac{3}{\Lambda}}\pi)$
by the gauge transformation of Lemma \ref{lem:NormalisationToSmoothCompatibility}
with $\varepsilon$ in place of $\varepsilon_{0}$,\footnote{Note that the $\prime$-notation for $(r_{/}^{(\varepsilon)},\Omega_{/}^{(\varepsilon)},\bar{f}_{/}^{(\varepsilon)};\sqrt{-\frac{3}{\Lambda}}\pi)$
and $(r_{/}^{\prime(\varepsilon)},\Omega_{/}^{\prime(\varepsilon)},\bar{f}_{/}^{\prime(\varepsilon)};\sqrt{-\frac{3}{\Lambda}}\pi)$
is inverted in Lemma \ref{lem:NormalisationToSmoothCompatibility}.} the bounds (\ref{eq:FirstDerivativeTransformation}), (\ref{eq:SecondDerivativeTransformation})
and (\ref{eq:SmallnessInitialBeams}) imply (in view of the relation
\begin{align}
r_{/}^{\prime(\varepsilon)}(V(v)) & =r_{/}^{(\varepsilon)}(v),\label{eq:InitialDataGaugeTransformation-1}\\
(\Omega_{/}^{\prime(\varepsilon)})^{2}(V(v)) & =\frac{1}{\frac{dV}{du}(0)\cdot\frac{dV}{dv}(v)}\Omega_{/}^{(\varepsilon)}(v),\nonumber 
\end{align}
 between $(r_{/}^{\prime(\varepsilon)},\Omega_{/}^{\prime(\varepsilon)})$
and $(r_{/}^{(\varepsilon)},\Omega_{/}^{(\varepsilon)})$) that if
(\ref{eq:ComparableInitialDataWithAdSEpsilonn})\textendash (\ref{eq:BoundDvdvRAndDvOmegaInitiallyInBeams})
hold for $(r_{/}^{\prime(\varepsilon)},\Omega_{/}^{\prime(\varepsilon)},\bar{f}_{/}^{\prime(\varepsilon)};\sqrt{-\frac{3}{\Lambda}}\pi)$,
then they also hold for $(r_{/}^{(\varepsilon)},\Omega_{/}^{(\varepsilon)},\bar{f}_{/}^{(\varepsilon)};\sqrt{-\frac{3}{\Lambda}}\pi)$
with $2C$ in place of $C$. The bound (\ref{eq:SmallRationInitially}),
on the other hand, is gauge invariant.

The bound (\ref{eq:ComparableInitialDataWithAdSEpsilonn}) for $r_{/}^{\prime(\varepsilon)}$
is a corollary of the gauge condition (\ref{eq:NormalisedGaugeCondition})
relating $\Omega_{/}^{\prime(\varepsilon)}$ to $\partial_{v}r_{/}^{\prime(\varepsilon)}$
and Lemma \ref{lem:FreeFVlasov} relating $(r_{/}^{\prime(\varepsilon)},\Omega_{/}^{\prime(\varepsilon)},\bar{f}_{/}^{\prime(\varepsilon)};\sqrt{-\frac{3}{\Lambda}}\pi)$
to $F^{(\varepsilon)}$ (in particular, the estimate (\ref{eq:InitialEstimateDifferenceFromAdS})),
noting that the bound (\ref{eq:SmallnessConditionForConstruction})
for $F^{(\varepsilon)}$ is a direct consequence of (\ref{eq:SmallnessInitialBeams})
(assuming that $\varepsilon_{1}$ has been fixed smaller than $c_{0}$
in (\ref{eq:SmallnessConditionForConstruction})).

We will now proceed to establish that 
\begin{equation}
\sup_{v\in(0,v_{\mathcal{I}})}\Big|\partial_{v}\Big(\frac{\partial_{v}r_{/}^{\prime(\varepsilon)}}{1-\frac{1}{3}\Lambda(r_{/}^{\prime(\varepsilon)})^{2}}\Big)(v)\Big|\le C\sqrt{-\Lambda}.\label{eq:BoundDvdvRAndDvOmegaInitiallyInBeams-1}
\end{equation}
Note that, since $\Omega_{/}^{\prime(\varepsilon)}$ and $\partial_{v}r_{/}^{\prime(\varepsilon)}$
are related by the gauge normalising condition (\ref{eq:NormalisedGaugeCondition}),
the bounds (\ref{eq:BoundDvdvRAndDvOmegaInitiallyInBeams})\textendash (\ref{eq:BoundDvdvRAndDvOmegaInitiallyInBeams})
for $(r_{/}^{\prime(\varepsilon)},\Omega_{/}^{\prime(\varepsilon)})$
follow immediately from (\ref{eq:BoundDvdvRAndDvOmegaInitiallyInBeams-1})
and (\ref{eq:ComparableInitialDataWithAdSEpsilonn}).

The alternative form (\ref{eq:FormulaForDvR/2}) of the gauge condition
(\ref{eq:NormalisedGaugeCondition}) for $(r_{/}^{\prime(\varepsilon)},\Omega_{/}^{\prime(\varepsilon)})$
yields, after differentiating in $v$:
\begin{equation}
\partial_{v}\Big(\frac{\partial_{v}r_{/}^{\prime(\varepsilon)}}{1-\frac{1}{3}\Lambda(r_{/}^{\prime(\varepsilon)})^{2}}\Big)(v)=\Big(4\pi\frac{r_{/}^{\prime(\varepsilon)}(T_{/}^{\prime(\varepsilon)})_{vv}}{(\partial_{v}r_{/}^{\prime(\varepsilon)})}(v)\Big)\cdot\frac{\partial_{v}r_{/}^{\prime(\varepsilon)}}{1-\frac{1}{3}\Lambda(r_{/}^{\prime(\varepsilon)})^{2}},\label{eq:FormulaForDvR/2-1}
\end{equation}
where, in view of the relation (\ref{eq:AlmostVlasovFieldInvariant Form-1})
between $\bar{f}_{/}^{\prime(\varepsilon)}$ and $F^{(\varepsilon)}$
and the gauge condition (\ref{eq:NormalisedGaugeCondition}) for $(r_{/}^{\prime(\varepsilon)},\Omega_{/}^{\prime(\varepsilon)})$:
\begin{equation}
(T_{/}^{\prime(\varepsilon)})_{vv}(v)\doteq8\pi\frac{(\partial_{v}r_{/}^{\prime(\varepsilon)})^{2}}{\big(r_{/}^{\prime(\varepsilon)}-\frac{1}{3}\Lambda(r_{/}^{\prime(\varepsilon)})^{3}\big)^{2}}(v)\int_{0}^{+\infty}\int_{0}^{+\infty}p^{2}F^{(\varepsilon)}(v;p,l)\,\frac{dp}{p}ldl.\label{eq:ExpressionEnergyMomentumPrime}
\end{equation}
In view of (\ref{eq:FormulaForDvR/2-1}) and (\ref{eq:ExpressionEnergyMomentumPrime}),
the bound (\ref{eq:BoundDvdvRAndDvOmegaInitiallyInBeams-1}) follows
from the expression (\ref{eq:InitialVlasovSeeds}) for $F_{i}^{(\varepsilon)}$
and the fact that $F^{(\varepsilon)}=\sum_{i=0}^{N_{\varepsilon}}F_{i}^{(\varepsilon)}$.

The bound (\ref{eq:SmallRationInitially}) follows readily by applying
Gronwall's inequality on the integral relation (\ref{eq:RenormalisedMassInitially})
for $\frac{2\tilde{m}_{/}^{(\varepsilon)}}{r_{/}^{(\varepsilon)}}$,
using the estimates (\ref{eq:SmallnessInitialBeams}) and (\ref{eq:ComparableInitialDataWithAdSEpsilonn}).
\end{proof}
The initial data family $(r_{/}^{(\varepsilon)},(\Omega_{/}^{(\varepsilon)})^{2},\bar{f}_{/}^{(\varepsilon)})$
satisfies the following smallness condition with respect to the initial
data norm $||\cdot||$ introduced by Definition \ref{def:InitialDataNorm}:
\begin{lem}
\label{lem:SmallnessFamilyInitialData} There exists some $C>0$,
such that, for all $\varepsilon\in(0,1)$, the initial data set $(r_{/}^{(\varepsilon)},(\Omega_{/}^{(\varepsilon)})^{2},\bar{f}_{/}^{(\varepsilon)};\sqrt{-\frac{3}{\Lambda}}\pi)$
satisfies: 
\begin{equation}
||(r_{/}^{(\varepsilon)},(\Omega_{/}^{(\varepsilon)})^{2},\bar{f}_{/}^{(\varepsilon)};\sqrt{-\frac{3}{\Lambda}}\pi)||\le C\sigma_{\varepsilon},\label{eq:SizeInitialData}
\end{equation}
where $||\cdot||$ is defined by (\ref{eq:InitialDataNorm}). In particular,
as a consequence of (\ref{eq:ParametersGoingToZero}):
\begin{equation}
\lim_{\varepsilon\rightarrow0}||(r_{/}^{(\varepsilon)},(\Omega_{/}^{(\varepsilon)})^{2},\bar{f}_{/}^{(\varepsilon)};\sqrt{-\frac{3}{\Lambda}}\pi)||=0.\label{eq:InitialDataGoingTo0}
\end{equation}
\end{lem}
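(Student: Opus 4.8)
The norm $||(r_{/}^{(\varepsilon)},(\Omega_{/}^{(\varepsilon)})^{2},\bar{f}_{/}^{(\varepsilon)};\sqrt{-\tfrac{3}{\Lambda}}\pi)||$ is, by Definitions \ref{def:InitialDataNorm} and \ref{def:ComparisonVlasovField}, a functional of the \emph{free} massless Vlasov field $f^{(AdS)}$ on the fixed background $(\mathcal{M}_{AdS},g_{AdS})$, so the plan is to estimate it by analysing the explicit geodesic flow on AdS only; the Einstein--Vlasov dynamics plays no role. Since, by the Remark following Definition \ref{def:InitialDataFamily}, the gauge transformation of Lemma \ref{lem:NormalisationToSmoothCompatibility} is the identity on the support of $F^{(\varepsilon)}$ and $v_{\mathcal{I}}=\sqrt{-\tfrac{3}{\Lambda}}\pi$ already, $f^{(AdS)}$ coincides with the free AdS evolution of $\bar{f}_{/}^{\prime(\varepsilon)}$, which by (\ref{eq:InitialVlasovSeedsTotal}) and (\ref{eq:AlmostVlasovFieldInvariant Form-1}) is the superposition $\sum_{i=0}^{N_{\varepsilon}} a_{\varepsilon i} F_{i}^{(\varepsilon)}(v;\partial_{v}r_{/}^{\prime(\varepsilon)}(v)p^{u},l)$ of the beam seeds, the factor $\partial_{v}r_{/}^{\prime(\varepsilon)}$ differing from its AdS value by $O(\varepsilon)$ by (\ref{eq:ComparableInitialDataWithAdSEpsilonn}), which is harmless below. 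By linearity of the Vlasov equation and of $f\mapsto T_{\mu\nu}[f]$, $T_{\mu\nu}[f^{(AdS)}]=\sum_{i}a_{\varepsilon i}T_{\mu\nu}[f_{i}^{(AdS)}]$, with $f_{i}^{(AdS)}$ the free AdS evolution of the $i$-th beam. In view of (\ref{eq:UpperBoundSumWeightsBeam}) and of (\ref{eq:HeuristicRelationParameters}) (which gives $\varepsilon^{(i)}\le\varepsilon^{(0)}=\varepsilon\ll\rho_{\varepsilon}$), it therefore suffices to prove that each of the three terms in (\ref{eq:InitialDataNorm}), evaluated on $f_{i}^{(AdS)}$ alone, is $\lesssim\varepsilon^{(i)}$, uniformly in $i$ and in the supremum parameters; summing then gives $||\cdots||\lesssim\sum_{i}a_{\varepsilon i}\varepsilon^{(i)}\le\varepsilon\rho_{\varepsilon}^{-1}\sigma_{\varepsilon}\le C\sigma_{\varepsilon}$. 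The mass term is immediate this way: integrating (\ref{eq:RenormalisedMassInitially}) in $v$, using $(T_{/}^{(\varepsilon)})_{vv},(T_{/}^{(\varepsilon)})_{uv}\ge0$, (\ref{eq:ComparableInitialDataWithAdSEpsilonn}) and the per-beam bound (\ref{eq:SmallnessInitialBeams}) yields $\sqrt{-\Lambda}\,\tilde{m}_{/}^{(\varepsilon)}|_{v=v_{\mathcal{I}}}\lesssim\sum_{i}a_{\varepsilon i}\varepsilon^{(i)}\lesssim\sigma_{\varepsilon}$.

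For the two energy-concentration suprema the key observation is that on $(\mathcal{M}_{AdS},g_{AdS})$ all null geodesics, maximally extended through reflections off $\mathcal{I}$ and $\{u=v\}$, are periodic, and the pair (metric, $f^{(AdS)}$) is invariant under the isometry $(u,v)\mapsto(u+v_{\mathcal{I}},v+v_{\mathcal{I}})$ generated by $\partial_{u}+\partial_{v}$; hence each supremum in (\ref{eq:InitialDataNorm}) is attained for $U_{*}$ (resp. $V_{*}$) in $[0,v_{\mathcal{I}}]$, so only a time interval of length $O(v_{\mathcal{I}})$ needs to be controlled. On such an interval Corollary \ref{cor:GeodesicPathsLongTimes} applies with $C_{0}$ an absolute constant: in the gauge (\ref{eq:AdSMetricValues-1}) one has $\partial_{v}r_{AdS}/(1-\tfrac13\Lambda r_{AdS}^{2})=-\partial_{u}r_{AdS}/(1-\tfrac13\Lambda r_{AdS}^{2})=\tfrac12$ and $\tilde{m}_{AdS}\equiv0$, so (\ref{eq:BoundGeometryC0})--(\ref{eq:SmallnessTrapping}) hold trivially, while the angular-momentum hypothesis of Corollary \ref{cor:GeodesicPathsLongTimes}, with $U=O(v_{\mathcal{I}})$, reduces to $\tfrac{l}{E_{0}}\sqrt{-\Lambda}\lesssim\varepsilon^{(i)}\le$ an absolute constant (recall $\tfrac{l}{E_{0}}\sqrt{-\Lambda}\sim\varepsilon^{(i)}$ on the beam $f_{i}^{(AdS)}$, by (\ref{eq:InitialVlasovSeeds})), which holds for $\varepsilon$ small. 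Consequently $\mathrm{supp}(f_{i}^{(AdS)})$ is confined to the thin worldtube $\bigcup_{k}(\mathcal{V}_{\nwarrow}^{(k)}\cup\mathcal{V}_{\nearrow}^{(k)})$ of (\ref{eq:DomainForGammaMaximallyExtended}), a union of $O(1)$ (per period) near-null strips of width $\lesssim\varepsilon^{(i)}(-\Lambda)^{-1/2}$ — the "ingoing" strips $\mathcal{V}_{\nwarrow}^{(k)}$ near-constant-$v$ and the "outgoing" strips $\mathcal{V}_{\nearrow}^{(k)}$ near-constant-$u$ — along which the momenta obey (\ref{eq:EnergyGrowthGammaMaximallyExtended})--(\ref{eq:OutgoingRegionMaximallyExtended}).

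It then remains to bound, for any constant-$u$ slice $\{u=U_{*}\}$, the integral $\int\big(\tfrac{r_{AdS}T_{vv}[f_{i}^{(AdS)}]}{\partial_{v}r_{AdS}}+\tfrac{r_{AdS}T_{uv}[f_{i}^{(AdS)}]}{-\partial_{u}r_{AdS}}\big)\,dv$ by $C\varepsilon^{(i)}$, and the analogous integral of $\tfrac{r_{AdS}T_{uu}}{-\partial_{u}r_{AdS}}+\tfrac{r_{AdS}T_{uv}}{\partial_{v}r_{AdS}}$ on constant-$v$ slices; this is the technical heart. Where the slice crosses an ingoing strip $\mathcal{V}_{\nwarrow}^{(k)}$, one expresses $T_{vv},T_{uv}$ via (\ref{eq:ComponentsStressEnergy}), changes variables to pull the fibre integrals back along the geodesic flow (whose Liouville measure is conserved), bounds the resulting Jacobian and the weights $\Omega^{2}\dot\gamma^{u},\Omega^{2}\dot\gamma^{v}$ by absolute constants using (\ref{eq:EnergyGrowthGammaMaximallyExtended})--(\ref{eq:OutgoingRegionMaximallyExtended}), and thereby reduces to the initial bound (\ref{eq:SmallnessInitialBeams}). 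Where the slice lies inside an outgoing strip $\mathcal{V}_{\nearrow}^{(k)}$ (on which the spatial footprint may be long), one instead uses that there $\dot\gamma^{u}/\dot\gamma^{v}\lesssim\tfrac{l^{2}}{E_{0}^{2}}\tfrac{1-\frac13\Lambda r^{2}}{r^{2}}$ by (\ref{eq:OutgoingRegionMaximallyExtended}), hence — via the null-shell relation (\ref{eq:NullShellNewAngularMomentum}), which gives $T_{uv}=\tfrac{\pi}{2}r^{-2}\int\tfrac{\Omega^{2}l^{2}}{r^{2}}\bar f\,\tfrac{dp^{u}}{p^{u}}ldl$ — both $T_{vv}$ and $T_{uv}$ are $\lesssim(\varepsilon^{(i)})^{2}\tfrac{1-\frac13\Lambda r^{2}}{r^{2}}$ in these coordinates, so the contribution is $\lesssim\varepsilon^{(i)}$ after integrating $dr$ over $r\gtrsim\varepsilon^{(i)}(-\Lambda)^{-1/2}$; the roles of $u,v$ and of $T_{vv}\leftrightarrow T_{uu}$, $\mathcal{V}_{\nwarrow}\leftrightarrow\mathcal{V}_{\nearrow}$ are exchanged for the second supremum.

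Summing the $O(1)$ strip-contributions on each slice, then over $i$ with weights $a_{\varepsilon i}$ as in the first paragraph, yields (\ref{eq:SizeInitialData}), and (\ref{eq:InitialDataGoingTo0}) follows from $\sigma_{\varepsilon}\to0$ (see (\ref{eq:ParametersGoingToZero})). The main obstacle I expect is precisely the uniform control of the change-of-variables factors in the fibre integrals as the beam passes near the axis, where $r$ dips to $\sim\varepsilon^{(i)}(-\Lambda)^{-1/2}$ and the congruence is geometrically nontrivial; but this is exactly the regime addressed by Lemma \ref{lem:GeodesicPaths}/Corollary \ref{cor:GeodesicPathsLongTimes}, and on AdS no hypothesis beyond the ones verified in the second paragraph is needed, so the argument closes.
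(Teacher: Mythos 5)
There is a genuine gap, and it sits exactly at the point you wave away at the end. Your reduction rests on the claim that each per\textendash beam term in (\ref{eq:InitialDataNorm}), evaluated on $f_{i}^{(AdS)}$ alone, is $\lesssim\varepsilon^{(i)}$ \emph{uniformly in the slice parameter} $U_{*}$ (resp.\ $V_{*}$). This is false. When the slice $\{u=U_{*}\}$ passes through the near\textendash axis vertex of the $i$-th beam (i.e.\ $U_{*}$ close to $v_{\varepsilon,i}+k\sqrt{-\frac{3}{\Lambda}}\pi$), the crossing occurs at $r_{AdS}\sim\varepsilon^{(i)}(-\Lambda)^{-1/2}$, and there the integral of $\frac{rT_{vv}}{\partial_{v}r}+\frac{rT_{uv}}{-\partial_{u}r}$ across the beam is comparable to the (unit-weight) energy flux of the beam, $\sim\varepsilon^{(i)}(-\Lambda)^{-1/2}$, divided by $r$ at the crossing \textemdash{} i.e.\ of order $1$, not $\varepsilon^{(i)}$. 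Note also that in this vertex region the geodesics are neither ingoing nor outgoing, $l^{2}/r^{2}\sim E^{2}$, so the $T_{uv}$ component is not suppressed and your outgoing-strip suppression argument does not apply; the issue is not a Jacobian in the fibre integrals but the genuine size of the flux against the $1/r$ weight. Consequently the summation $\sum_{i}a_{\varepsilon i}\varepsilon^{(i)}$ you propose cannot be the structure of the final bound.

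The paper's proof handles precisely this: using (\ref{eq:VDerivativeMassfromTotalParticleCurrent}) and conservation of the particle current, it proves the slice-dependent per-beam estimate (\ref{eq:SecondBoudForInitialDataNorm}), with the factor $\sup_{\mathcal{V}_{\varepsilon,i}^{(AdS)}\cap\{u=U_{*}\}}\frac{1}{r_{AdS}}$ made explicit, and then splits the beams according to the position of the slice (the index $i_{\varepsilon}[U_{*}]$ and (\ref{eq:MinimumDistanceForBeams})): for each $U_{*}$ at most one beam is crossed near its vertex and contributes $O(a_{\varepsilon i})\le\sigma_{\varepsilon}$; beams with $i>i_{\varepsilon}[U_{*}]$ are still $\rho_{\varepsilon}^{-1}\varepsilon^{(i)}$-separated from the slice by the choice (\ref{eq:InitialCenters}) of the initial positions, contributing $O(\rho_{\varepsilon}a_{\varepsilon i})$, which sums to $\le\sigma_{\varepsilon}$ by (\ref{eq:UpperBoundSumWeightsBeam}); beams with $i<i_{\varepsilon}[U_{*}]$ contribute $O(\varepsilon a_{\varepsilon i})$. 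The final bound $C\sigma_{\varepsilon}$ is thus driven by $\max_{i}a_{\varepsilon i}<\sigma_{\varepsilon}$ together with $\rho_{\varepsilon}\sum_{i}a_{\varepsilon i}\le\sigma_{\varepsilon}$, not by $\sum_{i}a_{\varepsilon i}\varepsilon^{(i)}$. Your treatment of the mass term and your use of the AdS geodesic flow to confine the beams to thin worldtubes are fine and broadly parallel to the paper, but to close the argument you must replace the claimed uniform $\lesssim\varepsilon^{(i)}$ bound by a bound carrying the local $1/r_{AdS}$ factor and then exploit the $\rho_{\varepsilon}^{-1}$ hierarchy of initial separations in the way just described.
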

\begin{proof}
Since the bound (\ref{eq:SizeInitialData}) is non-trivial only in
the limit $\varepsilon\rightarrow0$, it suffices to establish it
for $\varepsilon\in(0,\varepsilon_{1}]$. 

Let $(r_{/}^{\prime(\varepsilon)},(\Omega_{/}^{\prime(\varepsilon)})^{2},\bar{f}_{/}^{\prime(\varepsilon)};\sqrt{-\frac{3}{\Lambda}}\pi)$
be the gauge-normalised expression of $(r_{/}^{(\varepsilon)},(\Omega_{/}^{(\varepsilon)})^{2},\bar{f}_{/}^{(\varepsilon)};\sqrt{-\frac{3}{\Lambda}}\pi)$,
constructed in Definition \ref{def:InitialDataFamily}. Recall that
$(r_{/}^{\prime(\varepsilon)},(\Omega_{/}^{\prime(\varepsilon)})^{2},\bar{f}_{/}^{\prime(\varepsilon)};\sqrt{-\frac{3}{\Lambda}}\pi)$
and $(r_{/}^{(\varepsilon)},(\Omega_{/}^{(\varepsilon)})^{2},\bar{f}_{/}^{(\varepsilon)};\sqrt{-\frac{3}{\Lambda}}\pi)$
satisfy (\ref{eq:EqualVlasovFieldsTransformation}) and (\ref{eq:EqualComponentsTransformation}),
i.\,e.~coincide in the support of $\bar{f}_{/}^{(\varepsilon)}=\bar{f}_{/}^{\prime(\varepsilon)}$.

For any $\varepsilon\in(0,\varepsilon_{1}]$ and $0\le i\le N_{\varepsilon}$,
let us define $f_{\varepsilon,i}^{(AdS)}$ and $\Big[\frac{rT_{\alpha\beta}}{\partial_{v}r}\Big]_{\varepsilon,i}^{(AdS)}$
in terms of $v_{\mathcal{I}}=\sqrt{-\frac{3}{\Lambda}}\pi$ and 
\begin{equation}
\bar{f}_{i}^{(\varepsilon)}(v;p^{u},l)\doteq F_{i}^{(\varepsilon)}\big(v;\text{ }\partial_{v}r_{/}^{\prime(\varepsilon)}(v)p^{u},\text{ }l\big)\label{eq:FiEpsilonDefinition-1}
\end{equation}
as in Definition \ref{def:ComparisonVlasovField}; we will similarly
define $f_{\varepsilon}^{(AdS)}$ and $\Big[\frac{rT_{\alpha\beta}}{\partial_{v}r}\Big]_{\varepsilon}^{(AdS)}$
in terms of $\bar{f}_{/}^{\prime(\varepsilon)}=\bar{f}_{/}^{(\varepsilon)}$
(see (\ref{eq:EqualVlasovFieldsTransformation})). Note that the relations
(\ref{eq:AlmostVlasovFieldInvariant Form-1}), (\ref{eq:EqualVlasovFieldsTransformation})
and (\ref{eq:InitialVlasovTotal}) imply that 
\begin{equation}
f_{\varepsilon}^{(AdS)}\doteq\sum_{i=0}^{N_{\varepsilon}}a_{\varepsilon i}f_{\varepsilon,i}^{(AdS)}\label{eq:SumVlasovBeams}
\end{equation}
and 
\begin{equation}
\Big[\frac{rT_{\alpha\beta}}{\partial_{v}r}\Big]_{\varepsilon}^{(AdS)}=\sum_{i=0}^{N_{\varepsilon}}a_{\varepsilon i}\Big[\frac{rT_{\alpha\beta}}{\partial_{v}r}\Big]_{\varepsilon,i}^{(AdS)}.\label{eq:SumEnergyMomenta}
\end{equation}
Note that, in view of (\ref{eq:EqualComponentsTransformation}) and
(\ref{eq:SupportFepsilon}), (\ref{eq:FiEpsilonDefinition-1}) implies
that
\begin{equation}
\bar{f}_{i}^{(\varepsilon)}(v;p^{u},l)\doteq F_{i}^{(\varepsilon)}\big(v;\text{ }\partial_{v}r_{/}^{(\varepsilon)}(v)p^{u},\text{ }l\big)\label{eq:FiEpsilonDefinition}
\end{equation}

From the expression (\ref{eq:InitialVlasovSeeds}) for $F_{i}^{(\varepsilon)}$,
the relation (\ref{eq:FiEpsilonDefinition}) between $\bar{f}_{i}^{(\varepsilon)}$
and $F_{i}^{(\varepsilon)}$, the bound (\ref{eq:ComparableInitialDataWithAdSEpsilonn})
for $r_{/}^{(\varepsilon)}$ and properties of the geodesic flow on
AdS when $\frac{l}{E}\ll1$ (see the relation A.10 in the Appendix
of \cite{MoschidisVlasovWellPosedness}), it readily follows that
the support of $f_{\varepsilon,i}^{(AdS)}$ satisfies (for some absolute
constant $C>0$) 
\begin{equation}
supp\big(f_{\varepsilon,i}^{(AdS)}\big)\subset\Bigg(\Big\{(u,v)\in\mathcal{V}_{\varepsilon,i}^{(AdS)}\Big\}\cap\Big\{\Omega_{AdS}^{2}(u,v)\Big(p^{u}+p^{v}\Big)\le C\Big\}\cap\Big\{2\le\frac{\sqrt{-\Lambda}l}{\varepsilon^{i+1}}\le6\Big\}\Bigg),\label{eq:SupportOfVlasovOnAdS}
\end{equation}
where 
\begin{align}
\mathcal{V}_{\varepsilon,i}^{(AdS)}\doteq\bigcup_{k\in\mathbb{Z}}\Big(\Big\{\Big|v & -v_{\varepsilon,i}-k\sqrt{-\frac{3}{\Lambda}}\pi\Big|\le C\frac{\varepsilon^{(i)}}{\sqrt{-\Lambda}}\Big\}\cup\Big\{\Big|u-v_{\varepsilon,i}-k\sqrt{-\frac{3}{\Lambda}}\pi\Big|\le C\frac{\varepsilon^{(i)}}{\sqrt{-\Lambda}}\Big\}\Big)\cap\label{eq:DomainForBeamsOnAdS}\\
 & \cap\Big\{ r_{AdS}(u,v)\ge C^{-1}\frac{\varepsilon^{(i)}}{\sqrt{-\Lambda}}\Big\}.\nonumber 
\end{align}

\begin{figure}[h] 
\centering 
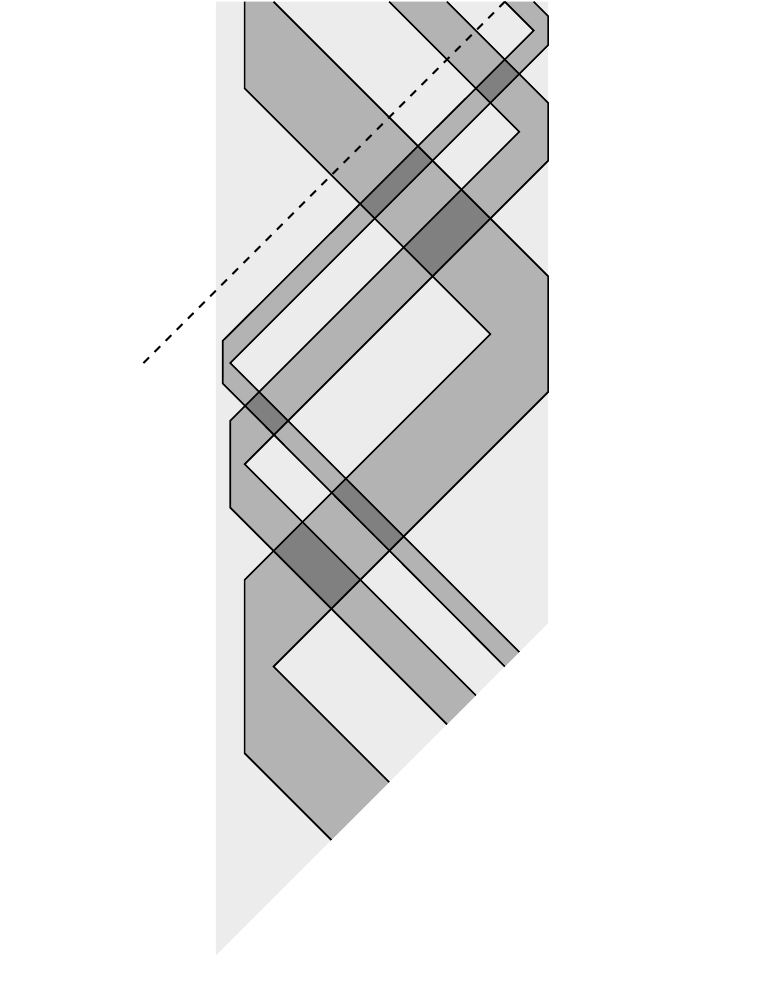 
\caption{Schematic depiction of the regions $\mathcal{V}_{\varepsilon,i}^{(AdS)}$ bounding the trajectories of the geodesics in the support of $f_{\epsilon,i}^{(AdS)}$, for $0\le i\le 2$. The region $\mathcal{V}_{\varepsilon,i}^{(AdS)}$ has width proportional to $\frac{\varepsilon^{(i)}}{\sqrt{-\Lambda}}$. The minimum value of $r$ along $\mathcal{V}_{\varepsilon,i}^{(AdS)}$ is also proportional to $\frac{\varepsilon^{(i)}}{\sqrt{-\Lambda}}$. On the other hand, the separation of $\mathcal{V}_{\varepsilon,i}^{(AdS)}$ from the rest of these regions when $u=0$ is proportional to $\rho_{\varepsilon}^{-1} \frac{\varepsilon^{(i)}}{\sqrt{-\Lambda}}$. \label{fig:Domains_AdS}}
\end{figure}

In view of (\ref{eq:SupportOfVlasovOnAdS}), using (\ref{eq:VDerivativeMassfromTotalParticleCurrent})
(with $r_{AdS}$ in place of $r$ and $m_{AdS}=\frac{1}{6}\Lambda r_{AdS}^{3}$
in place of $m$), as well as the conservation (\ref{eq:ConservedParticle})
of the particle current $N_{\mu}[f_{\varepsilon,i}^{(AdS)}]$ of $f_{\varepsilon,i}^{(AdS)}$,
we can readily calculate for any $\varepsilon\in(0,\varepsilon_{1}]$,
any $U_{*}\ge0$ and any $0\le i\le N_{\varepsilon}$:
\begin{align}
\int_{U_{*}}^{U_{*}+\sqrt{-\frac{3}{\Lambda}}\pi}\Big(\Big[\frac{rT_{vv}}{\partial_{v}r}\Big]_{\varepsilon,i}^{(AdS)}(U_{*},v)+\Big[\frac{rT_{uv}}{-\partial_{u}r}\Big]_{\varepsilon,i}^{(AdS)}(U_{*},v)\Big)\,dv & \le C\int_{U_{*}}^{U_{*}+\sqrt{-\frac{3}{\Lambda}}\pi}r(U_{*},v)\cdot N_{v}[f_{\varepsilon,i}^{(AdS)}](U_{*},v)\,dv\le\label{eq:FirstBoundForInitialDataNorm}\\
 & \le C\Big(\sup_{\mathcal{V}_{\varepsilon,i}^{(AdS)}\cap\{u=U_{*}\}}\frac{1}{r_{AdS}}\Big)\cdot\int_{u=0}r_{AdS}^{2}N_{v}[f_{\varepsilon,i}^{(AdS)}]\,dv\nonumber 
\end{align}
for some absolute constant $C>0$. Using the expression (\ref{eq:ComponentsParticle}),
the explicit formula (\ref{eq:InitialVlasovSeeds}) for $F_{i}^{(\varepsilon)}$
and the bound (\ref{eq:ComparableInitialDataWithAdSEpsilonn}) (the
latter used in order to estimate the term $\partial_{v}r_{/}^{(\varepsilon)}$
in the relation (\ref{eq:FiEpsilonDefinition}) between $F_{i}^{(\varepsilon)}$
and $f_{\varepsilon,i}^{(AdS)}|_{u=0}$), we can readily estimate
\begin{equation}
\int_{u=0}r_{AdS}^{2}N_{v}[f_{\varepsilon,i}^{(AdS)}]\,dv\le C\frac{\varepsilon^{(i)}}{\sqrt{-\Lambda}},\label{eq:InitialCurrentEstimate}
\end{equation}
thus obtaining from (\ref{eq:FirstBoundForInitialDataNorm}) that,
for any $U_{*}\ge0$ and any $0\le i\le N_{\varepsilon}$: 
\begin{equation}
\int_{U_{*}}^{U_{*}+\sqrt{-\frac{3}{\Lambda}}\pi}\Big(\Big[\frac{rT_{vv}}{\partial_{v}r}\Big]_{\varepsilon,i}^{(AdS)}(U_{*},v)+\Big[\frac{rT_{uv}}{-\partial_{u}r}\Big]_{\varepsilon,i}^{(AdS)}(U_{*},v)\Big)\,dv\le C\frac{\varepsilon^{(i)}}{\sqrt{-\Lambda}}\sup_{\mathcal{V}_{\varepsilon,i}^{(AdS)}\cap\{u=U_{*}\}}\frac{1}{r_{AdS}}.\label{eq:SecondBoudForInitialDataNorm}
\end{equation}

For any $\varepsilon\in(0,\varepsilon_{1}]$ and any $U_{*}\ge0$,
let us define $i_{\varepsilon}[U_{*}]$ to be the unique integer $i$
in $[0,N_{\varepsilon}]$ for which 
\begin{equation}
v_{\varepsilon,i-1}+k\sqrt{-\frac{3}{\Lambda}}\pi+\rho_{\varepsilon}^{-\frac{1}{2}}\frac{\varepsilon^{(i-1)}}{\sqrt{-\Lambda}}<U_{*}\le v_{\varepsilon,i}+k\sqrt{-\frac{3}{\Lambda}}\pi+\rho_{\varepsilon}^{-\frac{1}{2}}\frac{\varepsilon^{(i)}}{\sqrt{-\Lambda}}
\end{equation}
for some $k\in\mathbb{Z}$, with the convention that 
\begin{equation}
v_{\varepsilon,-1}\doteq v_{\varepsilon,N_{\varepsilon}}-\sqrt{-\frac{3}{\Lambda}}\pi
\end{equation}
and 
\[
\varepsilon^{(-1)}\doteq\varepsilon^{(N_{\varepsilon})}.
\]
Then, in view of the bound (\ref{eq:SupportOfVlasovOnAdS}) for the
support of $f_{\varepsilon,i}^{(AdS)}$, the definition (\ref{eq:DomainForBeamsOnAdS})
of the domains $\mathcal{V}_{\varepsilon,i}^{(AdS)}$ (see also Figure
\ref{fig:Domains_AdS}), the relation (\ref{eq:InitialCenters}) defining
$v_{\varepsilon,i}$ and the relations (\ref{eq:HierarchyOfParameters})
and (\ref{eq:RecursiveEi}) between $\varepsilon$, $\rho_{\varepsilon}$
and $\varepsilon^{(i)}$, we infer that, given any $U_{*}\ge0$, we
can bound 
\begin{equation}
\begin{cases}
\inf_{\mathcal{V}_{\varepsilon,i}^{(AdS)}\cap\{u=U_{*}\}}r_{AdS}\ge c\varepsilon^{(i)}(-\Lambda)^{-\frac{1}{2}}, & \text{ for }i=i_{\varepsilon}[U_{*}],\\
\inf_{\mathcal{V}_{\varepsilon,i}^{(AdS)}\cap\{u=U_{*}\}}r_{AdS}\ge\rho_{\varepsilon}^{-1}\varepsilon^{(i)}(-\Lambda)^{-\frac{1}{2}}, & \text{ for }i_{\varepsilon}[U_{*}]<i\le N_{\varepsilon},\\
\inf_{\mathcal{V}_{\varepsilon,i}^{(AdS)}\cap\{u=U_{*}\}}r_{AdS}\ge c(-\Lambda)^{-\frac{1}{2}}, & \text{ for }0\le i<i_{\varepsilon}[U_{*}],
\end{cases}\label{eq:MinimumDistanceForBeams}
\end{equation}
for some absolute constant $c>0$. Multiplying (\ref{eq:SecondBoudForInitialDataNorm})
with $a_{\varepsilon i}$ and adding the resulting bounds for $0\le i\le N_{\varepsilon}$,
we therefore infer using (\ref{eq:MinimumDistanceForBeams}) (in view
also of (\ref{eq:SumEnergyMomenta})) and (\ref{eq:RecursiveEi})
that, for any $\varepsilon\in(0,\varepsilon_{1}]$ and any $U_{*}\ge0$:
\begin{equation}
\int_{U_{*}}^{U_{*}+\sqrt{-\frac{3}{\Lambda}}\pi}\Big(\Big[\frac{rT_{vv}}{\partial_{v}r}\Big]_{\varepsilon}^{(AdS)}(U_{*},v)+\Big[\frac{rT_{uv}}{-\partial_{u}r}\Big]_{\varepsilon}^{(AdS)}(U_{*},v)\Big)\,dv\le C\Big(\varepsilon+\max_{0\le i\le N_{\varepsilon}}a_{\varepsilon i}+\sum_{i=0}^{N_{\varepsilon}}\rho_{\varepsilon}a_{\varepsilon i}\Big).\label{eq:ComeOn1}
\end{equation}
Similarly, we can estimate for any $V_{*}\ge0$:
\begin{equation}
\int_{\max\{0,V_{*}-\sqrt{-\frac{3}{\Lambda}}\pi\}}^{V_{*}}\Big(\Big[\frac{rT_{uu}}{-\partial_{u}r}\Big]_{\varepsilon}^{(AdS)}(u,V_{*})+\Big[\frac{rT_{uv}}{\partial_{v}r}\Big]_{\varepsilon}^{(AdS)}(u,V_{*})\Big)\,du\le C\Big(\varepsilon+\max_{0\le i\le N_{\varepsilon}}a_{\varepsilon i}+\sum_{i=0}^{N_{\varepsilon}}\rho_{\varepsilon}a_{\varepsilon i}\Big).\label{eq:ComeOn2}
\end{equation}

Using the expression (\ref{eq:ComponentsStressEnergy}), the explicit
formula (\ref{eq:InitialVlasovSeeds}) for $F_{i}^{(\varepsilon)}$
and the relation (\ref{eq:FiEpsilonDefinition}) between $F_{i}^{(\varepsilon)}$
for $f_{\varepsilon,i}^{(AdS)}|_{u=0}$ (arguing similarly as for
obtaining (\ref{eq:InitialCurrentEstimate})), we can bound 
\begin{equation}
\sqrt{-\Lambda}\tilde{m}_{\varepsilon}|_{\mathcal{I}}\le C\big(\max_{0\le i\le N_{\varepsilon}}a_{\varepsilon i}\big)\varepsilon.\label{eq:ComeOn3}
\end{equation}
 Therefore, for any $\varepsilon\in(0,\varepsilon_{1}]$, the following
estimate for the size of $(r_{/}^{(\varepsilon)},(\Omega_{/}^{(\varepsilon)})^{2},\bar{f}_{/}^{(\varepsilon)};\sqrt{-\frac{3}{\Lambda}}\pi)$
with respect to the norm (\ref{eq:InitialDataNorm}) follows readily
by adding (\ref{eq:ComeOn1}), (\ref{eq:ComeOn2}) and (\ref{eq:ComeOn3})
and using (\ref{eq:HierarchyOfParameters}), (\ref{eq:UpperBoundSumWeightsBeam})
and the assumption that $a_{\varepsilon i}\in(0,\sigma_{\varepsilon})$:
\begin{equation}
||(r_{/}^{(\varepsilon)},(\Omega_{/}^{(\varepsilon)})^{2},\bar{f}_{/}^{(\varepsilon)};\sqrt{-\frac{3}{\Lambda}}\pi)||\le C\Big(\sigma_{\varepsilon}+\sum_{i=0}^{N_{\varepsilon}}\rho_{\varepsilon}a_{\varepsilon i}\Big)\le C\sigma_{\varepsilon}.
\end{equation}
In particular, (\ref{eq:SizeInitialData}) holds.
\end{proof}

\subsection{\label{subsec:Notational-conventions-and}Notational conventions
for domains and fundamental computations}

For the rest of this paper, we will denote with $(\mathcal{U}_{max}^{(\varepsilon)};r_{\varepsilon},\Omega_{\varepsilon}^{2},f_{\varepsilon})$
the maximal future development of the initial data set $(r_{/}^{(\varepsilon)},(\Omega_{/}^{(\varepsilon)})^{2},\bar{f}_{/}^{(\varepsilon)};\sqrt{-\frac{3}{\Lambda}}\pi)$
(for the defintion of the notion of a maximal future development for
(\ref{eq:RequationFinal})\textendash (\ref{NullShellFinal}), see
Proposition \ref{cor:MaximalDevelopment}). We will also denote with
$\mathcal{I}_{\varepsilon}$ and $\gamma_{\mathcal{Z}\varepsilon}$
the conformal infinity and axis, respectively, of $(\mathcal{U}_{max}^{(\varepsilon)};r_{\varepsilon},\Omega_{\varepsilon}^{2},f_{\varepsilon})$,
with corresponding endpoint parameters $u_{\mathcal{I}_{\varepsilon}}\in(0,+\infty]$
and $u_{\gamma_{\mathcal{Z}\varepsilon}}\in(0,+\infty]$, defined
in accordance with Definition \ref{def:DevelopmentSets}. Note that
the proof of Theorem \ref{thm:TheTheorem} will consist of showing
that $u_{\mathcal{I}_{\varepsilon}}<+\infty$.
\begin{rem*}
In order to simplify the heavy notation associated with all the parameters
that will be introduced in the proof of Theorem \ref{thm:TheTheorem},
we will frequently \emph{drop }the subscript $\varepsilon$ in $r_{\varepsilon}$,
$\Omega_{\varepsilon}^{2}$, $m_{\varepsilon}$ and $\tilde{m}_{\varepsilon}$,
but not in $f_{\varepsilon}$. Therefore, from now on we will almost
always denote $(\mathcal{U}_{max}^{(\varepsilon)};r_{\varepsilon},\Omega_{\varepsilon}^{2},f_{\varepsilon})$
as $(\mathcal{U}_{max}^{(\varepsilon)};r,\Omega^{2},f_{\varepsilon})$. 
\end{rem*}
For any $\delta\in(0,1]$ and any $\varepsilon\in(0,1]$, we will
define $u_{\delta;\varepsilon}^{+}\in[0,u_{\mathcal{I}_{\varepsilon}}]$
by
\begin{equation}
u_{\delta;\varepsilon}^{+}\doteq\sup\Big\{\bar{u}\in[0,u_{\mathcal{I}_{\varepsilon}}):\text{ }\frac{2\tilde{m}}{r}(u,v)<\delta\text{ for all }(u,v)\in\mathcal{U}_{max}^{(\varepsilon)}\text{ with }0<u<\bar{u}\Big\}.\label{eq:Definitionudelta}
\end{equation}
Similarly, we will define for any $K>0$ and any $\varepsilon\in(0,1]$:
\begin{align}
u_{K;\varepsilon}^{\sharp}\doteq\sup\Big\{\bar{u}\in[0,u_{\mathcal{I}_{\varepsilon}}): & \text{ }\sup_{u\in(0,\bar{u})}\int_{u}^{u+\sqrt{-\frac{3}{\Lambda}}\pi}r\Big(\frac{T_{vv}[f_{\varepsilon}]}{\partial_{v}r}+\frac{T_{uv}[f_{\varepsilon}]}{-\partial_{u}r}\Big)(u,v)\,dv<K\label{eq:DefinitionUSharp}\\
 & \text{ and}\sup_{v\in(0,\bar{u}+\sqrt{-\frac{3}{\Lambda}}\pi)}\int_{\max\{0,v-\sqrt{-\frac{3}{\Lambda}}\pi\}}^{\min\{v,\bar{u}\}}r\Big(\frac{T_{uv}[f_{\varepsilon}]}{\partial_{v}r}+\frac{T_{uu}[f_{\varepsilon}]}{-\partial_{u}r}\Big)(u,v)\,du<K\Big\}.\nonumber 
\end{align}

We will define $\mathcal{U}_{K,\delta}^{(\varepsilon)}\subset\mathcal{U}_{max}^{(\varepsilon)}$
to be the open subset 
\begin{equation}
\mathcal{U}_{K,\delta}^{(\varepsilon)}\doteq\big\{0<u<\min\{u_{\delta;\varepsilon}^{+},\text{ }u_{K;\varepsilon}^{\sharp}\big\}.\label{eq:DefinitionUKappadelta}
\end{equation}
Note that $\mathcal{U}_{K,\delta}^{(\varepsilon)}$ is non-empty if
and only if $\delta$ and $K$ satisfy in terms of the initial data:
\begin{equation}
\sup_{v\in(0,\sqrt{-\frac{3}{\Lambda}}\pi)}\frac{2\tilde{m}_{/}^{(\varepsilon)}}{r_{/}^{(\varepsilon)}}(v)<\delta\label{eq:DeltaForInitialData}
\end{equation}
and 
\begin{equation}
\int_{0}^{\sqrt{-\frac{3}{\Lambda}}\pi}r_{/}^{(\varepsilon)}\Big(\frac{(T_{/}^{(\varepsilon)})_{vv}}{\partial_{v}r_{/}^{(\varepsilon)}}+\frac{(T_{/}^{(\varepsilon)})_{uv}}{-\partial_{u}r_{/}^{(\varepsilon)}}\Big)(v)\,dv<K.\label{eq:LambdaForInitialData}
\end{equation}
As a consequence of (\ref{eq:InitialDataGoingTo0}) and Lemma \ref{lem:FirstSmallnessBoundInitialData},
given any $\delta\in(0,1)$ and $K>0$, there always exists an $\varepsilon_{0}\in(0,\varepsilon_{1}]$
such that (\ref{eq:DeltaForInitialData}) and (\ref{eq:LambdaForInitialData})
are satified for all $\varepsilon\in(0,\varepsilon_{0})$ (in particular,
it suffices to choose any $\varepsilon_{0}$ for which $\sigma_{\varepsilon_{0}}\lesssim\min\{K,\delta\}$).
\begin{rem*}
In the case when $\mathcal{U}_{K,\delta}^{(\varepsilon)}$ is non-empty,
$(\mathcal{U}_{K,\delta}^{(\varepsilon)};r,\Omega^{2},f_{\varepsilon})$
is a future development of $(r_{/}^{(\varepsilon)},(\Omega_{/}^{(\varepsilon)})^{2},\bar{f}_{/}^{(\varepsilon)};\sqrt{-\frac{3}{\Lambda}}\pi)$
for (\ref{eq:RequationFinal})\textendash (\ref{NullShellFinal})
with reflecting boundary conditions on conformal infinity, in accordance
with Definition \ref{def:Development}. Note also that 
\[
u_{1;\varepsilon}^{+}=u_{\mathcal{I}_{\varepsilon}}
\]
 as a trivial consequence of (\ref{eq:NoTrappingPastInfinity}). 
\end{rem*}
Let 
\[
0<\eta_{0}\ll1
\]
 be a small absolute constant. For the rest of the paper, we will
assume that $\eta_{0}$ has been fixed small enough in terms of the
absolute constant $\delta_{0}$ appearing in the statements of Lemma
\ref{lem:GeodesicPaths} and Proposition  \ref{cor:GeneralContinuationCriterion}. 

The following domains in $\mathcal{U}_{max}^{(\varepsilon)}$ will
be play a central role in the proof of Theorem \ref{thm:TheTheorem}: 
\begin{defn}
\label{def:Domains}We will define the domains $\mathcal{U}_{\varepsilon}^{+},\mathcal{T}_{\varepsilon}^{+}\subset\mathcal{U}_{max}^{(\varepsilon)}$
by 
\begin{equation}
\mathcal{U}_{\varepsilon}^{+}\doteq\mathcal{U}_{\sigma_{\varepsilon}^{-1};\eta_{0}}^{(\varepsilon)}\cap\big\{ u<\frac{\sigma_{\varepsilon}^{-2}}{\sqrt{-\Lambda}}\big\}\label{eq:BootstrapDomain}
\end{equation}
and 
\begin{equation}
\mathcal{T}_{\varepsilon}^{+}\doteq\mathcal{U}_{\delta_{\varepsilon}^{-1};\eta_{0}}^{(\varepsilon)}\cap\big\{ u<\frac{\sigma_{\varepsilon}^{-2}}{\sqrt{-\Lambda}}\big\}\label{eq:LargerBootstrapDomain}
\end{equation}
where $\mathcal{U}_{\sigma_{\varepsilon}^{-1};\eta_{0}}^{(\varepsilon)}$,
$\mathcal{U}_{\delta_{\varepsilon}^{-1};\eta_{0}}^{(\varepsilon)}$
are defined by (\ref{eq:DefinitionUKappadelta}). 
\end{defn}
\begin{rem*}
Note that, since $\delta_{\varepsilon}\ll\sigma_{\varepsilon}$, 
\[
\mathcal{U}_{\varepsilon}^{+}\subseteq\mathcal{T}_{\varepsilon}^{+}.
\]
\end{rem*}
Let us define 
\[
u[\mathcal{U}_{\varepsilon}^{+}]\doteq\min\big\{ u_{\eta_{0};\varepsilon}^{+},\text{ }u_{\sigma_{\varepsilon}^{-1};\varepsilon}^{\sharp},\frac{\sigma_{\varepsilon}^{-2}}{\sqrt{-\Lambda}}\big\}
\]
and 
\[
u[\mathcal{T}_{\varepsilon}^{+}]\doteq\min\big\{ u_{\eta_{0};\varepsilon}^{+},\text{ }u_{\delta_{\varepsilon}^{-1};\varepsilon}^{\sharp},\frac{\sigma_{\varepsilon}^{-2}}{\sqrt{-\Lambda}}\big\},
\]
so that 
\begin{align}
\mathcal{U}_{\varepsilon}^{+} & =\big\{0<u<u[\mathcal{U}_{\varepsilon}^{+}]\big\}\cap\big\{ u<v<u+\sqrt{-\frac{3}{\Lambda}}\pi\big\},\label{eq:DefinitionBootstrapDomainWithU}\\
\mathcal{T}_{\varepsilon}^{+} & =\big\{0<u<u[\mathcal{T}_{\varepsilon}^{+}]\big\}\cap\big\{ u<v<u+\sqrt{-\frac{3}{\Lambda}}\pi\big\}.\nonumber 
\end{align}
As an immediate consequence of the extension principle of Proposition
\ref{cor:GeneralContinuationCriterion}, we infer the following condition
for the boundary of $\mathcal{U}_{\varepsilon}^{+}$ and $\mathcal{T}_{\varepsilon}^{+}$
in $\mathcal{U}_{max}^{(\varepsilon)}$ :
\begin{lem}
\label{lem:ExtensionPrincipleSpecialDomains} For any $\varepsilon\in(0,\varepsilon_{1}]$,
\begin{equation}
u[\mathcal{U}_{\varepsilon}^{+}]<u[\mathcal{T}_{\varepsilon}^{+}]<u_{\mathcal{I}_{\varepsilon}}.\label{eq:Blablabla}
\end{equation}
In particular, there exists a $u_{0}>0$ such that 
\begin{equation}
\big\{0<u<u[\mathcal{T}_{\varepsilon}^{+}]+u_{0}\big\}\subset\mathcal{U}_{max}^{(\varepsilon)}.\text{ }\label{eq:Ablabla}
\end{equation}
Furthermore, in the case when 
\begin{equation}
u[\mathcal{U}_{\varepsilon}^{+}]<\frac{\sigma_{\varepsilon}^{-2}}{\sqrt{-\Lambda}},\label{eq:UpperBoundUExtensionCondition}
\end{equation}
one of the following three conditions hold for the future boundary
$\{u=u[\mathcal{U}_{\varepsilon}^{+}]\}$ of $\mathcal{U}_{\varepsilon}^{+}$:
\begin{equation}
\limsup_{p\rightarrow\{u=u[\mathcal{U}_{\varepsilon}^{+}]\}}\frac{2\tilde{m}}{r}(p)=\eta_{0},\label{eq:ExtensionConditionMuTilde}
\end{equation}
\begin{equation}
\text{ }\lim\sup_{u\rightarrow u[\mathcal{U}_{\varepsilon}^{+}]}\int_{u}^{u+\sqrt{-\frac{3}{\Lambda}}\pi}r\Big(\frac{T_{vv}[f_{\varepsilon}]}{\partial_{v}r}+\frac{T_{uv}[f_{\varepsilon}]}{-\partial_{u}r}\Big)(u,v)\,dv=\sigma_{\varepsilon}^{-1},\label{eq:ExtensionConditionNormUCons}
\end{equation}
or 
\begin{equation}
\sup_{v\in(0,u[\mathcal{U}_{\varepsilon}^{+}]+\sqrt{-\frac{3}{\Lambda}}\pi)}\int_{\max\{0,v-\sqrt{-\frac{3}{\Lambda}}\pi\}}^{\min\{v,u[\mathcal{U}_{\varepsilon}^{+}]\}}r\Big(\frac{T_{uv}[f_{\varepsilon}]}{\partial_{v}r}+\frac{T_{uu}[f_{\varepsilon}]}{-\partial_{u}r}\Big)(u,v)\,du=\sigma_{\varepsilon}^{-1}.\label{eq:ExtensionConditionNormVCons}
\end{equation}
Similarly, the same condition holds on $\{u=u[\mathcal{T}_{\varepsilon}^{+}]\}$
for $\mathcal{T}_{\varepsilon}^{+}$, with $\delta_{\varepsilon}$
in place of $\sigma_{\varepsilon}$ in (\ref{eq:ExtensionConditionNormUCons})
and (\ref{eq:ExtensionConditionNormVCons}).
\end{lem}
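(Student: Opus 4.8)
The plan is to obtain every assertion of the lemma from the extension principle of Proposition \ref{cor:GeneralContinuationCriterion}, applied to the domain $\mathcal{T}_{\varepsilon}^{+}$ (and, for the last part, to $\mathcal{U}_{\varepsilon}^{+}$). First I would note that, for $\varepsilon\in(0,\varepsilon_{1}]$, Lemmas \ref{lem:FirstSmallnessBoundInitialData} and \ref{lem:SmallnessFamilyInitialData} (together with the discussion following (\ref{eq:LambdaForInitialData}) and the hierarchy (\ref{eq:HierarchyOfParameters}), which gives $\sigma_{\varepsilon}\ll\delta_{\varepsilon}^{-1}$ and $C\varepsilon<\eta_{0}$) guarantee that the conditions (\ref{eq:DeltaForInitialData})--(\ref{eq:LambdaForInitialData}) hold with $\delta=\eta_{0}$ and $K=\delta_{\varepsilon}^{-1}$; hence $\mathcal{T}_{\varepsilon}^{+}$ is non-empty and, by (\ref{eq:DefinitionBootstrapDomainWithU}), is precisely the domain $\mathcal{U}_{u[\mathcal{T}_{\varepsilon}^{+}];\sqrt{-3/\Lambda}\pi}$ with $0<u[\mathcal{T}_{\varepsilon}^{+}]\le\sigma_{\varepsilon}^{-2}(-\Lambda)^{-1/2}<+\infty$, on which $(r,\Omega^{2},f_{\varepsilon})$ restricts to a smooth solution of (\ref{eq:RequationFinal})--(\ref{NullShellFinal}) with smooth axis and smooth conformal infinity.

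Next I would check the hypotheses of Proposition \ref{cor:GeneralContinuationCriterion} on $\mathcal{T}_{\varepsilon}^{+}$. By (\ref{eq:DefinitionUKappadelta}) one has $\frac{2\tilde{m}}{r}<\eta_{0}$ throughout $\mathcal{T}_{\varepsilon}^{+}$; since $\Lambda<0$ gives $m=\tilde{m}+\frac{1}{6}\Lambda r^{3}\le\tilde{m}$, this yields $\frac{2m}{r}\le\frac{2\tilde{m}}{r}<\eta_{0}<1$, i.e.\ (\ref{eq:NoTrappingContinuity}), and also $\limsup_{(u,v)\to(u[\mathcal{T}_{\varepsilon}^{+}],u[\mathcal{T}_{\varepsilon}^{+}])}\frac{2\tilde{m}}{r}\le\eta_{0}\le\delta_{0}$, i.e.\ (\ref{eq:NoConcentrationContinuity}), because $\eta_{0}$ was fixed small relative to $\delta_{0}$. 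The remaining hypothesis (\ref{eq:CompactSupportFContinuity}) is exactly the bounded-phase-space-support property (\ref{eq:BoundedSupportDefinition}) of $(r_{/}^{(\varepsilon)},(\Omega_{/}^{(\varepsilon)})^{2},\bar{f}_{/}^{(\varepsilon)};\sqrt{-3/\Lambda}\pi)\in\mathfrak{B}_{0}$, transported to the null shell via $p^{v}=\frac{l^{2}}{\Omega^{2}r^{2}p^{u}}$. Proposition \ref{cor:GeneralContinuationCriterion} then furnishes $\bar{u}_{1}>u[\mathcal{T}_{\varepsilon}^{+}]$ and a smooth extension of $(r,\Omega^{2},f_{\varepsilon})$ to $\mathcal{U}_{\bar{u}_{1};\sqrt{-3/\Lambda}\pi}$ with smooth axis and conformal infinity; this extension is a future development in the sense of Definition \ref{def:Development}, so by Proposition \ref{cor:MaximalDevelopment} it is contained in $\mathcal{U}_{max}^{(\varepsilon)}$. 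Setting $u_{0}\doteq\bar{u}_{1}-u[\mathcal{T}_{\varepsilon}^{+}]>0$ this gives (\ref{eq:Ablabla}), and since the conformal infinity of the extension reaches $u=\bar{u}_{1}$ we obtain $u_{\mathcal{I}_{\varepsilon}}\ge\bar{u}_{1}>u[\mathcal{T}_{\varepsilon}^{+}]$.

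Because the solution is now known to be smooth on a full neighborhood of $\overline{\mathcal{T}_{\varepsilon}^{+}}$, the three functionals of $\bar{u}$ implicit in (\ref{eq:Definitionudelta})--(\ref{eq:DefinitionUSharp})---namely $\bar{u}\mapsto\sup_{\{u=\bar{u}\}}\frac{2\tilde{m}}{r}$ and the two scale-invariant integrals---are continuous on $[0,u[\mathcal{T}_{\varepsilon}^{+}]+u_{0})$, the latter two being moreover non-decreasing in $\bar{u}$. Thus $u_{\eta_{0};\varepsilon}^{+}$, $u_{\sigma_{\varepsilon}^{-1};\varepsilon}^{\sharp}$ and $u_{\delta_{\varepsilon}^{-1};\varepsilon}^{\sharp}$ are the first values of $\bar{u}$ at which these functionals attain the thresholds $\eta_{0}$, $\sigma_{\varepsilon}^{-1}$, $\delta_{\varepsilon}^{-1}$ respectively. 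Since $\mathcal{U}_{\varepsilon}^{+}\subseteq\mathcal{T}_{\varepsilon}^{+}$ we have $u[\mathcal{U}_{\varepsilon}^{+}]\le u[\mathcal{T}_{\varepsilon}^{+}]$; and because $\sigma_{\varepsilon}^{-1}\ll\delta_{\varepsilon}^{-1}$, a continuous non-decreasing scale-invariant norm needs a strictly positive increment of $\bar{u}$ to pass from $\sigma_{\varepsilon}^{-1}$ to $\delta_{\varepsilon}^{-1}$, so $u_{\sigma_{\varepsilon}^{-1};\varepsilon}^{\sharp}<u_{\delta_{\varepsilon}^{-1};\varepsilon}^{\sharp}$; unwinding the minima in (\ref{eq:DefinitionBootstrapDomainWithU}) this yields $u[\mathcal{U}_{\varepsilon}^{+}]<u[\mathcal{T}_{\varepsilon}^{+}]$. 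Finally, if $u[\mathcal{U}_{\varepsilon}^{+}]<\sigma_{\varepsilon}^{-2}(-\Lambda)^{-1/2}$ then $u[\mathcal{U}_{\varepsilon}^{+}]=\min\{u_{\eta_{0};\varepsilon}^{+},u_{\sigma_{\varepsilon}^{-1};\varepsilon}^{\sharp}\}$, and for whichever index realises this minimum the corresponding (continuous) functional equals its threshold at $u=u[\mathcal{U}_{\varepsilon}^{+}]$, which is precisely one of (\ref{eq:ExtensionConditionMuTilde}), (\ref{eq:ExtensionConditionNormUCons}), (\ref{eq:ExtensionConditionNormVCons}); the same reasoning applied to $\mathcal{T}_{\varepsilon}^{+}$ gives the corresponding trichotomy with $\delta_{\varepsilon}$ in place of $\sigma_{\varepsilon}$.

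The argument is essentially a matter of unpacking definitions once the extension principle is in hand, so I do not expect a serious obstacle. The two points requiring a little care are: (i) verifying the continuity (and, for the norm functionals, monotonicity) of the quantities in (\ref{eq:Definitionudelta})--(\ref{eq:DefinitionUSharp}) in $\bar{u}$, which is exactly why one needs the genuine smooth extension past $u[\mathcal{T}_{\varepsilon}^{+}]$ furnished by Proposition \ref{cor:GeneralContinuationCriterion} rather than mere existence on $\mathcal{T}_{\varepsilon}^{+}$; and (ii) confirming at the outset that $\mathcal{U}_{\varepsilon}^{+}$ and $\mathcal{T}_{\varepsilon}^{+}$ are non-empty, which follows from the smallness estimates of Lemmas \ref{lem:FirstSmallnessBoundInitialData} and \ref{lem:SmallnessFamilyInitialData} together with the hierarchy (\ref{eq:HierarchyOfParameters}).
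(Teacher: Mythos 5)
Your overall route is the one the paper itself takes: apply the extension principle of Proposition \ref{cor:GeneralContinuationCriterion} to $\mathcal{T}_{\varepsilon}^{+}$ (checking (\ref{eq:NoTrappingContinuity})--(\ref{eq:CompactSupportFContinuity}) exactly as you do) to obtain a smooth extension past $u[\mathcal{T}_{\varepsilon}^{+}]$, hence (\ref{eq:Ablabla}) and $u[\mathcal{T}_{\varepsilon}^{+}]<u_{\mathcal{I}_{\varepsilon}}$, and then use continuity of the quantities entering (\ref{eq:Definitionudelta})--(\ref{eq:DefinitionUSharp}) across $\{u=u[\mathcal{U}_{\varepsilon}^{+}]\}$ to force one of (\ref{eq:ExtensionConditionMuTilde})--(\ref{eq:ExtensionConditionNormVCons}). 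The paper phrases this last step as a contradiction (if all three functionals stay a margin $\delta$ below their thresholds, continuity extends the defining inequalities to $\{0<u<u[\mathcal{U}_{\varepsilon}^{+}]+u_{0}'\}$, contradicting the definition of $u[\mathcal{U}_{\varepsilon}^{+}]$), whereas you argue directly that the sup-characterisation plus continuity forces threshold attainment; these are the same argument in substance, and your verification of the hypotheses of the extension principle and the appeal to Proposition \ref{cor:MaximalDevelopment} to place the extension inside $\mathcal{U}_{max}^{(\varepsilon)}$ are fine.

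The one step that does not close as written is your derivation of the strict inequality $u[\mathcal{U}_{\varepsilon}^{+}]<u[\mathcal{T}_{\varepsilon}^{+}]$. Both quantities are minima of \emph{three} terms, of which only the $u^{\sharp}$-term differs: $u[\mathcal{U}_{\varepsilon}^{+}]=\min\{u_{\eta_{0};\varepsilon}^{+},u_{\sigma_{\varepsilon}^{-1};\varepsilon}^{\sharp},\sigma_{\varepsilon}^{-2}(-\Lambda)^{-1/2}\}$ and likewise for $\mathcal{T}_{\varepsilon}^{+}$ with $\delta_{\varepsilon}^{-1}$. If the minimum for $\mathcal{U}_{\varepsilon}^{+}$ is realised by $u_{\eta_{0};\varepsilon}^{+}$ or by the cap $\sigma_{\varepsilon}^{-2}(-\Lambda)^{-1/2}$ (both common to the two domains), then $u[\mathcal{U}_{\varepsilon}^{+}]=u[\mathcal{T}_{\varepsilon}^{+}]$, so ``unwinding the minima'' only yields the non-strict inequality; moreover $u_{\sigma_{\varepsilon}^{-1};\varepsilon}^{\sharp}<u_{\delta_{\varepsilon}^{-1};\varepsilon}^{\sharp}$ itself requires that the norm actually attains $\sigma_{\varepsilon}^{-1}$ before $u_{\mathcal{I}_{\varepsilon}}$ (otherwise both sups equal $u_{\mathcal{I}_{\varepsilon}}$), and nothing available at this stage rules out the $\eta_{0}$-threshold or the cap being the binding constraint. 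To be fair, the paper's own proof is no more explicit here -- it asserts (\ref{eq:Blablabla}) as an immediate consequence of the extension principle -- and what your argument (like the paper's) genuinely delivers is $u[\mathcal{U}_{\varepsilon}^{+}]\le u[\mathcal{T}_{\varepsilon}^{+}]<u_{\mathcal{I}_{\varepsilon}}$; if you want strictness you must either add the missing case analysis (excluding that the shared terms realise the minimum) or observe that the non-strict version suffices for the way the lemma is used later.
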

\begin{proof}
The proof of (\ref{eq:Blablabla}) and (\ref{eq:Ablabla}) follows
immediately by applying Proposition \ref{cor:GeneralContinuationCriterion}
to $\mathcal{T}_{\varepsilon}^{+}$, using the fact that 
\[
\sup_{\mathcal{T}_{\varepsilon}^{+}}\frac{2\tilde{m}}{r}\le\eta_{0}
\]
 (following from (\ref{eq:Definitionudelta}) and the definition of
$\mathcal{T}_{\varepsilon}^{+}$), as well as the fact that the initial
Vlasov field $\bar{f}_{/}^{(\varepsilon)}$ (introduced in Definition
\ref{def:InitialDataFamily}) is compactly supported in phase space.

In order to establish that at least one of the relations \ref{eq:ExtensionConditionMuTilde}\textendash \ref{eq:ExtensionConditionNormVCons}
hold for $\mathcal{U}_{\varepsilon}^{+}$, we will assume, for the
sake of contradiction, that there exists a (possibly small)$\delta>0$
\begin{equation}
\limsup_{p\rightarrow\{u=u[\mathcal{U}_{\varepsilon}^{+}]\}}\frac{2\tilde{m}}{r}(p)<\eta_{0}-\delta,\label{eq:ExtensionConditionMuTilde-1}
\end{equation}
\begin{equation}
\text{ }\lim\sup_{u\rightarrow u[\mathcal{U}_{\varepsilon}^{+}]}\int_{u}^{u+\sqrt{-\frac{3}{\Lambda}}\pi}r\Big(\frac{T_{vv}[f_{\varepsilon}]}{\partial_{v}r}+\frac{T_{uv}[f_{\varepsilon}]}{-\partial_{u}r}\Big)(u,v)\,dv<\sigma_{\varepsilon}^{-1}-\delta\label{eq:ExtensionConditionNormUCons-1}
\end{equation}
and
\begin{equation}
\sup_{v\in(0,u[\mathcal{U}_{\varepsilon}^{+}]+\sqrt{-\frac{3}{\Lambda}}\pi)}\int_{\max\{0,v-\sqrt{-\frac{3}{\Lambda}}\pi\}}^{\min\{v,u[\mathcal{U}_{\varepsilon}^{+}]\}}r\Big(\frac{T_{uv}[f_{\varepsilon}]}{\partial_{v}r}+\frac{T_{uu}[f_{\varepsilon}]}{-\partial_{u}r}\Big)(u,v)\,du<\sigma_{\varepsilon}^{-1}-\delta.\label{eq:ExtensionConditionNormVCons-1}
\end{equation}
Then, we readily infer by continuity (using (\ref{eq:Ablabla}) and
the fact that $r^{2}T_{\mu\nu}$ and $\tilde{m}$ extend continuously
on $\mathcal{I}_{\varepsilon}$) that there exists some $0<u_{0}^{\prime}<\frac{\sigma_{\varepsilon}^{-2}}{\sqrt{-\Lambda}}-u[\mathcal{U}_{\varepsilon}^{+}]$
such that 
\begin{equation}
\sup_{\{0<u<u[\mathcal{U}_{\varepsilon}^{+}]+u_{0}^{\prime}\}}\frac{2\tilde{m}}{r}(p)<\eta_{0}-\frac{\delta}{2},\label{eq:ExtensionConditionMuTilde-1-1}
\end{equation}
\begin{equation}
\text{ }\sup_{\{0<u<u[\mathcal{U}_{\varepsilon}^{+}]+u_{0}^{\prime}\}}\int_{u}^{u+\sqrt{-\frac{3}{\Lambda}}\pi}r\Big(\frac{T_{vv}[f_{\varepsilon}]}{\partial_{v}r}+\frac{T_{uv}[f_{\varepsilon}]}{-\partial_{u}r}\Big)(u,v)\,dv<\sigma_{\varepsilon}^{-1}-\frac{\delta}{2}\label{eq:ExtensionConditionNormUCons-1-1}
\end{equation}
and
\begin{equation}
\sup_{v\in(0,u[\mathcal{U}_{\varepsilon}^{+}]+u_{0}^{\prime}+\sqrt{-\frac{3}{\Lambda}}\pi)}\int_{\max\{0,v-\sqrt{-\frac{3}{\Lambda}}\pi\}}^{\min\{v,u[\mathcal{U}_{\varepsilon}^{+}]+u_{0}^{\prime}\}}r\Big(\frac{T_{uv}[f_{\varepsilon}]}{\partial_{v}r}+\frac{T_{uu}[f_{\varepsilon}]}{-\partial_{u}r}\Big)(u,v)\,du<\sigma_{\varepsilon}^{-1}-\frac{\delta}{2}.\label{eq:ExtensionConditionNormVCons-1-1}
\end{equation}
Then, in view of the definition (\ref{eq:DefinitionUKappadelta})
and (\ref{eq:BootstrapDomain}) of $\mathcal{U}_{\sigma_{\varepsilon}^{-1},\eta_{0}}^{(\varepsilon)}$
and $\mathcal{U}_{\varepsilon}^{+}$, as well as assumption (\ref{eq:UpperBoundUExtensionCondition}),
the bounds (\ref{eq:ExtensionConditionMuTilde-1-1})\textendash (\ref{eq:ExtensionConditionNormVCons-1-1})
imply that 
\[
\{0<u<u[\mathcal{U}_{\varepsilon}^{+}]+u_{0}^{\prime}\}\subset u[\mathcal{U}_{\varepsilon}^{+}],
\]
which is a contradiction, in view of (\ref{eq:DefinitionBootstrapDomainWithU}).
Therefore, \ref{eq:ExtensionConditionMuTilde}\textendash \ref{eq:ExtensionConditionNormVCons}
hold for $\mathcal{U}_{\varepsilon}^{+}$. The proof of the analogous
relations for $\mathcal{T}_{\varepsilon}^{+}$ follows in exactly
the same way.
\end{proof}
For any $\varepsilon\in(0,1]$ and any $0\le i\le N_{\varepsilon}$,
we will define $f_{\varepsilon i}$ to be the solution of the Vlasov
equation (\ref{eq:VlasovFinal}) on $(\mathcal{U}_{max}^{(\varepsilon)};r,\Omega^{2})$
satisfying at $u=0$: 
\begin{equation}
f_{\varepsilon i}(0,v;p^{u},p^{v},l)=F_{i}^{(\varepsilon)}(v;\text{ }\partial_{v}r_{/}^{(\varepsilon)}p^{u},\text{ }l)\cdot\delta\Big(\Omega^{2}(0,v)p^{u}p^{v}-\frac{l^{2}}{r^{2}(0,v)}\Big),\label{eq:Fepsiloni}
\end{equation}
where $F_{i}^{(\varepsilon)}$ is given by (\ref{eq:InitialVlasovSeeds})
for $\varepsilon\in(0,\varepsilon_{1}]$, and $F_{i}^{(\varepsilon)}=F_{i}^{(\varepsilon_{1})}$
for $\varepsilon\in(\varepsilon_{1},1]$. Note that, as a consequence
of (\ref{eq:InitialVlasovTotal}), the total Vlasov field $f_{\varepsilon}$
is expressed as 
\begin{equation}
f_{\varepsilon}=\sum_{i=0}^{N_{\varepsilon}}a_{\varepsilon i}\cdot f_{\varepsilon i}.\label{eq:LinearCombinationVlasovFields}
\end{equation}

We will also define the functions $\bar{f}_{\varepsilon}$, $\bar{f}_{\varepsilon i}$
on the phase space over $\mathcal{U}_{max}^{(\varepsilon)}$, associated
to the Vlasov distributions $f_{\varepsilon}$, $f_{\varepsilon i}$,
respectively, as in Section \ref{subsec:VlasovEquations}, i.\,e.~by
the relation 
\begin{equation}
f_{\varepsilon i}(u,v;p^{u},p^{v},l)\doteq\bar{f}_{\varepsilon i}(u,v;p^{u},p^{v},l)\cdot\delta\Big(\Omega^{2}p^{u}p^{v}-\frac{l^{2}}{r^{2}}\Big).\label{eq:RelationFBarEpsilon}
\end{equation}
and similarly for $\bar{f}_{\varepsilon}$, $f_{\varepsilon}$ in
place of $\bar{f}_{\varepsilon i}$, $f_{\varepsilon i}$. Note that
the relation \ref{eq:RelationFBarEpsilon} uniquely determines $\bar{f}_{\varepsilon i}$
only on the shell $\big\{\Omega^{2}p^{u}p^{v}=\frac{l^{2}}{r^{2}}\big\}$;
see Section \ref{subsec:VlasovEquations}. As a consequence of the
Vlasov equation for (\ref{eq:VlasovFinal}), the functions $\bar{f}_{\varepsilon i}$
and $\bar{f}_{\varepsilon}$ are conserved along the integral curves
of (\ref{eq:NewNullGeodesicsSphericalSymmetry}); since $l$ is a
constant of motion for (\ref{eq:NewNullGeodesicsSphericalSymmetry}),
we can estimate using the explicit formula (\ref{eq:InitialVlasovSeeds})
for $F_{i}^{(\varepsilon)}$ and the bound (\ref{eq:ComparableInitialDataWithAdSEpsilonn})
for $\partial_{v}r_{/}^{(\varepsilon)}$:
\begin{align}
\sup_{(u,v)\in\mathcal{U}_{max}^{(\varepsilon)},\text{ }p^{u},p^{v}\in(0,+\infty)} & \int_{0}^{+\infty}\bar{f}_{\varepsilon i}(u,v;p^{u},p^{v},l)|_{\Omega^{2}p^{u}p^{v}=\frac{l^{2}}{r^{2}}}\,ldl=\label{eq:UpperBoundFbarEpsilon}\\
 & =\sup_{v\in(0,\sqrt{-\frac{3}{\Lambda}}\pi),\text{ }p^{u}\in(0,+\infty)}\int_{0}^{+\infty}F_{i}^{(\varepsilon)}(v;\text{ }\partial_{v}r_{/}^{(\varepsilon)}p^{u},\text{ }l)\,ldl\le\nonumber \\
 & \le16.\nonumber 
\end{align}

In view of the formula (\ref{eq:InitialVlasovSeeds}) for $F_{i}^{(\varepsilon)}$,
the bound (\ref{eq:ComparableInitialDataWithAdSEpsilonn}) for $\partial_{v}r_{/}^{(\varepsilon)}$
and the form (\ref{eq:InitialVlasovTotal}) of the initial Vlasov
distribution $\bar{f}_{/}^{(\varepsilon)}$, we infer that the total
renormalised Hawking mass $\tilde{m}|_{\mathcal{I}_{\varepsilon}}$
of $(\mathcal{U}_{max}^{(\varepsilon)};r,\Omega^{2},f_{\varepsilon})$
at $\mathcal{I}_{\varepsilon}$ (which is conserved, in view of the
reflecting boundary condition on $\mathcal{I}$; see (\ref{eq:ConservationHawkingMassAtInfinity}))
satisfies 
\begin{equation}
\sqrt{-\Lambda}\tilde{m}|_{\mathcal{I}_{\varepsilon}}\sim\sum_{i=0}^{N_{\varepsilon}}a_{\varepsilon i}\varepsilon^{(i)},\label{eq:TotalHawkingMassE}
\end{equation}
where the constants implicit in the $\sim$ notation in (\ref{eq:TotalHawkingMassE})
are independent of $\varepsilon$ and $\{a_{\varepsilon i}\}_{i=0}^{N_{\varepsilon}}$.
In particular, in view of (\ref{eq:RecursiveEi}) and the assumption
$a_{\varepsilon i}\in[0,\sigma_{\varepsilon})$: 
\begin{equation}
\sqrt{-\Lambda}\tilde{m}|_{\mathcal{I}_{\varepsilon}}\le\sigma_{\varepsilon}\varepsilon.\label{eq:TrivialUpperBoundTotalMass}
\end{equation}

As an immediate consequence of the definition (\ref{eq:Definitionudelta})
and (\ref{eq:DefinitionUSharp}) of $u_{\eta_{0};\varepsilon}^{+}$
and $u_{\sigma_{\varepsilon}^{-1};\varepsilon}^{\sharp}$, respectively,
and the definition (\ref{eq:BootstrapDomain}) of $\mathcal{U}_{\varepsilon}^{+}$,
we can bound for any $\varepsilon\in(0,\varepsilon_{1}]$:
\begin{equation}
\sup_{\mathcal{U}_{\varepsilon}^{+}}\frac{2\tilde{m}}{r}\le\eta_{0},\label{eq:UpperBoundMuBootstrapDomain}
\end{equation}
\begin{align}
\text{ }\sup_{U\ge0}\int_{\{u=U\}\cap\mathcal{U}_{\varepsilon}^{+}}r\Big(\frac{T_{vv}[f_{\varepsilon}]}{\partial_{v}r}+\frac{T_{uv}[f_{\varepsilon}]}{-\partial_{u}r}\Big)(U,v)\,dv+\label{eq:UpperBoundNormBootstrapDomain}\\
\sup_{V\ge0}\int_{\{v=V\}\cap\mathcal{U}_{\varepsilon}^{+}}r\Big(\frac{T_{uv}[f_{\varepsilon}]}{\partial_{v}r}+\frac{T_{uu}[f_{\varepsilon}]}{-\partial_{u}r}\Big)(u,V)\,du & \le2\sigma_{\varepsilon}^{-1}\nonumber 
\end{align}
and 
\begin{equation}
\sup_{\mathcal{U}_{\varepsilon}^{+}}(u+v)\le\frac{2}{\sqrt{-\Lambda}}\sigma_{\varepsilon}^{-2}+\sqrt{-\frac{3}{\Lambda}}\pi.\label{eq:UpperBoundU+VBootstrapDomain}
\end{equation}
In particular, integrating the constraint equations (\ref{eq:EquationROutside})
and (\ref{eq:EquationROutside}) along lines of the form $v=const$
and $u=const$, respectively, using also the boundary conditions (\ref{eq:BoundaryConditionDrAxis})
and (\ref{eq:BoundaryConditionDrInfinity}) for $\partial_{v}r$,
$\partial_{u}r$ on $\mathcal{I}_{\varepsilon}$ and $\gamma_{\mathcal{Z}_{\varepsilon}}$,
we can estimate for any $\varepsilon\in(0,\varepsilon_{1}]$
\begin{align}
\sup_{\mathcal{U}_{\varepsilon}^{+}}\Bigg(\Big|\log\Big(\frac{\partial_{v}r}{1-\frac{2m}{r}}\Big)\Big|+\Big|\log\Big(\frac{-\partial_{u}r}{1-\frac{2m}{r}}\Big)\Big|\Bigg)\le & \big(\sup_{\mathcal{U}_{\varepsilon}^{+}}\sqrt{-\Lambda}(u+v)\big)\cdot\Bigg(\sup_{\bar{v}\ge0}\int_{\{v=\bar{v}\}\cap\mathcal{U}_{\varepsilon}^{+}}r\frac{T_{uu}[f_{\varepsilon}]}{-\partial_{u}r}(u,\bar{v})\,du+\label{eq:EstimateGeometryLikeC0}\\
 & \hphantom{\big(\sup_{\mathcal{U}_{\varepsilon}^{+}}\sqrt{-\Lambda}(}+\sup_{\bar{u}\ge0}\int_{\{u=\bar{u}\}\cap\mathcal{U}_{\varepsilon}^{+}}r\frac{T_{vv}[f_{\varepsilon}]}{\partial_{v}r}(\bar{u},v)\,dv\Bigg)+\nonumber \\
 & \text{ }+\sup_{v\in[0,\sqrt{-\frac{3}{\Lambda}}\pi)}\Big|\log\Big(\frac{\partial_{v}r_{/}^{(\varepsilon)}}{1-\frac{1}{3}\Lambda(r_{/}^{(\varepsilon)})^{2}}\Big)\Big|\le\nonumber \\
\le & 5\sigma_{\varepsilon}^{-3}\nonumber 
\end{align}
 where, in passing from the second to the third line in (\ref{eq:EstimateGeometryLikeC0}),
we made use of (\ref{eq:UpperBoundNormBootstrapDomain}), (\ref{eq:UpperBoundU+VBootstrapDomain})
and Lemma \ref{lem:FirstSmallnessBoundInitialData}.

Similarly, the (\ref{eq:LargerBootstrapDomain}) of $\mathcal{T}_{\varepsilon}^{+}$
yields for any $\varepsilon\in(0,\varepsilon_{1}]$:
\begin{equation}
\sup_{\mathcal{T}_{\varepsilon}^{+}}\frac{2\tilde{m}}{r}\le\eta_{0},\label{eq:UpperBoundMuLargerDomain}
\end{equation}
\begin{align}
\text{ }\sup_{U\ge0}\int_{\{u=U\}\cap\mathcal{T}_{\varepsilon}^{+}}r\Big(\frac{T_{vv}[f_{\varepsilon}]}{\partial_{v}r}+\frac{T_{uv}[f_{\varepsilon}]}{-\partial_{u}r}\Big)(U,v)\,dv+\label{eq:UpperBoundNormLargerDomain}\\
\sup_{V\ge0}\int_{\{v=V\}\cap\mathcal{T}_{\varepsilon}^{+}}r\Big(\frac{T_{uv}[f_{\varepsilon}]}{\partial_{v}r}+\frac{T_{uu}[f_{\varepsilon}]}{-\partial_{u}r}\Big)(u,V)\,du & \le2\delta_{\varepsilon}^{-1},\nonumber 
\end{align}
\begin{equation}
\sup_{\mathcal{T}_{\varepsilon}^{+}}(u+v)\le\frac{2}{\sqrt{-\Lambda}}\sigma_{\varepsilon}^{-2}+\sqrt{-\frac{3}{\Lambda}}\pi\label{eq:UpperBoundU+VLargerDomain}
\end{equation}
and, in analogy to (\ref{eq:EstimateGeometryLikeC0}):
\begin{equation}
\sup_{\mathcal{T}_{\varepsilon}^{+}}\Bigg(\Big|\log\Big(\frac{\partial_{v}r}{1-\frac{2m}{r}}\Big)\Big|+\Big|\log\Big(\frac{-\partial_{u}r}{1-\frac{2m}{r}}\Big)\Big|\Bigg)\le5\sigma_{\varepsilon}^{-2}\delta_{\varepsilon}^{-1}\le\delta_{\varepsilon}^{-2}.\label{eq:EstimateGeometryLikeC0LargerDomain}
\end{equation}

\subsection{\label{subsec:Some-basic-geometric-constructions} Notational conventions
for the beams and their intersection regions}

In this Section we will introduce some shorthand notation for regions
of the $(u,v)$-plane which, when intersected with the domain $\mathcal{U}_{max}^{(\varepsilon)}$
of the maximal future development of $(r_{/}^{(\varepsilon)},(\Omega_{/}^{(\varepsilon)})^{2},\bar{f}_{/}^{(\varepsilon)};\sqrt{-\frac{3}{\Lambda}}\pi)$,
will contain the support of the Vlasov beams emanating from the initial
data on $u=0$. 

\begin{figure}[h] 
\centering 
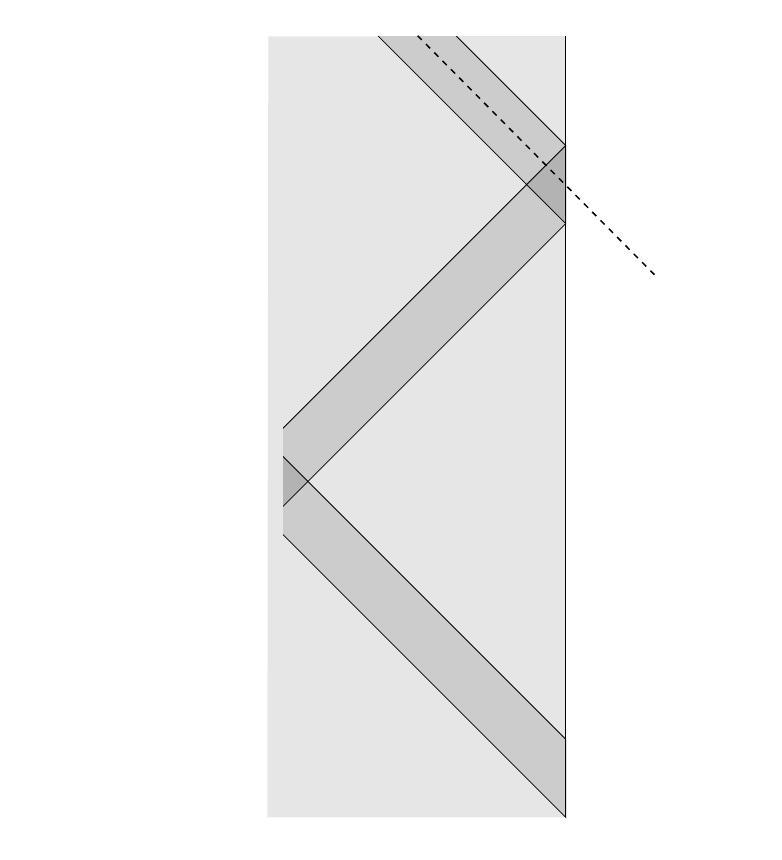 
\caption{Schematic depiction of the domains $\mathcal{V}_{i\nwarrow}^{(n)}$ and $\mathcal{V}_{i\nearrow}^{(n)}$ for some $\varepsilon>0$ and some $0\le i \le N_{\varepsilon}$. For the definition of $v_{\varepsilon,i}^{(n)}$,  $h_{\varepsilon,i}$ and $\alpha_{\varepsilon,i}$, see the relation (\ref{eq:ShorthandNotationCornerPoints}).}
\end{figure}

Using the shorthand notation 
\begin{align}
v_{\varepsilon,i}^{(n)} & \doteq v_{\varepsilon,i}+n\sqrt{-\frac{3}{\Lambda}}\pi,\label{eq:ShorthandNotationCornerPoints}\\
h_{\varepsilon,i} & \doteq e^{\sigma_{\varepsilon}^{-6}}\frac{\varepsilon^{(i)}}{\sqrt{-\Lambda}},\nonumber \\
\beta_{\varepsilon,i} & \doteq\exp\big(-\exp(\sigma_{\varepsilon}^{-4})\big)\frac{\varepsilon^{(i)}}{\sqrt{-\Lambda}},\nonumber 
\end{align}
we will define the following ``narrow'' sets for any $n\in\mathbb{N}$,
any $\varepsilon\in(0,\varepsilon_{1}]$ and any integer $0\le i\le N_{\varepsilon}$: 

\begin{equation}
\mathcal{V}_{i}^{(n)}\doteq\Big(\mathcal{V}_{i\nwarrow}^{(n)}\cup\mathcal{V}_{i\nearrow}^{(n)}\Big),\label{eq:BeamDomain}
\end{equation}
where 
\begin{align}
\mathcal{V}_{i\nwarrow}^{(n)} & \doteq\Big\{\Big|v-v_{\varepsilon,i}^{(n)}\Big|\le h_{\varepsilon,i}\Big\}\cap\Big\{\beta_{\varepsilon,i}\le v-u\le\sqrt{-\frac{3}{\Lambda}}\pi\Big\},\label{eq:IngoingOutgoingDomains}\\
\mathcal{V}_{i\nearrow}^{(n)} & \doteq\Big\{\Big|u-v_{\varepsilon,i}^{(n)}\Big|\le h_{\varepsilon,i}\Big\}\cap\Big\{\beta_{\varepsilon,i}\le v-u\le\sqrt{-\frac{3}{\Lambda}}\pi\Big\}.\nonumber 
\end{align}
We will also set 
\[
\mathcal{V}_{i}\doteq\cup_{n\in\mathbb{N}}\mathcal{V}_{i}^{(n)}.
\]
\begin{rem*}
As a consequence of the formula (\ref{eq:InitialVlasovSeeds}) for
$F_{i}^{(\varepsilon)}$, the definition (\ref{eq:Fepsiloni}) of
$f_{\varepsilon i}$ in terms of $F_{i}^{(\varepsilon)}$, the properties
of the geodesic flow on AdS spacetime (see the relation A.10 in the
Appendix of \cite{MoschidisVlasovWellPosedness}) and the Cauchy stability
statement \ref{prop:CauchyStabilityAdS} for (\ref{eq:RequationFinal})\textendash (\ref{NullShellFinal}),
we readily infer that there exists some $C_{\varepsilon}>0$ with
$C_{\varepsilon}\xrightarrow{\varepsilon\rightarrow0}+\infty$ such
that the Vlasov beam corresponding to $f_{\varepsilon i}$ is supported
in $\mathcal{V}_{i}$ in the retarded time interval $0\le u\le C_{\varepsilon}$,
i.\,e.
\begin{equation}
supp(f_{\varepsilon i})\cap\Big\{0\le u\le C_{\varepsilon}\Big\}\subset\Big\{(u,v)\in\bigcup_{n\in\mathbb{N}}\mathcal{V}_{i}^{(n)}\cap\mathcal{U}_{max}^{(\varepsilon)}\Big\}.\label{eq:VlasovBeamRestrictionQualitative}
\end{equation}
In Section \ref{subsec:Control-of-the-Vlasov-beams}, we will establish
a quantitative refinement of (\ref{eq:VlasovBeamRestrictionQualitative})
(see Lemma \ref{lem:QuantitativeBeamEstimate}).
\end{rem*}
We will also define the intersection regions $\mathcal{R}_{i;j}^{(n)}$
for any $n\in\mathbb{N}$ and any integers $0\le i\neq j\le N_{\varepsilon}$
as follows: 
\begin{equation}
\mathcal{R}_{i;j}^{(n)}\doteq\begin{cases}
\mathcal{V}_{i\nwarrow}^{(n)}\cap\mathcal{V}_{j\nearrow}^{(n)}, & \text{ if }i>j,\\
\mathcal{V}_{i\nwarrow}^{(n+1)}\cap\mathcal{V}_{j\nearrow}^{(n)}, & \text{ if }i<j.
\end{cases}\label{eq:IntersectionDomain}
\end{equation}
The self-intersection regions $\mathcal{R}_{i;\gamma_{\mathcal{Z}}}^{(n)}$
and $\mathcal{R}_{i;\mathcal{I}}^{(n)}$ will be defined for any $n\in\mathbb{N}$
and any $0\le i\le N_{\varepsilon}$ as 
\begin{equation}
\mathcal{R}_{i;\gamma_{\mathcal{Z}}}^{(n)}\doteq\mathcal{V}_{i\nwarrow}^{(n)}\cap\mathcal{V}_{i\nearrow}^{(n)}\Big\{ v-u\ge\beta_{\varepsilon,i}\Big\}\label{eq:IntersectionDomainNearAxis}
\end{equation}
 and 
\begin{equation}
\mathcal{R}_{i;\mathcal{I}}^{(n)}\doteq\mathcal{V}_{i\nwarrow}^{(n+1)}\cap\mathcal{V}_{i\nearrow}^{(n)}\Big\{ v-u\le\sqrt{-\frac{3}{\Lambda}}\pi\Big\}.\label{eq:IntersectionDomainNearInfinity}
\end{equation}

\begin{figure}[h] 
\centering 
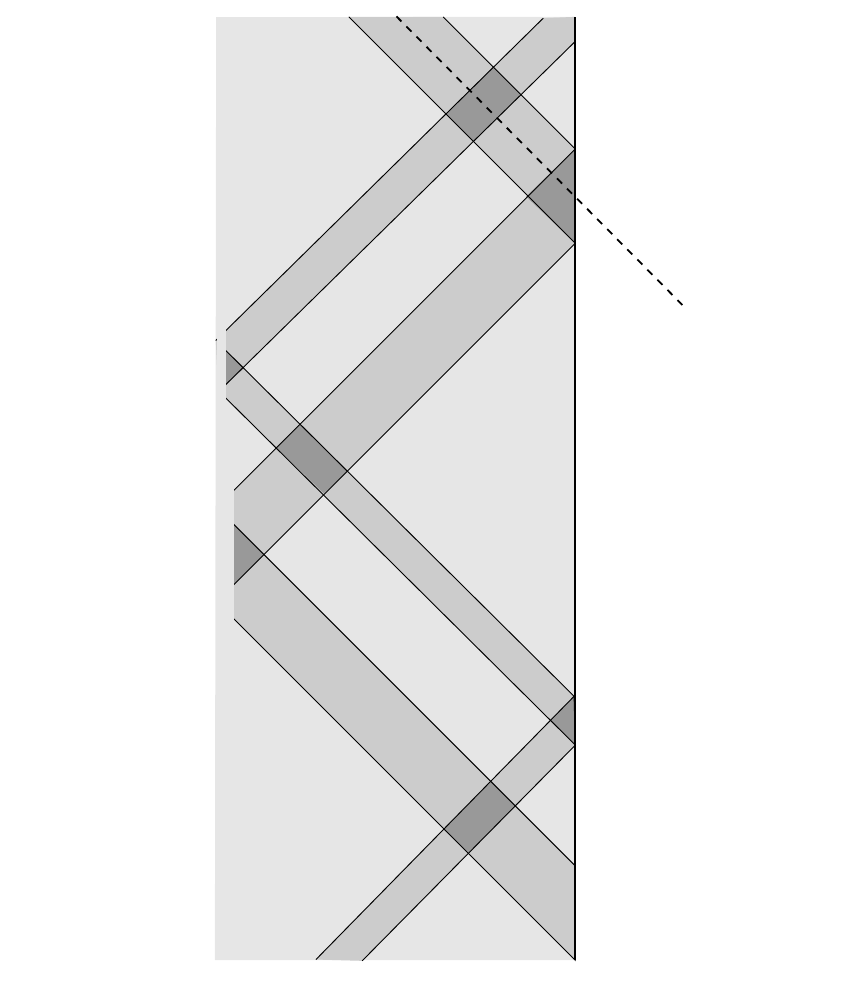 
\caption{Schematic depiction of the intersection domains $\mathcal{R}_{i;j}^{(n)}$, $\mathcal{R}_{i;\gamma_{\mathcal{Z}}}^{(n)}$ and $\mathcal{R}_{i;\mathcal{I}}^{(n)}$ for $i>j$.}
\end{figure}
\begin{rem*}
Note that the domains $\mathcal{R}_{i;j}^{(n)}$, $\mathcal{R}_{i;\gamma_{\mathcal{Z}}}^{(n)}$
and $\mathcal{R}_{i;\mathcal{I}}^{(n)}$ can be also expressed as
\begin{equation}
\mathcal{R}_{i;j}^{(n)}=\begin{cases}
[v_{\varepsilon,j}^{(n)}-h_{\varepsilon,j},v_{\varepsilon,j}^{(n)}+h_{\varepsilon,j}]\times[v_{\varepsilon,i}^{(n)}-h_{\varepsilon,i},v_{\varepsilon,i}^{(n)}+h_{\varepsilon,i}], & \text{ if }i>j,\\{}
[v_{\varepsilon,j}^{(n)}-h_{\varepsilon,j},v_{\varepsilon,j}^{(n)}+h_{\varepsilon,j}]\times[v_{\varepsilon,i}^{(n+1)}-h_{\varepsilon,i},v_{\varepsilon,i}^{(n+1)}+h_{\varepsilon,i}], & \text{ if }i<j,
\end{cases}\label{eq:ExplicitIntersectionDomain}
\end{equation}
\begin{equation}
\mathcal{R}_{i;\gamma_{\mathcal{Z}}}^{(n)}=[v_{\varepsilon,i}^{(n)}-h_{\varepsilon,i},v_{\varepsilon,i}^{(n)}+h_{\varepsilon,i}]\times[v_{\varepsilon,i}^{(n)}-h_{\varepsilon,i},v_{\varepsilon,i}^{(n)}+h_{\varepsilon,i}]\cap\big\{ v-u\ge\beta_{\varepsilon,i}\big\}\label{eq:ExplicitSpecialDomainAxis}
\end{equation}
and 
\begin{equation}
\mathcal{R}_{i;\mathcal{I}}^{(n)}=[v_{\varepsilon,i}^{(n)}-h_{\varepsilon,i},v_{\varepsilon,i}^{(n)}+h_{\varepsilon,i}]\times[v_{\varepsilon,i}^{(n+1)}-h_{\varepsilon,i},v_{\varepsilon,i}^{(n+1)}+h_{\varepsilon,i}]\cap\big\{ v-u\le\sqrt{-\frac{3}{\Lambda}}\pi\big\}.\label{eq:ExplicitSpecialDomainInfinity}
\end{equation}
\end{rem*}
For any point $p=(\bar{u},\bar{v})\in\mathcal{U}_{max}^{(\varepsilon)}$,
we will define the pair of crooked lines $\zeta_{\nwarrow}[p]$ and
$\zeta_{\nearrow}[p]$ as follows: 
\begin{equation}
\zeta_{\nwarrow}[p]\doteq\bigcup_{k\in\mathbb{N}}\Big(\big\{ v=\bar{v}-k\sqrt{-\frac{3}{\Lambda}}\pi\big\}\cup\big\{ u=\bar{v}-(k+1)\sqrt{-\frac{3}{\Lambda}}\pi\big\}\Big)\cap\mathcal{U}_{max}^{(\varepsilon)}\label{eq:CrookedLineIngoing}
\end{equation}
and 
\begin{equation}
\zeta_{\nearrow}[p]\doteq\bigcup_{k\in\mathbb{N}}\Big(\big\{ u=\bar{u}-k\sqrt{-\frac{3}{\Lambda}}\pi\big\}\cup\big\{ v=\bar{u}-k\sqrt{-\frac{3}{\Lambda}}\pi\big\}\Big)\cap\mathcal{U}_{max}^{(\varepsilon)}.\label{eq:CrookedLineOutgoing}
\end{equation}
We will also define the following functions on $\mathcal{U}_{max}^{(\varepsilon)}$:
\begin{equation}
dist_{\nwarrow}[p]\doteq\inf_{\zeta_{\nwarrow}[p]\cap\big(\cup_{n\in\mathbb{N}}\cup_{i=0}^{N_{\varepsilon}}\mathcal{V}_{i}^{(n)}\big)}(v-u)\label{eq:IngoingDistanceFunction}
\end{equation}
and 
\begin{equation}
dist_{\nearrow}[p]\doteq\inf_{\zeta_{\nearrow}[p]\cap\big(\cup_{n\in\mathbb{N}}\cup_{i=0}^{N_{\varepsilon}}\mathcal{V}_{i}^{(n)}\big)}(v-u).\label{eq:OutgoingDistanceFunction}
\end{equation}

\begin{figure}[h] 
\centering 
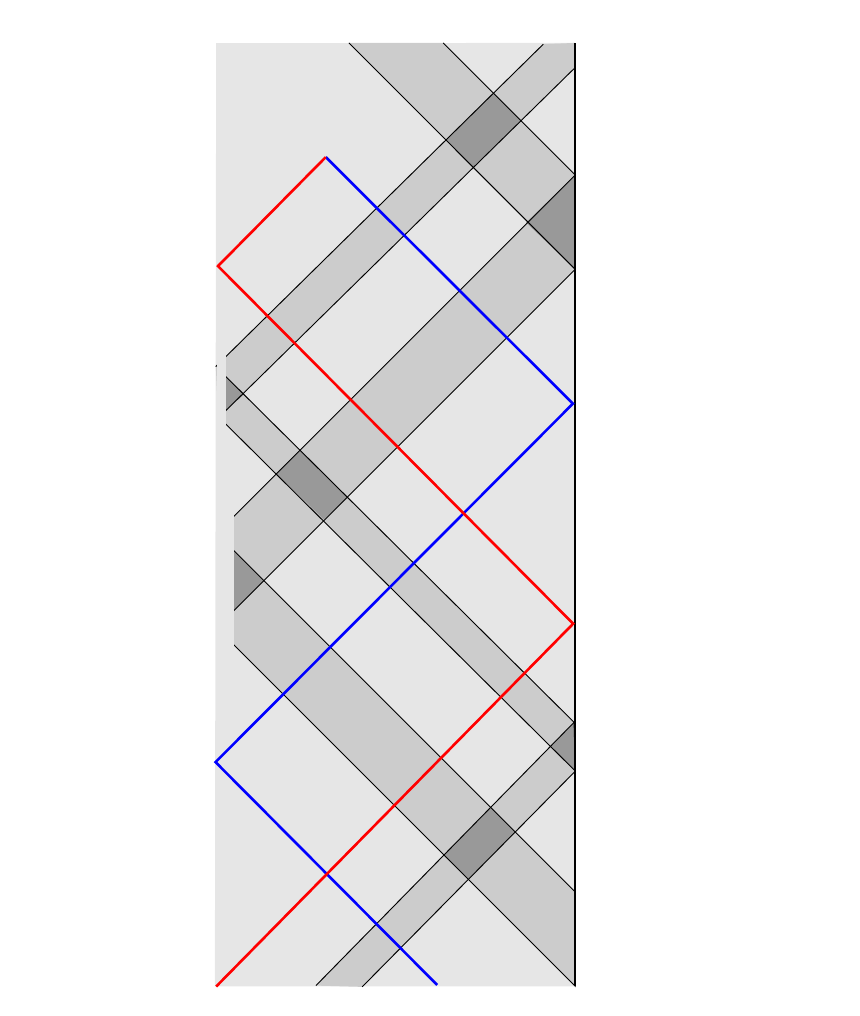 
\caption{Schematic depiction of the crooked lines $\zeta_{\nwarrow}[p]$ and $\zeta_{\nearrow}[p]$ emanating from a point $p\in \mathcal{U}^{(\epsilon )}_{max}$. The distance functions $dist_{\nwarrow}[p]$ and $dist_{\nearrow}[p]$ are defined as the infimum of the distance from the axis (measured by $v-u$) of the points on the intersection of the union of the beams $\mathcal{V}_{i}^{(n)}$ with $\zeta_{\nwarrow}[p]$ and $\zeta_{\nearrow}[p]$, respectively.}
\end{figure}
\begin{rem*}
Notice that, if $p\in\cup_{n\in\mathbb{N}}\mathcal{V}_{i\nwarrow}^{(n)}$,
then 
\begin{equation}
dist_{\nwarrow}[p]=\beta_{\varepsilon,i}.\label{eq:SimpleEqualityDistance1}
\end{equation}
Similarly, if $p\in\cup_{n\in\mathbb{N}}\mathcal{V}_{i\nearrow}^{(n)}$:
\begin{equation}
dist_{\nearrow}[p]=\beta_{\varepsilon,i}.\label{SimpleEqualityDistance2}
\end{equation}
\end{rem*}
For any $n\in\mathbb{N}$, any $\varepsilon\in(0,\varepsilon_{1}]$
and any integers $0\le i,j\le N_{\varepsilon}$, $i\neq j$, such
that $\mathcal{R}_{i;j}^{(n)}\subset\mathcal{U}_{max}^{(\varepsilon)}$,
we will introduce the following quantities related to the energy content
of $\mathcal{V}_{i\nwarrow}^{(n)}$, $\mathcal{V}_{j\nearrow}^{(n)}$
before and after their intersection with the region $\mathcal{R}_{i;j}^{(n)}$:
\begin{align}
\mathcal{E}_{\nwarrow}^{(-)}[n;i,j] & \doteq\begin{cases}
\tilde{m}_{\varepsilon}\Big(v_{\varepsilon,j}^{(n)}-h_{\varepsilon,j},\text{ }v_{\varepsilon,i}^{(n)}+h_{\varepsilon,i}\Big)-\tilde{m}_{\varepsilon}\Big(v_{\varepsilon,j}^{(n)}-h_{\varepsilon,j},\text{ }v_{\varepsilon,i}^{(n)}-h_{\varepsilon,i}\Big), & \text{ if }i>j,\\
\tilde{m}_{\varepsilon}\Big(v_{\varepsilon,j}^{(n)}-h_{\varepsilon,j},\text{ }v_{\varepsilon,i}^{(n+1)}+h_{\varepsilon,i}\Big)-\tilde{m}_{\varepsilon}\Big(v_{\varepsilon,j}^{(n)}-h_{\varepsilon,j},\text{ }v_{\varepsilon,i}^{(n+1)}-h_{\varepsilon,i}\Big), & \text{ if }i<j,
\end{cases}\label{eq:IngoingEnergyBefore}\\
\nonumber \\
\mathcal{E}_{\nwarrow}^{(+)}[n;i,j] & \doteq\begin{cases}
\tilde{m}_{\varepsilon}\Big(v_{\varepsilon,j}^{(n)}+h_{\varepsilon,j},\text{ }v_{\varepsilon,i}^{(n)}+h_{\varepsilon,i}\Big)-\tilde{m}_{\varepsilon}\Big(v_{\varepsilon,j}^{(n)}+h_{\varepsilon,j},\text{ }v_{\varepsilon,i}^{(n)}-h_{\varepsilon,i}\Big), & \text{ if }i>j,\\
\tilde{m}_{\varepsilon}\Big(v_{\varepsilon,j}^{(n)}+h_{\varepsilon,j},\text{ }v_{\varepsilon,i}^{(n+1)}+h_{\varepsilon,i}\Big)-\tilde{m}_{\varepsilon}\Big(v_{\varepsilon,j}^{(n)}+h_{\varepsilon,j},\text{ }v_{\varepsilon,i}^{(n+1)}-h_{\varepsilon,i}\Big), & \text{ if }i<j
\end{cases}\label{eq:IngoingEnergyAfter}
\end{align}
and 
\begin{align}
\mathcal{E}_{\nearrow}^{(-)}[n;i,j] & \doteq\begin{cases}
\tilde{m}_{\varepsilon}\Big(v_{\varepsilon,j}^{(n)}-h_{\varepsilon,j},\text{ }v_{\varepsilon,i}^{(n)}-h_{\varepsilon,i}\Big)-\tilde{m}_{\varepsilon}\Big(v_{\varepsilon,j}^{(n)}+h_{\varepsilon,j},\text{ }v_{\varepsilon,i}^{(n)}-h_{\varepsilon,i}\Big), & \text{ if }i>j,\\
\tilde{m}_{\varepsilon}\Big(v_{\varepsilon,j}^{(n)}-h_{\varepsilon,j},\text{ }v_{\varepsilon,i}^{(n+1)}-h_{\varepsilon,i}\Big)-\tilde{m}_{\varepsilon}\Big(v_{\varepsilon,j}^{(n)}+h_{\varepsilon,j},\text{ }v_{\varepsilon,i}^{(n+1)}-h_{\varepsilon,i}\Big), & \text{ if }i<j,
\end{cases}\label{eq:OutgoingEnergyBefore}\\
\nonumber \\
\mathcal{E}_{\nearrow}^{(+)}[n;i,j] & \doteq\begin{cases}
\tilde{m}_{\varepsilon}\Big(v_{\varepsilon,j}^{(n)}-h_{\varepsilon,j},\text{ }v_{\varepsilon,i}^{(n)}+h_{\varepsilon,i}\Big)-\tilde{m}_{\varepsilon}\Big(v_{\varepsilon,j}^{(n)}+h_{\varepsilon,j},\text{ }v_{\varepsilon,i}^{(n)}+h_{\varepsilon,i}\Big), & \text{ if }i>j,\\
\tilde{m}_{\varepsilon}\Big(v_{\varepsilon,j}^{(n)}-h_{\varepsilon,j},\text{ }v_{\varepsilon,i}^{(n+1)}+h_{\varepsilon,i}\Big)-\tilde{m}_{\varepsilon}\Big(v_{\varepsilon,j}^{(n)}+h_{\varepsilon,j},\text{ }v_{\varepsilon,i}^{(n+1)}+h_{\varepsilon,i}\Big), & \text{ if }i<j,
\end{cases}\label{eq:OutgoingEnergyAfter}
\end{align}
For any $n\in\mathbb{N}$, any $\varepsilon\in(0,\varepsilon_{1}]$
and any integer $0\le i\le N_{\varepsilon}$ such that $\mathcal{R}_{i;\gamma_{\mathcal{Z}}}^{(n)}\subset\mathcal{U}_{max}^{(\varepsilon)}$
and $\mathcal{R}_{i;\mathcal{I}}^{(n)}\subset\mathcal{U}_{max}^{(\varepsilon)}$,
we will define, respectively,
\begin{equation}
\mathcal{E}_{\gamma_{\mathcal{Z}}}[n;i]\doteq\tilde{m}\Big(v_{\varepsilon,i}^{(n)}-h_{\varepsilon,i},\text{ }v_{\varepsilon,i}^{(n)}+h_{\varepsilon,i}\Big),\label{eq:IngoingEnergyAxis}
\end{equation}
and 
\begin{equation}
\mathcal{E}_{\mathcal{I}}[n;i]\doteq\tilde{m}|_{\mathcal{I}_{\varepsilon}}-\tilde{m}\Big(v_{\varepsilon,i}^{(n)}+h_{\varepsilon,i},\text{ }v_{\varepsilon,i}^{(n+1)}-h_{\varepsilon,i}\Big).\label{eq:IngoingEnergyInfinity}
\end{equation}
\begin{rem*}
When $i>j$, the quantity $\mathcal{E}_{\nwarrow}^{(-)}[n;i,j]$ measures
the energy content of the ingoing beam $\mathcal{V}_{i\nwarrow}^{(n)}$
right \emph{before} entering the region $\mathcal{R}_{i;j}^{(n)}$,
while $\mathcal{E}_{\nwarrow}^{(+)}[n;i,j]$ measures the energy content
of $\mathcal{V}_{i\nwarrow}^{(n)}$ right \emph{after }leaving $\mathcal{R}_{i;j}^{(n)}$
(when $i<j$, the same holds after replacing $\mathcal{V}_{i\nwarrow}^{(n)}$
with $\mathcal{V}_{i\nwarrow}^{(n+1)}$). Similarly, $\mathcal{E}_{\nearrow}^{(-)}[n;i,j]$
and $\mathcal{E}_{\nearrow}^{(+)}[n;i,j]$ measure the energy content
of the outgoing beam $\mathcal{V}_{j\nearrow}^{(n)}$ right before
and right after, respectively, $\mathcal{R}_{j;i}^{(n)}$. Finally,
$\mathcal{E}_{\gamma_{\mathcal{Z}}}[n;i]$ measures the energy content
of $\mathcal{V}_{i}^{(n)}$ measured at the region $\mathcal{R}_{i;\gamma_{\mathcal{Z}}}^{(n)}$,
while $\mathcal{E}_{\mathcal{I}}[n;i]$ measures the energy content
of $\mathcal{V}_{i}^{(n)}$ at the region $\mathcal{R}_{i;\mathcal{I}}^{(n)}$.
For a schematic depiction of the definition of the above quantities,
see Figure \ref{fig:FigureEnergy}.
\end{rem*}
\begin{figure}[h] 
\centering 
%% Creator: Inkscape inkscape 0.92.3, www.inkscape.org
%% PDF/EPS/PS + LaTeX output extension by Johan Engelen, 2010
%% Accompanies image file '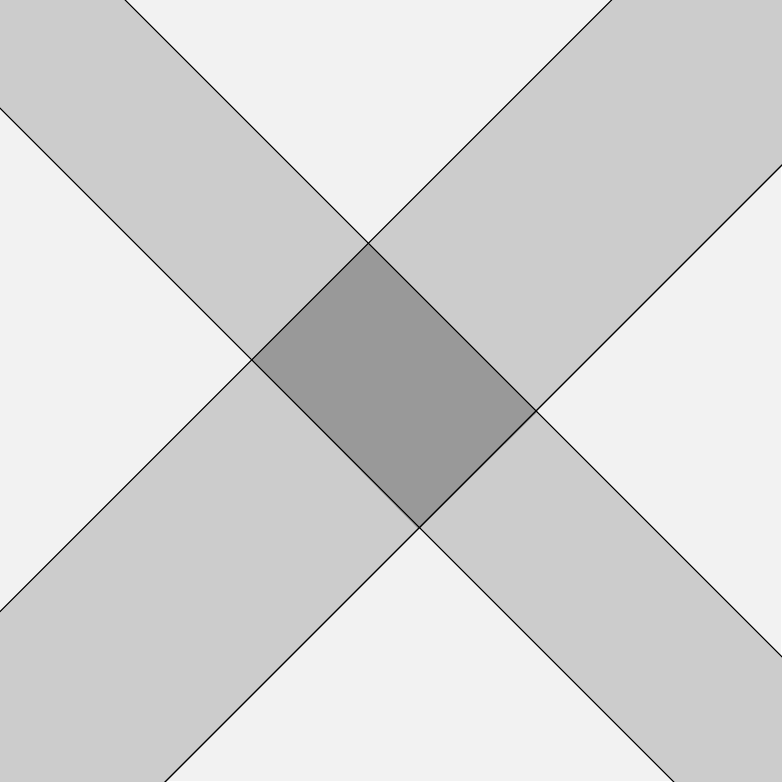' (pdf, eps, ps)
%%
%% To include the image in your LaTeX document, write
%%   \input{<filename>.pdf_tex}
%%  instead of
%%   \includegraphics{<filename>.pdf}
%% To scale the image, write
%%   \def\svgwidth{<desired width>}
%%   \input{<filename>.pdf_tex}
%%  instead of
%%   \includegraphics[width=<desired width>]{<filename>.pdf}
%%
%% Images with a different path to the parent latex file can
%% be accessed with the `import' package (which may need to be
%% installed) using
%%   \usepackage{import}
%% in the preamble, and then including the image with
%%   \import{<path to file>}{<filename>.pdf_tex}
%% Alternatively, one can specify
%%   \graphicspath{{<path to file>/}}
%% 
%% For more information, please see info/svg-inkscape on CTAN:
%%   http://tug.ctan.org/tex-archive/info/svg-inkscape
%%
\begingroup%
  \makeatletter%
  \providecommand\color[2][]{%
    \errmessage{(Inkscape) Color is used for the text in Inkscape, but the package 'color.sty' is not loaded}%
    \renewcommand\color[2][]{}%
  }%
  \providecommand\transparent[1]{%
    \errmessage{(Inkscape) Transparency is used (non-zero) for the text in Inkscape, but the package 'transparent.sty' is not loaded}%
    \renewcommand\transparent[1]{}%
  }%
  \providecommand\rotatebox[2]{#2}%
  \newcommand*\fsize{\dimexpr\f@size pt\relax}%
  \newcommand*\lineheight[1]{\fontsize{\fsize}{#1\fsize}\selectfont}%
  \ifx\svgwidth\undefined%
    \setlength{\unitlength}{225bp}%
    \ifx\svgscale\undefined%
      \relax%
    \else%
      \setlength{\unitlength}{\unitlength * \real{\svgscale}}%
    \fi%
  \else%
    \setlength{\unitlength}{\svgwidth}%
  \fi%
  \global\let\svgwidth\undefined%
  \global\let\svgscale\undefined%
  \makeatother%
  \begin{picture}(1,1)%
    \lineheight{1}%
    \setlength\tabcolsep{0pt}%
    \put(0,0){\includegraphics[width=\unitlength,page=1]{Energy_Definition.pdf}}%
    \put(0.4521307,0.51765848){\color[rgb]{0,0,0}\makebox(0,0)[lt]{\lineheight{1.25}\smash{\begin{tabular}[t]{l}$\mathcal{R}_{i;j}^{(n)}$\end{tabular}}}}%
    \put(0,0){\includegraphics[width=\unitlength,page=2]{Energy_Definition.pdf}}%
    \put(0.6559406,0.2376238){\color[rgb]{0,0,0}\makebox(0,0)[lt]{\lineheight{1.25}\smash{\begin{tabular}[t]{l}$\mathcal{E}_{\nwarrow}^{(-)}[n;i,j]$\end{tabular}}}}%
    \put(0.14727723,0.75495048){\color[rgb]{0,0,0}\makebox(0,0)[lt]{\lineheight{1.25}\smash{\begin{tabular}[t]{l}$\mathcal{E}_{\nwarrow}^{(+)}[n;i,j]$\end{tabular}}}}%
    \put(0.15717824,0.23886142){\color[rgb]{0,0,0}\makebox(0,0)[lt]{\lineheight{1.25}\smash{\begin{tabular}[t]{l}$\mathcal{E}_{\nearrow}^{(-)}[n;i,j]$\end{tabular}}}}%
    \put(0.65099007,0.75247524){\color[rgb]{0,0,0}\makebox(0,0)[lt]{\lineheight{1.25}\smash{\begin{tabular}[t]{l}$\mathcal{E}_{\nearrow}^{(+)}[n;i,j]$\end{tabular}}}}%
  \end{picture}%
\endgroup%
 
\caption{The quantities $\mathcal{E}_{\nwarrow}^{(\pm)}[n;i,j]$ and $\mathcal{E}_{\nearrow}^{(\pm)}[n;i,j]$ measure the energy content of the beams $\mathcal{V}_{i\nwarrow}^{(n)}$ (with $n+1$ in place of $n$ when $i<j$) and $\mathcal{V}_{j\nearrow}^{(n)}$ right before and right after intersecting the region $\mathcal{R}_{i;j}^{(n)}$.\label{fig:FigureEnergy}}
\end{figure}

For any $n\in\mathbb{N}$, any $\varepsilon\in(0,\varepsilon_{1}]$
and any integers $0\le i,j\le N_{\varepsilon}$ such that $i>0$ or
$j>0$, we will introduce the following quantities measuring the separation
of two successive beams of matter (defined when the corresponding
regions of integration lie in the domain $\mathcal{U}_{max}^{(\varepsilon)}\cap\big\{\frac{2m}{r}<1\big\}$):
\begin{align}
\mathfrak{D}r_{\nwarrow}^{(\pm)}[n;i,j] & \doteq\begin{cases}
\int_{\text{ }v_{\varepsilon,i-1}^{(n)}+(\rho_{\varepsilon}^{-\frac{7}{8}}+1)h_{\varepsilon,i-1}}^{\text{ }v_{\varepsilon,i}^{(n)}-(\rho_{\varepsilon}^{-\frac{7}{8}}+1)h_{\varepsilon,i-1}}\frac{\partial_{v}r}{1-\frac{2m}{r}}(v_{\varepsilon,j}^{(n)}\pm h_{\varepsilon,j},v)\,dv, & \text{ if }i>j,\\
\int_{\text{ }v_{\varepsilon,i-1}^{(n+1)}+(\rho_{\varepsilon}^{-\frac{7}{8}}+1)h_{\varepsilon,i-1}}^{\text{ }v_{\varepsilon,i}^{(n+1)}-(\rho_{\varepsilon}^{-\frac{7}{8}}+1)h_{\varepsilon,i-1}}\frac{\partial_{v}r}{1-\frac{2m}{r}}(v_{\varepsilon,j}^{(n)}\pm h_{\varepsilon,j},v)\,dv, & \text{ if }i\le j,
\end{cases}\label{eq:IngoingRSeparationBeforeAfter}
\end{align}
(well-defined when $i>0$) and 
\begin{align}
\mathfrak{D}r_{\nearrow}^{(\pm)}[n;i,j] & \doteq\begin{cases}
\int_{\text{ }v_{\varepsilon,j-1}^{(n)}+(\rho_{\varepsilon}^{-\frac{7}{8}}+1)h_{\varepsilon,j-1}}^{\text{ }v_{\varepsilon,j}^{(n)}-(\rho_{\varepsilon}^{-\frac{7}{8}}+1)h_{\varepsilon,j-1}}\frac{-\partial_{u}r}{1-\frac{2m}{r}}(u,v_{\varepsilon,i}^{(n)}\pm h_{\varepsilon,i})\,du, & \text{ if }i\ge j,\\
\int_{\text{ }v_{\varepsilon,j-1}^{(n)}+(\rho_{\varepsilon}^{-\frac{7}{8}}+1)h_{\varepsilon,j-1}}^{\text{ }v_{\varepsilon,j}^{(n)}-(\rho_{\varepsilon}^{-\frac{7}{8}}+1)h_{\varepsilon,j-1}}\frac{-\partial_{u}r}{1-\frac{2m}{r}}(u,v_{\varepsilon,i}^{(n+1)}\pm h_{\varepsilon,i})\,du, & \text{ if }i<j.
\end{cases}\label{eq:OutgoingRSeparationBeforeAfter}
\end{align}
(well-defined when $j>0$). 
\begin{rem*}
Notice that, when $\frac{2\tilde{m}}{r}\ll1$ and $\frac{\partial_{v}r}{1-\frac{1}{3}\Lambda r^{2}},\frac{\partial_{u}r}{1-\frac{1}{3}\Lambda r^{2}}\ll\rho_{\varepsilon}^{-\delta}$,
in the case when $i=j$ we have 
\begin{equation}
\mathfrak{D}r_{\nwarrow}^{(\pm)}[n;i,i]\sim\max_{\mathcal{R}_{i-1;i}^{(n)}}\frac{1}{-\frac{1}{3}\Lambda r}
\end{equation}
and 
\begin{equation}
\mathfrak{D}r_{\nearrow}^{(\pm)}[n;i,j]\sim\min_{\mathcal{R}_{i;i-1}^{(n)}}r.
\end{equation}
\end{rem*}
\begin{figure}[h] 
\centering 
\scriptsize
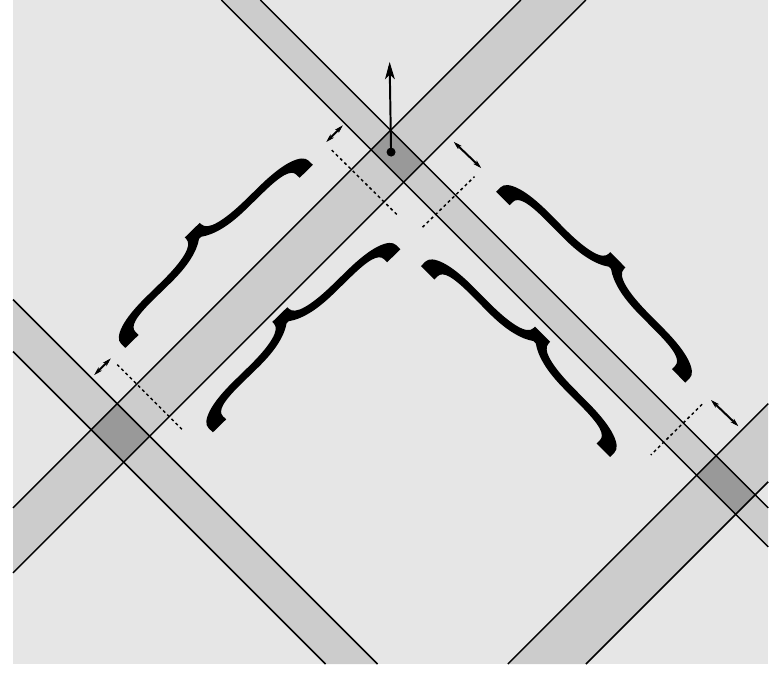 
\caption{In the figure above, we present a schematic depiction of the configuration of neighboring beams when $i\neq j$, using the shorthand notation $\bar{h}_{\epsilon,k}=\rho_{\epsilon}^{-\frac{7}{8}}h_{\epsilon,k}$. The quantities $\mathfrak{D}r_{\nearrow}^{(-)}[n;i,j]$ and $\mathfrak{D}r_{\nearrow}^{(+)}[n;i,j]$ measure the geometric separation of the beams $\mathcal{V}_{j\nearrow}^{(n)}$ and $\mathcal{V}_{j-1\nearrow}^{(n)}$ right before and right after their intersection with $\mathcal{V}_{i\nwarrow}^{(n)}$ (or $\mathcal{V}_{i\nwarrow}^{(n+1)}$, when $i<j$), respectively Similarly, $\mathfrak{D}r_{\nwarrow}^{(-)}[n;i,j]$ and $\mathfrak{D}r_{\nwarrow}^{(+)}[n;i,j]$ measure the separation of $\mathcal{V}_{i\nwarrow}^{(n)}$ and $\mathcal{V}_{i-1\nwarrow}^{(n)}$ (or $\mathcal{V}_{i\nwarrow}^{(n+1)}$ and $\mathcal{V}_{i-1\nwarrow}^{(n+1)}$, when $i<j$) right before and right after their intersection with $\mathcal{V}_{j\nearrow}^{(n)}$, respectively. \label{fig:FigureRSeparation}}
\end{figure}

Finally, setting 
\begin{align}
\tilde{h}_{\varepsilon,i} & \doteq e^{\delta_{\varepsilon}^{-6}}\frac{\varepsilon^{(i)}}{\sqrt{-\Lambda}},\label{eq:ShorthandNotationTilde}\\
\tilde{\beta}_{\varepsilon,i} & \doteq\exp\big(-\exp(\delta_{\varepsilon}^{-4})\big)\frac{\varepsilon^{(i)}}{\sqrt{-\Lambda}},\nonumber 
\end{align}
(noting that $\tilde{h}_{\varepsilon,i}$, $\tilde{\beta}_{\varepsilon,i}$
are defined like $h_{\varepsilon,i}$, $\beta_{\varepsilon,i}$, albeit
with $\delta_{\varepsilon}$ in place of $\sigma_{\varepsilon}$),
we will define $\widetilde{\mathcal{V}}_{i}^{(n)}$, $\widetilde{\mathcal{V}}_{i\nwarrow}^{(n)}$
and $\widetilde{\mathcal{V}}_{i\nearrow}^{(n)}$ by (\ref{eq:BeamDomain})\textendash (\ref{eq:IngoingOutgoingDomains})
with $\tilde{h}_{\varepsilon,i}$, $\tilde{\beta}_{\varepsilon,i}$
in place of $h_{\varepsilon,i}$, $\beta_{\varepsilon,i}$, i.\,e.:
\begin{equation}
\widetilde{\mathcal{V}}_{i}^{(n)}\doteq\Big(\widetilde{\mathcal{V}}_{i\nwarrow}^{(n)}\cup\widetilde{\mathcal{V}}_{i\nearrow}^{(n)}\Big),\label{eq:BeamDomainTilde}
\end{equation}
\begin{align}
\widetilde{\mathcal{V}}_{i\nwarrow}^{(n)} & \doteq\Big\{\Big|v-v_{\varepsilon,i}^{(n)}\Big|\le\tilde{h}_{\varepsilon,i}\Big\}\cap\Big\{\tilde{\beta}_{\varepsilon,i}\le v-u\le\sqrt{-\frac{3}{\Lambda}}\pi\Big\},\label{eq:IngoingOutgoingDomainsTilde}\\
\widetilde{\mathcal{V}}_{i\nearrow}^{(n)} & \doteq\Big\{\Big|u-v_{\varepsilon,i}^{(n)}\Big|\le\tilde{h}_{\varepsilon,i}\Big\}\cap\Big\{\tilde{\beta}_{\varepsilon,i}\le v-u\le\sqrt{-\frac{3}{\Lambda}}\pi\Big\}.\nonumber 
\end{align}
Similarly, we will define $\widetilde{\mathcal{R}}_{i;j}^{(n)}$,
$\widetilde{\mathcal{R}}_{i;\gamma_{\mathcal{Z}}}^{(n)}$, $\widetilde{\mathcal{R}}_{i;\mathcal{I}}^{(n)}$,
$\widetilde{\mathcal{E}}_{\nwarrow}^{(\pm)}[n;i,j]$, $\widetilde{\mathcal{E}}_{\nearrow}^{(\pm)}[n;i,j]$,
$\widetilde{\mathfrak{D}}r_{\nwarrow}^{(\pm)}[n;i,j]$ and $\widetilde{\mathfrak{D}}r_{\nearrow}^{(\pm)}[n;i,j]$
by (\ref{eq:ExplicitIntersectionDomain}), (\ref{eq:ExplicitSpecialDomainAxis}),
(\ref{eq:ExplicitSpecialDomainInfinity}), (\ref{eq:IngoingEnergyBefore})\textendash (\ref{eq:IngoingEnergyAfter}),
(\ref{eq:OutgoingEnergyBefore})\textendash (\ref{eq:OutgoingEnergyAfter}),
(\ref{eq:IngoingRSeparationBeforeAfter}) and (\ref{eq:OutgoingRSeparationBeforeAfter}),
respectively (i.\,e.~using the same definitions as for $\mathcal{V}_{i}^{(n)}$,
$\mathcal{R}_{i;j}^{(n)}$, $\mathcal{E}_{\nwarrow}^{(\pm)}[n;i,j]$,
$\mathcal{E}_{\nearrow}^{(\pm)}[n;i,j]$, $\mathfrak{D}r_{\nwarrow}^{(\pm)}[n;i,j]$
and $\mathfrak{D}r_{\nearrow}^{(\pm)}[n;i,j]$), with $\tilde{h}_{\varepsilon,i}$,
$\tilde{\beta}_{\varepsilon,i}$ in place of $h_{\varepsilon,i}$,
$\beta_{\varepsilon,i}$. 
\begin{rem*}
Note that $\mathcal{V}_{i}^{(n)}\subset\widetilde{\mathcal{V}}_{i}^{(n)}$,
and similarly for $\mathcal{V}_{i\nwarrow}^{(n)}$, $\mathcal{V}_{i\nearrow}^{(n)}$
and $\widetilde{\mathcal{V}}_{i\nwarrow}^{(n)}$, $\widetilde{\mathcal{V}}_{i\nearrow}^{(n)}$.
\end{rem*}

\section{\label{sec:The-technical-core} First steps for the proof of Theorem
\ref{thm:TheTheorem}: Beam interactions and energy concentration}

This section will constitute the technical core of the proof of Theorem
\ref{thm:TheTheorem}. First, in Section \ref{subsec:Control-of-the-Vlasov-beams},
we will obtain estimates controlling the geodesics in the support
of the components $f_{\varepsilon i}$ constituting the total Vlasov
field $f_{\varepsilon}$ (see the relation (\ref{eq:LinearCombinationVlasovFields}))
in the regions $\mathcal{U}_{\varepsilon}^{+},\mathcal{T}_{\varepsilon}^{+}\subset\mathcal{U}_{max}^{(\varepsilon)}$,
showing that the supports of the $f_{\varepsilon i}$'s form a configuration
of intersecting beams in physical space. Then, in Sections \ref{subsec:BeamInteractions}\textendash \ref{subsec:Control-of-the-evolution_ERM},
we will proceed to establish refined estimates for the exchange of
energy occuring at the intersection of any two of those beams, as
well as for the change in the geometric separation of the beams over
time; these bounds will be used in Sections \ref{sec:ThefirstStage}\textendash \ref{sec:The-final-stage}
to show that, provided the initial data parameters $a_{\varepsilon i}$
in (\ref{eq:InitialVlasovTotal}) are chosen appropriately, the total
energy of $f_{\varepsilon}$ is eventually concentrated in regions
of sufficiently small scale in phase space, resulting in the formation
of a trapped sphere. 

\subsection{\label{subsec:Control-of-the-Vlasov-beams} Control of the Vlasov
beams and the spacetime geometry away from the trapped region}

The following lemma will allow us to control the support of the Vlasov
beams $f_{\varepsilon i}$ in the regions $\mathcal{U}_{\varepsilon}^{+}$
and $\mathcal{T}_{\varepsilon}^{+}$ introduced in Section \ref{subsec:Notational-conventions-and}. 
\begin{lem}
\label{lem:QuantitativeBeamEstimate} For any $\varepsilon\in(0,\varepsilon_{1}]$
and any $0\le i\le N_{\varepsilon}$, the support of the Vlasov field
$f_{\varepsilon i}=f_{\varepsilon i}(u,v;p^{u},p^{v},l)$ on $\mathcal{U}_{\varepsilon}^{+}$
satisfies 
\begin{equation}
supp(f_{\varepsilon i})\cap\big\{(u,v)\in\mathcal{U}_{\varepsilon}^{+}\big\}\subset\Big\{(u,v)\in\mathcal{V}_{i}\Big\}\cap\Big\{\exp(-\sigma_{\varepsilon}^{-6})\le\Omega^{2}(p^{u}+p^{v})\le\exp\big(\exp(\sigma_{\varepsilon}^{-4})\big)\Big\},\label{eq:BoundSupportFepsiloni}
\end{equation}
where the regions $\mathcal{V}_{i}=\cup_{n}\mathcal{V}_{i}^{(n)}$
in the $(u,v)$-plane are defined by (\ref{eq:BeamDomain}). Furthermore,
if $\gamma\subset\mathcal{U}_{\varepsilon}^{+}$ is a future directed,
affinely parametrised null geodesic in the support of $f_{\varepsilon i}$
which is maximally extended through reflections off $\mathcal{I}$
(see Definition 2.3 in \cite{MoschidisVlasovWellPosedness}) and $p$
is any point on $\gamma$, then:
\begin{equation}
\frac{\dot{\gamma}^{v}}{\dot{\gamma}^{u}}\Big|_{p}\le\exp\big(\exp(\sigma_{\varepsilon}^{-4})\big)l^{2}\frac{1-\frac{1}{3}\Lambda r^{2}}{r^{2}}\Big|_{p}\text{ if }p\in\cup_{n\in\mathbb{N}}\mathcal{V}_{i\nwarrow}^{(n)}\label{eq:IngoingBoundBootstrapDomain}
\end{equation}
and 
\begin{equation}
\frac{\dot{\gamma}^{u}}{\dot{\gamma}^{v}}\Big|_{p}\le\exp\big(\exp(\sigma_{\varepsilon}^{-4})\big)l^{2}\frac{1-\frac{1}{3}\Lambda r^{2}}{-\Lambda r^{2}}\Big|_{p}\text{ if }p\in\cup_{n\in\mathbb{N}}\mathcal{V}_{i\nearrow}^{(n)},\label{eq:OutgoingBoundBootstrapDomain}
\end{equation}
where $l$ is the angular momentum of $\gamma$.

Similarly, on $\mathcal{T}_{\varepsilon}^{+}$, the support of $f_{\varepsilon i}$
satisfies 
\begin{equation}
supp(f_{\varepsilon i})\cap\big\{(u,v)\in\mathcal{T}_{\varepsilon}^{+}\big\}\subset\Big\{(u,v)\in\widetilde{\mathcal{V}}_{i}\Big\}\cap\Big\{\exp(-\delta_{\varepsilon}^{-6})\le\Omega^{2}(p^{u}+p^{v})\le\exp\big(\exp(\delta_{\varepsilon}^{-4})\big)\Big\},\label{eq:BoundSupportFepsiloni-1}
\end{equation}
Furthermore, if $\gamma\subset\mathcal{T}_{\varepsilon}^{+}$ is a
future directed, affinely parametrised null geodesic in the support
of $f_{\varepsilon i}$ which is maximally extended through reflections
and $p$ is any point on $\gamma$, then (\ref{eq:IngoingBoundBootstrapDomain})\textendash (\ref{eq:OutgoingBoundBootstrapDomain})
hold with $\delta_{\varepsilon}$ in place of $\sigma_{\varepsilon}$.
\end{lem}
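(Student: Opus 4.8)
The statement to be proved, Lemma \ref{lem:QuantitativeBeamEstimate}, packages together a support localisation for each beam $f_{\varepsilon i}$, together with quantitative bounds on the ratio $\dot{\gamma}^{v}/\dot{\gamma}^{u}$ (resp.\ its inverse) along geodesics in the support, in the two domains $\mathcal{U}_{\varepsilon}^{+}$ and $\mathcal{T}_{\varepsilon}^{+}$. The heart of the proof is an application of Corollary \ref{cor:GeodesicPathsLongTimes} to the geodesics in the support of $f_{\varepsilon i}$, with the constant $C_{0}$ in that corollary taken to be the geometric bound on $\log\big(\frac{\partial_{v}r}{1-\frac{2m}{r}}\big)$, $\log\big(\frac{-\partial_{u}r}{1-\frac{2m}{r}}\big)$ supplied by \eqref{eq:EstimateGeometryLikeC0} on $\mathcal{U}_{\varepsilon}^{+}$ (namely $C_{0}=5\sigma_{\varepsilon}^{-3}$) and by \eqref{eq:EstimateGeometryLikeC0LargerDomain} on $\mathcal{T}_{\varepsilon}^{+}$ (namely $C_{0}=\delta_{\varepsilon}^{-2}$). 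The hypotheses of Corollary \ref{cor:GeodesicPathsLongTimes} that need checking are: the smallness condition \eqref{eq:SmallnessTrapping} for $\frac{2\tilde{m}}{r}+\sqrt{-\Lambda}\tilde{m}$ on the domain, which follows from \eqref{eq:UpperBoundMuBootstrapDomain} (resp.\ \eqref{eq:UpperBoundMuLargerDomain}) together with the upper bound \eqref{eq:TrivialUpperBoundTotalMass} on the total renormalised mass and the observation $\tilde m \le \tilde m|_{\mathcal I_\varepsilon}$; the initial conditions \eqref{eq:initiallyIngoing}, \eqref{eq:InitialRGamma2} and the angular-momentum smallness \eqref{eq:SmallAngularMomentumMaximallyExtended} for the geodesics at $u=0$; and the bound $U \le v_{\mathcal I}^{-1}\cdot(\text{number of reflections})$ needed so that the tube widths $e^{300(k+1)C_{0}}\frac{l}{E_{0}}$ stay controlled by $h_{\varepsilon,i}$.

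\textbf{Key steps, in order.} First I would recall from Definition \ref{def:InitialDataFamily} and the explicit formula \eqref{eq:InitialVlasovSeeds} for $F_{i}^{(\varepsilon)}$ that the initial data for $f_{\varepsilon i}$ at $u=0$ is supported in $\{|\sqrt{-\Lambda}(v-v_{\varepsilon,i})|\le 2\varepsilon^{(i)}\}$, with $p\sim 3$ (so $\Omega^{2}p^{u}\sim\partial_{v}r_{/}^{(\varepsilon)}\cdot 3$, controlled via \eqref{eq:ComparableInitialDataWithAdSEpsilonn}), and with angular momentum $\sqrt{-\Lambda}l/\varepsilon^{(i)}\in[2,6]$; in particular $l/E_{0}\sim\varepsilon^{(i)}(-\Lambda)^{-1/2}$ and the initial geodesics are ingoing with $r(\gamma(0))\gg l/E_{0}$. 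Next I would verify that on $\mathcal{U}_{\varepsilon}^{+}$ the quantity $l/E_{0}\sqrt{-\Lambda}\sim\varepsilon^{(i)}$ satisfies the smallness requirement \eqref{eq:SmallAngularMomentumMaximallyExtended} relative to $C_{0}=5\sigma_{\varepsilon}^{-3}$ and $U=\sigma_{\varepsilon}^{-2}(-\Lambda)^{-1/2}$: since the number of reflections is $\lceil v_{\mathcal I}^{-1}U\rceil\lesssim\sigma_{\varepsilon}^{-2}$, one needs $\varepsilon^{(i)}\le\exp\big(-400(1+\sigma_{\varepsilon}^{-2})\cdot 5\sigma_{\varepsilon}^{-3}\big)$, which holds because of the rapid collapse \eqref{eq:RecursiveEi} combined with the hierarchy \eqref{eq:HierarchyOfParameters} forcing $\varepsilon^{(0)}=\varepsilon\ll\exp(-\exp(\sigma_{\varepsilon}^{-10}))$ and each $\varepsilon^{(i+1)}$ doubly-exponentially smaller. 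Then Corollary \ref{cor:GeodesicPathsLongTimes} applies: its domain-inclusion conclusion \eqref{eq:DomainForGammaMaximallyExtended} places $\gamma$ inside $\bigcup_{k}(\mathcal{V}_{\nwarrow}^{(k)}\cup\mathcal{V}_{\nearrow}^{(k)})$ with tube widths $e^{300(k+1)C_{0}}\frac{l}{E_{0}}\lesssim e^{300\sigma_{\varepsilon}^{-2}\cdot 5\sigma_{\varepsilon}^{-3}}\varepsilon^{(i)}(-\Lambda)^{-1/2}\le e^{\sigma_{\varepsilon}^{-6}}\varepsilon^{(i)}(-\Lambda)^{-1/2}=h_{\varepsilon,i}$ (again using \eqref{eq:HierarchyOfParameters}), and with $r\ge e^{-e^{400C_{0}}(1+v_{\mathcal I}^{-1}U)}\frac{l}{E_{0}}\ge\beta_{\varepsilon,i}$, exactly matching the definitions \eqref{eq:IngoingOutgoingDomains}; this gives the first containment in \eqref{eq:BoundSupportFepsiloni}. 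The energy bounds \eqref{eq:EnergyGrowthGammaMaximallyExtended} (after converting from $E_{0}$ to $\Omega^{2}(p^{u}+p^{v})$ via the initial normalisation $E_{0}\sim 3$ and \eqref{eq:ComparableInitialDataWithAdSEpsilonn}, \eqref{eq:BoundOmegaC0}) yield the second factor $\exp(-\sigma_{\varepsilon}^{-6})\le\Omega^{2}(p^{u}+p^{v})\le\exp(\exp(\sigma_{\varepsilon}^{-4}))$; and \eqref{eq:IngoingRegionMaximallyExtended}--\eqref{eq:OutgoingRegionMaximallyExtended} directly give \eqref{eq:IngoingBoundBootstrapDomain}--\eqref{eq:OutgoingBoundBootstrapDomain} after bounding $e^{e^{300C_{0}}(1+v_{\mathcal I}^{-1}U)}\le\exp(\exp(\sigma_{\varepsilon}^{-4}))$ using the hierarchy. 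The $\mathcal{T}_{\varepsilon}^{+}$ assertions follow by the identical argument with $C_{0}=\delta_{\varepsilon}^{-2}$, $\tilde h_{\varepsilon,i}=e^{\delta_{\varepsilon}^{-6}}\varepsilon^{(i)}(-\Lambda)^{-1/2}$, $\tilde\beta_{\varepsilon,i}$, and $\sigma_{\varepsilon}$ replaced by $\delta_{\varepsilon}$ throughout.

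\textbf{The main obstacle.} The routine part is invoking Corollary \ref{cor:GeodesicPathsLongTimes}; the delicate part is the bookkeeping that the doubly-exponential tower of constants produced by the corollary ($e^{e^{300C_{0}}(\cdots)}$, with $C_{0}$ already a negative power of $\sigma_{\varepsilon}$) is still swallowed by the single-exponential thickening built into the definitions $h_{\varepsilon,i}=e^{\sigma_{\varepsilon}^{-6}}\varepsilon^{(i)}(-\Lambda)^{-1/2}$, $\beta_{\varepsilon,i}=\exp(-\exp(\sigma_{\varepsilon}^{-4}))\varepsilon^{(i)}(-\Lambda)^{-1/2}$, and by the decay rate \eqref{eq:RecursiveEi} of $\varepsilon^{(i)}$. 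This is exactly what the parameter hierarchy \eqref{eq:HierarchyOfParameters} and the relation $\varepsilon=\exp(-\exp(\rho_{\varepsilon}^{-10}))$, $\rho_{\varepsilon}\ll\delta_{\varepsilon}\ll\sigma_{\varepsilon}$ were engineered to ensure: e.g.\ $e^{300C_{0}}(1+v_{\mathcal I}^{-1}U)$ with $C_{0}=5\sigma_{\varepsilon}^{-3}$ and $v_{\mathcal I}^{-1}U\lesssim\sigma_{\varepsilon}^{-2}$ is bounded by $\exp(\sigma_{\varepsilon}^{-4})$ for $\varepsilon_{1}$ small, so $\exp(\exp(\sigma_{\varepsilon}^{-4}))$ absorbs the outer exponential; and $e^{-e^{400C_{0}}(1+v_{\mathcal I}^{-1}U)}\ge\exp(-\exp(\sigma_{\varepsilon}^{-4}))=\beta_{\varepsilon,i}/(\varepsilon^{(i)}(-\Lambda)^{-1/2})$. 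I would carry out this chain of inequalities carefully once (for $\mathcal{U}_{\varepsilon}^{+}$), invoking the remark after Definition \ref{def:HierarchyParameters} to suppress explicit references to \eqref{eq:HierarchyOfParameters} at each step, and then note that the $\mathcal{T}_{\varepsilon}^{+}$ case is verbatim with $\sigma_{\varepsilon}\mapsto\delta_{\varepsilon}$.
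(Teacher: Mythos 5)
Your proposal is correct and follows essentially the same route as the paper's proof: one applies Corollary \ref{cor:GeodesicPathsLongTimes} with $C_{0}=5\sigma_{\varepsilon}^{-3}$ (from (\ref{eq:EstimateGeometryLikeC0})) and $U\le\sigma_{\varepsilon}^{-2}(-\Lambda)^{-1/2}$, verifies (\ref{eq:initiallyIngoing}), (\ref{eq:InitialRGamma2}), (\ref{eq:SmallAngularMomentumMaximallyExtended}) from the explicit form (\ref{eq:InitialVlasovSeeds}), (\ref{eq:Fepsiloni}) of the initial data, and reads off (\ref{eq:BoundSupportFepsiloni})--(\ref{eq:OutgoingBoundBootstrapDomain}) from (\ref{eq:DomainForGammaMaximallyExtended}), (\ref{eq:EnergyGrowthGammaMaximallyExtended}) and (\ref{eq:IngoingRegionMaximallyExtended})--(\ref{eq:OutgoingRegionMaximallyExtended}), with the $\mathcal{T}_{\varepsilon}^{+}$ case obtained verbatim using (\ref{eq:EstimateGeometryLikeC0LargerDomain}) and $\delta_{\varepsilon}$ in place of $\sigma_{\varepsilon}$. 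Your extra bookkeeping of how the hierarchy (\ref{eq:HierarchyOfParameters}), (\ref{eq:RecursiveEi}) absorbs the doubly-exponential constants into $h_{\varepsilon,i}$, $\beta_{\varepsilon,i}$ is exactly the step the paper leaves implicit, and it is carried out correctly.
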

\begin{rem*}
The bound (\ref{eq:BoundSupportFepsiloni}) implies that the domains
$\mathcal{V}_{i}$ (or $\widetilde{\mathcal{V}}_{i}$, in the case
of (\ref{eq:BoundSupportFepsiloni-1})) strictly contain the Vlasov
beams $\zeta_{i}$ appearing in the discussion of Section \ref{subsec:Discussion-on-the}.
\end{rem*}
\begin{proof}
The proof of Lemma (\ref{lem:QuantitativeBeamEstimate}) will be a
simple consequence of Corollary \ref{cor:GeodesicPathsLongTimes}.
In particular, let $\gamma\subset\mathcal{U}_{\varepsilon}^{+}$ is
a future directed, affinely parametrised null geodesic in the support
of $f_{\varepsilon i}$ which is maximally extended through reflections.
Setting 
\begin{equation}
C_{0}\doteq5\sigma_{\varepsilon}^{-3},\label{eq:ConstantCZero}
\end{equation}
\begin{equation}
v_{\mathcal{I}}\doteq\sqrt{-\frac{3}{\Lambda}}\pi
\end{equation}
 and 
\begin{equation}
U\doteq\sup_{\mathcal{U}_{\varepsilon}^{+}}u,\label{eq:DefiU}
\end{equation}
we readily observe the following:

\begin{itemize}

\item Using the definition (\ref{eq:BootstrapDomain}) of $\mathcal{U}_{\varepsilon}^{+}$
and (\ref{eq:DefiU}), we can readily bound 
\begin{equation}
U\le\frac{\sigma_{\varepsilon}^{-2}}{\sqrt{-\Lambda}}.\label{eq:UpperU}
\end{equation}

\item The bound (\ref{eq:SmallnessTrapping}) holds on $\mathcal{U}_{\varepsilon}^{+}\doteq\mathcal{U}_{U;v_{\mathcal{I}}}$,
in view of (\ref{eq:UpperBoundMuBootstrapDomain}) (using also the
assumption that $\eta_{0}<\delta_{0}$). Moreover, the bound (\ref{eq:BoundGeometryC0})
follows from (\ref{eq:UpperU}) and the estimate (\ref{eq:EstimateGeometryLikeC0}),
assuming that $\varepsilon_{1}$ has been fixed small enough.

\item As a consequence of the expression (\ref{eq:InitialVlasovSeeds})
for $F_{i}^{(\varepsilon)}$ and the relation (\ref{eq:Fepsiloni})
between $F_{i}^{(\varepsilon)}$ and $f_{\varepsilon i}$ (using also
the bound (\ref{eq:ComparableInitialDataWithAdSEpsilonn}) for $\partial_{v}r_{/}^{(\varepsilon)}$
in (\ref{eq:Fepsiloni})), we can estimate for the angular momentum
$l$ and the initial energy $E_{0}$ of $\gamma$ (defined by (\ref{eq:InitialEnergy}))
that 
\[
\frac{1}{10}\varepsilon^{(i)}\le\sqrt{-\Lambda}l\le10\varepsilon^{(i)}
\]
and
\[
\frac{1}{10}\le E_{0}\le10,
\]
as well as 
\[
\frac{\dot{\gamma}^{v}}{\dot{\gamma}^{u}}\Big|_{u=0}<1.
\]
Therefore, $\gamma$ satisfies the conditions (\ref{eq:initiallyIngoing}),
(\ref{eq:InitialRGamma2}) and (\ref{eq:SmallAngularMomentumMaximallyExtended}).

\end{itemize}

Hence, the conditions of Corollary \ref{cor:GeodesicPathsLongTimes}
are satisfied for $\gamma$, provided $\varepsilon_{1}$ is chosen
smaller than some absolute constant. As a result, (\ref{eq:BoundSupportFepsiloni})
follows readily from (\ref{eq:DomainForGammaMaximallyExtended}) and
(\ref{eq:EnergyGrowthGammaMaximallyExtended}), while (\ref{eq:IngoingBoundBootstrapDomain})
and (\ref{eq:OutgoingBoundBootstrapDomain}) follow from \ref{eq:IngoingRegionMaximallyExtended}\textendash \ref{eq:OutgoingRegionMaximallyExtended}.

The corresponding statements for $\gamma\subset\mathcal{T}_{\varepsilon}^{+}$
follow by exactly the same arguments, after replacing $\sigma_{\varepsilon}$
with $\delta_{\varepsilon}$ in (\ref{eq:ConstantCZero}) and using
(\ref{eq:EstimateGeometryLikeC0LargerDomain}) in place of (\ref{eq:EstimateGeometryLikeC0}).
\end{proof}
The following Lemma will allow us to control various quantities related
to the geometry of $(\mathcal{U}_{\varepsilon}^{+};r,\Omega^{2})$
and $(\mathcal{T}_{\varepsilon}^{+};r,\Omega^{2})$, some which are
of higher regularity than that controlled by the norm \ref{eq:InitialDataNormSlice}.
Effective control on such quantities will be obtained through the
quantitative estimates provided by Lemma \ref{lem:QuantitativeBeamEstimate}
on the support of the Vlasov fields $f_{\varepsilon i}$.
\begin{lem}
\label{lem:ControlHigherOrderDerivatives} For any $\varepsilon\in(0,\varepsilon_{1}]$
and any $0\le i\le N_{\varepsilon}$, the following estimate holds
on $\mathcal{U}_{\varepsilon}^{+}$: 
\begin{equation}
\Big|\log\Big(\frac{\Omega^{2}}{1-\frac{1}{3}\Lambda r^{2}}\Big)\Big|\le10\sigma_{\varepsilon}^{-3},\label{eq:UpperBoundOmegaLikeC0}
\end{equation}
while the following estimates hold in the regions $\cup_{k\in\mathbb{N}}\mathcal{V}_{i}^{(k)}\cap\mathcal{U}_{\varepsilon}^{+}$:
\begin{equation}
\inf_{\mathcal{V}_{i}\cap\mathcal{U}_{\varepsilon}^{+}}r\ge\exp\big(-2\exp(\sigma_{\varepsilon}^{-4})\big)\frac{\varepsilon^{(i)}}{\sqrt{-\Lambda}},\label{eq:LowerBoundROnDomains}
\end{equation}
\begin{align}
\begin{cases}
r^{2}T_{vv}[f_{\varepsilon i}](u,v)\le\exp\big(\exp(\sigma_{\varepsilon}^{-5})\big) & \text{and}\\
r^{2}T_{uu}[f_{\varepsilon i}](u,v)\le\exp\big(\exp(2\sigma_{\varepsilon}^{-5})\big)\frac{(\varepsilon^{(i)})^{4}}{r^{4}(u,v)}(-\Lambda)^{-2}, & \text{ if }(u,v)\in\cup_{n\in\mathbb{N}}\mathcal{V}_{i\nwarrow}^{(n)},
\end{cases}\label{eq:UpperBoundTuuTvvFi}\\
\begin{cases}
r^{2}T_{uu}[f_{\varepsilon i}](u,v)\le\exp\big(\exp(\sigma_{\varepsilon}^{-5})\big) & \text{and}\\
r^{2}T_{vv}[f_{\varepsilon i}](u,v)\le\exp\big(\exp(2\sigma_{\varepsilon}^{-5})\big)\frac{(\varepsilon^{(i)})^{4}}{r^{4}(u,v)}(-\Lambda)^{-2}, & \text{ if }(u,v)\in\cup_{n\in\mathbb{N}}\mathcal{V}_{i\nearrow}^{(n)},
\end{cases}\nonumber 
\end{align}
and 
\begin{equation}
r^{2}T_{uv}[f_{\varepsilon i}](u,v)\le\exp\big(\exp(\sigma_{\varepsilon}^{-5})\big)\cdot\frac{(\varepsilon^{(i)})^{2}}{r^{2}(u,v)}(-\Lambda)^{-1}.\label{eq:UpperBoundTuvFi}
\end{equation}
Furthermore, we can estimate 
\begin{align}
\sup_{(u,v)\in\mathcal{U}_{\varepsilon}^{+}}\Big|dist_{\nwarrow}[(u,v)]\cdot\partial_{v}\log\Big(\frac{\Omega^{2}}{1-\frac{1}{3}\Lambda r^{2}}\Big)(u,v)\Big|+\label{eq:BoundDOmegaEverywhere}\\
+\sup_{(u,v)\in\mathcal{U}_{\varepsilon}^{+}}\Big|dist_{\nearrow}[(u,v)]\cdot & \partial_{u}\log\Big(\frac{\Omega^{2}}{1-\frac{1}{3}\Lambda r^{2}}\Big)(u,v)\Big|\le\exp\big(\exp(\sigma_{\varepsilon}^{-5})\big)\nonumber 
\end{align}
and 
\begin{align}
\sup_{(u,v)\in\mathcal{U}_{\varepsilon}^{+}}\Big|dist_{\nwarrow}[(u,v)]\cdot\partial_{v}\Big(\frac{\partial_{v}r}{1-\frac{1}{3}\Lambda r^{2}}\Big)(u,v)\Big|+\label{eq:BoundDDrEverywhere}\\
+\sup_{(u,v)\in\mathcal{U}_{\varepsilon}^{+}}\Big|dist_{\nearrow}[(u,v)]\cdot & \partial_{u}\Big(\frac{\partial_{u}r}{1-\frac{1}{3}\Lambda r^{2}}\Big)(u,v)\Big|\le\exp\big(\exp(\sigma_{\varepsilon}^{-5})\big),\nonumber 
\end{align}
where the functions $dist_{\nwarrow}[\cdot]$ and $dist_{\nearrow}[\cdot]$
are defined by (\ref{eq:IngoingDistanceFunction}) and (\ref{eq:OutgoingDistanceFunction}).
Moreover, for any $n\in\mathbb{N}$ and $0\le i,j\le N_{\varepsilon}$,
$i\neq j$, such that $\mathcal{R}_{i;j}^{(n)}\subset\mathcal{U}_{+}^{(\varepsilon)}$,
the following estimates hold on $\mathcal{R}_{i;j}^{(n)}$, depending
on whether $i>j$ or $i<j$:

\begin{itemize}

\item In the case $i>j$, 
\begin{equation}
\exp(-\sigma_{\varepsilon}^{-7})\rho_{\varepsilon}^{-1}\le\frac{\sqrt{-\Lambda}r|_{\mathcal{R}_{i;j}^{(n)}}}{\varepsilon^{(j)}}\le\exp(\sigma_{\varepsilon}^{-7})\rho_{\varepsilon}^{-1}\label{eq:BoundsRIntersectionRegioni>j}
\end{equation}
and 
\begin{equation}
\sup_{\mathcal{R}_{i;j}^{(n)}}r-\inf_{\mathcal{R}_{i;j}^{(n)}}r\le\frac{\exp(\sigma_{\varepsilon}^{-7})}{\sqrt{-\Lambda}}\varepsilon^{(j)}.\label{eq:ComparisonRIntersectionRegioni>j}
\end{equation}

\item In the case $i<j$, 
\begin{equation}
\exp(-\sigma_{\varepsilon}^{-4})\rho_{\varepsilon}\frac{1}{\varepsilon^{(i)}}\le\sqrt{-\Lambda}r|_{\mathcal{R}_{i;j}^{(n)}}\le\exp(\sigma_{\varepsilon}^{-4})\rho_{\varepsilon}\frac{1}{\varepsilon^{(i)}}\label{eq:BoundsRIntersectionRegioni<j}
\end{equation}
and 
\begin{equation}
\sup_{\mathcal{R}_{i;j}^{(n)}}\frac{1}{r}-\inf_{\mathcal{R}_{i;j}^{(n)}}\frac{1}{r}\le\exp(\sigma_{\varepsilon}^{-7})\sqrt{-\Lambda}\varepsilon^{(i)}.\label{eq:ComparisonRIntersectionRegioni<j}
\end{equation}

\end{itemize}

Replacing $\mathcal{U}_{\varepsilon}^{+}$ with $\mathcal{T}_{\varepsilon}^{+}$,
the estimates (\ref{eq:UpperBoundOmegaLikeC0})\textendash (\ref{eq:ComparisonRIntersectionRegioni<j})
still hold with $\delta_{\varepsilon}$ in place of $\sigma_{\varepsilon}$,
$\widetilde{\mathcal{V}}_{i}^{(n)}$ in place of $\mathcal{V}_{i}^{(n)}$
and $\widetilde{\mathcal{R}}_{i;j}^{(n)}$ in place of $\mathcal{R}_{i;j}^{(n)}$.
\end{lem}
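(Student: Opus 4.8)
\textbf{Proof strategy for Lemma \ref{lem:ControlHigherOrderDerivatives}.}
The plan is to derive every estimate in the statement from the quantitative beam-localisation bounds of Lemma \ref{lem:QuantitativeBeamEstimate}, the a priori control on the norm of the solution on $\mathcal{U}_\varepsilon^+$ established in Section \ref{subsec:Notational-conventions-and} (in particular \eqref{eq:UpperBoundMuBootstrapDomain}--\eqref{eq:EstimateGeometryLikeC0}), the geodesic-flow estimates of Lemma \ref{lem:GeodesicPaths} and Corollary \ref{cor:GeodesicPathsLongTimes}, and the evolution equations \eqref{eq:EquationROutside}--\eqref{eq:TildeUMaza}. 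First I would record \eqref{eq:UpperBoundOmegaLikeC0}: this follows directly from the bound \eqref{eq:EstimateGeometryLikeC0} on $\log\big(\tfrac{\partial_v r}{1-\frac{2m}{r}}\big)$ and $\log\big(\tfrac{-\partial_u r}{1-\frac{2m}{r}}\big)$ together with the definition \eqref{eq:DefinitionHereHawkingMass} of $\Omega^2$ in terms of $\partial_u r$, $\partial_v r$ and $m$, and the smallness \eqref{eq:UpperBoundMuBootstrapDomain} of $\tfrac{2\tilde m}{r}$. Next, \eqref{eq:LowerBoundROnDomains} is immediate from the containment $\gamma\subset\{r\ge e^{-e^{400C_0}(1+v_\mathcal{I}^{-1}U)}\tfrac{l}{E_0}\}$ in \eqref{eq:DomainForGammaMaximallyExtended} applied with $C_0=5\sigma_\varepsilon^{-3}$ and the fact that $\tfrac{1}{10}\varepsilon^{(i)}\le\sqrt{-\Lambda}\,l\le 10\varepsilon^{(i)}$ and $E_0\sim 1$ on the support of $f_{\varepsilon i}$, exactly as in the proof of Lemma \ref{lem:QuantitativeBeamEstimate} (using \eqref{eq:HierarchyOfParameters} to absorb the $\sigma_\varepsilon$-dependent constants).

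For the energy-momentum bounds \eqref{eq:UpperBoundTuuTvvFi}--\eqref{eq:UpperBoundTuvFi}, I would plug the geodesic-flow inequalities \eqref{eq:IngoingBoundBootstrapDomain}--\eqref{eq:OutgoingBoundBootstrapDomain} into the integral expressions \eqref{eq:ComponentsStressEnergy} for $T_{\mu\nu}[f_{\varepsilon i}]$. Concretely, on $\mathcal{V}_{i\nwarrow}^{(n)}$ the ratio $\tfrac{\dot\gamma^v}{\dot\gamma^u}\lesssim e^{e^{\sigma_\varepsilon^{-4}}}l^2\tfrac{1-\frac13\Lambda r^2}{r^2}$ says that $\Omega^2 p^v$ is small compared to $\Omega^2 p^u$, while on $\mathcal{V}_{i\nearrow}^{(n)}$ the roles are reversed; combined with the two-sided bounds $\exp(-\sigma_\varepsilon^{-6})\le\Omega^2(p^u+p^v)\le\exp(\exp(\sigma_\varepsilon^{-4}))$ from \eqref{eq:BoundSupportFepsiloni} and the normalisation \eqref{eq:UpperBoundFbarEpsilon} of $\int \bar f_{\varepsilon i}\,l\,dl\le 16$, one obtains $r^2 T_{vv}\lesssim e^{e^{\sigma_\varepsilon^{-5}}}$ on the ingoing part and $r^2 T_{uu}\lesssim e^{e^{2\sigma_\varepsilon^{-5}}}\tfrac{(\varepsilon^{(i)})^4}{r^4}(-\Lambda)^{-2}$ there (the power of $\tfrac{\varepsilon^{(i)}}{r}$ arising from $(\Omega^2 p^v)^2\lesssim (l^2/r^2)^2(\Omega^2 p^u)^{-2}$ after using the shell relation \eqref{eq:NullShellNewAngularMomentum}), and symmetrically on the outgoing part; \eqref{eq:UpperBoundTuvFi} follows from $(\Omega^2 p^u)(\Omega^2 p^v)=l^2/r^2$ directly. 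The factors $\sigma_\varepsilon^{-5}$, $\sigma_\varepsilon^{-6}$, $\sigma_\varepsilon^{-7}$ in the exponents are chosen generously so that, via the hierarchy \eqref{eq:HierarchyOfParameters}, any product of lower-tier exponential factors and of the number $N_\varepsilon$ of beams is absorbed.

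For the first-derivative estimates \eqref{eq:BoundDOmegaEverywhere}--\eqref{eq:BoundDDrEverywhere}, the plan is to integrate the evolution equations: $\partial_v\log\big(\tfrac{\Omega^2}{1-\frac13\Lambda r^2}\big)$ is governed by \eqref{eq:OneMoreUsefulEquationOmega} (equivalently \eqref{eq:RenormalisedEquations}) and $\partial_v\big(\tfrac{\partial_v r}{1-\frac13\Lambda r^2}\big)$ by the constraint \eqref{eq:EksiswshGiaKappa-} combined with \eqref{eq:EqualDuRDvRInitially}-type relations; starting from the boundary data on $\mathcal{I}_\varepsilon$ and $\gamma_{\mathcal{Z}\varepsilon}$ (where \eqref{eq:BoundaryConditionOmegaInfinity} and \eqref{eq:BoundaryConditionOmegaAxis} hold), one integrates along a $v=\text{const}$ or $u=\text{const}$ line. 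The integrand involves $T_{vv}$, $T_{uv}$ and $\tfrac{\tilde m}{r^3}$, all of which are supported inside $\cup_{n,i}\mathcal{V}_i^{(n)}$ and bounded by the estimates just obtained; since the crossing of such a line with the beam cluster occurs in $v$- (resp.\ $u$-) intervals whose total length is comparable to the beam widths $h_{\varepsilon,i}$, which are themselves $\sim\exp(\sigma_\varepsilon^{-6})\,r|_{\text{beam}}\,\cdot$(angular momentum factor), the $r^{-2}$ weights in the integrals produce exactly the $dist_{\nwarrow}[\cdot]^{-1}$ (resp.\ $dist_{\nearrow}[\cdot]^{-1}$) singular behaviour, matched by the factor $dist_{\nwarrow}[p]$ (resp.\ $dist_{\nearrow}[p]$) on the left-hand side via the simple identities \eqref{eq:SimpleEqualityDistance1}--\eqref{SimpleEqualityDistance2} when $p$ is inside a beam, and via a direct estimate on $r$ along $\zeta_\nwarrow[p]$, $\zeta_\nearrow[p]$ when $p$ is outside all beams. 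Finally, the intersection-region estimates \eqref{eq:BoundsRIntersectionRegioni>j}--\eqref{eq:ComparisonRIntersectionRegioni<j} follow from the geodesic domain containment \eqref{eq:DomainForGammaMaximallyExtended}: on $\mathcal{R}_{i;j}^{(n)}$ with $i>j$ the beam $\mathcal{V}_{i\nwarrow}$ is near the axis only at $v-u\sim\beta_{\varepsilon,i}$, while $\mathcal{V}_{j\nearrow}$ at that $(u,v)$ is at $r$ comparable to the initial separation $d_j\sim\rho_\varepsilon^{-1}\varepsilon^{(j)}(-\Lambda)^{-1/2}$ dictated by \eqref{eq:InitialCenters}, and since on AdS (controlled via Lemma \ref{lem:GeodesicPaths}) $r$ along an outgoing geodesic at a fixed number of reflections is determined up to $e^{O(C_0)}$ errors by its value at $u=0$, one gets the two-sided bound \eqref{eq:BoundsRIntersectionRegioni>j}; \eqref{eq:ComparisonRIntersectionRegioni>j} then follows because the diameter of $\mathcal{R}_{i;j}^{(n)}$ in both $u$ and $v$ is $\le 2h_{\varepsilon,i}+2h_{\varepsilon,j}$, and $r$ varies on that scale by at most $\sup\big|\tfrac{\partial_v r}{1-\frac13\Lambda r^2}\big|\cdot$(diameter), bounded using \eqref{eq:EstimateGeometryLikeC0}; the case $i<j$ is handled the same way after passing to $\tfrac1r$, exploiting that near $\mathcal{I}$ the relevant beam is the narrowest one. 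The statements over $\mathcal{T}_\varepsilon^+$ are obtained verbatim with $\delta_\varepsilon$ replacing $\sigma_\varepsilon$ throughout, using \eqref{eq:EstimateGeometryLikeC0LargerDomain} and \eqref{eq:BoundSupportFepsiloni-1} in place of \eqref{eq:EstimateGeometryLikeC0} and \eqref{eq:BoundSupportFepsiloni}.

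\textbf{Main obstacle.} The delicate point is not any single estimate but the bookkeeping: one must verify that after summing over the up-to-$N_\varepsilon$ beams and over the $O(\sigma_\varepsilon^{-2})$ reflections contained in $\mathcal{U}_\varepsilon^+$, and after composing the nested exponential factors $e^{e^{200C_0}}$ produced by Lemma \ref{lem:GeodesicPaths} (with $C_0=5\sigma_\varepsilon^{-3}$), every error term is still dominated by the target exponents $\exp(\exp(\sigma_\varepsilon^{-5}))$, $\exp(\exp(\sigma_\varepsilon^{-7}))$ etc. This is exactly what the multi-scale hierarchy \eqref{eq:HierarchyOfParameters}--\eqref{eq:RecursiveEi} is engineered to guarantee — e.g.\ $\exp(\exp(e^{\delta_\varepsilon^{-6}}))\le\rho_\varepsilon^{1/20}$ — so the proof amounts to carefully invoking \eqref{eq:HierarchyOfParameters} at each step; the higher-order character of the $T_{uu}$ (resp.\ $T_{vv}$) bound on the ingoing (resp.\ outgoing) beam part, reflecting the near-radiality of the geodesics, is the one genuinely new ingredient compared to the null-dust analysis of \cite{MoschidisNullDust}, and it is what feeds the refined energy-exchange formulas of the following subsections.
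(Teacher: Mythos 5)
Your proposal follows essentially the same route as the paper's proof: \eqref{eq:UpperBoundOmegaLikeC0} from \eqref{eq:DefinitionHereHawkingMass}, \eqref{eq:UpperBoundMuBootstrapDomain} and \eqref{eq:EstimateGeometryLikeC0}; the bounds \eqref{eq:UpperBoundTuuTvvFi}--\eqref{eq:UpperBoundTuvFi} by inserting the support and momentum-ratio estimates of Lemma \ref{lem:QuantitativeBeamEstimate} into \eqref{eq:ComponentsStressEnergy}; \eqref{eq:BoundDOmegaEverywhere} by integrating the renormalised $\Omega$-equation along the crooked lines $\zeta_{\nwarrow}[p]$, $\zeta_{\nearrow}[p]$ back to the data at $u=0$ using the axis/infinity reflection conditions, with \eqref{eq:BoundDDrEverywhere} then read off pointwise from the $v$- (resp.\ $u$-) constraint; and \eqref{eq:BoundsRIntersectionRegioni>j}--\eqref{eq:ComparisonRIntersectionRegioni<j} by converting the coordinate distance of $\mathcal{R}_{i;j}^{(n)}$ from the axis (resp.\ infinity) into a bound on $r$ (resp.\ $1/r$) via \eqref{eq:EstimateGeometryLikeC0}. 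One small correction: \eqref{eq:LowerBoundROnDomains} is a statement about the whole planar region $\mathcal{V}_{i}\cap\mathcal{U}_{\varepsilon}^{+}$, not merely about the support of $f_{\varepsilon i}$, so it should be obtained (as in the paper) from the cutoff $v-u\ge\beta_{\varepsilon,i}$ built into the definition \eqref{eq:IngoingOutgoingDomains} together with integration of $(\partial_{v}-\partial_{u})r$ from the axis using \eqref{eq:EstimateGeometryLikeC0}, rather than from the geodesic containment \eqref{eq:DomainForGammaMaximallyExtended}, which only controls $r$ along geodesics in the beam's support.
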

\begin{rem*}
Note that, in view of the relations (\ref{eq:SimpleEqualityDistance1})
and (\ref{SimpleEqualityDistance2}), the estimates (\ref{eq:BoundDOmegaEverywhere})
and (\ref{eq:BoundDDrEverywhere}) yield, as a special case, that,
for any $0\le i\le N_{\varepsilon}$:
\begin{equation}
\sup_{\cup_{k\in\mathbb{N}}\mathcal{V}_{i\nwarrow}^{(k)}\cap\mathcal{U}_{\varepsilon}^{+}}\Big|\partial_{v}\log\Big(\frac{\Omega^{2}}{1-\frac{1}{3}\Lambda r^{2}}\Big)\Big|+\sup_{\cup_{k\in\mathbb{N}}\mathcal{V}_{i\nearrow}^{(k)}\cap\mathcal{U}_{\varepsilon}^{+}}\Big|\partial_{u}\log\Big(\frac{\Omega^{2}}{1-\frac{1}{3}\Lambda r^{2}}\Big)\Big|\le\exp\big(2\exp(\sigma_{\varepsilon}^{-5})\big)\frac{\sqrt{-\Lambda}}{\varepsilon^{(i)}}\label{eq:BoundDOmegaOnDomainsSpecialCase}
\end{equation}
and 
\begin{equation}
\sup_{\cup_{k\in\mathbb{N}}\mathcal{V}_{i\nwarrow}^{(k)}\cap\mathcal{U}_{\varepsilon}^{+}}\Big|\partial_{v}\Big(\frac{\partial_{v}r}{1-\frac{1}{3}\Lambda r^{2}}\Big)\Big|+\sup_{\cup_{k\in\mathbb{N}}\mathcal{V}_{i\nearrow}^{(k)}\cap\mathcal{U}_{\varepsilon}^{+}}\Big|\partial_{u}\Big(\frac{\partial_{u}r}{1-\frac{1}{3}\Lambda r^{2}}\Big)\Big|\le\exp\big(2\exp(\sigma_{\varepsilon}^{-5})\big)\frac{\sqrt{-\Lambda}}{\varepsilon^{(i)}}\label{eq:BoundDdrOnDomainsSpecialCase}
\end{equation}
Notice that the left hand sides of (\ref{eq:BoundDOmegaOnDomainsSpecialCase})
and (\ref{eq:BoundDdrOnDomainsSpecialCase}) are not be estimated
by the low regularity norm (\ref{eq:InitialDataNormSlice}). This
loss of regularity is reflected in the fact that the right hand sides
of (\ref{eq:BoundDOmegaOnDomainsSpecialCase}) and (\ref{eq:BoundDdrOnDomainsSpecialCase})
can not be bounded merely by terms of the form $(\exp(\exp(\sigma_{\varepsilon}^{-C}))$,
but have additional $(\varepsilon^{(i)})^{-1}$ terms which, in these
cases, are optional. 
\end{rem*}
\begin{proof}
\noindent Let $\varepsilon\in(0,\varepsilon_{1}]$ and let $0\le i\le N_{\varepsilon}$.
In view of the formula (\ref{eq:DefinitionHereHawkingMass}) and the
bound (\ref{eq:UpperBoundMuBootstrapDomain}) for $2\tilde{m}/r$,
the estimate (\ref{eq:EstimateGeometryLikeC0}) readily implies (\ref{eq:UpperBoundOmegaLikeC0}).

Using the fact that 
\[
\inf_{\mathcal{V}_{i}}(v-u)=\exp\big(-\exp(\sigma_{\varepsilon}^{-4})\big)\frac{\varepsilon^{(i)}}{\sqrt{-\Lambda}}
\]
(following from the definition of (\ref{eq:IngoingOutgoingDomains})$\mathcal{V}_{i\nwarrow}^{(n)},\mathcal{V}_{i\nearrow}^{(n)}$),
the lower bound (\ref{eq:LowerBoundROnDomains}) for $r$ follows
readily after integrating $(\partial_{v}-\partial_{u})r$ in $\partial_{v}-\partial_{u}$
starting from $\gamma_{\mathcal{Z}_{\varepsilon}}$ and using the
bound (\ref{eq:EstimateGeometryLikeC0}).

The estimates (\ref{eq:UpperBoundTuuTvvFi}) and (\ref{eq:UpperBoundTuvFi})
for $T_{\mu\nu}[f_{\varepsilon i}]$ follow readily from the explicit
formulas (\ref{eq:ComponentsStressEnergy}) for $T_{\mu\nu}[f_{\varepsilon i}]$,
the estimates (\ref{eq:BoundSupportFepsiloni}) and (\ref{eq:IngoingBoundBootstrapDomain})\textendash (\ref{eq:OutgoingBoundBootstrapDomain})
for the support of $f_{\varepsilon i}$ in $p^{u}$, $p^{v}$, the
fact that $f_{\varepsilon i}$ is supported on $\{2\le\frac{l\sqrt{-\Lambda}}{\varepsilon^{(i)}}\le6\}$
(in view of (\ref{eq:InitialVlasovSeeds}) and (\ref{eq:Fepsiloni})),
the bound (\ref{eq:UpperBoundFbarEpsilon}), and the bounds (\ref{eq:EstimateGeometryLikeC0}),
(\ref{eq:UpperBoundOmegaLikeC0}). 

For any $(\bar{u},\bar{v})\in\mathcal{U}_{\varepsilon}^{+}$, integrating
the renormalised equation (\ref{eq:RenormalisedEquations}) for $\Omega$
in $u$ along $\zeta_{\nwarrow}[(\bar{u},\bar{v})]$ and in $v$ along
$\zeta_{\nearrow}[(\bar{u},\bar{v})]$ (see (\ref{eq:CrookedLineIngoing})
and (\ref{eq:CrookedLineOutgoing}) for the definition of $\zeta_{\nwarrow}[\cdot]$
and $\zeta_{\nearrow}[\cdot]$), making use of the boundary conditions
(\ref{eq:BoundaryConditionOmegaAxis})\textendash (\ref{eq:BoundaryConditionOmegaInfinity})
for $\Omega^{2}$ at $\gamma_{\mathcal{Z}}$, $\mathcal{I}_{\varepsilon}$,
we infer that:
\begin{align}
\Big|\partial_{v} & \log\Big(\frac{\Omega^{2}}{1-\frac{1}{3}\Lambda r^{2}}\Big)(\bar{u},\bar{v})\Big|\le\label{eq:EstimateDvOmegaDomains}\\
\le & \int_{\zeta_{\nwarrow}[(\bar{u},\bar{v})]}\Big(\frac{\tilde{m}}{r}\Big(\frac{1}{r^{2}}+\frac{1}{3}\Lambda\frac{\Lambda r^{2}-1}{1-\frac{1}{3}\Lambda r^{2}}\Big)\cdot\frac{\Omega^{2}}{1-\frac{1}{3}\Lambda r^{2}}-16\pi\frac{1-\frac{1}{2}\Lambda r^{2}}{1-\frac{1}{3}\Lambda r^{2}}\frac{1}{r^{2}}(r^{2}T_{uv}[f_{\varepsilon}])\Big)\,d(u+v)+\nonumber \\
 & +\Big|\partial_{v}\log\Big(\frac{(\Omega_{/}^{(\varepsilon)})^{2}}{1-\frac{1}{3}\Lambda(r_{/}^{(\varepsilon)})^{2}}\Big)\Big|\Bigg|_{\{u=0\}\cap\zeta_{\nwarrow}[(\bar{u},\bar{v})]}\nonumber 
\end{align}
and 
\begin{align}
\Big|\partial_{u} & \log\Big(\frac{\Omega^{2}}{1-\frac{1}{3}\Lambda r^{2}}\Big)(\bar{u},\bar{v})\Big|\le\label{eq:EstimateDuOmegaDomains}\\
\le & \int_{\zeta_{\nearrow}[(\bar{u},\bar{v})]}\Big(\frac{\tilde{m}}{r}\Big(\frac{1}{r^{2}}+\frac{1}{3}\Lambda\frac{\Lambda r^{2}-1}{1-\frac{1}{3}\Lambda r^{2}}\Big)\cdot\frac{\Omega^{2}}{1-\frac{1}{3}\Lambda r^{2}}-16\pi\frac{1-\frac{1}{2}\Lambda r^{2}}{1-\frac{1}{3}\Lambda r^{2}}\frac{1}{r^{2}}(r^{2}T_{uv}[f_{\varepsilon}])\Big)\,d(u+v)+\nonumber \\
 & +\Big|\partial_{v}\log\Big(\frac{(\Omega_{/}^{(\varepsilon)})^{2}}{1-\frac{1}{3}\Lambda(r_{/}^{(\varepsilon)})^{2}}\Big)\Big|\Bigg|_{\{u=0\}\cap\zeta_{\nearrow}[(\bar{u},\bar{v})]}\nonumber 
\end{align}
Making use of the following:

\begin{itemize}

\item The bounds (\ref{eq:UpperBoundMuBootstrapDomain}) and (\ref{eq:UpperBoundU+VBootstrapDomain})
for $2\tilde{m}/r$ and $\sqrt{-\Lambda}(u+v)$ (the latter estimating
the number of straight segments contained in $\zeta_{\nwarrow}[(\bar{u},\bar{v})]$
and $\zeta_{\nearrow}[(\bar{u},\bar{v})]$),

\item The bounds (\ref{eq:EstimateGeometryLikeC0}) and (\ref{eq:UpperBoundOmegaLikeC0})
for $\frac{\partial_{v}r}{1-\frac{1}{3}\Lambda r^{2}}$, $\frac{\partial_{u}r}{1-\frac{1}{3}\Lambda r^{2}}$
and $\frac{\Omega^{2}}{1-\frac{1}{3}\Lambda r^{2}}$

\item The bound .
\begin{equation}
r^{2}T_{uv}[f_{\varepsilon}]\le\frac{1}{2\pi}\frac{-\partial_{u}r}{1-\frac{2m}{r}}\partial_{v}\tilde{m}\text{ and }r^{2}T_{uv}[f_{\varepsilon}]\le\frac{1}{2\pi}\frac{\partial_{v}r}{1-\frac{2m}{r}}(-\partial_{u}\tilde{m})
\end{equation}
(following from (\ref{eq:TildeUMaza})\textendash (\ref{eq:TildeVMaza}))

\item The fact that the support of $\tilde{m}|_{\zeta_{\nwarrow}[(\bar{u},\bar{v})]}$,
$T_{\mu\nu}|_{\zeta_{\nwarrow}[(\bar{u},\bar{v})]}$ is contained
in $\{v-u\ge dist_{\nwarrow}[(\bar{u},\bar{v})]\}$ (and similarly
for $\zeta_{\nearrow}[(\bar{u},\bar{v})]$),

\item The trivial estimates
\begin{align}
\sup_{0\le\hat{u}\le\bar{u}}\int_{\{u=\hat{u}\}\cap\{v-u\ge dist_{\nwarrow}[(\bar{u},\bar{v})]\}}\frac{1}{r^{2}}(\partial_{v}r)\,dv+\\
+\sup_{0\le\hat{v}\le\bar{v}}\int_{\{v=\hat{v}\}\cap\{v-u\ge dist_{\nwarrow}[(\bar{u},\bar{v})]\}} & \frac{1}{r^{2}}(-\partial_{u}r)\,du\le e^{\sigma_{\varepsilon}^{-4}}\frac{1}{dist_{\nwarrow}[(\bar{u},\bar{v})]}\nonumber 
\end{align}
and 
\begin{align}
\sup_{0\le\hat{u}\le\bar{u}}\int_{\{u=\hat{u}\}\cap\{v-u\ge dist_{\nearrow}[(\bar{u},\bar{v})]\}}\frac{1}{r^{2}}(\partial_{v}r)\,dv+\\
+\sup_{0\le\hat{v}\le\bar{v}}\int_{\{v=\hat{v}\}\cap\{v-u\ge dist_{\nearrow}[(\bar{u},\bar{v})]\}} & \frac{1}{r^{2}}(-\partial_{u}r)\,du\le e^{\sigma_{\varepsilon}^{-4}}\frac{1}{dist_{\nearrow}[(\bar{u},\bar{v})]}\nonumber 
\end{align}
(following from (\ref{eq:EstimateGeometryLikeC0})),

\item The initial data estimates (\ref{eq:ComparableInitialDataWithAdSEpsilonn})
and (\ref{eq:BoundDvdvRAndDvOmegaInitiallyInBeams}) for $\partial_{v}\log\Big(\frac{(\Omega_{/}^{(\varepsilon)})^{2}}{1-\frac{1}{3}\Lambda(r_{/}^{(\varepsilon)})^{2}}\Big)$,

\end{itemize}

we infer from (\ref{eq:EstimateDvOmegaDomains})\textendash (\ref{eq:EstimateDuOmegaDomains})
that 
\begin{equation}
\Big|\partial_{v}\log\Big(\frac{\Omega^{2}}{1-\frac{1}{3}\Lambda r^{2}}\Big)(\bar{u},\bar{v})\Big|\le e^{2\sigma_{\varepsilon}^{-4}}\frac{1}{dist_{\nwarrow}[(\bar{u},\bar{v})]}+C\sqrt{-\Lambda}\label{eq:EstimateDvOmegaDomains-1}
\end{equation}
and 
\begin{equation}
\Big|\partial_{u}\log\Big(\frac{\Omega^{2}}{1-\frac{1}{3}\Lambda r^{2}}\Big)(\bar{u},\bar{v})\Big|\le e^{2\sigma_{\varepsilon}^{-4}}\frac{1}{dist_{\nearrow}[(\bar{u},\bar{v})]}+C\sqrt{-\Lambda}.\label{eq:EstimateDvOmegaDomains-1-1}
\end{equation}
The bound (\ref{eq:BoundDOmegaEverywhere}) follows readily from (\ref{eq:EstimateDvOmegaDomains-1})\textendash (\ref{eq:EstimateDvOmegaDomains-1-1}).

The constraint equations (\ref{eq:ConstraintVFinal}) and (\ref{eq:ConstraintUFinal})
imply that 
\begin{equation}
\partial_{v}\Big(\frac{\partial_{v}r}{1-\frac{1}{3}\Lambda r^{2}}\Big)=\partial_{v}\log\Big(\frac{\Omega^{2}}{1-\frac{1}{3}\Lambda r^{2}}\Big)\cdot\frac{\partial_{v}r}{1-\frac{1}{3}\Lambda r^{2}}-\frac{4\pi}{1-\frac{1}{3}\Lambda r^{2}}\frac{1}{r}\cdot r^{2}T_{vv}[f_{\varepsilon}]\label{eq:ConstraintVRenormalised}
\end{equation}
and 
\begin{equation}
\partial_{u}\Big(\frac{\partial_{u}r}{1-\frac{1}{3}\Lambda r^{2}}\Big)=\partial_{u}\log\Big(\frac{\Omega^{2}}{1-\frac{1}{3}\Lambda r^{2}}\Big)\cdot\frac{\partial_{u}r}{1-\frac{1}{3}\Lambda r^{2}}-\frac{4\pi}{1-\frac{1}{3}\Lambda r^{2}}\frac{1}{r}\cdot r^{2}T_{uu}[f_{\varepsilon}].\label{eq:ConstraintURenormalised}
\end{equation}
The estimate (\ref{eq:BoundDDrEverywhere}) is obtained readily from
the relations (\ref{eq:ConstraintVRenormalised})\textendash (\ref{eq:ConstraintURenormalised}),
the bound (\ref{eq:BoundDOmegaEverywhere}) for $\frac{\partial\Omega^{2}}{1-\frac{1}{3}\Lambda r^{2}}$,
the bound (\ref{eq:EstimateGeometryLikeC0}) for $\frac{\partial r}{1-\frac{1}{3}\Lambda r^{2}}$,
the bounds (\ref{eq:UpperBoundTuuTvvFi}) for $f_{\varepsilon i}$,
the fact that $T_{\mu\nu}[f_{\varepsilon}]$ is supported only on
$\cup_{i=0}^{N_{\varepsilon}}\mathcal{V}_{i}$ and the trivial estimate
\begin{align*}
\sup_{(u,v)\in\cup_{n\in\mathbb{N}}\cup_{i=0}^{N_{\varepsilon}}\mathcal{V}_{i}^{(n)}}\Big( & \frac{\max\{dist_{\nwarrow}[(u,v)],\text{ }dist_{\nearrow}[(u,v)]\}}{r(u,v)}\Big)\le\\
 & \le e^{\sigma_{\varepsilon}^{-4}}\sup_{(u,v)\in\cup_{n\in\mathbb{N}}\cup_{i=0}^{N_{\varepsilon}}\mathcal{V}_{i}^{(n)}}\Big(\frac{\max\{dist_{\nwarrow}[(u,v)],\text{ }dist_{\nearrow}[(u,v)]\}}{v-u}\Big)\le e^{\sigma_{\varepsilon}^{-4}}
\end{align*}
(following from the bound (\ref{eq:EstimateGeometryLikeC0}) and the
definition (\ref{eq:IngoingDistanceFunction}), (\ref{eq:OutgoingDistanceFunction})
of $dist_{\nwarrow}[\cdot]$, $dist_{\nearrow}[\cdot]$).

Finally, let $n\in\mathbb{N}$ and $0\le i,j\le N_{\varepsilon}$,
$i\neq j$, be such that $\mathcal{R}_{i;j}^{(n)}\subset\mathcal{U}_{+}^{(\varepsilon)}$.
In view of the form (\ref{eq:ExplicitIntersectionDomain}) of $\mathcal{R}_{i;j}^{(n)}$,
we infer the following: 

\begin{itemize}

\item In the case $i>j$, integrating $(\partial_{v}-\partial_{u})r$
in $\partial_{v}-\partial_{u}$ from $(\frac{u+v}{2},\frac{u+v}{2})\in\gamma_{\mathcal{Z}_{\varepsilon}}$
up to $(u,v)\in\mathcal{\mathcal{R}}_{i;j}^{(n)}$, using the bound
(\ref{eq:EstimateGeometryLikeC0}) and the formulas (\ref{eq:InitialCenters})
and (\ref{eq:ShorthandNotationCornerPoints}) for $v_{i,\varepsilon}$
and $v_{i,\varepsilon}^{(n)}$, we obtain (\ref{eq:BoundsRIntersectionRegioni>j})
and (\ref{eq:ComparisonRIntersectionRegioni>j}).

\item In the case $i<j$, arguing similarly but integrating $(\partial_{v}-\partial_{u})\frac{1}{r}$
in $\partial_{v}-\partial_{u}$ from $(\frac{u+v}{2}-\frac{1}{2}\sqrt{-\frac{3}{\Lambda}}\pi,\frac{u+v}{2}+\frac{1}{2}\sqrt{-\frac{3}{\Lambda}}\pi)\in\mathcal{I}_{\varepsilon}$
up to $(u,v)\in\mathcal{\mathcal{R}}_{i;j}^{(n)}$, we obtain (\ref{eq:BoundsRIntersectionRegioni<j})
and (\ref{eq:ComparisonRIntersectionRegioni<j}). 

\end{itemize}

The proof of the analogous estimates for $\mathcal{T}_{\varepsilon}^{+}$
in place of $\mathcal{U}_{\varepsilon}^{+}$ (with $\delta_{\varepsilon}$,
$\widetilde{\mathcal{V}}_{i}^{(n)}$, $\widetilde{\mathcal{R}}_{i;j}^{(n)}$
replacing $\sigma_{\varepsilon}$, $\mathcal{V}_{i}^{(n)}$, $\mathcal{R}_{i;j}^{(n)}$)
follows in exactly the same way, using (\ref{eq:EstimateGeometryLikeC0LargerDomain})
in place of (\ref{eq:EstimateGeometryLikeC0}). 
\end{proof}

\subsection{\label{subsec:BeamInteractions} Interaction of the Vlasov beams:
Energy exchange and concentration}

In this section, we will establish a number of results providing quantitative
bounds on the change of the energy content (as measured by (\ref{eq:IngoingEnergyBefore})\textendash (\ref{eq:OutgoingEnergyAfter}))
and the geometric separation (as measured by (\ref{eq:IngoingRSeparationBeforeAfter})\textendash (\ref{eq:OutgoingRSeparationBeforeAfter}))
of the beams $\mathcal{V}_{i}^{(n)}$, before and after their pairwise
intersections. As a corollary of these technical bounds, we will be
able to estimate the total change of the energy content and the separation
of the beams after each successive reflection off $\mathcal{I}_{\varepsilon}$
in the next section (see Proposition \ref{prop:TotalEnergyChange}).

The next result provides an estimate for the change of the energy
content of the beams $\mathcal{V}_{j\nearrow}^{(n)}$ and $\mathcal{V}_{i\nwarrow}^{(n)}$
(or $\mathcal{V}_{i\nwarrow}^{(n+1)}$, if $i<j$) before and after
their intersection.
\begin{prop}
\label{prop:EnergyChangeInteraction} Let $\varepsilon\in(0,\varepsilon_{1}]$
and let $n\in\mathbb{N}$ and $0\le i,j\le N_{\varepsilon}$, $i\neq j$,
be such that 
\[
\mathcal{R}_{i;j}^{(n)}\subset\mathcal{U}_{\varepsilon}^{+}.
\]
Let us also define 
\begin{equation}
r_{n;i.j}\doteq\inf_{\mathcal{R}_{i;j}^{(n)}}r.\label{eq:DefinitionRnij}
\end{equation}
Then the following relations hold for the change of the energy of
the two Vlasov beams entering and leaving the intersection region
$\mathcal{R}_{i;j}^{(n)}$:

\begin{itemize}

\item If $i>j$, then 
\begin{align}
\mathcal{E}_{\nwarrow}^{(+)}[n;i,j] & =\mathcal{E}_{\nwarrow}^{(-)}[n;i,j]\cdot\exp\Big(\frac{2\mathcal{E}_{\nearrow}^{(-)}[n;i,j]}{r_{n;i.j}}+O(\rho_{\varepsilon}^{\frac{3}{2}})\Big)+O\Big(\rho_{\varepsilon}^{\frac{3}{2}}\frac{\varepsilon^{(i)}}{\sqrt{-\Lambda}}\Big),\label{eq:IncreaseIngoingEnergyCloseToAxis}\\
\mathcal{E}_{\nearrow}^{(+)}[n;i,j] & =\mathcal{E}_{\nearrow}^{(-)}[n;i,j]\cdot\big(1+O(\varepsilon)\big)+O\Big(\rho_{\varepsilon}^{\frac{3}{2}}\frac{\varepsilon^{(j)}}{\sqrt{-\Lambda}}\Big)\label{eq:DecreaseOutgoingEnergyCloseToAxis}
\end{align}

\item If $i<j$, then 
\begin{align}
\mathcal{E}_{\nwarrow}^{(+)}[n;i,j] & =\mathcal{E}_{\nwarrow}^{(-)}[n;i,j]\cdot\big(1+O(\varepsilon)\big)+O\Big(\varepsilon\frac{\varepsilon^{(i)}}{\sqrt{-\Lambda}}\Big),\label{eq:IngoingEnergyCloseToInfinity}\\
\mathcal{E}_{\nearrow}^{(+)}[n;i,j] & =\mathcal{E}_{\nearrow}^{(-)}[n;i,j]\cdot\big(1+O(\varepsilon)\big)+O\Big(\varepsilon\frac{\varepsilon^{(j)}}{\sqrt{-\Lambda}}\Big).\label{eq:OutgoingEnergyCloseToInfinity}
\end{align}

%Na ekshghsw kai bigO notation kapou

\end{itemize}

In the case when $\widetilde{\mathcal{R}}_{i;j}^{(n)}\subset\mathcal{T}_{\varepsilon}^{+}$,
the relations (\ref{eq:IncreaseIngoingEnergyCloseToAxis})\textendash (\ref{eq:OutgoingEnergyCloseToInfinity})
also hold for $\widetilde{\mathcal{E}}_{\nwarrow}^{(\pm)}[n;i,j]$,
$\widetilde{\mathcal{E}}_{\nearrow}^{(\pm)}[n;i,j]$ in place of $\mathcal{E}_{\nwarrow}^{(\pm)}[n;i,j]$,
$\mathcal{E}_{\nearrow}^{(\pm)}[n;i,j]$.
\end{prop}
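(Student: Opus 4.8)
The plan is to derive the energy-exchange formulas \eqref{eq:IncreaseIngoingEnergyCloseToAxis}--\eqref{eq:OutgoingEnergyCloseToInfinity} by tracking the renormalised Hawking mass $\tilde m$ across the intersection rectangle $\mathcal{R}_{i;j}^{(n)}$, following the strategy of \cite{MoschidisNullDust} outlined in the introduction but now keeping rigorous control of the Vlasov error terms. First I would recall the ODE \eqref{eq:ODEforEnergyFlux} (the analogue of \eqref{eq:MassIntro}) for $\partial_u\partial_v\tilde m$, which expresses the change in the $v$-flux of mass around the ingoing beam $\mathcal{V}_{i\nwarrow}^{(n)}$ in terms of the factor $\frac{1-\frac{2m}{r}}{\partial_v r}r^2 T_{vv}$, and the constraint \eqref{eq:EksiswshGiaK} governing $\partial_u\log\big(\frac{\partial_v r}{1-2m/r}\big)$ along the outgoing beam $\mathcal{V}_{j\nearrow}^{(n)}$. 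Integrating \eqref{eq:EksiswshGiaK} in $u$ across $\mathcal{R}_{j;i}^{(n)}$ produces the exponential factor $\exp\big(-4\pi\int r^{-1}\frac{r^2T_{uu}[f_{\varepsilon j}]}{-\partial_u r}\,du\big)$, which by the definition \eqref{eq:OutgoingEnergyBefore} of $\mathcal{E}_{\nearrow}^{(-)}[n;i,j]$ and the relation \eqref{eq:UDerivativeMassFromReducedTotalCurrent} (or directly \eqref{eq:TildeUMaza}) is, to leading order, $\exp\big(\frac{2\mathcal{E}_{\nearrow}^{(-)}[n;i,j]}{r_{n;i.j}}\big)$ once one uses the near-constancy of $r$ on $\mathcal{R}_{i;j}^{(n)}$ guaranteed by \eqref{eq:ComparisonRIntersectionRegioni>j}. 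Then integrating the resulting identity for $\partial_v\tilde m$ over the width of the ingoing beam yields \eqref{eq:IncreaseIngoingEnergyCloseToAxis}; the analogous computation with $u,v$ interchanged for the outgoing beam gives \eqref{eq:DecreaseOutgoingEnergyCloseToAxis}, where the loss factor turns out to be $1+O(\varepsilon)$ rather than an exponential decrease because, by \eqref{eq:BoundsRIntersectionRegioni>j}, the ingoing beam's mass content $\mathcal{E}_{\nwarrow}^{(-)}\sim a_{\varepsilon i}\varepsilon^{(i)}$ is negligible compared with $r_{n;i.j}\sim\rho_\varepsilon^{-1}\varepsilon^{(j)}$.

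The second block of the argument handles the quantitative size of the error terms, which is where the genuine novelty over the null-dust case lies. In the null-dust limit $r^2T_{vv}$ is exactly $u$-independent (relation \eqref{eq:ConservationIntro}); for the massless Vlasov field the analogue \eqref{eq:ConservationOfEnergyDu} carries a correction term of higher differential order in the metric, controlled not by the scale-invariant norm $\|\cdot\|$ but only by the higher-order bounds \eqref{eq:BoundDOmegaOnDomainsSpecialCase}--\eqref{eq:BoundDdrOnDomainsSpecialCase} of Lemma \ref{lem:ControlHigherOrderDerivatives}, which carry an unavoidable factor $(\varepsilon^{(i)})^{-1}$. The key point is that these factors are compensated by the smallness of the interaction region: the $v$-extent of $\mathcal{R}_{i;j}^{(n)}$ is $2h_{\varepsilon,i}=2e^{\sigma_\varepsilon^{-6}}\varepsilon^{(i)}(-\Lambda)^{-1/2}$ (or $2\tilde h_{\varepsilon,i}$ in the $\mathcal{T}_\varepsilon^+$ case), so products of the form $h_{\varepsilon,i}\cdot(\varepsilon^{(i)})^{-1}\cdot\exp(\exp(\sigma_\varepsilon^{-5}))$ are bounded by $\exp(\exp(2\sigma_\varepsilon^{-5}))$, and then — crucially — by the hierarchy relation \eqref{eq:HierarchyOfParameters} and the fact that $\varepsilon^{(i+1)}$ is exponentially smaller than any power of $\varepsilon^{(i)}$, these are reabsorbed into the claimed $O(\rho_\varepsilon^{3/2})$ and $O(\varepsilon)$ errors. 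I would organise this as a sequence of lemma-style estimates: (i) using Lemma \ref{lem:QuantitativeBeamEstimate} to bound the angular-momentum corrections to $r^2T_{vv}$ along an ingoing geodesic in terms of $\frac{l^2}{E_0^2}\frac{1}{r^2}\lesssim\exp(\exp(\sigma_\varepsilon^{-4}))(\varepsilon^{(i)}/r)^2$, which on $\mathcal{R}_{i;j}^{(n)}$ with $r\gtrsim\rho_\varepsilon^{-1}\varepsilon^{(j)}$ is tiny; (ii) using \eqref{eq:UpperBoundTuvFi} to show the $T_{uv}$ terms in \eqref{eq:ODEforEnergyFlux} contribute at most $O(\rho_\varepsilon^{3/2})$; (iii) using \eqref{eq:UpperBoundMuBootstrapDomain}, i.e. $\frac{2\tilde m}{r}\le\eta_0$, to control the non-sign-definite bulk terms and to apply Gronwall in the exponential identities.

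For the case $i<j$ the geometry is reversed: by \eqref{eq:BoundsRIntersectionRegioni<j}, $r$ is large on $\mathcal{R}_{i;j}^{(n)}$, of size $\sim\rho_\varepsilon\varepsilon^{(i),-1}(-\Lambda)^{-1/2}\gg1$, so both energy factors $\exp\big(\pm\frac{2\mathcal{E}}{r}\big)$ are of the form $1+O(\varepsilon)$ and the whole interaction is essentially invisible to leading order; here the main work is just to verify that the additive error terms are $O(\varepsilon\,\varepsilon^{(i)}(-\Lambda)^{-1/2})$ respectively $O(\varepsilon\,\varepsilon^{(j)}(-\Lambda)^{-1/2})$, which again follows from the beam-width smallness combined with \eqref{eq:UpperBoundTuvFi} and \eqref{eq:ComparisonRIntersectionRegioni<j}. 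Finally, the statement for $\widetilde{\mathcal{E}}_{\nwarrow}^{(\pm)},\widetilde{\mathcal{E}}_{\nearrow}^{(\pm)}$ on $\mathcal{T}_\varepsilon^+$ follows verbatim after replacing $\sigma_\varepsilon,\mathcal{V}_i^{(n)},\mathcal{R}_{i;j}^{(n)},h_{\varepsilon,i},\beta_{\varepsilon,i}$ by $\delta_\varepsilon,\widetilde{\mathcal{V}}_i^{(n)},\widetilde{\mathcal{R}}_{i;j}^{(n)},\tilde h_{\varepsilon,i},\tilde\beta_{\varepsilon,i}$ and invoking the $\mathcal{T}_\varepsilon^+$ versions of Lemmas \ref{lem:QuantitativeBeamEstimate} and \ref{lem:ControlHigherOrderDerivatives}; the error magnitudes $O(\rho_\varepsilon^{3/2}),O(\varepsilon)$ stay the same because the hierarchy \eqref{eq:HierarchyOfParameters} is precisely engineered so that even after paying $\exp(\exp(\delta_\varepsilon^{-C}))$-type factors one lands far below $\rho_\varepsilon^{3/2}$.

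I expect the main obstacle to be step (ii)--(iii) of the error analysis: controlling the higher-order correction in the Vlasov conservation law \eqref{eq:ConservationOfEnergyDu}, which involves $\partial_v\log\Omega^2$ and $\partial_v r$ derivatives that are \emph{not} controlled by the scale-invariant norm. The delicate bookkeeping is to show that each such derivative, bounded via Lemma \ref{lem:ControlHigherOrderDerivatives} only by $\exp(\exp(\sigma_\varepsilon^{-5}))\sqrt{-\Lambda}/\varepsilon^{(i)}$, always appears paired with at least one factor of the beam width $h_{\varepsilon,i}$ or the $r$-separation $\sim\rho_\varepsilon^{-1}\varepsilon^{(j)}$ before being integrated, so that the dangerous $(\varepsilon^{(i)})^{-1}$ cancels and — after using $\varepsilon^{(j)}\ll\varepsilon^{(i)}$ for $j>i$ — what remains is genuinely small; getting the combinatorics of which error sits in the multiplicative versus additive slot exactly right (and matching the asymmetry between the $i>j$ and $i<j$ formulas) is the part requiring real care rather than routine estimation.
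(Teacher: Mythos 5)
Your proposal follows essentially the same route as the paper: propagate the energy density $\frac{1-\frac{2m}{r}}{\partial_v r}r^2T_{vv}[f_{\varepsilon i}]$ (resp.\ its outgoing analogue) across $\mathcal{R}_{i;j}^{(n)}$, extract the exponential factor $\exp\big(\frac{2\mathcal{E}_{\nearrow}^{(-)}}{r_{n;i,j}}\big)$ from the constraint (\ref{eq:EksiswshGiaK}) together with the near-constancy of $r$ on the intersection region, absorb the higher-order Vlasov error in (\ref{eq:ConservationOfEnergyDu}) via Lemma \ref{lem:ControlHigherOrderDerivatives} paired with the beam widths and the parameter hierarchy (the paper does this by an integration by parts in $v$ on the $\partial_v(r^2T_{uv})$ term, which your ``pairing with the width before integrating'' bookkeeping amounts to), and treat $i<j$ and the $\mathcal{T}_{\varepsilon}^{+}$ case exactly as you describe. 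The only blemish is a sign slip in your displayed exponential (the factor multiplying the ingoing density is $\exp\big(+4\pi\int\frac{rT_{uu}}{-\partial_u r}\,du\big)$, since $\frac{1-\frac{2m}{r}}{\partial_v r}$ increases in $u$), but the conclusion you draw from it is the correct one.
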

\begin{proof}
For the purpose of simplifying the expressions appearing in the proof
of Proposition \ref{prop:EnergyChangeInteraction}, let us introduce
the shorthand notation 
\begin{align}
v_{n;i,j}^{(\pm)} & =\begin{cases}
v_{\varepsilon,i}^{(n)}\pm h_{\varepsilon,i}, & \text{ if }i>j,\\
v_{\varepsilon,i}^{(n+1)}\pm h_{\varepsilon,i} & \text{ if }i<j,
\end{cases}\label{eq:v+-}\\
u_{n;i,j}^{(\pm)} & =v_{\varepsilon,j}^{(n)}\pm h_{\varepsilon,j}.\label{eq:u+-}
\end{align}
Note that, with this notation, 
\[
\mathcal{R}_{i;j}^{(n)}=[u_{n;i,j}^{(-)},u_{n;i,j}^{(+)}]\times[v_{n;i,j}^{(-)},v_{n;i,j}^{(+)}].
\]

Let us introduce the following energy densities: On $\{v_{n;i,j}^{(-)}\le v\le v_{n;i,j}^{(+)}\}$,
we will set 
\begin{equation}
E_{\nwarrow}[n;i,j]\doteq2\pi\frac{1-\frac{2m}{r}}{\partial_{v}r}r^{2}\cdot a_{\varepsilon i}T_{vv}[f_{\varepsilon i}],\label{eq:IngoingMainEnergyDensity}
\end{equation}
while on $\{u_{n;i,j}^{(-)}\le u\le u_{n;i,j}^{(+)}\}$ we will set
\begin{equation}
E_{\nearrow}[n;i,j]\doteq2\pi\frac{1-\frac{2m}{r}}{-\partial_{u}r}r^{2}\cdot a_{\varepsilon j}T_{uu}[f_{\varepsilon j}].\label{eq:OutgoingMainEnergyDensity}
\end{equation}
We will also define the following energy-related quantities by integrating
$E_{\nwarrow}[n;i,j]$ and $E_{\nearrow}[n;i,j]$ in directions transverse
to the corresponding index arrow: On $\{v_{n;i,j}^{(-)}\le v\le v_{n;i,j}^{(+)}\}$,
we will define
\begin{equation}
\mathcal{E}_{\nwarrow}[n;i,j](u,v)\doteq\int_{v_{n;i,j}^{(-)}}^{v}E_{\nwarrow}[n;i.j](u,\bar{v})\,d\bar{v},\label{eq:IngoingEnergyForBootstrap}
\end{equation}
while on $\{u_{n;i,j}^{(-)}\le u\le u_{n;i,j}^{(+)}\}$ we will define
\begin{equation}
\mathcal{E}_{\nearrow}[n;i,j](u,v)\doteq\int_{u_{n;i,j}^{(-)}}^{u}E_{\nearrow}[n;i.j](\bar{u},v)\,d\bar{u}.\label{eq:OutgoingEnergyForBootstrap}
\end{equation}

In view of the fact that, among all the $f_{\varepsilon k}$'s, only
$f_{\varepsilon i}$ and $f_{\varepsilon j}$ are supported on $\mathcal{R}_{i;j}^{(n)}$,
the expression (\ref{eq:LinearCombinationVlasovFields}) for $f_{\varepsilon}$
implies that 
\begin{equation}
T_{\mu\nu}[f_{\varepsilon}]|_{\mathcal{R}_{i;j}^{(n)}}=a_{\varepsilon i}T_{\mu\nu}[f_{\varepsilon i}]|_{\mathcal{R}_{i;j}^{(n)}}+a_{\varepsilon j}T_{\mu\nu}[f_{\varepsilon j}]|_{\mathcal{R}_{i;j}^{(n)}}.\label{eq:OnlyTwoComponents}
\end{equation}

Notice that, in view of the fact that $T_{\mu\nu}[f_{\varepsilon i}]$
and $T_{\mu\nu}[f_{\varepsilon j}]$ are supported on $\mathcal{V}_{i}$
and $\mathcal{V}_{j}$, respectively (and hence vanish to infinite
order on $v=v_{n;i,j}^{(\pm)}$ and $u=u_{n;i,j}^{(\pm)}$, respectively),
the relation (\ref{eq:OnlyTwoComponents}) implies (in view of the
relations (\ref{eq:TildeUMaza})\textendash (\ref{eq:TildeVMaza})
for $\tilde{m}$, the definition (\ref{eq:IngoingMainEnergyDensity})\textendash (\ref{eq:OutgoingMainEnergyDensity})
of $E_{\nwarrow}$, $E_{\nearrow}$ and the bounds (\ref{eq:EstimateGeometryLikeC0})
and (\ref{eq:UpperBoundTuvFi}) on $\partial r$, $T_{uv}$) that:
\begin{align}
\tilde{m}\big(u,v_{n;i,j}^{(+)}\big)-\tilde{m}\big(u,v_{n;i,j}^{(-)}\big) & =\mathcal{E}_{\nwarrow}[n;i,j](u,\text{ }v_{n;i,j}^{(+)})+\mathfrak{Err}_{i,j\nwarrow}^{(n)}(u),\text{ for any }u\in[u_{n;i,j}^{(-)},u_{n;i,j}^{(+)}],\label{eq:FormulaForFinalEnergiesIntermediateFromDensities-1}\\
\tilde{m}(u_{n;i,j}^{(-)},v)-\tilde{m}(u_{n;i,j}^{(+)},v) & =\mathcal{E}_{\nearrow}[n;i,j](u_{n;i,j}^{(+)},\text{ }v)+\mathfrak{Err}_{i,j\nearrow}^{(n)}(v),\text{ for any }v\in[v_{n;i,j}^{(-)},v_{n;i,j}^{(+)}],\nonumber 
\end{align}
where 
\begin{align}
|\mathfrak{Err}_{i,j\nwarrow}^{(n)}(u)| & \le\exp\big(\exp(2\sigma_{\varepsilon}^{-5})\big)\cdot\frac{(\varepsilon^{(i)})^{2}}{(-\Lambda)(\inf_{\mathcal{R}_{i;j}^{(n)}}r)^{2}}\cdot\frac{\varepsilon^{(i)}}{\sqrt{-\Lambda}},\label{eq:ErrorTermForEnergyFinal}\\
|\mathfrak{Err}_{i,j\nearrow}^{(n)}(v)| & \le\exp\big(\exp(2\sigma_{\varepsilon}^{-5})\big)\cdot\frac{(\varepsilon^{(j)})^{2}}{(-\Lambda)(\inf_{\mathcal{R}_{i;j}^{(n)}}r)^{2}}\cdot\frac{\varepsilon^{(j)}}{\sqrt{-\Lambda}}.\nonumber 
\end{align}
In particular, in view of the definition (\ref{eq:IngoingEnergyBefore})\textendash (\ref{eq:OutgoingEnergyAfter})
of $\mathcal{E}_{\nwarrow}^{(\pm)}$, $\mathcal{E}_{\nearrow}^{(\pm)}$:
\begin{align}
\mathcal{E}_{\nwarrow}^{(\pm)}[n;i,j] & =\mathcal{E}_{\nwarrow}[n;i,j](u_{n;i,j}^{(\pm)},\text{ }v_{n;i,j}^{(+)})+\mathfrak{Err}_{i,j\nwarrow}^{(n)}(u_{n;i,j}^{(\pm)}),\label{eq:FormulaForFinalEnergiesFromDensities}\\
\mathcal{E}_{\nearrow}^{(\pm)}[n;i,j] & =\mathcal{E}_{\nearrow}[n;i,j](u_{n;i,j}^{(+)},\text{ }v_{n;i,j}^{(\pm)})+\mathfrak{Err}_{i,j\nearrow}^{(n)}(v_{n;i,j}^{(\pm)}).\nonumber 
\end{align}

The conservation of energy relation (\ref{eq:ConservedEnergy}) for
the Vlasov field $f_{\varepsilon i}$ reads (in view of the relation
$T_{uv}[f_{\varepsilon i}]=\frac{1}{4}\Omega^{2}g^{AB}T_{AB}[f_{\varepsilon i}]$
holding for all massless Vlasov fields):
\begin{align}
\partial_{u}(r^{2}T_{vv}[f_{\varepsilon i}]) & =-\partial_{v}(r^{2}T_{uv}[f_{\varepsilon i}])+\Big(\partial_{v}\log(\Omega^{2})-2\frac{\partial_{v}r}{r}\Big)(r^{2}T_{uv}[f_{\varepsilon i}]),\label{eq:ConservationOfEnergyDu}\\
\partial_{v}(r^{2}T_{uu}[f_{\varepsilon i}]) & =-\partial_{u}(r^{2}T_{uv}[f_{\varepsilon i}])+\Big(\partial_{u}\log(\Omega^{2})-2\frac{\partial_{u}r}{r}\Big)(r^{2}T_{uv}[f_{\varepsilon i}])\label{eq:ConservationOfEnergyDv}
\end{align}
 and similarly for $f_{\varepsilon j}$. Furthermore, equations (\ref{eq:EquationROutside})
and (\ref{eq:EquationROutside}) readily yield: 
\begin{align}
\partial_{u}\Big(\frac{1-\frac{2m}{r}}{\partial_{v}r}\Big) & =\Big(4\pi\frac{rT_{uu}[f_{\varepsilon}]}{-\partial_{u}r}\Big)\cdot\frac{1-\frac{2m}{r}}{\partial_{v}r},\label{eq:KappaExponentialDu}\\
\partial_{v}\Big(\frac{1-\frac{2m}{r}}{-\partial_{u}r}\Big) & =\Big(-4\pi\frac{rT_{vv}[f_{\varepsilon}]}{\partial_{v}r}\Big)\cdot\frac{1-\frac{2m}{r}}{-\partial_{u}r}.\label{eq:KappaBarExponentialDv}
\end{align}
Differentiating (\ref{eq:IngoingMainEnergyDensity}) with resepect
to $\partial_{u}$ and using (\ref{eq:ConservationOfEnergyDu}) and
(\ref{eq:KappaExponentialDu}), we obtain: 
\begin{equation}
\partial_{u}E_{\nwarrow}[n;i.j]=\Big(4\pi\frac{rT_{uu}[f_{\varepsilon}]}{-\partial_{u}r}\Big)\cdot E_{\nwarrow}[n;i.j]-a_{\varepsilon i}\Big\{2\pi\frac{1-\frac{2m}{r}}{\partial_{v}r}\partial_{v}(r^{2}T_{uv}[f_{\varepsilon i}])+2\pi\frac{1-\frac{2m}{r}}{\partial_{v}r}\Big(\partial_{v}\log(\Omega^{2})-2\frac{\partial_{v}r}{r}\Big)(r^{2}T_{uv}[f_{\varepsilon i}])\Big\}.\label{eq:ODEforEnergyFlux}
\end{equation}

\medskip{}

\noindent \emph{Remark.} Notice that the coefficient of $E_{\nwarrow}[n;i.j]$
in the right hand side of (\ref{eq:ODEforEnergyFlux}) is strictly
positive. It is the sign of this coefficient that will lead to the
increase of the energy quantity $\mathcal{E}_{\nwarrow}[n;i,j]$ as
quantified by (\ref{eq:IncreaseIngoingEnergyCloseToAxis}).

\medskip{}

\noindent From (\ref{eq:ODEforEnergyFlux}), we obtain the following
explicit formula for $E_{\nwarrow}[n;i.j](u,v)$ for $(u,v)\in\mathcal{R}_{i;j}^{(n)}$:
\begin{equation}
E_{\nwarrow}[n;i.j](u,v)=\exp\Big(\int_{u_{n;i,j}^{(-)}}^{u}4\pi\frac{rT_{uu}[f_{\varepsilon}]}{-\partial_{u}r}(\bar{u},v)\,d\bar{u}\Big)\cdot E_{\nwarrow}[n;i.j](u_{n;i,j}^{(-)},v)-a_{\varepsilon i}\mathfrak{Err}_{\nwarrow}[n;i.j](u,v),\label{eq:EnergyDensityIncreaseIngoing}
\end{equation}
where
\begin{align}
\mathfrak{Err}_{\nwarrow}[n;i.j](u,v)\doteq & -\int_{u_{n;i,j}^{(-)}}^{u}\exp\Big(\int_{\bar{u}}^{u}4\pi\frac{rT_{uu}[f_{\varepsilon}]}{-\partial_{u}r}(\hat{u},v)\,d\hat{u}\Big)\times\label{eq:ErrorTermForEnergyChangeIngoing}\\
 & \hphantom{-e^{\int_{v_{\varepsilon,j}^{(n)}-h_{\varepsilon,j}}^{u}4\pi}}\times\Bigg\{2\pi\frac{1-\frac{2m}{r}}{\partial_{v}r}\partial_{v}(r^{2}T_{uv}[f_{\varepsilon i}])+2\pi\frac{1-\frac{2m}{r}}{\partial_{v}r}\Big(\partial_{v}\log(\Omega^{2})-2\frac{\partial_{v}r}{r}\Big)(r^{2}T_{uv}[f_{\varepsilon i}])\Bigg\}(\bar{u},v)\,d\bar{u}.\nonumber 
\end{align}
Similarly, differentiating (\ref{eq:OutgoingMainEnergyDensity}) with
respect to $\partial_{v}$, we infer the following formula for $E_{\nearrow}[n;i.j](u,v)$
on $\mathcal{R}_{i;j}^{(n)}$: 
\begin{equation}
E_{\nearrow}[n;i.j](u,v)=\exp\Big(-\int_{v_{n;i,j}^{(-)}}^{v}4\pi\frac{rT_{vv}[f_{\varepsilon}]}{\partial_{v}r}(u,\bar{v})\,d\bar{v}\Big)\cdot E_{\nearrow}[n;i.j](u,v_{n;i,j}^{(-)})-a_{\varepsilon j}\mathfrak{Err}_{\nearrow}[n;i.j](u,v),\label{eq:EnergyDensityIncreaseOutgoing}
\end{equation}
where
\begin{align}
\mathfrak{Err}_{\nearrow}[n;i.j](u,v)\doteq & -\int_{v_{n;i,j}^{(-)}}^{v}\exp\Big(-\int_{\bar{v}}^{v}4\pi\frac{rT_{vv}[f_{\varepsilon}]}{\partial_{v}r}(u,\hat{v})\,d\hat{v}\Big)\times\label{eq:ErrorTermForEnergyChangeOutgoing}\\
 & \hphantom{-e^{\int_{v_{\varepsilon,j}^{(n)}-h_{\varepsilon,j}}^{u}4\pi}}\times\Bigg\{2\pi\frac{1-\frac{2m}{r}}{-\partial_{u}r}\partial_{u}(r^{2}T_{uv}[f_{\varepsilon j}])+2\pi\frac{1-\frac{2m}{r}}{-\partial_{u}r}\Big(\partial_{u}\log(\Omega^{2})-2\frac{\partial_{u}r}{r}\Big)(r^{2}T_{uv}[f_{\varepsilon j}])\Bigg\}(u,\bar{v})\,d\bar{v}.\nonumber 
\end{align}

In view of the relation (\ref{eq:OnlyTwoComponents}) for $f_{\varepsilon}$,
$f_{\varepsilon i}$ and $f_{\varepsilon j}$ on $\mathcal{R}_{i;j}^{(n)}$
and the definition (\ref{eq:IngoingMainEnergyDensity})\textendash (\ref{eq:OutgoingMainEnergyDensity})
of $E_{\nwarrow}$, $E_{\nearrow}$, we have for any $(u,v)\in[u_{n;i,j}^{(-)},u_{n;i,j}^{(+)}]\times[v_{n;i,j}^{(-)},v_{n;i,j}^{(+)}]$:
\begin{equation}
\int_{u_{n;i,j}^{(-)}}^{u}4\pi\frac{rT_{uu}[f_{\varepsilon}]}{-\partial_{u}r}(\bar{u},v)\,d\bar{u}=\int_{u_{n;i,j}^{(-)}}^{u}\Bigg\{\frac{2E_{\nearrow}[n;i,j]}{r}(\bar{u},v)\cdot\Big(1-\frac{2m}{r}(\bar{u},v)\Big)^{-1}+\frac{4\pi a_{\varepsilon i}}{r}\cdot\frac{r^{2}T_{uu}[f_{\varepsilon i}]}{-\partial_{u}r}(\bar{u},v)\Bigg\}\,d\bar{u}\label{eq:CoefficientExpressionForIngoing}
\end{equation}
and 
\begin{equation}
\int_{v_{n;i,j}^{(-)}}^{v}4\pi\frac{rT_{vv}[f_{\varepsilon}]}{\partial_{v}r}(u,\bar{v})\,d\bar{v}=\int_{v_{n;i,j}^{(-)}}^{v}\Bigg\{\frac{2E_{\nwarrow}[n;i,j]}{r}(u,\bar{v})\cdot\Big(1-\frac{2m}{r}(u,\bar{v})\Big)^{-1}+\frac{4\pi a_{\varepsilon j}}{r}\cdot\frac{r^{2}T_{vv}[f_{\varepsilon j}]}{\partial_{v}r}(u,\bar{v})\Bigg\}\,d\bar{v}.\label{eq:CoefficientExpressionForOutgoing}
\end{equation}
Using (\ref{eq:UpperBoundTuuTvvFi}) for $T_{uu}[f_{\varepsilon i}]$,
$T_{vv}[f_{\varepsilon j}]$, recalling that $\mathcal{R}_{i;j}^{(n)}=\mathcal{V}_{i\nwarrow}^{(n)}\cap\mathcal{V}_{j\nearrow}^{(n)}$,
as well as the bound (\ref{eq:EstimateGeometryLikeC0}) and the assumption
$a_{\varepsilon k}\in(0,\sigma_{\varepsilon}]$, the relations (\ref{eq:CoefficientExpressionForIngoing})
and (\ref{eq:CoefficientExpressionForOutgoing}) yield: 
\begin{equation}
\int_{u_{n;i,j}^{(-)}}^{u}4\pi\frac{rT_{uu}[f_{\varepsilon}]}{-\partial_{u}r}(\bar{u},v)\,d\bar{u}=\int_{u_{n;i,j}^{(-)}}^{u}\Bigg\{\frac{2E_{\nearrow}[n;i,j]}{r\cdot\big(1-\frac{1}{3}\Lambda r^{2}-O(\mu_{i;j})\big)}(\bar{u},v)+O\Big((-\Lambda)^{-2}\exp\big(\exp(\sigma_{\varepsilon}^{-6})\big)\frac{(\varepsilon^{(i)})^{4}}{r^{5}(\bar{u},v)}\Big)\Bigg\}\,d\bar{u}\label{eq:CoefficientExpressionForIngoing-1}
\end{equation}
and 
\begin{equation}
\int_{v_{n;i,j}^{(-)}}^{v}4\pi\frac{rT_{vv}[f_{\varepsilon}]}{\partial_{v}r}(u,\bar{v})\,d\bar{v}=\int_{v_{n;i,j}^{(-)}}^{v}\Bigg\{\frac{2E_{\nwarrow}[n;i,j]}{r\cdot\big(1-\frac{1}{3}\Lambda r^{2}-O(\mu_{i;j})\big)}(u,\bar{v})+O\Big((-\Lambda)^{-2}\exp\big(\exp(\sigma_{\varepsilon}^{-6})\big)\frac{(\varepsilon^{(j)})^{4}}{r^{5}(u,\bar{v})}\Big)\Bigg\}\,d\bar{v},\label{eq:CoefficientExpressionForOutgoing-1}
\end{equation}
where 
\begin{equation}
\mu_{i;j}\doteq\sup_{\mathcal{R}_{i;j}^{(n)}}\frac{2\tilde{m}}{r}.
\end{equation}
Note that 
\begin{equation}
\mu_{i;j}=O(\eta_{0}),\label{eq:UpperBoundMuIntersectionTrivial}
\end{equation}
as a trivial consequence of (\ref{eq:UpperBoundMuBootstrapDomain}).

For the rest of the proof of (\ref{eq:IncreaseIngoingEnergyCloseToAxis})\textendash (\ref{eq:OutgoingEnergyCloseToInfinity}),
we will consider the cases $i>j$ and $i<j$ separately.

\medskip{}

\noindent \emph{The case $i>j$: Proof of (\ref{eq:IncreaseIngoingEnergyCloseToAxis})
and (\ref{eq:DecreaseOutgoingEnergyCloseToAxis}).} Integrating (\ref{eq:TildeVMaza})
in $v$ from the axis $\gamma_{\mathcal{Z}_{\varepsilon}}$ up to
$\mathcal{R}_{i;j}^{(n)}$, using the fact that, among all the $f_{\varepsilon k}$'s,
only $f_{\varepsilon\bar{j}}$, $j\le\bar{j}\le i$ are supported
on $\{u_{n;i,j}^{(-)}\le u\le u_{n;i,j}^{(+)}\}\cap\{v\le v_{n;i,j}^{(+)}\}$,
we can readily estimate (using (\ref{eq:EstimateGeometryLikeC0})
and (\ref{eq:UpperBoundTuuTvvFi}), (\ref{eq:UpperBoundTuvFi})):
\begin{equation}
\sup_{\mathcal{R}_{i;j}^{(n)}}\tilde{m}\le\exp(\exp(\sigma_{\varepsilon}^{-6}))\frac{\varepsilon^{(j)}}{\sqrt{-\Lambda}}.\label{eq:UpperBoundHawkingMassRnij}
\end{equation}
From (\ref{eq:UpperBoundHawkingMassRnij}) and the bound (\ref{eq:BoundsRIntersectionRegioni>j})
for $r$ on $\mathcal{R}_{i;j}^{(n)}$, we immediately infer that:
\begin{equation}
\mu_{i;j}=\sup_{\mathcal{R}_{i;j}^{(n)}}\frac{2\tilde{m}}{r}\le\exp(\exp(\sigma_{\varepsilon}^{-7}))\rho_{\varepsilon}\le\rho_{\varepsilon}^{\frac{3}{4}}.\label{eq:NonTrivialUpperBoundMuij}
\end{equation}

Using the bounds (\ref{eq:NonTrivialUpperBoundMuij}), (\ref{eq:BoundsRIntersectionRegioni>j})
and (\ref{eq:ComparisonRIntersectionRegioni>j}) on $\mathcal{R}_{i;j}^{(n)}$
as well as (\ref{eq:HierarchyOfParameters}) and (\ref{eq:RecursiveEi}),
the relation (\ref{eq:CoefficientExpressionForIngoing-1}) yields:
\begin{align}
\int_{u_{n;i,j}^{(-)}}^{u}4\pi\frac{rT_{uu}[f_{\varepsilon}]}{-\partial_{u}r}(\bar{u},v)\,d\bar{u} & =\int_{v_{\varepsilon,j}^{(n)}-h_{\varepsilon,j}}^{u}\Bigg\{\frac{2E_{\nearrow}[n;i,j](\bar{u},v)}{r_{n;i,j}}\cdot\big(1+O(\rho_{\varepsilon}^{\frac{3}{4}})\big)+O\Big(\sqrt{-\Lambda}\rho_{\varepsilon}\exp(e^{\sigma_{\varepsilon}^{-8}})\frac{(\varepsilon^{(i)})^{4}}{(\varepsilon^{(j)})^{5}}\Big)\Bigg\}\,d\bar{u}=\label{eq:CoefficientExpressionForIngoing-1-1}\\
 & =\frac{2\mathcal{E}_{\nearrow}[n;i,j](u,v)}{r_{n;i,j}}\cdot\big(1+O(\rho_{\varepsilon}^{\frac{3}{4}})\big)+O\Big(\sqrt{-\Lambda}\rho_{\varepsilon}\exp(e^{\sigma_{\varepsilon}^{-8}})\frac{(\varepsilon^{(i)})^{4}}{(\varepsilon^{(j)})^{5}}h_{\varepsilon,j}\Big)=\nonumber \\
 & =\frac{2\mathcal{E}_{\nearrow}[n;i,j](u,v)}{r_{n;i,j}}\cdot\big(1+O(\rho_{\varepsilon}^{\frac{3}{4}})\big)+O\Big(\sqrt{-\Lambda}\rho_{\varepsilon}^{2}\exp(e^{\sigma_{\varepsilon}^{-8}})\frac{(\varepsilon^{(i)})^{4}}{(\varepsilon^{(j)})^{4}}\Big)=\nonumber \\
 & =\frac{2\mathcal{E}_{\nearrow}[n;i,j](u,v)}{r_{n;i,j}}\cdot\big(1+O(\rho_{\varepsilon}^{\frac{3}{4}})\big)+O(\varepsilon^{4})\nonumber 
\end{align}
(where we have used the bound $\frac{\varepsilon^{(i)}}{\varepsilon^{(j)}}\le\varepsilon$
when $i>j$). Similarly, (\ref{eq:CoefficientExpressionForOutgoing-1})
yields:
\begin{align}
\int_{v_{n;i,j}^{(-)}}^{v}4\pi\frac{rT_{vv}[f_{\varepsilon}]}{\partial_{v}r}(u,\bar{v})\,d\bar{v} & =\frac{2\mathcal{E}_{\nwarrow}[n;i,j](u,v)}{r_{n;i,j}}\cdot\big(1+O(\rho_{\varepsilon}^{\frac{3}{4}})\big)+O(\sqrt{-\Lambda}\rho_{\varepsilon}\exp(e^{\sigma_{\varepsilon}^{-8}})\frac{(\varepsilon^{(j)})^{4}}{(\varepsilon^{(j)})^{5}}h_{\varepsilon,i})=,\label{eq:CoefficientExpressionForOutgoing-1-1}\\
 & =\frac{2\mathcal{E}_{\nwarrow}[n;i,j](u,v)}{r_{n;i,j}}\cdot\big(1+O(\rho_{\varepsilon}^{\frac{3}{4}})\big)+O(\varepsilon).\nonumber 
\end{align}
where $r_{n;i,j}$ is defined by (\ref{eq:DefinitionRnij}).

Substituting (\ref{eq:CoefficientExpressionForIngoing-1-1}) in (\ref{eq:EnergyDensityIncreaseIngoing})
and integrating in $v$ over $[v_{n;i,j}^{(-)},v]$, we obtain:
\begin{equation}
\mathcal{E}_{\nwarrow}[n;i.j](u,v)=\int_{v_{n;i,j}^{(-)}}^{v}\Bigg\{\exp\Big(\frac{2\mathcal{E}_{\nearrow}[n;i,j](u,\bar{v})}{r_{n;i,j}}\cdot\big(1+O(\rho_{\varepsilon}^{\frac{3}{4}})\big)+O(\varepsilon)\Big)\cdot E_{\nwarrow}[n;i.j](u_{n;i,j}^{(-)},\bar{v})\Bigg\}\,d\bar{v}-a_{\varepsilon i}\int_{v_{n;i,j}^{(-)}}^{v}\mathfrak{Err}_{\nwarrow}[n;i.j](u,\bar{v})\,d\bar{v}.\label{eq:EnergyIntegralIncreaseIngoing}
\end{equation}
Integrating by parts in $\partial_{v}$ for the term $\partial_{v}(r^{2}T_{uv})$
in $\int_{v_{n;i,j}^{(-)}}^{v}\mathfrak{Err}_{\nwarrow}[n;i.j](u,\bar{v})\,d\bar{v}$
(see the expression (\ref{eq:ErrorTermForEnergyChangeIngoing})),
and using the fact that $T_{uv}[f_{\varepsilon i}]$ is supported
on $\mathcal{V}_{i}$ (and hence vanishes to infinite order on $v=v_{n;i,j}^{(\pm)}$),
we calculate
\begin{align}
\Big|\int_{v_{n;i,j}^{(-)}}^{v} & \mathfrak{Err}_{\nwarrow}[n;i.j](u,\bar{v})\,d\bar{v}\Big|=\label{eq:ErrorFinalRelation}\\
 & =\Bigg|\int_{v_{\varepsilon,i}^{(n)}-h_{\varepsilon,i}}^{v}\int_{v_{\varepsilon,j}^{(n)}-h_{\varepsilon,j}}^{u}\Bigg[\exp\Big(\int_{\bar{u}}^{u}4\pi\frac{rT_{uu}[f_{\varepsilon}]}{-\partial_{u}r}(\hat{u},\bar{v})\,d\hat{u}\Big)\times\nonumber \\
 & \hphantom{-e^{\int_{v_{\varepsilon,j}^{(n)}-h_{\varepsilon,j}}^{u}4\pi}}\times\Bigg\{2\pi\frac{1-\frac{2m}{r}}{\partial_{v}r}\partial_{v}(r^{2}T_{uv}[f_{\varepsilon i}])+2\pi\frac{1-\frac{2m}{r}}{\partial_{v}r}\Big(\partial_{v}\log(\Omega^{2})-2\frac{\partial_{v}r}{r}\Big)(r^{2}T_{uv}[f_{\varepsilon i}])\Bigg\}\Bigg](\bar{u},\bar{v})\,d\bar{u}d\bar{v}\Bigg|=\nonumber \\
 & =\Bigg|\int_{v_{\varepsilon,i}^{(n)}-h_{\varepsilon,i}}^{v}\int_{v_{\varepsilon,j}^{(n)}-h_{\varepsilon,j}}^{u}\Bigg[\exp\Big(\int_{\bar{u}}^{u}4\pi\frac{rT_{uu}[f_{\varepsilon}]}{-\partial_{u}r}(\hat{u},\bar{v})\,d\hat{u}\Big)\cdot2\pi\frac{1-\frac{2m}{r}}{\partial_{v}r}r^{2}T_{uv}[f_{\varepsilon i}]\times\nonumber \\
 & \hphantom{-e^{\int_{v_{\varepsilon,j}^{(n)}-h_{\varepsilon,j}}^{u}4\pi}}\times\Bigg\{-\int_{\bar{u}}^{u}\partial_{v}\Big(\frac{4\pi}{r(-\partial_{u}r)}\Big)\cdot r^{2}T_{uu}[f_{\varepsilon}](\hat{u},\bar{v})\,d\hat{u}-\int_{\bar{u}}^{u}\frac{4\pi}{r}\frac{1}{-\partial_{u}r}\partial_{v}(r^{2}T_{uu}[f_{\varepsilon}])(\hat{u},\bar{v})\,d\hat{u}-\nonumber \\
 & \hphantom{-e^{\int_{v_{\varepsilon,j}^{(n)}-h_{\varepsilon,j}}^{u}4\pi}\times\Bigg\{}-\partial_{v}\log\Big(\frac{1-\frac{2m}{r}}{\partial_{v}r}\Big)+\Big(\partial_{v}\log(\Omega^{2})-2\frac{\partial_{v}r}{r}\Big)\Bigg\}\Bigg](\bar{u},\bar{v})\,d\bar{u}d\bar{v}\Bigg|.\nonumber 
\end{align}
 We will estimate the right hand side of (\ref{eq:ErrorFinalRelation})
in a number of steps:

\begin{itemize}

\item Using the bounds (\ref{eq:UpperBoundMuBootstrapDomain}), (\ref{eq:EstimateGeometryLikeC0}),
(\ref{eq:UpperBoundTuuTvvFi}), (\ref{eq:OnlyTwoComponents}), (\ref{eq:BoundsRIntersectionRegioni>j})
and (\ref{eq:ComparisonRIntersectionRegioni>j}), we can estimate:
\begin{align}
\sup_{(u,\bar{v})\in\mathcal{R}_{i;j}^{(n)},\text{ }\bar{u}\in[v_{\varepsilon,j}^{(n)}-h_{\varepsilon,j},u]}\Big|\int_{\bar{u}}^{u}4\pi\frac{rT_{uu}[f_{\varepsilon}]}{-\partial_{u}r}(\hat{u},\bar{v})\,d\hat{u}\Big| & \le\int_{v_{\varepsilon,j}^{(n)}-h_{\varepsilon,j}}^{v_{\varepsilon,j}^{(n)}+h_{\varepsilon,j}}\exp\big(\exp(4\sigma_{\varepsilon}^{-5})\big)\cdot\frac{1}{r_{n;i,j}}\,du\le\label{eq:BoundExponentialFactorSummand}\\
 & \le\exp\big(\exp(\sigma_{\varepsilon}^{-6})\big)\cdot\frac{h_{\varepsilon,j}}{r_{n;i,j}}\le\nonumber \\
 & \le1.\nonumber 
\end{align}

\item Using the bounds (\ref{eq:EstimateGeometryLikeC0}), (\ref{eq:UpperBoundTuvFi}),
(\ref{eq:BoundsRIntersectionRegioni>j}) and (\ref{eq:ComparisonRIntersectionRegioni>j}),
we can estimate: 
\begin{equation}
2\pi\frac{1-\frac{2m}{r}}{\partial_{v}r}r^{2}T_{uv}[f_{\varepsilon i}]\le\exp\big(\exp(\sigma_{\varepsilon}^{-6})\big)\frac{(\varepsilon^{(i)})^{2}}{r_{n;i,j}^{2}}(-\Lambda)^{-1}.
\end{equation}

\item Using equation (\ref{eq:EquationROutside}) and the bounds
(\ref{eq:UpperBoundMuBootstrapDomain}), (\ref{eq:EstimateGeometryLikeC0}),
(\ref{eq:UpperBoundTuuTvvFi}), (\ref{eq:UpperBoundTuvFi}), (\ref{eq:BoundsRIntersectionRegioni>j})
and (\ref{eq:ComparisonRIntersectionRegioni>j}) (as well as the relation
(\ref{eq:OnlyTwoComponents}) between $f_{\varepsilon}$, $f_{\varepsilon i}$
and $f_{\varepsilon j}$ on $\mathcal{R}_{i;j}^{(n)}$ and the bound
$a_{\varepsilon k}\le1$), we can estimate 
\begin{align}
\sup_{(u,\bar{v})\in\mathcal{R}_{i;j}^{(n)},\text{ }\bar{u}\in[v_{\varepsilon,j}^{(n)}-h_{\varepsilon,j},u]}\Big|\int_{\bar{u}}^{u} & \partial_{v}\Big(\frac{4\pi}{r(-\partial_{u}r)}\Big)\cdot r^{2}T_{uu}[f_{\varepsilon}](\hat{u},\bar{v})\,d\hat{u}\Big|=\label{eq:BoundFirstSummand}\\
=\sup_{(u,\bar{v})\in\mathcal{R}_{i;j}^{(n)},\text{ }\bar{u}\in[v_{\varepsilon,j}^{(n)}-h_{\varepsilon,j},u]} & \Big|\int_{\bar{u}}^{u}4\pi\Big(-\frac{\partial_{v}r}{r^{2}(-\partial_{u}r)}+\frac{\partial_{u}\partial_{v}r}{r(-\partial_{u}r)^{2}}\Big)r^{2}T_{uu}[f_{\varepsilon}](\hat{u},\bar{v})\,d\hat{u}\Big|=\nonumber \\
=\sup_{(u,\bar{v})\in\mathcal{R}_{i;j}^{(n)},\text{ }\bar{u}\in[v_{\varepsilon,j}^{(n)}-h_{\varepsilon,j},u]} & \Big|\int_{\bar{u}}^{u}4\pi\Big(-\frac{\partial_{v}r}{r^{2}(-\partial_{u}r)}+\frac{\frac{2\tilde{m}}{r}-\frac{2}{3}\Lambda r^{2}}{r^{2}(1-\frac{2m}{r})}\frac{\partial_{v}r}{-\partial_{u}r}+\frac{4\pi r^{2}T_{uv}}{r^{2}(-\partial_{u}r)^{2}}\Big)r^{2}T_{uu}[f_{\varepsilon}](\hat{u},\bar{v})\,d\hat{u}\Big|\le\nonumber \\
 & \le\int_{v_{\varepsilon,j}^{(n)}-h_{\varepsilon,j}}^{v_{\varepsilon,j}^{(n)}+h_{\varepsilon,j}}\exp\big(\exp(\sigma_{\varepsilon}^{-9})\big)\frac{1}{r_{n;i,j}^{2}}\,du\le\nonumber \\
 & \le2\exp\big(\exp(\sigma_{\varepsilon}^{-9})\big)\frac{h_{\varepsilon,j}}{r_{n;i,j}^{2}}\le\nonumber \\
 & \le\exp\big(\exp(2\sigma_{\varepsilon}^{-9})\big)\rho_{\varepsilon}\frac{1}{r_{n;i,j}}\le\nonumber \\
 & \le\rho_{\varepsilon}^{\frac{1}{2}}\frac{1}{r_{n;i,j}}.\nonumber 
\end{align}

\item Using (\ref{eq:ConservationOfEnergyDv}) for $f_{\varepsilon}$
in place of $f_{\varepsilon i}$ to express $\partial_{v}(r^{2}T_{uu}[f_{\varepsilon}])$
in terms of $\partial_{u}(r^{2}T_{uv}[f_{\varepsilon}])$ and integrating
by parts in $\partial_{u}$, we calculate (in view of the bounds (\ref{eq:UpperBoundMuBootstrapDomain}),
(\ref{eq:EstimateGeometryLikeC0}), (\ref{eq:UpperBoundTuvFi}), (\ref{eq:BoundDOmegaOnDomainsSpecialCase})
for $\partial_{u}\Omega^{2}$ with $j$ in place of $i$, (\ref{eq:BoundDdrOnDomainsSpecialCase})
for $\partial_{u}^{2}r$ with $j$ in place of $i$, (\ref{eq:BoundsRIntersectionRegioni>j})
and (\ref{eq:ComparisonRIntersectionRegioni>j})): 
\begin{align}
 & \sup_{(u,\bar{v})\in\mathcal{R}_{i;j}^{(n)},\text{ }\bar{u}\in[v_{\varepsilon,j}^{(n)}-h_{\varepsilon,j},u]}\Big|\int_{\bar{u}}^{u}\frac{4\pi}{r}\frac{1}{-\partial_{u}r}\partial_{v}(r^{2}T_{uu}[f_{\varepsilon}])(\hat{u},\bar{v})\,d\hat{u}\Big|=\label{eq:BoundSecondSummand}\\
 & \hphantom{ll}=\sup_{(u,\bar{v})\in\mathcal{R}_{i;j}^{(n)},\text{ }\bar{u}\in[v_{\varepsilon,j}^{(n)}-h_{\varepsilon,j},u]}\Bigg|\int_{\bar{u}}^{u}\frac{4\pi}{r}\frac{1}{-\partial_{u}r}\Big(-\partial_{u}(r^{2}T_{uv}[f_{\varepsilon}])+\Big(\partial_{u}\log(\Omega^{2})-2\frac{\partial_{u}r}{r}\Big)(r^{2}T_{uv}[f_{\varepsilon}])\Big)(\hat{u},\bar{v})\,d\hat{u}\Bigg|=\nonumber \\
 & \hphantom{ll}=\sup_{(u,\bar{v})\in\mathcal{R}_{i;j}^{(n)},\text{ }\bar{u}\in[v_{\varepsilon,j}^{(n)}-h_{\varepsilon,j},u]}\Bigg|\int_{\bar{u}}^{u}4\pi\Big(\partial_{u}\big(\frac{1}{r(-\partial_{u}r)}\big)\cdot r^{2}T_{uv}[f_{\varepsilon}]+\nonumber \\
 & \hphantom{ll=\sup_{(u,\bar{v})\in\mathcal{R}_{i;j}^{(n)},\text{ }\bar{u}\in[v_{\varepsilon,j}^{(n)}-h_{\varepsilon,j},u]}\Big|\int_{\bar{u}}^{u}4\pi\Big(}+\frac{1}{r(-\partial_{u}r)}\Big(\partial_{u}\log(\Omega^{2})-2\frac{\partial_{u}r}{r}\Big)(r^{2}T_{uv}[f_{\varepsilon}])\Big)(\hat{u},\bar{v})\,d\hat{u}+\nonumber \\
 & \hphantom{ll=\sup_{(u,\bar{v})\in\mathcal{R}_{i;j}^{(n)},\text{ }\bar{u}\in[v_{\varepsilon,j}^{(n)}-h_{\varepsilon,j},u]}}+\frac{4\pi}{r}\frac{1}{-\partial_{u}r}\cdot r^{2}T_{uv}[f_{\varepsilon}](\bar{u},\bar{v})-\frac{4\pi}{r}\frac{1}{-\partial_{u}r}\cdot r^{2}T_{uv}[f_{\varepsilon}](u,\bar{v})\Bigg|\le\nonumber \\
 & \hphantom{ll}\le\int_{v_{\varepsilon,j}^{(n)}-h_{\varepsilon,j}}^{v_{\varepsilon,j}^{(n)}+h_{\varepsilon,j}}\exp\big(\exp(\sigma_{\varepsilon}^{-9})\big)\frac{1}{r_{n;i,j}}\big(\frac{\sqrt{-\Lambda}}{\varepsilon^{(j)}}+\frac{1}{r_{n;i,j}}\big)\frac{(\varepsilon^{(j)})^{2}}{(-\Lambda)r_{n;i,j}^{2}}\,du+\exp\big(\exp(\sigma_{\varepsilon}^{-9})\big)\frac{1}{r_{n;i,j}}\frac{(\varepsilon^{(j)})^{2}}{(-\Lambda)r_{n;i,j}^{2}}\le\nonumber \\
 & \hphantom{ll}\le\exp\big(\exp(2\sigma_{\varepsilon}^{-9})\big)\rho_{\varepsilon}^{2}\frac{1}{r_{n;i,j}}\le\nonumber \\
 & \hphantom{ll}\le\rho_{\varepsilon}\frac{1}{r_{n;i,j}}.\nonumber 
\end{align}

\item Using the the relation (\ref{eq:TildeVMaza}) for $\partial_{v}\tilde{m}$,
the estimate (\ref{eq:BoundDdrOnDomainsSpecialCase}) for $\partial_{v}^{2}r$,
as well as the bounds (\ref{eq:EstimateGeometryLikeC0}), (\ref{eq:UpperBoundTuuTvvFi}),
(\ref{eq:UpperBoundTuvFi}), (\ref{eq:UpperBoundMuBootstrapDomain}),
(\ref{eq:BoundsRIntersectionRegioni>j}) and (\ref{eq:ComparisonRIntersectionRegioni>j}),
we can estimate:
\begin{equation}
\sup_{\mathcal{R}_{i;j}^{(n)}\text{ }}\Big|\partial_{v}\log\Big(\frac{1-\frac{2m}{r}}{\partial_{v}r}\Big)\Big|\le\exp\big(\exp(\sigma_{\varepsilon}^{-6})\big)\Big(\frac{\sqrt{-\Lambda}}{\varepsilon^{(i)}}+\frac{1}{r_{n;i,j}}\Big).\label{eq:BoundThirdSummand}
\end{equation}

\item Using the estimate (\ref{eq:BoundDOmegaOnDomainsSpecialCase})
for $\partial_{v}\Omega^{2}$, as well as the bounds (\ref{eq:EstimateGeometryLikeC0}),
(\ref{eq:UpperBoundTuvFi}), (\ref{eq:BoundsRIntersectionRegioni>j})
and (\ref{eq:ComparisonRIntersectionRegioni>j}), we can estimate:
\begin{equation}
\sup_{\mathcal{R}_{i;j}^{(n)}\text{ }}\Big|\partial_{v}\log(\Omega^{2})-2\frac{\partial_{v}r}{r}\Big|\le\exp\big(\exp(\sigma_{\varepsilon}^{-6})\big)\Big(\frac{\sqrt{-\Lambda}}{\varepsilon^{(i)}}+\frac{1}{r_{n;i,j}}\Big).\label{eq:BoundFourthSummand}
\end{equation}

\end{itemize}

Using the estimates (\ref{eq:BoundExponentialFactorSummand})\textendash (\ref{eq:BoundFourthSummand})
(together with the relation of the parameters $\varepsilon$, $\rho_{\varepsilon}$,
$\delta_{\varepsilon}$ and $\sigma_{\varepsilon}$) to bound the
right hand side of (\ref{eq:ErrorFinalRelation}), we therefore obtain:
\begin{align}
\sup_{(u,v)\in\mathcal{R}_{i;j}^{(n)}}\Big|\int_{v_{n;i,j}^{(-)}}^{v}\mathfrak{Err}_{\nwarrow}[n;i.j](u,\bar{v})\,d\bar{v}\Big| & \le\int_{v_{\varepsilon,i}^{(n)}-h_{\varepsilon,i}}^{v_{\varepsilon,i}^{(n)}+h_{\varepsilon,i}}\int_{v_{\varepsilon,j}^{(n)}-h_{\varepsilon,j}}^{v_{\varepsilon,j}^{(n)}+h_{\varepsilon,j}}\frac{(\varepsilon^{(i)})^{2}}{r_{n;i,j}^{2}}\exp\big(\exp(\sigma_{\varepsilon}^{-6})\big)(-\Lambda)^{-1}\times\label{eq:FinalIngoingErrorEstimate}\\
 & \hphantom{\le\int_{v_{\varepsilon,i}^{(n)}-h_{\varepsilon,i}}^{v_{\varepsilon,i}^{(n)}+h_{\varepsilon,i}}}\times\Big(\exp\big(\exp(\sigma_{\varepsilon}^{-7})\big)\Big(\frac{\sqrt{-\Lambda}}{\varepsilon^{(i)}}+\frac{1}{r_{n;i,j}}\Big)+2\rho_{\varepsilon}^{\frac{1}{2}}\frac{1}{r_{n;i,j}}\Big)\,d\bar{u}d\bar{v}\le\nonumber \\
 & \le\exp\big(\exp(2\sigma_{\varepsilon}^{-7})\big)\rho_{\varepsilon}^{2}\frac{(\varepsilon^{(i)})^{2}}{(\varepsilon^{(j)})^{2}}\frac{\sqrt{-\Lambda}}{\varepsilon^{(i)}}h_{\varepsilon,i}h_{\varepsilon,j}\le\nonumber \\
 & \le\exp\big(\exp(\sigma_{\varepsilon}^{-8})\big)\rho_{\varepsilon}^{2}\frac{(\varepsilon^{(i)})^{2}}{(\varepsilon^{(j)})^{2}}\frac{\sqrt{-\Lambda}}{\varepsilon^{(i)}}\varepsilon^{(i)}\varepsilon^{(j)}(-\Lambda)^{-1}\le\nonumber \\
 & \le\rho_{\varepsilon}^{\frac{3}{2}}\varepsilon^{(i)}(-\Lambda)^{-\frac{1}{2}}\nonumber 
\end{align}
where, in the last step in (\ref{eq:FinalIngoingErrorEstimate}),
we have used the bound $\varepsilon^{(i)}<\varepsilon^{(j)}$ (holding
when $i>j$). Returning to (\ref{eq:EnergyIntegralIncreaseIngoing})
and using (\ref{eq:FinalIngoingErrorEstimate}) to estimate the last
term in the right hand side (using also the bound $a_{\varepsilon i}<\delta_{\varepsilon}\ll\exp\big(-\exp(\sigma_{\varepsilon}^{-8})\big)$),
we infer that, for any $(u,v)\in\mathcal{R}_{i;j}^{(n)}$: 
\begin{equation}
\mathcal{E}_{\nwarrow}[n;i.j](u,v)=\int_{v_{n;i,j}^{(-)}}^{v}\Bigg\{\exp\Big(\frac{2\mathcal{E}_{\nearrow}[n;i,j](u,\bar{v})}{r_{n;i,j}}\cdot\big(1+O(\rho_{\varepsilon}^{\frac{3}{4}})\big)+O(\varepsilon)\Big)\cdot E_{\nwarrow}[n;i.j](u_{n;i,j}^{(-)},\bar{v})\Bigg\}\,d\bar{v}+O\Big(\rho_{\varepsilon}^{\frac{3}{2}}\frac{\varepsilon^{(i)}}{\sqrt{-\Lambda}}\Big).\label{eq:AlmostThereIngoingEnergyChange}
\end{equation}
Similarly, substituting (\ref{eq:CoefficientExpressionForOutgoing-1-1})
in (\ref{eq:EnergyDensityIncreaseOutgoing}) and integrating in $u$
over $[u_{n;i,j}^{(-)},u]$, estimating $\int_{u_{n;i,j}^{(-)}}^{u}\mathfrak{Err}_{\nearrow}[n;i,j](\bar{u},v)\,d\bar{u}$
in the same way as we did for (\ref{eq:FinalIngoingErrorEstimate}),
we obtain for any $(u,v)\in\mathcal{R}_{i;j}^{(n)}$: 
\begin{equation}
\mathcal{E}_{\nearrow}[n;i.j](u,v)=\int_{u_{n;i,j}^{(-)}}^{u}\Bigg\{\exp\Big(-\frac{2\mathcal{E}_{\nwarrow}[n;i,j](\bar{u},v)}{r_{n;i,j}}\cdot\big(1+O(\rho_{\varepsilon}^{\frac{3}{4}})\big)+O(\varepsilon)\Big)\cdot E_{\nearrow}[n;i.j](\bar{u},v_{n;i,j}^{(-)})\Bigg\}\,d\bar{u}+O\Big(\rho_{\varepsilon}^{\frac{3}{2}}\frac{\varepsilon^{(j)}}{\sqrt{-\Lambda}}\Big).\label{eq:AlmostThereOutgoingEnergyChange}
\end{equation}

Using the bounds (\ref{eq:EstimateGeometryLikeC0}), (\ref{eq:UpperBoundTuuTvvFi}),
(\ref{eq:BoundsRIntersectionRegioni>j}) and (\ref{eq:ComparisonRIntersectionRegioni>j}),
we can trivially estimate 
\begin{equation}
\sup_{(u,v)\in\mathcal{R}_{i;j}^{(n)}}\frac{\mathcal{E}_{\nearrow}[n;i.j](u,v)}{r_{n;i,j}}\le\frac{\int_{v_{\varepsilon,j}^{(n)}-h_{\varepsilon,j}}^{v_{\varepsilon,j}^{(n)}+h_{\varepsilon,j}}\exp\big(\exp(\sigma_{\varepsilon}^{-6})\big)\,du}{r_{n;i,j}}\le2\exp\big(\exp(\sigma_{\varepsilon}^{-6})\big)\frac{h_{\varepsilon,j}}{r_{n;i,j}}\le\rho_{\varepsilon}^{\frac{3}{4}}.\label{eq:TrivialUpperBoundOutgoing}
\end{equation}
Dividing (\ref{eq:AlmostThereIngoingEnergyChange}) with $r_{n;i,j}$
and using (\ref{eq:TrivialUpperBoundOutgoing}) to estimate its right
hand side, making also use of (\ref{eq:EstimateGeometryLikeC0}) and
(\ref{eq:UpperBoundTuuTvvFi}) to estimate $E_{\nwarrow}[n;i.j](u_{n;i,j}^{(-)},\bar{v})$,
we obtain: 
\begin{align}
\sup_{(u,v)\in\mathcal{R}_{i;j}^{(n)}}\frac{\mathcal{E}_{\nwarrow}[n;i.j](u,v)}{r_{n;i,j}} & \le\frac{\int_{v_{\varepsilon,i}^{(n)}-h_{\varepsilon,i}}^{v_{\varepsilon,i}^{(n)}+h_{\varepsilon,i}}\Big\{\exp(O(\rho_{\varepsilon}^{\frac{3}{4}}))E_{\nwarrow}[n;i.j](u_{n;i,j}^{(-)},\bar{v})+O\Big(\rho_{\varepsilon}^{\frac{3}{2}}\frac{\varepsilon^{(i)}}{\sqrt{-\Lambda}}\Big)\Big\}\,d\bar{v}+}{r_{n;i,j}}\le\label{eq:SecondTrivialUpperBoundEnergy}\\
 & \le2\int_{v_{\varepsilon,i}^{(n)}-h_{\varepsilon,i}}^{v_{\varepsilon,i}^{(n)}+h_{\varepsilon,i}}\frac{\exp\big(\exp(\sigma_{\varepsilon}^{-6})\big)}{r_{n;i,j}}\,d\bar{v}\le\nonumber \\
 & \le4\exp\big(\exp(\sigma_{\varepsilon}^{-6})\big)\frac{h_{\varepsilon,i}}{r_{n;i,j}}\le\nonumber \\
 & \le\varepsilon,\nonumber 
\end{align}
where, in passing from the third to the fourth line in (\ref{eq:SecondTrivialUpperBoundEnergy}),
we made use of the bound $\frac{\varepsilon^{(i)}}{\varepsilon^{(j)}}\le\varepsilon$.

Returning to (\ref{eq:AlmostThereOutgoingEnergyChange}) and using
the bound (\ref{eq:SecondTrivialUpperBoundEnergy}), we infer: 
\begin{align}
\mathcal{E}_{\nearrow}[n;i.j](u,v) & =\int_{u_{n;i,j}^{(-)}}^{u}\Bigg\{\exp\Big(O(\varepsilon)\Big)\cdot E_{\nearrow}[n;i.j](\bar{u},v_{n;i,j}^{(-)})\Bigg\}\,d\bar{u}+O\Big(\rho_{\varepsilon}^{\frac{3}{2}}\frac{\varepsilon^{(j)}}{\sqrt{-\Lambda}}\Big)=\label{eq:FinallyThereOutgoingEnergyChange}\\
 & =(1+O(\varepsilon))\int_{u_{n;i,j}^{(-)}}^{u}E_{\nearrow}[n;i.j](\bar{u},v_{n;i,j}^{(-)})\,d\bar{u}+O\Big(\rho_{\varepsilon}^{\frac{3}{2}}\frac{\varepsilon^{(j)}}{\sqrt{-\Lambda}}\Big)=\nonumber \\
 & =(1+O(\varepsilon))\mathcal{E}_{\nearrow}[n;i.j](u,v_{n;i,j}^{(-)})+O\Big(\rho_{\varepsilon}^{\frac{3}{2}}\frac{\varepsilon^{(j)}}{\sqrt{-\Lambda}}\Big),\nonumber 
\end{align}
from which (\ref{eq:DecreaseOutgoingEnergyCloseToAxis}) follows by
setting $u=u_{n;i,j}^{(+)}$ and $v=v_{n;i,j}^{(+)}$ and using (\ref{eq:FormulaForFinalEnergiesFromDensities}).
Returning, now, to (\ref{eq:AlmostThereIngoingEnergyChange}) and
using (\ref{eq:FinallyThereOutgoingEnergyChange}) to estimate the
exponential in the right hand side, as well as the bound (\ref{eq:TrivialUpperBoundOutgoing})
to estimate the $\frac{2\mathcal{E}_{\nearrow}[n;i.j]}{r_{n;i,j}}\cdot O(\rho_{\varepsilon}^{\frac{3}{4}})$
error term, we obtain: 
\begin{align}
\mathcal{E}_{\nwarrow}[n;i.j](u,v) & =\int_{v_{n;i,j}^{(-)}}^{v}\Bigg\{\exp\Big(\frac{2\mathcal{E}_{\nearrow}[n;i.j](u,v_{n;i,j}^{(-)})}{r_{n;i,j}}\cdot\big(1+O(\rho_{\varepsilon}^{\frac{3}{4}})\big)+O(\rho_{\varepsilon}^{2})\Big)\cdot E_{\nwarrow}[n;i.j](u_{n;i,j}^{(-)},\bar{v})\Bigg\}\,d\bar{v}+O\Big(\rho_{\varepsilon}^{\frac{3}{2}}\frac{\varepsilon^{(i)}}{\sqrt{-\Lambda}}\Big)=\label{eq:FinallyThereIngoingEnergyChange}\\
 & =\exp\Big(\frac{2\mathcal{E}_{\nearrow}[n;i.j](u,v_{n;i,j}^{(-)})}{r_{n;i,j}}+O(\rho_{\varepsilon}^{\frac{3}{2}})\Big)\cdot\int_{v_{n;i,j}^{(-)}}^{v}E_{\nwarrow}[n;i.j](u_{n;i,j}^{(-)},\bar{v})\,d\bar{v}+O\Big(\rho_{\varepsilon}^{\frac{3}{2}}\frac{\varepsilon^{(i)}}{\sqrt{-\Lambda}}\Big)=\nonumber \\
 & =\exp\Big(\frac{2\mathcal{E}_{\nearrow}[n;i.j](u,v_{n;i,j}^{(-)})}{r_{n;i,j}}+O(\rho_{\varepsilon}^{\frac{3}{2}})\Big)\cdot\mathcal{E}_{\nwarrow}[n;i.j](u_{n;i,j}^{(-)},v)+O\Big(\rho_{\varepsilon}^{\frac{3}{2}}\frac{\varepsilon^{(i)}}{\sqrt{-\Lambda}}\Big),\nonumber 
\end{align}
from which (\ref{eq:IncreaseIngoingEnergyCloseToAxis}) follows by
setting $u=u_{n;i,j}^{(+)}$ and $v=v_{n;i,j}^{(+)}$ and using (\ref{eq:FormulaForFinalEnergiesFromDensities}). 

\medskip{}

\noindent \emph{Remark.} More generally, setting $u=u_{n;i,j}^{(+)}$
in (\ref{eq:FinallyThereOutgoingEnergyChange}), $v=v_{n;i,j}^{(+)}$
in (\ref{eq:FinallyThereIngoingEnergyChange}) and using (\ref{eq:FormulaForFinalEnergiesIntermediateFromDensities-1}),
we obtain
\begin{equation}
\tilde{m}(u_{n;i,j}^{(-)},v)-\tilde{m}(u_{n;i,j}^{(+)},v)=\mathcal{E}_{\nearrow}^{(-)}[n;i,j]\cdot\big(1+O(\varepsilon)\big)+O\Big(\rho_{\varepsilon}^{\frac{3}{2}}\frac{\varepsilon^{(j)}}{\sqrt{-\Lambda}}\Big)\text{ for all }v\in[v_{n;i,j}^{(-)},v_{n;i,j}^{(+)}]\label{eq:GeneralBoundMassDifferenceOutfoinfi>j}
\end{equation}
and 
\begin{equation}
\tilde{m}\big(u,v_{n;i,j}^{(+)}\big)-\tilde{m}\big(u,v_{n;i,j}^{(-)}\big)\le\mathcal{E}_{\nwarrow}^{(-)}[n;i,j]\cdot\exp\Big(\frac{2\mathcal{E}_{\nearrow}^{(-)}[n;i,j]}{r_{n;i.j}}+O(\rho_{\varepsilon}^{\frac{3}{2}})\Big)+O\Big(\rho_{\varepsilon}^{\frac{3}{2}}\frac{\varepsilon^{(i)}}{\sqrt{-\Lambda}}\Big)\text{ for all }u\in[u_{n;i,j}^{(-)},u_{n;i,j}^{(+)}].\label{eq:GeneralBoundMassDifferenceIngoingi>j}
\end{equation}

\medskip{}

\noindent \emph{The case $i<j$: Proof of (\ref{eq:IngoingEnergyCloseToInfinity})
and (\ref{eq:OutgoingEnergyCloseToInfinity}). }The proof of (\ref{eq:IngoingEnergyCloseToInfinity})
and (\ref{eq:OutgoingEnergyCloseToInfinity}) will follow by the same
arguments as the proof of (\ref{eq:DecreaseOutgoingEnergyCloseToAxis})
and (\ref{eq:IncreaseIngoingEnergyCloseToAxis}), the main difference
being that in this case, we will use (\ref{eq:BoundsRIntersectionRegioni<j})
and (\ref{eq:ComparisonRIntersectionRegioni<j}) in place of (\ref{eq:BoundsRIntersectionRegioni>j})
and (\ref{eq:ComparisonRIntersectionRegioni>j}), respectively. In
particular, in this case, the fact that $r_{n;i,j}\gtrsim\frac{1}{\varepsilon}$
will actually render all the error terms appearing in the relevant
computations of order $O(\varepsilon)$ or smaller, simplyfying the
whole procedure substantially.

Using the bounds (\ref{eq:EstimateGeometryLikeC0}), (\ref{eq:UpperBoundTuuTvvFi}),
(\ref{eq:BoundsRIntersectionRegioni<j}) and (\ref{eq:ComparisonRIntersectionRegioni<j})
on $\mathcal{R}_{i;j}^{(n)}$ as well as (\ref{eq:HierarchyOfParameters})
and (\ref{eq:RecursiveEi}), we can estimate
\begin{align}
\int_{u_{n;i,j}^{(-)}}^{u}4\pi\frac{rT_{uu}[f_{\varepsilon}]}{-\partial_{u}r}(\bar{u},v)\,d\bar{u} & =\int_{u_{n;i,j}^{(-)}}^{u}4\pi\frac{1}{r(1-\frac{1}{3}\Lambda r^{2})}\frac{1-\frac{1}{3}\Lambda r^{2}}{-\partial_{u}r}r^{2}T_{uu}[f_{\varepsilon}](\bar{u},v)\,d\bar{u}\le\label{eq:CoefficientExpressionForIngoing-1-1Infinity}\\
 & \le\exp(\exp(\sigma_{\varepsilon}^{-7}))\frac{1}{r_{n;i,j}^{3}}h_{\varepsilon,j}(-\Lambda)^{-1}\le\varepsilon\nonumber 
\end{align}
and, similarly
\begin{equation}
\int_{v_{n;i,j}^{(-)}}^{v}4\pi\frac{rT_{vv}[f_{\varepsilon}]}{\partial_{v}r}(u,\bar{v})\,d\bar{v}\le\varepsilon.\label{eq:CoefficientExpressionForOutgoing-1-1INfinity}
\end{equation}

Substituting (\ref{eq:CoefficientExpressionForIngoing-1-1Infinity})
in (\ref{eq:EnergyDensityIncreaseIngoing}) and integrating in $v$
over $[v_{n;i,j}^{(-)},v]$, we obtain the following analogue of (\ref{eq:EnergyIntegralIncreaseIngoing}):
\begin{align}
\mathcal{E}_{\nwarrow}[n;i.j](u,v) & =\int_{v_{n;i,j}^{(-)}}^{v}\Bigg\{\exp\Big(O(\varepsilon)\Big)\cdot E_{\nwarrow}[n;i.j](u_{n;i,j}^{(-)},\bar{v})\Bigg\}\,d\bar{v}-a_{\varepsilon i}\int_{v_{n;i,j}^{(-)}}^{v}\mathfrak{Err}_{\nwarrow}[n;i.j](u,\bar{v})\,d\bar{v}=\label{eq:IngoingIntegralInfinity}\\
 & =(1+O(\varepsilon))\mathcal{E}_{\nwarrow}[n;i.j](u_{n;i,j}^{(-)},v)-a_{\varepsilon i}\int_{v_{n;i,j}^{(-)}}^{v}\mathfrak{Err}_{\nwarrow}[n;i.j](u,\bar{v})\,d\bar{v}.\nonumber 
\end{align}
 Repeating the same procedure as for the proof of (\ref{eq:FinalIngoingErrorEstimate}),
but using (\ref{eq:BoundsRIntersectionRegioni<j}) and (\ref{eq:ComparisonRIntersectionRegioni<j})
in place of (\ref{eq:BoundsRIntersectionRegioni>j}) and (\ref{eq:ComparisonRIntersectionRegioni>j}),
we can estimate:
\begin{equation}
\sup_{(u,v)\in\mathcal{R}_{i;j}^{(n)}}\Big|\int_{v_{n;i,j}^{(-)}}^{v}\mathfrak{Err}_{\nwarrow}[n;i.j](u,\bar{v})\,d\bar{v}\Big|\le\varepsilon\cdot\frac{\varepsilon^{(i)}}{\sqrt{-\Lambda}}\label{eq:ErrorIngoingInfinity}
\end{equation}
Therefore, from (\ref{eq:IngoingIntegralInfinity}) we infer that
\begin{equation}
\mathcal{E}_{\nwarrow}[n;i.j](u,v)=(1+O(\varepsilon))\mathcal{E}_{\nwarrow}[n;i.j](u_{n;i,j}^{(-)},v)+O\Big(\varepsilon\cdot\frac{\varepsilon^{(i)}}{\sqrt{-\Lambda}}\Big),\label{eq:AlmostThereIngoingEnergyChange-1}
\end{equation}
from which (\ref{eq:IngoingEnergyCloseToInfinity}) follows by setting
$u=u_{n;i,j}^{(+)}$ and $v=v_{n;i,j}^{(+)}$ and using (\ref{eq:FormulaForFinalEnergiesFromDensities}).
Similarly, 
\begin{equation}
\mathcal{E}_{\nearrow}[n;i.j](u,v)=(1+O(\varepsilon))\mathcal{E}_{\nearrow}[n;i.j](u,v_{n;i,j}^{(-)})+O\Big(\varepsilon\cdot\frac{\varepsilon^{(j)}}{\sqrt{-\Lambda}}\Big),\label{eq:AlmostThereIngoingEnergyChange-1-1}
\end{equation}
from which the estimate (\ref{eq:OutgoingEnergyCloseToInfinity})
follows by setting $u=u_{n;i,j}^{(+)}$ and $v=v_{n;i,j}^{(+)}$.

\medskip{}

\noindent \emph{Remark.} Similarly as in the case $i>j$, from (\ref{eq:AlmostThereIngoingEnergyChange-1})
and (\ref{eq:AlmostThereIngoingEnergyChange-1}) (\ref{eq:AlmostThereIngoingEnergyChange-1-1}),
using (\ref{eq:FormulaForFinalEnergiesIntermediateFromDensities-1}),
we obtain 
\begin{equation}
\tilde{m}\big(u,v_{n;i,j}^{(+)}\big)-\tilde{m}\big(u,v_{n;i,j}^{(-)}\big)=\big(1+O(\varepsilon)\big)\mathcal{E}_{\nwarrow}^{(-)}[n;i,j]+O\Big(\varepsilon\frac{\varepsilon^{(i)}}{\sqrt{-\Lambda}}\Big)\text{ for all }u\in[u_{n;i,j}^{(-)},u_{n;i,j}^{(+)}]\label{eq:GeneralBoundMassDifferenceIngoingi<j}
\end{equation}
 and 
\begin{equation}
\tilde{m}(u_{n;i,j}^{(-)},v)-\tilde{m}(u_{n;i,j}^{(+)},v)=\big(1+O(\varepsilon)\big)\mathcal{E}_{\nearrow}^{(-)}[n;i,j]+O\Big(\varepsilon\frac{\varepsilon^{(j)}}{\sqrt{-\Lambda}}\Big)\text{ for all }v\in[v_{n;i,j}^{(-)},v_{n;i,j}^{(+)}].\label{eq:GeneralBoundMassDifferenceOutfoinfi<j}
\end{equation}

\medskip{}

The relations (\ref{eq:IncreaseIngoingEnergyCloseToAxis})\textendash (\ref{eq:OutgoingEnergyCloseToInfinity})
for $\widetilde{\mathcal{E}}_{\nwarrow}^{(\pm)}[n;i,j]$, $\widetilde{\mathcal{E}}_{\nearrow}^{(\pm)}[n;i,j]$
in place of $\mathcal{E}_{\nwarrow}^{(\pm)}[n;i,j]$, $\mathcal{E}_{\nearrow}^{(\pm)}[n;i,j]$
follow in exactly the same way, after replacing $\sigma_{\varepsilon}$,
$h_{\varepsilon,i}$, $\mathcal{V}_{i}^{(n)}$, $\mathcal{R}_{i;j}^{(n)}$
with $\delta_{\varepsilon}$, $\tilde{h}_{\varepsilon,i}$, $\widetilde{\mathcal{V}}_{i}^{(n)}$,
$\mathcal{R}_{i;j}^{(n)}$ respectively, in all the expressions above
and using (\ref{eq:EstimateGeometryLikeC0LargerDomain}) in place
of (\ref{eq:EstimateGeometryLikeC0}).
\end{proof}
The next result provides an estimate for the change of the geometric
separation of the beams $\mathcal{V}_{i\nwarrow}^{(n)}$ and $\mathcal{V}_{i-1\nwarrow}^{(n)}$
before and after their intersection with $\mathcal{V}_{j\nearrow}^{(n)}$,
as well as the change of the separation of $\mathcal{V}_{j\nearrow}^{(n)}$
and $\mathcal{V}_{j-1\nearrow}^{(n)}$before and after their intersection
with $\mathcal{V}_{i\nwarrow}^{(n)}$ (with $\mathcal{V}_{i\nwarrow}^{(n+1)}$
and $\mathcal{V}_{i-1\nwarrow}^{(n+1)}$ in place of $\mathcal{V}_{i\nwarrow}^{(n)}$
and $\mathcal{V}_{i-1\nwarrow}^{(n)}$, if $i<j$).
\begin{prop}
\label{prop:RSeparationChangeInteraction} Let $\varepsilon\in(0,\varepsilon_{1}]$
and let $n\in\mathbb{N}$ and $0\le i,j\le N_{\varepsilon}$, $i\neq j$,
be such that 
\[
\mathcal{R}_{i;j}^{(n)}\subset\mathcal{U}_{\varepsilon}^{+}.
\]
Let also $r_{n;i,j}$ be defined by (\ref{eq:DefinitionRnij}). Then,
the following relations hold regarding the change of the separation-measuring
quantities $\mathfrak{D}r_{\nwarrow}^{(\pm)}[n;i,j]$, $\mathfrak{D}r_{\nearrow}^{(\pm)}[n;i,j]$:

\begin{itemize}

\item In the case $i>j$, the quantities $\mathfrak{D}r_{\nearrow}^{(\pm)}[n;i,j]$
(defined for $j>0$) satisfy 
\begin{equation}
\mathfrak{D}r_{\nearrow}^{(+)}[n;i,j]=\mathfrak{D}r_{\nearrow}^{(-)}[n;i,j]\cdot\big(1+O(\varepsilon)\big),\label{eq:IncreaseOutgoingRSeparationCloseToAxis}
\end{equation}
while for the quantities $\mathfrak{D}r_{\nwarrow}^{(\pm)}[n;i,j]$
the following hold:

\begin{itemize}

\item If $i=j+1$, 
\begin{equation}
\mathfrak{D}r_{\nwarrow}^{(+)}[n;i,j]=\mathfrak{D}r_{\nwarrow}^{(-)}[n;i,j]\cdot\big(1+O(\rho_{\varepsilon}^{\frac{3}{4}})\big).\label{eq:DecreaseIngoingRSeparationCloseToAxisi=00003Dj+1}
\end{equation}

\item If $i>j+1$, 
\begin{equation}
\mathfrak{D}r_{\nwarrow}^{(+)}[n;i,j]=\mathfrak{D}r_{\nwarrow}^{(-)}[n;i,j]\cdot\exp\Big(-\frac{2\mathcal{E}_{\nearrow}^{(-)}[n;i,j]}{r_{n;i.j}}+O(\rho_{\varepsilon}^{\frac{3}{2}})\Big).\label{eq:DecreaseIngoingRSeparationCloseToAxisi>j+1}
\end{equation}

\end{itemize} 

\item In the case $i<j$, the quantities $\mathfrak{D}r_{\nearrow}^{(\pm)}[n;i,j]$
satisfy
\begin{equation}
\mathfrak{D}r_{\nearrow}^{(+)}[n;i,j]=\mathfrak{D}r_{\nearrow}^{(-)}[n;i,j]\cdot\big(1+O(\varepsilon)\big),\label{eq:OutgoingRSeparationCloseToInfinity}
\end{equation}
while the quantities $\mathfrak{D}r_{\nwarrow}^{(\pm)}[n;i,j]$ (defined
for $i>0$) satisfy:
\begin{align}
\mathfrak{D}r_{\nwarrow}^{(+)}[n;i,j] & =\mathfrak{D}r_{\nwarrow}^{(-)}[n;i,j]\cdot\big(1+O(\varepsilon)\big).\label{eq:IngoingRSeparationCloseToInfinity}
\end{align}

\end{itemize}

In the case when 
\[
\mathcal{R}_{i;\gamma_{\mathcal{Z}}}^{(n)},\mathcal{R}_{i;\mathcal{I}}^{(n)}\subset\mathcal{U}_{\varepsilon}^{+}
\]
the following relations hold for $\mathfrak{D}r_{\nwarrow}^{(\pm)}[n;i,i]$,
$\mathfrak{D}r_{\nearrow}^{(\pm)}[n;i,i]$: 
\begin{align}
\mathfrak{D}r_{\nwarrow}^{(+)}[n;i,i] & =\mathfrak{D}r_{\nwarrow}^{(-)}[n;i,i]\cdot\big(1+O(\varepsilon)\big),\label{eq:SpecialRSeparationCloseToInfinity}\\
\mathfrak{D}r_{\nearrow}^{(+)}[n;i,i] & =\mathfrak{D}r_{\nearrow}^{(-)}[n;i,i]\cdot\big(1+O(\rho_{\varepsilon}^{\frac{3}{4}})\big).\label{eq:SpecialRSeparationCloseToAxis}
\end{align}

Replacing $\mathcal{U}_{\varepsilon}^{+}$ with $\mathcal{T}_{\varepsilon}^{+}$
and $\mathcal{V}_{i}^{(n)}$ with $\widetilde{\mathcal{V}}_{i}^{(n)}$,
the relations (\ref{eq:IncreaseOutgoingRSeparationCloseToAxis})\textendash (\ref{eq:SpecialRSeparationCloseToAxis})
also hold with $\widetilde{\mathfrak{D}}r_{\nwarrow}^{(\pm)}[n;i,j]$,
$\widetilde{\mathfrak{D}}r_{\nearrow}^{(\pm)}[n;i,j]$ in place of
$\mathfrak{D}r_{\nwarrow}^{(\pm)}[n;i,j]$, $\mathfrak{D}r_{\nearrow}^{(\pm)}[n;i,j]$.
\end{prop}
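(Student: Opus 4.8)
The plan is to reduce every one of the stated relations to the evaluation of a single multiplicative exponential factor of exactly the type already computed in the proof of Proposition \ref{prop:EnergyChangeInteraction}; indeed, the separation quantities $\mathfrak{D}r_{\nwarrow}^{(\pm)}$, $\mathfrak{D}r_{\nearrow}^{(\pm)}$ will be seen to transform \emph{dually} to the energy quantities $\mathcal{E}_{\nearrow}^{(\pm)}$, $\mathcal{E}_{\nwarrow}^{(\pm)}$, with the factor $\exp\big(\tfrac{2\mathcal{E}}{r_{n;i,j}}\big)$ that governs an energy increase turning into the factor $\exp\big(-\tfrac{2\mathcal{E}}{r_{n;i,j}}\big)$ governing the corresponding separation decrease. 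The only new structural ingredient is a careful bookkeeping of which among the beams $f_{\varepsilon k}$ actually contributes to the relevant component of $T_{\mu\nu}[f_{\varepsilon}]$ in the thin strips over which one integrates\textemdash in each case exactly one beam, by the support estimates of Lemma \ref{lem:QuantitativeBeamEstimate}.

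For $\mathfrak{D}r_{\nwarrow}^{(\pm)}[n;i,j]$ with $i>j$, the two integrands in (\ref{eq:IngoingRSeparationBeforeAfter}) differ only in the value of the $u$-coordinate at which $\tfrac{\partial_{v}r}{1-\frac{2m}{r}}$ is evaluated, namely the two edges $u=v_{\varepsilon,j}^{(n)}\mp h_{\varepsilon,j}$ of the outgoing beam $\mathcal{V}_{j\nearrow}^{(n)}$. Integrating the constraint equation (\ref{eq:EksiswshGiaK}) in $u$ across $[v_{\varepsilon,j}^{(n)}-h_{\varepsilon,j},v_{\varepsilon,j}^{(n)}+h_{\varepsilon,j}]$ at each fixed $v$ in the range of integration gives
\[
\frac{\partial_{v}r}{1-\frac{2m}{r}}(v_{\varepsilon,j}^{(n)}+h_{\varepsilon,j},v)=\frac{\partial_{v}r}{1-\frac{2m}{r}}(v_{\varepsilon,j}^{(n)}-h_{\varepsilon,j},v)\cdot\exp\Big(-\int_{v_{\varepsilon,j}^{(n)}-h_{\varepsilon,j}}^{v_{\varepsilon,j}^{(n)}+h_{\varepsilon,j}}4\pi\frac{rT_{uu}[f_{\varepsilon}]}{-\partial_{u}r}(u,v)\,du\Big),
\]
so that integrating in $v$ exhibits $\mathfrak{D}r_{\nwarrow}^{(+)}[n;i,j]/\mathfrak{D}r_{\nwarrow}^{(-)}[n;i,j]$ as a weighted average of these exponentials over $v$. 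Since $v$ lies in the gap between the ingoing beams $i-1$ and $i$ and the $u$-strip is thin, Lemma \ref{lem:QuantitativeBeamEstimate} shows that the only component of $f_{\varepsilon}$ supported there is $f_{\varepsilon j}$; hence the exponent is precisely the quantity computed in (\ref{eq:CoefficientExpressionForIngoing-1-1}) of the proof of Proposition \ref{prop:EnergyChangeInteraction}, \emph{minus} the $\tfrac{4\pi a_{\varepsilon i}}{r}\cdot\tfrac{r^{2}T_{uu}[f_{\varepsilon i}]}{-\partial_{u}r}$ contribution present there but absent here. Using the relations (\ref{eq:TildeUMaza})\textendash (\ref{eq:TildeVMaza}) for $\partial\tilde{m}$, the bounds (\ref{eq:BoundsRIntersectionRegioni>j})\textendash (\ref{eq:ComparisonRIntersectionRegioni>j}) on $r$ on $\mathcal{R}_{i;j}^{(n)}$, the monotonicity $\partial_{v}\tilde{m}\ge 0$, $\partial_{u}\tilde{m}\le 0$, and the support structure of Lemma \ref{lem:QuantitativeBeamEstimate} (which, exactly as in (\ref{eq:GeneralBoundMassDifferenceOutfoinfi>j}), shows the mass drop of beam $j$ across the $u$-strip evaluated at $v$ in the gap to equal $\mathcal{E}_{\nearrow}^{(-)}[n;i,j]$ up to $O(\rho_{\varepsilon}^{3/2}\varepsilon^{(j)}(-\Lambda)^{-1/2})$), this exponent equals $\tfrac{2\mathcal{E}_{\nearrow}^{(-)}[n;i,j]}{r_{n;i,j}}(1+O(\rho_{\varepsilon}^{3/4}))+O(\rho_{\varepsilon}^{3/2})$, which yields (\ref{eq:DecreaseIngoingRSeparationCloseToAxisi>j+1}); when $i=j+1$ the gap sits immediately above beam $j$, one of the two beams whose separation is measured, so the bookkeeping near the axis only gives the weaker factor $1+O(\rho_{\varepsilon}^{3/4})$\textemdash consistent, since $\tfrac{2\mathcal{E}_{\nearrow}^{(-)}[n;i,j]}{r_{n;i,j}}\lesssim a_{\varepsilon j}\rho_{\varepsilon}\ll\rho_{\varepsilon}$ anyway. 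The relation (\ref{eq:IncreaseOutgoingRSeparationCloseToAxis}) is handled symmetrically, integrating (\ref{eq:EksiswshGiaKappa-}) in $v$ across the edges $v=v_{\varepsilon,i}^{(n)}\mp h_{\varepsilon,i}$ of $\mathcal{V}_{i\nwarrow}^{(n)}$ at fixed $u$ between the outgoing beams $j-1$ and $j$; now only $T_{vv}[f_{\varepsilon i}]$ contributes, the exponent is $\tfrac{2}{r_{n;i,j}}$ times the mass gain across beam $i$, which is $\lesssim a_{\varepsilon i}\varepsilon^{(i)}\rho_{\varepsilon}(\varepsilon^{(j)})^{-1}\le\varepsilon$ by (\ref{eq:BoundsRIntersectionRegioni>j}) and $\varepsilon^{(i)}\le\varepsilon\,\varepsilon^{(j)}$ (valid for $i>j$), giving the factor $1+O(\varepsilon)$.

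The cases $i<j$ (relations (\ref{eq:OutgoingRSeparationCloseToInfinity}), (\ref{eq:IngoingRSeparationCloseToInfinity})) and the self-intersection relation (\ref{eq:SpecialRSeparationCloseToInfinity}) are simpler still: there the relevant spheres have $r$ much larger than the energies of the beams involved (of order at least $\rho_{\varepsilon}(\varepsilon^{(i)})^{-1}(-\Lambda)^{-1/2}$, and larger still near level $n+1$), so every exponent is $O(\varepsilon)$, exactly as in the ``$i<j$'' part of the proof of Proposition \ref{prop:EnergyChangeInteraction}. The remaining self-intersection relation (\ref{eq:SpecialRSeparationCloseToAxis}) is proved like (\ref{eq:IncreaseOutgoingRSeparationCloseToAxis}) but with the intersection region $\mathcal{R}_{i;\gamma_{\mathcal{Z}}}^{(n)}$ near the axis, where $r$ is of order $\varepsilon^{(i)}(-\Lambda)^{-1/2}$, so the exponent is only $O(\rho_{\varepsilon}^{3/4})$. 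Finally, the $\mathcal{T}_{\varepsilon}^{+}$ versions follow by the identical arguments upon replacing $\sigma_{\varepsilon}$, $h_{\varepsilon,i}$, $\beta_{\varepsilon,i}$, $\mathcal{V}_{i}^{(n)}$, $\mathcal{R}_{i;j}^{(n)}$ by $\delta_{\varepsilon}$, $\tilde{h}_{\varepsilon,i}$, $\tilde{\beta}_{\varepsilon,i}$, $\widetilde{\mathcal{V}}_{i}^{(n)}$, $\widetilde{\mathcal{R}}_{i;j}^{(n)}$ and using (\ref{eq:EstimateGeometryLikeC0LargerDomain}) together with the $\mathcal{T}_{\varepsilon}^{+}$-forms of Lemmas \ref{lem:QuantitativeBeamEstimate} and \ref{lem:ControlHigherOrderDerivatives} throughout.

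The main obstacle is, as in Proposition \ref{prop:EnergyChangeInteraction}, the error analysis: after integrating by parts in the $\partial_{v}(r^{2}T_{uv})$ term one is left with terms containing $\partial_{v}\log\big(\tfrac{1-\frac{2m}{r}}{\partial_{v}r}\big)$, $\partial_{v}\log(\Omega^{2})$ and $\partial_{v}(r^{2}T_{uu})$, none of which is controlled by the low-regularity norm (\ref{eq:InitialDataNormSlice}). These are precisely the quantities for which the higher-order estimates of Lemma \ref{lem:ControlHigherOrderDerivatives} were established, and verifying that they combine\textemdash together with the hierarchy $\varepsilon\ll\rho_{\varepsilon}\ll\delta_{\varepsilon}\ll\sigma_{\varepsilon}$ and the width/separation relations (\ref{eq:ShorthandNotationCornerPoints})\textemdash to the claimed error sizes $O(\rho_{\varepsilon}^{3/2})$ and $O(\varepsilon)$ is the technical heart of the argument. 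However, since the relevant exponent $\int 4\pi\tfrac{rT_{uu}[f_{\varepsilon}]}{-\partial_{u}r}\,du$ is literally the quantity already estimated in the proof of Proposition \ref{prop:EnergyChangeInteraction} (and here without the extra $f_{\varepsilon i}$ self-contribution), this amounts to a careful adaptation rather than a genuinely new difficulty.
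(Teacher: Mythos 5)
Your proposal follows essentially the same route as the paper's proof: integrate the constraint equations (\ref{eq:EksiswshGiaK}), (\ref{eq:EksiswshGiaKappa-}) transversally across the intersecting beam (this is precisely the paper's formulas (\ref{eq:IngoingRSeparationBeforeAfter-1})--(\ref{eq:OutgoingRSeparationBeforeAfter-1}) over the strips $\mathcal{W}_{\nwarrow}[n;i,j]$, $\mathcal{W}_{\nearrow}[n;i,j]$), identify the exponent with $2\mathcal{E}^{(-)}_{\nearrow}/r_{n;i,j}$ (resp.\ show it is $O(\varepsilon)$ or $O(\rho_{\varepsilon}^{3/4})$) using the single-beam support structure, the conservation law and the higher-order bounds of Lemma \ref{lem:ControlHigherOrderDerivatives}, and split into the cases $i=j+1$, $i>j+1$, $i<j$, with the $\mathcal{T}_{\varepsilon}^{+}$ version by the usual substitutions. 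The duality you point out (energy factor $e^{+2\mathcal{E}/r}$ versus separation factor $e^{-2\mathcal{E}/r}$) is exactly how the paper organizes the computation.

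One point in your sketch is justified incorrectly, although the conclusion is right. For (\ref{eq:SpecialRSeparationCloseToAxis}) you argue that near $\mathcal{R}_{i;\gamma_{\mathcal{Z}}}^{(n)}$ one has $r\sim\varepsilon^{(i)}(-\Lambda)^{-1/2}$ and hence the exponent is $O(\rho_{\varepsilon}^{3/4})$; but if $r$ were really of that size on the segment of integration, the exponent would be of size $\sim\exp(\exp(\sigma_{\varepsilon}^{-5}))\,a_{\varepsilon i}$, which is \emph{not} small for the admissible range $a_{\varepsilon i}\in(0,\sigma_{\varepsilon})$, so the step as justified would fail. The actual mechanism (and the one the paper uses, both here and in the $i=j+1$ case (\ref{eq:DecreaseIngoingRSeparationCloseToAxisi=00003Dj+1})) is that the quantities $\mathfrak{D}r_{\nwarrow}^{(\pm)}$, $\mathfrak{D}r_{\nearrow}^{(\pm)}$ are by definition (\ref{eq:IngoingRSeparationBeforeAfter})--(\ref{eq:OutgoingRSeparationBeforeAfter}) integrated only over the gap \emph{with margins} $\rho_{\varepsilon}^{-\frac{7}{8}}h_{\varepsilon,i-1}$ (resp.\ $\rho_{\varepsilon}^{-\frac{7}{8}}h_{\varepsilon,j-1}$) excised around the neighbouring beam, so that on the integration segment $r\gtrsim e^{-\sigma_{\varepsilon}^{-6}}\rho_{\varepsilon}^{-\frac{7}{8}}\varepsilon^{(i-1)}(-\Lambda)^{-1/2}\gg\varepsilon^{(i)}(-\Lambda)^{-1/2}$; combined with the hierarchy $\varepsilon^{(i)}\ll\varepsilon^{(i-1)}$ (and, for $i=j+1$, with the bound $\exp(\exp(2\sigma_{\varepsilon}^{-6}))\rho_{\varepsilon}^{\frac{7}{8}}\le\rho_{\varepsilon}^{\frac{3}{4}}$ of (\ref{eq:TrivialUpperBoundOutgoingExponent})) this is what yields the claimed error sizes. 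Relatedly, your appeal to (\ref{eq:GeneralBoundMassDifferenceOutfoinfi>j}) needs the mass drop across the $u$-strip at $v$ in the \emph{gap} rather than in $\mathcal{R}_{i;j}^{(n)}$; this is fine, but it is exactly the content of the paper's propagation of the energy density $E_{\nearrow}$ from $v=v_{n;i,j}^{(-)}$ into $\mathcal{W}_{\nearrow}[n;i,j]$ with the $\mathfrak{Err}_{\nearrow}^{(0)}$ error analysis, so it should be stated as such rather than cited as already proved.
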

\begin{proof}
\noindent In order to establish (\ref{eq:IncreaseOutgoingRSeparationCloseToAxis})\textendash (\ref{eq:SpecialRSeparationCloseToAxis}),
we will assume without loss of generality that $i>0$ and $j>0$,
so that both $\mathfrak{D}r_{\nwarrow}^{(\pm)}[n;i,j]$ and $\mathfrak{D}r_{\nearrow}^{(\pm)}[n;i,j]$
are well defined. In the case when $i=0$ (when $\mathfrak{D}r_{\nwarrow}^{(\pm)}[n;i,j]$
is not defined), the proof of (\ref{eq:OutgoingRSeparationCloseToInfinity})
follows exactly as in the case $i>0$, and similarly for the proof
of (\ref{eq:DecreaseIngoingRSeparationCloseToAxisi=00003Dj+1})\textendash (\ref{eq:DecreaseIngoingRSeparationCloseToAxisi>j+1})
in the case $j=0$. 

As we did in the proof of Proposition \ref{prop:EnergyChangeInteraction},
we will use the shorthand notation $v_{n;i,j}^{(\pm)}$, $u_{n;i,j}^{(\pm)}$
for the expressions (\ref{eq:v+-}), (\ref{eq:u+-}), respectively.
We will also define the energy quantities $E_{\nwarrow}[n;i,j](u,v)$,
$E_{\nearrow}[n;i,j](u,v)$, $\mathcal{E}_{\nwarrow}[n;i,j](u,v)$
and $\mathcal{E}_{\nearrow}[n;i,j](u,v)$ by (\ref{eq:IngoingMainEnergyDensity}),
(\ref{eq:OutgoingMainEnergyDensity}), (\ref{eq:IngoingEnergyForBootstrap})
and (\ref{eq:OutgoingEnergyForBootstrap}), respectively. 

Let us define the domains
\begin{align}
\mathcal{W}_{\nwarrow}[n;i,j]\doteq & [u_{n;i,j-1}^{(+)}+\rho_{\varepsilon}^{-\frac{7}{8}}h_{\varepsilon,j-1},u_{n;i,j}^{(-)}-\rho_{\varepsilon}^{-\frac{7}{8}}h_{\varepsilon,j-1}]\times[v_{n;i,j}^{(-)},v_{n;i,j}^{(+)}],\label{eq:IngoingDomainForRSeparation}\\
\mathcal{W}_{\nearrow}[n;i,j]\doteq & [u_{n;i,j}^{(-)},u_{n;i,j}^{(+)}]\times[v_{n;i-1,j}^{(+)}+\rho_{\varepsilon}^{-\frac{7}{8}}h_{\varepsilon,i-1},v_{n;i,j}^{(-)}-\rho_{\varepsilon}^{-\frac{7}{8}}h_{\varepsilon,i-1}],\label{eq:OutgoingDomainForRSeparation}
\end{align}
with the following convention for $v_{i-1,i-1}^{(+)}$ (recall that
(\ref{eq:v+-}) defined $v_{n;i,j}^{(\pm)}$ only for $i\neq j$):
\begin{align}
v_{n;i-1,i-1}^{(+)} & =v_{\varepsilon,i-1}^{(n)}+h_{\varepsilon,i-1}.\label{eq:ConventionV+-}
\end{align}
Notice that the quantities $\mathfrak{D}r_{\nwarrow}^{(\pm)}[n;i,j]$
and $\mathfrak{D}r_{\nearrow}^{(\pm)}[n;i,j]$ (given by (\ref{eq:IngoingRSeparationBeforeAfter})
and (\ref{eq:OutgoingRSeparationBeforeAfter})) are defined through
integration on the $u=u_{n;i,j}^{(\pm)}$ and $v=v_{n;i,j}^{(\pm)}$
parts of the boundary of $\mathcal{W}_{\nearrow}[n;i,j]$ and $\mathcal{W}_{\nwarrow}[n;i,j]$,
respectively. Note also that 
\[
\mathcal{W}_{\nwarrow}[n;i,j]\subset\mathcal{V}_{i\nwarrow}^{(n)}
\]
 (with $\mathcal{V}_{i\nwarrow}^{(n+1)}$ in place of $\mathcal{V}_{i\nwarrow}^{(n)}$
if $i<j$) and 
\[
\mathcal{W}_{\nearrow}[n;i,j]\subset\mathcal{V}_{j\nearrow}^{(n)},
\]
 as well as 
\[
\mathcal{W}_{\nwarrow}[n;i,j]\cap\mathcal{R}_{i;j}^{(n)}=\mathcal{W}_{\nearrow}[n;i,j]\cap\mathcal{R}_{i;j}^{(n)}=\emptyset.
\]

\begin{figure}[h] 
\centering 
\scriptsize
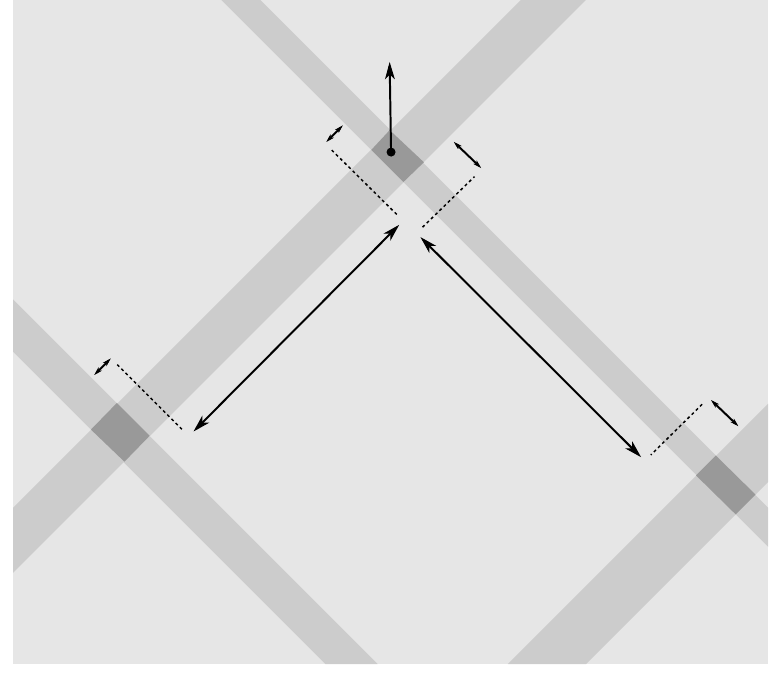 
\caption{Schematic depiction of the domains $\mathcal{W}_{\nwarrow}[n;i,j]$ and $\mathcal{W}_{\nearrow}[n;i,j]$. \label{fig:WDomains}}
\end{figure}

In view of the definition (\ref{eq:IngoingDistanceFunction}) and
(\ref{eq:OutgoingDistanceFunction}) of $dist_{\nwarrow}[\cdot]$
and $dist_{\nearrow}[\cdot]$, we can readily calculate that:

\begin{itemize}

\item For all $(u,v)\in\mathcal{W}_{\nwarrow}[n;i,j]$:
\begin{equation}
dist_{\nwarrow}[(u,v)]=\exp\big(-\exp(\sigma_{\varepsilon}^{-4})\big)\frac{\varepsilon^{(i)}}{\sqrt{-\Lambda}}\label{eq:DistanceFunctionsForWIngoing}
\end{equation}
and 
\begin{equation}
dist_{\nearrow}[(u,v)]\ge e^{-\sigma_{\varepsilon}^{-6}}\rho_{\varepsilon}^{-\frac{7}{8}}\frac{\varepsilon^{(j-1)}}{\sqrt{-\Lambda}}.\label{eq:LowerBoundTransversalDistanceForWIngoing}
\end{equation}

\item For all $(u,v)\in\mathcal{W}_{\nearrow}[n;i,j]$:
\begin{equation}
dist_{\nearrow}[(u,v)]=\exp\big(-\exp(\sigma_{\varepsilon}^{-4})\big)\frac{\varepsilon^{(j)}}{\sqrt{-\Lambda}}\label{eq:DistanceFunctionsForWOutgoing}
\end{equation}
and
\begin{equation}
dist_{\nwarrow}[(u,v)]\ge e^{-\sigma_{\varepsilon}^{-6}}\rho_{\varepsilon}^{-\frac{7}{8}}\frac{\varepsilon^{(i-1)}}{\sqrt{-\Lambda}}.\label{eq:LowerBoundTransversalDistanceForWOutgoing}
\end{equation}

\end{itemize}

In view of the bound (\ref{eq:BoundSupportFepsiloni}) on the support
of $f_{\varepsilon i}$, we know that, among all the $f_{\varepsilon k}$'s,
only $f_{\varepsilon i}$ is supported on $\mathcal{W}_{\nwarrow}[n;i,j]$,
and only $f_{\varepsilon j}$ is supported on $\mathcal{W}_{\nearrow}[n;i,j]$.
As a result,
\begin{align}
T_{\mu\nu}[f_{\varepsilon}]\big|_{\mathcal{W}_{\nwarrow}[n;i,j]} & =a_{\varepsilon i}T_{\mu\nu}[f_{\varepsilon i}]\big|_{\mathcal{W}_{\nwarrow}[n;i,j]},\label{eq:OnlyOneComponentTube}\\
T_{\mu\nu}[f_{\varepsilon}]\big|_{\mathcal{W}_{\nearrow}[n;i,j]} & =a_{\varepsilon j}T_{\mu\nu}[f_{\varepsilon j}]\big|_{\mathcal{W}_{\nearrow}[n;i,j]}.\nonumber 
\end{align}
 Furthermore, (\ref{eq:EstimateGeometryLikeC0}) and the definition
(\ref{eq:IngoingDomainForRSeparation}), (\ref{eq:OutgoingDomainForRSeparation})
of $\mathcal{W}_{\nwarrow}[n;i,j]$, $\mathcal{W}_{\nearrow}[n;i,j]$,
readily yield the following lower bounds: 

\begin{itemize}

\item When $i>j$:
\begin{equation}
\inf_{\mathcal{W}_{\nwarrow}[n;i,j]}r\ge e^{-\sigma_{\varepsilon}^{-6}}\rho_{\varepsilon}^{-1}\frac{\varepsilon^{(j)}}{\sqrt{-\Lambda}}\text{ and }\inf_{\mathcal{W}_{\nearrow}[n;i,j]}r\ge e^{-\sigma_{\varepsilon}^{-6}}\rho_{\varepsilon}^{-\frac{7}{8}}\frac{\varepsilon^{(j)}}{\sqrt{-\Lambda}}.\label{eq:LowerBoundRW}
\end{equation}

\item When $i<j$: 
\begin{equation}
\inf_{\mathcal{W}_{\nwarrow}[n;i,j]}r\ge e^{-\sigma_{\varepsilon}^{-6}}\rho_{\varepsilon}\frac{1}{\varepsilon^{(i)}\sqrt{-\Lambda}}\text{ and }\inf_{\mathcal{W}_{\nearrow}[n;i,j]}r\ge e^{-\sigma_{\varepsilon}^{-6}}\rho_{\varepsilon}\frac{1}{\varepsilon^{(i)}\sqrt{-\Lambda}}.\label{eq:LowerBoundRWNearInfinity}
\end{equation}

\end{itemize}

Integrating (\ref{eq:EquationROutside}) from $u=u_{n;i,j}^{(-)}$
up to $u=u_{n;i,j}^{(+)}$, exponentiating the resulting expression
and then integrating in $v\in[v_{n;i-1,j}^{(+)}+\rho_{\varepsilon}^{-\frac{7}{8}}h_{\varepsilon,i-1},v_{n;i,j}^{(-)}-\rho_{\varepsilon}^{-\frac{7}{8}}h_{\varepsilon,i-1}]$,
we readily obtain using the definition (\ref{eq:IngoingRSeparationBeforeAfter})
of $\mathfrak{D}r_{\nwarrow}^{(+)}[n;i,j]$: 
\begin{align}
\mathfrak{D}r_{\nwarrow}^{(+)}[n;i,j] & =\int_{\text{ }v_{n;i-1,j}^{(+)}+\rho_{\varepsilon}^{-\frac{7}{8}}h_{\varepsilon,i-1}}^{\text{ }v_{n;i,j}^{(-)}-\rho_{\varepsilon}^{-\frac{7}{8}}h_{\varepsilon,i-1}}\exp\Big(-\int_{u_{n;i,j}^{(-)}}^{u_{n;i,j}^{(+)}}4\pi\frac{rT_{uu}[f_{\varepsilon}]}{-\partial_{u}r}(\bar{u},\bar{v})\,d\bar{u}\Big)\frac{\partial_{v}r}{1-\frac{2m}{r}}(u_{n;i,j}^{(-)},\bar{v})\,d\bar{v}.\label{eq:IngoingRSeparationBeforeAfter-1}
\end{align}
Similarly, after integrating (\ref{eq:EquationROutside}): 
\begin{align}
\mathfrak{D}r_{\nearrow}^{(+)}[n;i,j] & =\int_{\text{ }u_{n;i,j-1}^{(+)}+\rho_{\varepsilon}^{-\frac{7}{8}}h_{\varepsilon,j-1}}^{\text{ }u_{n;i,j}^{(-)}-\rho_{\varepsilon}^{-\frac{7}{8}}h_{\varepsilon,j-1}}\exp\Big(\int_{v_{n;i,j}^{(-)}}^{v_{n;i,j}^{(+)}}4\pi\frac{rT_{vv}[f_{\varepsilon}]}{\partial_{v}r}(\bar{u},\bar{v})\,d\bar{v}\Big)\frac{-\partial_{u}r}{1-\frac{2m}{r}}(\bar{u},v_{n;i,j}^{(-)})\,d\bar{u}.\label{eq:OutgoingRSeparationBeforeAfter-1}
\end{align}

In view of (\ref{eq:OnlyOneComponentTube}), we readily infer (arguing
exactly as in the proof of (\ref{eq:CoefficientExpressionForIngoing-1})\textendash (\ref{eq:CoefficientExpressionForOutgoing-1}))
that, for all $(u,v)\in\mathcal{W}_{\nearrow}[n;i,j]$,
\begin{equation}
\int_{u_{n;i,j}^{(-)}}^{u}4\pi\frac{rT_{uu}[f_{\varepsilon}]}{-\partial_{u}r}(\bar{u},v)\,d\bar{u}=\int_{u_{n;i,j}^{(-)}}^{u}\frac{2E_{\nearrow}[n;i,j]}{r\cdot\big(1-\frac{1}{3}\Lambda r^{2}-O(\mu_{\nearrow ij})\big)}(\bar{u},v)\,d\bar{u}\label{eq:CoefficientExpressionForIngoingForDr}
\end{equation}
and, for all $(u,v)\in\mathcal{W}_{\nwarrow}[n;i,j]$, 
\begin{equation}
\int_{v_{n;i,j}^{(-)}}^{v}4\pi\frac{rT_{vv}[f_{\varepsilon}]}{\partial_{v}r}(u,\bar{v})\,d\bar{v}=\int_{v_{n;i,j}^{(-)}}^{v}\frac{2E_{\nwarrow}[n;i,j]}{r\cdot\big(1-\frac{1}{3}\Lambda r^{2}-O(\mu_{\nwarrow ij})\big)}(u,\bar{v})\,d\bar{v},\label{eq:CoefficientExpressionForOutgoingForDr}
\end{equation}
where 
\begin{align}
\mu_{\nearrow ij} & \doteq\sup_{\mathcal{W}_{\nearrow}[n;i,j]}\frac{2\tilde{m}}{r},\\
\mu_{\nwarrow ij} & \doteq\sup_{\mathcal{W}_{\nwarrow}[n;i,j]}\frac{2\tilde{m}}{r}.\nonumber 
\end{align}
Note that 
\begin{equation}
\mu_{\nearrow ij},\text{ }\mu_{\nwarrow ij}=O(\eta_{0}),\label{eq:UpperBoundMuIntersectionTrivial-1}
\end{equation}
as a trivial consequence of (\ref{eq:UpperBoundMuBootstrapDomain}).

Substituting (\ref{eq:CoefficientExpressionForIngoingForDr}) and
(\ref{eq:CoefficientExpressionForOutgoingForDr}) in (\ref{eq:IngoingRSeparationBeforeAfter-1})
and (\ref{eq:OutgoingRSeparationBeforeAfter-1}), respectively, we
obtain 
\begin{align}
\mathfrak{D}r_{\nwarrow}^{(+)}[n;i,j] & =\int_{\text{ }v_{n;i-1,j}^{(+)}+\rho_{\varepsilon}^{-\frac{7}{8}}h_{\varepsilon,i-1}}^{\text{ }v_{n;i,j}^{(-)}-\rho_{\varepsilon}^{-\frac{7}{8}}h_{\varepsilon,i-1}}\exp\Big(-\int_{u_{n;i,j}^{(-)}}^{u_{n;i,j}^{(+)}}\frac{2E_{\nearrow}[n;i,j]}{r\cdot\big(1-\frac{1}{3}\Lambda r^{2}-O(\mu_{\nearrow ij})\big)}(\bar{u},\bar{v})\,d\bar{u}\Big)\cdot\frac{\partial_{v}r}{1-\frac{2m}{r}}(u_{n;i,j}^{(-)},\bar{v})\,d\bar{v}\label{eq:IngoingRSeparationBeforeAfterFinalGeneral}
\end{align}
and 
\begin{align}
\mathfrak{D}r_{\nearrow}^{(+)}[n;i,j] & =\int_{\text{ }u_{n;i,j-1}^{(+)}+\rho_{\varepsilon}^{-\frac{7}{8}}h_{\varepsilon,j-1}}^{\text{ }u_{n;i,j}^{(-)}-\rho_{\varepsilon}^{-\frac{7}{8}}h_{\varepsilon,j-1}}\exp\Big(\int_{v_{n;i,j}^{(-)}}^{v_{n;i,j}^{(+)}}\frac{2E_{\nwarrow}[n;i,j]}{r\cdot\big(1-\frac{1}{3}\Lambda r^{2}-O(\mu_{\nwarrow ij})\big)}(\bar{u},\bar{v})\,d\bar{v}\Big)\cdot\frac{-\partial_{u}r}{1-\frac{2m}{r}}(\bar{u},v_{n;i,j}^{(-)})\,d\bar{u}.\label{eq:OutgoingRSeparationBeforeAfterFinalGeneral}
\end{align}

From (\ref{eq:ODEforEnergyFlux}) (and the analogous equation for
$\partial_{v}E_{\nearrow}[n;i,j](u,v)$), we obtain the following
formulas (in analogy with (\ref{eq:EnergyDensityIncreaseIngoing})
and (\ref{eq:EnergyDensityIncreaseOutgoing})): 

\begin{itemize}

\item For $(u,v)\in\mathcal{W}_{\nwarrow}[n;i,j]$, 
\begin{equation}
E_{\nwarrow}[n;i.j](u,v)=\exp\Big(-\int_{u}^{u_{n;i,j}^{(-)}}4\pi\frac{rT_{uu}[f_{\varepsilon}]}{-\partial_{u}r}(\bar{u},v)\,d\bar{u}\Big)\cdot E_{\nwarrow}[n;i.j](u_{n;i,j}^{(-)},v)-a_{\varepsilon i}\mathfrak{Err}_{\nwarrow}^{(0)}[n;i.j](u,v),\label{eq:EnergyDensityIncreaseIngoingDr}
\end{equation}
where
\begin{align}
\mathfrak{Err}_{\nwarrow}^{(0)}[n;i.j](u,v)\doteq & \int_{u}^{u_{n;i,j}^{(-)}}\exp\Big(-\int_{u}^{\bar{u}}4\pi\frac{rT_{uu}[f_{\varepsilon}]}{-\partial_{u}r}(\hat{u},v)\,d\hat{u}\Big)\times\label{eq:ErrorTermForEnergyChangeIngoingDr}\\
 & \hphantom{-}\times\Bigg\{2\pi\frac{1-\frac{2m}{r}}{\partial_{v}r}\partial_{v}(r^{2}T_{uv}[f_{\varepsilon i}])+2\pi\frac{1-\frac{2m}{r}}{\partial_{v}r}\Big(\partial_{v}\log(\Omega^{2})-2\frac{\partial_{v}r}{r}\Big)(r^{2}T_{uv}[f_{\varepsilon i}])\Bigg\}(\bar{u},v)\,d\bar{u}.\nonumber 
\end{align}

\item For $(u,v)\in\mathcal{W}_{\nearrow}[n;i,j]$, 
\begin{equation}
E_{\nearrow}[n;i.j](u,v)=\exp\Big(\int_{v}^{v_{n;i,j}^{(-)}}4\pi\frac{rT_{vv}[f_{\varepsilon}]}{\partial_{v}r}(u,\bar{v})\,d\bar{v}\Big)\cdot E_{\nearrow}[n;i.j](u,v_{n;i,j}^{(-)})-a_{\varepsilon j}\mathfrak{Err}_{\nearrow}^{(0)}[n;i.j](u,v),\label{eq:EnergyDensityIncreaseOutgoingDr}
\end{equation}
where
\begin{align}
\mathfrak{Err}_{\nearrow}^{(0)}[n;i.j](u,v)\doteq & \int_{v}^{v_{n;i,j}^{(-)}}\exp\Big(-\int_{v}^{\bar{v}}4\pi\frac{rT_{vv}[f_{\varepsilon}]}{\partial_{v}r}(u,\hat{v})\,d\hat{v}\Big)\times\label{eq:ErrorTermForEnergyChangeOutgoingDr}\\
 & \hphantom{-}\times\Bigg\{2\pi\frac{1-\frac{2m}{r}}{-\partial_{u}r}\partial_{u}(r^{2}T_{uv}[f_{\varepsilon j}])+2\pi\frac{1-\frac{2m}{r}}{-\partial_{u}r}\Big(\partial_{u}\log(\Omega^{2})-2\frac{\partial_{u}r}{r}\Big)(r^{2}T_{uv}[f_{\varepsilon j}])\Bigg\}(u,\bar{v})\,d\bar{v}.\nonumber 
\end{align}

\end{itemize}

We will now procced to treat the cases $i>j$ and $i<j$ separately.

\medskip{}

\noindent \emph{The case $i>j$: Proof of (\ref{eq:IncreaseOutgoingRSeparationCloseToAxis})\textendash (\ref{eq:DecreaseIngoingRSeparationCloseToAxisi>j+1}).}
Integrating (\ref{eq:TildeVMaza}) in $v$ from the axis $\gamma_{\mathcal{Z}_{\varepsilon}}$
up to $\mathcal{W}_{\nearrow}[n;i,j]$, $\mathcal{W}_{\nwarrow}[n;i,j]$,
using the fact that, among all the $f_{\varepsilon k}$'s, only $f_{\varepsilon\bar{j}}$,
$j\le\bar{j}\le i$ are supported on the domain 
\[
\{\inf_{\mathcal{W}_{\nearrow}[n;i,j]}u\le u\le\sup_{\mathcal{W}_{\nearrow}[n;i,j]}u\}\cap\{v\le\sup_{\mathcal{W}_{\nearrow}[n;i,j]}v\}
\]
 (and the same for $\mathcal{W}_{\nwarrow}[n;i,j]$), we can readily
estimate (using (\ref{eq:EstimateGeometryLikeC0}) and (\ref{eq:UpperBoundTuuTvvFi}),
(\ref{eq:UpperBoundTuvFi})): 
\begin{equation}
\sup_{\mathcal{W}_{\nearrow}[n;i,j]\cup\mathcal{W}_{\nwarrow}[n;i,j]}\tilde{m}\le\exp(\exp(\sigma_{\varepsilon}^{-6}))\frac{\varepsilon^{(j)}}{\sqrt{-\Lambda}}.\label{eq:UpperBoundHawkingMassRnij-1}
\end{equation}
From (\ref{eq:UpperBoundHawkingMassRnij-1}) and the bound (\ref{eq:LowerBoundRW})
for $r$ on $\mathcal{W}_{\nearrow}[n;i,j]$, $\mathcal{W}_{\nwarrow}[n;i,j]$,
we immediately infer that: 
\begin{equation}
\mu_{\nearrow ij}+\mu_{\nwarrow ij}=\sup_{\mathcal{W}_{\nearrow}[n;i,j]}\frac{2\tilde{m}}{r}+\sup_{\mathcal{W}_{\nwarrow}[n;i,j]}\frac{2\tilde{m}}{r}\le\exp(\exp(\sigma_{\varepsilon}^{-7}))\rho_{\varepsilon}^{\frac{7}{8}}\le\rho_{\varepsilon}^{\frac{3}{4}}.\label{eq:NonTrivialUpperBoundMuij-1}
\end{equation}

Using the fact that $T_{\mu\nu}[f_{\varepsilon}]=a_{\varepsilon j}T_{\mu\nu}[f_{\varepsilon j}]$
on $\mathcal{W}_{\nearrow}[n;i,j]$, the bound (\ref{eq:UpperBoundTuuTvvFi})
for $T_{vv}[f_{\varepsilon j}]$ on $\mathcal{W}_{\nearrow}[n;i,j]\subset\mathcal{V}_{j\nearrow}^{(n)}$,
combined with (\ref{eq:EstimateGeometryLikeC0}), implies that
\begin{align}
\sup_{(u,v)\in\mathcal{W}_{\nearrow}[n;i,j]} & \int_{v}^{v_{n;i,j}^{(-)}}4\pi\frac{rT_{vv}[f_{\varepsilon}]}{\partial_{v}r}(u,\bar{v})\,d\bar{v}\label{eq:TrivialBoundChangeOfEnergyFlux}\\
 & \le a_{\varepsilon j}\exp\big(\exp(\sigma_{\varepsilon}^{-6})\big)(-\Lambda)^{-2}\int_{r(u_{n;i,j}^{(+)},v_{n;i-1,j}^{(+)}+\rho_{\varepsilon}^{-\frac{7}{8}}h_{\varepsilon,i-1})}^{r(u_{n;i,j}^{(-)},v_{n;i,j}^{(-)})}\frac{(\varepsilon^{(j)})^{4}}{\bar{r}^{5}}\,d\bar{r}\le\nonumber \\
 & \le\exp\big(\exp(\sigma_{\varepsilon}^{-6})\big)(-\Lambda)^{-2}(\varepsilon^{(j)})^{4}\frac{r(u_{n;i,j}^{(-)},v_{n;i,j}^{(-)})-r(u_{n;i,j}^{(+)},v_{n;i-1,j}^{(+)}+\rho_{\varepsilon}^{-\frac{7}{8}}h_{\varepsilon,i-1})}{\big(r(u_{n;i,j}^{(+)},v_{n;i-1,j}^{(+)}+\rho_{\varepsilon}^{-\frac{7}{8}}h_{\varepsilon,i-1})\big)^{5}}\le\nonumber \\
 & \le\exp\big(\exp(2\sigma_{\varepsilon}^{-6})\big)(-\Lambda)^{-2}(\varepsilon^{(j)})^{4}\frac{(u_{n;i,j}^{(+)}-u_{n;i,j}^{(-)})+(v_{n;i,j}^{(-)}-v_{n;i-1,j}^{(+)}-\rho_{\varepsilon}^{-\frac{7}{8}}h_{\varepsilon,i-1})}{(\rho_{\varepsilon}^{-\frac{7}{8}}h_{\varepsilon,i-1}+v_{n;i-1,j}^{(+)}-u_{n;i,j}^{(+)})^{5}}\le\nonumber \\
 & \le\exp\big(\exp(\sigma_{\varepsilon}^{-7})\big)(\varepsilon^{(j)})^{4}\frac{\varepsilon^{(j)}+\rho_{\varepsilon}^{-1}\varepsilon^{(i-1)}}{(\rho_{\varepsilon}^{-\frac{7}{8}}\varepsilon^{(i-1)}+\text{sgn}(i-j-1)\cdot\rho_{\varepsilon}^{-1}\varepsilon^{(j)})^{5}}\le\nonumber \\
 & \le\rho_{\varepsilon},\nonumber 
\end{align}
where 
\[
\text{sgn}(i-j-1)=\begin{cases}
0, & i=j+1,\\
+1, & i>j+1
\end{cases}
\]
and, in passing from the second to the third line in (\ref{eq:TrivialBoundChangeOfEnergyFlux}),
we used the fact that $\partial_{u}r<0$ and $\partial_{v}r>0$ on
$\mathcal{U}_{\varepsilon}^{+}$. Substituting the bound (\ref{eq:TrivialBoundChangeOfEnergyFlux})
in (\ref{eq:EnergyDensityIncreaseOutgoingDr}), we infer that, for
any $(u,v)\in\mathcal{W}_{\nearrow}[n;i,j]$:
\begin{equation}
E_{\nearrow}[n;i.j](u,v)=\big(1+O(\rho_{\varepsilon})\big)\cdot E_{\nearrow}[n;i.j](u,v_{n;i,j}^{(-)})-a_{\varepsilon j}\mathfrak{Err}_{\nearrow}^{(0)}[n;i.j](u,v).\label{eq:EnergyDensityIncreaseOutgoingDr-1}
\end{equation}
Dividing (\ref{eq:EnergyDensityIncreaseOutgoingDr-1}) with $r$ and
integrating in $u$, we infer that, for any $(u,v)\in\mathcal{W}_{\nearrow}[n;i,j]$:
\begin{equation}
\int_{u_{n;i,j}^{(-)}}^{u}\frac{E_{\nearrow}[n;i.j]}{r}(\bar{u},v)\,d\bar{u}=\big(1+O(\rho_{\varepsilon})\big)\cdot\int_{u_{n;i,j}^{(-)}}^{u}\frac{E_{\nearrow}[n;i.j](\bar{u},v_{n;i,j}^{(-)})}{r(\bar{u},v)}\,d\bar{u}-a_{\varepsilon j}\int_{u_{n;i,j}^{(-)}}^{u}\frac{\mathfrak{Err}_{\nearrow}^{(0)}[n;i.j]}{r}(\bar{u},v)\,d\bar{u}.\label{eq:Almost1/renergyformula}
\end{equation}
Using the bound
\begin{equation}
\sup_{v\ge0}\Bigg(\sup_{(u_{1},v),(u_{2},v)\in\mathcal{W}_{\nearrow}[n;i,j]}\Big|\frac{r(u_{1},v)}{r(u_{2},v)}-1\Big|\Bigg)\le e^{\sigma_{\varepsilon}^{-6}}\rho_{\varepsilon}^{\frac{7}{8}}\le\rho_{\varepsilon}^{\frac{1}{2}}\label{eq:DifferenceRW}
\end{equation}
(following readily from (\ref{eq:EstimateGeometryLikeC0}) and the
definition (\ref{eq:OutgoingDomainForRSeparation}) of $\mathcal{W}_{\nearrow}[n;i,j]$),
we readily infer from (\ref{eq:Almost1/renergyformula}) that: 
\begin{align}
\int_{u_{n;i,j}^{(-)}}^{u}\frac{E_{\nearrow}[n;i.j]}{r}(\bar{u},v)\,d\bar{u} & =\big(1+O(\rho_{\varepsilon}^{\frac{1}{2}})\big)\cdot\frac{1}{r(u,v)}\int_{u_{n;i,j}^{(-)}}^{u}E_{\nearrow}[n;i.j](\bar{u},v_{n;i,j}^{(-)})\,d\bar{u}-\label{eq:1/rEnergyUsefulFormulaOutgoing}\\
 & \hphantom{=\big(1+O(\rho_{\varepsilon}^{\frac{1}{2}})\big)\cdot}-a_{\varepsilon j}\int_{u_{n;i,j}^{(-)}}^{u}\frac{\mathfrak{Err}_{\nearrow}^{(0)}[n;i.j]}{r}(\bar{u},v)\,d\bar{u}=\nonumber \\
 & =\big(1+O(\rho_{\varepsilon}^{\frac{1}{2}})\big)\cdot\frac{\mathcal{E}_{\nearrow}[n;i.j](u,v_{n;i,j}^{(-)})}{r(u,v)}-a_{\varepsilon j}\int_{u_{n;i,j}^{(-)}}^{u}\frac{\mathfrak{Err}_{\nearrow}^{(0)}[n;i.j]}{r}(\bar{u},v)\,d\bar{u}.\nonumber 
\end{align}
 Similarly, for any $(u,v)\in\mathcal{W}_{\nwarrow}[n;i,j]$: 
\begin{equation}
\int_{v_{n;i,j}^{(-)}}^{v}\frac{E_{\nwarrow}[n;i.j]}{r}(u,\bar{v})\,d\bar{v}=\big(1+O(\rho_{\varepsilon}^{\frac{1}{2}})\big)\cdot\frac{\mathcal{E}_{\nwarrow}[n;i.j](u_{n;i,j}^{(-)},v)}{r(u,v)}-a_{\varepsilon i}\int_{v_{n;i,j}^{(-)}}^{v}\frac{\mathfrak{Err}_{\nwarrow}^{(0)}[n;i.j]}{r}(u,\bar{v})\,d\bar{v}.\label{eq:1/rEnergyUsefulFormulaIngoing}
\end{equation}

Arguing similarly as for the derivation of (\ref{eq:ErrorFinalRelation}),
integrating by parts in $\partial_{u}$ for the term\\ $\partial_{u}(r^{2}T_{uv})$
in $\int_{u_{n;i,j}^{(-)}}^{u}\frac{\mathfrak{Err}_{\nearrow}^{(0)}[n;i.j]}{r}(\bar{u},v)\,d\bar{u}$
(see the expression (\ref{eq:ErrorTermForEnergyChangeOutgoingDr})),
we calculate: 
\begin{align}
\Big|\int_{u_{n;i,j}^{(-)}}^{u} & \frac{\mathfrak{Err}_{\nearrow}^{(0)}[n;i.j]}{r}(\bar{u},v)\,d\bar{u}\Big|=\label{eq:ErrorFinalRelation-1}\\
 & =\Bigg|\int_{v_{\varepsilon,j}^{(n)}-h_{\varepsilon,j}}^{u}\int_{v}^{v_{\varepsilon,i}^{(n)}-h_{\varepsilon,i}}\frac{1}{r(\bar{u},v)}\Bigg[\exp\Big(-\int_{v}^{\bar{v}}4\pi\frac{rT_{vv}[f_{\varepsilon}]}{\partial_{v}r}(\bar{u},\hat{v})\,d\hat{v}\Big)\times\nonumber \\
 & \hphantom{-e^{\int^{u}}}\times\Bigg\{2\pi\frac{1-\frac{2m}{r}}{-\partial_{u}r}\partial_{u}(r^{2}T_{uv}[f_{\varepsilon j}])+2\pi\frac{1-\frac{2m}{r}}{-\partial_{u}r}\Big(\partial_{u}\log(\Omega^{2})-2\frac{\partial_{u}r}{r}\Big)(r^{2}T_{uv}[f_{\varepsilon j}])\Bigg\}(\bar{u},\bar{v})\Bigg]\,d\bar{v}d\bar{u}\Bigg|=\nonumber \\
 & =\Bigg|\int_{v_{\varepsilon,j}^{(n)}-h_{\varepsilon,j}}^{u}\int_{v}^{v_{\varepsilon,i}^{(n)}-h_{\varepsilon,i}}\Bigg(\frac{1}{r(\bar{u},v)}\Bigg[\exp\Big(-\int_{v}^{\bar{v}}4\pi\frac{rT_{vv}[f_{\varepsilon}]}{\partial_{v}r}(\bar{u},\hat{v})\,d\hat{v}\Big)\cdot2\pi\frac{1-\frac{2m}{r}}{-\partial_{u}r}r^{2}T_{uv}[f_{\varepsilon j}](\bar{u},\bar{v})\times\nonumber \\
 & \hphantom{-e^{\int^{u}}}\times\Bigg\{\int_{v}^{\bar{v}}\partial_{u}\Big(\frac{4\pi}{r\partial_{v}r}\Big)r^{2}T_{vv}[f_{\varepsilon}](\bar{u},\hat{v})\,d\hat{v}+\int_{v}^{\bar{v}}\frac{4\pi}{r\partial_{v}r}\partial_{u}(r^{2}T_{vv}[f_{\varepsilon}])(\bar{u},\hat{v})\,d\hat{v}-\nonumber \\
 & \hphantom{-e^{\int_{v_{\varepsilon,j}^{(n)}-h_{\varepsilon,j}}^{u}}\times\Bigg\{}-\partial_{u}\log\Big(\frac{1-\frac{2m}{r}}{-\partial_{u}r}\Big)(\bar{u},\bar{v})+\Big(\partial_{u}\log(\Omega^{2})-2\frac{\partial_{u}r}{r}\Big)(\bar{u},\bar{v})\Bigg\}\Bigg]+\nonumber \\
 & \hphantom{-e^{\int_{v_{\varepsilon,j}^{(n)}-h_{\varepsilon,j}}^{u}}\times\Bigg\{}+\frac{\partial_{u}r}{r^{2}}(\bar{u},v)\exp\Big(-\int_{v}^{\bar{v}}4\pi\frac{rT_{vv}[f_{\varepsilon}]}{\partial_{v}r}(\bar{u},\hat{v})\,d\hat{v}\Big)\cdot2\pi\frac{1-\frac{2m}{r}}{-\partial_{u}r}r^{2}T_{uv}[f_{\varepsilon i}](\bar{u},\bar{v})\Bigg)\,d\bar{v}d\bar{u}\Bigg|.\nonumber 
\end{align}
We will estimate the right hand side of (\ref{eq:ErrorFinalRelation-1})
similarly as we did for (\ref{eq:ErrorFinalRelation}): 

\begin{itemize}

\item Using the bounds (\ref{eq:UpperBoundMuBootstrapDomain}), (\ref{eq:EstimateGeometryLikeC0}),
(\ref{eq:UpperBoundTuuTvvFi}), (\ref{eq:OnlyOneComponentTube}) (and
the fact that $i\ge j+1$), we can estimate:
\begin{align}
\sup_{(u,v)\in\mathcal{W}_{\nearrow}[n;i,j],\text{ }(\bar{u},\bar{v})\in[u_{n;i-1,j}^{(-)},u]\times[v,v_{n;i,j}^{(-)}]} & \Big|\int_{v}^{\bar{v}}4\pi\frac{rT_{vv}[f_{\varepsilon}]}{\partial_{v}r}(\bar{u},\hat{v})\,d\hat{v}\Big|\le\label{eq:BoundExponentialFactorSummand-1}\\
 & \le\int_{v_{n;i-1,j}^{(-)}+(\rho_{\varepsilon}^{-\frac{7}{8}}+1)h_{\varepsilon,i-1}}^{v_{n;i,j}^{(-)}}\exp\big(\exp(4\sigma_{\varepsilon}^{-5})\big)\cdot\frac{(\varepsilon^{(j)})^{4}}{r^{5}(u_{n;i,j}^{(+)},v)}(-\Lambda)^{-2}\,dv\le\nonumber \\
 & \le\exp\big(\exp(\sigma_{\varepsilon}^{-6})\big)\cdot\frac{(\varepsilon^{(j)})^{4}}{r^{4}(u_{n;i,j}^{(+)},v_{n;i-1,j}^{(-)}+(\rho_{\varepsilon}^{-\frac{7}{8}}+1)h_{\varepsilon,i-1})}(-\Lambda)^{-2}\le\nonumber \\
 & \le\exp\big(\exp(2\sigma_{\varepsilon}^{-6})\big)\cdot\rho_{\varepsilon}^{\frac{7}{2}}\le\nonumber \\
 & \le1.\nonumber 
\end{align}

\item Using the bounds (\ref{eq:EstimateGeometryLikeC0}) and (\ref{eq:UpperBoundTuvFi}),
we can estimate: 
\begin{align}
\sup_{\mathcal{W}_{\nearrow}[n;i,j]}\Big(2\pi\frac{1-\frac{2m}{r}}{\partial_{v}r}r^{2}T_{uv}[f_{\varepsilon j}]\Big) & \le\exp\big(\exp(\sigma_{\varepsilon}^{-6})\big)\sup_{\mathcal{W}_{\nearrow}[n;i,j]}\frac{(\varepsilon^{(j)})^{2}}{r^{2}}(-\Lambda)^{-1}\le\\
 & \le\exp\big(\exp(\sigma_{\varepsilon}^{-6})\big)\frac{(\varepsilon^{(j)})^{2}}{r^{2}(u_{n;i,j}^{(+)},v_{n;i-1,j}^{(-)}+(\rho_{\varepsilon}^{-\frac{7}{8}}+1)h_{\varepsilon,i-1})}(-\Lambda)^{-1}\le\nonumber \\
 & \le\exp\big(\exp(2\sigma_{\varepsilon}^{-6})\big)\rho_{\varepsilon}^{\frac{7}{4}}\le\nonumber \\
 & \le\rho_{\varepsilon}^{\frac{3}{2}}.\nonumber 
\end{align}

\item Using equation (\ref{eq:EquationROutside}) and the bounds
(\ref{eq:UpperBoundMuBootstrapDomain}), (\ref{eq:EstimateGeometryLikeC0}),
(\ref{eq:OnlyOneComponentTube}), (\ref{eq:UpperBoundTuuTvvFi}),
(\ref{eq:UpperBoundTuvFi}), and (\ref{eq:LowerBoundRW}) (as well
as the trivial bound $a_{\varepsilon k}\le1$), we can estimate for
all $(u,v)\in\mathcal{W}_{\nearrow}[n;i,j]$:
\begin{align}
\sup_{(\bar{u},\bar{v})\in[u_{n;i,j}^{(-)},u]\times[v,v_{n;i,j}^{(-)}-\rho_{\varepsilon}^{-\frac{7}{8}}h_{\varepsilon,i-1}]}\Big|\int_{v}^{\bar{v}} & \partial_{u}\Big(\frac{4\pi}{r\partial_{v}r}\Big)r^{2}T_{vv}[f_{\varepsilon}](\bar{u},\hat{v})\,d\hat{v}\Big|=\label{eq:BoundFirstSummand-1}\\
=\sup_{(\bar{u},\bar{v})\in[u_{n;i-,j}^{(-)},u]\times[v,v_{n;i,j}^{(-)}-\rho_{\varepsilon}^{-\frac{7}{8}}h_{\varepsilon,i-1}]} & \Big|\int_{v}^{\bar{v}}4\pi\Big(-\frac{\partial_{u}r}{r^{2}\partial_{v}r}-\frac{\partial_{u}\partial_{v}r}{r(\partial_{v}r)^{2}}\Big)r^{2}T_{vv}[f_{\varepsilon}](\bar{u},\hat{v})\,d\hat{v}\Big|=\nonumber \\
=\sup_{(\bar{u},\bar{v})\in[u_{n;i,j}^{(-)},u]\times[v,v_{n;i,j}^{(-)}-\rho_{\varepsilon}^{-\frac{7}{8}}h_{\varepsilon,i-1}]} & \Big|\int_{v}^{\bar{v}}4\pi\Big(-\frac{\partial_{u}r}{r^{2}\partial_{v}r}+\frac{\frac{2\tilde{m}}{r}-\frac{2}{3}\Lambda r^{2}}{r^{2}(1-\frac{2m}{r})}\frac{-\partial_{u}r}{\partial_{v}r}+\frac{4\pi r^{2}T_{uv}}{r^{2}(\partial_{v}r)^{2}}\Big)r^{2}T_{vv}[f_{\varepsilon}](\bar{u},\hat{v})\,d\hat{v}\Big|\le\nonumber \\
 & \le\int_{v}^{v_{n;i,j}^{(-)}}\exp\big(\exp(\sigma_{\varepsilon}^{-9})\big)\frac{1}{r^{2}(u,\hat{v})}\frac{(\varepsilon^{(j)})^{4}}{r^{4}(u,\hat{v})}(-\Lambda)^{-2}\,d\hat{v}\le\nonumber \\
 & \le\exp\big(\exp(\sigma_{\varepsilon}^{-9})\big)\frac{(\varepsilon^{(j)})^{4}}{r^{5}(u,v)}(-\Lambda)^{-2}\le\nonumber \\
 & \le\exp\big(\exp(2\sigma_{\varepsilon}^{-9})\big)\rho_{\varepsilon}^{\frac{7}{2}}\frac{1}{r(u,v)}.\nonumber 
\end{align}

\item Using (\ref{eq:ConservationOfEnergyDu}) to express $\partial_{u}(r^{2}T_{vv}[f_{\varepsilon}])$
in terms of $\partial_{v}(r^{2}T_{uv}[f_{\varepsilon}])$ and integrating
by parts in $\partial_{v}$, we calculate for any $(u,v)\in\mathcal{W}_{\nearrow}[n;i,j]$
(in view of (\ref{eq:OnlyOneComponentTube}) and the bounds (\ref{eq:UpperBoundMuBootstrapDomain}),
(\ref{eq:EstimateGeometryLikeC0}), (\ref{eq:OnlyOneComponentTube}),
(\ref{eq:UpperBoundTuvFi}), (\ref{eq:LowerBoundRW}), (\ref{eq:BoundDOmegaEverywhere})
for $\partial_{v}\Omega^{2}$, (\ref{eq:BoundDDrEverywhere}) for
$\partial_{v}^{2}r$ and (\ref{eq:LowerBoundTransversalDistanceForWOutgoing})
for $dist_{\nwarrow}[\cdot]$ on $\mathcal{W}_{\nearrow}[n;i,j]$):
\begin{align}
 & \sup_{(\bar{u},\bar{v})\in[u_{n;i,j}^{(-)},u]\times[v,v_{n;i,j}^{(-)}-\rho_{\varepsilon}^{-\frac{7}{8}}h_{\varepsilon,i-1}]}\Big|\int_{v}^{\bar{v}}\frac{4\pi}{r\partial_{v}r}\partial_{u}(r^{2}T_{vv}[f_{\varepsilon}])(\bar{u},\hat{v})\,d\hat{v}\Big|=\label{eq:BoundSecondSummand-1}\\
 & \hphantom{blablabla}=\sup_{(\bar{u},\bar{v})\in[u_{n;i,j}^{(-)},u]\times[v,v_{n;i,j}^{(-)}-\rho_{\varepsilon}^{-\frac{7}{8}}h_{\varepsilon,i-1}]}\Big|\int_{v}^{\bar{v}}\frac{4\pi}{r}\frac{1}{\partial_{v}r}\Big(-\partial_{v}(r^{2}T_{uv}[f_{\varepsilon}])+\nonumber \\
 & \hphantom{blablabla}\hphantom{\sup_{(\bar{u},\bar{v})\in[u_{n;i,j}^{(-)},u]\times[v,v_{n;i,j}^{(-)}-\rho_{\varepsilon}^{-\frac{7}{8}}h_{\varepsilon,i-1}]}\Big|\int_{v}^{\bar{v}}}+\Big(\partial_{v}\log(\Omega^{2})-2\frac{\partial_{v}r}{r}\Big)(r^{2}T_{uv}[f_{\varepsilon}])\Big)(\bar{u},\hat{v})\,d\hat{v}\Big|=\nonumber\\
 & \hphantom{blablabla}=\sup_{(\bar{u},\bar{v})\in[u_{n;i,j}^{(-)},u]\times[v,v_{n;i,j}^{(-)}-\rho_{\varepsilon}^{-\frac{7}{8}}h_{\varepsilon,i-1}]}\Big|\int_{v}^{\bar{v}}4\pi\Big(\partial_{v}\big(\frac{1}{r\partial_{v}r}\big)\cdot r^{2}T_{uv}[f_{\varepsilon}]+\nonumber \\
 & \hphantom{blablabla}\hphantom{\sup_{(\bar{u},\bar{v})\in[u_{n;i,j}^{(-)},u]\times[v,v_{n;i,j}^{(-)}-\rho_{\varepsilon}^{-\frac{7}{8}}h_{\varepsilon,i-1}]}}+\frac{1}{r\partial_{v}r}\Big(\partial_{v}\log(\Omega^{2})-2\frac{\partial_{v}r}{r}\Big)(r^{2}T_{uv}[f_{\varepsilon}])\Big)(\bar{u},\hat{v})\,d\hat{v}+\nonumber\\
 & \hphantom{blablabla}\hphantom{\sup_{(\bar{u},\bar{v})\in[u_{n;i,j}^{(-)},u]\times[v,v_{n;i,j}^{(-)}-}}+\frac{4\pi}{r}\frac{1}{\partial_{v}r}\cdot r^{2}T_{uv}[f_{\varepsilon}](\bar{u},\bar{v})-\frac{4\pi}{r}\frac{1}{\partial_{v}r}\cdot r^{2}T_{uv}[f_{\varepsilon}](\bar{u},v)\Big|\le\nonumber \\
 & \hphantom{blablabla}\le\int_{v}^{v_{n;i,j}^{(-)}-\rho_{\varepsilon}^{-\frac{7}{8}}h_{\varepsilon,i-1}}\exp\big(\exp(\sigma_{\varepsilon}^{-9})\big)\frac{1}{r(u,\hat{v})}\big(\frac{1}{dist_{\nwarrow}[(u,\hat{v})]}+\frac{1}{r(u,\hat{v})}\big)\frac{(\varepsilon^{(j)})^{2}}{(-\Lambda)r^{2}(u,\hat{v})}\,d\hat{v}+\nonumber \\
 & \hphantom{blablabla}\hphantom{\sup_{(\bar{u},\bar{v})\in[u_{n;i,j}^{(-)},u]\times[v,v_{n;i,j}^{(-)}}}+\exp\big(\exp(\sigma_{\varepsilon}^{-9})\big)\frac{1}{r(u,v)}\frac{(\varepsilon^{(j)})^{2}}{(-\Lambda)r^{2}(u,v)}\le\nonumber\\
 & \hphantom{blablabla}\le\exp\big(2\exp(\sigma_{\varepsilon}^{-9})\big)\frac{1}{r(u,v)}\Big(1+\int_{v_{n;i-1,j}^{(+)}+\rho_{\varepsilon}^{-\frac{7}{8}}h_{\varepsilon,i-1}}^{v_{n;i,j}^{(-)}-\rho_{\varepsilon}^{-\frac{7}{8}}h_{\varepsilon,i-1}}\frac{1}{dist_{\nwarrow}[(u,\hat{v})]}\,d\hat{v}\Big)\frac{(\varepsilon^{(j)})^{2}}{(-\Lambda)r^{2}(u,\hat{v})}+\nonumber \\
 & \hphantom{blablabla}\hphantom{\sup_{(\bar{u},\bar{v})\in[u_{n;i,j}^{(-)},u]\times[v,v_{n;i,j}^{(-)}}}+\exp\big(\exp(\sigma_{\varepsilon}^{-9})\big)\frac{1}{r(u,v)}\frac{(\varepsilon^{(j)})^{2}}{(-\Lambda)r^{2}(u,v)}\le\nonumber\\
 & \hphantom{blablabla}\le\exp\big(4\exp(\sigma_{\varepsilon}^{-9})\big)\frac{1}{r(u,v)}\Big(1+\exp(\sigma_{\varepsilon}^{-7})\rho_{\varepsilon}^{-\frac{1}{8}}\Big)\frac{(\varepsilon^{(j)})^{2}}{(-\Lambda)r^{2}(u,\hat{v})}+\nonumber \\
 & \hphantom{blablabla}\hphantom{\sup_{(\bar{u},\bar{v})\in[u_{n;i,j}^{(-)},u]\times[v,v_{n;i,j}^{(-)}}}+\exp\big(\exp(\sigma_{\varepsilon}^{-9})\big)\frac{1}{r(u,v)}\frac{(\varepsilon^{(j)})^{2}}{(-\Lambda)r^{2}(u,v)}\le\nonumber\\
 & \hphantom{blablabla}\le\exp\big(\exp(2\sigma_{\varepsilon}^{-9})\big)\rho_{\varepsilon}^{\frac{13}{8}}\frac{1}{r(u,v)}.\nonumber 
\end{align}

\item Using the the relation (\ref{eq:TildeUMaza}) for $\partial_{u}\tilde{m}$,
the estimate (\ref{eq:BoundDDrEverywhere}) for $\partial_{u}^{2}r$,
as well as the bounds (\ref{eq:EstimateGeometryLikeC0}), (\ref{eq:UpperBoundTuuTvvFi}),
(\ref{eq:UpperBoundTuvFi}) and (\ref{eq:UpperBoundMuBootstrapDomain}),
we can estimate for any $(\bar{u},\bar{v})\in\mathcal{W}_{\nearrow}[n;i,j]$:
\begin{equation}
\Big|\partial_{u}\log\Big(\frac{1-\frac{2m}{r}}{-\partial_{u}r}\Big)(\bar{u},\bar{v})\Big|\le\exp\big(\exp(\sigma_{\varepsilon}^{-6})\big)\Big(\frac{1}{dist_{\nwarrow}[(\bar{u},\bar{v})]}+\frac{1}{r(\bar{u},\bar{v})}\Big).\label{eq:BoundThirdSummand-1}
\end{equation}

\item Using the estimate (\ref{eq:BoundDOmegaEverywhere}) for $\partial_{u}\Omega^{2}$,
as well as the bound (\ref{eq:EstimateGeometryLikeC0}),, we can estimate
for any $(\bar{u},\bar{v})\in\mathcal{W}_{\nearrow}[n;i,j]$:
\begin{equation}
\Big|\partial_{v}\log(\Omega^{2})-2\frac{\partial_{v}r}{r}\Big|(\bar{u},\bar{v})\le\exp\big(\exp(\sigma_{\varepsilon}^{-6})\big)\Big(\frac{1}{dist_{\nwarrow}[(\bar{u},\bar{v})]}+\frac{1}{r(\bar{u},\bar{v})}\Big).\label{eq:BoundFourthSummand-1}
\end{equation}

\end{itemize}

Using the estimates (\ref{eq:BoundExponentialFactorSummand-1})\textendash (\ref{eq:BoundFourthSummand-1})
(together with (\ref{eq:EstimateGeometryLikeC0}), (\ref{eq:UpperBoundTuvFi}),
(\ref{eq:OnlyOneComponentTube}), (\ref{eq:LowerBoundTransversalDistanceForWOutgoing}),
(\ref{eq:LowerBoundRW}) and the relation of the parameters $\varepsilon$,
$\rho_{\varepsilon}$, $\delta_{\varepsilon}$ and $\sigma_{\varepsilon}$)
to bound the right hand side of (\ref{eq:ErrorFinalRelation-1}),
we therefore obtain for any $(u,v)\in\mathcal{W}_{\nearrow}[n;i,j]$:
\begin{align}
\Big|\int_{u_{n;i,j}^{(-)}}^{u} & \frac{\mathfrak{Err}_{\nearrow}^{(0)}[n;i.j]}{r}(\bar{u},v)\,d\bar{u}\Big|\le\label{eq:FinalIngoingErrorEstimate-1}\\
 & \le\int_{v_{\varepsilon,j}^{(n)}-h_{\varepsilon,j}}^{u}\int_{v}^{v_{\varepsilon,i}^{(n)}-h_{\varepsilon,i-1}}\Bigg(\frac{1}{r(\bar{u},v)}\rho_{\varepsilon}^{\frac{3}{2}}\Bigg\{\exp\big(\exp(2\sigma_{\varepsilon}^{-9})\big)\rho_{\varepsilon}^{\frac{7}{2}}\frac{1}{r(u,v)}+\nonumber\\
 & \hphantom{\le\int_{v_{\varepsilon,j}^{(n)}-h_{\varepsilon,j}}^{u}\int_{v}^{v_{\varepsilon,i}^{(n)}-h_{\varepsilon,i-1}}\Bigg(}+\exp\big(\exp(2\sigma_{\varepsilon}^{-9})\big)\rho_{\varepsilon}^{\frac{13}{8}}\frac{1}{r(u,v)}+\exp\big(\exp(\sigma_{\varepsilon}^{-6})\big)\Big(\frac{1}{dist_{\nwarrow}[(\bar{u},\bar{v})]}+\frac{1}{r(\bar{u},\bar{v})}\Big)\Bigg\}+\nonumber \\
 & \hphantom{\le\int_{v_{\varepsilon,j}^{(n)}-h_{\varepsilon,j}}^{u}\int_{v}^{v_{\varepsilon,i}^{(n)}-h_{\varepsilon,i-1}}\Bigg(}+e^{\sigma_{\varepsilon}^{-6}}\rho_{\varepsilon}^{\frac{3}{2}}\frac{1}{r^{2}(\bar{u},v)}\Bigg)\,d\bar{v}d\bar{u}\le\nonumber\\
 & \le\exp\big(\exp(3\sigma_{\varepsilon}^{-9})\big)\rho_{\varepsilon}^{\frac{3}{2}}h_{\varepsilon,j}\cdot\rho_{\varepsilon}^{-1}h_{\varepsilon,i-1}\cdot\Big(\rho_{\varepsilon}^{\frac{7}{8}}\frac{\sqrt{-\Lambda}}{\varepsilon^{(i-1)}}+\frac{1}{r(u,v)}\Big)\frac{1}{r(u,v)}\le\nonumber \\
 & \le\exp\big(\exp(4\sigma_{\varepsilon}^{-9})\big)\rho_{\varepsilon}^{\frac{1}{2}}\varepsilon^{(j)}\varepsilon^{(i-1)}\cdot\Big(\rho_{\varepsilon}^{\frac{7}{8}}\frac{1}{\varepsilon^{(i-1)}}+\rho_{\varepsilon}^{\frac{7}{8}}\frac{1}{\varepsilon^{(j)}}\Big)(-\Lambda)^{-\frac{1}{2}}\frac{1}{r(u,v)}\le\nonumber \\
 & \le\exp\big(\exp(4\sigma_{\varepsilon}^{-9})\big)\rho_{\varepsilon}^{\frac{11}{8}}\max\{\varepsilon^{(i-1)},\varepsilon^{(j)}\}(-\Lambda)^{-\frac{1}{2}}\frac{1}{r(u,v)}\le\nonumber \\
 & \le\rho_{\varepsilon}^{\frac{5}{4}}\frac{\varepsilon^{(j)}}{\sqrt{-\Lambda}}\frac{1}{r(u,v)}.\nonumber 
\end{align}

Returning to (\ref{eq:1/rEnergyUsefulFormulaOutgoing}) and using
(\ref{eq:FinalIngoingErrorEstimate-1}) to estimate the last term
in the right hand side, we infer that, for any $(u,v)\in\mathcal{W}_{\nearrow}[n;i,j]$:
\begin{equation}
\int_{u_{n;i,j}^{(-)}}^{u}\frac{E_{\nearrow}[n;i.j]}{r}(\bar{u},v)\,d\bar{u}=\big(1+O(\rho_{\varepsilon}^{\frac{1}{2}})\big)\cdot\frac{\mathcal{E}_{\nearrow}[n;i.j](u,v_{n;i,j}^{(-)})+O\Big(\rho_{\varepsilon}^{\frac{5}{4}}\frac{\varepsilon^{(j)}}{\sqrt{-\Lambda}}\Big)}{r(u,v)}.\label{eq:Final1/rEnergyOutgoing}
\end{equation}
Similarly, estimating $\int_{v_{n;i,j}^{(-)}}^{v}\frac{\mathfrak{Err}_{\nwarrow}^{(0)}[n;i.j]}{r}(u,\bar{v})\,d\bar{v}$
similarly as we did for $\int_{u_{n;i,j}^{(-)}}^{u}\frac{\mathfrak{Err}_{\nearrow}^{(0)}[n;i.j]}{r}(\bar{u},v)\,d\bar{u}$,
we infer from (\ref{eq:1/rEnergyUsefulFormulaIngoing}) that, for
all $(u,v)\in\mathcal{W}_{\nwarrow}[n;i,j]$: 
\begin{equation}
\int_{v_{n;i,j}^{(-)}}^{v}\frac{E_{\nwarrow}[n;i.j]}{r}(u,\bar{v})\,d\bar{v}=\big(1+O(\rho_{\varepsilon}^{\frac{1}{2}})\big)\cdot\frac{\mathcal{E}_{\nwarrow}[n;i.j](u_{n;i,j}^{(-)},v)+O\Big(\rho_{\varepsilon}^{\frac{5}{4}}\frac{\varepsilon^{(i)}}{\sqrt{-\Lambda}}\Big)}{r(u,v)}.\label{eq:Final1/rEnergyIngoing}
\end{equation}

Substituting (\ref{eq:Final1/rEnergyOutgoing}) in (\ref{eq:IngoingRSeparationBeforeAfterFinalGeneral})
and using (\ref{eq:NonTrivialUpperBoundMuij-1}), (\ref{eq:FormulaForFinalEnergiesFromDensities})
and the fact that $\Lambda r^{2}=O(\varepsilon)$ on $\mathcal{W}_{\nearrow}[n;i,j]$
when $i>j$, we infer that
\begin{equation}
\mathfrak{D}r_{\nwarrow}^{(+)}[n;i,j]=\int_{\text{ }v_{n;i-1,j}^{(+)}+\rho_{\varepsilon}^{-\frac{7}{8}}h_{\varepsilon,i-1}}^{\text{ }v_{n;i,j}^{(-)}-\rho_{\varepsilon}^{-\frac{7}{8}}h_{\varepsilon,i-1}}\exp\Bigg(-\big(1+O(\rho_{\varepsilon}^{\frac{3}{4}})\big)\cdot\frac{2\mathcal{E}_{\nearrow}^{(-)}[n;i,j]+O\Big(\rho_{\varepsilon}^{\frac{5}{4}}\frac{\varepsilon^{(j)}}{\sqrt{-\Lambda}}\Big)}{r(u_{n;i,j}^{(+)},\bar{v})}\Bigg)\cdot\frac{\partial_{v}r}{1-\frac{2m}{r}}(u_{n;i,j}^{(-)},\bar{v})\,d\bar{v}.\label{eq:IngoingRSeparationBeforeAfterFinalGeneral-1}
\end{equation}
Similarly, from (\ref{eq:OutgoingRSeparationBeforeAfterFinalGeneral})
we infer that 
\begin{equation}
\mathfrak{D}r_{\nearrow}^{(+)}[n;i,j]=\int_{\text{ }u_{n;i,j-1}^{(+)}+\rho_{\varepsilon}^{-\frac{7}{8}}h_{\varepsilon,j-1}}^{\text{ }u_{n;i,j}^{(-)}-\rho_{\varepsilon}^{-\frac{7}{8}}h_{\varepsilon,j-1}}\exp\Big(\big(1+O(\rho_{\varepsilon}^{\frac{3}{4}})\big)\cdot\frac{2\mathcal{E}_{\nwarrow}^{(-)}[n;i,j]+O\Big(\rho_{\varepsilon}^{\frac{5}{4}}\frac{\varepsilon^{(i)}}{\sqrt{-\Lambda}}\Big)}{r(\bar{u},v_{n;i,j}^{(-)})}\Big)\cdot\frac{-\partial_{u}r}{1-\frac{2m}{r}}(\bar{u},v_{n;i,j}^{(-)})\,d\bar{u}.\label{eq:OutgoingRSeparationBeforeAfterFinalGeneral-1}
\end{equation}

\begin{itemize}

\item In the case when $i=j+1$, using the bounds (\ref{eq:EstimateGeometryLikeC0}),
(\ref{eq:UpperBoundTuuTvvFi}) and (\ref{eq:LowerBoundRW}), we can
trivially estimate 
\begin{align}
\sup_{\bar{v}\in[v_{n;i-1,j}^{(+)}+\rho_{\varepsilon}^{-\frac{7}{8}}h_{\varepsilon,i-1},v_{n;i,j}^{(-)}-\rho_{\varepsilon}^{-\frac{7}{8}}h_{\varepsilon,i-1}]} & \frac{\mathcal{E}_{\nearrow}^{(-)}[n;i,j]+O\Big(\rho_{\varepsilon}^{\frac{5}{4}}\frac{\varepsilon^{(j)}}{\sqrt{-\Lambda}}\Big)}{r(u_{n;i,j}^{(+)},\bar{v})}\label{eq:TrivialUpperBoundOutgoingExponent}\\
 & \le\frac{\int_{u_{n;i,j}^{(-)}}^{u_{n;i,j}^{(+)}}\exp\big(\exp(\sigma_{\varepsilon}^{-6})\big)\,du+O\Big(\rho_{\varepsilon}^{\frac{5}{4}}\frac{\varepsilon^{(j)}}{\sqrt{-\Lambda}}\Big)}{e^{-\sigma_{\varepsilon}^{-6}}\rho_{\varepsilon}^{-\frac{7}{8}}\frac{\varepsilon^{(j)}}{\sqrt{-\Lambda}}}\nonumber\\
 & \le\exp\big(\exp(2\sigma_{\varepsilon}^{-6})\big)\rho_{\varepsilon}^{\frac{7}{8}}\nonumber\\
 & \le\rho_{\varepsilon}^{\frac{3}{4}}.\nonumber
\end{align}
From (\ref{eq:IngoingRSeparationBeforeAfterFinalGeneral-1}), we therefore
infer that 
\begin{align}
\mathfrak{D}r_{\nwarrow}^{(+)}[n;i,j] & =\int_{\text{ }v_{n;i-1,j}^{(+)}+\rho_{\varepsilon}^{-\frac{7}{8}}h_{\varepsilon,i-1}}^{\text{ }v_{n;i,j}^{(-)}-\rho_{\varepsilon}^{-\frac{7}{8}}h_{\varepsilon,i-1}}\exp\big(O(\rho_{\varepsilon}^{\frac{3}{4}})\big)\cdot\frac{\partial_{v}r}{1-\frac{2m}{r}}(u_{n;i,j}^{(-)},\bar{v})\,d\bar{v}=\label{eq:FinalChangeInIngoingRDifferenei=00003Dj+1}\\
 & =\int_{\text{ }v_{n;i-1,j}^{(+)}+\rho_{\varepsilon}^{-\frac{7}{8}}h_{\varepsilon,i-1}}^{\text{ }v_{n;i,j}^{(-)}-\rho_{\varepsilon}^{-\frac{7}{8}}h_{\varepsilon,i-1}}\big(1+O(\rho_{\varepsilon}^{\frac{3}{4}})\big)\frac{\partial_{v}r}{1-\frac{2m}{r}}(u_{n;i,j}^{(-)},\bar{v})\,d\bar{v}=\nonumber \\
 & =\big(1+O(\rho_{\varepsilon}^{\frac{3}{4}})\big)\mathfrak{D}r_{\nwarrow}^{(-)}[n;i,j].\nonumber 
\end{align}
As a result, the relation (\ref{eq:DecreaseIngoingRSeparationCloseToAxisi=00003Dj+1})
follows readily for $i=j+1$.

\item In the case when $i>j+1$, we can trivially estimate (using
(\ref{eq:BoundsRIntersectionRegioni>j}) and the relation (\ref{eq:RecursiveEi})
between the $\varepsilon^{(k)}$'s) that, for any $\bar{v}\in[v_{n;i-1,j}^{(+)}+\rho_{\varepsilon}^{-\frac{7}{8}}h_{\varepsilon,i-1},v_{n;i,j}^{(-)}-\rho_{\varepsilon}^{-\frac{7}{8}}h_{\varepsilon,i-1}]$:
\begin{equation}
1-\frac{r(u_{n;i,j}^{(+)},\bar{v})}{r_{n;i,j}}=\frac{r(u_{n;i,j}^{(+)},v_{n;i,j}^{(-)})-r(u_{n;i,j}^{(+)},\bar{v})}{r(u_{n;i,j}^{(+)},v_{n;i,j}^{(-)})}=O(\varepsilon).
\end{equation}
Thus, from (\ref{eq:IngoingRSeparationBeforeAfterFinalGeneral-1}),
we infer that 
\begin{align}
\mathfrak{D}r_{\nwarrow}^{(+)}[n;i,j] & =\int_{\text{ }v_{n;i-1,j}^{(+)}+\rho_{\varepsilon}^{-\frac{7}{8}}h_{\varepsilon,i-1}}^{\text{ }v_{n;i,j}^{(-)}-\rho_{\varepsilon}^{-\frac{7}{8}}h_{\varepsilon,i-1}}\exp\Bigg(-\big(1+O(\rho_{\varepsilon}^{\frac{3}{4}})+O(\varepsilon)\big)\cdot\frac{2\mathcal{E}_{\nearrow}^{(-)}[n;i,j]+O\Big(\rho_{\varepsilon}^{\frac{5}{4}}\frac{\varepsilon^{(j)}}{\sqrt{-\Lambda}}\Big)}{r_{n;i,j}}\Bigg)\times\label{eq:FinalChangeIngoingInRDifferencei>j+1}\\
 & \hphantom{\int_{\text{ }v_{n;i-1,j}^{(+)}+\rho_{\varepsilon}^{-\frac{7}{8}}h_{\varepsilon,i-1}}^{\text{ }v_{n;i,j}^{(-)}-\rho_{\varepsilon}^{-\frac{7}{8}}h_{\varepsilon,i-1}}\exp}\times\frac{\partial_{v}r}{1-\frac{2m}{r}}(u_{n;i,j}^{(-)},\bar{v})\,d\bar{v}=\nonumber\\
 & =\exp\Bigg(-\big(1+O(\rho_{\varepsilon}^{\frac{3}{4}})\big)\cdot\frac{2\mathcal{E}_{\nearrow}^{(-)}[n;i,j]+O\Big(\rho_{\varepsilon}^{\frac{5}{4}}\frac{\varepsilon^{(j)}}{\sqrt{-\Lambda}}\Big)}{r_{n;i,j}}\Bigg)\cdot\mathfrak{D}r_{\nwarrow}^{(-)}[n;i,j].\nonumber 
\end{align}
From (\ref{eq:FinalChangeIngoingInRDifferencei>j+1}) and (\ref{eq:BoundsRIntersectionRegioni>j})
(as well as the upper bound (\ref{eq:TrivialUpperBoundOutgoing})
for $\frac{2\mathcal{E}_{\nearrow}^{(-)}[n;i,j]}{r_{n;i,j}}$), the
relation (\ref{eq:DecreaseIngoingRSeparationCloseToAxisi>j+1}) follows
readily for $i>j+1$.

\item In all the cases when $i>j$, we can readily estimate using
(\ref{eq:EstimateGeometryLikeC0}) and (\ref{eq:UpperBoundTuuTvvFi}):
\begin{equation}
\mathcal{E}_{\nearrow}^{(-)}[n;i,j]\le\exp\big(\exp(2\sigma_{\varepsilon}^{-6})\big)\varepsilon^{(i)}(-\Lambda)^{-\frac{1}{2}}
\end{equation}
and 
\begin{equation}
\inf_{\bar{u}\in[\text{ }u_{n;i,j-1}^{(+)}+\rho_{\varepsilon}^{-\frac{7}{8}}h_{\varepsilon,j-1},\text{ }u_{n;i,j}^{(-)}+\rho_{\varepsilon}^{-\frac{7}{8}}h_{\varepsilon,j-1}]}r(\bar{u},v_{n;i,j}^{(-)})\ge e^{-\sigma_{\varepsilon}^{-6}}\rho_{\varepsilon}^{-\frac{7}{8}}\varepsilon^{(j-1)}(-\Lambda)^{-\frac{1}{2}}.
\end{equation}
Thus, from (\ref{eq:OutgoingRSeparationBeforeAfterFinalGeneral-1})
(and the relation (\ref{eq:RecursiveEi}) between the $\varepsilon^{(k)}$'s)
we infer that 
\begin{align}
\mathfrak{D}r_{\nearrow}^{(+)}[n;i,j] & =\int_{\text{ }u_{n;i,j-1}^{(+)}+\rho_{\varepsilon}^{-\frac{7}{8}}h_{\varepsilon,j-1}}^{\text{ }u_{n;i,j}^{(-)}-\rho_{\varepsilon}^{-\frac{7}{8}}h_{\varepsilon,j-1}}\exp\Big(O\Big(\frac{\exp\big(\exp(2\sigma_{\varepsilon}^{-6})\big)\varepsilon^{(i)}}{e^{-\sigma_{\varepsilon}^{-6}}\rho_{\varepsilon}^{-\frac{7}{8}}\varepsilon^{(j-1)}}\Big)\Big)\cdot\frac{-\partial_{u}r}{1-\frac{2m}{r}}(\bar{u},v_{n;i,j}^{(-)})\,d\bar{u}=\label{eq:FinalChangeInOutgoingRDifferencei>j}\\
 & =\int_{\text{ }u_{n;i,j-1}^{(+)}+\rho_{\varepsilon}^{-\frac{7}{8}}h_{\varepsilon,j-1}}^{\text{ }u_{n;i,j}^{(-)}-\rho_{\varepsilon}^{-\frac{7}{8}}h_{\varepsilon,j-1}}\exp\Big(O(\varepsilon)\Big)\cdot\frac{-\partial_{u}r}{1-\frac{2m}{r}}(\bar{u},v_{n;i,j}^{(-)})\,d\bar{u}=\nonumber \\
 & =(1+O(\varepsilon))\cdot\mathfrak{D}r_{\nearrow}^{(-)}[n;i,j].\nonumber 
\end{align}
In particular, (\ref{eq:IncreaseOutgoingRSeparationCloseToAxis})
follows from (\ref{eq:FinalChangeInOutgoingRDifferencei>j}).

\end{itemize}

\medskip{}

\noindent \emph{The case $i<j$: Proof of (\ref{eq:OutgoingRSeparationCloseToInfinity})\textendash (\ref{eq:IngoingRSeparationCloseToInfinity}).}
In the case when $i<j$, the proof of (\ref{eq:OutgoingRSeparationCloseToInfinity})\textendash (\ref{eq:IngoingRSeparationCloseToInfinity})
follows by repaeting exactly the same steps as for the proof of (\ref{eq:IncreaseOutgoingRSeparationCloseToAxis})\textendash (\ref{eq:DecreaseIngoingRSeparationCloseToAxisi>j+1}),
but using the bound (\ref{eq:LowerBoundRWNearInfinity}) in place
of (\ref{eq:LowerBoundRW}). This results in several simplifications
and improvements in the bounds of the various error terms (compare
with the proof of (\ref{eq:IngoingEnergyCloseToInfinity})\textendash (\ref{eq:OutgoingEnergyCloseToInfinity})
in relation to (\ref{eq:IncreaseIngoingEnergyCloseToAxis})\textendash (\ref{eq:DecreaseOutgoingEnergyCloseToAxis})):
using (\ref{eq:LowerBoundRWNearInfinity}), it readily follows that
the first term in the right hand side of (\ref{eq:1/rEnergyUsefulFormulaOutgoing})\textendash (\ref{eq:1/rEnergyUsefulFormulaIngoing})
is of order $O(\varepsilon^{\frac{1}{2}})$, while the right hand
side of (\ref{eq:ErrorFinalRelation-1}) is of order $O(\varepsilon)$.
As a result, using once more the bound (\ref{eq:LowerBoundRWNearInfinity})
for the $\Lambda r^{2}$ terms, one infers that the arguments of the
exponentials in (\ref{eq:IngoingRSeparationBeforeAfterFinalGeneral})\textendash (\ref{eq:OutgoingRSeparationBeforeAfterFinalGeneral})
are of order $O(\varepsilon)$, therefore obtaining (\ref{eq:OutgoingRSeparationCloseToInfinity})\textendash (\ref{eq:IngoingRSeparationCloseToInfinity}).
We will omit the tedious details.

\medskip{}

\noindent \emph{Proof of (\ref{eq:SpecialRSeparationCloseToInfinity})\textendash (\ref{eq:SpecialRSeparationCloseToAxis}).}
The proof of (\ref{eq:SpecialRSeparationCloseToInfinity}) follows
by repeating exactly the same steps as for the proof of (\ref{eq:DecreaseIngoingRSeparationCloseToAxisi=00003Dj+1}),
while the proof of (\ref{eq:SpecialRSeparationCloseToAxis}) follows
exactly in the same way as the proof of (\ref{eq:IngoingRSeparationCloseToInfinity}).
We will omit the relevant details.

\medskip{}

The relations (\ref{eq:IncreaseOutgoingRSeparationCloseToAxis})\textendash (\ref{eq:SpecialRSeparationCloseToAxis})
for $\widetilde{\mathfrak{D}}r_{\nwarrow}^{(\pm)}[n;i,j]$, $\widetilde{\mathfrak{D}}r_{\nearrow}^{(\pm)}[n;i,j]$
in place of $\mathfrak{D}r_{\nwarrow}^{(\pm)}[n;i,j]$, $\mathfrak{D}r_{\nearrow}^{(\pm)}[n;i,j]$
follow by repeating exactly the same steps, after replacing $\sigma_{\varepsilon}$,
$h_{\varepsilon,i}$, $\mathcal{V}_{i}^{(n)}$, $\mathcal{R}_{i;j}^{(n)}$
with $\delta_{\varepsilon}$, $\tilde{h}_{\varepsilon,i}$, $\widetilde{\mathcal{V}}_{i}^{(n)}$,
$\widetilde{\mathcal{R}}_{i;j}^{(n)}$ respectively, in all the expressions
above and using (\ref{eq:EstimateGeometryLikeC0LargerDomain}) in
place of (\ref{eq:EstimateGeometryLikeC0}).
\end{proof}
%PROSOXH! Na ekshghsw kai edw se remark kai sthn eisagegh oti h energeia ths megalhs aktinas allazei amelhtea epeidh to m/r gia thn mikrh aktina einai amelhteo 

\subsection{\label{subsec:The-instability-mechanism}The instability mechanism:
Energy growth for the Vlasov beams}

In this section, we will use Propositions \ref{prop:EnergyChangeInteraction}
and \ref{prop:RSeparationChangeInteraction} in order to obtain quantitative
control on the total change in the energy content and the geometric
separation of the beams $\mathcal{V}_{i}^{(n)}$ between two successive
reflections off $\mathcal{I}_{\varepsilon}$. To this end, we will
first introduce the quantities $\mu_{i}[n]$, $\mathcal{E}_{i}[n]$
and $R_{i}[n]$, which are determined by a recursive system of relations
and will be later shown to approximate sufficiently $\frac{2\mathcal{E}_{\nwarrow}^{(-)}[n;i,0]}{\mathfrak{D}r_{\nwarrow}^{(-)}[n;i,0]}$,
$\mathcal{E}_{\nwarrow}^{(-)}[n;i,0]$ and $\mathfrak{D}r_{\nwarrow}^{(-)}[n;i,0]$,
respectively:
\begin{defn}
\label{def:RecursiveSystemMuER} For any $\varepsilon\in(0,\varepsilon_{1}]$,
let us define the sequences $\mu_{i}:\mathbb{N}\rightarrow(0,+\infty)$,
$0\le i\le N_{\varepsilon}-1$, by the recursive relations
\begin{equation}
\mu_{i}[n+1]=\mu_{i}[n]\cdot\exp\Big(2\sum_{j=0}^{i-1}\mu_{j}[n+1]\Big),\label{eq:RecursiveFormulaMu}
\end{equation}
with initial conditions 
\begin{equation}
\mu_{i}[0]=\frac{2\mathcal{E}_{\nwarrow}^{(-)}[0;i,0]}{\mathfrak{D}r_{\nwarrow}^{(-)}[0;i+1,0]}.\label{eq:InitialMuI}
\end{equation}

We will also define$\mathcal{E}_{i}:\mathbb{N}\rightarrow(0,+\infty)$
(for $0\le i\le N_{\varepsilon}$) and $R_{i}:\mathbb{N}\rightarrow(0,+\infty)$
(for $1\le i\le N_{\varepsilon}$) by the following recursive system
of relations:
\begin{align}
\mathcal{E}_{i}[n+1] & =\mathcal{E}_{i}[n]\cdot\exp\Big(\sum_{j=0}^{i-1}\mu_{j}[n+1]\Big),\label{eq:FormulaRecursiveEnergy}\\
R_{i}[n+1] & =R_{i}[n]\cdot\exp\Big(-\sum_{j=0}^{i-2}\mu_{j}[n+1]\Big),\label{eq:FormulaRecursiveDr}
\end{align}
with initial conditions 
\begin{align}
\mathcal{E}_{i}[0] & =\mathcal{E}_{\nwarrow}^{(-)}[0;i,0],\label{eq:InitialConditionsImplicitRelations}\\
R_{i}[0] & =\mathfrak{D}r_{\nwarrow}^{(-)}[0;i,0].\nonumber 
\end{align}
Notice that the quantities $\mu_{i}[n]$, $\mathcal{E}_{i}[n]$ and
$R_{i+1}[n]$ satisfy for all $0\le i\le N_{\varepsilon}-1$: 
\begin{equation}
\frac{2\mathcal{E}_{i}[n]}{R_{i+1}[n]}=\mu_{i}[n].\label{eq:QuotientERMu}
\end{equation}
\end{defn}
\begin{rem*}
The relations (\ref{eq:RecursiveFormulaMu}) and (\ref{eq:InitialMuI})
uniquely determine $\mu_{i}[n]$ for all $0\le i\le N_{\varepsilon}-1$,
$n\in\mathbb{N}$, as can be seen by arguing inductively on $i$:
For $i=0$, (\ref{eq:RecursiveFormulaMu}) yields that $\mu_{i}[n]=\mu_{i}[0]$
for all $n\in\mathbb{N}$. Provided $\mu_{\bar{i}}:\mathbb{N}\rightarrow(0,+\infty)$
has been determined for $0\le\bar{i}\le i-1$, the relation (\ref{eq:RecursiveFormulaMu})
yields 
\begin{equation}
\mu_{i}[n]=\mu_{i}[0]\cdot\exp\Big(2\sum_{\bar{n}=1}^{n}\sum_{j=0}^{i-1}\mu_{j}[\bar{n}]\Big)\label{eq:EplicitRelationMu}
\end{equation}
for all $n\in\mathbb{N}$. In particular, note that, as a consequence
of (\ref{eq:EplicitRelationMu}), for any $i>0$, 
\[
\mu_{i}[n]\xrightarrow{n\rightarrow\infty}+\infty.
\]
 
\end{rem*}
The following proposition is the main result of this section. It will
provide us with useful approximate formulas for the total change of
the energy content and the geometric separation of the beams between
two successive reflections off $\mathcal{I}_{\varepsilon}$, expressed
in terms of the quantities $\mathcal{E}_{i}[n]$ and $R_{i}[n]$.
In particular, it will be readily inferred from these formulas that,
for any $i>0$, the energy content of each beam $\mathcal{V}_{i}^{(n)}$
increases in $n$, while the geometric separation of the beams remains
under control.\footnote{Both these statements hold modulo error terms that will be shown to
be negligible after a careful choice of the initial weights $a_{\varepsilon i}$
in the next Section. }
\begin{prop}
\label{prop:TotalEnergyChange} Let $n\in\mathbb{N}$ be such that
\begin{equation}
\{0\le u\le v_{\varepsilon,0}^{(n)}-h_{\varepsilon,0}\}\cap\big\{ u<v<u+\sqrt{-\frac{3}{\Lambda}}\pi\big\}\subset\mathcal{U}_{\varepsilon}^{+}.\label{eq:NoonTrivialN}
\end{equation}
Then the following relations hold: 
\begin{align}
\mathcal{E}_{\nwarrow}^{(-)}[n;i,0] & =\mathcal{E}_{i}[n]+O\Big(\rho_{\varepsilon}^{\frac{1}{16}}\frac{\varepsilon^{(i)}}{\sqrt{-\Lambda}}\Big)\text{ for all }0\le i\le N_{\varepsilon},\label{eq:EqualEnergyFlux}\\
\mathfrak{D}r_{\nwarrow}^{(-)}[n;i,0] & =R_{i}[n]\cdot\big(1+O(\rho_{\varepsilon}^{\frac{1}{16}})\big)\text{ for all }1\le i\le N_{\varepsilon},\label{eq:EqualRSeparation}
\end{align}
where the sequences $\mathcal{E}_{i}$ and $R_{i}$ were introduced
in Definition \ref{def:RecursiveSystemMuER}. 

In addition, for any $0\le j\le N_{\varepsilon}-1$ such that 
\begin{equation}
\widetilde{\mathcal{R}}_{N_{\varepsilon};j}^{(n)}\subset\mathcal{T}_{\varepsilon}^{+},\label{eq:IntersectionDomainInTau}
\end{equation}
we have 
\begin{equation}
\widetilde{\mathcal{E}}_{\nearrow}^{(-)}[n;N_{\varepsilon},j]=\mathcal{E}_{j}[n+1]+O\Big(\rho_{\varepsilon}^{\frac{1}{17}}\frac{\varepsilon^{(j)}}{\sqrt{-\Lambda}}\Big)\label{eq:EqualEnergyFluxOutgoingTau}
\end{equation}
and, if $j\ge1$:
\begin{equation}
\widetilde{\mathfrak{D}}r_{\nearrow}^{(-)}[n;N_{\varepsilon},j]=R_{j}[n+1]\cdot\big(1+O(\rho_{\varepsilon}^{\frac{1}{17}})\big).\label{eq:EqualRSeparationOutgoingTau}
\end{equation}
Finally, if 
\[
\widetilde{\mathcal{R}}_{N_{\varepsilon};N_{\varepsilon}-1}^{(n)}\subset\mathcal{T}_{\varepsilon}^{+},
\]
we have 
\begin{equation}
\widetilde{\mathcal{E}}_{\nwarrow}^{(-)}[n;N_{\varepsilon},N_{\varepsilon}-1]=\mathcal{E}_{N_{\varepsilon}}[n+1]+O\Big(\rho_{\varepsilon}^{\frac{1}{17}}\frac{\varepsilon^{(N_{\varepsilon})}}{\sqrt{-\Lambda}}\Big).\label{eq:EqualEnergyFluxFinalIngoingTau}
\end{equation}
\end{prop}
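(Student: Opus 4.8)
The plan is to prove Proposition \ref{prop:TotalEnergyChange} by induction on $n$, carefully tracking the energy content and geometric separation of the beams through each reflection cycle off $\mathcal{I}_{\varepsilon}$. The base case $n=0$ is immediate from the definitions \eqref{eq:InitialConditionsImplicitRelations}: $\mathcal{E}_{\nwarrow}^{(-)}[0;i,0]=\mathcal{E}_{i}[0]$ and $\mathfrak{D}r_{\nwarrow}^{(-)}[0;i,0]=R_{i}[0]$ exactly, with no error terms. For the inductive step, assuming \eqref{eq:EqualEnergyFlux}--\eqref{eq:EqualRSeparation} hold at level $n$, I would track the evolution of all beam slices $\mathcal{V}_{i\nwarrow}^{(n)}$ as they: (i) reach the axis and reflect, picking up the energy-exchange contributions from the regions $\mathcal{R}_{i;\bar{\imath}}^{(n)}$ with $\bar{\imath}<i$ (where $\mathcal{V}_{i\nwarrow}^{(n)}$ is ingoing and $\mathcal{V}_{\bar{\imath}\nearrow}^{(n)}$ is outgoing, so $\zeta_i$ is the ``$\bar{\zeta}$'' of \eqref{eq:IncreaseIngoingEnergyCloseToAxis}), (ii) turn outgoing and pass through the regions $\mathcal{R}_{i;\bar{\imath}}^{(n)}$ with $\bar{\imath}>i$ near $\mathcal{I}$, where by \eqref{eq:OutgoingEnergyCloseToInfinity} the change is only $(1+O(\varepsilon))$, and (iii) reflect off $\mathcal{I}_{\varepsilon}$, which preserves the energy content by the reflecting boundary condition (\ref{eq:ConservationHawkingMassAtInfinity}). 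Composing the multiplicative factors $\exp\big(2\mathcal{E}_{\nearrow}^{(-)}[n;i,\bar\imath]/r_{n;i,\bar\imath}\big)$ over all $\bar\imath<i$, and using \eqref{eq:BoundsRIntersectionRegioni>j} to identify $r_{n;i,\bar\imath}$ with $R_{i}[n]\big(1+O(\rho_\varepsilon^{1/16})\big)$ (so that $2\mathcal{E}_{\nearrow}^{(-)}[n;\bar\imath,0]/r_{n;i,\bar\imath}$ becomes $\mu_{\bar\imath}[n]\big(1+O(\rho_\varepsilon^{1/16})\big)$), one reproduces the recursion \eqref{eq:FormulaRecursiveEnergy}. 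The analogous bookkeeping for $\mathfrak{D}r_{\nwarrow}^{(-)}$, using \eqref{eq:DecreaseIngoingRSeparationCloseToAxisi=00003Dj+1}--\eqref{eq:DecreaseIngoingRSeparationCloseToAxisi>j+1}, reproduces \eqref{eq:FormulaRecursiveDr}.

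\textbf{Key steps, in order.} First I would fix $n$ satisfying \eqref{eq:NoonTrivialN} and enumerate, in increasing order of $u+v$, the finitely many intersection regions $\mathcal{R}_{i;\bar\imath}^{(n)}$, $\mathcal{R}_{i;\gamma_{\mathcal{Z}}}^{(n)}$, $\mathcal{R}_{i;\mathcal{I}}^{(n)}$ that the beam $\mathcal{V}_{i}$ encounters between the $n$-th and $(n{+}1)$-st reflection, observing from the ordering conventions in Section \ref{subsec:Some-basic-geometric-constructions} that $\mathcal{V}_{i\nwarrow}^{(n)}$ meets $\mathcal{V}_{\bar\imath\nearrow}^{(n)}$ near the axis (region $\mathcal{R}_{i;\bar\imath}^{(n)}$, $i>\bar\imath$) \emph{before} meeting the outgoing versions of higher beams near $\mathcal{I}$. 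Second, for each such crossing I would invoke Proposition \ref{prop:EnergyChangeInteraction}: when $i>\bar\imath$ the ingoing energy gets multiplied by $\exp\big(2\mathcal{E}_{\nearrow}^{(-)}[n;i,\bar\imath]/r_{n;i,\bar\imath}+O(\rho_\varepsilon^{3/2})\big)$ plus an additive $O(\rho_\varepsilon^{3/2}\varepsilon^{(i)}/\sqrt{-\Lambda})$ error; when $i<\bar\imath$ (crossings near infinity, in the outgoing phase) the factor is just $1+O(\varepsilon)$ with additive $O(\varepsilon\,\varepsilon^{(i)}/\sqrt{-\Lambda})$. Third, I would telescope these updates: there are at most $N_\varepsilon$ crossings per cycle, so the accumulated additive errors are bounded by $N_\varepsilon\cdot\rho_\varepsilon^{3/2}\varepsilon^{(i)}/\sqrt{-\Lambda}$, which by \eqref{eq:NumberOfBeams} and \eqref{eq:HierarchyOfParameters} is $\ll\rho_\varepsilon^{1/16}\varepsilon^{(i)}/\sqrt{-\Lambda}$, and the accumulated multiplicative $O(\rho_\varepsilon^{3/2})$ errors in the exponents sum to $O(N_\varepsilon\rho_\varepsilon^{3/2})\ll\rho_\varepsilon^{1/16}$. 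Crucially, here I must use the inductive hypothesis at level $n$ together with \eqref{eq:BoundsRIntersectionRegioni>j} and \eqref{eq:MaxMu}-type a priori control (or, at this stage, the crude bound $\mu_i[n]=O(\eta_0)$ from $\mathcal{R}_{i;\bar\imath}^{(n)}\subset\mathcal{U}_\varepsilon^+$ via \eqref{eq:NonTrivialUpperBoundMuij}) so that replacing $\mathcal{E}_{\nearrow}^{(-)}[n;i,\bar\imath]$ by $\mathcal{E}_{\bar\imath}[n]$ and $r_{n;i,\bar\imath}$ by $R_{i}[n]$ introduces only $O(\rho_\varepsilon^{1/16})$-type multiplicative errors in the exponent. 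Fourth, matching the resulting composite formula against the defining recursions \eqref{eq:RecursiveFormulaMu}--\eqref{eq:FormulaRecursiveDr} and \eqref{eq:QuotientERMu} yields \eqref{eq:EqualEnergyFlux}--\eqref{eq:EqualRSeparation} at level $n{+}1$. Finally, for \eqref{eq:EqualEnergyFluxOutgoingTau}--\eqref{eq:EqualEnergyFluxFinalIngoingTau}, which concern the \emph{outgoing} slices $\widetilde{\mathcal{V}}_{N_\varepsilon\nearrow}^{(n)}$ and $\widetilde{\mathcal{V}}_{N_\varepsilon\nwarrow}^{(n)}$ at the \emph{larger} scale $\delta_\varepsilon$ inside $\mathcal{T}_\varepsilon^+$, I would run the same argument but now using the tilde-versions of Propositions \ref{prop:EnergyChangeInteraction}--\ref{prop:RSeparationChangeInteraction} on $\mathcal{T}_\varepsilon^+$ with $\delta_\varepsilon$ in place of $\sigma_\varepsilon$; since $\mathcal{E}_j[n{+}1]$ is, by the already-established \eqref{eq:EqualEnergyFlux} at level $n{+}1$ (applied to the ingoing slice on $\mathcal{U}_\varepsilon^+\subset\mathcal{T}_\varepsilon^+$) together with the reflecting boundary condition, exactly what the outgoing slice carries right after the $(n{+}1)$-st reflection, the error upgrades only from $\rho_\varepsilon^{1/16}$ to $\rho_\varepsilon^{1/17}$ to absorb the slightly worse constants coming from $\delta_\varepsilon\ll\sigma_\varepsilon$.

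\textbf{Main obstacle.} The delicate point is the propagation of the error terms through the induction without losing a factor of $N_\varepsilon$ at each step — i.e.\ showing that the errors are genuinely $O(\rho_\varepsilon^{1/16}\varepsilon^{(i)}/\sqrt{-\Lambda})$ \emph{uniformly in} $n$ over the whole range allowed by \eqref{eq:NoonTrivialN}, rather than growing geometrically. This requires two things: (a) a uniform a priori upper bound on $\sum_{\bar\imath}\mu_{\bar\imath}[n]$ over the relevant range of $n$ — which is exactly where the monotonicity structure of the recursion \eqref{eq:RecursiveFormulaMu} and the constraint that $\mathcal{R}_{i;\bar\imath}^{(n)}\subset\mathcal{U}_\varepsilon^+$ (forcing $\sup\frac{2\tilde m}{r}\le\eta_0$, hence $\mu_{\bar\imath}[n]=O(\eta_0)$ on each intersection region via \eqref{eq:NonTrivialUpperBoundMuij}, and correspondingly a bound on the number of reflections $n\lesssim\sigma_\varepsilon^{-2}$) enter decisively; and (b) verifying that the multiplicative errors $1+O(\varepsilon)$ from the many near-$\mathcal{I}$ crossings, compounded over $O(N_\varepsilon)$ factors and $O(\sigma_\varepsilon^{-2})$ cycles, stay below $\rho_\varepsilon^{1/16}$ — which again follows from \eqref{eq:HierarchyOfParameters} since $N_\varepsilon\sigma_\varepsilon^{-2}\varepsilon\ll\rho_\varepsilon$. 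Once the uniform a priori smallness is in hand, the rest is a (lengthy but routine) telescoping computation; I would suppress those details and refer forward to the precise bookkeeping, emphasizing only the structure of how \eqref{eq:IncreaseIngoingEnergyCloseToAxis}, \eqref{eq:DecreaseIngoingRSeparationCloseToAxisi>j+1} and \eqref{eq:QuotientERMu} conspire to reproduce the recursion.
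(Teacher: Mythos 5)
Your overall skeleton is the same as the paper's: compose the pairwise interaction formulas of Propositions \ref{prop:EnergyChangeInteraction} and \ref{prop:RSeparationChangeInteraction} over one reflection cycle to obtain an approximate version of the recursion of Definition \ref{def:RecursiveSystemMuER}, and then compare with the exact recursion, absorbing the accumulated errors through the hierarchy (\ref{eq:HierarchyOfParameters}) and the bound $n\lesssim\sigma_{\varepsilon}^{-2}$ implied by (\ref{eq:NoonTrivialN}). However, there is a genuine gap in the comparison step as you have set it up. The recursion (\ref{eq:RecursiveFormulaMu}) is \emph{implicit}: the exponent contains $\mu_{j}[n+1]$, not $\mu_{j}[n]$. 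Correspondingly, when the ingoing beam $i$ crosses the outgoing beam $j<i$ near the axis, that outgoing beam has already traversed all the beams below it during the same cycle, so its energy there is $\approx\mathcal{E}_{j}[n+1]$ (not $\mathcal{E}_{j}[n]$), and the radius of the crossing region is comparable to the separation between beams $j$ and $j+1$, i.e.\ to $R_{j+1}[n+1]$ (via (\ref{eq:ComparableRnij}) and (\ref{eq:ExpressionRnii-1inU})), not to $R_{i}[n]$ as you write. With your identifications the composite one-cycle formula would be matched against an explicit recursion with level-$n$ quantities in the exponent, which defines a different sequence; since $\mu_{j}[n+1]/\mu_{j}[n]$ is not close to $1$ in general, this substitution is not negligible and the statement for the sequences of Definition \ref{def:RecursiveSystemMuER} would not follow. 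The structural consequence is that a plain induction on $n$ does not close: to control the level-$(n+1)$ quantities of index $i$ one must already control the level-$(n+1)$ quantities of all indices $<i$, so an inner induction on the beam index is required (the paper runs a bootstrap on the differences $\bar{\mu}_{j}$ together with a discrete Gronwall argument in both $j$ and $n$), and your plan does not set this up.

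A second, more minor issue concerns the uniform-in-$n$ error propagation that you correctly single out as the main obstacle. The mechanism you invoke, namely $\mu_{j}[n]=O(\eta_{0})$ on each intersection region, does not yield a useful bound on $\sum_{j}\mu_{j}[n]$, since summing $N_{\varepsilon}$ terms of size $\eta_{0}$ is not small. What is actually needed (and what the paper uses) is only the crude bound $\sum_{j}2\mathcal{E}_{\nwarrow}^{(-)}[n;j,0]/\mathfrak{D}r_{\nwarrow}^{(-)}[n;j+1,0]\le\exp(\exp(2\delta_{\varepsilon}^{-15}))$ coming from (\ref{eq:TrivialBoundEnergiesConcentration})--(\ref{eq:TrivialBoundsRSeparation}) and the value of $N_{\varepsilon}$, combined with the facts that $\rho_{\varepsilon}$ is yet another exponential smaller than $\delta_{\varepsilon}$ and that $n\lesssim\sigma_{\varepsilon}^{-2}$: the resulting exponential amplification of the per-cycle errors is then still dominated by a small power of $\rho_{\varepsilon}$. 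Finally, as a bookkeeping point, in the outgoing phase the crossings of beam $i$ with the ingoing beams $j>i$ occur near the axis and are governed by (\ref{eq:DecreaseOutgoingEnergyCloseToAxis}), while (\ref{eq:OutgoingEnergyCloseToInfinity}) and (\ref{eq:IngoingEnergyCloseToInfinity}) govern the later crossings near $\mathcal{I}$; the net $(1+O(\varepsilon))$ effect you state is correct, but the citations should be sorted accordingly.
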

\begin{figure}[h] 
\centering 
\scriptsize
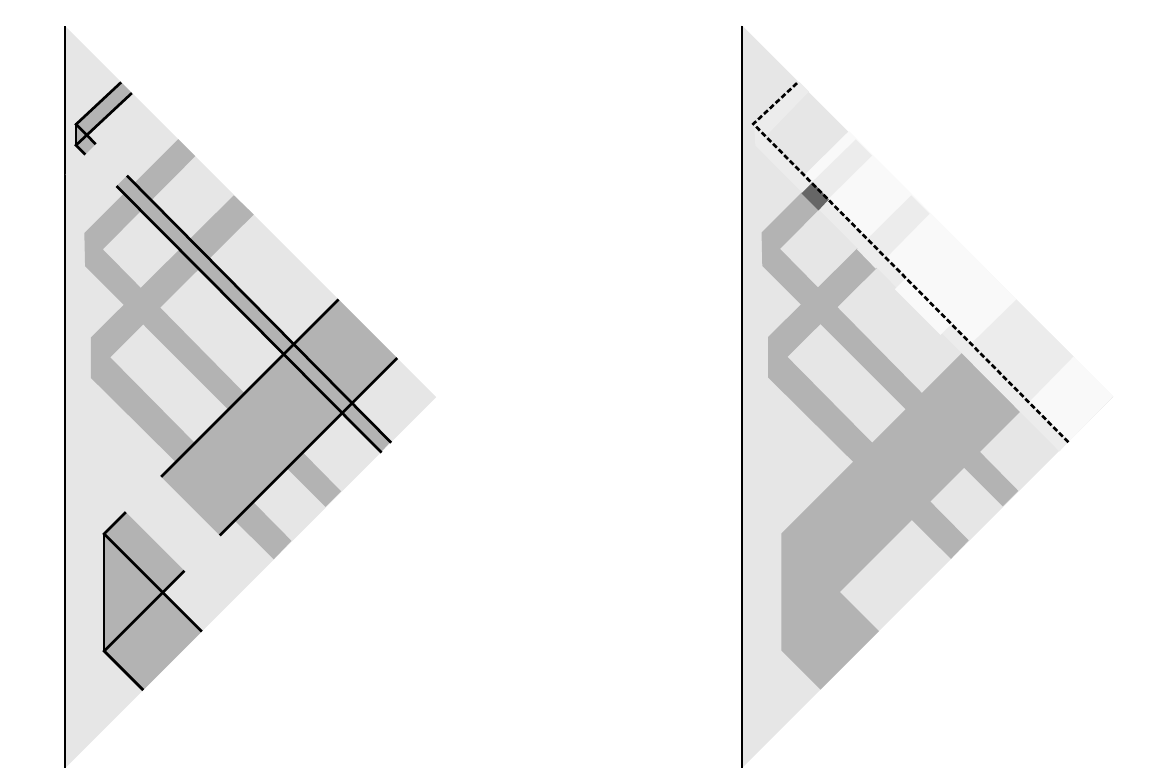 
\caption{The relations (\ref{eq:EqualEnergyFlux})--(\ref{eq:EqualRSeparation}) provide approximate formulas for the incoming energy $\mathcal{E}_{\nwarrow}^{(-)}[n;i,0]$ of the $i$-th beam, as well as its geometric separation $\mathfrak{D}r_{\nwarrow}^{(-)}[n;i,0]$ from the $(i-1)$-th beam, as measured at $u=v_{\epsilon,0}^{(n)}-h_{\epsilon,0}$ (schematic depiction on the left). Similarly, the relations (\ref{eq:EqualEnergyFluxOutgoingTau})--(\ref{eq:EqualRSeparationOutgoingTau}) provide approximate formulas for the outgoing energy of the $i$-th beam and its geometric separation from the $(i-1)$-th beam, as measured before its intersection with the $N_{\epsilon}$-th beam, i.\,e.~at $v=v_{\epsilon,N_{\epsilon}}^{(n)}-\tilde{h}_{\epsilon,N_{\epsilon}}$ (schematic depiction on the right). The reason for using the $\tilde{\cdot}$ quantities (as well as the slightly larger beams $\widetilde{\mathcal{V}}_i$) in the latter case  is that the relations (\ref{eq:EqualEnergyFluxOutgoingTau})--(\ref{eq:EqualRSeparationOutgoingTau}) will later be used in a region of $\mathcal{U}_{max}^{(\epsilon)}$ which is a subset of $\mathcal{T}^+_{\epsilon}$, but not a subset of $\mathcal{U}^+_{\epsilon}$; however, the analogous relations also hold (with exactly the same proof) for $\mathcal{E}_{\nearrow}^{(-)}[n;N_{\epsilon},j]$ and $\mathfrak{D}r_{\nearrow}^{(-)}[n;N_{\epsilon},j]$ in the region $\mathcal{U}^+_{\epsilon}$. }
\end{figure}
\begin{proof}
The proof of Proposition \ref{prop:TotalEnergyChange} will be separated
in a number of steps. We will first establish a number of auxiliary
relations and estimates, before proceeding with the proof of (\ref{eq:EqualEnergyFlux})\textendash (\ref{eq:EqualRSeparation})
and (\ref{eq:EqualEnergyFluxOutgoingTau})\textendash (\ref{eq:EqualEnergyFluxFinalIngoingTau}).

\medskip{}

\noindent \emph{Auxiliary bounds and relations.} In view of the fact
that 
\[
supp(T_{\mu\nu}[f_{\varepsilon}])\cap\mathcal{U}_{\varepsilon}^{+}\subset\bigcup_{k=0}^{N_{\varepsilon}}\mathcal{V}_{k}
\]
(following readily from the bound (\ref{eq:BoundSupportFepsiloni})
on the support of the $f_{\varepsilon k}$'s and the relation (\ref{eq:LinearCombinationVlasovFields})
between $f_{\varepsilon}$ and the $f_{\varepsilon k}$'s), we infer
from equations (\ref{eq:TildeUMaza}) and (\ref{eq:TildeVMaza}) for
$\tilde{m}$ that the function $\tilde{m}(u,v)$ is constant in every
connected component of $\mathcal{U}_{\varepsilon}^{+}\backslash\bigcup_{n\in\mathbb{N}}\bigcup_{k=0}^{N_{\varepsilon}}\mathcal{V}_{k}^{(n)}$,
i.\,e.~in the regions between the beams $\mathcal{V}_{k}^{(n)}$.
This fact immediately implies, in view of the definition (\ref{eq:IngoingEnergyBefore})\textendash (\ref{eq:IngoingEnergyInfinity})
of the quantities $\mathcal{E}_{\nwarrow}^{(\pm)}$, $\mathcal{E}_{\nearrow}^{(\pm)}$,
$\mathcal{E}_{\gamma_{\mathcal{Z}}}$ and $\mathcal{E}_{\mathcal{I}}$,
that, for all $n\in\mathbb{N}$ and all $0\le i,j\le N_{\varepsilon}$
with $i\neq j$: 
\begin{equation}
\mathcal{E}_{\nwarrow}^{(-)}[n;i,j]=\mathcal{E}_{\nwarrow}^{(+)}[n;i,j-1]\label{eq:EqualitySuccessiveEnergiesIngoing}
\end{equation}
 and 
\begin{equation}
\mathcal{E}_{\nearrow}^{(-)}[n;i,j]=\mathcal{E}_{\nearrow}^{(+)}[n;i-1,j],\label{eq:EqualitySuccessiveEnergiesOutgoing}
\end{equation}
where we have used the following index convention for (\ref{eq:EqualitySuccessiveEnergiesIngoing})
and (\ref{eq:EqualitySuccessiveEnergiesOutgoing}): 

\begin{itemize}

\item When $i=j-1$ or $j=i-1$: 
\begin{equation}
\mathcal{E}_{\nwarrow}^{(-)}[n;i,i]\doteq\mathcal{E}_{\gamma_{\mathcal{Z}}}[n;i],\label{eq:ConventionFirstEnergy}
\end{equation}
\[
\mathcal{E}_{\nearrow}^{(-)}[n;i,i]\doteq\mathcal{E}_{\mathcal{I}}[n;i],
\]
\[
\mathcal{E}_{\nwarrow}^{(+)}[n;i,i]\doteq\mathcal{E}_{\mathcal{I}}[n;i]
\]
 and 
\begin{equation}
\mathcal{E}_{\nearrow}^{(+)}[n;i,i]\doteq\mathcal{E}_{\gamma_{\mathcal{Z}}}[n;i].\label{eq:ConventionFinalEnergy}
\end{equation}

\item When $j=-1$ or $i=-1$: 
\begin{equation}
\mathcal{E}_{\nwarrow}^{(+)}[n;i,-1]\doteq\mathcal{E}_{\nwarrow}^{(+)}[n-1;i,N_{\varepsilon}]
\end{equation}
and 
\begin{equation}
\mathcal{E}_{\nearrow}^{(+)}[n;-1,j]\doteq\mathcal{E}_{\nearrow}^{(+)}[n;N_{\varepsilon},j]\label{eq:LastConventionEnergy}
\end{equation}

\end{itemize}

The relations (\ref{eq:EqualitySuccessiveEnergiesIngoing}) and (\ref{eq:EqualitySuccessiveEnergiesOutgoing})
also hold with $\widetilde{\mathcal{E}}_{\nwarrow}^{(\pm)}$, $\widetilde{\mathcal{E}}_{\nearrow}^{(\pm)}$
in place of $\mathcal{E}_{\nwarrow}^{(\pm)}$, $\mathcal{E}_{\nwarrow}^{(\pm)}$.

By the same reasoning, the right hand side of the contraint equations
(\ref{eq:EquationROutside}) and (\ref{eq:EquationROutside}) vanishes
in every connected component of $\mathcal{U}_{\varepsilon}^{+}\backslash\bigcup_{k=0}^{N_{\varepsilon}}\mathcal{V}_{k}$;
hence, it readily follows (by the definition (\ref{eq:IngoingRSeparationCloseToInfinity})\textendash (\ref{eq:OutgoingRSeparationBeforeAfter})
of $\mathfrak{D}r_{\nwarrow}^{(\pm)}$, $\mathfrak{D}r_{\nearrow}^{(\pm)}$)
that, for all $n\in\mathbb{N}$ and all $0\le i,j\le N_{\varepsilon}$,
$i\neq j$: 
\begin{equation}
\mathfrak{D}r_{\nwarrow}^{(-)}[n;i,j]=\mathfrak{D}r_{\nwarrow}^{(+)}[n;i,j-1]\label{eq:EqualitySuccessiveRSeparationsIngoing}
\end{equation}
 and 
\begin{equation}
\mathfrak{D}r_{\nearrow}^{(-)}[n;i,j]=\mathfrak{D}r_{\nearrow}^{(+)}[n;i-1,j],\label{eq:EqualitySuccessiveRSeparationsOutgoing}
\end{equation}
where we have used the index convention 
\begin{equation}
\mathfrak{D}r_{\nwarrow}^{(+)}[n;i,-1]\doteq\mathfrak{D}r_{\nwarrow}^{(+)}[n-1;i,N_{\varepsilon}]\label{eq:FirstConventionRSeparation}
\end{equation}
and 
\begin{equation}
\mathfrak{D}r_{\nearrow}^{(+)}[n;-1,j]\doteq\mathfrak{D}r_{\nearrow}^{(+)}[n;N_{\varepsilon},j].\label{eq:LastConventionRSeparation}
\end{equation}
Similarly for $\widetilde{\mathfrak{D}}r_{\nwarrow}^{(\pm)}$, $\widetilde{\mathfrak{D}}r_{\nearrow}^{(\pm)}$
in place of $\mathfrak{D}r_{\nwarrow}^{(\pm)}$, $\mathfrak{D}r_{\nearrow}^{(\pm)}$.

The following bounds will be useful for estimating the error terms
appearing after repeated applications of the formulas (\ref{eq:IncreaseIngoingEnergyCloseToAxis})\textendash (\ref{eq:OutgoingEnergyCloseToInfinity}):
In view of the bounds (\ref{eq:UpperBoundTuuTvvFi}) and (\ref{eq:UpperBoundTuvFi})
for the components of the energy momentum tensor, the bounds (\ref{eq:EstimateGeometryLikeC0})
and (\ref{eq:BoundsRIntersectionRegioni>j})\textendash (\ref{eq:ComparisonRIntersectionRegioni<j})
for $r$ on $\mathcal{R}_{i;j}^{(n)}$ and the relations (\ref{eq:TildeUMaza})\textendash (\ref{eq:TildeVMaza})
for $\tilde{m}$, we can readily bound for any $n\in\mathbb{N}$ and
$0\le i,j\le N_{\varepsilon}$, $i\neq j$, such that $\mathcal{R}_{i;j}^{(n)}\subset\mathcal{U}_{\varepsilon}^{+}$:
\begin{equation}
\frac{\mathcal{E}_{\nwarrow}^{(\pm)}[n;i,j]}{r_{n;i,j}}+\frac{\mathcal{E}_{\nearrow}^{(\pm)}[n;i,j]}{r_{n;i,j}}\le\exp\big(\exp(2\sigma_{\varepsilon}^{-5})\big)\rho_{\varepsilon}.\label{eq:TrivialBoundEnergiesConcentration}
\end{equation}
Moreover, in view of the definition (\ref{eq:IngoingRSeparationCloseToInfinity})\textendash (\ref{eq:OutgoingRSeparationBeforeAfter})
of $\mathfrak{D}r_{\nwarrow}^{(\pm)}$, $\mathfrak{D}r_{\nearrow}^{(\pm)}$,
the definition (\ref{eq:InitialCenters}) of $v_{\varepsilon,i}$
and the estimate (\ref{eq:EstimateGeometryLikeC0}), we infer that,
for any $n\in\mathbb{N}$ and any $0\le i,j\le N_{\varepsilon}$,
such that $\mathcal{R}_{i;j}^{(n)}\subset\mathcal{U}_{\varepsilon}^{+}$
if $i\neq j$ or $\mathcal{R}_{i;\gamma_{\mathcal{Z}}}^{(n)},\mathcal{R}_{i;\mathcal{I}}^{(n)}\subset\mathcal{U}_{\varepsilon}^{+}$
if $i=j$: 
\begin{align}
e^{-\sigma_{\varepsilon}^{-4}}\rho_{\varepsilon}^{-1}\frac{\varepsilon^{(i-1)}}{\sqrt{-\Lambda}} & \le\mathfrak{D}r_{\nwarrow}^{(\pm)}[n;i,j]\le e^{\sigma_{\varepsilon}^{-4}}\rho_{\varepsilon}^{-1}\frac{\varepsilon^{(i-1)}}{\sqrt{-\Lambda}},\label{eq:TrivialBoundsRSeparation}\\
e^{-\sigma_{\varepsilon}^{-4}}\rho_{\varepsilon}^{-1}\frac{\varepsilon^{(j-1)}}{\sqrt{-\Lambda}} & \le\mathfrak{D}r_{\nearrow}^{(\pm)}[n;i,j]\le e^{\sigma_{\varepsilon}^{-4}}\rho_{\varepsilon}^{-1}\frac{\varepsilon^{(j-1)}}{\sqrt{-\Lambda}}.\nonumber 
\end{align}

For any $0\le i\le N_{\varepsilon}-1$ such that $\mathcal{R}_{i+1;i}^{(n)}\subset\mathcal{U}_{\varepsilon}^{+}$,
we can express $r_{n;i+1,i}$ by integrating $\partial_{v}r$ in $v$
from $(u_{n;i+1,i}^{(+)},u_{n;i+1,i}^{(+)})\in\gamma_{\mathcal{Z}_{\varepsilon}}$
up to $(u_{n;i+1,i}^{(+)},v_{n;i+1,i}^{(-)})$ as follows (using the
notational conventions (\ref{eq:v+-})\textendash (\ref{eq:u+-})
and (\ref{eq:ConventionV+-}), as well as the bounds (\ref{eq:BoundsRIntersectionRegioni>j})\textendash (\ref{eq:ComparisonRIntersectionRegioni>j}),
the fact that $\tilde{m}=0$ on $\{u_{n;i+1,i}^{(+)}\}\times[u_{n;i+1,i}^{(+)},v_{n;i+1,i}^{(-)}]$
and the bounds (\ref{eq:EstimateGeometryLikeC0}) and (\ref{eq:TrivialBoundsRSeparation})):
\begin{align}
r_{n;i+1,i} & =r(u_{n;i+1,i}^{(+)},v_{n;i+1,i}^{(-)})=\label{eq:ExpressionRnii-1inV}\\
 & =\int_{u_{n;i+1,i}^{(+)}}^{v_{n;i+1,i}^{(-)}}\partial_{v}r(u_{n;i+1,i}^{(+)},v)\,dv=\nonumber \\
 & =\int_{u_{n;i+1,i}^{(+)}}^{v_{n;i+1,i}^{(-)}}\frac{\partial_{v}r}{1-\frac{2m}{r}}(u_{n;i+1,i}^{(+)},v)\cdot(1+O(\varepsilon))\,dv=\nonumber \\
 & =\int_{v_{n;i,i}^{(+)}+\rho_{\varepsilon}^{-\frac{7}{8}}h_{\varepsilon,i-1}}^{v_{n;i+1,i}^{(-)}-\rho_{\varepsilon}^{-\frac{7}{8}}h_{\varepsilon,i-1}}\frac{\partial_{v}r}{1-\frac{2m}{r}}(u_{n;i+1,i}^{(+)},v)\cdot(1+O(\varepsilon))\,dv+O\Big(\exp\big(\exp(2\sigma_{\varepsilon}^{-5})\big)\rho_{\varepsilon}^{-\frac{7}{8}}\frac{\varepsilon^{(i-1)}}{\sqrt{-\Lambda}}\Big)=\nonumber \\
 & =(1+O(\varepsilon))\mathfrak{D}r_{\nwarrow}^{(+)}[n;i+1,i]+O\Big(\exp\big(\exp(2\sigma_{\varepsilon}^{-5})\big)\rho_{\varepsilon}^{-\frac{7}{8}}\frac{\varepsilon^{(i-1)}}{\sqrt{-\Lambda}}\Big)=\nonumber \\
 & =\mathfrak{D}r_{\nwarrow}^{(+)}[n;i+1,i]\cdot\big(1+O(\rho_{\varepsilon}^{\frac{1}{10}})\big).\nonumber 
\end{align}
However, we can similarly express $r_{n;i+1,i}$ as an integral of
$-\partial_{u}r$ in $u$ (provided $(u_{n;i+1,i}^{(+)},v_{n;i+1,i}^{(-)})\times\{v_{n;i+1,i}^{(-)}\}\subset\mathcal{U}_{\varepsilon}^{+}$,
which is necessarily true if $\mathcal{R}_{i+1;\gamma_{\mathcal{Z}}}^{(n)}\subset\mathcal{U}_{\varepsilon}^{+}$:
\begin{align}
r_{n;i+1,i} & =\int_{u_{n;i+1,i}^{(+)}}^{v_{n;i+1,i}^{(-)}}(-\partial_{u}r)(u,v_{n;i+1,i}^{(-)})\,du=\label{eq:ExpressionRnii-1inU}\\
 & =\mathfrak{D}r_{\nearrow}^{(-)}[n;i+1,i+1]\cdot\big(1+O(\rho_{\varepsilon}^{\frac{1}{10}})\big).\nonumber 
\end{align}
Arguing similarly (replacing $\sigma_{\varepsilon}$ with $\delta_{\varepsilon}$
and using (\ref{eq:EstimateGeometryLikeC0LargerDomain}) in place
of (\ref{eq:EstimateGeometryLikeC0})), we infer that, for any $0\le i\le N_{\varepsilon}-1$
such that $\widetilde{\mathcal{R}}_{i+1;\gamma_{\mathcal{Z}}}^{(n)}\subset\mathcal{T}_{\varepsilon}^{+}$,
the relations (\ref{eq:ExpressionRnii-1inV}) and (\ref{eq:ExpressionRnii-1inU})
also hold with 
\begin{equation}
\tilde{r}_{n;i+1,i}\doteq\inf_{\widetilde{\mathcal{R}}_{i+1;i}^{(n)}}r
\end{equation}
in place of $r_{n;i+1,i}$ and $\widetilde{\mathfrak{D}}r_{\nwarrow}^{(+)}[n;i+1,i]$,
$\widetilde{\mathfrak{D}}r_{\nearrow}^{(-)}[n;i+1,i+1]$ in place
of $\mathfrak{D}r_{\nwarrow}^{(+)}[n;i+1,i]$, $\mathfrak{D}r_{\nearrow}^{(-)}[n;i+1,i+1]$,
respectively.

From (\ref{eq:ExpressionRnii-1inV}) and (\ref{eq:ExpressionRnii-1inU}),
we immediately infer that, for all $0\le i\le N_{\varepsilon}-1$
such that $\mathcal{R}_{i+1;\gamma_{\mathcal{Z}}}^{(n)}\subset\mathcal{U}_{\varepsilon}^{+}$:
\begin{equation}
\mathfrak{D}r_{\nearrow}^{(-)}[n;i+1,i+1]=\mathfrak{D}r_{\nwarrow}^{(+)}[n;i+1,i]\cdot\big(1+O(\rho_{\varepsilon}^{\frac{1}{10}})\big).\label{eq:EqualityRSeparationsNearAxis}
\end{equation}
Similarly, expressing $\frac{1}{r_{n;i,i+1}}$ as an integral of $\partial_{v}(\frac{1}{r})$
and $\partial_{u}(\frac{1}{r})$ from $\mathcal{I}_{\varepsilon}$
up to $(u_{n;i,i+1}^{(+)},v_{n;i,i+1}^{(-)})$, we infer that, for
all $0\le i\le N_{\varepsilon}-1$ such that $\mathcal{R}_{i+1;\mathcal{I}}^{(n)}\subset\mathcal{U}_{\varepsilon}^{+}$
\begin{equation}
\mathfrak{D}r_{\nwarrow}^{(-)}[n;i+1,i+1]=\mathfrak{D}r_{\nearrow}^{(+)}[n;i,i+1]\cdot\big(1+O(\rho_{\varepsilon}^{\frac{1}{10}})\big).\label{eq:EqualityRSeparationsNearInfinity}
\end{equation}

\medskip{}

\noindent \emph{Proof of (\ref{eq:EqualEnergyFlux})\textendash (\ref{eq:EqualRSeparation}).}
In order to establish (\ref{eq:EqualEnergyFlux}) and (\ref{eq:EqualRSeparation}),
we will first show that, for all $n\in\mathbb{N}$ such that 
\begin{equation}
\{0\le u\le v_{\varepsilon,0}^{(n+1)}-h_{\varepsilon,0}\}\cap\big\{ u<v<u+\sqrt{-\frac{3}{\Lambda}}\pi\big\}\subset\mathcal{U}_{\varepsilon}^{+}\label{eq:AlternativeNonTrivialU}
\end{equation}
and any $0\le i\le N_{\varepsilon}$:
\begin{equation}
\mathcal{E}_{\nwarrow}^{(-)}[n+1;i,0]=\mathcal{E}_{\nwarrow}^{(-)}[n;i,0]\cdot\exp\Big(\sum_{j=0}^{i-1}\frac{2\mathcal{E}_{\nwarrow}^{(-)}[n+1;j,0]}{\mathfrak{D}r_{\nwarrow}^{(-)}[n+1;j+1,0]}+O(\rho_{\varepsilon}^{\frac{1}{12}})\Big)+O\Big(\rho_{\varepsilon}^{\frac{1}{4}}\frac{\varepsilon^{(i)}}{\sqrt{-\Lambda}}\Big)\label{eq:ChangeOfEnergyFinalAgain}
\end{equation}
and, for all $1\le i\le N_{\varepsilon}$:
\begin{equation}
\mathfrak{D}r_{\nwarrow}^{(-)}[n+1;i,0]=\mathfrak{D}r_{\nwarrow}^{(-)}[n;i,0]\cdot\exp\Big(-\sum_{j=0}^{i-2}\frac{2\mathcal{E}_{\nearrow}^{(-)}[n;i,j]}{\mathfrak{D}r_{\nwarrow}^{(-)}[n+1;j+1,0]}+O(\rho_{\varepsilon}^{\frac{1}{12}})\Big).\label{eq:ChangeOfRSeparationFinalAgain}
\end{equation}
Note that the recursive system (\ref{eq:ChangeOfEnergyFinalAgain})\textendash (\ref{eq:ChangeOfRSeparationFinalAgain}),
modulo the $O(\cdot)$ error terms, is in fact the same as the recursive
system (\ref{eq:FormulaRecursiveEnergy})\textendash (\ref{eq:FormulaRecursiveDr})
for $\mathcal{E}_{i}[n]$, $R_{i}[n]$, with the quantities $\mathcal{E}_{\nwarrow}^{(-)}[n;i,0]$,
$\mathfrak{D}r_{\nwarrow}^{(-)}[n;i,0]$ and $\mathcal{E}_{i}[n]$,
$R_{i}[n]$, respectively, satisfying the same initial conditions
at $n=0$.

Let us first assume that (\ref{eq:ChangeOfEnergyFinalAgain})\textendash (\ref{eq:ChangeOfRSeparationFinalAgain})
have been established. The relations (\ref{eq:EqualEnergyFlux}) and
(\ref{eq:EqualRSeparation}) will then follow by showing that the
quantities 
\begin{align*}
e_{i}[n] & \doteq\frac{\mathcal{E}_{\nwarrow}^{(-)}[n;i,0]-\mathcal{E}_{i}[n]}{\varepsilon^{(i)}}\sqrt{-\Lambda}\text{ for }0\le i\le N_{\varepsilon},\\
r_{i}[n] & \doteq\frac{\mathfrak{D}r_{\nwarrow}^{(-)}[n;i,0]}{R_{i}[n]}\text{ for }1\le i\le N_{\varepsilon},\\
\bar{\mu}_{i}[n] & \doteq\rho_{\varepsilon}^{-1}\Big(\frac{2\mathcal{E}_{\nwarrow}^{(-)}[n;i,0]}{\mathfrak{D}r_{\nwarrow}^{(-)}[n;i+1,0]}-\mu_{i}[n]\Big)\text{ for }0\le i\le N_{\varepsilon}-1
\end{align*}
 satisfy 
\begin{equation}
|e_{i}[n]|,\text{ }|r_{i}[n]-1|,\text{ }|\bar{\mu}_{i}[n]|\le\rho_{\varepsilon}^{\frac{1}{16}}.\label{eq:BoundToShowSequence}
\end{equation}

To this end, let $n^{*}$ be the maximum number in $\{0,1,\ldots,n\}$
such that, for all $0\le\bar{n}\le n^{*}$:
\begin{equation}
|\bar{\mu}_{i}[\bar{n}]|\le\rho_{\varepsilon}^{\frac{1}{15}}\text{ for all }0\le i\le N_{\varepsilon}-1\label{eq:UpperBoundMuInductionContradiction}
\end{equation}
(note that (\ref{eq:UpperBoundMuInductionContradiction}) is trivially
true for $\bar{n}=0$, since $\bar{\mu}_{i}[0]=0$ by (\ref{eq:InitialConditionsImplicitRelations})).
Assuming, for the sake of contradiction, that $n^{*}<n$, we will
show that (\ref{eq:UpperBoundMuInductionContradiction}) also holds
for $n^{*}+1$, hence contradicting the maximality of $n^{*}$. Note
that, in view of the definition (\ref{eq:BootstrapDomain}), it is
necessary that 
\begin{equation}
n^{*}<n\lesssim\sigma_{\varepsilon}^{-2}\label{eq:UpperBoundNuDomain}
\end{equation}
(otherwise, (\ref{eq:AlternativeNonTrivialU}) cannot hold). We will
argue inductively on $i$, assuming that, for all $0\le\bar{i}\le i-1$,
\begin{equation}
|\bar{\mu}_{\bar{i}}[\bar{n}]|\le\rho_{\varepsilon}^{\frac{1}{15}}\text{ for all }0\le\bar{n}\le n^{*}+1\label{eq:InductionOnIMu}
\end{equation}
and then showing that (\ref{eq:InductionOnIMu}) also holds for $\bar{i}=i$.
Note that (\ref{eq:InductionOnIMu}) holds trivially for $\bar{i}=0$,
since, in this case, (\ref{eq:RecursiveFormulaMu}), (\ref{eq:ChangeOfEnergyFinalAgain})\textendash (\ref{eq:ChangeOfRSeparationFinalAgain})
and (\ref{eq:UpperBoundNuDomain}) imply that 
\begin{equation}
\mu_{0}[\bar{n}]=\mu_{0}[0]
\end{equation}
 and 
\begin{equation}
\frac{2\mathcal{E}_{\nwarrow}^{(-)}[\bar{n};0,0]}{\mathfrak{D}r_{\nwarrow}^{(-)}[\bar{n};1,0]}=(1+O(\rho_{\varepsilon}^{\frac{1}{14}}))\frac{2\mathcal{E}_{\nwarrow}^{(-)}[0;0,0]}{\mathfrak{D}r_{\nwarrow}^{(-)}[0;1,0]}+O(\rho_{\varepsilon}^{1+\frac{1}{6}}),
\end{equation}
which yield 
\begin{equation}
|\bar{\mu}_{0}[\bar{n}]|\le\rho_{\varepsilon}^{-1}\Big|O(\rho_{\varepsilon}^{\frac{1}{14}}))\frac{2\mathcal{E}_{\nwarrow}^{(-)}[0;0,0]}{\mathfrak{D}r_{\nwarrow}^{(-)}[0;1,0]}+O(\rho_{\varepsilon}^{1+\frac{1}{6}})\Big|\le\exp(\exp(\sigma_{\varepsilon}^{-9}))\rho_{\varepsilon}^{\frac{1}{14}}.\label{eq:TrivialBoundMu0}
\end{equation}

In view of the relation (\ref{eq:RecursiveFormulaMu}) for $\mu_{i}[n]$,
the relations (\ref{eq:ChangeOfEnergyFinalAgain})\textendash (\ref{eq:ChangeOfRSeparationFinalAgain})
for $\mathcal{E}_{\nwarrow}^{(-)}[n;i,0]$, $\mathfrak{D}r_{\nwarrow}^{(-)}[n;i+1,0]$,
the bounds (\ref{eq:TrivialBoundEnergiesConcentration})\textendash (\ref{eq:TrivialBoundsRSeparation}),
the bound 
\begin{equation}
\sum_{j=0}^{N_{\varepsilon}-1}\frac{2\mathcal{E}_{\nwarrow}^{(-)}[k;j,0]}{\mathfrak{D}r_{\nwarrow}^{(-)}[k;j+1,0]}\le\exp(\exp(2\delta_{\varepsilon}^{-15}))\text{ for all }0\le k\le n\label{eq:BoundSumEnergiesByDelta}
\end{equation}
(following from (\ref{eq:TrivialBoundEnergiesConcentration})\textendash (\ref{eq:TrivialBoundsRSeparation})
and the fact that $N_{\varepsilon}=\rho_{\varepsilon}^{-1}\exp(e^{\delta_{\varepsilon}^{-15}})$),
the bound 
\begin{equation}
\sum_{j=0}^{i-1}\bar{\mu}_{j}[k]\le\rho_{\varepsilon}^{-1+\frac{1}{15}}\exp(\exp(\delta_{\varepsilon}^{-15}))\text{ for all }0\le k\le n^{*}+1
\end{equation}
 (following from the inductive assumption (\ref{eq:InductionOnIMu}))
and the bound (\ref{eq:UpperBoundMuInductionContradiction}) for $0\le\bar{n}\le n^{*}$,
we infer that, for any $0\le\bar{n}\le n^{*}$:
\begin{align}
\big|\bar{\mu}_{i} & [\bar{n}+1]\big|=\rho_{\varepsilon}^{-1}\Bigg|\frac{\mathcal{E}_{\nwarrow}^{(-)}[\bar{n};i,0]}{\mathfrak{D}r_{\nwarrow}^{(-)}[\bar{n};i+1,0]}\exp\Big(2\sum_{j=0}^{i-1}\frac{2\mathcal{E}_{\nwarrow}^{(-)}[\bar{n}+1;j,0]}{\mathfrak{D}r_{\nwarrow}^{(-)}[\bar{n}+1;j+1,0]}+O(\rho_{\varepsilon}^{\frac{1}{12}})\Big)+O(\rho_{\varepsilon}^{1+\frac{1}{6}})-\mu_{i}[\bar{n}]\exp\Big(2\sum_{j=0}^{i-1}\mu_{j}[\bar{n}+1]\Big)\Bigg|=\label{BoundMuBarInitial}\\
 & =\exp\Big(2\sum_{j=0}^{i-1}\frac{2\mathcal{E}_{\nwarrow}^{(-)}[\bar{n}+1;j,0]}{\mathfrak{D}r_{\nwarrow}^{(-)}[\bar{n}+1;j+1,0]}\Big)\rho_{\varepsilon}^{-1}\Bigg|\frac{\mathcal{E}_{\nwarrow}^{(-)}[\bar{n};i,0]}{\mathfrak{D}r_{\nwarrow}^{(-)}[\bar{n};i+1,0]}-\mu_{i}[\bar{n}]\exp\Big(-2\rho_{\varepsilon}\sum_{j=0}^{i-1}\bar{\mu}_{j}[\bar{n}+1]\Big)+O(\rho_{\varepsilon}^{1+\frac{1}{13}})\Bigg|=\nonumber \\
 & =\exp\Big(O(\exp(\exp(2\delta_{\varepsilon}^{-15})))\Big)\rho_{\varepsilon}^{-1}\Bigg|\frac{\mathcal{E}_{\nwarrow}^{(-)}[\bar{n};i,0]}{\mathfrak{D}r_{\nwarrow}^{(-)}[\bar{n};i+1,0]}-\mu_{i}[\bar{n}]\Big(1-2\rho_{\varepsilon}\sum_{j=0}^{i-1}\bar{\mu}_{j}[\bar{n}+1]+O(\rho_{\varepsilon}^{\frac{2}{15}}\delta_{\varepsilon}^{-4})\Big)+O(\rho_{\varepsilon}^{1+\frac{1}{13}})\Bigg|\le\nonumber \\
 & \le\exp\Big(\exp\big(e^{\delta_{\varepsilon}^{-16}}\big)\Big)\cdot|\bar{\mu}_{i}[\bar{n}]|+\exp\Big(\exp\big(e^{\delta_{\varepsilon}^{-16}}\big)\Big)\rho_{\varepsilon}\sum_{j=0}^{i-1}|\bar{\mu}_{j}[\bar{n}+1]|+\rho_{\varepsilon}^{\frac{1}{14}}.\nonumber 
\end{align}
Applying (\ref{BoundMuBarInitial}) successively for $\bar{n}=0,\ldots n^{*}$,
using also the bound (\ref{eq:UpperBoundNuDomain}) for $n^{*}<n$,
we obtain 
\begin{align}
\max_{0\le\bar{n}\le n^{*}}\big|\bar{\mu}_{i}[\bar{n}+1]\big| & \le\exp\Big(n^{*}\exp\big(e^{\delta_{\varepsilon}^{-16}}\big)\Big)\cdot\sum_{\bar{n}=0}^{n^{*}}\sum_{j=0}^{i-1}|\bar{\mu}_{j}[\bar{n}+1]|+\exp\Big(n^{*}\exp\big(e^{\delta_{\varepsilon}^{-16}}\big)\Big)\cdot\rho_{\varepsilon}^{\frac{1}{14}}\le\label{eq:BoundMuBarFinal}\\
 & \le\bar{\delta}_{\varepsilon}^{-1}\rho_{\varepsilon}\sum_{j=0}^{i-1}\big(\max_{0\le\bar{n}\le n^{*}}\bar{\mu}_{j}[\bar{n}+1]\big)+\bar{\delta}_{\varepsilon}^{-1}\rho_{\varepsilon}^{\frac{1}{14}},\nonumber 
\end{align}
where 
\begin{equation}
\bar{\delta}_{\varepsilon}\doteq\exp\Big(-\exp\big(2e^{\delta_{\varepsilon}^{-16}}\big)\Big).\label{eq:BarDelta}
\end{equation}
Since (\ref{eq:BoundMuBarFinal}) is similarly valid for any $\bar{i}$
with $0\le\bar{i}\le i$ in place of $i$, we infer from (\ref{eq:BoundMuBarFinal})
after applying a discrete Gronwal-type argument in the $i$ variable
(using also the bound (\ref{eq:TrivialBoundMu0}) for $\bar{\mu}_{0}$
and the fact that $i\le N_{\varepsilon}$): 
\begin{equation}
\max_{0\le\bar{n}\le n^{*}}\big|\bar{\mu}_{i}[\bar{n}+1]\big|\le\exp(2\bar{\delta}_{\varepsilon}^{-1}\rho_{\varepsilon}N_{\varepsilon})\rho_{\varepsilon}^{\frac{1}{14}}\le\exp(\bar{\delta}_{\varepsilon}^{-2})\rho_{\varepsilon}^{\frac{1}{14}},
\end{equation}
from which (\ref{eq:InductionOnIMu}) for $\bar{i}=i$ follows, in
view of the relation (\ref{eq:HierarchyOfParameters}) between $\rho_{\varepsilon}$
and $\delta_{\varepsilon}$. 

As a result, we have established inductively that (\ref{eq:InductionOnIMu})
holds for any $0\le i\le N_{\varepsilon}-1$, and hence: 
\begin{equation}
\max_{0\le i\le N_{\varepsilon}-1}\max_{0\le\bar{n}\le n}|\bar{\mu}_{i}[\bar{n}]|\le\rho_{\varepsilon}^{\frac{1}{15}}.\label{eq:MuBarBoundFinal}
\end{equation}

From the relations (\ref{eq:ChangeOfEnergyFinalAgain})\textendash (\ref{eq:ChangeOfRSeparationFinalAgain})
for $\mathcal{E}_{\nwarrow}^{(-)}[n;i,0]$, $\mathfrak{D}r_{\nwarrow}^{(-)}[n;i+1,0]$
and (\ref{eq:FormulaRecursiveEnergy})\textendash (\ref{eq:FormulaRecursiveDr})
for $\mathcal{E}_{i}[n]$, $R_{i}[n]$, the bounds (\ref{eq:TrivialBoundEnergiesConcentration})\textendash (\ref{eq:TrivialBoundsRSeparation}),
the bound (\ref{eq:BoundSumEnergiesByDelta}) and the bound (\ref{eq:MuBarBoundFinal}),
we obtain for any $0\le\bar{n}\le n-1$ and any $0\le i\le N_{\varepsilon}$:
\begin{align}
|e_{i}[\bar{n}+1]| & =\Bigg|\frac{\sqrt{-\Lambda}}{\varepsilon^{(i)}}\mathcal{E}_{\nwarrow}^{(-)}[\bar{n};i,0]\cdot\exp\Big(\sum_{j=0}^{i-1}\frac{2\mathcal{E}_{\nwarrow}^{(-)}[\bar{n}+1;j,0]}{\mathfrak{D}r_{\nwarrow}^{(-)}[\bar{n}+1;j+1,0]}+O(\rho_{\varepsilon}^{\frac{1}{12}})\Big)+O(\rho_{\varepsilon}^{\frac{1}{4}})-\frac{\sqrt{-\Lambda}}{\varepsilon^{(i)}}\mathcal{E}_{i}[\bar{n}]\cdot\exp\Big(\sum_{j=0}^{i-1}\mu_{j}[\bar{n}]\Big)\Bigg|=\label{eq:AlmostGronwale}\\
 & =\exp\Big(\sum_{j=0}^{i-1}\frac{2\mathcal{E}_{\nwarrow}^{(-)}[n+1;j,0]}{\mathfrak{D}r_{\nwarrow}^{(-)}[n+1;j+1,0]}\Big)\Bigg|(1+O(\rho_{\varepsilon}^{\frac{1}{12}}))\frac{\sqrt{-\Lambda}}{\varepsilon^{(i)}}\mathcal{E}_{\nwarrow}^{(-)}[\bar{n};i,0]-\nonumber \\
 & \hphantom{=\exp\Big(\sum_{j=0}^{i-1}=\exp\Big(\sum_{j=0}^{i-1}}-\frac{\sqrt{-\Lambda}}{\varepsilon^{(i)}}\mathcal{E}_{i}[\bar{n}]\cdot\exp\Big(-\rho_{\varepsilon}\sum_{j=0}^{i-1}\bar{\mu}_{j}[\bar{n}+1]\Big)+O(\rho_{\varepsilon}^{\frac{1}{4}})\Bigg|\le\nonumber\\
 & \le\bar{\delta}_{\varepsilon}^{-\frac{1}{2}}\Bigg|e_{i}[\bar{n}]+O(\rho_{\varepsilon}^{\frac{1}{12}})\frac{\sqrt{-\Lambda}}{\varepsilon^{(i)}}\mathcal{E}_{\nwarrow}^{(-)}[\bar{n};i,0]+O(\exp(\exp(\delta_{\varepsilon}^{-15}))\rho_{\varepsilon}^{\frac{1}{15}})\frac{\sqrt{-\Lambda}}{\varepsilon^{(i)}}\mathcal{E}_{i}[\bar{n}]+O(\rho_{\varepsilon}^{\frac{1}{4}})\Bigg|\le\nonumber \\
 & \le\bar{\delta}_{\varepsilon}^{-1}\Big(|e_{i}[\bar{n}]|+\rho_{\varepsilon}^{\frac{1}{15}}\Big)\nonumber 
\end{align}
and, for any $1\le i\le N_{\varepsilon}$:
\begin{align}
|r_{i}[\bar{n}+1]-1| & =\Bigg|\frac{\mathfrak{D}r_{\nwarrow}^{(-)}[\bar{n};i,0]\cdot\exp\Big(-\sum_{j=0}^{i-2}\frac{2\mathcal{E}_{\nwarrow}^{(-)}[\bar{n}+1;j,0]}{\mathfrak{D}r_{\nwarrow}^{(-)}[\bar{n}+1;j+1,0]}+O(\rho_{\varepsilon}^{\frac{1}{12}})\Big)}{R_{i}[\bar{n}]\cdot\exp\Big(-\sum_{j=0}^{i-2}\bar{\mu}_{j}[\bar{n}+1]\Big)}-1\Bigg|=\label{eq:AlmostGronwalR}\\
 & =\Bigg|r_{i}[\bar{n}]\cdot\exp\Big(-\rho_{\varepsilon}\sum_{j=0}^{i-2}\bar{\mu}_{j}[\bar{n}+1]+O(\rho_{\varepsilon}^{\frac{1}{12}})\Big)-1\Bigg|=\nonumber \\
 & =\Bigg|r_{i}[\bar{n}]\cdot\exp\Big(O(\exp(\exp(\delta_{\varepsilon}^{-15}))\rho_{\varepsilon}^{\frac{1}{15}})\Big)-1\Bigg|\le\nonumber \\
 & \le\big(1+\exp(\exp(\delta_{\varepsilon}^{-16}))\rho_{\varepsilon}^{\frac{1}{15}}\big)\cdot|r_{i}[\bar{n}+1]-1|+\exp(\exp(\delta_{\varepsilon}^{-15}))\rho_{\varepsilon}^{\frac{1}{15}}.\nonumber 
\end{align}
From (\ref{eq:AlmostGronwale}) and (\ref{eq:AlmostGronwalR}), using
also the initial conditions $e_{i}[0]=0$ and $r_{i}[0]=1$, we obtain
(using also (\ref{eq:UpperBoundNuDomain}))
\begin{equation}
\max_{0\le i\le N_{\varepsilon}}\max_{0\le\bar{n}\le n}|e_{i}[\bar{n}+1]|\le\exp(n\bar{\delta}_{\varepsilon}^{-1})\bar{\delta}_{\varepsilon}^{-1}\rho_{\varepsilon}^{\frac{1}{15}}\le\rho_{\varepsilon}^{\frac{1}{16}}\label{eq:finale}
\end{equation}
and 
\begin{equation}
\max_{1\le i\le N_{\varepsilon}}\max_{0\le\bar{n}\le n}|r_{i}[\bar{n}]-1|\le\bar{\delta}_{\varepsilon}^{-1}n\rho^{\frac{1}{15}}\le\rho_{\varepsilon}^{\frac{1}{16}}.\label{eq:Finalr}
\end{equation}
From (\ref{eq:MuBarBoundFinal}), (\ref{eq:finale}) and (\ref{eq:Finalr}),
we therefore infer (\ref{eq:BoundToShowSequence}), thus obtaining
(\ref{eq:EqualEnergyFlux}) and (\ref{eq:EqualRSeparation}) (assuming
that (\ref{eq:ChangeOfEnergyFinalAgain}) and (\ref{eq:ChangeOfRSeparationFinalAgain})
have been proven).

\medskip{}

\noindent \emph{Proof of (\ref{eq:ChangeOfEnergyFinalAgain})\textendash (\ref{eq:ChangeOfRSeparationFinalAgain}).}
We will now proceed with the proof of (\ref{eq:ChangeOfEnergyFinalAgain})\textendash (\ref{eq:ChangeOfRSeparationFinalAgain}).
Let $n\in\mathbb{N}$ be an integer satisfying (\ref{eq:AlternativeNonTrivialU})
and let $0\le i\le N_{\varepsilon}$. 

\begin{figure}[h] 
\centering 
\scriptsize
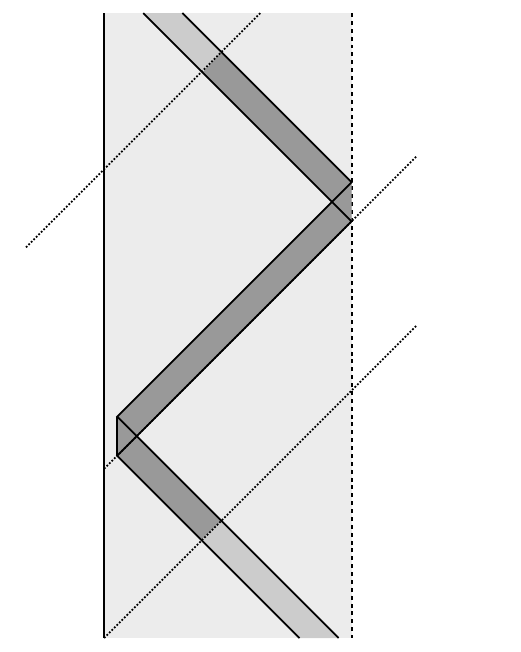 
\caption{For the proof of (\ref{eq:ChangeOfEnergyFinalAgain}), we move along $\mathcal{V}_i$ in the direction of the red arrows in three steps: First,  from $u=v_{\epsilon,0}^{(n)}-h_{\epsilon,i}$ up to $u=v_{\epsilon,i}^{(n)}-h_{\epsilon,i}$ (ingoing regime), then from $v=v_{\epsilon,i}^{(n)}+h_{\epsilon,i}$ up to $v=v_{\epsilon,i}^{(n+1)}+h_{\epsilon,i}$ (outgoing regime) and finally again from $u=v_{\epsilon,i}^{(n)}+h_{\epsilon,i}$ up to $u=v_{\epsilon,0}^{(n+1)}-h_{\epsilon,i}$ (ingoing regime). Along the way, we use the formulas (\ref{eq:IncreaseIngoingEnergyCloseToAxis})--(\ref{eq:OutgoingEnergyCloseToInfinity}) to calculate the change in the energy content of $\mathcal{V}_i$ after each intersection with one of the beams $\mathcal{V}_j$, $j\neq i$. \label{fig:Integration_Direction}}
\end{figure}

\begin{enumerate}

\item First, moving along the beam $\mathcal{V}_{i\nwarrow}^{(n)}$
from $u=v_{\varepsilon,0}^{(n)}-h_{\varepsilon,i}$ up to $u=v_{\varepsilon,i}^{(n)}-h_{\varepsilon,i}$
(see Figure \ref{fig:Integration_Direction}) and calculating the
differene between $\mathcal{E}_{\nwarrow}^{(-)}[n;i,j]$ and $\mathcal{E}_{\nwarrow}^{(+)}[n;i,j]$
using (\ref{eq:IncreaseIngoingEnergyCloseToAxis}) for all $0\le j<i$
(measuring the change in the energy content of $\mathcal{V}_{i\nwarrow}^{(n)}$
as it crosses each of the outgoing beams $\mathcal{V}_{j\nearrow}^{(n)}$,
$j<i$), making also use of the equality (\ref{eq:EqualitySuccessiveEnergiesIngoing})
(expressing the conservation of the energy content of $\mathcal{V}_{i\nwarrow}^{(n)}$
in the region between two successive intersections with the outgoing
beams) and the bound (\ref{eq:TrivialBoundEnergiesConcentration}),
we infer 
\begin{align}
\mathcal{E}_{\gamma_{\mathcal{Z}}}[n;i] & =\mathcal{E}_{\nwarrow}^{(-)}[n;i,0]\cdot\exp\Big(\sum_{j=0}^{i-1}\frac{2\mathcal{E}_{\nearrow}^{(-)}[n;i,j]}{r_{n;i.j}}+\sum_{j=0}^{i-1}O(\rho_{\varepsilon}^{\frac{3}{2}})\Big)+\label{eq:ChangeOfEnergyIngoingFirst}\\
 & \hphantom{=\mathcal{E}_{\nwarrow}^{(-)}[n;i,0]\cdot}+\sum_{j=0}^{i-1}\Big\{\exp\Big(\sum_{k=0}^{j-1}\frac{2\mathcal{E}_{\nearrow}^{(-)}[n;i,k]}{r_{n;i.k}}+\sum_{k=0}^{j-1}O(\rho_{\varepsilon}^{\frac{3}{2}})\Big)\cdot O\Big(\rho_{\varepsilon}^{\frac{3}{2}}\frac{\varepsilon^{(i)}}{\sqrt{-\Lambda}}\Big)\Big\}=\nonumber\\
 & =\mathcal{E}_{\nwarrow}^{(-)}[n;i,0]\cdot\exp\Big(\sum_{j=0}^{i-1}\frac{2\mathcal{E}_{\nearrow}^{(-)}[n;i,j]}{r_{n;i.j}}+O(N_{\varepsilon}\rho_{\varepsilon}^{\frac{3}{2}})\Big)+O\Big(\exp\big(N_{\varepsilon}\exp\big(\exp(2\sigma_{\varepsilon}^{-5})\big)\rho_{\varepsilon}\big)\cdot N_{\varepsilon}\rho_{\varepsilon}^{\frac{3}{2}}\frac{\varepsilon^{(i)}}{\sqrt{-\Lambda}}\Big)=\nonumber \\
 & =\mathcal{E}_{\nwarrow}^{(-)}[n;i,0]\cdot\exp\Big(\sum_{j=0}^{i-1}\frac{2\mathcal{E}_{\nearrow}^{(-)}[n;i,j]}{r_{n;i.j}}+O(\rho_{\varepsilon}^{\frac{1}{4}})\Big)+O\Big(\rho_{\varepsilon}^{\frac{1}{4}}\frac{\varepsilon^{(i)}}{\sqrt{-\Lambda}}\Big),\nonumber 
\end{align}
where, in passing from the second to the third line in (\ref{eq:ChangeOfEnergyIngoingFirst}),
we have made use of the definition (\ref{eq:NumberOfBeams}) of $N_{\varepsilon}$
and the relations (\ref{eq:HierarchyOfParameters}) between $\rho_{\varepsilon}$,
$\delta_{\varepsilon}$ and $\sigma_{\varepsilon}$. 

\item Moving along $\mathcal{V}_{i\nearrow}^{(n)}$ from $v=v_{\varepsilon,i}^{(n)}+h_{\varepsilon,i}$
up to $v=v_{\varepsilon,i}^{(n+1)}-h_{\varepsilon,i}$ (see Figure
\ref{fig:Integration_Direction}), calculating the differene between
$\mathcal{E}_{\nearrow}^{(-)}[n;j,i]$ and $\mathcal{E}_{\nearrow}^{(+)}[n;j,i]$
using (\ref{eq:DecreaseOutgoingEnergyCloseToAxis}) for $i<j\le N_{\varepsilon}$
and (\ref{eq:OutgoingEnergyCloseToInfinity}) for $0\le j<i$ (measuring
the change in the energy content of $\mathcal{V}_{i\nearrow}^{(n)}$
as it crosses each of the ingoing beams $\mathcal{V}_{j\nwarrow}^{(n)}$,
$1\le j\le N_{\varepsilon}$), making also use of the equality (\ref{eq:EqualitySuccessiveEnergiesOutgoing}),
we similarly infer that 
\begin{align}
\mathcal{E}_{\mathcal{I}}[n;i] & =\mathcal{E}_{\gamma_{\mathcal{Z}}}[n;i]\cdot\big(1+\sum_{j=0}^{N_{\varepsilon}}O(\varepsilon)\big)+\sum_{j=0}^{i-1}O\Big(\rho_{\varepsilon}^{\frac{3}{2}}\frac{\varepsilon^{(j)}}{\sqrt{-\Lambda}}\Big)+\sum_{j=i+1}^{N_{\varepsilon}}O\Big(\varepsilon\frac{\varepsilon^{(j)}}{\sqrt{-\Lambda}}\Big)=\label{eq:ChangeOfEnergyOutgoing}\\
 & =\mathcal{E}_{\gamma_{\mathcal{Z}}}[n;i]\cdot\big(1+O(N_{\varepsilon}\varepsilon)\big)+O\Big(N_{\varepsilon}(\rho_{\varepsilon}^{\frac{3}{2}}+\varepsilon)\frac{\varepsilon^{(j)}}{\sqrt{-\Lambda}}\Big)=\nonumber \\
 & =\mathcal{E}_{\gamma_{\mathcal{Z}}}[n;i]\cdot\big(1+O(\varepsilon^{\frac{1}{2}})\big)+O\Big(\rho_{\varepsilon}^{\frac{1}{4}}\frac{\varepsilon^{(j)}}{\sqrt{-\Lambda}}\Big).\nonumber 
\end{align}

\item Finally, moving along $\mathcal{V}_{i\nwarrow}^{(n+1)}$ from
$u=v_{\varepsilon,i}^{(n)}+h_{\varepsilon,i}$ up to $u=v_{\varepsilon,0}^{(n+1)}-h_{\varepsilon,i}$
(see Figure \ref{fig:Integration_Direction}), calculating the difference
between $\mathcal{E}_{\nwarrow}^{(-)}[n;i,j]$ and $\mathcal{E}_{\nwarrow}^{(+)}[n;i,j]$
using (\ref{eq:IngoingEnergyCloseToInfinity}) for all $j>i$ (making
use of the equality (\ref{eq:EqualitySuccessiveEnergiesIngoing})),
we obtain: 
\begin{align}
\mathcal{E}_{\nwarrow}^{(-)}[n+1;i,0] & =\mathcal{E}_{\mathcal{I}}[n;i]\cdot\big(1+\sum_{j=i+1}^{N_{\varepsilon}}O(\varepsilon)\big)+\sum_{j=i+1}^{N_{\varepsilon}}O\Big(\varepsilon\frac{\varepsilon^{(i)}}{\sqrt{-\Lambda}}\Big)=\label{eq:ChangeOfEnergyIngoingSecond}\\
 & =\mathcal{E}_{\mathcal{I}}[n;i]\cdot\big(1+O(\varepsilon^{\frac{1}{2}})\big)+O\Big(\varepsilon^{\frac{1}{2}}\frac{\varepsilon^{(i)}}{\sqrt{-\Lambda}}\Big).\nonumber 
\end{align}

\end{enumerate}

Combining (\ref{eq:ChangeOfEnergyIngoingFirst}), (\ref{eq:ChangeOfEnergyOutgoing})
and (\ref{eq:ChangeOfEnergyIngoingSecond}) and using the relations
(\ref{eq:HierarchyOfParameters}) between $\varepsilon$ and $\rho_{\varepsilon}$,
we therefore infer that: 
\begin{equation}
\mathcal{E}_{\nwarrow}^{(-)}[n+1;i,0]=\mathcal{E}_{\nwarrow}^{(-)}[n;i,0]\cdot\exp\Big(\sum_{j=0}^{i-1}\frac{2\mathcal{E}_{\nearrow}^{(-)}[n;i,j]}{r_{n;i.j}}+O(\rho_{\varepsilon}^{\frac{1}{4}})\Big)+O\Big(\rho_{\varepsilon}^{\frac{1}{4}}\frac{\varepsilon^{(i)}}{\sqrt{-\Lambda}}\Big).\label{eq:ChangeOfEnergyAlmostFinal}
\end{equation}
Using the estimates (\ref{eq:BoundsRIntersectionRegioni>j}), (\ref{eq:EstimateGeometryLikeC0}),
(\ref{eq:UpperBoundMuBootstrapDomain}) and the fact that $\Lambda R^{2}=O(\varepsilon)$
on $\{u_{n;i,j}^{(+)}\}\times[v_{n;j+1,j}^{(-)},v_{n;i,j}^{(-)}]$,
we can readily estimate:
\begin{align}
\frac{r_{n;i,j}}{r_{n;j+1,j}}-1 & =\frac{\int_{v_{n;j+1,j}^{(-)}}^{v_{n;i,j}^{(-)}}\partial_{v}r(u_{n;i,j}^{(+)},v)\,dv}{r_{n;j+1,j}}\le\label{eq:ComparableRnij}\\
 & \le\frac{1}{r_{n;j+1,j}}\sup_{\{u_{n;i,j}^{(+)}\}\times[v_{n;j+1,j}^{(-)},v_{n;i,j}^{(-)}]}(\partial_{v}r)\sum_{k=j+1}^{i-1}\rho_{\varepsilon}^{-1}\frac{\varepsilon^{(k)}}{\sqrt{-\Lambda}}\le\nonumber \\
 & \le\exp(\exp(\sigma_{\varepsilon}^{-6}))\rho_{\varepsilon}^{-1}\frac{\sum_{k=j+1}^{i-1}\varepsilon^{(k)}}{\varepsilon^{(j)}}\le\nonumber \\
 & \le\varepsilon^{\frac{1}{2}}.\nonumber 
\end{align}
Using the relation (\ref{eq:ExpressionRnii-1inU}) and the bounds
(\ref{eq:TrivialBoundEnergiesConcentration}) and (\ref{eq:ComparableRnij}),
from (\ref{eq:ChangeOfEnergyAlmostFinal}) we then infer that
\begin{equation}
\mathcal{E}_{\nwarrow}^{(-)}[n+1;i,0]=\mathcal{E}_{\nwarrow}^{(-)}[n;i,0]\cdot\exp\Big(\sum_{j=0}^{i-1}\frac{2\mathcal{E}_{\nearrow}^{(-)}[n;i,j]}{\mathfrak{D}r_{\nearrow}^{(-)}[n;j+1,j+1]}+O(\rho_{\varepsilon}^{\frac{1}{12}})\Big)+O\Big(\rho_{\varepsilon}^{\frac{1}{4}}\frac{\varepsilon^{(i)}}{\sqrt{-\Lambda}}\Big).\label{eq:ChangeOfEnergyFinal}
\end{equation}

\textgreek{A}ssuming that $1\le i\le N_{\varepsilon}$, we will now
repeat the same procedure for the geometric separation $\mathfrak{D}r$
in place of the energy content $\mathcal{E}$ of the beams:

\begin{enumerate}

\item First, moving in the $u$ direction along the strip 
\[
\mathcal{S}_{i\nwarrow}^{(n)}\doteq\{v_{\varepsilon,i-1}^{(n)}+(\rho_{\varepsilon}^{-1}+1)h_{\varepsilon,i-1}\le v\le v_{\varepsilon,i}^{(n)}-(\rho_{\varepsilon}^{-1}+1)h_{\varepsilon,i-1}\},
\]
from $u=v_{\varepsilon,0}^{(n)}$ up to the axis $\gamma_{\mathcal{Z}_{\varepsilon}}$,
calculating the differene between $\mathfrak{D}r_{\nwarrow}^{(-)}[n;i,j]$
and $\mathfrak{D}r_{\nwarrow}^{(+)}[n;i,j]$ using (\ref{eq:DecreaseIngoingRSeparationCloseToAxisi>j+1})
for all $0\le j\le i-1$ and (\ref{eq:DecreaseIngoingRSeparationCloseToAxisi=00003Dj+1})
for $j=i-1$, making also use of the equality (\ref{eq:EqualitySuccessiveRSeparationsIngoing})
between $\mathfrak{D}r_{\nwarrow}^{(-)}[n;i,j]$ and $\mathfrak{D}r_{\nwarrow}^{(+)}[n;i,j-1]$,
we infer 
\begin{align}
\mathfrak{D}r_{\nwarrow}^{(+)}[n;i,i-1] & =\mathfrak{D}r_{\nwarrow}^{(-)}[n;i,0]\cdot\exp\Big(-\sum_{j=0}^{i-2}\frac{2\mathcal{E}_{\nearrow}^{(-)}[n;i,j]}{r_{n;i.j}}+\sum_{j=0}^{i-2}O(\rho_{\varepsilon}^{\frac{3}{2}})+O(\rho_{\varepsilon}^{\frac{3}{4}})\Big)=\label{eq:ChangeOfRSeparationIngoingFirst}\\
 & =\mathfrak{D}r_{\nwarrow}^{(-)}[n;i,0]\cdot\exp\Big(-\sum_{j=0}^{i-2}\frac{2\mathcal{E}_{\nearrow}^{(-)}[n;i,j]}{r_{n;i.j}}+O(N_{\varepsilon}\rho_{\varepsilon}^{\frac{3}{2}})+O(\rho_{\varepsilon}^{\frac{3}{4}})\Big)=\nonumber \\
 & =\mathfrak{D}r_{\nwarrow}^{(-)}[n;i,0]\cdot\exp\Big(-\sum_{j=0}^{i-2}\frac{2\mathcal{E}_{\nearrow}^{(-)}[n;i,j]}{r_{n;i.j}}+O(\rho_{\varepsilon}^{\frac{1}{4}})\Big).\nonumber 
\end{align}

\item Moving in the $v$ direction along along the strip 
\[
\mathcal{S}_{i\nearrow}^{(n)}\doteq\{v_{\varepsilon,i-1}^{(n)}+(\rho_{\varepsilon}^{-1}+1)h_{\varepsilon,i-1}\le u\le v_{\varepsilon,i}^{(n)}-(\rho_{\varepsilon}^{-1}+1)h_{\varepsilon,i-1}\},
\]
 from the axis $\gamma_{\mathcal{Z}_{\varepsilon}}$ up to conformal
infinity $\mathcal{I}_{\varepsilon}$, calculating the differene between
$\mathfrak{D}r_{\nearrow}^{(-)}[n;j,i]$ and $\mathfrak{D}r_{\nearrow}^{(+)}[n;j,i]$
using (\ref{eq:SpecialRSeparationCloseToAxis}) for $j=i$, (\ref{eq:IncreaseOutgoingRSeparationCloseToAxis})
for $i<j\le N_{\varepsilon}$ and (\ref{eq:OutgoingRSeparationCloseToInfinity})
for $0\le j<i$, making also use of the equality (\ref{eq:EqualitySuccessiveRSeparationsOutgoing})
as well as the approximate equality (\ref{eq:EqualityRSeparationsNearAxis})
between $\mathfrak{D}r_{\nearrow}^{(-)}[n;i,i]$ and $\mathfrak{D}r_{\nwarrow}^{(+)}[n;i,i-1]$,
we similarly infer that 
\begin{align}
\mathfrak{D}r_{\nearrow}^{(+)}[n;N_{\varepsilon},i] & =\mathfrak{D}r_{\nearrow}^{(-)}[n;i,i]\cdot\big(1+\sum_{j=0,j\neq i}^{N_{\varepsilon}}O(\varepsilon)+O(\rho_{\varepsilon}^{\frac{3}{4}})\big)=\label{eq:ChangeOfRSeparationOutgoing}\\
 & =\mathfrak{D}r_{\nwarrow}^{(+)}[n;i,i-1]\cdot\big(1+O((N_{\varepsilon}-1)\varepsilon)+O(\rho_{\varepsilon}^{\frac{3}{4}})+O(\rho_{\varepsilon}^{\frac{1}{10}})\big)=\nonumber \\
 & =\mathfrak{D}r_{\nwarrow}^{(+)}[n;i,i-1]\cdot\big(1+O(\rho_{\varepsilon}^{\frac{1}{10}})\big).\nonumber 
\end{align}

\item Finally, moving in the $u$ direction along $\mathcal{S}_{i\nwarrow}^{(n+1)}$
from $\mathcal{I}_{\varepsilon}$ up to $u=v_{\varepsilon,0}^{(n+1)}-h_{\varepsilon,i}$,
calculating the difference between $\mathfrak{D}r_{\nwarrow}^{(-)}[n;j,i]$
and $\mathfrak{D}r_{\nwarrow}^{(+)}[n;j,i]$ using (\ref{eq:SpecialRSeparationCloseToInfinity})
for $j=i$ and (\ref{eq:IngoingRSeparationCloseToInfinity}) for all
$j>i$, making use of the equality (\ref{eq:EqualitySuccessiveRSeparationsIngoing})
and the approximate equality (\ref{eq:EqualityRSeparationsNearInfinity})
between $\mathfrak{D}r_{\nwarrow}^{(-)}[n;i,i]$ and $\mathfrak{D}r_{\nearrow}^{(+)}[n;i-1,i]$,
we obtain: 
\begin{align}
\mathfrak{D}r_{\nwarrow}^{(-)}[n+1;i,0] & =\mathfrak{D}r_{\nwarrow}^{(-)}[n;i,i]\cdot\big(1+\sum_{j=i}^{N_{\varepsilon}}O(\varepsilon)\big)=\label{eq:ChangeOfRSeparationIngoingSecond}\\
 & =\mathfrak{D}r_{\nearrow}^{(+)}[n;i-1,i]\cdot\big(1+O(\rho_{\varepsilon}^{\frac{1}{10}})\big).\nonumber 
\end{align}

\end{enumerate}

Combining (\ref{eq:ChangeOfRSeparationIngoingFirst}), (\ref{eq:ChangeOfRSeparationOutgoing})
and (\ref{eq:ChangeOfRSeparationIngoingSecond}), we obtain that 
\begin{equation}
\mathfrak{D}r_{\nwarrow}^{(-)}[n+1;i,0]=\mathfrak{D}r_{\nwarrow}^{(-)}[n;i,0]\cdot\exp\Big(-\sum_{j=0}^{i-2}\frac{2\mathcal{E}_{\nearrow}^{(-)}[n;i,j]}{r_{n;i.j}}+O(\rho_{\varepsilon}^{\frac{1}{10}})\Big).\label{eq:AlmostFinalRSeparation}
\end{equation}
Using the relation (\ref{eq:ExpressionRnii-1inU}) and the bounds
(\ref{eq:TrivialBoundEnergiesConcentration}) and (\ref{eq:ComparableRnij}),
from (\ref{eq:AlmostFinalRSeparation}) we then infer that
\begin{equation}
\mathfrak{D}r_{\nwarrow}^{(-)}[n+1;i,0]=\mathfrak{D}r_{\nwarrow}^{(-)}[n;i,0]\cdot\exp\Big(-\sum_{j=0}^{i-2}\frac{2\mathcal{E}_{\nearrow}^{(-)}[n;i,j]}{\mathfrak{D}r_{\nearrow}^{(-)}[n;j+1,j+1]}+O(\rho_{\varepsilon}^{\frac{1}{12}})\Big).\label{eq:ChangeOfRSeparationFinal}
\end{equation}

For any $n\in\mathbb{N}$ such that (\ref{eq:AlternativeNonTrivialU})
is satisfied, arguing in exactly the same way as for the proof of
(\ref{eq:ChangeOfEnergyOutgoing}), but moving along $\mathcal{V}_{j\nearrow}^{(n)}$
starting from $v=v_{\varepsilon,i}^{(n)}-h_{\varepsilon,i}$ (instead
of $v=v_{\varepsilon,j}^{(n)}+h_{\varepsilon,j}$) up to $v=v_{\varepsilon,j}^{(n+1)}-h_{\varepsilon,j}$,
we infer that, for any $0\le j\le i-1$: 
\begin{equation}
\mathcal{E}_{\mathcal{I}}[n;j]=\mathcal{E}_{\nearrow}^{(-)}[n;i,j]\cdot\big(1+O(\varepsilon^{\frac{1}{2}})\big)+O\Big(\rho_{\varepsilon}^{\frac{1}{4}}\frac{\varepsilon^{(j)}}{\sqrt{-\Lambda}}\Big).\label{eq:OneMoreAuxiliary}
\end{equation}
Using (\ref{eq:ChangeOfEnergyIngoingSecond}) (for $j$ in place of
$i$) and (\ref{eq:OneMoreAuxiliary}), we therefore infer that 
\begin{equation}
\mathcal{E}_{\nearrow}^{(-)}[n;i,j]=\mathcal{E}_{\nwarrow}^{(-)}[n+1;j,0]\cdot\big(1+O(\varepsilon^{\frac{1}{2}})\big)+O\Big(\rho_{\varepsilon}^{\frac{1}{4}}\frac{\varepsilon^{(j)}}{\sqrt{-\Lambda}}\Big).\label{eq:OneMore...}
\end{equation}
Similarly, using the first line of (\ref{eq:ChangeOfRSeparationOutgoing})
and (\ref{eq:ChangeOfRSeparationIngoingSecond}), we obtain: 
\begin{equation}
\mathfrak{D}r_{\nwarrow}^{(+)}[n;j+1,j+1]=\mathfrak{D}r_{\nwarrow}^{(-)}[n+1;j+1,0]\cdot\big(1+O(\rho_{\varepsilon}^{\frac{1}{10}})\big).\label{eq:TwoMore...}
\end{equation}
Substituting $\mathcal{E}_{\nearrow}^{(-)}[n;i,j]$ and $\mathfrak{D}r_{\nearrow}^{(-)}[n;j+1,j+1]$
in the right hand side of (\ref{eq:ChangeOfEnergyFinal}) with (\ref{eq:OneMore...})
and (\ref{eq:TwoMore...}), respectively, we therefore obtain (\ref{eq:ChangeOfEnergyFinalAgain}).
Similarly, from (\ref{eq:ChangeOfRSeparationFinal}) we infer (\ref{eq:ChangeOfRSeparationFinalAgain}).

\medskip{}

\noindent \emph{Remark. }For any $0\le j_{1}\le i\le N_{\varepsilon}$
and $0\le j_{0}\le j_{1}$, it readily follows from the proof of (\ref{eq:ChangeOfEnergyFinalAgain})
and (\ref{eq:ChangeOfRSeparationFinalAgain}) (after restricting ourselves
to the interactions of the beams taking place only between $u=v_{j_{0},\varepsilon}^{(n)}-h_{\varepsilon,j_{0}}$
and $v=v_{i,\varepsilon}^{(n)}-h_{\varepsilon,i}$) that:
\begin{equation}
\mathcal{E}_{\nwarrow}^{(-)}[n;i,j_{1}]=\mathcal{E}_{\nwarrow}^{(-)}[n;i,j_{0}]\cdot\exp\Big(\sum_{j=j_{0}}^{j_{1}-1}\frac{2\mathcal{E}_{\nearrow}^{(-)}[n;i,j]}{\mathfrak{D}r_{\nearrow}^{(-)}[n;i,j+1]}+O(\rho_{\varepsilon}^{\frac{1}{12}})\Big)+O\Big(\rho_{\varepsilon}^{\frac{1}{4}}\frac{\varepsilon^{(i)}}{\sqrt{-\Lambda}}\Big),\label{eq:ChangeOfEnergyFinalAgainGeneralIngoing}
\end{equation}
\begin{equation}
\mathcal{E}_{\nearrow}^{(-)}[n;i,j_{1}]=\mathcal{E}_{\nwarrow}^{(-)}[n;j_{1},j_{0}]\cdot\exp\Big(\sum_{j=j_{0}}^{j_{1}-1}\frac{2\mathcal{E}_{\nearrow}^{(-)}[n;i,j]}{\mathfrak{D}r_{\nearrow}^{(-)}[n;i,j+1]}+O(\rho_{\varepsilon}^{\frac{1}{12}})\Big)+O\Big(\rho_{\varepsilon}^{\frac{1}{4}}\frac{\varepsilon^{(j_{1})}}{\sqrt{-\Lambda}}\Big)\label{eq:ChangeOfEnergyFinalAgainGeneralOutgoing}
\end{equation}
(with the convention that, when $i=j_{1}$, $\mathcal{E}_{\nwarrow}^{(-)}[n;i,i]=\mathcal{E}_{\nwarrow}^{(-)}[n;i,i]=\mathcal{E}_{\gamma_{\mathcal{Z}}}[n;i]$)
and 
\begin{equation}
\mathfrak{D}r_{\nwarrow}^{(-)}[n;i,j_{1}]=\mathfrak{D}r_{\nwarrow}^{(-)}[n;i,j_{0}]\cdot\exp\Big(-\sum_{j=j_{0}}^{j_{1}-2}\frac{2\mathcal{E}_{\nearrow}^{(-)}[n;i,j]}{\mathfrak{D}r_{\nearrow}^{(-)}[n;i,j+1]}+O(\rho_{\varepsilon}^{\frac{1}{12}})\Big),\text{ provided }i>0,\label{eq:ChangeOfRSeparationFinalAgainGeneralIngoing}
\end{equation}
\begin{equation}
\mathfrak{D}r_{\nearrow}^{(-)}[n;i,j_{1}]=\mathfrak{D}r_{\nwarrow}^{(-)}[n;j_{1},j_{0}]\cdot\exp\Big(-\sum_{j=j_{0}}^{j_{1}-2}\frac{2\mathcal{E}_{\nearrow}^{(-)}[n;i,j]}{\mathfrak{D}r_{\nearrow}^{(-)}[n;i,j+1]}+O(\rho_{\varepsilon}^{\frac{1}{12}})\Big),\text{ provided }j_{1}>0\label{eq:ChangeOfRSeparationFinalAgainGeneralOutgoing}
\end{equation}
(with the convention that, when $i=j_{1}$, $\mathfrak{D}r_{\nwarrow}^{(-)}[n;i,i]=\mathfrak{D}r_{\nearrow}^{(-)}[n;i,j_{1}]$). 

Similarly, for $0\le i\le j_{1}\le N_{\varepsilon}$ and $0\le j_{0}\le i$,
\begin{equation}
\mathcal{E}_{\nwarrow}^{(-)}[n;i,j_{1}]=\mathcal{E}_{\nwarrow}^{(-)}[n;i,j_{0}]\cdot\exp\Big(\sum_{j=j_{0}}^{i-1}\frac{2\mathcal{E}_{\nearrow}^{(-)}[n;i,j]}{\mathfrak{D}r_{\nearrow}^{(-)}[n;i,j+1]}+O(\rho_{\varepsilon}^{\frac{1}{12}})\Big)+O\Big(\rho_{\varepsilon}^{\frac{1}{4}}\frac{\varepsilon^{(i)}}{\sqrt{-\Lambda}}\Big),\label{eq:ChangeOfEnergyFinalAgainGeneralIngoing-1}
\end{equation}
\begin{equation}
\mathcal{E}_{\nearrow}^{(-)}[n;i,j_{1}]=\mathcal{E}_{\nwarrow}^{(-)}[n;j_{1},j_{0}]\cdot\exp\Big(\sum_{j=j_{0}}^{j_{1}-1}\frac{2\mathcal{E}_{\nearrow}^{(-)}[n;i,j]}{\mathfrak{D}r_{\nearrow}^{(-)}[n;i,j+1]}+O(\rho_{\varepsilon}^{\frac{1}{12}})\Big)+O\Big(\rho_{\varepsilon}^{\frac{1}{4}}\frac{\varepsilon^{(j_{1})}}{\sqrt{-\Lambda}}\Big)\label{eq:ChangeOfEnergyFinalAgainGeneralOutgoing-1}
\end{equation}
(with the convention that, when $i=j_{1}$, $\mathcal{E}_{\nwarrow}^{(-)}[n;i,i]=\mathcal{E}_{\nwarrow}^{(-)}[n;i,i]=\mathcal{E}_{\mathcal{I}}[n;i]$)
and
\begin{equation}
\mathfrak{D}r_{\nwarrow}^{(-)}[n;i,j_{1}]=\mathfrak{D}r_{\nwarrow}^{(-)}[n;i,j_{0}]\cdot\exp\Big(-\sum_{j=j_{0}}^{i-2}\frac{2\mathcal{E}_{\nearrow}^{(-)}[n;i,j]}{\mathfrak{D}r_{\nearrow}^{(-)}[n;i,j+1]}+O(\rho_{\varepsilon}^{\frac{1}{12}})\Big),\text{ provided }i>0,\label{eq:ChangeOfRSeparationFinalAgainGeneralIngoing-1}
\end{equation}
\begin{equation}
\mathfrak{D}r_{\nearrow}^{(-)}[n;i,j_{1}]=\mathfrak{D}r_{\nwarrow}^{(-)}[n;j_{1},j_{0}]\cdot\exp\Big(-\sum_{j=j_{0}}^{j_{1}-2}\frac{2\mathcal{E}_{\nearrow}^{(-)}[n;i,j]}{\mathfrak{D}r_{\nearrow}^{(-)}[n;i,j+1]}+O(\rho_{\varepsilon}^{\frac{1}{12}})\Big),\text{ provided }j_{1}>0\label{eq:ChangeOfRSeparationFinalAgainGeneralOutgoing-1}
\end{equation}
(with the convention that, when $i=j_{1}$, $\mathfrak{D}r_{\nearrow}^{(-)}[n;i,j_{1}]=\mathfrak{D}r_{\nwarrow}^{(-)}[n;i,i]$). 

The same relations also hold for $\widetilde{\mathcal{E}}^{(\pm)}$,
$\widetilde{\mathfrak{D}}r^{(\pm)}$ in place of $\mathcal{E}^{(\pm)}$,
$\mathfrak{D}r^{(\pm)}$.

\medskip{}

\noindent \emph{Proof of (\ref{eq:EqualEnergyFluxOutgoingTau})\textendash }(\ref{eq:EqualEnergyFluxFinalIngoingTau})\emph{.}
In order to establish (\ref{eq:EqualEnergyFluxOutgoingTau}) and (\ref{eq:EqualRSeparationOutgoingTau}),
we will use the fact that, for any $n\in\mathbb{N}$ and $0\le j\le N_{\varepsilon}-1$
such that (\ref{eq:NoonTrivialN}) and (\ref{eq:IntersectionDomainInTau})
hold, we have for all $0\le\bar{j}\le j$
\begin{equation}
\widetilde{\mathcal{E}}_{\nearrow}^{(-)}[n;N_{\varepsilon},\bar{j}]=\mathcal{E}_{\nwarrow}^{(-)}[n;\bar{j},0]\cdot\exp\Big(\sum_{k=0}^{\bar{j}-1}\frac{2\widetilde{\mathcal{E}}_{\nearrow}^{(-)}[n;N_{\varepsilon},k]}{\widetilde{\mathfrak{D}}r_{\nearrow}^{(-)}[n;N_{\varepsilon},k+1]}+O(\rho_{\varepsilon}^{\frac{1}{12}})\Big)+O\Big(\rho_{\varepsilon}^{\frac{1}{4}}\frac{\varepsilon^{(\bar{j})}}{\sqrt{-\Lambda}}\Big)\label{eq:ChangeOfEnergyFinalAgainTau}
\end{equation}
and, for all $1\le\bar{j}\le j$
\begin{equation}
\widetilde{\mathfrak{D}}r_{\nearrow}^{(-)}[n;N_{\varepsilon},\bar{j}]=\mathfrak{D}r_{\nwarrow}^{(-)}[n;\bar{j},0]\cdot\exp\Big(-\sum_{k=0}^{\bar{j}-2}\frac{2\widetilde{\mathcal{E}}_{\nearrow}^{(-)}[n;N_{\varepsilon},k]}{\widetilde{\mathfrak{D}}r_{\nearrow}^{(-)}[n;N_{\varepsilon},k+1]}+O(\rho_{\varepsilon}^{\frac{1}{12}})\Big).\label{eq:ChangeOfRSeparationFinalAgainTau}
\end{equation}
Note that, if (\ref{eq:NoonTrivialN}) holds, then it is also necessarily
true that 
\begin{equation}
\widetilde{\mathcal{R}}_{N_{\varepsilon};\bar{j}}^{(n)}\subset\mathcal{T}_{\varepsilon}^{+}\text{ for all }0\le\bar{j}\le j,
\end{equation}
and hence all the terms in the relations (\ref{eq:ChangeOfEnergyFinalAgainTau})\textendash (\ref{eq:ChangeOfRSeparationFinalAgainTau})
are well defined. The relations (\ref{eq:ChangeOfEnergyFinalAgainTau})
and (\ref{eq:ChangeOfRSeparationFinalAgainTau}) are immediate corollaries
of (\ref{eq:ChangeOfEnergyFinalAgainGeneralOutgoing}) and (\ref{eq:ChangeOfRSeparationFinalAgainGeneralOutgoing})
(for $\widetilde{\mathcal{E}}^{(\pm)}$, $\widetilde{\mathfrak{D}}r^{(\pm)}$
in place of $\mathcal{E}^{(\pm)}$, $\mathfrak{D}r^{(\pm)}$) with
$i=N_{\varepsilon}$, $j_{1}=\bar{j}$ and $j_{0}=0$, using also
the fact that, since (\ref{eq:NoonTrivialN}) holds, 
\begin{align}
\widetilde{\mathcal{E}}_{\nwarrow}^{(-)}[n;\bar{j},0] & =\mathcal{E}_{\nwarrow}^{(-)}[n;\bar{j},0]\text{ for all }0\le\bar{j}\le j\text{ and }\label{eq:EqualTildeNonTilde}\\
\widetilde{\mathfrak{D}}r_{\nwarrow}^{(-)}[n;\bar{j},0] & =\mathfrak{D}r_{\nwarrow}^{(-)}[n;\bar{j},0]\Big(1+O(\rho_{\varepsilon}^{\frac{1}{9}})\Big)\text{ for all }1\le\bar{j}\le j\nonumber 
\end{align}
(which is inferred from the definition of $\mathcal{E}_{\nwarrow}^{(-)}$,
$\mathfrak{D}r_{\nwarrow}^{(-)}$, $\widetilde{\mathcal{E}}_{\nwarrow}^{(-)}$and
$\widetilde{\mathfrak{D}}r_{\nwarrow}^{(-)}$ in Section \ref{subsec:Some-basic-geometric-constructions},
as well as the fact that
\[
\tilde{m}(v_{\varepsilon,0}^{(n)}-h_{\varepsilon,0},v_{\varepsilon,\bar{j}}^{(n)}\pm h_{\varepsilon,\bar{j}})=\tilde{m}(v_{\varepsilon,0}^{(n)}-h_{\varepsilon,0},v_{\varepsilon,\bar{j}}^{(n)}\pm\tilde{h}_{\varepsilon,\bar{j}}),
\]
since the support of $T_{\mu\nu}[f]$ in $\{0\le u\le v_{\varepsilon,0}^{(n)}-h_{\varepsilon,0}\}\subset\mathcal{U}_{\varepsilon}^{+}$
is contained in $\cup_{k\in\mathbb{N}}\cup_{i=0}^{N_{\varepsilon}}\mathcal{V}_{i}^{(k)}$).

In order to infer (\ref{eq:EqualEnergyFluxOutgoingTau})\textendash (\ref{eq:EqualRSeparationOutgoingTau})
from (\ref{eq:ChangeOfEnergyFinalAgainTau})\textendash (\ref{eq:ChangeOfRSeparationFinalAgainTau}),
we will argue similarly as in the case of (\ref{eq:EqualEnergyFlux})\textendash (\ref{eq:EqualRSeparation}):
Defining the quantities 
\begin{align*}
\tilde{e}_{\bar{j}}[n+1] & \doteq\frac{\widetilde{\mathcal{E}}_{\nearrow}^{(-)}[n;N_{\varepsilon},\bar{j}]-\mathcal{E}_{\bar{j}}[n+1]}{\varepsilon^{(\bar{j})}}\sqrt{-\Lambda}\text{ for }0\le\bar{j}\le j,\\
\tilde{r}_{\bar{j}}[n+1] & \doteq\frac{\widetilde{\mathfrak{D}}r_{\nearrow}^{(-)}[n;N_{\varepsilon},\bar{j}]}{R_{\bar{j}}[n+1]}\text{ for }1\le\bar{j}\le j,\\
\tilde{\mu}_{\bar{j}}[n+1] & \doteq\rho_{\varepsilon}^{-1}\Big(\frac{2\widetilde{\mathcal{E}}_{\nearrow}^{(-)}[n;N_{\varepsilon},\bar{j}]}{\widetilde{\mathfrak{D}}r_{\nearrow}^{(-)}[n;N_{\varepsilon},\bar{j}+1]}-\mu_{\bar{j}}[n+1]\Big)\text{ for }0\le\bar{j}\le j,
\end{align*}
the relations (\ref{eq:EqualEnergyFluxOutgoingTau}) and (\ref{eq:EqualRSeparationOutgoingTau})
will follow by showing that 
\begin{equation}
\max_{0\le\bar{j}\le j}\big|\tilde{e}_{\bar{j}}[n+1]\big|,\text{ }\max_{1\le\bar{j}\le j}\big|\tilde{r}_{\bar{j}}[n+1]-1\big|,\text{ }\max_{0\le\bar{j}\le j}\big|\tilde{\mu}_{\bar{j}}[n+1]\big|\le\rho_{\varepsilon}^{\frac{1}{17}}.\label{eq:TildeQuantitiesBound}
\end{equation}

We will argue by induction on $\bar{j}$: For any $0\le\bar{j}\le j$,
we will show that, if 
\begin{equation}
\max_{0\le k\le\bar{j}-1}\big|\tilde{\mu}_{k}[n+1]\big|\le\rho_{\varepsilon}^{\frac{1}{16}},\label{eq:InductiveMuKTilde}
\end{equation}
then 
\begin{equation}
\big|\tilde{\mu}_{\bar{j}}[n+1]\big|\le\rho_{\varepsilon}^{\frac{1}{16}}.\label{eq:BoundToShowInductionMuTilde}
\end{equation}
In view of the relations (\ref{eq:ChangeOfEnergyFinalAgainTau})\textendash (\ref{eq:ChangeOfRSeparationFinalAgainTau})
for $\widetilde{\mathcal{E}}_{\nearrow}^{(-)}[n;N_{\varepsilon},\bar{j}]$,
$\widetilde{\mathfrak{D}}r_{\nearrow}^{(-)}[n;N_{\varepsilon},\bar{j}]$
and the relation (\ref{eq:RecursiveFormulaMu}) for $\mu_{\bar{j}}[n]$,
we infer using the bounds (\ref{eq:TrivialBoundEnergiesConcentration})\textendash (\ref{eq:TrivialBoundsRSeparation})
(for $\widetilde{\mathcal{E}}^{(\pm)}$, $\mathfrak{\widetilde{D}}r^{(\pm)}$
and $\delta_{\varepsilon}$ in place of $\mathcal{E}^{(\pm)}$, $\mathfrak{D}r^{(\pm)}$
and $\sigma_{\varepsilon}$, respectively), the bound 
\begin{equation}
\sum_{k=0}^{j-1}\frac{2\widetilde{\mathcal{E}}_{\nearrow}^{(-)}[n;N_{\varepsilon},k]}{\widetilde{\mathfrak{D}}r_{\nearrow}^{(-)}[n;N_{\varepsilon},k+1]}\le\exp(\exp(\delta_{\varepsilon}^{-6}))\text{ for all }0\le k\le n\label{eq:BoundSumEnergiesByDelta-1}
\end{equation}
(following from (\ref{eq:TrivialBoundEnergiesConcentration})\textendash (\ref{eq:TrivialBoundsRSeparation})
for $\widetilde{\mathcal{E}}^{(\pm)}$, $\mathfrak{\widetilde{D}}r^{(\pm)}$
and the fact that $N_{\varepsilon}=\rho_{\varepsilon}^{-1}\exp\big(e^{\delta_{\varepsilon}^{-15}}\big)$),
the bound 
\begin{equation}
\sum_{k=0}^{\bar{j}-1}\tilde{\mu}_{k}[n+1]\le\rho_{\varepsilon}^{-1+\frac{1}{16}}\exp\big(e^{\delta_{\varepsilon}^{-15}}\big)
\end{equation}
 (following from the inductive assumption (\ref{eq:InductiveMuKTilde}))
and the estimate (\ref{eq:MuBarBoundFinal}) for $\bar{\mu}_{\bar{j}}[n]$
established previously that:
\begin{align}
\big|\tilde{\mu}_{\bar{j}}[n+ & 1]\big|=\label{eq:BoundMuTildeForGronwal}\\
 & =\rho_{\varepsilon}^{-1}\Bigg|\frac{2\mathcal{E}_{\nwarrow}^{(-)}[n;\bar{j},0]}{\mathfrak{D}r_{\nwarrow}^{(-)}[n;\bar{j}+1,0]}\exp\Big(2\sum_{k=0}^{\bar{j}-1}\frac{2\widetilde{\mathcal{E}}_{\nearrow}^{(-)}[n;N_{\varepsilon},k]}{\widetilde{\mathfrak{D}}r_{\nearrow}^{(-)}[n;N_{\varepsilon},k+1]}+O(\rho_{\varepsilon}^{\frac{1}{12}})\Big)+O(\rho_{\varepsilon}^{1+\frac{1}{6}})-\nonumber\\
 & \hphantom{=\rho_{\varepsilon}^{-1}\Bigg|\frac{2\mathcal{E}_{\nwarrow}^{(-)}[n;\bar{j},0]}{\mathfrak{D}r_{\nwarrow}^{(-)}[n;\bar{j}+1,0]}\exp\Big(2\sum_{k=0}^{\bar{j}-1}\frac{2\widetilde{\mathcal{E}}_{\nearrow}^{(-)}[n;N_{\varepsilon},k]}{\widetilde{\mathfrak{D}}r_{\nearrow}^{(-)}[n;N_{\varepsilon},k+1]}}-\mu_{\bar{j}}[n]\exp\Big(2\sum_{k=0}^{\bar{j}-1}\mu_{k}[n+1]\Big)\Bigg|=\nonumber\\
 & =\exp\Big(2\sum_{k=0}^{\bar{j}-1}\frac{2\widetilde{\mathcal{E}}_{\nearrow}^{(-)}[n;N_{\varepsilon},k]}{\widetilde{\mathfrak{D}}r_{\nearrow}^{(-)}[n;N_{\varepsilon},k+1]}\Big)\rho_{\varepsilon}^{-1}\Bigg|\frac{2\mathcal{E}_{\nwarrow}^{(-)}[n;\bar{j},0]}{\mathfrak{D}r_{\nwarrow}^{(-)}[n;\bar{j}+1,0]}(1+O(\rho_{\varepsilon}^{\frac{1}{12}}))-\nonumber \\
 & \hphantom{=\exp\Big(2\sum_{k=0}^{\bar{j}-1}\frac{2\widetilde{\mathcal{E}}_{\nearrow}^{(-)}[n;N_{\varepsilon},k]}{\widetilde{\mathfrak{D}}r_{\nearrow}^{(-)}[n;N_{\varepsilon},k+1]}\Big)\rho_{\varepsilon}^{-1}\Bigg|}-\mu_{\bar{j}}[n]\exp\Big(-2\rho_{\varepsilon}\sum_{k=0}^{\bar{j}-1}\tilde{\mu}_{k}[n+1]\Big)+O(\rho_{\varepsilon}^{1+\frac{1}{6}})\Bigg|=\nonumber\\
 & =\exp\big(O(\exp\big(2e^{\delta_{\varepsilon}^{-15}}\big))\big)\Bigg|\rho_{\varepsilon}^{-1}\Big(\frac{2\mathcal{E}_{\nwarrow}^{(-)}[n;\bar{j},0]}{\mathfrak{D}r_{\nwarrow}^{(-)}[n;\bar{j}+1,0]}-\mu_{\bar{j}}[n](1-2\rho_{\varepsilon}\sum_{k=0}^{\bar{j}-1}\tilde{\mu}_{k}[n+1]\Big)+O(\rho_{\varepsilon}^{\frac{1}{13}})\Bigg|\le\nonumber \\
 & \le\bar{\delta}_{\varepsilon}^{-\frac{1}{2}}\Big\{|\bar{\mu}_{\bar{j}}[n]|+\rho_{\varepsilon}\sum_{k=0}^{\bar{j}-1}|\tilde{\mu}_{k}[n+1]|+O(\rho_{\varepsilon}^{\frac{1}{13}})\Big\}\le\nonumber \\
 & \le\bar{\delta}_{\varepsilon}^{-1}\Big\{\rho_{\varepsilon}\sum_{k=0}^{\bar{j}-1}|\tilde{\mu}_{k}[n+1]|+\rho_{\varepsilon}^{\frac{1}{15}}\Big\},\nonumber 
\end{align}
where $\bar{\delta}_{\varepsilon}$ was defined in terms of $\delta_{\varepsilon}$
by (\ref{eq:HierarchyOfParameters}). Note that, for $\bar{j}=0$,
from (\ref{eq:BoundMuTildeForGronwal}) we infer that 
\begin{equation}
\big|\tilde{\mu}_{0}[n+1]\big|\le\bar{\delta}_{\varepsilon}^{-1}\rho_{\varepsilon}^{\frac{1}{15}}.\label{eq:InitialBoundMuTilde}
\end{equation}
In general, for $0\le\bar{j}\le j$, applying a Gronwal-type inequality
in the $\bar{j}$ variable, from (\ref{eq:BoundMuTildeForGronwal})
and (\ref{eq:InitialBoundMuTilde}) we infer that:
\begin{equation}
\big|\tilde{\mu}_{\bar{j}}[n+1]\big|\le\exp\Big(\bar{\delta}_{\varepsilon}^{-1}N_{\varepsilon}\rho_{\varepsilon}\Big)\rho_{\varepsilon}^{\frac{1}{15}}\le\rho_{\varepsilon}^{\frac{1}{16}},
\end{equation}
thus establishing (\ref{eq:BoundToShowInductionMuTilde}). As a result,
\begin{equation}
\max_{0\le\bar{j}\le j}\big|\tilde{\mu}_{\bar{j}}[n+1]\big|\le\rho_{\varepsilon}^{\frac{1}{16}}.\label{eq:BoundMuTildeImproved}
\end{equation}

From the relations (\ref{eq:ChangeOfEnergyFinalAgainTau})\textendash (\ref{eq:ChangeOfRSeparationFinalAgainTau})
for $\widetilde{\mathcal{E}}_{\nearrow}^{(-)}[n;N_{\varepsilon},\bar{j}]$,
$\widetilde{\mathfrak{D}}r_{\nearrow}^{(-)}[n;N_{\varepsilon},\bar{j}]$
and (\ref{eq:FormulaRecursiveEnergy})\textendash (\ref{eq:FormulaRecursiveDr})
for $\mathcal{E}_{\bar{j}}[n+1]$, $R_{\bar{j}}[n+1]$, in view of
the bounds (\ref{eq:TrivialBoundEnergiesConcentration})\textendash (\ref{eq:TrivialBoundsRSeparation})
(for $\widetilde{\mathcal{E}}^{(\pm)}$, $\widetilde{\mathfrak{D}}r^{(\pm)}$),
the bound (\ref{eq:BoundSumEnergiesByDelta-1}), the bound (\ref{eq:BoundMuTildeImproved}),
as well as the approximate equalities (\ref{eq:EqualEnergyFlux})
and (\ref{eq:EqualRSeparation}) between $\mathcal{E}_{\nwarrow}^{(-)}[n;\bar{j},0]$,
$\mathfrak{D}r_{\nwarrow}^{(-)}[n;\bar{j},0]$ and $\mathcal{E}_{\bar{j}}[n]$,
$R_{\bar{j}}[n]$, respectively, we can estimate for any $0\le\bar{j}\le j$
(arguing similarly as for the derivation of (\ref{eq:AlmostGronwale})
and (\ref{eq:AlmostGronwalR})):
\begin{align}
|\tilde{e}_{\bar{j}} & [n+1]|=\label{eq:AlmostGronwaleTau}\\
 & =\Bigg|\frac{\sqrt{-\Lambda}}{\varepsilon^{(\bar{j})}}\mathcal{E}_{\nwarrow}^{(-)}[n;\bar{j},0]\cdot\exp\Big(\sum_{k=0}^{\bar{j}-1}\frac{2\widetilde{\mathcal{E}}_{\nearrow}^{(-)}[n;N_{\varepsilon},k]}{\widetilde{\mathfrak{D}}r_{\nearrow}^{(-)}[n;N_{\varepsilon},k+1]}+O(\rho_{\varepsilon}^{\frac{1}{12}})\Big)+O(\rho_{\varepsilon}^{\frac{1}{4}})-\frac{\sqrt{-\Lambda}}{\varepsilon^{(\bar{j})}}\mathcal{E}_{i}[n]\cdot\exp\Big(\sum_{k=0}^{\bar{j}-1}\mu_{k}[n+1]\Big)\Bigg|=\nonumber\\
 & =\exp\Big(\sum_{k=0}^{\bar{j}-1}\frac{2\widetilde{\mathcal{E}}_{\nearrow}^{(-)}[n;N_{\varepsilon},k]}{\widetilde{\mathfrak{D}}r_{\nearrow}^{(-)}[n;N_{\varepsilon},k+1]}\Big)\Bigg|(1+O(\rho_{\varepsilon}^{\frac{1}{12}}))\frac{\sqrt{-\Lambda}}{\varepsilon^{(\bar{j})}}\mathcal{E}_{\nwarrow}^{(-)}[n;\bar{j},0]-\nonumber \\
 & \hphantom{=\exp\Big(\sum_{k=0}^{\bar{j}-1}\frac{2\widetilde{\mathcal{E}}_{\nearrow}^{(-)}[n;N_{\varepsilon},k]}{\widetilde{\mathfrak{D}}r_{\nearrow}^{(-)}[n;N_{\varepsilon},k+1]}\Big)\Bigg|}-\frac{\sqrt{-\Lambda}}{\varepsilon^{(\bar{j})}}\mathcal{E}_{\bar{j}}[n]\cdot\exp\Big(-\rho_{\varepsilon}\sum_{k=0}^{\bar{j}-1}\tilde{\mu}_{\bar{j}}[\bar{n}+1]\Big)+O(\rho_{\varepsilon}^{\frac{1}{4}})\Bigg|\le\nonumber\\
 & \le\exp\big(\exp(\exp(2\delta_{\varepsilon}^{-15}))\big)\Bigg|\frac{\sqrt{-\Lambda}}{\varepsilon^{(\bar{j})}}\Big(\mathcal{E}_{\nwarrow}^{(-)}[n;\bar{j},0]-\mathcal{E}_{\bar{j}}[n]\Big)+\nonumber \\
 & \hphantom{\le\exp\big(\exp(\exp(2\delta_{\varepsilon}^{-15}))\big)\Bigg|}+O(\rho_{\varepsilon}^{\frac{1}{12}})\frac{\sqrt{-\Lambda}}{\varepsilon^{(i)}}\mathcal{E}_{\nwarrow}^{(-)}[n;\bar{j},0]+O(\rho_{\varepsilon}^{\frac{1}{15}})\frac{\sqrt{-\Lambda}}{\varepsilon^{(i)}}\mathcal{E}_{i}[n]+O(\rho_{\varepsilon}^{\frac{1}{4}})\Bigg|\le\nonumber\\
 & \le\exp\big(\exp(\exp(4\delta_{\varepsilon}^{-15}))\big)\Big(\rho_{\varepsilon}^{\frac{1}{16}}+\exp(\exp(\sigma_{\varepsilon}^{-6}))\rho_{\varepsilon}^{\frac{1}{15}}\Big)\le\nonumber \\
 & \le\bar{\delta}_{\varepsilon}^{-1}\rho_{\varepsilon}^{\frac{1}{16}}\nonumber 
\end{align}
and, for any $1\le\bar{j}\le j$:
\begin{align}
|\tilde{r}_{i}[n+1]-1| & =\Bigg|\frac{\mathfrak{D}r_{\nwarrow}^{(-)}[n;\bar{i},0]\cdot\exp\Big(-\sum_{k=0}^{\bar{j}-2}\frac{2\widetilde{\mathcal{E}}_{\nearrow}^{(-)}[n;N_{\varepsilon},k]}{\widetilde{\mathfrak{D}}r_{\nearrow}^{(-)}[n;N_{\varepsilon},k+1]}+O(\rho_{\varepsilon}^{\frac{1}{12}})\Big)}{R_{\bar{j}}[n]\cdot\exp\Big(-\sum_{k=0}^{\bar{j}-1}\mu_{k}[n+1]\Big)}-1\Bigg|=\label{eq:AlmostGronwalRTau}\\
 & =\Bigg|\frac{\mathfrak{D}r_{\nwarrow}^{(-)}[n;\bar{i},0]}{R_{\bar{j}}[n]}\cdot\exp\Big(-\rho_{\varepsilon}\sum_{k=0}^{\bar{j}-1}\tilde{\mu}_{k}[n+1]+O(\rho_{\varepsilon}^{\frac{1}{12}})\Big)-1\Bigg|=\nonumber \\
 & =\Bigg|\Big(1+O(\rho_{\varepsilon}^{\frac{1}{16}})\Big)\cdot\exp\Big(O\big(\exp(e^{\delta_{\varepsilon}^{-15}})\rho_{\varepsilon}^{\frac{1}{15}}\big)\Big)-1\Bigg|\le\nonumber \\
 & \le\bar{\delta}_{\varepsilon}^{-1}\rho_{\varepsilon}^{\frac{1}{16}}.\nonumber 
\end{align}
From (\ref{eq:AlmostGronwaleTau}), (\ref{eq:AlmostGronwalRTau})
and (\ref{eq:BoundMuTildeImproved}), in view of the relation (\ref{eq:HierarchyOfParameters})
between $\rho_{\varepsilon}$, $\delta_{\varepsilon}$ and $\sigma_{\varepsilon}$,
we readily obtain (\ref{eq:TildeQuantitiesBound}). Thus, we infer
(\ref{eq:EqualEnergyFluxOutgoingTau}) and (\ref{eq:EqualRSeparationOutgoingTau}).

Using the relation (\ref{eq:ChangeOfEnergyFinalAgainGeneralIngoing})
for $i=N_{\varepsilon}$, $j_{0}=0$, $j_{1}=N_{\varepsilon}-1$ with
$\widetilde{\mathcal{E}}$, $\widetilde{\mathfrak{D}}r$ in place
of $\mathcal{E}$, $\mathfrak{D}r$, the relation (\ref{eq:IncreaseIngoingEnergyCloseToAxis})
for $i=N_{\varepsilon}$, $j=N_{\varepsilon}-1$ with $\widetilde{\mathcal{E}}$
in place of $\mathcal{E}$, as well as the relation (\ref{eq:ExpressionRnii-1inU})
for $i=N_{\varepsilon}-1$ with $\tilde{r}_{n;i+1,i}$, $\widetilde{\mathfrak{D}}r$
in place of $r_{n;i+1,i}$, $\mathfrak{D}r$, we readily infer that:
\begin{equation}
\widetilde{\mathcal{E}}_{\nwarrow}^{(+)}[n;N_{\varepsilon},N_{\varepsilon}-1]=\widetilde{\mathcal{E}}_{\nwarrow}^{(-)}[n;N_{\varepsilon},0]\cdot\exp\Big(\sum_{j=0}^{N_{\varepsilon}-1}\frac{2\widetilde{\mathcal{E}}_{\nearrow}^{(-)}[n;N_{\varepsilon},j]}{\widetilde{\mathfrak{D}}r_{\nearrow}^{(-)}[n;N_{\varepsilon},j+1]}+O(\rho_{\varepsilon}^{\frac{1}{12}})\Big)+O\Big(\rho_{\varepsilon}^{\frac{1}{4}}\frac{\varepsilon^{(N_{\varepsilon})}}{\sqrt{-\Lambda}}\Big).\label{eq:ChangeOfEnergyFinalAgainGeneralIngoing-2-1}
\end{equation}
The relation (\ref{eq:EqualEnergyFluxFinalIngoingTau}) now readily
follows from (\ref{eq:ChangeOfEnergyFinalAgainGeneralIngoing-2-1})
using (\ref{eq:EqualEnergyFluxOutgoingTau}), (\ref{eq:EqualRSeparationOutgoingTau})
and (\ref{eq:EqualTildeNonTilde}) for the right hand side of (\ref{eq:ChangeOfEnergyFinalAgainGeneralIngoing-2-1}),
as well as the relation (\ref{eq:FormulaRecursiveEnergy}) for $\mathcal{E}_{N_{\varepsilon}}[n+1]$
and the fact that (\ref{eq:EqualEnergyFlux}) holds for $\mathcal{E}_{\nwarrow}^{(-)}[n;N_{\varepsilon},0]$.
\end{proof}

\subsection{\label{subsec:Control-of-the-evolution_ERM} Control of the evolution
in terms of $\mathcal{E}_{i}[n]$, $R_{i}[n]$, $\mu_{i}[n]$}

In this section, we will establish some additional bounds on various
quantities related to the geometry of $(\mathcal{U}_{max}^{(\varepsilon)};r,\Omega^{2},f_{\varepsilon})$
in terms of the quantities $\mathcal{E}_{i}[n]$, $R_{i}[n]$ and
$\mu_{i}[n]$. These bounds will enable us to obtain a priori control
of the evolution of $(r_{/}^{(\varepsilon)},(\Omega_{/}^{(\varepsilon)})^{2},\bar{f}_{/}^{(\varepsilon)})$
by estimating the growth rate of solutions to the recursive systems
\ref{eq:RecursiveFormulaMu} and \ref{eq:FormulaRecursiveEnergy}\textendash \ref{eq:FormulaRecursiveDr}.

The following result can be viewed as a supplement to Proposition
\ref{prop:TotalEnergyChange}, providing us with additional bounds
on the energy content and the geometric separation of the beams on
the regions $\mathcal{R}_{i;j}^{(n)}$ (not necessarily with $j=0$
or $i=N_{\varepsilon}$):
\begin{lem}
\label{lem:ComparisonEstimatesEnergiesAndRSeparations} For any $n\in\mathbb{N}$
such that 
\begin{equation}
\{0\le u\le v_{\varepsilon,0}^{(n)}-h_{\varepsilon,0}\}\cap\big\{ u<v<u+\sqrt{-\frac{3}{\Lambda}}\pi\big\}\subset\mathcal{U}_{\varepsilon}^{+}\label{eq:NoonTrivialN-1}
\end{equation}
and any $0\le i,j\le N_{\varepsilon}$, such that $\mathcal{R}_{i;j}^{(n)}\subset\mathcal{U}_{\varepsilon}^{+}$,
if $i\neq j$, and $\mathcal{R}_{i;\gamma_{\mathcal{Z}}}^{(n)},\mathcal{R}_{i;\mathcal{I}}^{(n)}\subset\mathcal{U}_{\varepsilon}^{+}$,
if $i=j$,\footnote{Note that $\mathcal{R}_{i;j}^{(n)}$, $\mathcal{R}_{i;\gamma_{\mathcal{Z}}}^{(n)}$
and $\mathcal{R}_{i;\mathcal{I}}^{(n)}$ are contained in $\{u\ge v_{\varepsilon,0}^{(n)}-h_{\varepsilon,0}\}$.} we can estimate: 
\begin{align}
\mathcal{E}_{\nwarrow}^{(\pm)}[n;i,j] & \le\mathcal{E}_{i}[n+1]+\rho_{\varepsilon}^{\frac{1}{18}}\frac{\varepsilon^{(i)}}{\sqrt{-\Lambda}},\label{eq:UpperBoundEnergiesFromE}\\
\mathcal{E}_{\nearrow}^{(\pm)}[n;i,j] & \le\mathcal{E}_{j}[n+1]+\rho_{\varepsilon}^{\frac{1}{18}}\frac{\varepsilon^{(j)}}{\sqrt{-\Lambda}}\nonumber 
\end{align}
(if $i\neq j$), 
\begin{align}
\mathcal{E}_{\gamma_{\mathcal{Z}}}[n;i] & \le\mathcal{E}_{i}[n+1]+\rho_{\varepsilon}^{\frac{1}{18}}\frac{\varepsilon^{(i)}}{\sqrt{-\Lambda}},\label{eq:UpperBoundEnergiesFromESpecial}\\
\mathcal{E}_{\mathcal{I}}[n;i] & \le\mathcal{E}_{j}[n+1]+\rho_{\varepsilon}^{\frac{1}{18}}\frac{\varepsilon^{(j)}}{\sqrt{-\Lambda}}\nonumber 
\end{align}
(if $i=j)$ and 
\begin{align}
\mathfrak{D}r_{\nwarrow}^{(\pm)}[n;i,j] & \ge R_{i}[n+1]\cdot\big(1-\rho_{\varepsilon}^{\frac{1}{18}}\big),\text{ if }i>0,\label{eq:LowerBoundDRFromR}\\
\mathfrak{D}r_{\nearrow}^{(\pm)}[n;i,j] & \ge R_{j}[n+1]\cdot\big(1-\rho_{\varepsilon}^{\frac{1}{18}}\big),\text{ if }j>0.\nonumber 
\end{align}
 Similarly, for any $0\le i,j\le N_{\varepsilon}$ such that $\widetilde{\mathcal{R}}_{i;j}^{(n)}\subset\mathcal{T}_{\varepsilon}^{+}$
if $i\neq j$, or $\widetilde{\mathcal{R}}_{i;\gamma_{\mathcal{Z}}}^{(n)},\widetilde{\mathcal{R}}_{i;\mathcal{I}}^{(n)}\subset\mathcal{T}_{\varepsilon}^{+}$,
if $i=j$, the bounds (\ref{eq:UpperBoundEnergiesFromE})\textendash (\ref{eq:LowerBoundDRFromR})
also hold with $\widetilde{\mathcal{E}}_{\nwarrow}^{(\pm)}$, $\widetilde{\mathcal{E}}_{\nearrow}^{(\pm)}$,
$\widetilde{\mathcal{E}}_{\gamma_{\mathcal{Z}}}$, $\widetilde{\mathcal{E}}_{\mathcal{I}}$,
$\widetilde{\mathfrak{D}}r_{\nwarrow}^{(\pm)}$ and $\widetilde{\mathfrak{D}}r_{\nearrow}^{(\pm)}$
in place of $\mathcal{E}_{\nwarrow}^{(\pm)}$, $\mathcal{E}_{\nearrow}^{(\pm)}$,
$\mathcal{E}_{\gamma_{\mathcal{Z}}}$, $\mathcal{E}_{\mathcal{I}}$,
$\mathfrak{D}r_{\nwarrow}^{(\pm)}$ and $\mathfrak{D}r_{\nearrow}^{(\pm)}$,
respectively.
\end{lem}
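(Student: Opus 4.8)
The statement to be proven (Lemma \ref{lem:ComparisonEstimatesEnergiesAndRSeparations}) asserts that the quantities $\mathcal{E}_{\nwarrow}^{(\pm)}[n;i,j]$, $\mathcal{E}_{\nearrow}^{(\pm)}[n;i,j]$ (and their specialisations $\mathcal{E}_{\gamma_{\mathcal{Z}}}$, $\mathcal{E}_{\mathcal{I}}$), which measure the energy content of beams at a general intersection region $\mathcal{R}_{i;j}^{(n)}$, are bounded above (up to a negligible error) by $\mathcal{E}_{i}[n+1]$ or $\mathcal{E}_{j}[n+1]$, and that the separation quantities $\mathfrak{D}r_{\nwarrow}^{(\pm)}$, $\mathfrak{D}r_{\nearrow}^{(\pm)}$ are bounded below by $R_{i}[n+1]$ or $R_{j}[n+1]$. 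The plan is to reduce these estimates to the already-established formulas of Proposition \ref{prop:TotalEnergyChange} (in particular relations (\ref{eq:EqualEnergyFlux})--(\ref{eq:EqualRSeparation}) and the general relations (\ref{eq:ChangeOfEnergyFinalAgainGeneralIngoing})--(\ref{eq:ChangeOfRSeparationFinalAgainGeneralOutgoing-1}) derived in its proof), together with the monotonicity properties of the quantities $\mu_i[n]$, $\mathcal{E}_i[n]$ and $R_i[n]$ encoded in the recursive system of Definition \ref{def:RecursiveSystemMuER}.

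First I would fix $n$ satisfying (\ref{eq:NoonTrivialN-1}) and a pair $i \neq j$ with $\mathcal{R}_{i;j}^{(n)} \subset \mathcal{U}_{\varepsilon}^{+}$. The key observation is that $\mathcal{E}_{\nwarrow}^{(-)}[n;i,j]$ (the energy of the $i$-th ingoing beam right before crossing $\mathcal{V}_{j\nearrow}^{(n)}$) is, by the general relations (\ref{eq:ChangeOfEnergyFinalAgainGeneralIngoing})--(\ref{eq:ChangeOfEnergyFinalAgainGeneralIngoing-1}), related to $\mathcal{E}_{\nwarrow}^{(-)}[n;i,0]$ by a product of factors $\exp\big(\sum \frac{2\mathcal{E}_{\nearrow}^{(-)}}{\mathfrak{D}r_{\nearrow}^{(-)}} + O(\rho_{\varepsilon}^{1/12})\big)$ running over a sub-collection of intersections, plus an $O(\rho_{\varepsilon}^{1/4}\varepsilon^{(i)}(-\Lambda)^{-1/2})$ error. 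Since each such exponent is non-negative up to the $O(\rho_{\varepsilon}^{1/12})$ error (energies are non-negative; the $\rho_\varepsilon$-error is absorbable by the bound (\ref{eq:TrivialBoundEnergiesConcentration})), and since the \emph{full} product of all such factors up to $u = v^{(n)}_{\varepsilon,0}-h_{\varepsilon,0}$ of the next step yields $\mathcal{E}_{\nwarrow}^{(-)}[n+1;i,0]$, which by (\ref{eq:EqualEnergyFlux}) equals $\mathcal{E}_i[n+1] + O(\rho_{\varepsilon}^{1/16}\varepsilon^{(i)}(-\Lambda)^{-1/2})$, I would argue that $\mathcal{E}_{\nwarrow}^{(\pm)}[n;i,j]$ is a partial product, hence dominated by the full one modulo errors. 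Concretely: writing $\mathcal{E}_{\nwarrow}^{(+)}[n;i,j] = \mathcal{E}_{\nwarrow}^{(-)}[n;i,0]\cdot\prod(\text{factors}\geq e^{-O(\rho_\varepsilon^{1/12})}) + (\text{error})$, and the full product gives $\mathcal{E}_i[n+1]$; the factors between the $j$-th intersection and the end are $\geq e^{-O(\rho_\varepsilon^{1/12})N_\varepsilon} = 1 - O(\rho_\varepsilon^{1/13})$ (using $N_\varepsilon \rho_\varepsilon^{1/12} \ll \rho_\varepsilon^{1/13}$ via (\ref{eq:HierarchyOfParameters})), so $\mathcal{E}_{\nwarrow}^{(\pm)}[n;i,j] \leq \mathcal{E}_i[n+1]\cdot(1+O(\rho_\varepsilon^{1/13})) + O(\rho_\varepsilon^{1/4}\varepsilon^{(i)}(-\Lambda)^{-1/2})$, and since $\mathcal{E}_i[n+1] \lesssim \exp(\exp(2\sigma_\varepsilon^{-5}))\varepsilon^{(i)}(-\Lambda)^{-1/2}$ by (\ref{eq:TrivialBoundEnergiesConcentration}) and (\ref{eq:EqualEnergyFlux}), the multiplicative error $O(\rho_\varepsilon^{1/13})\mathcal{E}_i[n+1]$ is absorbed into $\rho_\varepsilon^{1/18}\varepsilon^{(i)}(-\Lambda)^{-1/2}$, yielding (\ref{eq:UpperBoundEnergiesFromE}). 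The bound for $\mathcal{E}_{\nearrow}^{(\pm)}[n;i,j]$ is identical using the outgoing relations and Proposition \ref{prop:TotalEnergyChange}'s conclusion applied at the next reflection; the specialisations (\ref{eq:UpperBoundEnergiesFromESpecial}) for $\mathcal{E}_{\gamma_{\mathcal{Z}}}$, $\mathcal{E}_{\mathcal{I}}$ follow from the index conventions (\ref{eq:ConventionFirstEnergy})--(\ref{eq:ConventionFinalEnergy}) which identify them with $\mathcal{E}_{\nwarrow}^{(\pm)}[n;i,i]$, $\mathcal{E}_{\nearrow}^{(\pm)}[n;i,i]$. For the separation bounds (\ref{eq:LowerBoundDRFromR}): the relations (\ref{eq:ChangeOfRSeparationFinalAgainGeneralIngoing})--(\ref{eq:ChangeOfRSeparationFinalAgainGeneralOutgoing-1}) express $\mathfrak{D}r_{\nwarrow}^{(\pm)}[n;i,j]$ as $\mathfrak{D}r_{\nwarrow}^{(-)}[n;i,0]$ times a product of factors $\exp\big(-\sum\frac{2\mathcal{E}_{\nearrow}^{(-)}}{\mathfrak{D}r_{\nearrow}^{(-)}} + O(\rho_\varepsilon^{1/12})\big) \leq e^{O(\rho_\varepsilon^{1/12})}$; running to the end gives $\mathfrak{D}r_{\nwarrow}^{(-)}[n+1;i,0] = R_i[n+1](1+O(\rho_\varepsilon^{1/16}))$ by (\ref{eq:EqualRSeparation}), and since the full product is $\leq$ (product to the $j$-th step) times $e^{O(\rho_\varepsilon^{1/12})N_\varepsilon}$, reversing gives $\mathfrak{D}r_{\nwarrow}^{(\pm)}[n;i,j] \geq \mathfrak{D}r_{\nwarrow}^{(-)}[n+1;i,0]\cdot e^{-O(\rho_\varepsilon^{1/13})} = R_i[n+1](1-O(\rho_\varepsilon^{1/16}))(1-O(\rho_\varepsilon^{1/13})) \geq R_i[n+1](1-\rho_\varepsilon^{1/18})$.

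The $\widetilde{\cdot}$-versions follow verbatim after replacing $\sigma_\varepsilon$, $h_{\varepsilon,i}$, $\beta_{\varepsilon,i}$, $\mathcal{V}_i^{(n)}$, $\mathcal{R}_{i;j}^{(n)}$ by $\delta_\varepsilon$, $\tilde h_{\varepsilon,i}$, $\tilde\beta_{\varepsilon,i}$, $\widetilde{\mathcal{V}}_i^{(n)}$, $\widetilde{\mathcal{R}}_{i;j}^{(n)}$, and invoking the $\widetilde{\cdot}$-clauses of Propositions \ref{prop:EnergyChangeInteraction}, \ref{prop:RSeparationChangeInteraction} and \ref{prop:TotalEnergyChange} together with (\ref{eq:EstimateGeometryLikeC0LargerDomain}) in place of (\ref{eq:EstimateGeometryLikeC0}). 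The main obstacle I anticipate is purely bookkeeping: one must carefully verify that the partial products occurring in the general relations (\ref{eq:ChangeOfEnergyFinalAgainGeneralIngoing})--(\ref{eq:ChangeOfRSeparationFinalAgainGeneralOutgoing-1}) do decompose the full product computed in Proposition \ref{prop:TotalEnergyChange}'s proof — i.e., that the index ranges and conventions (\ref{eq:ConventionFirstEnergy})--(\ref{eq:LastConventionRSeparation}) match up so that ``energy before the $j$-th intersection'' times ``remaining growth factors'' genuinely reconstructs ``energy at the next reflection''. Once the telescoping structure is confirmed, the monotonicity (each growth factor $\geq 1$ for energies, $\leq 1$ for separations, modulo $O(\rho_\varepsilon^{1/12})$) does all the work, and the error absorption is routine given the hierarchy (\ref{eq:HierarchyOfParameters}) and the a priori bound $N_\varepsilon = \rho_\varepsilon^{-1}\exp(\exp(\delta_\varepsilon^{-15}))$.
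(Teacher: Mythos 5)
There is a genuine gap in your argument, and it lies exactly where the lemma is designed to do its work. Your plan bounds the partial product at $\mathcal{R}_{i;j}^{(n)}$ by the \emph{full} product terminating at the next reflection, i.e.\ by $\mathcal{E}_{\nwarrow}^{(-)}[n+1;i,0]$, and then invokes (\ref{eq:EqualEnergyFlux}) (with $n+1$ in place of $n$) to identify that quantity with $\mathcal{E}_{i}[n+1]$. But (\ref{eq:EqualEnergyFlux}) at step $n+1$ requires $\{0\le u\le v_{\varepsilon,0}^{(n+1)}-h_{\varepsilon,0}\}\cap\{u<v<u+\sqrt{-3/\Lambda}\pi\}\subset\mathcal{U}_{\varepsilon}^{+}$, and, moreover, your ``remaining growth factors'' between the $j$-th intersection and the end of the $n$-th period can only be controlled by Propositions \ref{prop:EnergyChangeInteraction}--\ref{prop:RSeparationChangeInteraction} if all the corresponding intersection regions (including $\mathcal{R}_{i;\gamma_{\mathcal{Z}}}^{(n)}$, $\mathcal{R}_{i;\mathcal{I}}^{(n)}$ and the subsequent ingoing crossings) lie in $\mathcal{U}_{\varepsilon}^{+}$. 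Neither is among the hypotheses: the lemma only assumes containment up to $u=v_{\varepsilon,0}^{(n)}-h_{\varepsilon,0}$ together with containment of the single region $\mathcal{R}_{i;j}^{(n)}$, which forces only the regions $\mathcal{R}_{\bar i;\bar j}^{(n)}$ with $\bar i\le i$, $\bar j\le j$ (i.e.\ the \emph{past} of $\mathcal{R}_{i;j}^{(n)}$) to be contained. In the very situation where the lemma is applied (Lemma \ref{lem:ControlForExtension}, where $u_{\varepsilon,n}^{+}=\min\{v_{\varepsilon,0}^{(n+1)}-h_{\varepsilon,0},u[\mathcal{U}_{\varepsilon}^{+}]\}$ may cut the period short), the future part of your telescoping is simply not available, so the argument fails precisely when it is needed.

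The paper's proof telescopes in the opposite direction and then compares with the \emph{abstract} recursion rather than with the actual future energy. It uses the general relations (\ref{eq:ChangeOfEnergyFinalAgainGeneralIngoing})--(\ref{eq:ChangeOfRSeparationFinalAgainGeneralOutgoing-1}) with $j_{0}=0$ to express $\mathcal{E}_{\nearrow}^{(-)}[n;i,\bar j]$, $\mathfrak{D}r_{\nearrow}^{(-)}[n;i,\bar j]$ in terms of $\mathcal{E}_{\nwarrow}^{(-)}[n;\bar j,0]$, $\mathfrak{D}r_{\nwarrow}^{(-)}[n;\bar j,0]$ (all data within the past of $\mathcal{R}_{i;j}^{(n)}$, hence controlled), inserts the step-$n$ identities (\ref{eq:EqualEnergyFlux})--(\ref{eq:EqualRSeparation}), and first upgrades these to the approximate equalities $\mathcal{E}_{\nearrow}^{(-)}[n;i,\bar j]=\mathcal{E}_{\bar j}[n+1]+O(\rho_{\varepsilon}^{1/16}\varepsilon^{(\bar j)}(-\Lambda)^{-1/2})$, $\mathfrak{D}r_{\nearrow}^{(-)}[n;i,\bar j]=R_{\bar j}[n+1](1+O(\rho_{\varepsilon}^{1/16}))$ by the same Gronwall-in-$i$ comparison used for (\ref{eq:EqualEnergyFluxOutgoingTau})--(\ref{eq:EqualRSeparationOutgoingTau}). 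Feeding these back into the ingoing relations produces $\mathcal{E}_{\nwarrow}^{(-)}[n;i,\bar j]=\mathcal{E}_{i}[n]\exp\big(\sum_{k=0}^{\bar j-1}2\mathcal{E}_{k}[n+1]/R_{k+1}[n+1]+O(\rho_{\varepsilon}^{1/17})\big)+O(\rho_{\varepsilon}^{1/4}\varepsilon^{(i)}(-\Lambda)^{-1/2})$, and the one-sided bounds then follow purely combinatorially because the partial sum over $k\le\bar j-1$ is dominated by the full sum over $k\le i-1$ appearing in the recursive definitions (\ref{eq:FormulaRecursiveEnergy})--(\ref{eq:FormulaRecursiveDr}) of $\mathcal{E}_{i}[n+1]$, $R_{i}[n+1]$ (using $\bar j\le i$ and nonnegativity of the summands). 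No information about the solution to the future of $\mathcal{R}_{i;j}^{(n)}$ is ever used. If you reorganise your argument along these lines — telescope from $u=v_{\varepsilon,0}^{(n)}-h_{\varepsilon,0}$ forward only as far as $\mathcal{R}_{i;j}^{(n)}$, and compare partial against full sums in the recursion — your error bookkeeping (absorption via (\ref{eq:HierarchyOfParameters}) and (\ref{eq:TrivialBoundEnergiesConcentration})--(\ref{eq:TrivialBoundsRSeparation})) goes through essentially as you wrote it, including the $\widetilde{\cdot}$ case.
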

\begin{proof}
Let $n\in\mathbb{N}$ be such that (\ref{eq:NoonTrivialN-1}) is satisfied,
and let $0\le i,j\le N_{\varepsilon}$, $i>j$, be such that 
\begin{equation}
\mathcal{R}_{i;j}^{(n)}\subset\mathcal{U}_{\varepsilon}^{+}.\label{eq:RnijInUe}
\end{equation}
 Notice that (\ref{eq:RnijInUe}) implies that 
\begin{equation}
\mathcal{R}_{\bar{i};\bar{j}}^{(n)}\subset\mathcal{U}_{\varepsilon}^{+}\text{ for all }0\le\bar{i}\le i,\text{ }0\le\bar{j}\le j,\textnormal{ }\bar{i}\neq\bar{j}
\end{equation}
 and 
\begin{equation}
\mathcal{R}_{\bar{i};\gamma_{\mathcal{Z}}}^{(n)},\mathcal{R}_{\bar{i};\mathcal{I}}^{(n)}\subset\mathcal{U}_{\varepsilon}^{+}\text{ for all }0\le\bar{i}\le j.
\end{equation}

Using (\ref{eq:ChangeOfEnergyFinalAgainGeneralOutgoing}) and (\ref{eq:ChangeOfRSeparationFinalAgainGeneralOutgoing})
for $j_{0}=0$ and $j_{1}=\bar{j}$, we obtain for any $0\le\bar{j}\le j$:
\begin{equation}
\mathcal{E}_{\nearrow}^{(-)}[n;i,\bar{j}]=\mathcal{E}_{\nwarrow}^{(-)}[n;\bar{j},0]\cdot\exp\Big(\sum_{k=0}^{\bar{j}-1}\frac{2\mathcal{E}_{\nearrow}^{(-)}[n;i,k]}{\mathfrak{D}r_{\nearrow}^{(-)}[n;i,k+1]}+O(\rho_{\varepsilon}^{\frac{1}{12}})\Big)+O\Big(\rho_{\varepsilon}^{\frac{1}{4}}\frac{\varepsilon^{(\bar{j})}}{\sqrt{-\Lambda}}\Big)\label{eq:ChangeOfEnergyFinalAgainForEstimatesOutgoing}
\end{equation}
and, provided $\bar{j}\ge1$:
\begin{equation}
\mathfrak{D}r_{\nearrow}^{(-)}[n;i,\bar{j}]=\mathfrak{D}r_{\nwarrow}^{(-)}[n;\bar{j},0]\cdot\exp\Big(-\sum_{k=0}^{\bar{j}-2}\frac{2\mathcal{E}_{\nearrow}^{(-)}[n;i,k]}{\mathfrak{D}r_{\nearrow}^{(-)}[n;i,k+1]}+O(\rho_{\varepsilon}^{\frac{1}{12}})\Big).\label{eq:ChangeOfRSeparationFinalAgainForEstimatesOutgoing}
\end{equation}
Arguing exactly as in the proof of (\ref{eq:EqualEnergyFluxOutgoingTau})\textendash (\ref{eq:EqualRSeparationOutgoingTau}),
by comparing the system (\ref{eq:ChangeOfEnergyFinalAgainForEstimatesOutgoing})\textendash (\ref{eq:ChangeOfRSeparationFinalAgainForEstimatesOutgoing})
for $\mathcal{E}_{\nearrow}^{(-)}[n;i,\bar{j}]$, $\mathfrak{D}r_{\nearrow}^{(-)}[n;i,\bar{j}]$
with the system (\ref{eq:FormulaRecursiveEnergy})\textendash (\ref{eq:FormulaRecursiveDr})
for $\mathcal{E}_{\bar{j}}[n+1]$, $R_{\bar{j}}[n+1]$, using also
the approximate equalities (\ref{eq:EqualEnergyFlux})\textendash (\ref{eq:EqualRSeparation})
for $\mathcal{E}_{\nwarrow}^{(-)}[n;\bar{j},0]$, $\mathfrak{D}r_{\nwarrow}^{(-)}[n;\bar{j},0]$
and $\mathcal{E}_{\bar{j}}[n]$, $R_{\bar{j}}[n]$, respectively,
we infer that 
\begin{align}
\mathcal{E}_{\nearrow}^{(-)}[n;i,\bar{j}] & =\mathcal{E}_{\bar{j}}[n+1]+O\Big(\rho_{\varepsilon}^{\frac{1}{16}}\frac{\varepsilon^{(\bar{j})}}{\sqrt{-\Lambda}}\Big)\text{ for all }0\le\bar{j}\le j,\label{eq:EqualEnergyFluxGeneralCase}\\
\mathfrak{D}r_{\nearrow}^{(-)}[n;i,\bar{j}] & =R_{\bar{j}}[n+1]\cdot\big(1+O(\rho_{\varepsilon}^{\frac{1}{16}})\big)\text{ for all }1\le\bar{j}\le j.\label{eq:EqualRSeparationGeneralCase}
\end{align}

Using (\ref{eq:ChangeOfEnergyFinalAgainGeneralIngoing}) and (\ref{eq:ChangeOfRSeparationFinalAgainGeneralIngoing})
for $j_{0}=0$ and $j_{1}=\bar{j}$, we obtain for any $0\le\bar{j}\le j$:
\begin{equation}
\mathcal{E}_{\nwarrow}^{(-)}[n;i,\bar{j}]=\mathcal{E}_{\nwarrow}^{(-)}[n;i,0]\cdot\exp\Big(\sum_{k=0}^{\bar{j}-1}\frac{2\mathcal{E}_{\nearrow}^{(-)}[n;i,k]}{\mathfrak{D}r_{\nearrow}^{(-)}[n;i,k+1]}+O(\rho_{\varepsilon}^{\frac{1}{12}})\Big)+O\Big(\rho_{\varepsilon}^{\frac{1}{4}}\frac{\varepsilon^{(i)}}{\sqrt{-\Lambda}}\Big)\label{eq:ChangeOfEnergyFinalAgainForEstimatesIngoing}
\end{equation}
and, provided $\bar{j}\ge1$:
\begin{equation}
\mathfrak{D}r_{\nwarrow}^{(-)}[n;i,\bar{j}]=\mathfrak{D}r_{\nwarrow}^{(-)}[n;i,0]\cdot\exp\Big(-\sum_{k=0}^{\bar{j}-2}\frac{2\mathcal{E}_{\nearrow}^{(-)}[n;i,j]}{\mathfrak{D}r_{\nearrow}^{(-)}[n;i,j+1]}+O(\rho_{\varepsilon}^{\frac{1}{12}})\Big).\label{eq:ChangeOfRSeparationFinalAgainForEstimatesIngoing}
\end{equation}
Using the approximate equalities (\ref{eq:EqualEnergyFlux})\textendash (\ref{eq:EqualRSeparation})
for $\mathcal{E}_{\nwarrow}^{(-)}[n;i,0]$, $\mathfrak{D}r_{\nwarrow}^{(-)}[n;i,0]$
and $\mathcal{E}_{i}[n]$, $R_{i}[n]$, respectively, as well as the
approximate equalities (\ref{eq:EqualEnergyFluxGeneralCase})\textendash (\ref{eq:EqualRSeparationGeneralCase})
for $\mathcal{E}_{\nearrow}^{(-)}[n;i,\bar{j}]$, $\mathfrak{D}r_{\nearrow}^{(-)}[n;i,\bar{j}]$
and $\mathcal{E}_{\bar{j}}[n+1]$, $R_{\bar{j}}[n+1]$, respectively,
and the bounds (\ref{eq:TrivialBoundEnergiesConcentration}) and (\ref{eq:TrivialBoundsRSeparation}),
we obtain from (\ref{eq:ChangeOfEnergyFinalAgainForEstimatesIngoing})\textendash (\ref{eq:ChangeOfRSeparationFinalAgainForEstimatesIngoing})
that, for any $0\le\bar{j}\le j$: 
\begin{align}
\mathcal{E}_{\nwarrow}^{(-)}[n;i,\bar{j}] & =\mathcal{E}_{i}[n]\cdot\exp\Big(\sum_{k=0}^{\bar{j}-1}\frac{2\mathcal{E}_{k}[n+1]}{R_{k+1}[n+1]}+O(\rho_{\varepsilon}^{\frac{1}{17}})\Big)+O\Big(\rho_{\varepsilon}^{\frac{1}{4}}\frac{\varepsilon^{(i)}}{\sqrt{-\Lambda}}\Big)\le\label{eq:ChangeOfEnergyFinalAgainForEstimatesIngoing-1}\\
 & \le\mathcal{E}_{i}[n]\cdot\exp\Big(\sum_{k=0}^{i-1}\frac{2\mathcal{E}_{k}[n+1]}{R_{k+1}[n+1]}+O(\rho_{\varepsilon}^{\frac{1}{17}})\Big)+O\Big(\rho_{\varepsilon}^{\frac{1}{4}}\frac{\varepsilon^{(i)}}{\sqrt{-\Lambda}}\Big)\nonumber 
\end{align}
and, provided $i\ge1$:
\begin{align}
\mathfrak{D}r_{\nwarrow}^{(-)}[n;i,\bar{j}] & =R_{i}[n]\cdot\exp\Big(-\sum_{k=0}^{\bar{j}-2}\frac{2\mathcal{E}_{k}[n+1]}{R_{k+1}[n+1]}+O(\rho_{\varepsilon}^{\frac{1}{17}})\Big)\ge\label{eq:ChangeOfRSeparationFinalAgainForEstimatesIngoing-1}\\
 & \ge R_{i}[n]\cdot\exp\Big(-\sum_{k=0}^{i-2}\frac{2\mathcal{E}_{k}[n+1]}{R_{k+1}[n+1]}+O(\rho_{\varepsilon}^{\frac{1}{17}})\Big)\nonumber 
\end{align}
(where we have used the fact that $i\ge\bar{j}$). On the other hand,
from (\ref{eq:FormulaRecursiveEnergy})\textendash (\ref{eq:FormulaRecursiveDr})
we obtain: 
\begin{equation}
\mathcal{E}_{i}[n+1]=\mathcal{E}_{i}[n]\cdot\exp\Big(\sum_{k=0}^{i-1}\frac{2\mathcal{E}_{k}[n+1]}{R_{k+1}[n+1]}\Big)\label{eq:RecursiveEnergiesBarJ}
\end{equation}
and, for $i\ge1$:
\begin{equation}
R_{i}[n+1]=R_{i}[n]\cdot\exp\Big(-\sum_{k=0}^{i-2}\frac{2\mathcal{E}_{k}[n+1]}{R_{k+1}[n+1]}\Big).\label{eq:RecursiveRSeparationBarJ}
\end{equation}
Comparing (\ref{eq:ChangeOfEnergyFinalAgainForEstimatesIngoing-1})\textendash (\ref{eq:ChangeOfRSeparationFinalAgainForEstimatesIngoing-1})
and (\ref{eq:RecursiveEnergiesBarJ})\textendash (\ref{eq:RecursiveRSeparationBarJ}),
using also the bounds (\ref{eq:TrivialBoundEnergiesConcentration})
and (\ref{eq:TrivialBoundsRSeparation}), we obtain
\begin{align}
\mathcal{E}_{\nwarrow}^{(-)}[n;i,\bar{j}] & \le\mathcal{E}_{i}[n+1]+\rho_{\varepsilon}^{\frac{1}{18}}\text{ for all }0\le\bar{j}\le j,\label{eq:UpperBoundEnergyFluxGeneralCase}\\
\mathfrak{D}r_{\nwarrow}^{(-)}[n;i,\bar{j}] & \ge R_{i}[n+1]\Big(1-\rho_{\varepsilon}^{\frac{1}{18}}\Big)\text{ for all }0\le\bar{j}\le j,\text{ if }i\ge1.\label{eq:UpperBoundRSeparationFluxGeneralCase}
\end{align}

From (\ref{eq:EqualEnergyFluxGeneralCase})\textendash (\ref{eq:EqualRSeparationGeneralCase})
and (\ref{eq:UpperBoundEnergyFluxGeneralCase})\textendash (\ref{eq:UpperBoundRSeparationFluxGeneralCase}),
we infer (\ref{eq:UpperBoundEnergiesFromE}) and (\ref{eq:LowerBoundDRFromR})
in the case $i>j$. The proof of (\ref{eq:UpperBoundEnergiesFromE})
and (\ref{eq:LowerBoundDRFromR}), when $i<j$, or (\ref{eq:UpperBoundEnergiesFromESpecial})
and (\ref{eq:LowerBoundDRFromR}), when $i=j$, follows in exactly
the same way (using (\ref{eq:ChangeOfEnergyFinalAgainGeneralIngoing-1})\textendash (\ref{eq:ChangeOfRSeparationFinalAgainGeneralOutgoing-1})
in place of (\ref{eq:ChangeOfEnergyFinalAgainGeneralIngoing})\textendash (\ref{eq:ChangeOfRSeparationFinalAgainGeneralOutgoing})),
and hence the details will be omitted.
\end{proof}
The following result will be useful in obtaining a priori control
on the concentration of the energy of $f_{\varepsilon}$ on $\mathcal{U}_{\varepsilon}^{+}$
and $\mathcal{T}_{\varepsilon}^{+}$:
\begin{lem}
\label{lem:ControlForExtension} For any $n\in\mathbb{N}$ such that
\begin{equation}
\{0\le u\le v_{\varepsilon,0}^{(n)}-h_{\varepsilon,0}\}\cap\big\{ u<v<u+\sqrt{-\frac{3}{\Lambda}}\pi\big\}\subset\mathcal{U}_{\varepsilon}^{+},\label{eq:NoonTrivialN-1-1}
\end{equation}
 we can estimate on 
\begin{equation}
\mathcal{U}_{\varepsilon;n}^{+}\doteq\mathcal{U}_{\varepsilon}^{+}\cap\{v_{\varepsilon,0}^{(n)}-h_{\varepsilon,0}\le u\le v_{\varepsilon,0}^{(n+1)}-h_{\varepsilon,0}\}\label{eq:U+n}
\end{equation}
that: 
\begin{align}
\text{ }\sup_{V\ge0}\int_{\{v=V\}\cap\mathcal{U}_{\varepsilon;n}^{+}}r\Big(\frac{T_{uu}[f_{\varepsilon}]}{-\partial_{u}r}+\frac{T_{uv}[f_{\varepsilon}]}{\partial_{v}r}\Big) & (u,V)\,du+\label{eq:UpperBoundNormBootstrapDomainFromSequence}\\
\sup_{U\ge0}\int_{\{u=U\}\cap\mathcal{U}_{\varepsilon;n}^{+}}r\Big(\frac{T_{vv}[f_{\varepsilon}]}{\partial_{v}r}+ & \frac{T_{uv}[f_{\varepsilon}]}{-\partial_{u}r}\Big)(U,v)\,dv\le\nonumber \\
\le & 8\sum_{i=0}^{N_{\varepsilon}-1}\mu_{i}[n+1]+\max_{0\le i\le N_{\varepsilon}}\big\{\big(\exp(e^{\sigma_{\varepsilon}^{-7}}\big)a_{\varepsilon i}\big\}+\rho_{\varepsilon}^{\frac{1}{19}}\nonumber 
\end{align}
(where $\mu_{i}[n]$ were introduced in Definition \ref{def:RecursiveSystemMuER}).
Furthermore, 
\begin{equation}
\sup_{\mathcal{U}_{\varepsilon;n}^{+}}\frac{2\tilde{m}}{r}\le\max_{0\le i\le N_{\varepsilon}}\big\{\big(\exp(e^{\sigma_{\varepsilon}^{-8}}\big)a_{\varepsilon i}\big\}+\varepsilon^{\frac{1}{2}}\label{eq:UpperBoundBootstrapDomainMuTilde/R}
\end{equation}
and, for any $0\le j\le N_{\varepsilon}$: 
\begin{equation}
\sup_{\mathcal{U}_{\varepsilon;n}^{+}\cap\{u\le v_{\varepsilon,j}^{(n)}+h_{\varepsilon,j}\}}\frac{2\tilde{m}}{r}\le\max_{0\le i\le j}\big\{\big(\exp(e^{\sigma_{\varepsilon}^{-8}}\big)a_{\varepsilon i}\big\}+\varepsilon^{\frac{1}{2}}.\label{eq:UpperBoundBootstrapDomainRestrictedMu}
\end{equation}
Similarly, the estimates (\ref{eq:UpperBoundNormBootstrapDomainFromSequence}),
(\ref{eq:UpperBoundBootstrapDomainMuTilde/R}) and (\ref{eq:UpperBoundBootstrapDomainRestrictedMu})
also hold on 
\begin{equation}
\mathcal{T}_{\varepsilon;n}^{+}\doteq\mathcal{T}_{\varepsilon}^{+}\cap\{v_{\varepsilon,0}^{(n)}-\tilde{h}_{\varepsilon,0}\le u\le v_{\varepsilon,0}^{(n+1)}-\tilde{h}_{\varepsilon,0}\},\label{eq:T_n_e}
\end{equation}
with $\delta_{\varepsilon}$ and $\tilde{h}_{\varepsilon,j}$ in place
of $\sigma_{\varepsilon}$ and $h_{\varepsilon,j}$, respectively. 
\end{lem}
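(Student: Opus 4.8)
\textbf{Proof strategy for Lemma \ref{lem:ControlForExtension}.}

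The plan is to obtain the three estimates \eqref{eq:UpperBoundNormBootstrapDomainFromSequence}, \eqref{eq:UpperBoundBootstrapDomainMuTilde/R} and \eqref{eq:UpperBoundBootstrapDomainRestrictedMu} on the ``one period'' strip $\mathcal{U}_{\varepsilon;n}^{+}$ by reducing every integral or supremum to the beam-by-beam energy accounting already established in Proposition \ref{prop:TotalEnergyChange} and Lemma \ref{lem:ComparisonEstimatesEnergiesAndRSeparations}. First I would exploit the crucial structural fact used repeatedly in the proof of Proposition \ref{prop:TotalEnergyChange}: since $supp(T_{\mu\nu}[f_{\varepsilon}])\cap\mathcal{U}_{\varepsilon}^{+}\subset\bigcup_{k}\bigcup_{i}\mathcal{V}_{i}^{(k)}$, the renormalised mass $\tilde{m}$ is constant in each connected component of $\mathcal{U}_{\varepsilon}^{+}\setminus\bigcup\mathcal{V}_{i}^{(k)}$; hence any null segment $\{u=U\}$ or $\{v=V\}$ crossing $\mathcal{U}_{\varepsilon;n}^{+}$ meets at most $N_{\varepsilon}$ beams, and the integral $\int r\big(\frac{T_{vv}}{\partial_v r}+\frac{T_{uv}}{-\partial_u r}\big)\,dv$ along it telescopes into a sum over those beams of quantities of the form $\frac{2\mathcal{E}_{\nwarrow}^{(\pm)}[n;i,j]}{r}$ or $\frac{2\mathcal{E}_{\nearrow}^{(\pm)}[n;i,j]}{r}$ — up to the $T_{uv}$ contribution, which is controlled by \eqref{eq:TildeUMaza}--\eqref{eq:TildeVMaza} and the bound \eqref{eq:UpperBoundTuvFi} and is of lower order. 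Then I would apply \eqref{eq:UpperBoundEnergiesFromE}--\eqref{eq:LowerBoundDRFromR} of Lemma \ref{lem:ComparisonEstimatesEnergiesAndRSeparations} to replace $\mathcal{E}_{\nwarrow}^{(\pm)}[n;i,j]$ by $\mathcal{E}_i[n+1]+O(\rho_\varepsilon^{1/18}\varepsilon^{(i)}(-\Lambda)^{-1/2})$ and $\mathfrak{D}r_{\nwarrow}^{(\pm)}[n;i,j]$ from below by $R_i[n+1](1-\rho_\varepsilon^{1/18})$, so that each beam's contribution to the integral is bounded by $\frac{2\mathcal{E}_i[n+1]}{R_{i+1}[n+1]}\cdot(1+O(\rho_\varepsilon^{1/18}))+O(\rho_\varepsilon^{1/18})=\mu_i[n+1]+O(\rho_\varepsilon^{1/18})$ via the identity \eqref{eq:QuotientERMu}; summing over $0\le i\le N_{\varepsilon}-1$, and treating the top beam $i=N_\varepsilon$ separately (its energy is still $\le\exp(e^{\sigma_\varepsilon^{-7}})a_{\varepsilon N_\varepsilon}\cdot\varepsilon^{(N_\varepsilon)}(-\Lambda)^{-1/2}$ by the trivial bound, contributing the $\max_i a_{\varepsilon i}$ term), and absorbing the $N_\varepsilon\cdot\rho_\varepsilon^{1/18}$ errors into $\rho_\varepsilon^{1/19}$ using $N_\varepsilon=\rho_\varepsilon^{-1}\exp(e^{\delta_\varepsilon^{-15}})$ and the hierarchy \eqref{eq:HierarchyOfParameters}, yields \eqref{eq:UpperBoundNormBootstrapDomainFromSequence}.

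For \eqref{eq:UpperBoundBootstrapDomainMuTilde/R}, the plan is to integrate \eqref{eq:TildeVMaza} in $v$ from the axis $\gamma_{\mathcal{Z}_\varepsilon}$ up to an arbitrary point of $\mathcal{U}_{\varepsilon;n}^{+}$: the total renormalised mass picked up is $\sum_i \mathcal{E}_{\nwarrow}^{(\pm)}[n;i,\cdot]$ which, by \eqref{eq:UpperBoundEnergiesFromE}, is $\le\sum_i\big(\mathcal{E}_i[n+1]+\rho_\varepsilon^{1/18}\varepsilon^{(i)}(-\Lambda)^{-1/2}\big)$, while the value of $r$ at such a point is bounded below, away from the axis, by the geometric separation $\inf_j\mathfrak{D}r_{\nwarrow}^{(\pm)}[n;j,\cdot]\gtrsim R_j[n+1]$ through \eqref{eq:LowerBoundDRFromR} and the analogue of \eqref{eq:ExpressionRnii-1inV}; close to the axis instead $\tilde{m}=O(r^3)$, so $\frac{2\tilde{m}}{r}$ is small there directly. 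The ratio is therefore dominated by $\max_i\frac{2\mathcal{E}_i[n+1]}{R_i[n+1]}=\max_i\mu_{i-1}[n+1]\cdot(\text{bounded factor})$; however, since \eqref{eq:UpperBoundBootstrapDomainMuTilde/R} must hold with the \emph{initial weights} $a_{\varepsilon i}$ and not $\mu_i[n+1]$, I would instead trace back through \eqref{eq:EplicitRelationMu} and the relation between $\mathcal{E}_i[0]$ and $a_{\varepsilon i}\varepsilon^{(i)}(-\Lambda)^{-1/2}$ (Definition \ref{def:RecursiveSystemMuER}, \eqref{eq:InitialConditionsImplicitRelations}, and the estimates of Lemma \ref{lem:ControlHigherOrderDerivatives}), together with the a priori bound $\sum_i\mu_i[n]\le\exp(e^{\sigma_\varepsilon^{-7}})$ already used in \eqref{eq:BoundSumEnergiesByDelta}, to conclude $\frac{2\mathcal{E}_i[n+1]}{r}\le\exp(e^{\sigma_\varepsilon^{-8}})a_{\varepsilon i}$ pointwise on $\mathcal{V}_i$. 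The restricted bound \eqref{eq:UpperBoundBootstrapDomainRestrictedMu} follows by the identical argument, noting that on $\mathcal{U}_{\varepsilon;n}^{+}\cap\{u\le v_{\varepsilon,j}^{(n)}+h_{\varepsilon,j}\}$ only the beams $\mathcal{V}_0,\dots,\mathcal{V}_j$ have been crossed, so the sum and the maximum run only over $0\le i\le j$.

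Finally, the $\mathcal{T}_{\varepsilon}^{+}$ versions follow by repeating the whole argument verbatim, replacing everywhere $\sigma_\varepsilon$, $h_{\varepsilon,i}$, $\mathcal{V}_i^{(n)}$, $\mathcal{R}_{i;j}^{(n)}$, $\mathcal{E}_{\nwarrow}^{(\pm)}$, $\mathfrak{D}r_{\nwarrow}^{(\pm)}$ by $\delta_\varepsilon$, $\tilde{h}_{\varepsilon,i}$, $\widetilde{\mathcal{V}}_i^{(n)}$, $\widetilde{\mathcal{R}}_{i;j}^{(n)}$, $\widetilde{\mathcal{E}}_{\nwarrow}^{(\pm)}$, $\widetilde{\mathfrak{D}}r_{\nwarrow}^{(\pm)}$, and invoking the $\widetilde{\,\cdot\,}$ clauses of Proposition \ref{prop:TotalEnergyChange} and Lemma \ref{lem:ComparisonEstimatesEnergiesAndRSeparations} together with \eqref{eq:EstimateGeometryLikeC0LargerDomain} in place of \eqref{eq:EstimateGeometryLikeC0}. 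The main obstacle I anticipate is the bookkeeping in the first step: making the telescoping of the null-line integrals fully rigorous requires carefully ordering the at-most-$N_\varepsilon$ beam crossings along a given segment of $\{u=U\}$ or $\{v=V\}$ (distinguishing whether the segment lies in the ``ingoing'' or ``outgoing'' part of each beam, and whether it passes between two beams or through the axis/infinity self-intersection regions $\mathcal{R}_{i;\gamma_{\mathcal{Z}}}^{(n)}$, $\mathcal{R}_{i;\mathcal{I}}^{(n)}$), and then checking that each of the $N_\varepsilon$ error terms of size $O(\rho_\varepsilon^{1/18})$ does indeed sum to something absorbable into $\rho_\varepsilon^{1/19}$ — which is exactly where the largeness of $\rho_\varepsilon^{-1}$ relative to $\exp(e^{\delta_\varepsilon^{-15}})$ in the hierarchy \eqref{eq:HierarchyOfParameters} is used. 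The second potential subtlety is that the statement is phrased in terms of $a_{\varepsilon i}$ rather than $\mu_i[n+1]$, so one must be slightly careful that the monotonicity $\mu_i[n]\ge\mu_i[0]$ (from \eqref{eq:EplicitRelationMu}) goes in the harmless direction for an \emph{upper} bound only after first bounding $\mu_i[n+1]$ from above by $\exp(e^{\sigma_\varepsilon^{-7}})a_{\varepsilon i}$ using the a priori control $\sum_j\mu_j[n]\lesssim\exp(e^{\sigma_\varepsilon^{-7}})$ valid on $\mathcal{U}_{\varepsilon}^{+}$.
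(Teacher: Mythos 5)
Your strategy for \eqref{eq:UpperBoundNormBootstrapDomainFromSequence} is essentially the paper's: there too, each null line $\{v=V\}$ or $\{u=U\}$ is decomposed into its beam crossings, every fully crossed beam contributes at most $2\mu_i[n+1]+O(\rho_\varepsilon^{1+\frac{1}{18}})$ via Proposition \ref{prop:TotalEnergyChange}, Lemma \ref{lem:ComparisonEstimatesEnergiesAndRSeparations} and the identifications \eqref{eq:ExpressionRnii-1inU}, \eqref{eq:ComparableRnij}, and the $N_\varepsilon$ error terms are absorbed into $\rho_\varepsilon^{\frac{1}{19}}$ using \eqref{eq:HierarchyOfParameters}. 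The one point you underplay is the origin of the $\max_i\exp(e^{\sigma_\varepsilon^{-7}})a_{\varepsilon i}$ term: in the paper it covers not just the top beam but any beam met incompletely (a beam cut by the future boundary $\{u=u_{\varepsilon,n}^{+}\}$ of the strip, or the single beam met in the far region where $V>V^{(+)}$), for which the $\tilde m$-difference quantities are unavailable and one falls back on the pointwise bounds \eqref{eq:UpperBoundTuuTvvFi}--\eqref{eq:UpperBoundTuvFi}; this is the bookkeeping you anticipated, and it is not a gap.

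For \eqref{eq:UpperBoundBootstrapDomainMuTilde/R} and \eqref{eq:UpperBoundBootstrapDomainRestrictedMu}, however, your route has a genuine gap. You propose to bound $2\tilde m/r$ by $\max_i 2\mathcal{E}_i[n+1]/R_i[n+1]$ and then ``trace back'' to the weights $a_{\varepsilon i}$ through \eqref{eq:EplicitRelationMu}, invoking an a priori bound $\sum_i\mu_i[n]\le\exp(e^{\sigma_\varepsilon^{-7}})$. No such bound is available: what the paper controls a priori is of the form \eqref{eq:BoundSumEnergiesByDelta}, with constant $\exp(\exp(2\delta_\varepsilon^{-15}))$ per period, accumulated over up to $\sim\sigma_\varepsilon^{-2}$ periods, so the recursion only yields $\mathcal{E}_i[n+1]\le\mathcal{E}_i[0]\cdot\exp\big(O(\sigma_\varepsilon^{-2}\exp(\exp(2\delta_\varepsilon^{-15})))\big)$ --- a prefactor involving $\delta_\varepsilon^{-1}$ and the number of reflections, not $\exp(e^{\sigma_\varepsilon^{-8}})$. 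The precise form of the constant is essential here, because \eqref{eq:UpperBoundBootstrapDomainMuTilde/R} is later beaten by choosing $a_{\varepsilon i}\le\exp(-\exp(\delta_\varepsilon^{-10}))$ and $a_{\varepsilon N_\varepsilon}\le\exp(-e^{\sigma_\varepsilon^{-9}})$ in Proposition \ref{prop:FineTuningInitialData} and Lemma \ref{lem:ProfileOfTheLastStep}; a prefactor of size $\exp\big(\exp(\exp(C\delta_\varepsilon^{-15}))\big)$ would render the estimate useless there. The paper's proof of these two bounds in fact never touches $\mu_i[n+1]$, $\mathcal{E}_i[n+1]$ or $R_i[n+1]$: it introduces the regions $\mathcal{Q}_i^{(n)}$ of \eqref{eq:QDomain} (everything ``to the right'' of the beam $\mathcal{V}_i$), uses $\tilde m\equiv0$ on the component containing the axis \eqref{eq:VanishingMassToTheLeft}, and on $\mathcal{Q}_i^{(n)}\setminus\cup_{j<i}\mathcal{Q}_j^{(n)}$ bounds $\tilde m$ by integrating $\partial\tilde m$ from the axis, where only the beams $j\ge i$ contribute, each by at most $\exp(\exp(\sigma_\varepsilon^{-7}))a_{\varepsilon j}\varepsilon^{(j)}(-\Lambda)^{-1/2}$ thanks to the uniform pointwise bounds \eqref{eq:UpperBoundTuuTvvFi}--\eqref{eq:UpperBoundTuvFi} together with \eqref{eq:EstimateGeometryLikeC0} (these already encode, uniformly in $n$, all the growth permitted inside $\mathcal{U}_\varepsilon^{+}$); dividing by $\inf_{\mathcal{Q}_i^{(n)}}r\ge\exp(-\exp(\sigma_\varepsilon^{-5}))\varepsilon^{(i)}(-\Lambda)^{-1/2}$ gives \eqref{eq:UpperBoundBootstrapDomainMuTilde/R} with a $\sigma_\varepsilon$-only constant, and \eqref{eq:UpperBoundBootstrapDomainRestrictedMu} follows because the regions with $i>j$ do not meet $\{u\le v_{\varepsilon,j}^{(n)}+h_{\varepsilon,j}\}$ (your justification ``only the beams $\mathcal{V}_0,\dots,\mathcal{V}_j$ have been crossed'' is not accurate as stated, since the ingoing phases of the beams of index $>j$ have already been crossed at such $u$; what matters is that at radii $\sim\varepsilon^{(i)}$ only beams of index $\ge i$ lie between the point and the axis). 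Your sketch also relies, near the axis, on ``$\tilde m=O(r^3)$'', which holds only in the vacuum region adjacent to the axis, not inside the innermost beam at small $r$ --- exactly the regime the $\mathcal{Q}_i^{(n)}$ hierarchy is designed to handle. To repair the argument, drop the detour through the recursive quantities and argue directly from the pointwise bounds, beam by beam in the hierarchy of radii.
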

\begin{rem*}
Notice that $\mathcal{U}_{\varepsilon;n}^{+}$ can be alternatively
expressed as 
\[
\mathcal{U}_{\varepsilon;n}^{+}=\big\{ v_{\varepsilon,0}^{(n)}-h_{\varepsilon,0}\le u\le u_{\varepsilon,n}^{+}\big\}\cap\big\{ u<v<u+\sqrt{-\frac{3}{\Lambda}}\pi\big\},
\]
where 
\begin{equation}
u_{\varepsilon,n}^{+}\doteq\min\{v_{\varepsilon,0}^{(n+1)}-h_{\varepsilon,0},u[\mathcal{U}_{\varepsilon}^{+}]\}\label{u+epsilonn}
\end{equation}
(see the relation (\ref{eq:DefinitionBootstrapDomainWithU}) for $\mathcal{U}_{\varepsilon}^{+}$).
\end{rem*}
\begin{proof}
In order to show (\ref{eq:UpperBoundNormBootstrapDomainFromSequence}),
we will first show that, for any $V\ge0$, 
\begin{equation}
\text{ }\int_{\{v=V\}\cap\mathcal{U}_{\varepsilon;n}^{+}}r\Big(\frac{T_{uu}[f_{\varepsilon}]}{-\partial_{u}r}+\frac{T_{uv}[f_{\varepsilon}]}{\partial_{v}r}\Big)(u,V)\,du\le\frac{1}{2}\max_{0\le i\le N_{\varepsilon}}\big\{\big(\exp(e^{\sigma_{\varepsilon}^{-7}}\big)a_{\varepsilon i}\big\}+4\sum_{i=0}^{N_{\varepsilon}-1}\mu_{i}[n+1]+\frac{1}{2}\rho_{\varepsilon}^{\frac{1}{19}}\label{eq:UpperBoundInUTvvTuv}
\end{equation}
Note that, in view of the definition (\ref{eq:U+n}) of $\mathcal{U}_{\varepsilon;n}^{+}$,
the inequality (\ref{eq:UpperBoundInUTvvTuv}) is non trivial only
when 
\[
v_{\varepsilon,0}^{(n)}-h_{\varepsilon,0}<V<v_{\varepsilon,0}^{(n+1)}-h_{\varepsilon,0}+\sqrt{-\frac{3}{\Lambda}}\pi.
\]
In view of the relation (\ref{eq:TildeUMaza}) for $\partial_{u}\tilde{m}$,
the linear relation (\ref{eq:LinearCombinationVlasovFields}) between
$f_{\varepsilon}$ and $f_{\varepsilon i}$, as well as the bound
(\ref{eq:BoundSupportFepsiloni}) on the support of $f_{\varepsilon i}$,
we have 
\begin{equation}
\int_{\{v=V\}\cap\mathcal{U}_{\varepsilon;n}^{+}}r\Big(\frac{T_{uu}[f_{\varepsilon}]}{-\partial_{u}r}+\frac{T_{uv}[f_{\varepsilon}]}{\partial_{v}r}\Big)(u,V)\,du=\frac{1}{4\pi}\sum_{i=0}^{N_{\varepsilon}}\int_{\{v=V\}\cap(\mathcal{V}_{i}^{(n)}\cup\mathcal{V}_{i}^{(n+1)})\cap\mathcal{U}_{\varepsilon;n}^{+}}\big(1-\frac{2\tilde{m}}{r}-\frac{1}{3}\Lambda r^{2}\big)^{-1}\frac{-\partial_{u}\tilde{m}}{r}(u,V)\,du.\label{eq:LinearCombinationTvvTuv}
\end{equation}

We will proceed to establish (\ref{eq:UpperBoundInUTvvTuv}) by considering
the cases when $V^{(-)}\le V\le V^{(+)}$, $V<V^{(-)}$ and $V>V^{(+)}$
separately, where we have set 
\begin{align}
V^{(-)} & \doteq v_{\varepsilon,N_{\varepsilon}}^{(n)}+h_{\varepsilon,N_{\varepsilon}},\label{eq:V-}\\
V^{(+)} & \doteq v_{\varepsilon,N_{\varepsilon}}^{(n)}+\rho_{\varepsilon}^{-2}\frac{\varepsilon}{\sqrt{-\Lambda}}\label{eq:V+}
\end{align}
(see Figure \ref{fig:Concentration_Estimates}).

\begin{figure}[h] 
\centering 
\scriptsize
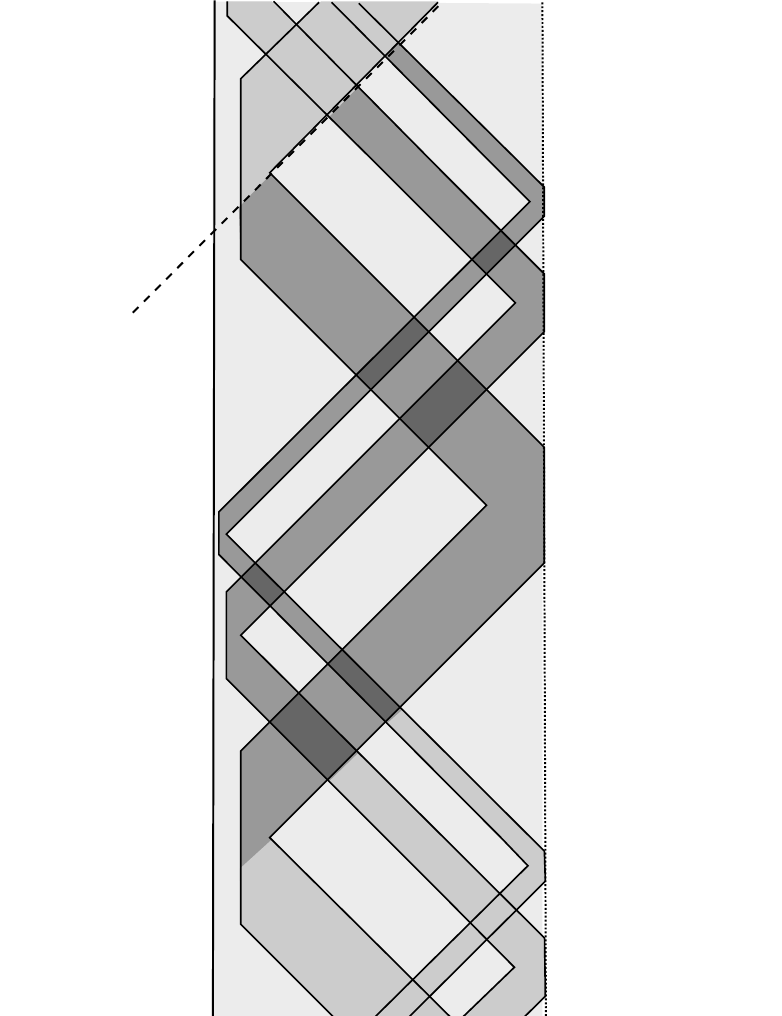 
\caption{Schematic depiction of the domain $\mathcal{U}_{\epsilon;n}^{+}$ in the case when $v_{\epsilon,0}^{(n+1)}-h_{\epsilon,0}<u[\mathcal{U}_{\epsilon}^{+}]$. For simplicity, we have only depicted three of the beam domains $\mathcal{V}_i$. \label{fig:Concentration_Estimates}}
\end{figure}

\medskip{}

\noindent \emph{Case I: $V\in[V^{(-)},V^{(+)}]$. }In this case, the
bound (\ref{eq:EstimateGeometryLikeC0}) and the definition (\ref{eq:V+})
of $V^{(+)}$ imply that 
\begin{equation}
r|_{\{v=V\}\cap(\cup_{i=0}^{N_{\varepsilon}}(\mathcal{V}_{i}^{(n)}\cup\mathcal{V}_{i}^{(n+1)}))\cap\mathcal{U}_{\varepsilon}^{+}}\le e^{\sigma_{\varepsilon}^{-4}}\rho_{\varepsilon}^{-2}\frac{\varepsilon}{\sqrt{-\Lambda}}.\label{eq:UpperBoundRFirstCase}
\end{equation}
Furthermore, for any $0\le i\le N_{\varepsilon}$, we have 
\begin{equation}
\{v=V\}\cap(\mathcal{V}_{i}^{(n)}\cup\mathcal{V}_{i}^{(n+1)})\cap\mathcal{U}_{\varepsilon;n}^{+}=\{v=V\}\cap\mathcal{V}_{i\nearrow}^{(n)}\cap\mathcal{U}_{\varepsilon;n}^{+}.\label{eq:IntersectionOnlywithOutgoingRegions}
\end{equation}
Let us define $i_{+}$ as the maximum number in $\{0,1,\ldots,N_{\varepsilon}\}$
such that 
\begin{equation}
\mathcal{V}_{i\nearrow}^{(n)}\subset\mathcal{U}_{\varepsilon;n}^{+}\text{ for all }i<i_{+}.\label{eq:i+definition}
\end{equation}
Note that, if $i_{+}<N_{\varepsilon}$, then it is necessary that
$\mathcal{V}_{j\nearrow}^{(n)}\cap\mathcal{U}_{\varepsilon}^{+}=\emptyset$
for all $\iota_{+}<j\le N_{\varepsilon}$. Moreover, for any $i<i_{+}$:
\begin{equation}
\inf_{\{v=V\}\cap\mathcal{V}_{i\nearrow}^{(n)}\cap\mathcal{U}_{\varepsilon;n}^{+}}(v-u)\ge v_{n;N_{\varepsilon},i_{+}+1}^{(+)}-u_{n;N_{\varepsilon},i_{+}+1}^{(+)}\ge\rho_{\varepsilon}^{-1}\frac{\varepsilon^{(i_{+}+1)}}{\sqrt{-\Lambda}}.\label{eq:LowerBoundv-u}
\end{equation}
Note that the above definition of $i_{+}$ implies that, in the extreme
case when $\mathcal{V}_{j\nearrow}^{(n)}\cap\mathcal{U}_{\varepsilon}^{+}=\emptyset$
for all $j$, we have $i_{+}=0$.

\medskip{}

\noindent \emph{Remark.} In the case when $v_{\varepsilon,0}^{(n+1)}-h_{\varepsilon,0}=u[\mathcal{U}_{\varepsilon}^{+}]$
(and hence $u_{\varepsilon,n}^{+}=v_{\varepsilon,0}^{(n+1)}-h_{\varepsilon,0}$
in (\ref{u+epsilonn})), such as the case depicted in Figure \ref{fig:Concentration_Estimates},
the parameter $i_{+}$ is equal to $N_{\varepsilon}$; similarly,
in this case, for the parameters $i_{+}^{(1)},i_{+}^{(2)}$ defined
by (\ref{eq:DefinitionI1+})\textendash (\ref{eq:DefinitionI2+}),
we have $i_{+}^{(1)}=i_{+}^{(2)}=N_{\varepsilon}$. The parameters
$i_{+}$, $i_{+}^{(1)}$ and $i_{+}^{(2)}$ are only introduced to
treat the case when $u_{\varepsilon,n}^{+}<v_{\varepsilon,0}^{(n+1)}-h_{\varepsilon,0}$
and $u=u_{\varepsilon,n}^{+}$ intersects one of the outgoing beam
components $\mathcal{V}_{i\nearrow}^{(n)}$, $0\le i\le N_{\varepsilon}$. 

\medskip{}

For any $i<i_{+}$, let us denote for simplicity 
\begin{align}
r_{min}^{(i)}(V) & =\inf_{\{v=V\}\cap\mathcal{V}_{i\nearrow}^{(n)}\cap\mathcal{U}_{\varepsilon;n}^{+}}r=r\Big(v_{\varepsilon,i}^{(n)}+h_{\varepsilon,i},V\Big),\label{eq:RiMinMax}\\
r_{max}^{(i)}(V) & =\sup_{\{v=V\}\cap\mathcal{V}_{i\nearrow}^{(n)}\cap\mathcal{U}_{\varepsilon;n}^{+}}r=r\Big(v_{\varepsilon,i}^{(n)}-h_{\varepsilon,i},V\Big).\nonumber 
\end{align}
 In view of the definition (\ref{eq:BeamDomain}) of $\mathcal{V}_{i\nearrow}^{(n)}$,
the bound (\ref{eq:BoundsRIntersectionRegioni>j}) for $r_{n;N_{\varepsilon},i}$,
the bound (\ref{eq:EstimateGeometryLikeC0}) on $\partial_{u}r$,
the bounds (\ref{eq:UpperBoundMuBootstrapDomain}), (\ref{eq:UpperBoundRFirstCase})
on $\frac{2\tilde{m}}{r}$, $r$, the fact that $\partial_{v}r>0$
on $\mathcal{U}_{\varepsilon}^{+}$ and the lower bound (\ref{eq:LowerBoundv-u}),
we have for any $i<i_{+}$:
\begin{equation}
\frac{r_{max}^{(i)}(V)-r_{min}^{(i)}(V)}{r_{min}^{(i)}(V)}\le\frac{\int_{([v_{\varepsilon,i}^{(n)}-h_{\varepsilon,i},v_{\varepsilon,i}^{(n)}+h_{\varepsilon,i}]\times\{V\})\cap\mathcal{U}_{\varepsilon;n}^{+}}\big(-\partial_{u}r\big)\,du}{r_{min}^{(i)}(V^{(-)})}\le\frac{\exp(2\sigma_{\varepsilon}^{-6})\varepsilon^{(i)}}{r_{n;N_{\varepsilon},i}}\le\rho_{\varepsilon}^{\frac{3}{4}}.\label{eq:SmallChangeInR}
\end{equation}

Using (\ref{eq:UpperBoundMuBootstrapDomain}), (\ref{eq:UpperBoundRFirstCase}),
(\ref{eq:IntersectionOnlywithOutgoingRegions}) and (\ref{eq:SmallChangeInR}),
together with the fact that $\partial_{u}\tilde{m}\le0$ on $\mathcal{U}_{\varepsilon}^{+}$,
we infer that, for any $i<i_{+}$:
\begin{align}
\int_{\{v=V\}\cap(\mathcal{V}_{i}^{(n)}\cup\mathcal{V}_{i}^{(n+1)})\cap\mathcal{U}_{\varepsilon;n}^{+}}\big(1-\frac{2\tilde{m}}{r}- & \frac{1}{3}\Lambda r^{2}\big)^{-1}\frac{-\partial_{u}\tilde{m}}{r}(u,V)\,du=\label{eq:FirstUpperBoundDM}\\
 & =\int_{\{v=V\}\cap\mathcal{V}_{i\nearrow}^{(n)}\cap\mathcal{U}_{\varepsilon;n}^{+}}(1+O(\eta_{0})+O(\varepsilon))\frac{-\partial_{u}\tilde{m}}{r_{min}^{(i)}(V)\big(1+O(\rho_{\varepsilon}^{\frac{3}{4}})\big)}(u,V)\,du\le\nonumber \\
 & \le\frac{2}{r_{min}^{(i)}(V)}\int_{\{v=V\}\cap\mathcal{V}_{i\nearrow}^{(n)}\cap\mathcal{U}_{\varepsilon;n}^{+}}(-\partial_{u}\tilde{m})(u,V)\,du.\nonumber 
\end{align}
Using the definition (\ref{eq:OutgoingEnergyAfter}) of $\mathcal{E}_{\nearrow}^{(+)}[n;N_{\varepsilon},i]$,
with the convention that 
\[
\mathcal{E}_{\nearrow}^{(+)}[n;N_{\varepsilon},N_{\varepsilon}]=\mathcal{E}_{\gamma_{\mathcal{Z}}}[n;N_{\varepsilon}],
\]
as well as the fact that $\tilde{m}$ is constant on each connected
component of $\mathcal{U}_{\varepsilon}^{+}\backslash\cup_{j=0}^{N_{\varepsilon}}\mathcal{V}_{j}$
and that $\partial_{v}r>0$, from (\ref{eq:FirstUpperBoundDM}) we
obtain that, for any $i<i_{+}$: 
\begin{align}
\int_{\{v=V\}\cap(\mathcal{V}_{i}^{(n)}\cup\mathcal{V}_{i}^{(n+1)})\cap\mathcal{U}_{\varepsilon;n}^{+}}\big(1-\frac{2\tilde{m}}{r}- & \frac{1}{3}\Lambda r^{2}\big)^{-1}\frac{-\partial_{u}\tilde{m}}{r}(u,V)\,du\le\label{eq:SecondUpperBoundDM}\\
 & \le\frac{2}{r_{min}^{(i)}(V^{(-)})}\mathcal{E}_{\nearrow}^{(+)}[n;N_{\varepsilon},i]\le\nonumber \\
 & \le\frac{2}{r_{n;N_{\varepsilon},i}}\mathcal{E}_{\nearrow}^{(+)}[n;N_{\varepsilon},i].\nonumber 
\end{align}
Using the relations (\ref{eq:ExpressionRnii-1inU}) and (\ref{eq:ComparableRnij})
(implying that $r_{n;N_{\varepsilon},i}=\mathfrak{D}r_{\nearrow}^{(-)}[n;i+1,i+1]\big(1+O(\rho_{\varepsilon}^{\frac{1}{10}})\big)$),
the bound (\ref{eq:TrivialBoundEnergiesConcentration}) for $\mathcal{E}_{\nearrow}^{(\pm)}$
and the estimates (\ref{eq:UpperBoundEnergiesFromE})\textendash (\ref{eq:LowerBoundDRFromR})
for $\mathcal{E}_{\nearrow}^{(\pm)}$, $\mathfrak{D}r_{\nearrow}^{(-)}$,
we infer from (\ref{eq:SecondUpperBoundDM}) that, for any $i<i_{+}$:
\begin{align}
\int_{\{v=V\}\cap(\mathcal{V}_{i}^{(n)}\cup\mathcal{V}_{i}^{(n+1)})\cap\mathcal{U}_{\varepsilon;n}^{+}}\big(1-\frac{2\tilde{m}}{r}-\frac{1}{3}\Lambda r^{2}\big)^{-1}\frac{-\partial_{u}\tilde{m}}{r}(u,V)\,du & \le4\frac{\mathcal{E}_{i}[n+1]}{R_{i}[n+1]}+O(\rho_{\varepsilon}^{1+\frac{1}{18}})\le\label{eq:FinalUpperBoundDM}\\
 & \le2\mu_{i}[n+1]+O(\rho_{\varepsilon}^{1+\frac{1}{18}}).\nonumber 
\end{align}

On the other hand, for $i=i_{+}$, using directly the bounds (\ref{eq:EstimateGeometryLikeC0}),
(\ref{eq:LowerBoundROnDomains}), (\ref{eq:UpperBoundTuuTvvFi}),
(\ref{eq:UpperBoundTuvFi}) and the fact that 
\begin{equation}
f_{\varepsilon}|_{\mathcal{V}_{j\nearrow}^{(n)}\cap\{V^{(-)}\le v\le V^{(+)}\}}=a_{\varepsilon j}f_{\varepsilon j}|_{\mathcal{V}_{j\nearrow}^{(n)}\cap\{V^{(-)}\le v\le V^{(+)}\}}
\end{equation}
(as a consequence of (\ref{eq:LinearCombinationVlasovFields}), (\ref{eq:BoundSupportFepsiloni})
and the definition of $V^{(-)}$, $V^{(+)}$), we infer that 
\begin{align}
\int_{\{v=V\}\cap(\mathcal{V}_{i_{+}}^{(n)}\cup\mathcal{V}_{i_{+}}^{(n+1)})\cap\mathcal{U}_{\varepsilon;n}^{+}} & \big(1-\frac{2\tilde{m}}{r}-\frac{1}{3}\Lambda r^{2}\big)^{-1}\frac{-\partial_{u}\tilde{m}}{r}(u,V)\,du=\label{eq:EstimateExceptionalBeamForNorm}\\
 & =\int_{\{v=V\}\cap\mathcal{V}_{i_{+}\nearrow}^{(n)}\cap\mathcal{U}_{\varepsilon;n}^{+}}r\Big(\frac{T_{uu}[f_{\varepsilon}]}{-\partial_{u}r}+\frac{T_{uv}[f_{\varepsilon}]}{\partial_{v}r}\Big)(u,V)\,du=\nonumber \\
 & =a_{\varepsilon i_{+}}\int_{\{v=V\}\cap\mathcal{V}_{i_{+}\nearrow}^{(n)}\cap\mathcal{U}_{\varepsilon;n}^{+}}r\Big(\frac{T_{uu}[f_{\varepsilon i_{+}}]}{-\partial_{u}r}+\frac{T_{uv}[f_{\varepsilon i_{+}}]}{\partial_{v}r}\Big)(u,V)\,du\le\nonumber \\
 & \le a_{\varepsilon i_{+}}\exp\big(\exp(3\sigma_{\varepsilon}^{-5})\big)\int_{\{v=V\}\cap\mathcal{V}_{i_{+}\nearrow}^{(n)}\cap\mathcal{U}_{\varepsilon;n}^{+}}\Big(\frac{(\varepsilon^{(i_{+})})^{4}}{r^{5}(u,v)}(-\Lambda)^{-2}+\frac{(\varepsilon^{(i_{+})})^{2}}{r^{3}(u,v)}(-\Lambda)^{-1}\Big)(u,V)\,du\le\nonumber \\
 & \le a_{\varepsilon i_{+}}\exp\big(\exp(3\sigma_{\varepsilon}^{-5})\big)\Big(\frac{(\varepsilon^{(i_{+})})^{4}}{(\min_{\mathcal{V}_{i_{+}\nearrow}^{(n)}}r)^{4}}(-\Lambda)^{-2}+\frac{(\varepsilon^{(i_{+})})^{2}}{(\min_{\mathcal{V}_{i_{+}\nearrow}^{(n)}}r)^{2}}(-\Lambda)^{-1}\Big)\le\nonumber \\
 & \le a_{\varepsilon i_{+}}\exp\big(\exp(\sigma_{\varepsilon}^{-6})\big).\nonumber 
\end{align}

From (\ref{eq:LinearCombinationTvvTuv}), (\ref{eq:IntersectionOnlywithOutgoingRegions}),
(\ref{eq:FinalUpperBoundDM}), (\ref{eq:EstimateExceptionalBeamForNorm})
and the fact that 
\[
\sum_{i=0}^{N_{\varepsilon}}O(\rho_{\varepsilon}^{1+\frac{1}{18}})=O(N_{\varepsilon}\rho_{\varepsilon}^{1+\frac{1}{18}})=O(\rho_{\varepsilon}^{\frac{1}{19}})
\]
 and 
\[
\mathcal{V}_{j\nearrow}^{(n)}\cap\mathcal{U}_{\varepsilon}^{+}=\emptyset\text{ for any }j>i_{+},
\]
we immediately infer (\ref{eq:UpperBoundInUTvvTuv}) in the case $V\in[V^{(-)},V^{(+)}]$.

\medskip{}

\noindent \emph{Case II: $V\in[v_{\varepsilon,0}^{(n)}-h_{\varepsilon,0},V^{(-)})$.
}In this case, the upper bound (\ref{eq:UpperBoundRFirstCase}) for
$r$ still holds. However, for any $0\le i\le N_{\varepsilon}$, we
now have 
\begin{equation}
\{v=V\}\cap(\mathcal{V}_{i}^{(n)}\cup\mathcal{V}_{i+1}^{(n)})\cap\mathcal{U}_{\varepsilon;n}^{+}=\{v=V\}\cap\mathcal{V}_{i}^{(n)}\cap\mathcal{U}_{\varepsilon;n}^{+}.\label{eq:IntersectionOnlyWithTwoRegionsSecondcase}
\end{equation}

Let us define $i_{+}^{(1)}$ as the maximum number in the set $\{0,1,\ldots,N_{\varepsilon}\}$
such that 
\begin{equation}
\{v=V\}\cap\mathcal{V}_{i}^{(n)}\cap\mathcal{U}_{\varepsilon;n}^{+}\neq\emptyset\text{ for all }i\le i_{+}^{(1)}\label{eq:DefinitionI1+}
\end{equation}
and let $i_{+}^{(2)}$ be the maximum number in $\{0,1,\ldots,i_{+}^{(1)}\}$
such that 
\begin{equation}
v_{\varepsilon,i}^{(n)}+h_{\varepsilon,i}<u_{\varepsilon,n}^{+}\text{ for all }i\le i_{+}^{(2)}\label{eq:DefinitionI2+}
\end{equation}
(where $\{u=u_{n,\varepsilon}^{+}\}$ is the future boundary of $\mathcal{U}_{\varepsilon;n}^{+}$).
Note that the definition of $i_{+}^{(1)}$ implies that 
\begin{equation}
\{v=V\}\cap\mathcal{V}_{i}^{(n)}\cap\mathcal{U}_{\varepsilon;n}^{+}=\emptyset\text{ for all }i>i_{+}^{(1)}\label{eq:TrivialIntersection}
\end{equation}
(which is a non-trivial statement only if $i_{+}^{(1)}<N_{\varepsilon}$)
and that 
\begin{equation}
V\ge v_{\varepsilon,i_{+}^{(1)}}^{(n)}-h_{\varepsilon,i_{+}^{(1)}}.\label{eq:LowerBoundVi+}
\end{equation}
Let us also remark that, trivially, in view of the form (\ref{eq:BeamDomain})
of $\mathcal{V}_{i}^{(n)}=\mathcal{V}_{i\nwarrow}^{(n)}\cup\mathcal{V}_{i\nearrow}^{(n)}$,%Sxhma gia to apo katw!!!
\begin{equation}
i_{+}^{(2)}=\begin{cases}
i_{+}^{(1)}-2, & \text{if }u_{\varepsilon,n}^{+}\in[v_{\varepsilon,i_{+}^{(1)}-1}^{(n)}-h_{\varepsilon,i_{+}^{(1)}-1},v_{\varepsilon,i_{+}^{(1)}-1}^{(n)}+h_{\varepsilon,i_{+}^{(1)}-1}]\text{ and }V\le v_{\varepsilon,i_{+}^{(1)}}^{(n)}+h_{\varepsilon,i_{+}^{(1)}},\\
i_{+}^{(1)}-1, & \text{ if }u_{\varepsilon,n}^{+}\in[v_{\varepsilon,i_{+}^{(1)}}^{(n)}-h_{\varepsilon,i_{+}^{(1)}},v_{\varepsilon,i_{+}^{(1)}}^{(n)}+h_{\varepsilon,i_{+}^{(1)}}],\\
i_{+}^{(1)}, & \text{ for all other values of }u_{\varepsilon,n}^{+}\le v_{\varepsilon,0}^{(n)}-h_{\varepsilon,0}.
\end{cases}\label{eq:I+2}
\end{equation}

It can be readily inferred from (\ref{eq:LowerBoundVi+}) and the
form (\ref{eq:BeamDomain}) of $\mathcal{V}_{i}^{(n)}=\mathcal{V}_{i\nwarrow}^{(n)}\cup\mathcal{V}_{i\nearrow}^{(n)}$
(and in particular, the fact that $\mathcal{V}_{i\nwarrow}^{(n)}\subset\{v\le v_{\varepsilon,i_{+}^{(1)}}^{(n)}-h_{\varepsilon,i_{+}^{(1)}}\}$
when $i<i_{+}^{(1)}$) that 
\begin{equation}
\{v=V\}\cap\mathcal{V}_{i}^{(n)}\cap\mathcal{U}_{\varepsilon;n}^{+}=\{v=V\}\cap\mathcal{V}_{i\nearrow}^{(n)}\cap\mathcal{U}_{\varepsilon;n}^{+}\text{ for all }i<i_{+}^{(1)}\label{eq:IntersectionONlyOutgoingSecondCase}
\end{equation}
and 
\begin{equation}
\inf_{\{v=V\}\cap\mathcal{V}_{i}^{(n)}\cap\mathcal{U}_{\varepsilon}^{+}}(v-u)\ge\rho_{\varepsilon}^{-1}\frac{\varepsilon^{(i)}}{\sqrt{-\Lambda}}\text{ for all }i<i_{+}^{(1)}.\label{eq:LowerBoundV-USecondCase}
\end{equation}

From (\ref{eq:UpperBoundRFirstCase}), (\ref{eq:LowerBoundV-USecondCase}),
(\ref{eq:EstimateGeometryLikeC0}) and the fact that $\partial_{v}r>0$,
we infer that, for any $i<i_{+}^{(2)}$, analogously to (\ref{eq:SmallChangeInR}):
\begin{equation}
\frac{r_{max}^{(i)}(V)-r_{min}^{(i)}(V)}{r_{min}^{(i)}(V)}\le\frac{\int_{([v_{\varepsilon,i}^{(n)}-h_{\varepsilon,i},v_{\varepsilon,i}^{(n)}+h_{\varepsilon,i}]\times\{V\})\cap\mathcal{U}_{\varepsilon;n}^{+}}\big(-\partial_{u}r\big)\,du}{r_{min}^{(i)}(v_{\varepsilon,i_{+}^{(1)}}^{(n)}-h_{\varepsilon,i_{+}^{(1)}})}\le\frac{\exp(2\sigma_{\varepsilon}^{-6})\varepsilon^{(i)}}{r_{n;i_{+}^{(1)},i}}\le\rho_{\varepsilon}^{\frac{3}{4}},\label{eq:SmallChangeInR-1}
\end{equation}
where $r_{max}^{(i)}(V),$ $r_{min}^{(i)}(V)$ are defined by (\ref{eq:RiMinMax}).
Therefore, using (\ref{eq:IntersectionOnlyWithTwoRegionsSecondcase})
and (\ref{eq:SmallChangeInR-1}) and arguing as in the proof of (\ref{eq:FirstUpperBoundDM})\textendash (\ref{eq:FinalUpperBoundDM}),
using in addition the estimate
\[
\int_{\{v=V\}\cap\mathcal{V}_{i\nearrow}^{(n)}\cap\mathcal{U}_{\varepsilon;n}^{+}}(-\partial_{u}\tilde{m})\,du\le\mathcal{E}_{\nearrow}^{(-)}[n;i_{+}^{(1)},i]\cdot\big(1+C\varepsilon\big)+C\rho_{\varepsilon}^{\frac{3}{2}}\frac{\varepsilon^{(j)}}{\sqrt{-\Lambda}}
\]
 (following from (\ref{eq:GeneralBoundMassDifferenceOutfoinfi>j}))
in the case when $V\in[v_{\varepsilon,i_{+}^{(1)}}^{(n)}-h_{\varepsilon,i_{+}^{(1)}},v_{\varepsilon,i_{+}^{(1)}}^{(n)}+h_{\varepsilon,i_{+}^{(1)}}]$,
we obtain for any $i<i_{+}^{(2)}$: 
\begin{equation}
\int_{\{v=V\}\cap(\mathcal{V}_{i}^{(n)}\cup\mathcal{V}_{i}^{(n+1)})\cap\mathcal{U}_{\varepsilon;n}^{+}}\big(1-\frac{2\tilde{m}}{r}-\frac{1}{3}\Lambda r^{2}\big)^{-1}\frac{-\partial_{u}\tilde{m}}{r}(u,V)\,du\le2\mu_{i}[n+1]+O(\rho_{\varepsilon}^{1+\frac{1}{18}})\label{eq:FinalUpperBoundDMSecondCase}
\end{equation}

On the other hand, for $i_{+}^{(2)}\le i\le i_{+}^{(1)}$, using the
relation 
\begin{align*}
\int_{\{v=V\}\cap\mathcal{V}_{i}^{(n)}\cap\mathcal{U}_{\varepsilon;n}^{+}\backslash\cup_{j<i_{+}^{(2)}}\mathcal{V}_{j}^{(n)}}\big(1-\frac{2\tilde{m}}{r}- & \frac{1}{3}\Lambda r^{2}\big)^{-1}\frac{-\partial_{u}\tilde{m}}{r}(u,V)\,du=\\
 & =a_{\varepsilon i}\int_{\{v=V\}\cap\mathcal{V}_{i\nearrow}^{(n)}\cap\mathcal{U}_{\varepsilon;n}^{+}\backslash\cup_{j<i_{+}^{(2)}}\mathcal{V}_{j}^{(n)}}r\Big(\frac{T_{uu}[f_{\varepsilon i}]}{-\partial_{u}r}+\frac{T_{uv}[f_{\varepsilon i}]}{\partial_{v}r}\Big)(u,V)\,du
\end{align*}
(following from (\ref{eq:TildeUMaza}), (\ref{eq:LinearCombinationVlasovFields})
and (\ref{eq:BoundSupportFepsiloni})), we infer by arguing exactly
as in the proof of (\ref{eq:EstimateExceptionalBeamForNorm}) that
\begin{equation}
\sum_{i=i_{+}^{(2)}}^{i_{+}^{(1)}}\int_{\{v=V\}\cap\mathcal{V}_{i}^{(n)}\cap\mathcal{U}_{\varepsilon;n}^{+}\backslash\cup_{j<i_{+}^{(2)}}\mathcal{V}_{j}^{(n)}}\big(1-\frac{2\tilde{m}}{r}-\frac{1}{3}\Lambda r^{2}\big)^{-1}\frac{-\partial_{u}\tilde{m}}{r}(u,V)\,du\le\sum_{i=i_{+}^{(2)}}^{i_{+}^{(1)}}a_{\varepsilon i_{+}}\exp\big(\exp(\sigma_{\varepsilon}^{-6})\big).\label{eq:EstimateExceptionalBeamForNormSecondCase}
\end{equation}

From (\ref{eq:LinearCombinationTvvTuv}), (\ref{eq:IntersectionOnlyWithTwoRegionsSecondcase}),
(\ref{eq:TrivialIntersection})(\ref{eq:FinalUpperBoundDMSecondCase}),
(\ref{eq:EstimateExceptionalBeamForNormSecondCase}) and the fact
that $|i_{+}^{(1)}-i_{+}^{(2)}|\le2$, we readily infer (\ref{eq:UpperBoundInUTvvTuv})
in the case $V<V^{(-)}$.

\medskip{}

\noindent \emph{Case III: $V\in(V^{(+)},v_{\varepsilon,0}^{(n+1)}-h_{\varepsilon,0}+\sqrt{-\frac{3}{\Lambda}}\pi)$.
}In this case, we will split the left hand side of (\ref{eq:LinearCombinationTvvTuv})
as 
\begin{align}
\int_{\{v=V\}\cap\mathcal{U}_{\varepsilon;n}^{+}}r\Big( & \frac{T_{uu}[f_{\varepsilon}]}{-\partial_{u}r}+\frac{T_{uv}[f_{\varepsilon}]}{\partial_{v}r}\Big)(u,V)\,du\label{eq:SplittingOfTuuTuv}\\
= & \int_{\{v=V\}\cap\{u\le U^{(+)}\}\cap\mathcal{U}_{\varepsilon;n}^{+}}r\Big(\frac{T_{uu}[f_{\varepsilon}]}{-\partial_{u}r}+\frac{T_{uv}[f_{\varepsilon}]}{\partial_{v}r}\Big)(u,V)\,du+\nonumber \\
 & +\int_{\{v=V\}\cap\{u\ge U^{(+)}\}\cap\mathcal{U}_{\varepsilon;n}^{+}}r\Big(\frac{T_{uu}[f_{\varepsilon}]}{-\partial_{u}r}+\frac{T_{uv}[f_{\varepsilon}]}{\partial_{v}r}\Big)(u,V)\,du,\nonumber 
\end{align}
where 
\begin{equation}
U^{(+)}\doteq v_{\varepsilon,0}^{(n+1)}-\rho_{\varepsilon}^{-2}\frac{\varepsilon}{\sqrt{-\Lambda}},\label{eq:DefinitionU+}
\end{equation}
and we will estimate each term in the right hand side of (\ref{eq:SplittingOfTuuTuv})
separately.

From the form (\ref{eq:BeamDomain}) of $\mathcal{V}_{i}^{(n)}$ and
the definitions (\ref{eq:V+}), (\ref{eq:DefinitionU+}) of $V^{(+)}$,
$U^{(+)}$, respectively, we infer that 
\begin{equation}
\inf_{\{v\ge V^{(+)}\}\cap\{u\le U^{(+)}\}\cap(\cup_{i=0}^{N_{\varepsilon}}\mathcal{V}_{i}^{(n)})\cap\mathcal{U}_{\varepsilon;n}^{+}}(v-u)\ge\frac{1}{2}\rho_{\varepsilon}^{-2}\frac{\varepsilon}{\sqrt{-\Lambda}}.
\end{equation}
Thus, using (\ref{eq:EstimateGeometryLikeC0}), we infer that 
\begin{equation}
\inf_{\{v\ge V^{(+)}\}\cap\{u\le U^{(+)}\}\cap(\cup_{i=0}^{N_{\varepsilon}}\mathcal{V}_{i}^{(n)})\cap\mathcal{U}_{\varepsilon;n}^{+}}r\ge e^{-\sigma_{\varepsilon}^{-4}}\rho_{\varepsilon}^{-2}\frac{\varepsilon}{\sqrt{-\Lambda}}.\label{eq:LowerBoundRForSplittingAway}
\end{equation}

Using the relation (\ref{eq:LinearCombinationVlasovFields}) and the
bounds (\ref{eq:EstimateGeometryLikeC0}), (\ref{eq:UpperBoundTuuTvvFi})\textendash (\ref{eq:UpperBoundTuvFi}),
from (\ref{eq:LowerBoundRForSplittingAway}) we infer that 
\begin{align}
\int_{\{v=V\}\cap\{u\le U^{(+)}\}\cap\mathcal{U}_{\varepsilon;n}^{+}} & r\Big(\frac{T_{uu}[f_{\varepsilon}]}{-\partial_{u}r}+\frac{T_{uv}[f_{\varepsilon}]}{\partial_{v}r}\Big)(u,V)\,du\label{eq:BoundNormAwayRegionThirdCase}\\
 & =\sum_{i=0}^{N_{\varepsilon}}a_{\varepsilon i}\int_{\{v=V\}\cap\{u\le U^{(+)}\}\cap\mathcal{U}_{\varepsilon;n}^{+}}r\Big(\frac{T_{uu}[f_{\varepsilon i}]}{-\partial_{u}r}+\frac{T_{uv}[f_{\varepsilon i}]}{\partial_{v}r}\Big)(u,V)\,du\nonumber \\
 & \le\exp(\exp(\sigma_{\varepsilon}^{-6}))\sum_{i=0}^{N_{\varepsilon}}a_{\varepsilon i}\int_{\{v=V\}\cap\{u\le U^{(+)}\}\cap\mathcal{U}_{\varepsilon;n}^{+}}\frac{1}{1-\frac{1}{3}\Lambda r^{2}}\Big(\frac{(\varepsilon^{(i)})^{4}}{r^{5}}(-\Lambda)^{-1}\nonumber \\
 & \hphantom{\le\exp(\exp(\sigma_{\varepsilon}^{-6}))\sum_{i=0}^{N_{\varepsilon}}a_{\varepsilon i}\int_{\{v=V\}\cap\{u\le U^{(+)}\}\cap\mathcal{U}_{\varepsilon;n}^{+}}}+\frac{(\varepsilon^{(i)})^{2}}{r^{3}}(-\Lambda)^{-1}\Big)(u,V)\,du\nonumber\\
 & \le\exp(\exp(\sigma_{\varepsilon}^{-6}))\sum_{i=0}^{N_{\varepsilon}}a_{\varepsilon i}\Bigg\{\frac{1}{1-\frac{1}{3}\Lambda r^{2}}\Big(\frac{(\varepsilon^{(i)})^{4}}{r^{4}}(-\Lambda)^{-1}+\frac{(\varepsilon^{(i)})^{2}}{r^{2}}(-\Lambda)^{-1}\Bigg\}_{r=e^{-\sigma_{\varepsilon}^{-4}}\rho_{\varepsilon}^{-2}\frac{\varepsilon}{\sqrt{-\Lambda}}}\nonumber \\
 & \le\rho_{\varepsilon}^{3}.\nonumber 
\end{align}

On the other hand, in the case when 
\[
\{v=V\}\cap\{u\ge U^{(+)}\}\cap\mathcal{U}_{\varepsilon;n}^{+}\neq\emptyset,
\]
from the form (\ref{eq:BeamDomain}) of $\mathcal{V}_{i}^{(n)}$ and
the definitions (\ref{eq:V+}), (\ref{eq:DefinitionU+}) of $V^{(+)}$,
$U^{(+)}$, respectively, we infer that, depending on whether $V$
belongs to $\cup_{i=0}^{N_{\varepsilon}}[v_{\varepsilon,i}^{(n+1)}-h_{\varepsilon,i},v_{\varepsilon,i}^{(n+1)}+h_{\varepsilon,i}]$
or not:

\begin{itemize}

\item Either 
\[
\{v=V\}\cap\{u\ge U^{(+)}\}\cap\big\{\cup_{i=0}^{N_{\varepsilon}}(\mathcal{V}_{i}^{(n)}\cup\mathcal{V}_{i}^{(n+1)}\big\}\cap\mathcal{U}_{\varepsilon;n}^{+}=\emptyset,
\]
in which case 
\begin{equation}
\int_{\{v=V\}\cap\{u\ge U^{(+)}\}\cap\mathcal{U}_{\varepsilon;n}^{+}}r\Big(\frac{T_{uu}[f_{\varepsilon}]}{-\partial_{u}r}+\frac{T_{uv}[f_{\varepsilon}]}{\partial_{v}r}\Big)(u,V)\,du=0,\label{eq:TrivialBoundNormThirdCase}
\end{equation}

\item or 
\begin{align*}
\{v=V\}\cap\{u\ge U^{(+)}\}\cap\big\{\cup_{i=0}^{N_{\varepsilon}}( & \mathcal{V}_{i}^{(n)}\cup\mathcal{V}_{i}^{(n+1)}\big\}\cap\mathcal{U}_{\varepsilon;n}^{+}=\\
= & \{v=V\}\cap\{u\ge U^{(+)}\}\cap\{\mathcal{V}_{i_{0}\nwarrow}^{(n+1)}\big\}\cap\mathcal{U}_{\varepsilon;n}^{+}\text{ for some }0\le i_{0}\le N_{\varepsilon},
\end{align*}
in which case, using the bounds (\ref{eq:EstimateGeometryLikeC0}),
(\ref{eq:BoundSupportFepsiloni}), (\ref{eq:LowerBoundROnDomains}),
(\ref{eq:UpperBoundTuuTvvFi}) and the fact that the regions 
\[
\mathcal{V}_{i\nwarrow}^{(n+1)}\cap\{U^{(+)}\le u\le u_{\varepsilon,n}^{+}\}
\]
 are disjoint, we can estimate
\begin{align}
\int_{\{v=V\}\cap\{u\ge U^{(+)}\}\cap\mathcal{U}_{\varepsilon;n}^{+}}r & \Big(\frac{T_{uu}[f_{\varepsilon}]}{-\partial_{u}r}+\frac{T_{uv}[f_{\varepsilon}]}{\partial_{v}r}\Big)(u,V)\,du=\label{eq:NonTrivialBoundNormThrdCase}\\
 & =a_{\varepsilon i_{0}}\int_{\{v=V\}\cap\{u\ge U^{(+)}\}\cap\mathcal{U}_{\varepsilon;n}^{+}}r\Big(\frac{T_{uu}[f_{\varepsilon i_{0}}]}{-\partial_{u}r}+\frac{T_{uv}[f_{\varepsilon i_{0}}]}{\partial_{v}r}\Big)(u,V)\,du\le\nonumber \\
 & \le\exp(\exp(\sigma_{\varepsilon}^{-6}))a_{\varepsilon i_{0}}\int_{\inf_{\mathcal{V}_{i_{0}\nwarrow}^{(n+1)}}r}^{+\infty}\frac{1}{1-\frac{1}{3}\Lambda r^{2}}\Big(\frac{(\varepsilon^{(i_{0})})^{4}}{r^{5}}(-\Lambda)^{-1}+\frac{(\varepsilon^{(i_{0})})^{2}}{r^{3}}(-\Lambda)^{-1}\Big)\,dr\le\nonumber \\
 & \le\frac{1}{2}\exp(\exp(\sigma_{\varepsilon}^{-7}))a_{\varepsilon i_{0}}.\nonumber 
\end{align}

\end{itemize}

From (\ref{eq:BoundNormAwayRegionThirdCase}), (\ref{eq:TrivialBoundNormThirdCase})
and (\ref{eq:NonTrivialBoundNormThrdCase}), we therefore infer (\ref{eq:UpperBoundInUTvvTuv})
in the case $V>V^{(+)}$. Thus, we have established (\ref{eq:UpperBoundInUTvvTuv})
for all values of $V$.

\medskip{}

Arguing as for the proof of(\ref{eq:UpperBoundInUTvvTuv}), we similarly
obtain that, for all $U\ge0$:
\begin{equation}
\text{ }\int_{\{u=U\}\cap\mathcal{U}_{\varepsilon;n}^{+}}r\Big(\frac{T_{vv}[f_{\varepsilon}]}{\partial_{v}r}+\frac{T_{uv}[f_{\varepsilon}]}{-\partial_{u}r}\Big)(U,v)\,dv\le\frac{1}{2}\max_{0\le i\le N_{\varepsilon}}\big\{\big(\exp(e^{\sigma_{\varepsilon}^{-7}}\big)a_{\varepsilon i}\big\}+4\sum_{i=0}^{N_{\varepsilon}-1}\mu_{i}[n+1]+\frac{1}{2}\rho_{\varepsilon}^{\frac{1}{19}}.\label{eq:UpperBoundInUTvvTuv-1}
\end{equation}
Thus, adding (\ref{eq:UpperBoundInUTvvTuv}) and (\ref{eq:UpperBoundInUTvvTuv-1}),
we infer (\ref{eq:UpperBoundNormBootstrapDomainFromSequence}).

We will now proceed to establish (\ref{eq:UpperBoundBootstrapDomainMuTilde/R}).
To this end, let us define the domains 
\begin{equation}
\mathcal{\mathcal{Q}}_{i}^{(n)}=\Big(\big\{\{u\le v_{\varepsilon,i}^{(n)}+h_{\varepsilon,i}\}\cap\{v\ge v_{\varepsilon,i}^{(n)}-h_{\varepsilon,i}\}\big\}\cup\big\{ v\ge v_{\varepsilon,i}^{(n+1)}-h_{\varepsilon,i}\big\}\Big)\cap\big\{ v-u\ge\beta_{\varepsilon,i}\big\}\cap\mathcal{U}_{\varepsilon,n}^{+},\label{eq:QDomain}
\end{equation}
where $\beta_{\varepsilon,i}$ are defined by (\ref{eq:ShorthandNotationCornerPoints}).
(see Figure \ref{fig:Q_Domains}).

\begin{figure}[h] 
\centering 
\scriptsize
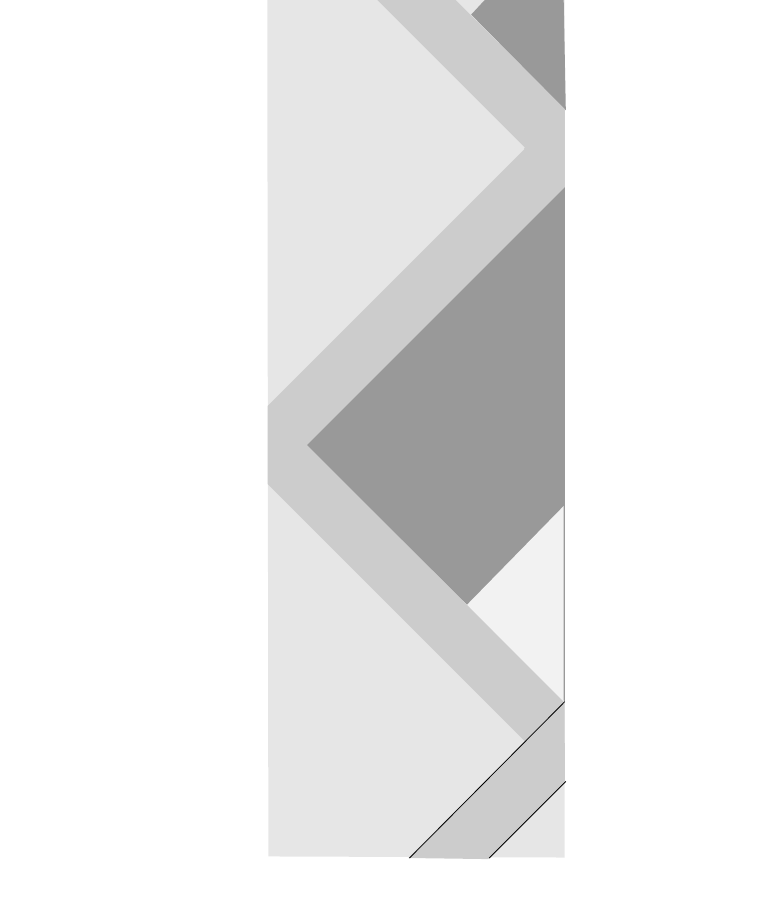 
\caption{The domain $\mathcal{Q}^{(n)}_i$ is equal to the union of the beam $\mathcal{V}_i \cap \mathcal{U}_{\epsilon,n}^{+}$ with the darker shaded region depicted above (where we assumed for simplicity that $v_{\epsilon,0}^{(n+1)}-h_{\epsilon,0}<u[\mathcal{U}_{\epsilon}^{+}]$). \label{fig:Q_Domains}}
\end{figure}

\medskip{}

\noindent \emph{Remark.} Notice that, for any $0\le i\le N_{\varepsilon}$,
the domain $\mathcal{\mathcal{Q}}_{i}^{(n)}$ consists of the region
``to the right'' of the beam $\mathcal{V}_{i}$ (including $\mathcal{V}_{i}$),
within the domain $\mathcal{U}_{\varepsilon,n}^{+}$. Moreover, in
view of (\ref{eq:BoundSupportFepsiloni}), we have 
\begin{equation}
\tilde{m}\equiv0\text{ on }\mathcal{U}_{\varepsilon;n}^{+}\backslash\cup_{i=0}^{N_{\varepsilon}}\mathcal{\mathcal{Q}}_{i}^{(n)}.\label{eq:VanishingMassToTheLeft}
\end{equation}
(since $\mathcal{U}_{\varepsilon;n}^{+}\backslash\cup_{i=0}^{N_{\varepsilon}}\mathcal{\mathcal{Q}}_{i}^{(n)}$
consists of the single connected component of $\mathcal{U}_{\varepsilon;n}^{+}\backslash\cup_{i=0}^{N_{\varepsilon}}\mathcal{V}_{i}$
containing $\gamma_{\mathcal{Z}}$). 

\medskip{}

\noindent As a corollary of the bound (\ref{eq:EstimateGeometryLikeC0})
for $\partial r$ and the definition (\ref{eq:ShorthandNotationCornerPoints})
of $\beta_{\varepsilon,i}$, we can bound for any $0\le i\le N_{\varepsilon}$:
\begin{equation}
\inf_{\mathcal{\mathcal{Q}}_{i}^{(n)}}r\ge\exp(-\exp(\sigma_{\varepsilon}^{-5}))\frac{\varepsilon^{(i)}}{\sqrt{-\Lambda}}.\label{eq:LowerBoundROnQ}
\end{equation}

In view of the relation (\ref{eq:LinearCombinationVlasovFields})
between $f_{\varepsilon}$ and the $f_{\varepsilon j}$'s, the fact
that $f_{\varepsilon j}$ is supported on $\cup_{k}\mathcal{V}_{j}^{(k)}$,
the bound (\ref{eq:EstimateGeometryLikeC0}) for $\partial r$ and
the bounds (\ref{eq:UpperBoundTuuTvvFi})\textendash (\ref{eq:UpperBoundTuvFi})
on $T_{\mu\nu}[f_{\varepsilon j}]$, we obtain from the relations
(\ref{eq:TildeUMaza})\textendash (\ref{eq:TildeVMaza}) for $\partial\tilde{m}$
(and the fact that $\tilde{m}|_{\gamma_{\mathcal{Z}}}=0$) that, for
any $0\le i\le N_{\varepsilon}$: 
\begin{equation}
\sup_{\mathcal{\mathcal{Q}}_{i}^{(n)}\backslash\cup_{j=0}^{i-1}\mathcal{\mathcal{Q}}_{j}^{(n)}}\tilde{m}\le\exp(\exp(\sigma_{\varepsilon}^{-7}))\sum_{j=i}^{N_{\varepsilon}}a_{\varepsilon j}\frac{\varepsilon^{(j)}}{\sqrt{-\Lambda}}\le\exp(\exp(\sigma_{\varepsilon}^{-7}))a_{\varepsilon i}\frac{\varepsilon^{(i)}}{\sqrt{-\Lambda}}+\exp(\exp(\sigma_{\varepsilon}^{-8}))\frac{\varepsilon^{(i+1)}}{\sqrt{-\Lambda}}.\label{eq:UpperBoundMuTildeQ}
\end{equation}
Combining (\ref{eq:LowerBoundROnQ}) and (\ref{eq:UpperBoundMuTildeQ}),
we infer that, for any $0\le i\le N_{\varepsilon}$, 
\begin{equation}
\sup_{\mathcal{\mathcal{Q}}_{i}^{(n)}\backslash\cup_{j=0}^{i-1}\mathcal{\mathcal{Q}}_{j}^{(n)}}\frac{2\tilde{m}}{r}\le\exp(\exp(\sigma_{\varepsilon}^{-8}))a_{\varepsilon i}+\varepsilon^{\frac{1}{2}}.\label{eq:UpperBoundMuOnQ}
\end{equation}
The upper bound (\ref{eq:UpperBoundBootstrapDomainMuTilde/R}) now
follows readily from (\ref{eq:UpperBoundMuOnQ}), (\ref{eq:VanishingMassToTheLeft})
and the fact that 
\begin{equation}
\mathcal{U}_{\varepsilon;n}^{+}=\bigcup_{i=0}^{N_{\varepsilon}}\Big(\mathcal{\mathcal{Q}}_{i}^{(n)}\backslash\cup_{j=0}^{i-1}\mathcal{\mathcal{Q}}_{j}^{(n)}\Big)\cup\Big(\mathcal{U}_{\varepsilon;n}^{+}\backslash\cup_{i=0}^{N_{\varepsilon}}\mathcal{\mathcal{Q}}_{i}^{(n)}\Big).\label{eq:LaLa}
\end{equation}
The upper bound (\ref{eq:UpperBoundBootstrapDomainRestrictedMu})
follows similarly from (\ref{eq:UpperBoundMuOnQ}) and (\ref{eq:VanishingMassToTheLeft}),
after noting that 
\[
(\mathcal{\mathcal{Q}}_{i}^{(n)}\backslash\cup_{k=0}^{i-1}\mathcal{\mathcal{Q}}_{k}^{(n)})\cap\{u\le v_{\varepsilon,j}^{(n)}+h_{\varepsilon,j}\}=\emptyset\text{ for all }j<i\le N_{\varepsilon}.
\]

The proof of the estimates (\ref{eq:UpperBoundNormBootstrapDomainFromSequence}),
(\ref{eq:UpperBoundBootstrapDomainMuTilde/R}) and (\ref{eq:UpperBoundBootstrapDomainRestrictedMu})
(with $\delta_{\varepsilon}$, $\tilde{h}_{\varepsilon,j}$ in place
of $\sigma_{\varepsilon}$, $h_{\varepsilon,j}$) on $\mathcal{T}_{\varepsilon;n}^{+}$
follows in exactly the same way, using (\ref{eq:EstimateGeometryLikeC0LargerDomain})
in place of (\ref{eq:EstimateGeometryLikeC0}) and replacing all the
statements about $\mathcal{V}_{i}^{(n)}$, $\mathcal{E}^{(\pm)}$,
$\mathfrak{D}r^{(\pm)}$ with the corresponding statements about $\widetilde{\mathcal{V}}_{i}^{(n)}$,
$\widetilde{\mathcal{E}}^{(\pm)}$, $\widetilde{\mathfrak{D}}r^{(\pm)}$,
respectively; we will omit the details. 
\end{proof}

\section{\label{sec:ThefirstStage}The first stage of the instability}

In this section, we will show that the parameters $\{a_{\varepsilon i}\}_{i=0}^{N_{\varepsilon}}$,
appearing in the definition of the initial data family $(r_{/}^{(\varepsilon)},(\Omega_{/}^{(\varepsilon)})^{2};\bar{f}_{/}^{(\varepsilon)})$,
can be carefully chosen (without violating the smallness condition
(\ref{eq:UpperBoundSumWeightsBeam})) so that, after $\sim\sigma_{\varepsilon}^{-\frac{3}{2}}$
reflections off $\mathcal{I}_{\varepsilon}$, the Vlasov beams form
a configuration of a particular form; this configuration will be shown
in the next section to guarantee the formation of a trapped sphere
in $O(1)$ retarded time. 

In particular, we will establish the following result (for the definition
of $v_{\varepsilon,i}^{(n)}$, $h_{\varepsilon,i}$ and the domain
$\mathcal{U}_{\varepsilon}^{+}\subset\mathcal{U}_{max}^{(\varepsilon)}$,
see Sections \ref{subsec:Notational-conventions-and}\textendash \ref{subsec:Some-basic-geometric-constructions};
for the definition of the sequences $\mu_{i}$, $\mathcal{E}_{i}$
amd $R_{i}$, see Section (\ref{subsec:The-instability-mechanism})):
\begin{prop}
\label{prop:FineTuningInitialData} For any $\varepsilon\in(0,\varepsilon_{1}]$,
there exists a finite sequence $\{a_{\varepsilon i}\}_{i=0}^{N_{\varepsilon}}\in(0,\sigma_{\varepsilon})$
satisfying (\ref{eq:UpperBoundSumWeightsBeam}),
\begin{equation}
\max_{0\le i\le N_{\varepsilon}-1}a_{\varepsilon i}<\exp(-\exp(\delta_{\varepsilon}^{-10}))\label{eq:UpperBoundAeAlmostAll}
\end{equation}
and 
\begin{equation}
a_{\varepsilon N_{\varepsilon}}<\exp(-\exp(\sigma_{\varepsilon}^{-9})),\label{eq:UpperBoundAeFinal}
\end{equation}
such that the following statements hold for the maximal future development
$(\mathcal{U}_{max}^{(\varepsilon)};r,\Omega^{2},f_{\varepsilon})$
of the initial data set $(r_{/}^{(\varepsilon)},(\Omega_{/}^{(\varepsilon)})^{2};\bar{f}_{/}^{(\varepsilon)})$
associated to $\{a_{\varepsilon i}\}_{i=0}^{N_{\varepsilon}}$ (see
Definition \ref{def:InitialDataFamily}):

\begin{enumerate}

\item Setting 
\begin{equation}
n_{+}\doteq\lceil\sigma_{\varepsilon}^{-\frac{3}{2}}\rceil,\label{eq:DefinitionN+}
\end{equation}
we have 
\begin{equation}
\big\{0<u\le v_{\varepsilon,0}^{(n_{+})}-h_{\varepsilon,0}\big\}\cap\big\{ u<v<u+\sqrt{-\frac{3}{\Lambda}}\pi\big\}\subset\mathcal{U}_{\varepsilon}^{+}.\label{eq:U+InLastStep}
\end{equation}

\item The quantities $\mu_{i}[n_{+}]$ (introduced in Definition
\ref{def:RecursiveSystemMuER}) satisfy, for all $0\le j\le N_{\varepsilon}-1$,
\begin{equation}
\mu_{j}[n_{+}]=\frac{\delta_{\varepsilon}^{-\frac{3}{4}}}{N_{\varepsilon}}e^{-2\frac{j}{N_{\varepsilon}}\delta_{\varepsilon}^{-\frac{3}{4}}},\label{eq:MuAllBeamsButOneN+}
\end{equation}
and, for $j=N_{\varepsilon}$: 
\begin{equation}
\mathcal{E}_{N_{\varepsilon}}[n_{+}]=\exp(-\exp(4\sigma_{\varepsilon}^{-9}))\frac{\varepsilon^{(N_{\varepsilon})}}{\sqrt{-\Lambda}}.\label{eq:EnergyTopBeamFinal}
\end{equation}

\end{enumerate}
\end{prop}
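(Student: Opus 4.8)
The plan is to treat the statement as a fine-tuning / inverse problem for the recursive system governing the quantities $\mu_i[n]$, $\mathcal{E}_i[n]$, $R_i[n]$ from Definition \ref{def:RecursiveSystemMuER}, together with a continuity/bootstrap argument that keeps the solution inside $\mathcal{U}_\varepsilon^+$ long enough for $n_+ = \lceil \sigma_\varepsilon^{-3/2}\rceil$ reflections to occur. The key observation is that, by Proposition \ref{prop:TotalEnergyChange}, $\mu_i[n]$, $\mathcal{E}_i[n]$, $R_i[n]$ approximate the physical quantities $\tfrac{2\mathcal{E}_\nwarrow^{(-)}[n;i,0]}{\mathfrak{D}r_\nwarrow^{(-)}[n;i+1,0]}$, $\mathcal{E}_\nwarrow^{(-)}[n;i,0]$, $\mathfrak{D}r_\nwarrow^{(-)}[n;i,0]$ with errors that are negligible \emph{provided} the domain $\mathcal{U}_\varepsilon^+$ extends up to $u = v_{\varepsilon,0}^{(n_+)} - h_{\varepsilon,0}$; and that the solution of the recursive system depends only on the initial data $\mu_i[0]$, $\mathcal{E}_i[0]$, which via \eqref{eq:InitialMuI}, \eqref{eq:InitialConditionsImplicitRelations} are explicit monotone functions of the weights $a_{\varepsilon i}$ (since, from Section \ref{subsec:The-initial-data-family}, the initial energies $\mathcal{E}_\nwarrow^{(-)}[0;i,0]$ are $\sim a_{\varepsilon i}\varepsilon^{(i)}(-\Lambda)^{-1/2}$ and the initial separations $\mathfrak{D}r_\nwarrow^{(-)}[0;i,0]$ are $\sim \rho_\varepsilon^{-1}\varepsilon^{(i-1)}(-\Lambda)^{-1/2}$, both essentially insensitive to the $a$'s).

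First I would solve the recursive system \eqref{eq:RecursiveFormulaMu} explicitly. From the remark after Definition \ref{def:RecursiveSystemMuER}, $\mu_i[n] = \mu_i[0]\exp\big(2\sum_{\bar n=1}^n\sum_{j=0}^{i-1}\mu_j[\bar n]\big)$; inductively in $i$ this determines $\mu_i[n]$ as an explicit, strictly increasing function of $n$ and of the vector $(\mu_0[0],\dots,\mu_{i}[0])$. The target profile \eqref{eq:MuAllBeamsButOneN+} is $\mu_j[n_+] = \tfrac{\delta_\varepsilon^{-3/4}}{N_\varepsilon}e^{-2(j/N_\varepsilon)\delta_\varepsilon^{-3/4}}$; I would invert the map $(\mu_j[0])_j \mapsto (\mu_j[n_+])_j$ — which is a triangular, monotone, smooth bijection onto a neighborhood of the origin — to read off the required $\mu_j[0]$, and then use \eqref{eq:InitialMuI} together with the explicit initial formulas from Section \ref{subsec:The-initial-data-family} to solve for $a_{\varepsilon j}$, $0\le j\le N_\varepsilon-1$. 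The resulting $a_{\varepsilon j}$ will be of size $\sim \tfrac{\rho_\varepsilon^{-1}\delta_\varepsilon^{-3/4}}{N_\varepsilon}\cdot(\text{decaying exponential in }n_+\text{ and }j)$; since the explicit solution of \eqref{eq:RecursiveFormulaMu} grows like a tower once $\sum_j\mu_j[0]$ is fixed, and since $n_+ = \lceil\sigma_\varepsilon^{-3/2}\rceil$, achieving $\mu_j[n_+]\sim \delta_\varepsilon^{-3/4}/N_\varepsilon$ forces $\mu_j[0]$, and hence $a_{\varepsilon j}$, to be extremely small — this is where the bounds \eqref{eq:UpperBoundAeAlmostAll} and \eqref{eq:UpperBoundSumWeightsBeam} come from, via the hierarchy \eqref{eq:HierarchyOfParameters} (which makes $\exp(-\exp(\delta_\varepsilon^{-10}))$ dominate the required $a_{\varepsilon j}$ once $n_+$ is polynomial in $\sigma_\varepsilon^{-1}$). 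The last weight $a_{\varepsilon N_\varepsilon}$ is then chosen, using \eqref{eq:FormulaRecursiveEnergy} at $i=N_\varepsilon$ and the already-determined $\mu_j$'s, so that $\mathcal{E}_{N_\varepsilon}[n_+] = \mathcal{E}_{N_\varepsilon}[0]\exp(\sum_{\bar n=1}^{n_+}\sum_{j=0}^{N_\varepsilon-1}\mu_j[\bar n])$ equals $\exp(-\exp(4\sigma_\varepsilon^{-9}))\varepsilon^{(N_\varepsilon)}(-\Lambda)^{-1/2}$, which pins $a_{\varepsilon N_\varepsilon}\sim \exp(-\exp(4\sigma_\varepsilon^{-9}))\cdot\exp(-\sum\sum\mu_j)$, yielding \eqref{eq:UpperBoundAeFinal}.

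Next I would run the bootstrap establishing \eqref{eq:U+InLastStep}. One sets up a continuity argument on $n$: assume $\{0<u\le v_{\varepsilon,0}^{(n)}-h_{\varepsilon,0}\}\subset\mathcal{U}_\varepsilon^+$ for some $n\le n_+$; then Proposition \ref{prop:TotalEnergyChange} gives $\mathcal{E}_\nwarrow^{(-)}[m;i,0] = \mathcal{E}_i[m] + O(\rho_\varepsilon^{1/16}\varepsilon^{(i)}/\sqrt{-\Lambda})$ and $\mathfrak{D}r_\nwarrow^{(-)}[m;i,0] = R_i[m](1+O(\rho_\varepsilon^{1/16}))$ for all $m\le n$; Lemma \ref{lem:ControlForExtension} then bounds the constraint-norm on each slab $\mathcal{U}_{\varepsilon;m}^+$ by $8\sum_{i}\mu_i[m+1] + \max_i\{\exp(e^{\sigma_\varepsilon^{-7}})a_{\varepsilon i}\} + \rho_\varepsilon^{1/19}$ and the mass ratio $\tfrac{2\tilde m}{r}$ by $\max_i\{\exp(e^{\sigma_\varepsilon^{-8}})a_{\varepsilon i}\} + \varepsilon^{1/2}$. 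Using \eqref{eq:MaxMu}-type monotonicity, i.e.\ that $\mu_i[m]$ is non-decreasing in $m$ so $\mu_i[m]\le\mu_i[n_+] = \tfrac{\delta_\varepsilon^{-3/4}}{N_\varepsilon}e^{-\dots}$ and hence $\sum_i\mu_i[m] \le \sum_i \tfrac{\delta_\varepsilon^{-3/4}}{N_\varepsilon}e^{-2(i/N_\varepsilon)\delta_\varepsilon^{-3/4}}\lesssim \tfrac12\delta_\varepsilon^{-3/4}\cdot\tfrac{1 - e^{-2\delta_\varepsilon^{-3/4}}}{\delta_\varepsilon^{-3/4}}\lesssim 1$, together with the smallness of the $a_{\varepsilon i}$ from \eqref{eq:UpperBoundAeAlmostAll}-\eqref{eq:UpperBoundAeFinal} and the hierarchy \eqref{eq:HierarchyOfParameters}, one concludes that on $\{u\le v_{\varepsilon,0}^{(n+1)}-h_{\varepsilon,0}\}$ the constraint-norm stays $\le \sigma_\varepsilon^{-1}$, the mass ratio stays $\le\eta_0$, and $u \le n_+\cdot\sqrt{-3/\Lambda}\pi \lesssim \sigma_\varepsilon^{-3/2}(-\Lambda)^{-1/2} < \sigma_\varepsilon^{-2}(-\Lambda)^{-1/2}$; invoking the extension principle Lemma \ref{lem:ExtensionPrincipleSpecialDomains} (which rules out each of the three boundary alternatives \eqref{eq:ExtensionConditionMuTilde}-\eqref{eq:ExtensionConditionNormVCons} since all three quantities are strictly below their thresholds) then extends the inclusion to $n+1$, closing the induction and giving \eqref{eq:U+InLastStep}.

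The main obstacle I expect is the simultaneous consistency of the three requirements: the fine-tuning of the $a_{\varepsilon i}$ to hit the profile \eqref{eq:MuAllBeamsButOneN+}-\eqref{eq:EnergyTopBeamFinal} exactly (not just approximately), the smallness constraints \eqref{eq:UpperBoundSumWeightsBeam}-\eqref{eq:UpperBoundAeFinal}, and keeping the error terms $O(\rho_\varepsilon^{1/16}\varepsilon^{(i)}/\sqrt{-\Lambda})$ in Proposition \ref{prop:TotalEnergyChange} small \emph{relative to} $\mathcal{E}_i[n_+]$ — since $\mathcal{E}_i[n_+]\sim \tfrac{C}{N_\varepsilon}R_{i+1}[n_+]\sim \tfrac{\delta_\varepsilon^{-3/4}}{N_\varepsilon}\rho_\varepsilon^{-1}\varepsilon^{(i)}/\sqrt{-\Lambda}$ while the error is $\rho_\varepsilon^{1/16}\varepsilon^{(i)}/\sqrt{-\Lambda}$, the relative error is $\sim N_\varepsilon\delta_\varepsilon^{3/4}\rho_\varepsilon^{1+1/16}$, which must be shown $\ll 1$ using $N_\varepsilon = \lceil\rho_\varepsilon^{-1}\exp(e^{\delta_\varepsilon^{-15}})\rceil$ and \eqref{eq:HierarchyOfParameters}; the delicate point is that this forces one to first fix the \emph{approximate} solution of the recursive system, then correct the $a_{\varepsilon i}$ by a second application of the (invertible, monotone) map to absorb the errors — essentially a fixed-point argument in the space of weight-vectors. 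Carrying this out carefully, and verifying that the correction does not push $\sum_i a_{\varepsilon i}$ past $\rho_\varepsilon^{-1}\sigma_\varepsilon$, is the technical heart of the proof; everything else (the explicit inversion of \eqref{eq:RecursiveFormulaMu}, the slab-by-slab bootstrap) is routine given the machinery of Sections \ref{sec:The-technical-core} and \ref{subsec:Notational-conventions-and}.
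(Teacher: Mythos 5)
Your overall architecture matches the paper's: item 2 is handled as a purely algebraic inverse problem, solving the recursion of Definition \ref{def:RecursiveSystemMuER} backwards in $n$ (inductively in $i$) from the prescribed profile at $n_{+}$, converting $\{\mu_{i}[0]\}$ into $\{a_{\varepsilon i}\}$ through the initial-data correspondence, and fixing $a_{\varepsilon N_{\varepsilon}}$ from the explicit formula for $\mathcal{E}_{N_{\varepsilon}}[n_{+}]$; item 1 is obtained by combining Lemma \ref{lem:ControlForExtension}, the monotonicity of $\mu_{i}[n]$ in $n$ together with $\sum_{i}\mu_{i}[n_{+}]\lesssim 1$, and the extension criterion of Lemma \ref{lem:ExtensionPrincipleSpecialDomains} (your induction on slabs is logically the same as the paper's single contradiction argument). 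The gap is in your justification of the smallness constraints on the weights. For \eqref{eq:UpperBoundAeAlmostAll} the mechanism is not ``tower growth over $n_{+}$ steps'': for $j=0$, and more generally whenever $\sum_{k<j}\mu_{k}$ is small, the recursion \eqref{eq:RecursiveFormulaMu} produces no backwards suppression at all ($\mu_{0}[n]\equiv\mu_{0}[0]$); the bound comes simply from monotonicity, $\mu_{j}[0]\le\mu_{j}[n_{+}]\le N_{\varepsilon}^{-1}\delta_{\varepsilon}^{-3/4}$, plus the hugeness of $N_{\varepsilon}$. More seriously, for the sum constraint \eqref{eq:UpperBoundSumWeightsBeam} monotonicity only yields $\sum_{j}\mu_{j}[0]\le\sum_{j}\mu_{j}[n_{+}]\lesssim 1$, i.e.\ $\sum_{j}a_{\varepsilon j}\lesssim\rho_{\varepsilon}^{-1}$, which misses \eqref{eq:UpperBoundSumWeightsBeam} by a factor $\sigma_{\varepsilon}^{-1}$, and your sketch offers no substitute argument. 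The missing idea --- and the only place where the choice $n_{+}\sim\sigma_{\varepsilon}^{-3/2}$ is genuinely used in this step --- is a dichotomy on the first index $i_{0}$ at which the partial sums $\sum_{i\le i_{0}}\mu_{i}[0]$ reach $\sim\sigma_{\varepsilon}$: if such an $i_{0}<N_{\varepsilon}$ exists, then every $\mu_{j}[n]$ with $j>i_{0}$ grows at least like $\exp(c\,\sigma_{\varepsilon}n)$, so matching the $O(1)$ (in fact $O(N_{\varepsilon}^{-1}\delta_{\varepsilon}^{-3/4})$) target at $n=n_{+}$ forces $\mu_{j}[0]\lesssim e^{-c\,\sigma_{\varepsilon}^{-1/2}}$ for all $j>i_{0}$, whence the total initial sum is again $O(\sigma_{\varepsilon})$. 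Without some version of this, \eqref{eq:UpperBoundSumWeightsBeam} is not established.

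Separately, the ``fixed-point correction in weight space'' that you single out as the technical heart is unnecessary and reflects a misreading of what must be hit exactly: item 2 prescribes the abstract quantities $\mu_{j}[n_{+}]$ and $\mathcal{E}_{N_{\varepsilon}}[n_{+}]$ of Definition \ref{def:RecursiveSystemMuER}, which are exact functionals of $\{\mu_{i}[0]\}$ through the recursion, not the physical energies $\mathcal{E}_{\nwarrow}^{(-)}[n_{+};i,0]$. The $O(\rho_{\varepsilon}^{1/16})$ errors of Proposition \ref{prop:TotalEnergyChange} only enter when the abstract sequences are compared with the spacetime quantities, i.e.\ in the geometric bootstrap for \eqref{eq:U+InLastStep} (through Lemma \ref{lem:ControlForExtension}) and in the later stages of the proof of the main theorem; they impose no correction on the choice of $\{a_{\varepsilon i}\}$, so no fixed-point argument is needed. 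Once you replace that step by the $i_{0}$-dichotomy above (and attribute \eqref{eq:UpperBoundAeAlmostAll}, \eqref{eq:UpperBoundAeFinal} to monotonicity plus the size of $N_{\varepsilon}$ and of the prescribed $\mathcal{E}_{N_{\varepsilon}}[n_{+}]$), your argument aligns with the paper's proof.
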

\begin{proof}
Let us set for convenience 
\begin{equation}
\mu_{N_{\varepsilon}}[n]\doteq2\rho_{\varepsilon}\mathcal{E}_{N_{\varepsilon}}[n]\frac{\sqrt{-\Lambda}}{\varepsilon^{(N_{\varepsilon})}}
\end{equation}
 for any $n\in\mathbb{N}$. In this way, the quantities $\mu_{i}[n]$
are defined for all $0\le i\le N_{\varepsilon}$ (note that Definition
\ref{def:RecursiveSystemMuER} only defined $\mu_{i}[n]$ for $0\le i\le N_{\varepsilon}-1$).
In particular, (\ref{eq:EnergyTopBeamFinal}) implies that 
\begin{equation}
\mu_{N_{\varepsilon}}[n_{+}]=2\rho_{\varepsilon}\exp(-\exp(4\sigma_{\varepsilon}^{-9})),\label{eq:MuTopBeam}
\end{equation}
while (\ref{eq:FormulaRecursiveEnergy}) implies that 
\begin{equation}
\mu_{N_{\varepsilon}}[n+1]=\mu_{N_{\varepsilon}}[n]\cdot\exp\Big(\sum_{j=0}^{N_{\varepsilon}-1}\mu_{j}[n+1]\Big)\label{eq:RecursiveMuTopBeam}
\end{equation}
for all $n\in\mathbb{N}$.

In view of the initial condition (\ref{eq:InitialMuI}) for $\mu_{i}$
for $0\le i\le N_{\varepsilon}-1$ (which, in particular, expresses
$\mu_{i}$ as a function of $T_{\mu\nu}$ and $r$ along $u=v_{\varepsilon,0}^{(0)}-h_{0,\varepsilon}$),
the form (\ref{eq:InitialVlasovTotal}) of $\bar{f}_{/}^{(\varepsilon)}$,
the bound (\ref{eq:SizeInitialData}) and the Cauchy stability statement
of Proposition \ref{prop:CauchyStabilityAdS} applied to $(r_{/}^{(\varepsilon)},(\Omega_{/}^{(\varepsilon)})^{2};\bar{f}_{/}^{(\varepsilon)})$
(implying, in particular, that $\frac{\partial_{v}r}{1-\frac{1}{3}\Lambda r^{2}}=\frac{1}{2}+O(\sigma_{\varepsilon})$
on $\{0\le u\le v_{\varepsilon,0}^{(0)}-h_{0,\varepsilon}\}$), we
readily infer that, for any $\varepsilon\in[0,\varepsilon_{1})$,
the quantities $\{\mu_{i}[0]\}_{i=0}^{N_{\varepsilon}}$ uniquely
determine $\{a_{\varepsilon i}\}_{i=0}^{N_{\varepsilon}}$ and vice-versa
and, moreover, 
\begin{equation}
C_{1}\rho_{\varepsilon}a_{\varepsilon i}\le\mu_{i}[0]\le C_{2}\rho_{\varepsilon}a_{\varepsilon i}\text{ for all }0\le i\le N_{\varepsilon},\label{eq:ComparableMu0Aei}
\end{equation}
 for some constants $C_{1},C_{2}>0$ independent of $i$, $\varepsilon$. 

By solving the recursive relation (\ref{eq:RecursiveFormulaMu}) for
$\mu_{i}[n]$ backwards in $n$, for $0\le i\le N_{\varepsilon}-1$,\footnote{Solving (\ref{eq:RecursiveFormulaMu}) backwards in $n$ can be performed
inductively in $i$: For $i=0$, $\mu_{i}[n]$ is constant in $n$,
while for any $i>0$, knowledge of $\{\mu_{\bar{i}}[\bar{n}]\}_{\bar{i}\le i-1}$
for all $0\le\bar{n}\le n$ completely determines $\mu_{i}[\bar{n}]$,
for all $0\le\bar{n}\le n$, in terms of $\mu_{i}[n]$.} and then solving (\ref{eq:RecursiveMuTopBeam}) backwards in $n$
for $i=N_{\varepsilon}$, we infer that the conditions (\ref{eq:MuAllBeamsButOneN+})
and (\ref{eq:MuTopBeam}) completely determine $\{\mu_{i}[0]\}_{i=0}^{N_{\varepsilon}}$
and, hence, $\{a_{\varepsilon i}\}_{i=0}^{N_{\varepsilon}}$. For
this reason, in order establish Proposition \ref{prop:FineTuningInitialData},
it suffices to show the following (in view of (\ref{eq:ComparableMu0Aei})):

\begin{itemize}

\item The finite sequence $\{\mu_{i}[0]\}_{i=0}^{N_{\varepsilon}}$,
fixed uniquely by the future conditions (\ref{eq:MuAllBeamsButOneN+})
and (\ref{eq:MuTopBeam}), satisfies 
\begin{equation}
\mu_{i}[0]<\rho_{\varepsilon}\exp(-\exp(2\delta_{\varepsilon}^{-10}))\text{ for all }0\le i\le N_{\varepsilon}-1,\label{eq:BoundMaxMui0}
\end{equation}
\begin{equation}
\mu_{N_{\varepsilon}}[0]<\rho_{\varepsilon}\exp(-\exp(2\sigma_{\varepsilon}^{-9}))\label{eq:BoundMuNe}
\end{equation}
 and 
\begin{equation}
\sum_{i=0}^{N_{\varepsilon}}\mu_{i}[0]\le C_{1}\sigma_{\varepsilon}.\label{eq:BoundSumMui0}
\end{equation}

\item The maximal future development $(\mathcal{U}_{max}^{(\varepsilon)};r,\Omega^{2},f_{\varepsilon})$
of the initial data set $(r_{/}^{(\varepsilon)},(\Omega_{/}^{(\varepsilon)})^{2};\bar{f}_{/}^{(\varepsilon)})$
associated to the finite sequence $\{a_{\varepsilon i}\}_{i=0}^{N_{\varepsilon}}$
(uniquely determined by $\{\mu_{i}[0]\}_{i=0}^{N_{\varepsilon}}$)
satisfies (\ref{eq:U+InLastStep}).

\end{itemize}

\medskip{}

\noindent \emph{Step 1: Proof of (\ref{eq:BoundMaxMui0})\textendash (\ref{eq:BoundSumMui0}).}
The relations (\ref{eq:MuAllBeamsButOneN+}) and (\ref{eq:MuTopBeam})
for $\mu_{i}[n_{+}]$ readily imply that 
\begin{equation}
\frac{1}{4}\le\sum_{i=0}^{N_{\varepsilon}}\mu_{i}[n_{+}]\le1.\label{eq:SumN+ComparableTo1}
\end{equation}

From (\ref{eq:RecursiveFormulaMu}) and (\ref{eq:RecursiveMuTopBeam})
we infer that, for any $0\le i\le N_{\varepsilon}$: 
\begin{equation}
\mu_{i}[n]\le\mu_{i}[n+1]\label{eq:IncreasingMuI}
\end{equation}
(with equality only when $i=0$). In particular,
\begin{equation}
\mu_{i}[0]\le\mu_{i}[n_{+}]\text{ for all }0\le i\le N_{\varepsilon}.\label{eq:EasyUpperBoundMuInitial}
\end{equation}
 From (\ref{eq:EasyUpperBoundMuInitial}), we infer, using (\ref{eq:MuAllBeamsButOneN+})
and (\ref{eq:MuTopBeam}), that 
\begin{align*}
\mu_{i}[0] & \le N_{\varepsilon}^{-1}\delta_{\varepsilon}^{-\frac{3}{4}},\\
\mu_{N_{\varepsilon}}[0] & \le2\rho_{\varepsilon}\exp(-\exp(4\sigma_{\varepsilon}^{-9})),
\end{align*}
from which (\ref{eq:BoundMaxMui0}) and (\ref{eq:BoundMuNe}) follow
readily, in view of the fact that $\delta_{\varepsilon}\ll\sigma_{\varepsilon}\ll1$
and $N_{\varepsilon}=\rho_{\varepsilon}^{-1}\exp(e^{\delta_{\varepsilon}^{-15}})$.

Let us define $i_{0}$ to be the minimum number in the set $\{0,1,\ldots,N_{\varepsilon}\}$
such that 
\begin{equation}
\sum_{i=0}^{i_{0}}\mu_{i}[0]\ge\frac{1}{2}C_{1}\sigma_{\varepsilon}.\label{eq:LowerBoundNormI0}
\end{equation}
Notice that the definition of $i_{0}$ implies that 
\begin{equation}
\sum_{i=0}^{i_{0}-1}\mu_{i}[0]<\frac{1}{2}C_{1}\sigma_{\varepsilon}.\label{eq:UpperBoundI0-1}
\end{equation}
Note also that the bound (\ref{eq:BoundMaxMui0}) (which we already
established) implies that, necessarily, 
\[
i_{0}\ge1.
\]

For the proof of (\ref{eq:BoundSumMui0}), we will consider two cases,
depending on the value of $i_{0}$: 

\begin{itemize}

\item In the case when $i_{0}=N_{\varepsilon}$, the bound (\ref{eq:LowerBoundNormI0})
trivially implies (\ref{eq:BoundSumMui0}).

\item In the case when $i_{0}\le N_{\varepsilon}-1$, for any $i_{0}+1\le j\le N_{\varepsilon}-1$
and any $n\ge0$, we can estimate using the recursive formula (\ref{eq:RecursiveFormulaMu}),
the monotonicity property (\ref{eq:IncreasingMuI}) and the lower
bound (\ref{eq:LowerBoundNormI0}): 
\begin{align}
\mu_{j}[n+1] & =\mu_{j}[n]\exp\Big(2\sum_{k=0}^{j-1}\mu_{k}[n+1]\Big)=\label{eq:MJAlmostThere}\\
 & =\mu_{j}[0]\exp\Big(2\sum_{\bar{n}=0}^{n}\sum_{k=0}^{j-1}\mu_{k}[\bar{n}+1]\Big)\ge\nonumber \\
 & \ge\mu_{j}[0]\exp\Big(2\sum_{\bar{n}=0}^{n}\sum_{k=0}^{i_{0}}\mu_{k}[\bar{n}+1]\Big)\ge\nonumber \\
 & \ge\mu_{j}[0]\exp\Big(2\sum_{\bar{n}=0}^{n}\sum_{k=0}^{i_{0}}\mu_{k}[0]\Big)\ge\nonumber \\
 & \ge\mu_{j}[0]\exp\Big(2\sum_{\bar{n}=0}^{n}(\frac{1}{2}C_{1}\sigma_{\varepsilon})\Big)=\nonumber \\
 & =\mu_{j}[0]\exp\Big(C_{1}\sigma_{\varepsilon}n\Big).\nonumber 
\end{align}
Similarly, for $j=N_{\varepsilon}$, we can estimate using (\ref{eq:RecursiveMuTopBeam})
in place of (\ref{eq:RecursiveFormulaMu}):
\begin{align}
\mu_{N_{\varepsilon}}[n+1] & =\mu_{N_{\varepsilon}}[n]\exp\Big(\sum_{k=0}^{N_{\varepsilon}-1}\mu_{k}[n+1]\Big)=\label{eq:MJAlmostThere-1}\\
 & =\mu_{N_{\varepsilon}}[0]\exp\Big(\sum_{\bar{n}=0}^{n}\sum_{k=0}^{N_{\varepsilon}-1}\mu_{k}[\bar{n}+1]\Big)\ge\nonumber \\
 & \ge\mu_{N_{\varepsilon}}[0]\exp\Big(\sum_{\bar{n}=0}^{n}\sum_{k=0}^{i_{0}}\mu_{k}[0]\Big)\ge\nonumber \\
 & =\mu_{N_{\varepsilon}}[0]\exp\Big(\frac{1}{2}C_{1}\sigma_{\varepsilon}n\Big).\nonumber 
\end{align}
From (\ref{eq:MJAlmostThere}) and (\ref{eq:MJAlmostThere-1}) for
$n=n_{+}-1$, using also the definition (\ref{eq:DefinitionN+}) of
$n_{+}$, the upper bound (\ref{eq:SumN+ComparableTo1}) and the fact
that $C_{1}$ is an absolute constant, we obtain that (provided $\varepsilon_{1}$
has been fixed small enough)
\begin{align}
\sum_{j=i_{0}+1}^{N_{\varepsilon}}\mu_{j}[0] & \le\exp\Big(-\frac{1}{2}C_{1}\sigma_{\varepsilon}(n_{+}-1)\Big)\sum_{j=i_{0}+1}^{N_{\varepsilon}}\mu_{j}[n_{+}]\le\label{eq:UpperBoundRestOfMu}\\
 & \le\exp\Big(-\frac{1}{2}C_{1}\sigma_{\varepsilon}(n_{+}-1)\Big)\le\nonumber \\
 & \le\exp\Big(-\frac{1}{2}C_{1}\sigma_{\varepsilon}^{-\frac{1}{2}}\Big)\le\nonumber \\
 & \le\exp\Big(-\sigma_{\varepsilon}^{-\frac{1}{4}}\Big).\nonumber 
\end{align}
From (\ref{eq:UpperBoundI0-1}), (\ref{eq:BoundMaxMui0}) and (\ref{eq:UpperBoundRestOfMu})
(using also the relation (\ref{eq:HierarchyOfParameters}) between
$\rho_{\varepsilon}$, $\delta_{\varepsilon}$, $\sigma_{\varepsilon}$)
, we therefore obtain that 
\begin{align*}
\sum_{j=0}^{N_{\varepsilon}}\mu_{j}[0] & =\sum_{j=0}^{i_{0}}\mu_{j}[0]+\mu_{i_{0}}[0]+\sum_{j=i_{0}+1}^{N_{\varepsilon}}\mu_{j}[0]\le\\
 & \le\frac{1}{2}C_{1}\sigma_{\varepsilon}+\rho_{\varepsilon}\exp(-\exp(2\delta_{\varepsilon}^{-10}))+\exp\Big(-\sigma_{\varepsilon}^{-\frac{1}{4}}\Big)\le\\
 & \le C_{1}\sigma_{\varepsilon},
\end{align*}
hence inferring (\ref{eq:BoundSumMui0}).

\end{itemize}

\medskip{}

\noindent \emph{Step 2: Proof of (\ref{eq:U+InLastStep}).} The inclusion
(\ref{eq:U+InLastStep}) is equivalent to the bound
\begin{equation}
v_{\varepsilon,0}^{(n_{+})}-h_{\varepsilon,0}<u[\mathcal{U}_{\varepsilon}^{+}],\label{eq:MaximumExtensionForUe}
\end{equation}
in view of the form (\ref{eq:DefinitionBootstrapDomainWithU}) of
$\mathcal{U}_{\varepsilon}^{+}$. For the sake of contradiction, let
us assume that 
\begin{equation}
u[\mathcal{U}_{\varepsilon}^{+}]\le v_{\varepsilon,0}^{(n_{+})}-h_{\varepsilon,0}.\label{eq:UeForContradiction}
\end{equation}
Notice that (\ref{eq:UeForContradiction}) implies (in view of (\ref{eq:DefinitionN+}))
that 
\begin{equation}
u[\mathcal{U}_{\varepsilon}^{+}]\lesssim\frac{\sigma_{\varepsilon}^{-\frac{3}{2}}}{\sqrt{-\Lambda}}\ll\frac{\sigma_{\varepsilon}^{-2}}{\sqrt{-\Lambda}}.\label{eq:Aaaaaaaaaaaah}
\end{equation}
Hence, Lemma \ref{lem:ExtensionPrincipleSpecialDomains} implies that
at least one of the relations (\ref{eq:ExtensionConditionMuTilde}),
(\ref{eq:ExtensionConditionNormUCons})and (\ref{eq:ExtensionConditionNormVCons})
holds.

In view of the bound (\ref{eq:UpperBoundNormBootstrapDomainFromSequence}),
we can estimate (using the hypothesis (\ref{eq:UeForContradiction})):
\begin{align}
\text{ }\sup_{V\ge0}\int_{\{v=V\}\cap\mathcal{U}_{\varepsilon}^{+}} & r\Big(\frac{T_{uu}[f_{\varepsilon}]}{-\partial_{u}r}+\frac{T_{uv}[f_{\varepsilon}]}{\partial_{v}r}\Big)(u,V)\,du+\label{eq:UpperBoundNormBootstrapDomainFromSequence-1}\\
+\sup_{U\ge0} & \int_{\{u=U\}\cap\mathcal{U}_{\varepsilon}^{+}}r\Big(\frac{T_{vv}[f_{\varepsilon}]}{\partial_{v}r}+\frac{T_{uv}[f_{\varepsilon}]}{-\partial_{u}r}\Big)(U,v)\,dv=\nonumber \\
= & \max_{n\le n_{+}-1}\Bigg\{\sup_{V\ge0}\int_{\{v=V\}\cap\mathcal{U}_{\varepsilon;n}^{+}}r\Big(\frac{T_{uu}[f_{\varepsilon}]}{-\partial_{u}r}+\frac{T_{uv}[f_{\varepsilon}]}{\partial_{v}r}\Big)(u,V)\,du+\nonumber \\
 & +\sup_{U\ge0}\int_{\{u=U\}\cap\mathcal{U}_{\varepsilon;n}^{+}}r\Big(\frac{T_{vv}[f_{\varepsilon}]}{\partial_{v}r}+\frac{T_{uv}[f_{\varepsilon}]}{-\partial_{u}r}\Big)(U,v)\,dv\Bigg\}\le\nonumber \\
\le & \max_{n\le n_{+}-1}\Bigg\{8\sum_{i=0}^{N_{\varepsilon}-1}\mu_{i}[n+1]+\max_{0\le i\le N_{\varepsilon}}\big\{\big(\exp(e^{\sigma_{\varepsilon}^{-7}})\big)a_{\varepsilon i}\big\}+\rho_{\varepsilon}^{\frac{1}{19}}\Bigg\}.\nonumber 
\end{align}
Using the bounds (\ref{eq:UpperBoundAeAlmostAll}) and (\ref{eq:UpperBoundAeFinal})
for $a_{\varepsilon i}$, the relation (\ref{eq:MuAllBeamsButOneN+})
for $\mu_{i}[n_{+}]$, as well as the fact that $\mu_{i}[n]$ is increasing
in $n$ for all $0\le i\le N_{\varepsilon}$, we infer from (\ref{eq:UpperBoundNormBootstrapDomainFromSequence-1})
that 
\begin{align}
\text{ }\sup_{V\ge0}\int_{\{v=V\}\cap\mathcal{U}_{\varepsilon}^{+}} & r\Big(\frac{T_{uu}[f_{\varepsilon}]}{-\partial_{u}r}+\frac{T_{uv}[f_{\varepsilon}]}{\partial_{v}r}\Big)(u,V)\,du+\label{eq:UpperBoundNormBootstrapDomainFromSequence-1-1}\\
+\sup_{U\ge0} & \int_{\{u=U\}\cap\mathcal{U}_{\varepsilon}^{+}}r\Big(\frac{T_{vv}[f_{\varepsilon}]}{\partial_{v}r}+\frac{T_{uv}[f_{\varepsilon}]}{-\partial_{u}r}\Big)(U,v)\,dv\le\nonumber \\
\le & 8\sum_{i=0}^{N_{\varepsilon}-1}\mu_{i}[n_{+}]+\exp(e^{\sigma_{\varepsilon}^{-7}}\big)\cdot\exp(e^{-\sigma_{\varepsilon}^{-9}}\big)+\rho_{\varepsilon}^{\frac{1}{19}}\Bigg\}\le\nonumber \\
\le & 8\sum_{i=0}^{N_{\varepsilon}-1}\frac{\delta_{\varepsilon}^{-\frac{3}{4}}}{N_{\varepsilon}}e^{-2\frac{j}{N_{\varepsilon}}\delta_{\varepsilon}^{-\frac{3}{4}}}+O(\sigma_{\varepsilon})\le\nonumber \\
\le & 20.\nonumber 
\end{align}
Furthermore, we infer from (\ref{eq:UpperBoundBootstrapDomainMuTilde/R})
(using the bounds (\ref{eq:UpperBoundAeAlmostAll}) and (\ref{eq:UpperBoundAeFinal})
for $a_{\varepsilon i}$) that 
\begin{align}
\sup_{\mathcal{U}_{\varepsilon}^{+}}\frac{2\tilde{m}}{r} & \le\max_{n\le n_{+}-1}\Big\{\sup_{\mathcal{U}_{\varepsilon;n}^{+}}\frac{2\tilde{m}}{r}\Big\}\le\label{eq:UpperBoundBootstrapDomainMuTilde/R-1}\\
 & \le\max_{0\le i\le N_{\varepsilon}}\big\{\big(\exp(e^{\sigma_{\varepsilon}^{-8}}\big)a_{\varepsilon i}\big\}+\varepsilon^{\frac{1}{2}}\le\nonumber \\
 & \le\exp(e^{\sigma_{\varepsilon}^{-8}}\big)\cdot\exp(e^{-\sigma_{\varepsilon}^{-9}}\big)+\varepsilon^{\frac{1}{2}}\le\nonumber \\
 & \le\frac{1}{2}\eta_{0}.\nonumber 
\end{align}
In view of the estimates \ref{eq:UpperBoundNormBootstrapDomainFromSequence-1-1}
and \ref{eq:UpperBoundBootstrapDomainMuTilde/R-1}, we therefore deduce
that none of the relations (\ref{eq:ExtensionConditionMuTilde}),
(\ref{eq:ExtensionConditionNormUCons}) and (\ref{eq:ExtensionConditionNormVCons})
can hold on $\mathcal{U}_{\varepsilon}^{+}$, which is a contradiction.
Hence, (\ref{eq:MaximumExtensionForUe}) holds.

Therefore, the proof of Proposition \ref{prop:FineTuningInitialData}
is complete.
\end{proof}

\section{\label{sec:The-final-stage}The final stage of the instability: Formation
of a black hole region }

In this section, we will show that, with the initial data parameters
$\{a_{\varepsilon i}\}_{i=0}^{N_{\varepsilon}}$ chosen as dictated
by Proposition \ref{prop:FineTuningInitialData}, the maximal future
development $(\mathcal{U}_{max}^{(\varepsilon)};r,\Omega^{2},f_{\varepsilon})$
of the associated initial data set $(r_{/}^{(\varepsilon)},(\Omega_{/}^{(\varepsilon)})^{2};\bar{f}_{/}^{(\varepsilon)})$
satisfies (\ref{eq:TrappedSphere}). Since (\ref{eq:ConvergenceToTrivialTheorem})
was already established in Lemma \ref{lem:SmallnessFamilyInitialData},
this section will complete the proof of Theorem \ref{thm:TheTheorem}.

\subsection{\label{subsec:Energy-growth-for-final-beam}Energy growth for the
final beam}

In order to complete the proof of Theorem \ref{thm:TheTheorem}, our
aim is to show that a trapped sphere is formed along the beam $\mathcal{V}_{N_{\varepsilon}\nwarrow}^{(n_{+})}$,
after its interaction with the beams $\mathcal{V}_{i\nearrow}^{(n_{+})}$,
$i\le N_{\varepsilon}-1$. To this end, in this section, we will first
establish the following result regarding the increase in the energy
content of the $N_{\varepsilon}$-th Vlasov beam occuring through
these interactions:
\begin{lem}
\label{lem:ProfileOfTheLastStep} For any $\varepsilon\in(0,\varepsilon_{1}]$,
let $\{a_{\varepsilon i}\}_{i=0}^{N_{\varepsilon}}$ and $n_{+}$
be as in Proposition \ref{prop:FineTuningInitialData}, and let $(\mathcal{U}_{max}^{(\varepsilon)};r,\Omega^{2},f_{\varepsilon})$
be the maximal future development of the initial data set $(r_{/}^{(\varepsilon)},(\Omega_{/}^{(\varepsilon)})^{2};\bar{f}_{/}^{(\varepsilon)})$
associated to $\{a_{\varepsilon i}\}_{i=0}^{N_{\varepsilon}}$. Then,
the following $(u,v)$-region is contained in the domain $\mathcal{T}_{\varepsilon}^{+}\subset\mathcal{U}_{max}^{(\varepsilon)}$
(see Section \ref{subsec:Notational-conventions-and} for the relevant
definitions):
\begin{equation}
\big\{0<u\le v_{\varepsilon,N_{\varepsilon}-1}^{(n_{+})}+\tilde{h}_{\varepsilon,N_{\varepsilon}-1}\big\}\cap\big\{ u<v<u+\sqrt{-\frac{3}{\Lambda}}\pi\big\}\subset\mathcal{T}_{\varepsilon}^{+}.\label{eq:LastStepTau}
\end{equation}
Furthermore, we have 
\begin{equation}
\widetilde{\mathcal{E}}_{\nwarrow}^{(+)}[n_{+};N_{\varepsilon},N_{\varepsilon}-1]\ge\delta_{\varepsilon}^{-\frac{1}{2}}\frac{\varepsilon^{(N_{\varepsilon})}}{\sqrt{-\Lambda}}.\label{eq:LowerBoundEnergyTopBeam}
\end{equation}
\end{lem}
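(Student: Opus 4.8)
The plan is to prove the two assertions of Lemma~\ref{lem:ProfileOfTheLastStep} in order, using the inclusion~\eqref{eq:LastStepTau} to license the use of the $\widetilde{\mathcal{E}}$- and $\widetilde{\mathfrak{D}}r$-quantities of Propositions~\ref{prop:TotalEnergyChange} and~\ref{prop:EnergyChangeInteraction} in the proof of~\eqref{eq:LowerBoundEnergyTopBeam}. The central quantitative input, which I would isolate first, is the estimate
\[
\sum_{i=0}^{N_\varepsilon-1}\mu_i[n_+ + 1] = \delta_\varepsilon^{-3/4}\big(1+o(1)\big)\qquad(\varepsilon\to 0).
\]
This follows from carrying out a \emph{single} step of the recursion~\eqref{eq:RecursiveFormulaMu}, starting from the profile~\eqref{eq:MuAllBeamsButOneN+} fixed by Proposition~\ref{prop:FineTuningInitialData}: writing $S_i\doteq\sum_{j<i}\mu_j[n_+ + 1]$, \eqref{eq:RecursiveFormulaMu} gives $S_{i+1}-S_i=\mu_i[n_+]e^{2S_i}$, a discretisation of the ODE $S'=\mu(x)e^{2S}$ with $\mu(x)=\tfrac{\delta_\varepsilon^{-3/4}}{N_\varepsilon}e^{-2(x/N_\varepsilon)\delta_\varepsilon^{-3/4}}$; integrating the ODE gives $S(N_\varepsilon)=\delta_\varepsilon^{-3/4}$ and $\mu_i[n_+ + 1]\simeq\delta_\varepsilon^{-3/4}/N_\varepsilon$, and the discretisation error is negligible because $N_\varepsilon=\lceil\rho_\varepsilon^{-1}\exp(e^{\delta_\varepsilon^{-15}})\rceil$ is chosen far larger than $\exp(2\delta_\varepsilon^{-3/4})$. (This is exactly the ``flattening of the intermediate profile $\mathcal{S}_*$'' mechanism of Section~\ref{subsec:Second-stage-of}, with the constant $C$ there played here by $\delta_\varepsilon^{-3/4}$.) Combined with the recursion~\eqref{eq:FormulaRecursiveEnergy} for $i=N_\varepsilon$ and with~\eqref{eq:EnergyTopBeamFinal}, this gives
\[
\mathcal{E}_{N_\varepsilon}[n_+ + 1]=\mathcal{E}_{N_\varepsilon}[n_+]\exp\Big(\sum_{i=0}^{N_\varepsilon-1}\mu_i[n_+ + 1]\Big)=\exp\big(\delta_\varepsilon^{-3/4}(1+o(1))-\exp(4\sigma_\varepsilon^{-9})\big)\tfrac{\varepsilon^{(N_\varepsilon)}}{\sqrt{-\Lambda}},
\]
and since the hierarchy of Definition~\ref{def:HierarchyParameters} forces $\delta_\varepsilon^{-3/4}\gg\exp(4\sigma_\varepsilon^{-9})$, this is $\ge\exp(\tfrac12\delta_\varepsilon^{-3/4})\tfrac{\varepsilon^{(N_\varepsilon)}}{\sqrt{-\Lambda}}\gg\delta_\varepsilon^{-1/2}\tfrac{\varepsilon^{(N_\varepsilon)}}{\sqrt{-\Lambda}}$.

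For~\eqref{eq:LastStepTau} I would run the extension/continuity argument of Step~2 of the proof of Proposition~\ref{prop:FineTuningInitialData}, now applied to $\mathcal{T}_\varepsilon^+$ (threshold $\delta_\varepsilon^{-1}$) instead of $\mathcal{U}_\varepsilon^+$. Assume for contradiction that $u[\mathcal{T}_\varepsilon^+]\le v_{\varepsilon,N_\varepsilon-1}^{(n_+)}+\tilde h_{\varepsilon,N_\varepsilon-1}$; since this is $\ll\sigma_\varepsilon^{-2}/\sqrt{-\Lambda}$, Lemma~\ref{lem:ExtensionPrincipleSpecialDomains} forces one of the three extension conditions on $\mathcal{T}_\varepsilon^+$. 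Decompose $\mathcal{T}_\varepsilon^+$ into the part $\{u\le v_{\varepsilon,0}^{(n_+)}-\tilde h_{\varepsilon,0}\}$ — which by~\eqref{eq:U+InLastStep} and $\tilde h_{\varepsilon,0}\ge h_{\varepsilon,0}$ lies in $\mathcal{U}_\varepsilon^+$, where Lemma~\ref{lem:ControlForExtension} applied on $\mathcal{U}_{\varepsilon;n}^+$ for $n\le n_+-1$ (using monotonicity of $\mu_i[n]$ in $n$, so $\sum_i\mu_i[n+1]\le\sum_i\mu_i[n_+]\le1$, together with~\eqref{eq:UpperBoundAeAlmostAll}--\eqref{eq:UpperBoundAeFinal}) gives a norm bound $\le 10$ and $\sup\tfrac{2\tilde m}{r}\le\tfrac12\eta_0$ — and the remaining part, which lies in $\mathcal{T}_{\varepsilon;n_+}^+\cap\{u\le v_{\varepsilon,N_\varepsilon-1}^{(n_+)}+\tilde h_{\varepsilon,N_\varepsilon-1}\}$. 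On this latter region the outgoing component $\widetilde{\mathcal{V}}_{N_\varepsilon\nearrow}^{(n_+)}$ is absent and the ingoing component $\widetilde{\mathcal{V}}_{N_\varepsilon\nwarrow}^{(n_+)}$ only enters at $r\gtrsim\rho_\varepsilon^{-1}\varepsilon^{(N_\varepsilon-1)}/\sqrt{-\Lambda}$, so re-running the proof of Lemma~\ref{lem:ControlForExtension} (as for~\eqref{eq:UpperBoundBootstrapDomainRestrictedMu} with $j=N_\varepsilon-1$) yields $\sup\tfrac{2\tilde m}{r}\le\max_{i\le N_\varepsilon-1}\{\exp(e^{\delta_\varepsilon^{-8}})a_{\varepsilon i}\}+\varepsilon^{1/2}\le\tfrac12\eta_0$ by~\eqref{eq:UpperBoundAeAlmostAll}, and a norm bound $\le 8\sum_{i=0}^{N_\varepsilon-1}\mu_i[n_+ + 1]+\max_{i\le N_\varepsilon-1}\{\exp(e^{\delta_\varepsilon^{-7}})a_{\varepsilon i}\}+\rho_\varepsilon^{1/19}=\delta_\varepsilon^{-3/4}(1+o(1))\ll\delta_\varepsilon^{-1}$ by the key estimate above. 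Hence none of the three conditions of Lemma~\ref{lem:ExtensionPrincipleSpecialDomains} holds on $\mathcal{T}_\varepsilon^+$ — a contradiction — which proves~\eqref{eq:LastStepTau}.

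With~\eqref{eq:LastStepTau} in hand, $\widetilde{\mathcal{R}}_{N_\varepsilon;N_\varepsilon-1}^{(n_+)}\subset\mathcal{T}_\varepsilon^+$ (it sits at $u\in[v_{\varepsilon,N_\varepsilon-1}^{(n_+)}-\tilde h_{\varepsilon,N_\varepsilon-1},\,v_{\varepsilon,N_\varepsilon-1}^{(n_+)}+\tilde h_{\varepsilon,N_\varepsilon-1}]$), and since~\eqref{eq:U+InLastStep} supplies the remaining hypothesis of Proposition~\ref{prop:TotalEnergyChange}, formula~\eqref{eq:EqualEnergyFluxFinalIngoingTau} applies and gives $\widetilde{\mathcal{E}}_\nwarrow^{(-)}[n_+;N_\varepsilon,N_\varepsilon-1]=\mathcal{E}_{N_\varepsilon}[n_+ + 1]+O(\rho_\varepsilon^{1/17}\varepsilon^{(N_\varepsilon)}/\sqrt{-\Lambda})$, hence by the displayed estimate above $\ge\tfrac12\exp(\tfrac12\delta_\varepsilon^{-3/4})\varepsilon^{(N_\varepsilon)}/\sqrt{-\Lambda}$. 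Passing from the $(-)$ to the $(+)$ energy across the last intersection, \eqref{eq:IncreaseIngoingEnergyCloseToAxis} of Proposition~\ref{prop:EnergyChangeInteraction} (with $i=N_\varepsilon$, $j=N_\varepsilon-1$, in its $\widetilde{\cdot}$ form, legitimate by~\eqref{eq:LastStepTau}) gives $\widetilde{\mathcal{E}}_\nwarrow^{(+)}[n_+;N_\varepsilon,N_\varepsilon-1]\ge(1-O(\rho_\varepsilon^{3/2}))\widetilde{\mathcal{E}}_\nwarrow^{(-)}[n_+;N_\varepsilon,N_\varepsilon-1]-O(\rho_\varepsilon^{3/2}\varepsilon^{(N_\varepsilon)}/\sqrt{-\Lambda})$; since the main term dominates every error term by the hierarchy, this is $\ge\delta_\varepsilon^{-1/2}\varepsilon^{(N_\varepsilon)}/\sqrt{-\Lambda}$, which is~\eqref{eq:LowerBoundEnergyTopBeam}.

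The main obstacle is the extension argument for~\eqref{eq:LastStepTau}: on the \emph{full} period $\mathcal{T}_{\varepsilon;n_+}^+$ the top beam $\widetilde{\mathcal{V}}_{N_\varepsilon}^{(n_+)}$ does reach $r\sim\tilde\beta_{\varepsilon,N_\varepsilon}$ near the axis, and there $\exp(e^{\delta_\varepsilon^{-7}})a_{\varepsilon N_\varepsilon}$ is \emph{not} small, so one must genuinely work on the truncated region $\{u\le v_{\varepsilon,N_\varepsilon-1}^{(n_+)}+\tilde h_{\varepsilon,N_\varepsilon-1}\}$ — where that near-axis excursion is cut off — and re-run, rather than merely quote, the relevant part of the proof of Lemma~\ref{lem:ControlForExtension}. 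Apart from that, the argument reduces to the arithmetic of the hierarchy (that $\delta_\varepsilon^{-3/4}$ beats $\exp(4\sigma_\varepsilon^{-9})$, that $N_\varepsilon$ beats $\exp(2\delta_\varepsilon^{-3/4})$, and that $\delta_\varepsilon^{-3/4}\ll\delta_\varepsilon^{-1}$) and to routine propagation of the $O(\rho_\varepsilon^{\cdot}\varepsilon^{(\cdot)}/\sqrt{-\Lambda})$ error terms through the chain \eqref{eq:EqualEnergyFluxFinalIngoingTau}$\to$\eqref{eq:FormulaRecursiveEnergy}$\to$\eqref{eq:IncreaseIngoingEnergyCloseToAxis}.
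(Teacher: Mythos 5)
Your proposal is correct and follows essentially the same route as the paper: the single-step recursion \eqref{eq:RecursiveFormulaMu} applied to the profile \eqref{eq:MuAllBeamsButOneN+} (which in fact telescopes exactly to $\mu_j[n_++1]=N_\varepsilon^{-1}\delta_\varepsilon^{-3/4}$, so no discretisation error is needed), the contradiction argument via Lemma \ref{lem:ExtensionPrincipleSpecialDomains} together with the restricted estimates of Lemma \ref{lem:ControlForExtension} on the truncated last period (keeping only $a_{\varepsilon i}$, $i\le N_\varepsilon-1$), and then \eqref{eq:EqualEnergyFluxFinalIngoingTau} combined with \eqref{eq:FormulaRecursiveEnergy}, \eqref{eq:EnergyTopBeamFinal} and the parameter hierarchy. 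Your extra passage from the $(-)$ to the $(+)$ energy via \eqref{eq:IncreaseIngoingEnergyCloseToAxis} is harmless bookkeeping (the paper applies the final-beam formula directly to $\widetilde{\mathcal{E}}_{\nwarrow}^{(+)}[n_+;N_\varepsilon,N_\varepsilon-1]$), and otherwise the argument matches the paper's proof.
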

\begin{proof}
Before establishing (\ref{eq:LastStepTau}) and (\ref{eq:LowerBoundEnergyTopBeam}),
we will first show that%Sxhma isws edw gia th energeia twn outgoing beams 
\begin{equation}
\mu_{j}[n_{+}+1]=N_{\varepsilon}^{-1}\delta_{\varepsilon}^{-\frac{3}{4}}\text{ for any }0\le j\le N_{\varepsilon}-1.\label{eq:MuAllBeamsFinalStepOutgoing}
\end{equation}

The recursive formula (\ref{eq:RecursiveFormulaMu}) yields that,
for all $0\le j\le N_{\varepsilon}-1$: 
\begin{equation}
\mu_{j}[n_{+}+1]=\mu_{j}[n_{+}]\cdot\exp\Big(2\sum_{k=0}^{j-1}\mu_{k}[n_{+}+1]\Big),\label{eq:RecursiveFormulaMu-1}
\end{equation}
while, in view of the relation (\ref{eq:MuAllBeamsButOneN+}), we
have for all $0\le j\le N_{\varepsilon}-1$: 
\begin{equation}
\mu_{j}[n_{+}]=\frac{\delta_{\varepsilon}^{-\frac{3}{4}}}{N_{\varepsilon}}e^{-2\frac{j}{N_{\varepsilon}}\delta_{\varepsilon}^{-\frac{3}{4}}}.\label{eq:MuAllBeamsButOneN+-1}
\end{equation}
We will show (\ref{eq:MuAllBeamsFinalStepOutgoing}) by arguing inductively
in $j$:

\begin{itemize}

\item For $j=0$, (\ref{eq:RecursiveFormulaMu-1}) and (\ref{eq:MuAllBeamsButOneN+-1})
imply that 
\[
\mu_{0}[n_{+}+1]=\mu_{0}[n_{+}]=N_{\varepsilon}^{-1}\delta_{\varepsilon}^{-\frac{3}{4}}.
\]

\item Assuming that, for some $1\le j_{0}\le N_{\varepsilon}-1$,
the relation (\ref{eq:MuAllBeamsFinalStepOutgoing}) holds for all
$0\le j\le j_{0}-1$, we calculate from (\ref{eq:RecursiveFormulaMu-1})
and (\ref{eq:MuAllBeamsButOneN+-1}) for $j=j_{0}$ that 
\begin{align*}
\mu_{j_{0}}[n_{+}+1] & =\mu_{j_{0}}[n_{+}]\cdot\exp\Big(2\sum_{k=0}^{j_{0}-1}\mu_{k}[n_{+}+1]\Big)=\\
 & =\frac{\delta_{\varepsilon}^{-\frac{3}{4}}}{N_{\varepsilon}}e^{-2\frac{j_{0}}{N_{\varepsilon}}\delta_{\varepsilon}^{-\frac{3}{4}}}\cdot\exp\Big(2\sum_{k=0}^{j_{0}-1}(N_{\varepsilon}^{-1}\delta_{\varepsilon}^{-\frac{3}{4}})\Big)=\\
 & =\frac{\delta_{\varepsilon}^{-\frac{3}{4}}}{N_{\varepsilon}}e^{-2\frac{j_{0}}{N_{\varepsilon}}\delta_{\varepsilon}^{-\frac{3}{4}}}\cdot e^{+2\frac{j_{0}}{N_{\varepsilon}}\delta_{\varepsilon}^{-\frac{3}{4}}}=\\
 & =\frac{\delta_{\varepsilon}^{-\frac{3}{4}}}{N_{\varepsilon}},
\end{align*}
i.\,e.~(\ref{eq:MuAllBeamsFinalStepOutgoing}) also holds for $j=j_{0}$.
Therefore, (\ref{eq:MuAllBeamsFinalStepOutgoing}) holds for all $0\le j\le N_{\varepsilon}-1$.

\end{itemize}

We will now proceed to show the inclusion (\ref{eq:LastStepTau}).
In view of the form (\ref{eq:DefinitionBootstrapDomainWithU}) of
the domain $\mathcal{T}_{\varepsilon}^{+}$, (\ref{eq:LastStepTau})
is equivalent to the bound 
\begin{equation}
v_{\varepsilon,N_{\varepsilon}-1}^{(n_{+})}+\tilde{h}_{\varepsilon,N_{\varepsilon}-1}<u[\mathcal{T}_{\varepsilon}^{+}].\label{eq:MaximumExtensionForTe}
\end{equation}
In order to establish (\ref{eq:MaximumExtensionForTe}), we will assume,
for the sake of contradiction, that 
\begin{equation}
u[\mathcal{T}_{\varepsilon}^{+}]\le v_{\varepsilon,N_{\varepsilon}-1}^{(n_{+})}+\tilde{h}_{\varepsilon,N_{\varepsilon}-1}.\label{eq:ForContradictionTe}
\end{equation}
In view of the inclusion (\ref{eq:MaximumExtensionForUe}) for $\mathcal{U}_{\varepsilon}^{+}$
and the fact that $u[\mathcal{U}_{\varepsilon}^{+}]<u[\mathcal{T}_{\varepsilon}^{+}]$,
the bound (\ref{eq:MaximumExtensionForTe}) in fact implies that
\begin{equation}
v_{\varepsilon,0}^{(n_{+})}-h_{\varepsilon,0}<u[\mathcal{T}_{\varepsilon}^{+}]\le v_{\varepsilon,N_{\varepsilon}-1}^{(n_{+})}+\tilde{h}_{\varepsilon,N_{\varepsilon}-1}.\label{eq:ForContradictionTe-1}
\end{equation}

In view of (\ref{eq:ForContradictionTe}) and the definition (\ref{eq:DefinitionN+})
of $n_{+}$, we can bound
\begin{equation}
u[\mathcal{T}_{\varepsilon}^{+}]\ll\frac{\sigma_{\varepsilon}^{-2}}{\sqrt{-\Lambda}}.
\end{equation}
As a consequence of Lemma \ref{lem:ExtensionPrincipleSpecialDomains},
we therefore infer that one of the following conditions holds:
\begin{equation}
\limsup_{p\rightarrow\{u=u[\mathcal{T}_{\varepsilon}^{+}]\}}\frac{2\tilde{m}}{r}(p)=\eta_{0},\label{eq:ExtensionConditionMuTilde-2}
\end{equation}
\begin{equation}
\text{ }\lim\sup_{u\rightarrow u[\mathcal{T}_{\varepsilon}^{+}]}\int_{u}^{u+\sqrt{-\frac{3}{\Lambda}}\pi}r\Big(\frac{T_{vv}[f_{\varepsilon}]}{\partial_{v}r}+\frac{T_{uv}[f_{\varepsilon}]}{-\partial_{u}r}\Big)(u,v)\,dv=\delta_{\varepsilon}^{-1},\label{eq:ExtensionConditionNormUCons-2}
\end{equation}
or 
\begin{equation}
\sup_{v\in(0,u[\mathcal{T}_{\varepsilon}^{+}]+\sqrt{-\frac{3}{\Lambda}}\pi)}\int_{\max\{0,v-\sqrt{-\frac{3}{\Lambda}}\pi\}}^{\min\{v,u[\mathcal{T}_{\varepsilon}^{+}]\}}r\Big(\frac{T_{uv}[f_{\varepsilon}]}{\partial_{v}r}+\frac{T_{uu}[f_{\varepsilon}]}{-\partial_{u}r}\Big)(u,v)\,du=\delta_{\varepsilon}^{-1}.\label{eq:ExtensionConditionNormVCons-2}
\end{equation}

In view of the bounds (\ref{eq:ForContradictionTe-1}) and (\ref{eq:UpperBoundAeAlmostAll}),
Lemma \ref{lem:ControlForExtension} (and, in particular, the estimate
(\ref{eq:UpperBoundBootstrapDomainRestrictedMu}) for $\mathcal{T}_{\varepsilon;n}^{+}$,
$\delta_{\varepsilon}$ in place of $\mathcal{U}_{\varepsilon;n}^{+}$,
$\sigma_{\varepsilon}$) implies that 
\begin{equation}
\sup_{\mathcal{T}_{\varepsilon;n_{+}}^{+}\cap\{u\le v_{\varepsilon,N_{\varepsilon}-1}^{(n_{+})}+\tilde{h}_{\varepsilon,N_{\varepsilon}-1}\}}\frac{2\tilde{m}}{r}\le\max_{0\le i\le N_{\varepsilon}-1}\big\{\big(\exp(e^{\delta_{\varepsilon}^{-8}}\big)a_{\varepsilon i}\big\}+\varepsilon^{\frac{1}{2}}\le\exp(e^{-\delta_{\varepsilon}^{-9}}\big)\le\frac{1}{2}\eta_{0}.\label{eq:Ante}
\end{equation}
Furthermore, the bound (\ref{eq:UpperBoundNormBootstrapDomainFromSequence})
of Lemma \ref{lem:ControlForExtension} (with $\mathcal{T}_{\varepsilon;n}^{+}$,
$\delta_{\varepsilon}$ in place of $\mathcal{U}_{\varepsilon;n}^{+}$,
$\sigma_{\varepsilon}$), together with (\ref{eq:UpperBoundAeAlmostAll}),
(\ref{eq:MuAllBeamsFinalStepOutgoing}) and the fact that $\mu_{i}[n]$
is increasing in $n$ for all $0\le i\le N_{\varepsilon}-1$, imply
that 
\begin{align}
\text{ }\sup_{V\ge0}\int_{\{v=V\}\cap\mathcal{T}_{\varepsilon;n}^{+}}r\Big(\frac{T_{uu}[f_{\varepsilon}]}{-\partial_{u}r}+\frac{T_{uv}[f_{\varepsilon}]}{\partial_{v}r}\Big) & (u,V)\,du+\label{eq:UpperBoundNormBootstrapDomainFromSequence-2}\\
\sup_{U\ge0}\int_{\{u=U\}\cap\mathcal{T}_{\varepsilon;n}^{+}}r\Big(\frac{T_{vv}[f_{\varepsilon}]}{\partial_{v}r}+ & \frac{T_{uv}[f_{\varepsilon}]}{-\partial_{u}r}\Big)(U,v)\,dv\le\nonumber \\
\le & 8\sum_{i=0}^{N_{\varepsilon}-1}\mu_{i}[n_{+}+1]+\max_{0\le i\le N_{\varepsilon}}\big\{\big(\exp(e^{\delta_{\varepsilon}^{-7}}\big)a_{\varepsilon i}\big\}+\rho_{\varepsilon}^{\frac{1}{19}}\le\nonumber \\
\le & 8N_{\varepsilon}\cdot N_{\varepsilon}^{-1}\delta_{\varepsilon}^{-\frac{3}{4}}+\exp(e^{-\delta_{\varepsilon}^{-9}}\big)\le\nonumber \\
\le & \frac{1}{2}\delta_{\varepsilon}^{-1}.\nonumber 
\end{align}
The bounds (\ref{eq:Ante}) and (\ref{eq:UpperBoundNormBootstrapDomainFromSequence-2})
readily imply that none of the relations (\ref{eq:ExtensionConditionMuTilde-2})\textendash (\ref{eq:ExtensionConditionNormVCons-2})
can hold, which is contradiction. Hence, (\ref{eq:MaximumExtensionForTe})
(and, therefore, (\ref{eq:LastStepTau})) holds.

We are finally ready to establish (\ref{eq:LowerBoundEnergyTopBeam}):
From Proposition \ref{prop:TotalEnergyChange} (in particular, the
relation (\ref{eq:EqualEnergyFluxFinalIngoingTau})), the relations
(\ref{eq:EnergyTopBeamFinal}) for $\mathcal{E}_{N_{\varepsilon}}[n_{+}]$
and (\ref{eq:MuAllBeamsFinalStepOutgoing}) for $\mu_{j}[n_{+}+1]$,
as well as the recursive formula (\ref{eq:FormulaRecursiveEnergy})
for $\mathcal{E}_{N_{\varepsilon}}[n]$, we obtain that 
\begin{align}
\widetilde{\mathcal{E}}_{\nwarrow}^{(+)}[n_{+};N_{\varepsilon},N_{\varepsilon}-1] & =\mathcal{E}_{N_{\varepsilon}}[n_{+}+1]+O\Big(\rho_{\varepsilon}^{\frac{1}{17}}\frac{\varepsilon^{(N_{\varepsilon})}}{\sqrt{-\Lambda}}\Big)=\label{eq:AntaePia}\\
 & =\mathcal{E}_{N_{\varepsilon}}[n_{+}]\cdot\exp\Big(\sum_{j=0}^{N_{\varepsilon}-1}\mu_{j}[n_{+}+1]\Big)+O\Big(\rho_{\varepsilon}^{\frac{1}{17}}\frac{\varepsilon^{(N_{\varepsilon})}}{\sqrt{-\Lambda}}\Big)=\nonumber \\
 & =\exp(-\exp(4\sigma_{\varepsilon}^{-9}))\frac{\varepsilon^{(N_{\varepsilon})}}{\sqrt{-\Lambda}}\cdot\exp\Big(N_{\varepsilon}\cdot N_{\varepsilon}^{-1}\delta_{\varepsilon}^{-\frac{3}{4}}\Big)+O\Big(\rho_{\varepsilon}^{\frac{1}{17}}\frac{\varepsilon^{(N_{\varepsilon})}}{\sqrt{-\Lambda}}\Big)=\nonumber \\
 & =\Big(\exp\big(\delta_{\varepsilon}^{-\frac{3}{4}}-\exp(4\sigma_{\varepsilon}^{-9})\big)+O(\rho_{\varepsilon}^{\frac{1}{17}})\Big)\frac{\varepsilon^{(N_{\varepsilon})}}{\sqrt{-\Lambda}}.\nonumber 
\end{align}
The bound (\ref{eq:LowerBoundEnergyTopBeam}) now follows readily
from (\ref{eq:AntaePia}), in view of the relations (\ref{eq:HierarchyOfParameters})
between $\rho_{\varepsilon}$, $\sigma_{\varepsilon}$ and $\delta_{\varepsilon}$.
\end{proof}

\subsection{Trapped surface formation and completion of the proof of Theorem
\ref{thm:TheTheorem}}

In this section, we will show that the energy content of the $N_{\varepsilon}$-th
beam $\mathcal{V}_{N_{\varepsilon}\nwarrow}^{(n_{+})}$, after its
interaction with the rest of the beams at the final step of the evolution
(which was studied in the previous section), is sufficiently high
for a trapped sphere to form before $\mathcal{V}_{N_{\varepsilon}\nwarrow}^{(n_{+})}$
reaches its minimum distance from the axis $\gamma_{\mathcal{Z}}$.
This statement will thus conclude the proof of Theorem \ref{thm:TheTheorem}.

In particular, we will show the following:
\begin{prop}
\label{prop:FinalTrappedSphereFormation} For any $\varepsilon\in(0,\varepsilon_{1}]$,
let $\{a_{\varepsilon i}\}_{i=0}^{N_{\varepsilon}}$, $(\mathcal{U}_{max}^{(\varepsilon)};r,\Omega^{2},f_{\varepsilon})$
and $n_{+}$ be as in Proposition \ref{prop:FineTuningInitialData}
and Lemma \ref{lem:ProfileOfTheLastStep}. Then, setting 
\begin{align}
\mathcal{B}_{\varepsilon}\doteq\big\{ v_{\varepsilon,N_{\varepsilon}-1}^{(n_{+})} & +\tilde{h}_{\varepsilon,N_{\varepsilon}-1}\le u\le v_{\varepsilon,N_{\varepsilon}}^{(n_{+})}-\delta_{\varepsilon}^{-\frac{1}{4}}h_{\varepsilon,N_{\varepsilon}}\big\}\cap\\
 & \cap\big\{ v\le v_{\varepsilon,N_{\varepsilon}}^{(n_{+})}+\exp(e^{\sigma_{\varepsilon}^{-7}})h_{\varepsilon,N_{\varepsilon}}\big\}\cap\big\{ u<v\big\}\nonumber 
\end{align}
(where $h_{\varepsilon,N_{\varepsilon}}$ is defined by (\ref{eq:ShorthandNotationCornerPoints})
and $\tilde{h}_{\varepsilon,N_{\varepsilon}-1}$ is defined by (\ref{eq:ShorthandNotationTilde})),
there exists a point $(u_{\dagger},v_{\dagger})\in\mathcal{B}_{\varepsilon}\cap\mathcal{U}_{max}^{(\varepsilon)}$
such that
\begin{equation}
\frac{2m}{r}(u_{\dagger},v_{\dagger})>1.\label{eq:TrappedSphereAtDagger}
\end{equation}
 In particular, $(\mathcal{U}_{max}^{(\varepsilon)};r,\Omega^{2},f_{\varepsilon})$
contains a trapped sphere.
\end{prop}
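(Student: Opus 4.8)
The strategy is to track the ingoing beam $\mathcal{V}_{N_{\varepsilon}\nwarrow}^{(n_{+})}$ through the region $\mathcal{B}_{\varepsilon}$, using the large lower bound on its energy content provided by Lemma \ref{lem:ProfileOfTheLastStep} together with the smallness of the normalised angular momentum of the geodesics comprising it, and to show that the Hawking mass ratio is forced above $1$ before the beam can turn around. The key input is that $\widetilde{\mathcal{E}}_{\nwarrow}^{(+)}[n_{+};N_{\varepsilon},N_{\varepsilon}-1]\gtrsim \delta_{\varepsilon}^{-1/2}\varepsilon^{(N_{\varepsilon})}(-\Lambda)^{-1/2}$, while the geodesics in this beam have $l/E_0 \sim \varepsilon^{(N_{\varepsilon})}(-\Lambda)^{-1/2}$, so that if the beam could reach $r\sim \min_\gamma r \sim \varepsilon^{(N_{\varepsilon})}(-\Lambda)^{-1/2}$ on AdS then $2\tilde m/r$ there would already be comparable to $\delta_{\varepsilon}^{-1/2}\gg 1$.

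\textbf{Step 1: continuation and geometric control up to the onset of trapping.} First I would establish that the solution extends smoothly beyond $\mathcal{T}_{\varepsilon}^+$ into a neighbourhood of $\mathcal{B}_{\varepsilon}$ as long as $2m/r<1$, using Proposition \ref{cor:GeneralContinuationCriterion} together with the bound $2\tilde m/r\le \eta_0$ just outside $\{u=u[\mathcal{T}_{\varepsilon}^+]\}$ (Lemma \ref{lem:ProfileOfTheLastStep} gives (\ref{eq:LastStepTau})). In the past of the first prospective trapped point, the only matter present is $f_{\varepsilon N_{\varepsilon}}$ (all lower beams have already reflected and are outgoing, away from $\mathcal{B}_{\varepsilon}$), so the spacetime is vacuum outside $\mathcal{V}_{N_{\varepsilon}}$ — Schwarzschild--AdS pieces glued along $\mathcal{V}_{N_{\varepsilon}\nwarrow}^{(n_+)}$. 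This reduces the problem to analysing the metric functions along the ingoing final beam.

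\textbf{Step 2: the monotonicity argument forcing $2m/r>1$.} Along the beam I would argue by contradiction: suppose $2m/r<1$ throughout $\mathcal{B}_{\varepsilon}\cap\mathcal{U}_{max}^{(\varepsilon)}$. Then $1-\frac{2m}{r}>0$, $\partial_u r<0<\partial_v r$ hold on the relevant region (by (\ref{eq:NoTrappingPastInfinity}) and the constraints), so equations (\ref{eq:EquationROutside})--(\ref{eq:TildeUMaza}) and the geodesic-flow identities (\ref{eq:XrhsimhGewdaisiakh-U}) are available. The renormalised Hawking mass $\tilde m$ immediately to the right of the beam is $\ge \widetilde{\mathcal{E}}_{\nwarrow}^{(+)}[n_{+};N_{\varepsilon},N_{\varepsilon}-1]\gtrsim \delta_{\varepsilon}^{-1/2}\varepsilon^{(N_{\varepsilon})}(-\Lambda)^{-1/2}$ by (\ref{eq:LowerBoundEnergyTopBeam}), and $\tilde m$ can only increase as $u$ increases along a fixed $v=\mathrm{const}$ line crossing the beam (since $\partial_u\tilde m\le 0$ with the other factors of the right sign). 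Then I would use a version of Lemma \ref{lem:GeodesicPaths}, or rather its proof adapted to the regime where $2\tilde m/r$ is no longer $\ll 1$ but $2m/r<1$ still holds — exploiting the sign structure of the bulk term in (\ref{eq:XrhsimhGewdaisiakh-U}) being controlled as long as $6\tilde m/r<1$, and a direct ODE analysis of $\tan^{-1}(\sqrt{-\Lambda}r(\gamma(\tau)))$ once $6\tilde m/r\ge 1$ using the monotonicity of the geodesic flow — to show that the beam slice $\mathcal{V}_{N_{\varepsilon}\nwarrow}^{(n_+)}\cap\{u=u_0\}$ enters $\{r\le r_0\}$ with $r_0\sim \varepsilon^{(N_{\varepsilon})}(-\Lambda)^{-1/2}$ for some $u_0$ in the prescribed $u$-range of $\mathcal{B}_{\varepsilon}$, before the beam is deflected outward. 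At that $u_0$, on the inner edge of the beam, $2\tilde m/r \ge 2\widetilde{\mathcal{E}}_{\nwarrow}^{(+)}[n_{+};N_{\varepsilon},N_{\varepsilon}-1]/r_0 \gtrsim \delta_{\varepsilon}^{-1/2}\gg 1$, so $2m/r = 2\tilde m/r + \frac{1}{3}\Lambda r^2 > 1$ (the cosmological term being negligible since $r\sim \varepsilon^{(N_{\varepsilon})}(-\Lambda)^{-1/2}\ll(-\Lambda)^{-1/2}$), contradicting the hypothesis.

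\textbf{Main obstacle.} The delicate point is exactly Step 2's control of the geodesic flow of the final beam \emph{in the large-curvature regime} $\frac{2\tilde m}{r}\gtrsim 1$, where Lemma \ref{lem:GeodesicPaths} no longer applies and the remark after Corollary \ref{cor:GeodesicPathsLongTimes} warns that no general a priori shape estimate holds. The resolution is that here we only need a \emph{one-sided} statement — that $r$ along the beam continues to \emph{decrease} (cannot have turned around) until it reaches $r_0$ — which should follow from: (i) the beam enters the region with $\partial_u r<0$ and nearly-radial momenta inherited from the $u\le u_{*}$ estimates of Lemma \ref{lem:QuantitativeBeamEstimate}; (ii) for a geodesic $\gamma$, $\frac{d}{d\tau}r$ changes sign only when $\partial_v r\,\dot\gamma^v = -\partial_u r\,\dot\gamma^u$, i.e.\ when $r\sim l/E_0$ modulo bounded factors, via the null-shell relation (\ref{eq:NullShellNewAngularMomentum}); and (iii) the monotonicity properties of the Einstein--massless Vlasov system (e.g.\ $\partial_u(\Omega^{-2}\partial_u r)\le 0$ from (\ref{eq:ConstraintUFinal})) keep $\Omega^2\dot\gamma^u$ from degenerating before that. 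Carefully quantifying that the deflection point is still within the $u$-range defining $\mathcal{B}_{\varepsilon}$ (i.e.\ the beam does not escape to large $r$ first) is where the bulk of the technical work in Section \ref{subsec:Energy-growth-for-final-beam} and Section \ref{subsec:Notational-conventions-and}'s extension lemmas is spent; I would lean on the explicit a priori estimates for geodesics in $\zeta_{N_\varepsilon}$ for $u\le u_*$ and the fact that $\mathcal{B}_{\varepsilon}$ has $u$-width $\sim \exp(e^{\sigma_\varepsilon^{-7}})h_{\varepsilon,N_\varepsilon}$, comfortably larger than the $O(\varepsilon^{(N_\varepsilon)}(-\Lambda)^{-1/2})$ excursion of the beam.
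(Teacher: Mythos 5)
Your global architecture coincides with the paper's: assume $\frac{2m}{r}\le1$ on $\mathcal{B}_{\varepsilon}\cap\mathcal{U}_{max}^{(\varepsilon)}$, extract the one-sided bounds coming from monotonicity ($\partial_{u}\big(\Omega^{2}/(-\partial_{u}r)\big)\le0$, hence $\Omega^{2}/(-\partial_{u}r)\le e^{\sigma_{\varepsilon}^{-4}}$ by integrating from $u=v_{\varepsilon,0}^{(n_{+})}-h_{\varepsilon,0}$, together with $\partial_{u}r<0\le\partial_{v}r$ and $\partial_{u}\tilde{m}\le0$), isolate the final beam (only $f_{\varepsilon N_{\varepsilon}}$ is supported in $\mathcal{B}_{\varepsilon}$, with a vacuum triangle to its past allowing the extension principle of Proposition \ref{prop:ExtensionPrincipleMihalis} to be applied), and contradict the hypothesis by pairing the energy lower bound (\ref{eq:LowerBoundEnergyTopBeam}) with a small value of $r$. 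The gap is in your Step 2. You organize the endgame around the claim that the beam slice actually \emph{reaches} $\{r\le r_{0}\}$ with $r_{0}\sim\varepsilon^{(N_{\varepsilon})}(-\Lambda)^{-1/2}$ before being deflected, within the $u$-range of $\mathcal{B}_{\varepsilon}$, and your ingredients (i)--(iii) do not deliver this: the heuristic ``$r$ can only turn around when $r\sim l/E_{0}$ modulo bounded factors'' presupposes two-sided control of $\Omega^{2}\dot{\gamma}^{u}$ and of $\partial_{v}r/(-\partial_{u}r)$ in the regime $\frac{2\tilde{m}}{r}\gtrsim1$, which is precisely what is unavailable there (the monotonicity (\ref{eq:ConstraintUFinal}) gives only an upper bound on $\Omega^{2}/(-\partial_{u}r)$, not the lower bounds needed to rule out stalling or premature deflection), and your final remark misreads the geometry of $\mathcal{B}_{\varepsilon}$: its $u$-extent is of order $\rho_{\varepsilon}^{-1}\varepsilon^{(N_{\varepsilon}-1)}(-\Lambda)^{-1/2}$, while the quantity that actually matters is the distance $\delta_{\varepsilon}^{-\frac{1}{4}}h_{\varepsilon,N_{\varepsilon}}$ of its future $u$-boundary from the beam centre.

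The paper's resolution is different and avoids any statement about where the geodesics go at small $r$. Under the contradiction hypothesis it proves only the one-sided ``no spreading in $v$'' estimate (\ref{eq:RefinedBoundSupportFepsilon}): as long as $r\ge\delta_{\varepsilon}^{-\frac{1}{4}}\varepsilon^{(N_{\varepsilon})}(-\Lambda)^{-1/2}$ (a deliberately larger threshold than your $r_{0}$, which creates room), the support of $f_{\varepsilon N_{\varepsilon}}$ stays within $v$-width $\tfrac{1}{2}\exp(e^{\sigma_{\varepsilon}^{-7}})h_{\varepsilon,N_{\varepsilon}}$ of $v_{\varepsilon,N_{\varepsilon}}^{(n_{+})}$; this is a continuity argument on the identity (\ref{eq:XrhsimhGewdaisiakh-U}) using only $\Omega^{2}/(-\partial_{u}r)\le e^{\sigma_{\varepsilon}^{-4}}$, $\frac{2\tilde{m}}{r}\le1+\varepsilon$, $T_{uv}\lesssim(-\partial_{u}\tilde{m})/r^{2}$ and the null-shell relation, and yields
\begin{equation*}
v(p)-v_{0}\lesssim\exp\big(\exp(\exp(\sigma_{\varepsilon}^{-9}))\big)\,\frac{l^{2}}{\inf_{\gamma}r}\le\delta_{\varepsilon}^{\frac{1}{8}}\frac{\varepsilon^{(N_{\varepsilon})}}{\sqrt{-\Lambda}}
\end{equation*}
along any geodesic of the beam above the threshold radius. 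The smallness of $r$ at the contradiction point is then obtained without tracking geodesics at all: at the point $(u_{\star},v_{\star})$ just outside the (non-spread) support, either $r=\delta_{\varepsilon}^{-\frac{1}{4}}\varepsilon^{(N_{\varepsilon})}(-\Lambda)^{-1/2}$, or $(u_{\star},v_{\star})$ lies on the future boundary $\{u=v_{\varepsilon,N_{\varepsilon}}^{(n_{+})}-\delta_{\varepsilon}^{-\frac{1}{4}}h_{\varepsilon,N_{\varepsilon}}\}$, where integrating $\partial_{v}r\le2e^{\sigma_{\varepsilon}^{-4}}$ from the axis over the short interval $v_{\star}-u_{\star}$ gives $r(u_{\star},v_{\star})\le\exp(e^{\sigma_{\varepsilon}^{-8}})\delta_{\varepsilon}^{-\frac{1}{4}}\varepsilon^{(N_{\varepsilon})}(-\Lambda)^{-1/2}$; since $\tilde{m}(u_{\star},v_{\star})=\widetilde{\mathcal{E}}_{\nwarrow}^{(+)}[n_{+};N_{\varepsilon},N_{\varepsilon}-1]\ge\delta_{\varepsilon}^{-\frac{1}{2}}\varepsilon^{(N_{\varepsilon})}(-\Lambda)^{-1/2}$, one gets $\frac{2m}{r}(u_{\star},v_{\star})\ge\delta_{\varepsilon}^{-\frac{1}{8}}>1$, the desired contradiction. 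To complete your write-up you would either have to supply the missing two-sided geodesic control in the large-curvature regime (which the paper's own discussion indicates is not to be had), or reorganize Step 2 along these lines, exploiting the hierarchy $\delta_{\varepsilon}^{-\frac{1}{2}}\gg\delta_{\varepsilon}^{-\frac{1}{4}}\gg\exp(e^{\sigma_{\varepsilon}^{-8}})$ so that only the no-spreading estimate and the elementary $\partial_{v}r$ bound are needed.
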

\begin{proof}
In order to establish (\ref{eq:TrappedSphereAtDagger}) for some $(u_{\dagger},v_{\dagger})\in\mathcal{B}_{\varepsilon}\cap\mathcal{U}_{max}^{(\varepsilon)}$,
we will assume for the sake of contradiction that 
\begin{equation}
\frac{2m}{r}\le1\text{ everywhere on }\mathcal{B}_{\varepsilon}\cap\mathcal{U}_{max}^{(\varepsilon)}.\label{eq:ContradictionAssumptionTraooedSphere.}
\end{equation}

\begin{figure}[h] 
\centering 
\scriptsize
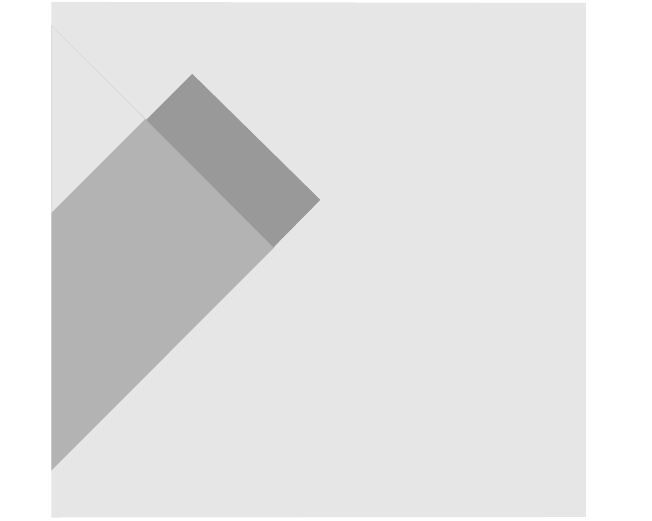 
\caption{In the figure above, the domain $\mathcal{B}_{\epsilon}$ consists of the two darker shaded regions $\mathcal{B}_{\epsilon} \setminus \mathcal{B}^{*}_{\epsilon}$ and $\mathcal{B}^{*}_{\epsilon}$. A fundamental step in the proof of Proposition \ref{prop:FinalTrappedSphereFormation} consists of showing that the physical-space support of the Vlasov field $f_{\epsilon N_{\epsilon}}$ in the region $\big\{ r\ge \delta_{\epsilon}^{-\frac{1}{4}}\frac{\epsilon^{(N_{\epsilon})}}{\sqrt{-\Lambda}}\big\}$ is contained in a domain $\mathcal{B}^{\sharp}_{\epsilon} \subset \mathcal{B}^{*}_{\epsilon}$. \label{fig:B_Domains}}
\end{figure}

Note that the bound (\ref{eq:ContradictionAssumptionTraooedSphere.})
implies, in view of the inequality 
\begin{equation}
\partial_{u}\Big(\frac{\Omega^{2}}{-\partial_{u}r}\Big)\le0\label{eq:NonIncreasingConstraint}
\end{equation}
(following readily from (\ref{eq:EquationROutside})), the relation
(\ref{eq:DefinitionHereHawkingMass}) and the fact that $\Omega^{2}$
is smooth on $\mathcal{U}_{max}^{(\varepsilon)}$, that 
\begin{equation}
\partial_{u}r<0\le\partial_{v}r\text{ on }\mathcal{B}_{\varepsilon}\cap\mathcal{U}_{max}^{(\varepsilon)}.\label{eq:InequalitiesRTrivialBepsilon}
\end{equation}
Furthermore, integrating the inequality (\ref{eq:NonIncreasingConstraint})
in $u$ from $u=v_{\varepsilon,0}^{(n_{+})}-h_{\varepsilon,0}$ and
using (\ref{eq:U+InLastStep}) and (\ref{eq:EstimateGeometryLikeC0})
at $u=v_{\varepsilon,0}^{(n_{+})}-h_{\varepsilon,0}$, we obtain the
following one-sided bound:
\begin{equation}
\sup_{\mathcal{B}_{\varepsilon}\cap\mathcal{U}_{max}^{(\varepsilon)}}\frac{\Omega^{2}}{-\partial_{u}r}\le\max_{u=v_{\varepsilon,0}^{(n_{+})}-h_{\varepsilon,0}}\frac{\Omega^{2}}{-\partial_{u}r}\le e^{\sigma_{\varepsilon}^{-4}}.\label{eq:OneSidedBoundDvr}
\end{equation}
Note also that, in view of (\ref{eq:InequalitiesRTrivialBepsilon})
and the bound (\ref{eq:EstimateGeometryLikeC0LargerDomain}) for $\{u=v_{\varepsilon,N_{\varepsilon}-1}^{(n_{+})}+\tilde{h}_{\varepsilon,N_{\varepsilon}-1}\}\cap\mathcal{B}_{\varepsilon}$
(which is contained in $\mathcal{T}_{\varepsilon}^{+}$, in view of
(\ref{eq:LastStepTau})), we can estimate 
\begin{equation}
\sup_{\mathcal{B}_{\varepsilon}}(-\Lambda r^{2})=-\Lambda r^{2}|_{(u,v)=\big(v_{\varepsilon,N_{\varepsilon}-1}^{(n_{+})}+\tilde{h}_{\varepsilon,N_{\varepsilon}-1},v_{\varepsilon,N_{\varepsilon}-1}^{(n_{+})}+\exp(e^{\sigma_{\varepsilon}^{-7}})h_{\varepsilon,N_{\varepsilon}}\big)}\le\varepsilon\label{eq:UpperBoundRho}
\end{equation}
and, hence, (\ref{eq:ContradictionAssumptionTraooedSphere.}) also
implies (in view of (\ref{eq:RenormalisedNewHawkingMass})) that 
\begin{equation}
\frac{2\tilde{m}}{r}\le1+\varepsilon\text{ everywhere on }\mathcal{B}_{\varepsilon}\cap\mathcal{U}_{max}^{(\varepsilon)}.\label{eq:ContradictionAssumptionTraooedSphere.-1}
\end{equation}

Among all components $f_{\varepsilon i}$ of the Vlasov field $f_{\varepsilon}$
(see the relation (\ref{eq:LinearCombinationVlasovFields})), only
$f_{\varepsilon N_{\varepsilon}}$ has non-trivial support on $\{u=v_{\varepsilon,N_{\varepsilon}-1}^{(n_{+})}+\tilde{h}_{\varepsilon,N_{\varepsilon}-1}\}\cap\mathcal{B}_{\varepsilon}\cap\mathcal{U}_{max}^{(\varepsilon)}$
(as a consequence of Lemma \ref{lem:QuantitativeBeamEstimate} on
the support of $f_{\varepsilon i}$ and (\ref{eq:LastStepTau})).
Hence, by the domain of dependence property, all the $f_{\varepsilon i}$'s
for $i\neq N_{\varepsilon}$ vanish on $\mathcal{B}_{\varepsilon}\cap\mathcal{U}_{max}^{(\varepsilon)}$,
i.\,e.:
\begin{equation}
f_{\varepsilon}|_{\mathcal{B}_{\varepsilon}\cap\mathcal{U}_{max}^{(\varepsilon)}}=a_{\varepsilon N_{\varepsilon}}f_{\varepsilon N_{\varepsilon}}|_{\mathcal{B}_{\varepsilon}\cap\mathcal{U}_{max}^{(\varepsilon)}}.\label{eq:OnlyOneVlasovFieldFepsilon}
\end{equation}

Let $\gamma\subset\mathcal{U}_{max}^{(\varepsilon)}$ be a future
directed null geodesic which is maximally extended through reflections
off $\mathcal{I}_{\varepsilon}$, in accordance with Definition 2.3
of \cite{MoschidisVlasovWellPosedness} (see also the statement of
Corollary \ref{cor:GeodesicPathsLongTimes}), such that $(\gamma,\dot{\gamma})$
lies in the support of $f_{\varepsilon N_{\varepsilon}}$ (where $\dot{\gamma}$
denotes the derivative with respect to the fixed affine parametrisation
of each maximal geodesic component $\gamma_{n}$ of $\gamma=\cup_{n}\gamma_{n}$).
In view of the bound (\ref{eq:BoundSupportFepsiloni}) for the support
of $f_{\varepsilon N_{\varepsilon}}$ and the inclusion (\ref{eq:U+InLastStep}),
we can trivially estimate that, at the point of $\gamma$ where $u=v_{\varepsilon,0}^{(n_{+})}-h_{\varepsilon,0}$:
\begin{equation}
v|_{\gamma\cap\{u=v_{\varepsilon,0}^{(n_{+})}-h_{\varepsilon,0}\}}\ge v_{\varepsilon,N_{\varepsilon}}^{(n_{+})}-h_{\varepsilon,N_{\varepsilon}}.\label{eq:vsupportGamma}
\end{equation}
Since $\gamma$ traces out a causal curve in $\mathcal{U}_{max}^{(\varepsilon)}$
(and, in particular, the coordinate function $v$ is non-decreasing
along $\gamma$), we infer from (\ref{eq:vsupportGamma}) that 
\begin{equation}
v|_{\gamma\cap\mathcal{B}_{\varepsilon}}\ge v_{\varepsilon,N_{\varepsilon}}^{(n_{+})}-h_{\varepsilon,N_{\varepsilon}}.
\end{equation}
Therefore: 
\begin{equation}
supp(f_{\varepsilon N_{\varepsilon}})\cap\mathcal{B}_{\varepsilon}\subset\{v\ge v_{\varepsilon,N_{\varepsilon}}^{(n_{+})}-h_{\varepsilon,N_{\varepsilon}}\}.\label{eq:LowerBoundVSupportFe}
\end{equation}
In view of(\ref{eq:OnlyOneVlasovFieldFepsilon}) and (\ref{eq:LowerBoundVSupportFe}),
we obtain that 
\begin{equation}
f_{\varepsilon}\equiv0\text{ on }\big(\mathcal{B}_{\varepsilon}\backslash\mathcal{B}_{\varepsilon}^{*}\big)\cap\mathcal{U}_{max}^{(\varepsilon)},
\end{equation}
where 
\[
\mathcal{B}_{\varepsilon}^{*}\doteq\{v\ge v_{\varepsilon,N_{\varepsilon}}^{(n_{+})}-h_{\varepsilon,N_{\varepsilon}}\}\cap\mathcal{B}_{\varepsilon}
\]
(see Figure \ref{fig:B_Domains}). By the domain of dependence property,
we therefore infer that the solution $(r,\Omega^{2},f_{\varepsilon})$
extends to the whole triangle 
\[
\mathcal{D}_{\varepsilon}\doteq\{v\le v_{\varepsilon,N_{\varepsilon}}^{(n_{+})}-h_{\varepsilon,N_{\varepsilon}}\}\cap\{u\ge v_{\varepsilon,N_{\varepsilon}-1}^{(n_{+})}+\tilde{h}_{\varepsilon,N_{\varepsilon}-1}\}\cap\{u<v\},
\]
i.\,e.~that 
\[
\mathcal{D}_{\varepsilon}\subset\mathcal{U}_{max}^{(\varepsilon)},
\]
 and that $\mathcal{D}_{\varepsilon}$ is a vacuum region, i.\,e.
\begin{equation}
f_{\varepsilon}|_{\mathcal{D}_{\varepsilon}}=0.\label{eq:VacuumTriangle}
\end{equation}

In view of (\ref{eq:VacuumTriangle}), we readily infer that $\partial_{v}r>0$
on $\mathcal{D}_{\varepsilon}$, and, therefore:
\[
\inf_{\mathcal{B}_{\varepsilon}^{*}\cap\{v=v_{\varepsilon,N_{\varepsilon}}^{(n_{+})}-h_{\varepsilon,N_{\varepsilon}}\}}r>0.
\]
Thus, using (\ref{eq:InequalitiesRTrivialBepsilon}), we infer that
\begin{align}
0<\inf_{\mathcal{B}_{\varepsilon}^{*}\cap\{v=v_{\varepsilon,N_{\varepsilon}}^{(n_{+})}-h_{\varepsilon,N_{\varepsilon}}\}} & r\le r|_{\mathcal{B}_{\varepsilon}^{*}\cap\mathcal{U}_{max}^{(\varepsilon)}}\label{eq:UpperLowerBoundr}\\
 & \le\max_{\mathcal{B}_{\varepsilon}^{*}\cap\{u=v_{\varepsilon,N_{\varepsilon}-1}^{(n_{+})}+\tilde{h}_{\varepsilon,N_{\varepsilon}-1}\}}r<+\infty.\nonumber 
\end{align}
The bounds (\ref{eq:ContradictionAssumptionTraooedSphere.}) and (\ref{eq:UpperLowerBoundr})
and the extension principle of Proposition \ref{prop:ExtensionPrincipleMihalis}
then readily imply that 
\begin{equation}
\mathcal{B}_{\varepsilon}^{*}\subset\mathcal{U}_{max}^{(\varepsilon)}.\label{eq:ExtensionOnAllOfB*}
\end{equation}

The following estimate for the support of the Vlasov field $f_{\varepsilon N_{\varepsilon}}$
will be crucial for the proof of Proposition \ref{prop:FinalTrappedSphereFormation}:
\begin{equation}
supp(f_{\varepsilon N_{\varepsilon}})\cap\mathcal{B}_{\varepsilon}\cap\big\{ r\ge\delta_{\varepsilon}^{-\frac{1}{4}}\frac{\varepsilon^{(N_{\varepsilon})}}{\sqrt{-\Lambda}}\big\}\subset\big\{(u,v)\in\mathcal{B}_{\varepsilon}^{\sharp}\big\},\label{eq:RefinedBoundSupportFepsilon}
\end{equation}
where 
\begin{align}
\mathcal{B}_{\varepsilon}^{\sharp}\doteq\big\{ v_{\varepsilon,N_{\varepsilon}}^{(n_{+})}-h_{\varepsilon,N_{\varepsilon}} & \le v\le v_{\varepsilon,N_{\varepsilon}}^{(n_{+})}+\frac{1}{2}\exp(e^{\sigma_{\varepsilon}^{-7}})h_{\varepsilon,N_{\varepsilon}}\big\}\cap\\
 & \cap\big\{ v_{\varepsilon,N_{\varepsilon}-1}^{(n_{+})}+\tilde{h}_{\varepsilon,N_{\varepsilon}-1}\le u\le v_{\varepsilon,N_{\varepsilon}}^{(n_{+})}-\delta_{\varepsilon}^{-\frac{1}{4}}h_{\varepsilon,N_{\varepsilon}}\big\}\subset\mathcal{B}_{\varepsilon}^{*}.\nonumber 
\end{align}

\medskip{}

\noindent \emph{Remark.} Note that the bound (\ref{eq:RefinedBoundSupportFepsilon})
does not follow from Lemma \ref{lem:QuantitativeBeamEstimate}, since
we do not expect the region $\mathcal{B}_{\varepsilon}^{\sharp}$
to lie within the domains $\mathcal{U}_{\varepsilon}^{+}$ or $\mathcal{T}_{\varepsilon}^{+}$,
where $\frac{2m}{r}\le\eta_{0}$ (in fact, our aim is to show that
$\frac{2m}{r}$ exceeds $1$ at some point on $\mathcal{B}_{\varepsilon}^{\sharp}$).
However, even when restricted to $\mathcal{B}_{\varepsilon}\cap\{u=v_{\varepsilon,N_{\varepsilon}-1}^{(n_{+})}+\tilde{h}_{\varepsilon,N_{\varepsilon}-1}\}$,
(\ref{eq:RefinedBoundSupportFepsilon}) is a partial improvement of
the bounds provided by Lemma \ref{lem:QuantitativeBeamEstimate},
since $\mathcal{B}_{\varepsilon}\cap\{u=v_{\varepsilon,N_{\varepsilon}-1}^{(n_{+})}+\tilde{h}_{\varepsilon,N_{\varepsilon}-1}\}$
is contained in $\mathcal{T}_{\varepsilon}^{+}$, but is not necessarily
contained in $\mathcal{U}_{\varepsilon}^{+}$, and hence Lemma \ref{lem:QuantitativeBeamEstimate}
can only guarantee bounds for $supp(f_{\varepsilon N_{\varepsilon}})$
in terms of $\tilde{h}_{\varepsilon N_{\varepsilon}}$.

\medskip{}

\noindent Let us assume, for a moment, that (\ref{eq:RefinedBoundSupportFepsilon})
has been established, and let us set 
\begin{equation}
(u_{\star},v_{\star})\doteq\text{future endpoint of }\big\{ v=v_{\varepsilon,N_{\varepsilon}-1}^{(n_{+})}+\frac{1}{2}\exp(e^{\sigma_{\varepsilon}^{-7}})h_{\varepsilon,N_{\varepsilon}}\big\}\cap\mathcal{B}_{\varepsilon}\cap\big\{ r\ge\delta_{\varepsilon}^{-\frac{1}{4}}\frac{\varepsilon^{(N_{\varepsilon})}}{\sqrt{-\Lambda}}\big\}.\label{eq:DefinitionU_*V_*}
\end{equation}

\begin{figure}[h] 
\centering 
\scriptsize
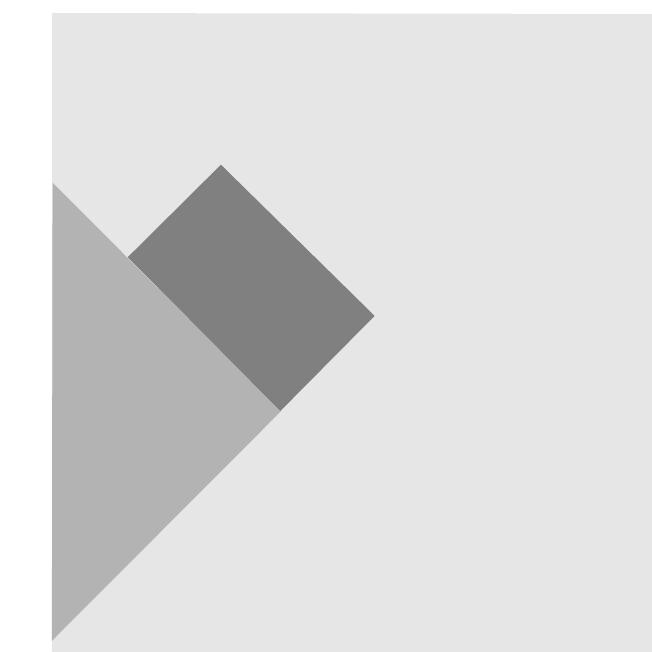 
\caption{Schematic depiction of the domains $\mathcal{D}_{\epsilon}$ and $\mathcal{B}^{\sharp}_{\epsilon}\subset \mathcal{B}^{*}_{\epsilon}$. The timelike curve $\mathcal{C}_{\epsilon} \doteq \{r=\delta_{\epsilon}^{-\frac{1}{4}}\frac{\epsilon^{(N_{\epsilon})}}{\sqrt{-\Lambda}}\}$ does not necessarily have to intersect the region $\mathcal{B}^{\sharp}_{\epsilon}$. In the case when the future boundary segment $ \{u=v_{\epsilon,N_{\epsilon}}^{(n_{+})}-\delta_{\epsilon}^{-\frac{1}{4}}h_{\epsilon,N_{\epsilon}}\} \cap \mathcal{B}^{\sharp}_{\epsilon}$ of $\mathcal{B}^{\sharp}_{\epsilon}$ lies to the left of the curve $\mathcal{C}_{\epsilon}$, the point $(u_{\star}, v_{\star})$ lies on $\mathcal{C}_{\epsilon}$. \label{fig:B_Domains_Second}}
\end{figure}

In view of (\ref{eq:OnlyOneVlasovFieldFepsilon}), the bound (\ref{eq:RefinedBoundSupportFepsilon})
on the support of $f_{\varepsilon N_{\varepsilon}}$ implies that
\[
f_{\varepsilon}\equiv0\text{ on }\big\{ v\ge v_{\varepsilon,N_{\varepsilon}-1}^{(n_{+})}+\frac{1}{2}\exp(e^{\sigma_{\varepsilon}^{-7}})h_{\varepsilon,N_{\varepsilon}}\big\}\cap\mathcal{B}_{\varepsilon}\cap\mathcal{U}_{max}^{(\varepsilon)},
\]
and hence $\tilde{m}$ is constant on $\big\{ v\ge v_{\varepsilon,N_{\varepsilon}-1}^{(n_{+})}+\frac{1}{2}\exp(e^{\sigma_{\varepsilon}^{-7}})h_{\varepsilon,N_{\varepsilon}}\big\}\cap\mathcal{B}_{\varepsilon}\cap\mathcal{U}_{max}^{(\varepsilon)}$.
This fact, combined with (\ref{eq:VacuumTriangle}) and the definition
(\ref{eq:IngoingEnergyAfter}) of $\widetilde{\mathcal{E}}_{\nwarrow}^{(+)}[n_{+};N_{\varepsilon},N_{\varepsilon}-1]$
implies that 
\begin{equation}
\tilde{m}(u_{\star},v_{\star})=\widetilde{\mathcal{E}}_{\nwarrow}^{(+)}[n_{+};N_{\varepsilon},N_{\varepsilon}-1].\label{eq:MtildeStar}
\end{equation}
The definition (\ref{eq:DefinitionU_*V_*}) of $(u_{\star},v_{\star})$
implies that:

\begin{itemize}

\item Either 
\begin{equation}
r(u_{\star},v_{\star})=\delta_{\varepsilon}^{-\frac{1}{4}}\frac{\varepsilon^{(N_{\varepsilon})}}{\sqrt{-\Lambda}},\label{eq:1111}
\end{equation}

\item Or 
\begin{equation}
(u_{\star},v_{\star})=\Big(v_{\varepsilon,N_{\varepsilon}-1}^{(n_{+})}-\delta_{\varepsilon}^{-\frac{1}{4}}h_{\varepsilon,N_{\varepsilon}},v_{\varepsilon,N_{\varepsilon}-1}^{(n_{+})}+\frac{1}{2}\exp(e^{\sigma_{\varepsilon}^{-7}})h_{\varepsilon,N_{\varepsilon}}\Big),
\end{equation}
in which case, by integrating the bound 
\[
\partial_{v}r=\frac{\partial_{v}r}{1-\frac{2m}{r}}\cdot\big(1-\frac{2\tilde{m}}{r}-\frac{1}{3}\Lambda r^{2}\big)\le2e^{\sigma_{\varepsilon}^{-4}}
\]
(following from (\ref{eq:OneSidedBoundDvr}) and (\ref{eq:UpperBoundRho}))
along $u=u_{\star}$ from $v=u_{\star}$ up to $v=v_{\star}$, we
can estimate 
\begin{equation}
r(u_{\star},v_{\star})\le\exp(e^{\sigma_{\varepsilon}^{-8}})\delta_{\varepsilon}^{-\frac{1}{4}}\frac{\varepsilon^{(N_{\varepsilon})}}{\sqrt{-\Lambda}}.\label{eq:2222}
\end{equation}

\end{itemize}

Since (\ref{eq:2222}) is weaker than (\ref{eq:1111}), we infer that,
in any case, (\ref{eq:2222}) always holds for $(u_{\star},v_{\star})$.
Combining (\ref{eq:MtildeStar}), (\ref{eq:2222}), (\ref{eq:UpperBoundRho})
and the lower bound (\ref{eq:LowerBoundEnergyTopBeam}) for $\widetilde{\mathcal{E}}_{\nwarrow}^{(+)}[n_{+};N_{\varepsilon},N_{\varepsilon}-1]$,
we therefore calculate (using the relation (\ref{eq:HierarchyOfParameters})
between $\sigma_{\varepsilon}$ and $\delta_{\varepsilon}$) that
\begin{align*}
\frac{2m}{r}(u_{\star},v_{\star}) & =\big(\frac{2\tilde{m}}{r}-\frac{1}{3}\Lambda r^{2}\big)(u_{\star},v_{\star})=\\
 & =\frac{2\widetilde{\mathcal{E}}_{\nwarrow}^{(+)}[n_{+};N_{\varepsilon},N_{\varepsilon}-1]}{r(u_{\star},v_{\star})}+O(\varepsilon)\ge\\
 & \ge\frac{\delta_{\varepsilon}^{-\frac{1}{2}}}{\exp(e^{\sigma_{\varepsilon}^{-8}})\delta_{\varepsilon}^{-\frac{1}{4}}}+O(\varepsilon)=\\
 & =\exp(-e^{\sigma_{\varepsilon}^{-8}})\delta_{\varepsilon}^{-\frac{1}{4}}+O(\varepsilon)\ge\\
 & \ge\delta_{\varepsilon}^{-\frac{1}{8}}>\\
 & >1,
\end{align*}
which is a contradiction, in view of our assumption (\ref{eq:ContradictionAssumptionTraooedSphere.}).
Thus, we infer that (\ref{eq:ContradictionAssumptionTraooedSphere.})
cannot hold, i.\,e.~that there exists some $(u_{\dagger},v_{\dagger})\in\mathcal{B}_{\varepsilon}\cap\mathcal{U}_{max}^{(\varepsilon)}$
for which (\ref{eq:TrappedSphereAtDagger}) holds.

In order to complete the proof of Proposition \ref{prop:FinalTrappedSphereFormation},
it therefore remains to establish (\ref{eq:RefinedBoundSupportFepsilon}).

\medskip{}

\noindent \emph{Proof of (\ref{eq:RefinedBoundSupportFepsilon}).}
Let $\gamma$ be any affinely parametrised, future directed, null
geodesic $\gamma$ (which is maximally extended through reflections
off $\mathcal{I}$) in the support of $f_{\varepsilon N_{\varepsilon}}$.
The definition (\ref{eq:InitialVlasovSeeds}) of $F_{N_{\varepsilon}}^{(\varepsilon)}$
and the relation (\ref{eq:Fepsiloni}) between $F_{N_{\varepsilon}}^{(\varepsilon)}$
and the initial data for $f_{\varepsilon N_{\varepsilon}}$ implies
that the angular momentum $l$ of $\gamma$ satisfies 
\begin{equation}
2\frac{\varepsilon^{(N_{\varepsilon})}}{\sqrt{-\Lambda}}\le l\le6\frac{\varepsilon^{(N_{\varepsilon})}}{\sqrt{-\Lambda}}.\label{eq:AngularMomentumComparable}
\end{equation}
In view of (\ref{eq:LowerBoundVSupportFe}), in order to show (\ref{eq:RefinedBoundSupportFepsilon}),
it suffices to show that 
\begin{equation}
\max\Big\{ v(p):\text{ }p\in\gamma\cap\mathcal{B}_{\varepsilon}\cap\big\{ r\ge\delta_{\varepsilon}^{-\frac{1}{4}}\frac{\varepsilon^{(N_{\varepsilon})}}{\sqrt{-\Lambda}}\big\}\Big\}\le v_{\varepsilon,N_{\varepsilon}-1}^{(n_{+})}+\frac{1}{2}\exp(e^{\sigma_{\varepsilon}^{-7}})h_{\varepsilon,N_{\varepsilon}}.\label{eq:UpperBoundVForGamma}
\end{equation}

\begin{figure}[h] 
\centering 
\scriptsize
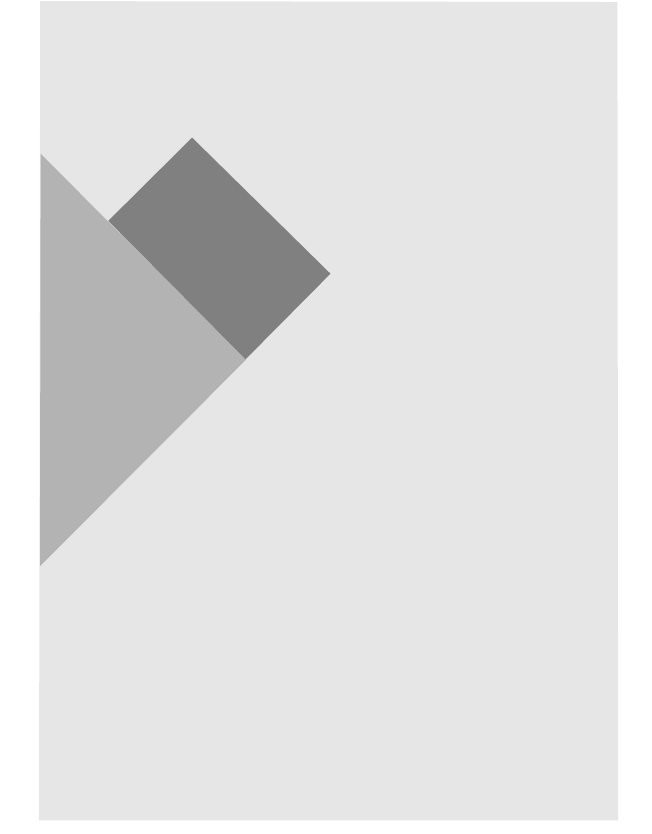 
\caption{Schematic depiction of a null geodesic $gamma$ in the support of $f_{\epsilon N_{\epsilon}}$. In order to establish the estimate (\ref{eq:UpperBoundVForGamma}), we will integrate the geodesic equation starting from the point $p_0$ (which lies in the region $\mathcal{U}^+_{\epsilon}$), i.\,e.~before the last interaction of $\gamma$ with the beams $\mathcal{V}^{(n_+)}_{i\nearrow}$, $0\le i \le N_{\epsilon}-1$.. \label{fig:B_Domains_Third}}
\end{figure}

Applying Lemma \ref{lem:QuantitativeBeamEstimate} for $i=N_{\varepsilon}$
and using (\ref{eq:U+InLastStep}) and (\ref{eq:EstimateGeometryLikeC0})
we infer that, at the point 
\[
p_{0}=(v_{\varepsilon,0}^{(n_{+})}-h_{\varepsilon,0},v_{0})\doteq\gamma\cap\{u=v_{\varepsilon,0}^{(n_{+})}-h_{\varepsilon,0}\},
\]
we can estimate 
\begin{equation}
\big|v_{\varepsilon,N_{\varepsilon}}^{(n_{+})}-v_{0}\big|\le h_{\varepsilon,N_{\varepsilon}},\label{eq:InitialVGamma}
\end{equation}
\begin{equation}
\exp(-\sigma_{\varepsilon}^{-6})\le\Omega^{2}(\dot{\gamma}^{u}+\dot{\gamma}^{u})\big|_{p_{0}}\le\exp\big(\exp(\sigma_{\varepsilon}^{-4})\big)\label{eq:InitialEnergyGamma}
\end{equation}
and
\begin{equation}
\frac{\dot{\gamma}^{v}}{\dot{\gamma}^{u}}\Big|_{p_{0}}\le\exp\big(\exp(\sigma_{\varepsilon}^{-4})\big)\frac{2l^{2}}{r^{2}}\Big|_{p_{0}}\le\varepsilon^{\frac{1}{2}}\label{eq:InitialIngoingGamma}
\end{equation}
(where, for \ref{eq:InitialIngoingGamma}, we made use of the fact
that $\varepsilon^{(N_{\varepsilon})}/r|_{p_{0}}\lesssim\varepsilon^{(N_{\varepsilon})}/\varepsilon^{(0)}<\varepsilon$).
Furthermore, using (\ref{eq:EstimateGeometryLikeC0}), (\ref{eq:BoundDOmegaEverywhere})
and the form (\ref{eq:BeamDomain}) of $\mathcal{V}_{N_{\varepsilon}\nwarrow}^{(n_{+})}$,
we can estimate for any $\bar{v}\in[v_{0},v_{0}+\exp(e^{\sigma_{\varepsilon}^{-7}})h_{\varepsilon,N_{\varepsilon}}]$:
\begin{equation}
\Big|\int_{v_{0}}^{\bar{v}}\big(\partial_{v}\log(\Omega^{2})-2\frac{\partial_{v}r}{r}\big)(v_{\varepsilon,0}^{(n_{+})}-h_{\varepsilon,0},v)\,dv\Big|\le\exp\big(\exp(\sigma_{\varepsilon}^{-6})\big)\sqrt{-\Lambda}\frac{|\bar{v}-v_{0}|}{\varepsilon^{(N_{\varepsilon})}}.\label{eq:BoundForInitialDataTermGeodesicEquation}
\end{equation}

We will establish (\ref{eq:UpperBoundVForGamma}) by continuity: We
will show that, for any $\bar{u}\in[v_{\varepsilon,0}^{(n_{+})}-h_{\varepsilon,0},v_{\varepsilon,N_{\varepsilon}-1}^{(n_{+})}-\delta_{\varepsilon}^{-\frac{1}{4}}h_{\varepsilon,N_{\varepsilon}}]$
such that 
\begin{equation}
v(p)\le v_{0}+\frac{1}{4}\exp(e^{\sigma_{\varepsilon}^{-7}})h_{\varepsilon,N_{\varepsilon}}\text{ for all }p\in\gamma\cap\big\{ v_{\varepsilon,0}^{(n_{+})}-h_{\varepsilon,0}\le u\le\bar{u}\big\}\cap\big\{ r\ge\delta_{\varepsilon}^{-\frac{1}{4}}\frac{\varepsilon^{(N_{\varepsilon})}}{\sqrt{-\Lambda}}\big\}\Big\}\label{eq:AssumptionContinuity}
\end{equation}
(note that (\ref{eq:AssumptionContinuity}) holds trivially when $\bar{u}=v_{\varepsilon,0}^{(n_{+})}-h_{\varepsilon,0}$),
the following stronger bound actually holds: 
\begin{equation}
v(p)\le v_{0}+\frac{1}{8}\exp(e^{\sigma_{\varepsilon}^{-7}})h_{\varepsilon,N_{\varepsilon}}\text{ for all }p\in\gamma\cap\big\{ v_{\varepsilon,0}^{(n_{+})}-h_{\varepsilon,0}\le u\le\bar{u}\big\}\cap\big\{ r\ge\delta_{\varepsilon}^{-\frac{1}{4}}\frac{\varepsilon^{(N_{\varepsilon})}}{\sqrt{-\Lambda}}\big\}\Big\}.\label{eq:ToShowContinuity}
\end{equation}

Let $\bar{u}\in[v_{\varepsilon,0}^{(n_{+})}-h_{\varepsilon,0},v_{\varepsilon,N_{\varepsilon}-1}^{(n_{+})}-\delta_{\varepsilon}^{-\frac{1}{4}}h_{\varepsilon,N_{\varepsilon}}]$
satisfy (\ref{eq:AssumptionContinuity}). Then, for any $u'\in[v_{\varepsilon,0}^{(n_{+})}-h_{\varepsilon,0},\bar{u}]$,
setting 
\begin{equation}
v'=\sup\Big\{ v(p):\text{ }p\in\gamma\cap\big\{ v_{\varepsilon,0}^{(n_{+})}-h_{\varepsilon,0}\le u\le u'\big\}\cap\big\{ r\ge\delta_{\varepsilon}^{-\frac{1}{4}}\frac{\varepsilon^{(N_{\varepsilon})}}{\sqrt{-\Lambda}}\big\}\Big\}\label{eq:v'}
\end{equation}
and applying (\ref{eq:XrhsimhGewdaisiakh-U}) for $u_{1}(v)=v_{\varepsilon,0}^{(n_{+})}-h_{\varepsilon,0}$,
we obtain:
\end{proof}
\begin{align}
\Big|\log\big(\Omega^{2}\dot{\gamma}^{u}\big)|_{v=v'}-\log\big(\Omega^{2}\dot{\gamma}^{u}\big)|_{v=v_{0}}\Big|\le & \Big|\int_{v_{0}}^{v'}\int_{v_{\varepsilon,0}^{(n_{+})}-h_{\varepsilon,0}}^{u(\gamma(s_{v}))}\Big(\frac{1}{2}\frac{\frac{6\tilde{m}}{r}-1}{r^{2}}\Omega^{2}-24\pi T_{uv}\Big)\,dudv\Big|+\label{eq:UsefulRelationForGeodesicWithMu-U-2}\\
 & +\Big|\int_{v_{0}}^{v'}\big(\partial_{v}\log(\Omega^{2})-2\frac{\partial_{v}r}{r}\big)(u_{1}(v),v)\,dv\Big|.\nonumber 
\end{align}

In view of the relation (\ref{eq:TildeUMaza}) for $\tilde{m}$, the
upper bound (\ref{eq:OneSidedBoundDvr}) and the fact that $\partial_{u}\tilde{m}\le0$
(which follows readily from (\ref{eq:TildeUMaza}) and (\ref{eq:InequalitiesRTrivialBepsilon})),
we can estimate on $\mathcal{B}_{\varepsilon}$:
\begin{align}
24\pi T_{uv} & \le3\frac{\Omega^{2}}{-\partial_{u}r}\frac{-\partial_{u}\tilde{m}}{r^{2}}\le\label{eq:TrivialBoundTuv}\\
 & \le3e^{\sigma_{\varepsilon}^{-4}}\frac{-\partial_{u}\tilde{m}}{r^{2}}=\nonumber \\
 & =3e^{\sigma_{\varepsilon}^{-4}}\Big(-\partial_{u}\big(\frac{\tilde{m}}{r^{2}}\big)+2\frac{\tilde{m}}{r^{3}}(-\partial_{u}r)\Big).\nonumber 
\end{align}
Furthermore, from the relation (\ref{eq:DefinitionHereHawkingMass})
and the upper bound (\ref{eq:OneSidedBoundDvr}), we can estimate
on $\mathcal{B}_{\varepsilon}$: 
\begin{equation}
\Omega^{2}\le4e^{\sigma_{\varepsilon}^{-4}}(-\partial_{u}r).\label{eq:UpperBoundOmega}
\end{equation}
Using the bounds (\ref{eq:ContradictionAssumptionTraooedSphere.-1}),
(\ref{eq:BoundForInitialDataTermGeodesicEquation}), (\ref{eq:TrivialBoundTuv})
and (\ref{eq:UpperBoundOmega}) to estimate the right hand side of
(\ref{eq:UsefulRelationForGeodesicWithMu-U-2}) (integrating, also,
in $u$ for the $\partial_{u}\big(\frac{\tilde{m}}{r^{2}}\big)$ term),
we obtain:
\begin{align}
\Big|\log\big(\Omega^{2}\dot{\gamma}^{u}\big)|_{v=v'}-\log\big(\Omega^{2}\dot{\gamma}^{u}\big)|_{v=v_{0}}\Big|\le & \int_{v_{0}}^{v'}\int_{v_{\varepsilon,0}^{(n_{+})}-h_{\varepsilon,0}}^{u(\gamma(s_{v}))}\Big(\frac{1}{2}\frac{\frac{6\tilde{m}}{r}+1}{r^{2}}\Omega^{2}\Big)\,dudv+24\pi\int_{v_{0}}^{v'}\int_{v_{\varepsilon,0}^{(n_{+})}-h_{\varepsilon,0}}^{u(\gamma(s_{v}))}T_{uv}\,dudv+\label{eq:UsefulRelationForGeodesicWithMu-U-2-1}\\
 & +\Big|\int_{v_{0}}^{v'}\big(\partial_{v}\log(\Omega^{2})-2\frac{\partial_{v}r}{r}\big)(u_{1}(v),v)\,dv\Big|\le\nonumber \\
\le & e^{\sigma_{\varepsilon}^{-5}}\int_{v_{0}}^{v'}\int_{v_{\varepsilon,0}^{(n_{+})}-h_{\varepsilon,0}}^{u(\gamma(s_{v}))}\frac{(-\partial_{u}r)}{r^{2}}\,dudv-3e^{\sigma_{\varepsilon}^{-4}}\int_{v_{0}}^{v'}\int_{v_{\varepsilon,0}^{(n_{+})}-h_{\varepsilon,0}}^{u(\gamma(s_{v}))}\partial_{u}\big(\frac{\tilde{m}}{r^{2}}\big)\,dudv+\nonumber \\
 & +\Big|\int_{v_{0}}^{v'}\big(\partial_{v}\log(\Omega^{2})-2\frac{\partial_{v}r}{r}\big)(u_{1}(v),v)\,dv\Big|\le\nonumber \\
\le & e^{\sigma_{\varepsilon}^{-5}}\frac{1}{r|_{\gamma\cap\{v=v'\}}}|v'-v_{0}|+\exp\big(\exp(\sigma_{\varepsilon}^{-6})\big)\sqrt{-\Lambda}\frac{|v'-v_{0}|}{\varepsilon^{(N_{\varepsilon})}}.\nonumber 
\end{align}
In view of the definition (\ref{eq:v'}) of $v'$ (and, in particular,
the fact that $r\ge\delta_{\varepsilon}^{-\frac{1}{4}}\frac{\varepsilon^{(N_{\varepsilon})}}{\sqrt{-\Lambda}}$
at $\gamma\cap\{v=v'\}$), from (\ref{eq:UsefulRelationForGeodesicWithMu-U-2-1})
we obtain that:
\begin{equation}
\Big|\log\big(\Omega^{2}\dot{\gamma}^{u}\big)|_{v=v'}-\log\big(\Omega^{2}\dot{\gamma}^{u}\big)|_{v=v_{0}}\Big|\le\exp\big(\exp(2\sigma_{\varepsilon}^{-6})\big)\sqrt{-\Lambda}\frac{|v'-v_{0}|}{\varepsilon^{(N_{\varepsilon})}}.\label{eq:AnteScedonTelos}
\end{equation}
In view of (\ref{eq:InitialEnergyGamma}) and assumption (\ref{eq:AssumptionContinuity}),
from (\ref{eq:AnteScedonTelos}) we infer that 
\begin{equation}
\Omega^{2}\dot{\gamma}^{u}|_{v=v'}\ge\exp\Big(-\exp\big(\exp(\sigma_{\varepsilon}^{-8})\Big).\label{eq:LowerBoundEnergyAnte}
\end{equation}
Furthermore, using the null-shell relation (\ref{eq:NullShellNewAngularMomentum})
for $\gamma$, the relation (\ref{eq:DefinitionHereHawkingMass})
for $\Omega^{2}$, the upper bounds (\ref{eq:ContradictionAssumptionTraooedSphere.})
and (\ref{eq:OneSidedBoundDvr}) and the lower bound (\ref{eq:LowerBoundEnergyAnte}),
we can readily estimate (using also the fact that $r\ge\delta_{\varepsilon}^{-\frac{1}{4}}\frac{\varepsilon^{(N_{\varepsilon})}}{\sqrt{-\Lambda}}$
at $\gamma\cap\{v=v'\}$, as a consequence of (\ref{eq:AssumptionContinuity})):
\begin{equation}
\frac{\partial_{v}r\cdot\dot{\gamma}^{v}}{-\partial_{u}r\cdot\dot{\gamma}^{u}}\big|_{v=v'}=\frac{1}{4}\big(1-\frac{2m}{r})\Big(\frac{\Omega^{2}}{-\partial_{u}r}\Big)^{2}\frac{l^{2}}{r^{2}}\frac{1}{(\Omega^{2}\dot{\gamma}^{u})^{2}}\Big|_{v=v'}\le\exp\Big(\exp\big(\exp(\sigma_{\varepsilon}^{-9})\Big)\delta_{\varepsilon}^{\frac{1}{2}}\le\delta_{\varepsilon}^{\frac{1}{4}}.\label{eq:ForEstimateInDr}
\end{equation}
In particular, (\ref{eq:ForEstimateInDr}) implies that the integration
forms $dr$ and $du$ along $\gamma$ satisfy at $v=v'$:
\begin{equation}
dr|_{\gamma\cap\{v=v'\}}=\big(\partial_{u}r+\frac{\dot{\gamma}^{v}}{\dot{\gamma}^{u}}\partial_{v}r\big)du|_{\gamma\cap\{v=v'\}}=\partial_{u}r(1+O(\delta_{\varepsilon}^{\frac{1}{4}}))du|_{\gamma\cap\{v=v'\}}.\label{eq:RelationForms}
\end{equation}

Using the relation (\ref{eq:NullShellNewAngularMomentum}) for $\gamma$
as well as the bounds (\ref{eq:OneSidedBoundDvr}), (\ref{eq:LowerBoundEnergyAnte}),
(\ref{eq:ForEstimateInDr}) and (\ref{eq:RelationForms}), we can
readily estimate that, for all $p\in\gamma\cap\big\{ v_{\varepsilon,0}^{(n_{+})}-h_{\varepsilon,0}\le u\le\bar{u}\big\}\cap\big\{ r\ge\delta_{\varepsilon}^{-\frac{1}{4}}\frac{\varepsilon^{(N_{\varepsilon})}}{\sqrt{-\Lambda}}\big\}\Big\}$:
\begin{align}
v(p)-v_{0} & \le\int_{v_{\varepsilon,0}^{(n_{+})}-h_{\varepsilon,0}}^{\bar{u}}\frac{\Omega^{2}\dot{\gamma}^{v}}{\Omega^{2}\dot{\gamma}^{u}}(s_{u})\,du=\label{eq:AmanPia!}\\
 & =\int_{v_{\varepsilon,0}^{(n_{+})}-h_{\varepsilon,0}}^{\bar{u}}\frac{\Omega^{2}l^{2}}{r^{2}}\frac{1}{(\Omega^{2}\dot{\gamma}^{u})^{2}}(s_{u})\,du\le\nonumber \\
 & \le\exp\Big(2\exp\big(\exp(\sigma_{\varepsilon}^{-8})\Big)\int_{v_{\varepsilon,0}^{(n_{+})}-h_{\varepsilon,0}}^{\bar{u}}\frac{\Omega^{2}l^{2}}{r^{2}}\Big|_{\gamma(s_{u})}\,du\le\nonumber \\
 & \le\exp\Big(4\exp\big(\exp(\sigma_{\varepsilon}^{-8})\Big)\int_{\gamma\cap\{v_{\varepsilon,0}^{(n_{+})}-h_{\varepsilon,0}\le u\le\bar{u}\}}\frac{l^{2}}{r^{2}}\frac{\Omega^{2}}{-\partial_{u}r}\,dr\le\nonumber \\
 & \le\exp\Big(\exp\big(\exp(\sigma_{\varepsilon}^{-9})\Big)\int_{\gamma\cap\{v_{\varepsilon,0}^{(n_{+})}-h_{\varepsilon,0}\le u\le\bar{u}\}}\frac{l^{2}}{r^{2}}\,dr\le\nonumber \\
 & \le\exp\Big(\exp\big(\exp(\sigma_{\varepsilon}^{-9})\Big)\frac{l^{2}}{\inf_{\gamma\cap\{v_{\varepsilon,0}^{(n_{+})}-h_{\varepsilon,0}\le u\le\bar{u}\}}r}\le\nonumber \\
 & \le\delta_{\varepsilon}^{\frac{1}{8}}\frac{\varepsilon^{(N_{\varepsilon})}}{\sqrt{-\Lambda}},\nonumber 
\end{align}
where, in passing to the last line of (\ref{eq:AmanPia!}), we made
use of (\ref{eq:AngularMomentumComparable}) and the bound $r\ge\delta_{\varepsilon}^{-\frac{1}{4}}\frac{\varepsilon^{(N_{\varepsilon})}}{\sqrt{-\Lambda}}$
on $\gamma\cap\{v_{\varepsilon,0}^{(n_{+})}-h_{\varepsilon,0}\le u\le\bar{u}\}$
(following from (\ref{eq:AssumptionContinuity})). From (\ref{eq:AmanPia!}),
the bound (\ref{eq:ToShowContinuity}) follows readily, in view of
the relation (\ref{eq:HierarchyOfParameters}) between $\sigma_{\varepsilon}$
and $\delta_{\varepsilon}$. Thus, by continuity, we have established
(\ref{eq:UpperBoundVForGamma}) and, therefore, (\ref{eq:RefinedBoundSupportFepsilon}).
Thus, the proof of Proposition \ref{prop:FinalTrappedSphereFormation}
has been completed.

\bibliographystyle{plain}
\bibliography{DatabaseExample}

\end{document}